\numberwithin{equation}{section}
\providecommand{\abs}[1]{\left\vert#1\right\vert}
\providecommand{\norm}[1]{\left\Vert#1\right\Vert}
\providecommand{\Rn}[1]{\mathbb{R}^{#1}}
\providecommand{\csubset}{\subset\subset}
\providecommand{\br}[1]{\langle #1 \rangle}
\providecommand{\ns}[1]{\norm{#1}^2}
\providecommand{\bs}[1]{\left[#1\right]_\ell^2}
\providecommand{\ip}[1]{\left(#1 \right)}
\providecommand{\jump}[1]{\left\llbracket #1 \right\rrbracket }
\def\nab{\nabla}
\def\dt{\partial_t}
\def\hal{\frac{1}{2}}
\def\ep{\varepsilon}
\def\ls{\lesssim}
\def\p{\partial}
\def\sg{\mathbb{D}}
\def\da{\Delta_{\mathcal{A}}}
\def\naba{\nab_{\mathcal{A}}}
\def\diva{\diverge_{\mathcal{A}}}
\def\Sa{S_{\mathcal{A}}}
\def\h1{{_0}H^1((-\ell,\ell))}
\def\sdb{\mathcal{D}_{\shortparallel}}
\def\seb{\mathcal{E}_{\shortparallel}}
\providecommand{\abs}[1]{\left\vert#1\right\vert}
\providecommand{\norm}[1]{\left\Vert#1\right\Vert}
\providecommand{\ns}[1]{\norm{#1}^2}
\providecommand{\Rn}[1]{\mathbb{R}^{#1}}
\providecommand{\jump}[1]{\left\llbracket #1 \right\rrbracket }
\providecommand{\br}[1]{\langle #1 \rangle}
\providecommand{\pp}[1]{\left( \mspace{-2.0mu} \left( #1 \right) \mspace{-2.0mu} \right)}
\providecommand{\sdb}[1]{\bar{\mathcal{D}}_{#1}}
\providecommand{\seb}[1]{\bar{\mathcal{E}}_{#1}}
\def\hal{\frac{1}{2}}
\def\ep{\varepsilon}
\def\ls{\lesssim}
\def\jg{\jump{\gamma}}
\def\nab{\nabla}
\def\dt{\partial_t}
\def\p{\partial}
\def\da{\Delta_{\mathcal{A}}}
\def\naba{\nab_{\mathcal{A}}}
\def\diva{\diverge_{\mathcal{A}}}
\def\Sa{S_{\mathcal{A}}}
\def\sg{\mathbb{D}}
\def\sga{\mathbb{D}_{\mathcal{A}}}
\def\SH0{\mathcal{H}^0(\Omega)}
\def\oH{\mathring{H}}
\def\A{\mathcal{A}}
\def\B{\mathcal{B}}
\def\D{\mathcal{D}}
\def\E{\mathcal{E}}
\def\G{\mathcal{G}}
\def\h{\mathcal{H}}
\def\H{\mathcal{H}}
\def\J{\mathcal{J}}
\def\K{\mathcal{K}}
\def\L{\mathcal{L}}
\def\N{\mathcal{N}}
\def\P{\mathcal{P}}
\def\Q{\mathcal{Q}}
\def\R{\mathbb{R}}
\def\V{\mathcal{V}}
\def\af{\mathfrak{A}}
\def\sw{\mathscr{W}}
\def\swh{\hat{\mathscr{W}}}
\def\low{\alpha}
\def\linz{\kappa}
\def\epm{\ep_{\operatorname{max}}}
\def\omeq{\omega_{\operatorname{eq}}}
\def\theq{\theta_{\operatorname{eq}}}
\def\st{\;\vert\;}
\def\XXint#1#2#3{{\setbox0=\hbox{$#1{#2#3}{\int}$ }
\vcenter{\hbox{$#2#3$ }}\kern-.6\wd0}}
\DeclareMathOperator{\tr}{tr}
\DeclareMathOperator{\diverge}{div}
\DeclareMathOperator{\spn}{span}
\newtheorem{lem}{Lemma}[section]
\newtheorem{cor}[lem]{Corollary}
\newtheorem{prop}[lem]{Proposition}
\newtheorem{thm}[lem]{Theorem}
\newtheorem{remark}[lem]{Remark}
\newtheorem{dfn}[lem]{Definition}
\title[Stability of contact lines in fluids]{Stability of contact lines in fluids: 2D Navier-Stokes Flow}
\author{Yan Guo}
\address{
Division of Applied Mathematics\\
Brown University \\
182 George St., Providence, RI 02912, USA
}
\email[Y. Guo]{guoy@dam.brown.edu}
\thanks{Y. Guo was supported in part by NSF grant DMS-grant 1810868 and NSFC grant 10828103.}
\author{Ian Tice}
\address{
Department of Mathematical Sciences\\
Carnegie Mellon University\\
Pittsburgh, PA 15213, USA
}
\email[I. Tice]{iantice@andrew.cmu.edu}
\thanks{I. Tice was supported by an NSF CAREER Grant (DMS \#1653161). }
\subjclass[2010]{Primary: 35Q30, 35R35, 76D45; Secondary: 35B40, 76E17, 47A60}
\keywords{Contact point dynamics, Navier-Stokes equations, free boundary problems}
\begin{document}

\begin{abstract} 
In this paper we study the dynamics of an incompressible viscous fluid evolving in an open-top container in two dimensions.  The fluid mechanics are dictated by the Navier-Stokes equations.  
The upper boundary of the fluid is free and evolves within the container.  The fluid is acted upon by a uniform gravitational field, and capillary forces are accounted for along the free boundary.  The triple-phase interfaces where the fluid, air above the vessel, and solid vessel wall come in contact are called contact points, and the angles formed at the contact point are called contact angles.  The model that we consider integrates boundary conditions that allow for full motion of the contact points and angles.  Equilibrium configurations consist of quiescent fluid within a domain whose upper boundary is given as the graph of a function minimizing a gravity-capillary energy functional, subject to a fixed mass constraint.  The equilibrium contact angles can take on any values between $0$ and $\pi$ depending on the choice of capillary parameters.  The main thrust of the paper is the development of a scheme of a priori estimates that show that solutions emanating from data sufficiently close to the equilibrium exist globally in time and decay to equilibrium at an exponential rate.
\end{abstract}

\maketitle


\section{Introduction }\label{sec_intro}

\subsection{Equations of motion   }
The purpose of this paper is to study the dynamics of a viscous incompressible fluid occupying an open-top vessel in two dimensions.  The vessel is modeled as a bounded, connected, open subset $\mathcal{V} \subset \mathbb{R}^2$ obeying the following pair of assumptions.  First, we posit that the vessel's top is a rectangular channel by assuming that 
\begin{equation}
\V_{\operatorname{top}} :=  \V \cap \{y \in \Rn{2} \st y_2 \ge 0 \} = \{y \in \Rn{2} \st -\ell < y_1 < \ell, 0 \le y_2 < L \}
\end{equation}
for some given distances $\ell, L >0$.  Note that $L$ is the height of the channel, while $2\ell$ is its width.  The second assumption on the vessel is that its boundary, $\p \V \subset \R^2$, is $C^2$ away from the corner points $(\pm \ell, L)$.   We will use the notation 
\begin{equation}
\V_{\operatorname{btm}} :=  \V \cap \{y \in \Rn{2} \st y_2 \le 0 \} 
\end{equation}
to denote the bottom portion of the vessel, on which we place no further geometric restrictions.  We refer to Figure \ref{fig_1} for two examples of vessels of the type considered here.

\begin{figure}[t]
    \centering
    \begin{minipage}{0.45\textwidth}
        \centering
        \includegraphics[width=0.9\textwidth]{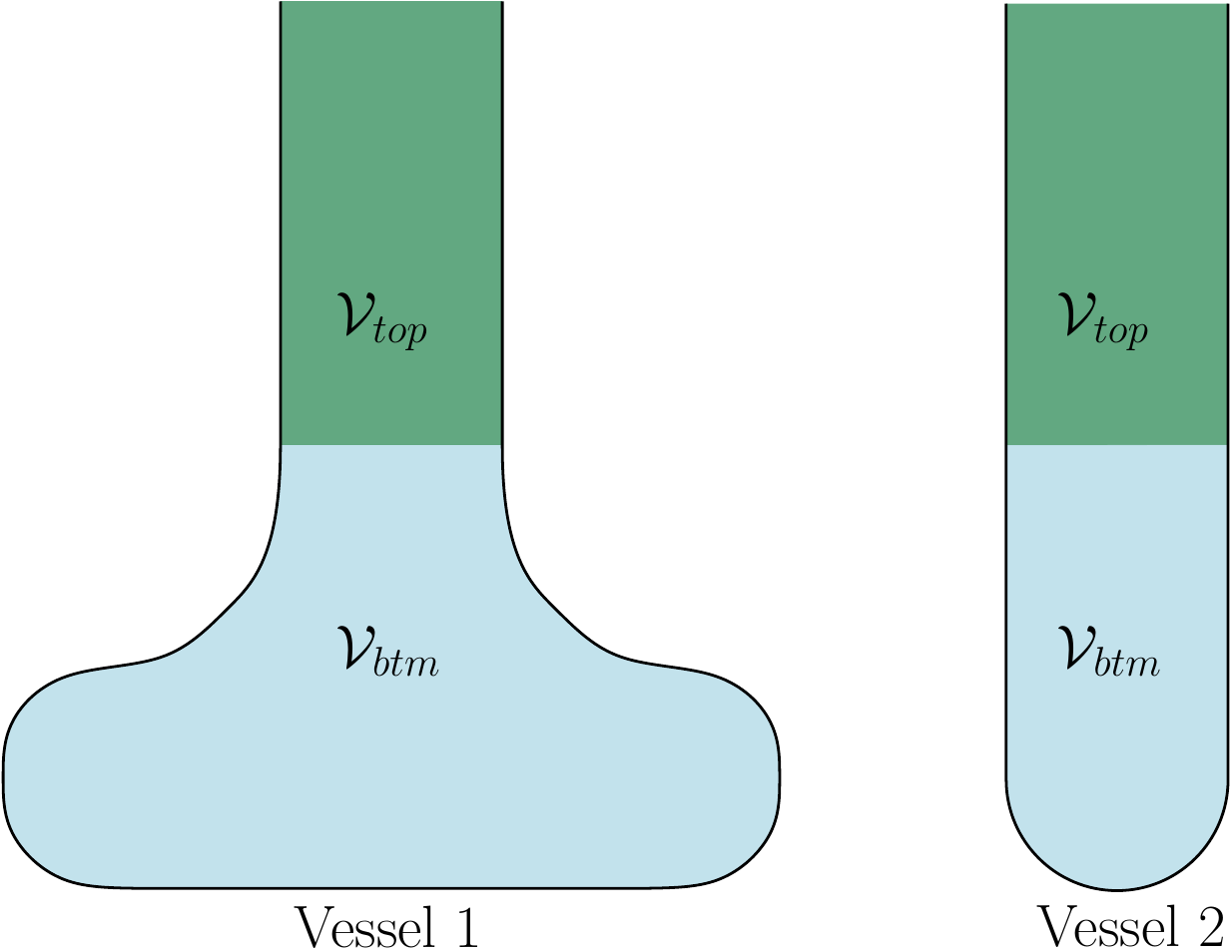} 
        \caption{Empty vessels}\label{fig_1}
    \end{minipage}\hfill
    \begin{minipage}{0.45\textwidth}
        \centering
        \includegraphics[width=0.9\textwidth]{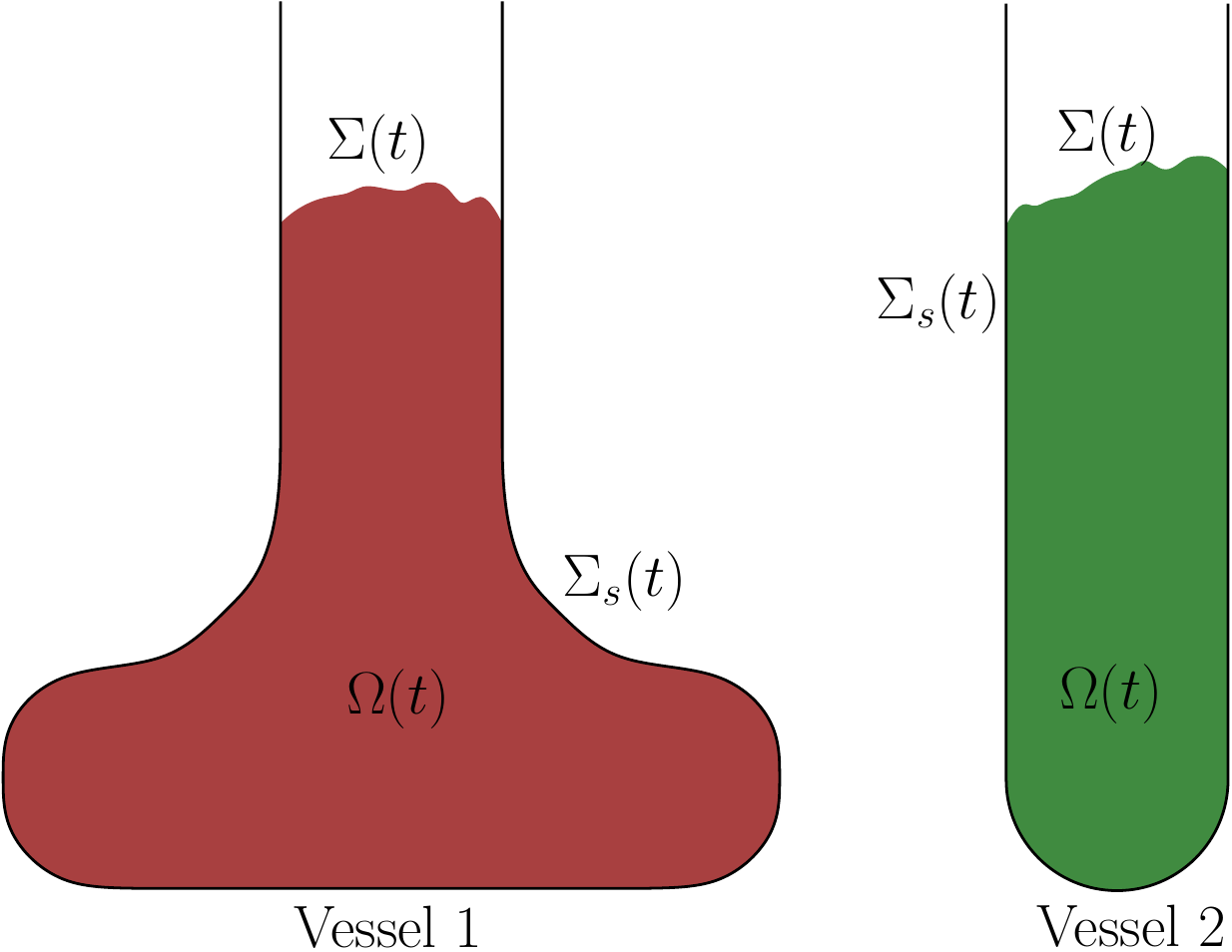} 
        \caption{Vessels with fluid}\label{fig_2}
    \end{minipage}
\end{figure}

The fluid is assumed to occupy the vessel in such a way that $\V_{\operatorname{btm}}$ is filled by the fluid, while $\V_{\operatorname{top}}$ is only partially filled, resulting in a free boundary where the fluid meets the air above the vessel.  For each time $t \ge 0$, this boundary is taken to be the graph of a function $\zeta(\cdot,t) : (-\ell,\ell)  \to (0,\infty)$ subject to the constraint that $\zeta(\pm \ell,t) \le L$.  The physical meaning of this constraint is that the fluid is assumed to not spill over the edges of the vessel.  Note, though, that we allow for the possibility that $\zeta(x,t) > L$ for some $x \in (-\ell,\ell)$ and $t \ge 0$, which corresponds to the fluid extending past the vessel's top away from the edges.  The points where the fluid, vessel, and air meet are $(\pm\ell, \zeta(\pm\ell,t))$ and are called the contact points.

In mathematical terms, we assume that the fluid occupies the time-dependent open set 
\begin{equation}\label{Omega_t_def}
\Omega(t) =  \V_{\operatorname{btm}} \cup  \{ y \in \R^{2} \st -\ell < y_1 < \ell, 0 < y_2 < \zeta(y_1,t)\}.
\end{equation}
We will write 
\begin{equation}
 \Sigma(t) = \{y \in \R^2 \st \abs{y_1} < \ell \text{ and } y_2 =   \zeta(y_1,t)\} \subset \p \Omega(t)
\end{equation}
for the moving fluid-vapor interface and 
\begin{equation}
 \Sigma_s(t) = \p \Omega(t) \backslash \Sigma(t)
\end{equation}
for the moving fluid-solid interface.  See Figure \ref{fig_2} for an example of two fluid domains in different types of vessels.  

The fluid's state is determined at each time by its velocity and pressure functions, $(u,P) : \Omega(t) \to \R^2 \times \R$, for which the associated viscous stress tensor is given by $S(P,u) : \Omega(t) \to \R^{2 \times 2}$ via 
\begin{equation}\label{stress_def}
 S(P,u) := PI - \mu \sg u,
\end{equation}
where $I$ is the $2 \times 2$ identity, $\mu >0$ is the fluid viscosity, and the symmetrized gradient is $\sg u = D u + (Du)^T$.  Extending the divergence operator to act on $S$ in the usual way, we have that $\diverge S(P,u) = \nab P - \mu \Delta u - \mu \nab \diverge u$.

In order to state the equations of motion, we first need to enumerate several terms that affect the dynamics.  The fluid is assumed to be of unit density and acted on by a uniform gravitational field pointing straight down with strength $g >0$.  Surface tension is accounted for, and we write $\sigma >0$ for the coefficient of tension  coefficient along the fluid-vapor interface, which is the graph of $\zeta(\cdot,t)$.  The parameter  $\beta > 0$ is the inverse slip length, which will appear in Navier's slip condition on the vessel side walls.  The energetic parameters $\gamma_{sv}, \gamma_{sf} \in \Rn{}$ measure the free-energy per unit length associated to the solid-vapor and solid-fluid interaction, respectively, and are the analogs of $\sigma$ for the other interfaces.  We define 
\begin{equation}\label{jg_def}
 \jg := \gamma_{sv} - \gamma_{sf},
\end{equation}
and we assume that $\jg$ and $\sigma$ satisfy the classical Young relation \cite{young}:
\begin{equation}\label{gamma_assume}
 \frac{\abs{\jg}}{\sigma} < 1.
\end{equation}
Finally, we define the contact point velocity response function $\sw: \R\to \R$ to be a $C^2$ increasing diffeomorphism such that $\sw(0) =0$.  

We can now state the equations of motion that govern the dynamics of the unknown triple $(u,P,\zeta)$ for $t >0$:
\begin{equation}\label{ns_euler}
\begin{cases}
\dt u + u\cdot \nab u +  \nab P - \mu \Delta u = 0 & \text{in }\Omega(t) \\ 
\diverge{u}=0 & \text{in }\Omega(t) \\ 
S(P,u) \nu = g \zeta \nu - \sigma \H(\zeta) \nu & \text{on } \Sigma(t) \\ 
(S(P,u)\nu - \beta u)\cdot \tau =0 &\text{on } \Sigma_s(t) \\
u \cdot \nu =0 &\text{on } \Sigma_s(t) \\
\partial_t \zeta = u_2 - u_1 \partial_{y_1}\zeta  &\text{on } \Sigma(t) \\ 
\sw(\dt \zeta(\pm \ell,t)) =  \jg \mp \sigma \frac{\p_1 \zeta}{\sqrt{1+\abs{\p_1 \zeta}^2}}(\pm \ell,t)
\end{cases}
\end{equation}
where $\nu$ the outward-pointing unit normal, $\tau$ is the associated unit tangent, and
\begin{equation}\label{H_def}
 \mathcal{H}(\zeta) := \p_1 \left( \frac{\p_1 \zeta}{\sqrt{1+\abs{\p_1 \zeta}^2}} \right)
\end{equation}
is the mean-curvature operator.  The first two equations in \eqref{ns_euler} are the incompressible Navier-Stokes equations for a fluid of unit density.  The third equation is the balance of stress on the free surface, which is also called the dynamic boundary condition.  Note that in principle the gravitational forcing term $-g e_2$ should appear as a bulk force in the first equation, but by shifting the pressure unknown via $P \mapsto P + g x_2$ we have shifted gravity to a surface term, as it is more convenient in this form.  The fourth and fifth equations in \eqref{ns_euler} constitute the Navier-slip condition; in contrast with the no-slip condition, the Navier condition allows for fluid slip along the fluid-solid interface, at the expense of generating a stress that acts against the motion.  The sixth equation in \eqref{ns_euler} is called the kinematic equation, as it tracks how the free surface function changes due to the fluid velocity.  The final equation in \eqref{ns_euler}, which is essential in our analysis and will be discussed more later in Section \ref{sec_res_disc}, is the contact point response equation.

The problem \eqref{ns_euler} is an evolution equation and must be augmented with two pieces of initial data:
\begin{enumerate}
 \item the initial free surface, $\zeta(\cdot,0): (-\ell,\ell) \to (0,\infty)$, which determines the initial fluid domain $\Omega(0)$,
 \item the initial fluid velocity $u_0 : \Omega(0) \to \R^2$, which satisfies $\diverge u_0 =0$ in $\Omega(0)$ and $u_0 \cdot \nu = 0$ on $\Sigma_s(0)$.
\end{enumerate}
As usual for the Navier-Stokes system, the initial pressure does not need to be specified.  The initial mass of the fluid is denoted by 
\begin{equation}
 M_0 := \abs{\Omega(0)} = \abs{\V_{\operatorname{btm}}} + M_{top}, \text{ where } M_{top} =  \int_{-\ell}^\ell \zeta(y_1,0) dy_1.
\end{equation}
The fluid's mass is conserved in time due to the combination of the kinematic boundary condition and the solenoidal condition for $u$ from \eqref{ns_euler}:
\begin{equation}\label{avg_prop}
\frac{d}{dt} \abs{\Omega(t)} = \frac{d}{dt}  \int_{-\ell}^\ell \zeta =  \int_{-\ell}^\ell \dt \zeta  = \int_{\Sigma(t)} u \cdot \nu = \int_{\Omega(t)} \diverge{u} = 0.
\end{equation}

\subsection{Equilibria    }

A steady state equilibrium solution to \eqref{ns_euler} corresponds to setting $u =0$, $P(y,t) = P_0 \in \R$, and $\zeta(y_1,t) = \zeta_0(y_1)$ with $\zeta_0$ and $P_0$ solving 
\begin{equation}\label{zeta0_eqn}
 \begin{cases}
 g \zeta_0 - \sigma \H(\zeta_0) = P_0 & \text{on } (-\ell,\ell) \\ 
 \sigma \frac{\p_1 \zeta_0}{\sqrt{1+\abs{\p_1 \zeta_0}^2}}(\pm \ell) = \pm \jg.
 \end{cases}
\end{equation}
By a slight abuse of notation, solutions to \eqref{zeta0_eqn} are called equilibrium capillary surfaces.  Note that the boundary condition specifies the cosine of the angle formed by the graph at the endpoints.  The constant pressure $P_0$ is not arbitrary; indeed, it is uniquely determined by specifying the mass in $\V_{\operatorname{top}}$ at equilibrium, i.e. prescribing 
\begin{equation}\label{zeta0_constraint}
 M_{top} =  \int_{-\ell}^\ell \zeta_0(y_1) dy_1.
\end{equation}
To see this, we use \eqref{zeta0_eqn} to compute 
\begin{equation}
 2\ell P_0 = \int_{-\ell}^\ell P_0 = \int_{-\ell}^\ell g \zeta_0 - \sigma \H(\zeta_0) = g M_{top} -\sigma \left. \frac{\p_1 \zeta_0}{\sqrt{1+\abs{\p_1 \zeta_0}^2}} \right\vert_{-\ell}^\ell 
= g M_{top} -2 \jg,
\end{equation}
which in turn implies that
\begin{equation}\label{p0_def}
 P_0 = \frac{g M_{top} -2 \jg}{2\ell}.
\end{equation}

The equations \eqref{zeta0_eqn} are the Euler-Lagrange equations associated to constrained minimizers of the energy functional $\mathscr{I} : W^{1,1}((-\ell,\ell)) \to \Rn{}$ defined via
\begin{equation}\label{zeta0_energy}
 \mathscr{I}(\zeta) = \int_{-\ell}^\ell \frac{g}{2} \abs{\zeta}^2 + \sigma \sqrt{1 + \abs{\zeta'}^2} - \jg(\zeta(\ell) + \zeta(-\ell)),
\end{equation}
subject to the mass constraint $M_{top} = \int_{-\ell}^\ell \zeta$.  In this framework the pressure $P_0$ is understood as the Lagrange multiplier associated to this constraint.  We now state an existence result for equilibrium capillary surfaces.  For a detailed proof we refer, for instance, to Appendix E of \cite{guo_tice_QS}, which is a one dimensional version of results found in the book of Finn \cite{finn}.

\begin{thm}\label{zeta0_wp}
There exists a constant $M_{min} \ge 0$ such that if $M_{top} > M_{min}$ then there exists a unique solution $\zeta_0 \in C^\infty([-\ell,\ell])$ to \eqref{zeta0_eqn} that satisfies \eqref{zeta0_constraint} with $P_0$ given by \eqref{p0_def}.  Moreover, $\zeta_0$ is even,  $\min_{[-\ell,\ell]} \zeta_0 >0$, and if $\mathscr{I}$ is given by \eqref{zeta0_energy}, then $\mathscr{I}(\zeta_0) \le \mathscr{I}(\psi)$ for all $\psi \in W^{1,1}((-\ell,\ell))$ such that $\int_{-\ell}^\ell \psi = M_{top}$.
\end{thm}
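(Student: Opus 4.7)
The plan is to apply the direct method of the calculus of variations to the constrained minimization problem for $\mathscr{I}$ on the admissible class $\mathcal{A} = \{\psi \in BV((-\ell,\ell)) \st \psi \ge 0,\ \int_{-\ell}^\ell \psi = M_{top}\}$, extended so that the area functional includes singular parts of $D\psi$ and the boundary traces are taken in the $BV$ sense. The key preliminary step is coercivity, which is where the Young condition \eqref{gamma_assume} plays a central role: the trace inequality gives $\abs{\psi(\pm\ell)} \le \int_{-\ell}^\ell \sqrt{1+\abs{\psi'}^2} + C(M_{top},\ell)$, so that for $\abs{\jg}/\sigma < 1$ the boundary term in \eqref{zeta0_energy} is absorbed into the area term with positive leftover, and $\mathscr{I}$ is bounded below with minimizing sequences of bounded $BV$-norm. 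Standard $BV$-compactness yields an $L^1$-limit $\zeta_0 \in BV$ preserving the mass constraint, and lower semicontinuity of the area functional together with continuity of the $L^2$ gravity term and of the boundary traces on equibounded $BV$ sequences identifies $\zeta_0$ as a minimizer.

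The main obstacle is showing that $\min_{[-\ell,\ell]} \zeta_0 > 0$ for sufficiently large $M_{top}$, because only then is the obstacle $\psi \ge 0$ inactive and the Euler-Lagrange system \eqref{zeta0_eqn} the true optimality condition. Testing with the constant profile $\psi \equiv M_{top}/(2\ell)$ gives $\mathscr{I}(\psi) = g M_{top}^2/(4\ell) + 2\sigma\ell - 2\jg M_{top}/(2\ell)$, which grows like $M_{top}^2$, whereas any competitor touching $\{\zeta = 0\}$ must drop by at least $M_{top}/(2\ell)$ somewhere, paying a surface-area cost bounded below independently of $M_{top}$ (by the isoperimetric-type bound comparing the graph length to its endpoint drop). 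Choosing $M_{min}$ so that, above it, a minimizer cannot afford this pinching cost gives strict positivity. With $\zeta_0 > 0$ established, \eqref{zeta0_eqn} follows from the unconstrained Euler-Lagrange derivation with Lagrange multiplier $P_0$, and the formula \eqref{p0_def} is obtained exactly by the integration computation already displayed in the text. Bootstrapping in \eqref{zeta0_eqn}---a quasilinear ODE with smooth, uniformly elliptic principal part on a set where $\zeta_0 \in L^\infty$---via standard elliptic ODE regularity upgrades $\zeta_0$ from $W^{1,1}$ to $C^\infty([-\ell,\ell])$.

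The remaining conclusions follow from strict convexity. The functional $\mathscr{I}$ is the sum of a strictly convex gravity term $\frac{g}{2}\abs{\zeta}^2$, a convex area term $\sigma\sqrt{1+\abs{\zeta'}^2}$, and a linear boundary term; under the affine mass constraint, the strict convexity in $\zeta$ forces uniqueness of the minimizer on $\mathcal{A}$, which simultaneously yields uniqueness of the $C^\infty$ solution to \eqref{zeta0_eqn}--\eqref{zeta0_constraint} (any such solution is a minimizer by the convex Euler-Lagrange sufficiency). For evenness, note that $\tilde{\zeta}_0(x) := \zeta_0(-x)$ lies in $\mathcal{A}$ and satisfies $\mathscr{I}(\tilde{\zeta}_0) = \mathscr{I}(\zeta_0)$, so it too is a minimizer; uniqueness then gives $\zeta_0(-x) = \zeta_0(x)$.
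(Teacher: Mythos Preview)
The paper does not prove this theorem but instead defers to Appendix E of \cite{guo_tice_QS} (itself a one-dimensional adaptation of arguments in Finn \cite{finn}); your outline follows exactly the direct-method route those references take, so the approach is the same. One caveat: your positivity argument is the weakest link as written---the sentence comparing the quadratic growth of $\mathscr{I}$ on the constant profile to a ``surface-area cost bounded below independently of $M_{top}$'' for competitors touching zero does not by itself yield the desired inequality (both sides involve $M_{top}$-dependent terms that need to be balanced carefully); the references make this step precise either by a direct ODE/comparison analysis once a bounded minimizer is in hand, or by a sharper competitor estimate, and you should do likewise.
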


Throughout the rest of the paper we make the following two crucial assumptions on the parameters. 
\begin{enumerate}
 \item We assume that $M_{top} > M_{min}$ in order to have an equilibrium $\zeta_0$ as in Theorem \ref{zeta0_wp}. 
 \item We assume that the parameter $L >0$, the height of the rectangular channel $\V_{\operatorname{top}}$, satisfies the condition $\zeta_0(\pm\ell) < L$, which means the fluid is not just about to spill over the top of the vessel.
\end{enumerate}

\subsection{Previous work and origins of the model \eqref{ns_euler} }\label{sec_discussion_model}

The contact lines (or contact points in two dimension) that form at triple junctions between three distinct phases (fluid, solid, and vapor phases in the present paper) have been a subject of intense study since the pioneering work of Young \cite{young} in 1805.  For an exhaustive overview we refer to  de Gennes \cite{degennes}.  Here we will content ourselves with a terse review. 

The story began with the study of equilibrium configurations by Young \cite{young}, Laplace \cite{laplace}, and Gauss \cite{gauss}, who discovered the underlying variational principle for $\mathscr{I}$ described above and in Theorem \ref{zeta0_wp} (though, obviously, the theorem is restated in the modern language of Sobolev spaces).  A key byproduct of this work is that the angle formed between the solid wall and the fluid (through the vapor phase), which is known as the equilibrium contact angle $\theq$ (see Figure \ref{fig_3}), is related to the free energy parameters $\gamma_{sf}$, $\gamma_{sv}$, and $\sigma$ via Young's equation
\begin{equation}\label{young_relat}
 \cos(\theq) = \frac{\gamma_{sf} - \gamma_{sv}}{\sigma} = -\frac{\jg}{\sigma}.
\end{equation}
Note that this manifests in \eqref{zeta0_eqn} through the equations for $\p_1 \zeta_0$ at the endpoints.
 
\begin{figure}[h]
        \centering
        \includegraphics[width=0.45\textwidth]{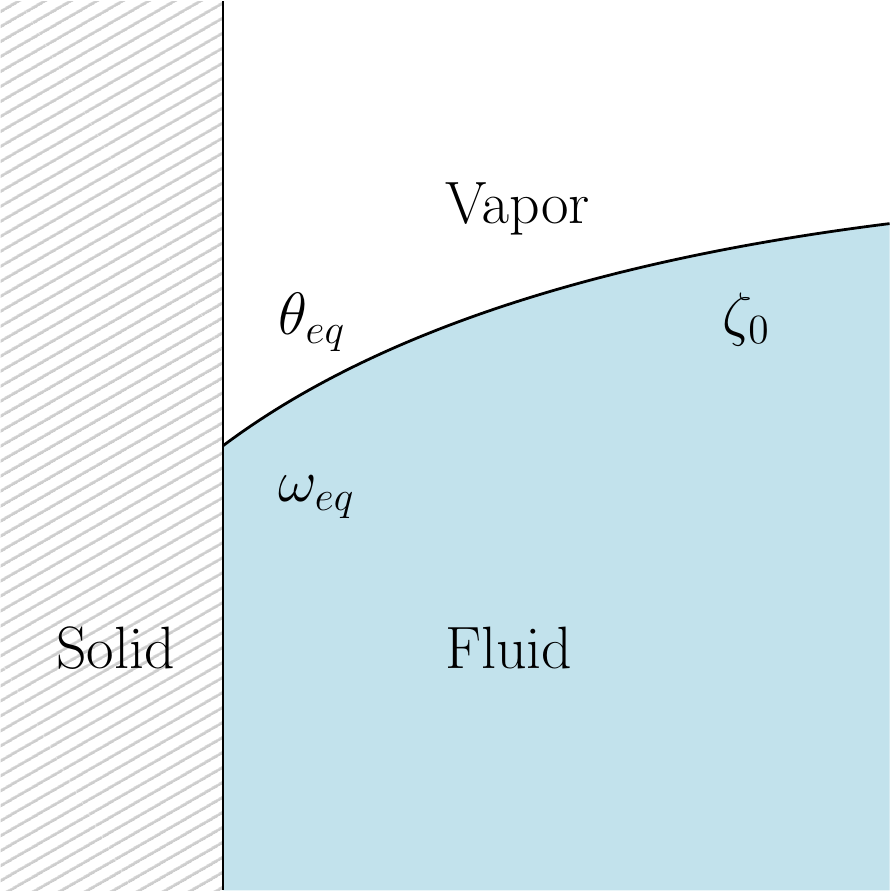} 
        \caption{Equilibrium contact angle}\label{fig_3}
\end{figure} 
 
The dynamic behavior of a contact line or point is significantly more delicate.  For instance, including a dynamic contact point in a fluid-solid-vapor model presents challenges to standard modeling assumptions made when working with viscous fluids.  Indeed, the free boundary kinematics (which may be rewritten as $\dt \zeta = u\cdot \nu\sqrt{1 + \abs{\nab \zeta}^2}$) and the typical no-slip boundary conditions for viscous fluids ($u=0$ at the fluid-solid interface) are incompatible: combining the two leads to the prediction that $\dt \zeta =0$ at the contact points, i.e. that the fluid is pinned at its initial position on the solid.  A moment's experimentation with an everyday coffee cup reveals this prediction to be nonsensical, and we are led to abandon the no-slip condition in favor of another boundary condition that allows for motion of the contact point.

The surveys of Dussan \cite{dussan} and Blake \cite{blake} provide a thorough discussion of the efforts of physicists and chemists in determining the dynamics of a contact point.  The general picture is that the dynamic contact angle, $\theta_{\operatorname{dyn}}$,  and the equilibrium angle, $\theta_{\operatorname{eq}}$, are related via 
 \begin{equation}\label{cl_motion}
  V_{cl} = F( \cos(\theta_{\operatorname{dyn}}) - \cos(\theq) ),
 \end{equation}
where $V_{cl}$ is the contact point velocity (along the solid) and $F$ is some increasing function such that $F(0)=0$.  The assumptions on $F$ enforce the experimentally observed fact that the slip of the contact line acts to restore the equilibrium angle (see Figure \ref{fig_4}).  Equations of the form \eqref{cl_motion}, but with different forms of $F$, have been derived in a number of ways.  Blake-Haynes \cite{blake_haynes} arrived at \eqref{cl_motion} through thermodynamic and molecular kinetics arguments.  Cox \cite{cox} used matched asymptotic analysis and hydrodynamic arguments.  Ren-E \cite{ren_e_deriv} derived \eqref{cl_motion} from thermodynamic principles applied to constitutive equations.  Ren-E \cite{ren_e} also performed molecular dynamics simulations and found an equation of the form \eqref{cl_motion}.  These simulations also indicated that the slip of the fluid along the solid obeys the well-known Navier-slip condition 
\begin{equation}\label{navier_slip}
u \cdot \nu =0 \text{ and }  S(P,u) \nu \cdot \tau = \beta u \cdot \tau
\end{equation}
for some parameter $\beta >0$.  The system \eqref{ns_euler} studied in the present paper synthesizes the Navier-slip boundary conditions \eqref{navier_slip} with the general form of the contact point equation \eqref{cl_motion}.  Indeed, the last equation in \eqref{ns_euler} may be rewritten as 
\begin{equation}
\sw(V_{cl}) =  \sw(\dt \zeta) =    \jg \mp \sigma \frac{\p_1 \zeta}{\sqrt{1+\abs{\p_1 \zeta}^2}}(\pm \ell,t)    =  \sigma (\cos(\theta_{\operatorname{dyn}}) - \cos(\theq) ),
\end{equation}
which is \eqref{cl_motion} with the convenient reformulation $\sw = \sigma F^{-1}$.

\begin{figure}
		\centering
		\begin{subfigure}{0.48\textwidth}
			\centering
			\includegraphics[width=0.9\textwidth]{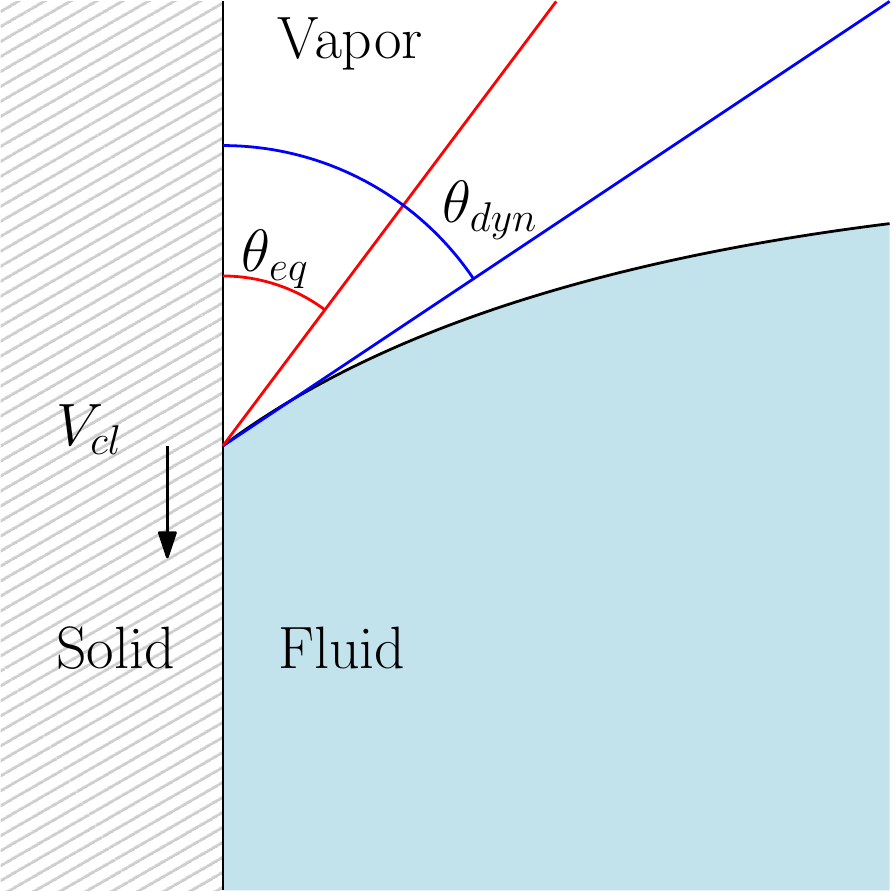}
			\caption{$\theta_{eq} < \theta_{dyn}$.} 
		\end{subfigure}
		\begin{subfigure}{0.48\textwidth}
			\centering
			\includegraphics[width=0.9\textwidth]{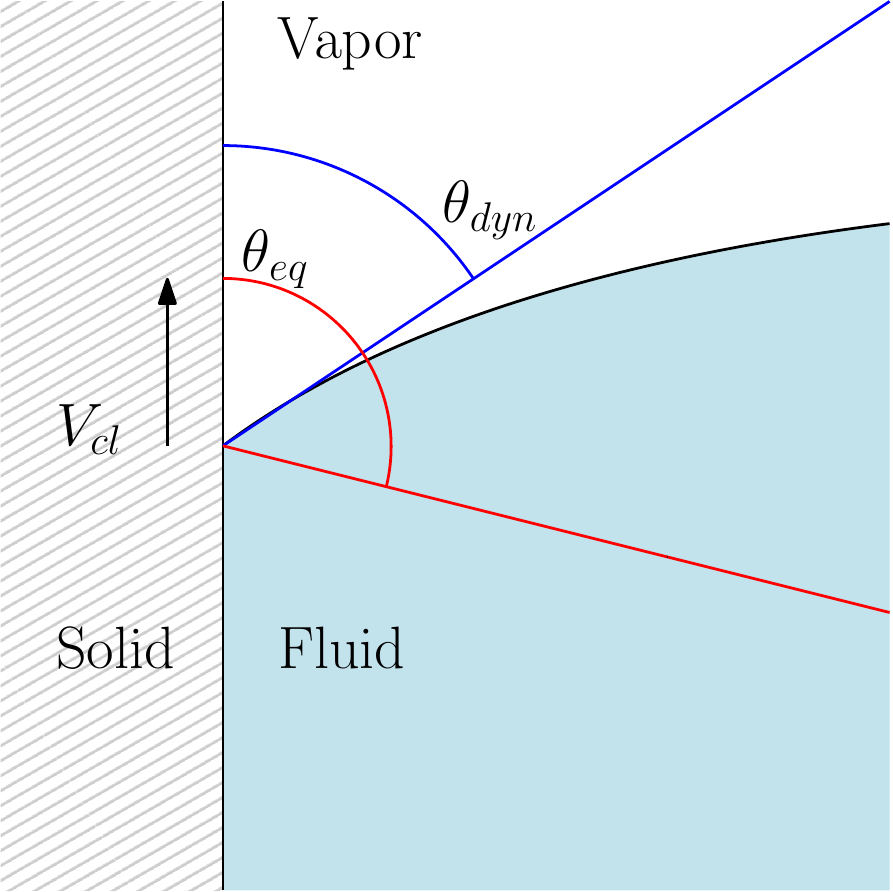}
			\caption{$\theta_{eq} > \theta_{dyn}$}
		\end{subfigure}
		\caption{
		The same dynamic fluid configuration, with the dynamic contact angle $\theta_{dyn}$ marked in blue,  but with different equilibrium contact angles $\theta_{eq}$, marked in red. In configuration (A) the condition $\theta_{eq} < \theta_{dyn}$ in \eqref{cl_motion} results in a downward pointing contact point velocity.  In configuration (B) the condition $\theta_{eq} > \theta_{dyn}$ in \eqref{cl_motion} results in an upward pointing contact point velocity.  In both cases, the resulting motion acts to return the dynamic angle to the equilibrium one.
		}
		\label{fig_4}
\end{figure}

Given the numerous derivations of \eqref{cl_motion}, we believe that its integration into the model \eqref{ns_euler} along with the Navier-slip condition yields a good general model for describing the dynamics of a viscous fluid with dynamic contact points and contact angles.  A  goal of this article is to provide further evidence for the validity of the model by proving that the equilibrium capillary surfaces are asymptotically stable, or more precisely, that sufficiently small perturbations of the equilibria give rise to global-in-time solutions that return to equilibrium exponentially fast as time diverges to infinity.  In recent previous work \cite{guo_tice_QS} we proved this in the much simpler case in which the Navier-Stokes equations in \eqref{ns_euler} were replaced by the Stokes equations, which yields a sort of quasi-static evolution.  The second author and Wu \cite{tice_wu} proved corresponding results for the Stokes droplet problem in which the vessel configuration is replaced with a droplet sitting atop a flat substrate, and with Zheng \cite{tice_zheng} established local existence results.  The Navier-Stokes problem presents numerous challenges relative to the Stokes problem, but we will delay a discussion of these to Section \ref{sec_res_disc}.

To the best of our knowledge, there are no other prior results in the literature related to models in which the full fluid mechanics are considered alongside dynamic contact points and contact angles.  However, there are results with a subset of these features.  Schweizer \cite{schweizer} studied a 2D Navier-Stokes problem with a fixed contact angle of $\pi/2$.  Bodea \cite{bodea} studied a similar problem with fixed $\pi/2$ contact angle in 3D channels with periodicity in one direction.  Kn\"upfer-Masmoudi \cite{knupfer_masmoudi} studied the dynamics of a 2D drop with fixed contact angle when the fluid is assumed to be governed by Darcy's law.  Related analysis of the fully stationary Navier-Stokes system with free, but unmoving boundary, was carried out in 2D by Solonnikov \cite{solonnikov} with contact angle fixed at $\pi$, by Jin \cite{jin} in 3D with angle $\pi/2$,  and by Socolowsky \cite{socolowsky} for 2D coating problems with fixed contact angles.  A simplified droplet model without fluid coupling was studied by Feldman-Kim \cite{feldman_kim}, who proved asymptotic stability using gradient flow techniques.  It is worth noting that much work has also been done on contact points in simplified thin-film models; we refer to the survey by Bertozzi \cite{bertozzi} for an overview.

We conclude this overview of the model with some stability heuristics.  Sufficiently regular solutions to \eqref{ns_euler} obey   the energy-dissipation equation
\begin{multline}\label{fund_en_evolve}
 \frac{d}{dt}\left( \int_{\Omega(t)} \hal \abs{u(\cdot,t)}^2 +   \mathscr{I}( \zeta(\cdot,t)) \right) \\
 + \int_{\Omega(t)} \frac{\mu}{2} \abs{\sg u(\cdot,t)}^2 + \int_{\Sigma_s(t)} \frac{\beta}{2} \abs{u(\cdot,t)  }^2 + \sum_{a=\pm 1} \dt \zeta(a \ell,t) \sw(\dt \zeta(a \ell,t))= 0,
\end{multline}
where $\mathscr{I}$ is the energy functional from \eqref{zeta0_energy}.  This identity may be derived in the usual way by dotting the first equation in \eqref{ns_euler} by $u$, integrating by parts over $\Omega(t)$, and employing the other equations.  The temporally differentiated term in parentheses is the physical energy, comprised of the fluid's kinetic energy (the first term) and the gravity-capillary potential energy (the second term).  The three remaining terms are the dissipation due to viscosity (the first term), slip along fluid-solid interface (the second), and slip along the contact point (the third).  Crucially, the assumptions on $\sw$ imply that $z \sw(z) >0$ for $z \neq 0$, which means the contact point dissipation term provides positive definite control of $\dt \zeta$ at the contact point.  Thus, the dissipation has a sign and serves to decrease the energy.  Since the equilibrium configuration $u=0$, $p=0$, $\zeta = \zeta_0$ is the unique global minimizer of the energy, \eqref{fund_en_evolve} formally suggests that global-in-time solutions will converge to the equilibrium as $t \to \infty$.  We will prove that this is indeed the case, provided that the initial data are sufficiently close to the equilibrium configuration, and we will show that such solutions must decay to equilibrium exponentially.

\subsection{Problem reformulation   }
 
In order to analyze the system \eqref{ns_euler} it is convenient to reformulate the problem in a fixed open set.  The stability heuristic given above suggests that for large time, the fluid domain should not differ much from the equilibrium domain, which suggests that we employ it as the fixed open set.  To this end we consider $\zeta_0 \in C^\infty([-\ell,\ell])$ from Theorem \ref{zeta0_wp} and define the equilibrium domain $\Omega \subset \R^2$ via 
\begin{equation}\label{Omega_def}
 \Omega := \V_{\operatorname{btm}} \cup  \{ x \in \Rn{2} \;\vert\; -\ell < x_1 < \ell \text{ and } 0 < x_2 < \zeta_0(x_1) \}.
\end{equation}
Note that $\p \Omega$ is $C^2$ away from the contact points $(\pm\ell,\zeta_0(\pm \ell))$, but that $\Omega$ has corner singularities there, so $\p \Omega$ is only Lipschitz globally.  Depending on the choice of the capillary parameters $\sigma$, $\gamma_{sv}$, and $\gamma_{sf}$, the angles formed at the contact points can take on any value between $0$ and $\pi$.  

We decompose the boundary $\p \Omega = \Sigma \sqcup \Sigma_s$, where
\begin{equation}\label{sigma_def}
 \Sigma := \{ x \in \Rn{2} \;\vert\; -\ell < x_1 < \ell \text{ and } x_2 = \zeta_0(x_1) \} \text{ and }
 \Sigma_s := \p \Omega \backslash \Sigma.
\end{equation}
The set $\Sigma$ is the equilibrium free surface, while $\Sigma_s$ denotes the ``sides'' of the equilibrium fluid configuration.  We will write $x \in \Omega$ as the spatial coordinate in the equilibrium domain.  We will write 
\begin{equation}\label{N0_def}
 \N_0 = (-\p_1 \zeta_0,1)
\end{equation}
for the non-unit normal vector field on $\Sigma$.

In our analysis we will assume that the free boundary is a small perturbation of the equilibrium interface by introducing the perturbation  $\eta:(-\ell,\ell) \times \R^{+} \to \R$ and positing that
\begin{equation}
 \zeta(x_1,t) = \zeta_0(x_1) + \eta(x_1,t).
\end{equation}
We will need to define an extension of $\eta$ that gains regularity.  To this end we first choose $E$ to be a bounded linear extension operator that maps $C^m((-\ell,\ell))$ to $C^m(\R)$ for all $0 \le m \le 5$ and  $W^{s,p}((-\ell,\ell))$ to $W^{s,p}(\R)$ for all $0 \le s \le 5$ and $1 \le p < \infty$ (such a map is readily constructed with the help of higher order reflections, Vandermonde matrices, and a cutoff function - see, for instance, Exercise 7.24 in \cite{leoni} for integer regularity, but non-integer regularity follows then by interpolating).  In turn, we define the extension of $\eta$ to be the function $\bar{\eta} : \{x \in \R^2 \st    x_2 \le  E\zeta_0(x_1)\} \times \R^+ \to \R$ given by
\begin{equation}\label{extension_def}
 \bar{\eta}(x,t) = \P E \eta(x_1,x_2 - E\zeta_0(x_1),t), 
\end{equation}
where $\mathcal{P}$ is the lower Poisson extension defined by \eqref{poisson_def}.  Note that although $\bar{\eta}(\cdot,t)$ is a priori defined in the unbounded set $\{x \in \R^2 \st    x_2 \le  E\zeta_0(x_1)\}$, in practice we will only ever use its restriction to the bounded set $\Omega \subset \{x \in \R^2 \st    x_2 \le  E\zeta_0(x_1)\}$.

Choose $\phi \in C^\infty(\R)$ such that $\phi(z) =0$ for $z \le \frac{1}{4} \min \zeta_0$ and $\phi(z) =z$ for $z \ge \hal \min \zeta_0$.  We combine $\phi$ and the extension $\bar{\eta}$ to define a map from the equilibrium domain to the moving domain $\Omega(t)$:
\begin{equation}\label{mapping_def}
 \Omega \ni x \mapsto   \left( x_1,x_2 +  \frac{\phi(x_2)}{\zeta_0(x_1)} \bar{\eta}(x,t) \right) := \Phi(x,t) = (y_1,y_2) \in \Omega(t).
\end{equation}
It is readily verified that the map $\Phi$ satisfies the following properties:
\begin{enumerate}
 \item $\Phi(x_1,\zeta_0(x_1),t) = (x_1, \zeta_0(x_1) + \eta(x_1,t)) = (x_1,\zeta(x_1,t))$, and hence $\Phi(\Sigma,t) = \Sigma(t)$,
 \item $\Phi(x,t) =x$ for $x \in \V_{\operatorname{btm}}$, i.e. the map is the identity in the bottom portion of the vessel and thus only distorts the upper rectangular channel $\V_{\operatorname{top}}$,
 \item $\Phi(\pm \ell, x_2,t) = (\pm \ell, x_2+ \phi(x_2)\bar{\eta}(\pm\ell ,x_2)/\zeta_0(\pm \ell))$, and hence $\Phi(\Sigma_s \cap \{x_1 = \pm \ell, x_2 \ge 0\},t) = \Sigma_s(t) \cap \{y_1 = \pm \ell, y_2 \ge 0\}$. 
\end{enumerate}
Moreover, if $\eta$ is sufficiently small (in an appropriate Sobolev space), then the mapping $\Phi$ will be a $C^1$ diffeomorphism of $\Omega$ onto $\Omega(t)$ that maps the components of $\p \Omega$ to the corresponding components of $\p \Omega(t)$.  

We will use $\Phi$ to reformulate \eqref{ns_euler} in $\Omega$, but first it is convenient to introduce some notation.  We write
\begin{equation}\label{A_def}
 \nab \Phi = 
\begin{pmatrix}
 1 & 0  \\
 A & J 
\end{pmatrix}
\text{ and }
 \mathcal{A} := (\nab \Phi^{-1})^T = 
\begin{pmatrix}
 1 & -A K \\
 0 & K
\end{pmatrix}
\end{equation}
for 
\begin{equation}\label{AJK_def}
 W = \frac{\phi}{\zeta_0}, \quad
 A = W \p_1 \bar{\eta} - \frac{W}{\zeta_0} \p_1 \zeta_0 \bar{\eta}, \quad 
 J = 1 + W \p_2 \bar{\eta} + \frac{\phi' \bar{\eta}}{\zeta_0}, \quad
 K = J^{-1}.
\end{equation}
Note that the Jacobian of our coordinate transformation is exactly $J = \det{\nab \Phi}$.

Provided that $\Phi$ is a diffeomorphism (which will always be satisfied in our analysis), we can reformulate \eqref{ns_euler} by using $\Phi$ to change coordinate systems.  This results in a PDE system that has the benefit of being posed in a fixed set but the downside of being significantly more nonlinear.  In the new system the PDE becomes 
\begin{equation}\label{ns_flattened}
\begin{cases}
\dt u -\dt \bar{\eta} W K \p_2 u + u \cdot \naba u  + \diva S_{\A}(P,u) =0 & \text{in }\Omega \\
\diva u = 0 &\text{in } \Omega \\
S_{\A}(P,u)\N = \left(g \zeta - \sigma \H(\zeta)    \right) \N &\text{on }\Sigma \\
\dt \eta = u \cdot \N &\text{on }\Sigma \\
(S_{\A}(P,u) \cdot \nu - \beta u)\cdot \tau = 0 &\text{on }\Sigma_s \\
u \cdot \nu =0 & \text{on }\Sigma_s \\
\sw(\dt \eta(\pm \ell,t)) = \jg \mp \sigma \frac{\p_1 \zeta}{\sqrt{1+\abs{\p_1 \zeta}^2}}(\pm \ell,t),
\end{cases}
\end{equation}
where $\zeta = \zeta_0 + \eta$ and 
\begin{equation}\label{N_def}
 \N = (-\p_1 \zeta,1) = \N_0 - (\p_1 \eta,0)
\end{equation}
is the non-unit normal to the moving free boundary.  Here we have written the differential operators $\naba$, $\diva$, and $\da$ with their actions given by $(\naba f)_i := \A_{ij} \p_j f$, $\diva X := \A_{ij}\p_j X_i$, and $\da f = \diva \naba f$ for appropriate $f$ and $X$.  The vector field $u\cdot \naba u$ has components $(u \cdot \naba u)_i := u_j \A_{jk} \p_k u_i$.  We also  write $\Sa(P,u)  = P I  - \mu \sg_{\A} u$ for the stress tensor, where $I$ the $2 \times 2$ identity and $(\sg_{\A} u)_{ij} = \A_{ik} \p_k u_j + \A_{jk} \p_k u_i$ is the symmetric $\A-$gradient.  Note that if we extend $\diva$ to act on symmetric tensors in the natural way, then $\diva \Sa(P,u) = \naba P - \mu \da u$ for vector fields satisfying $\diva u=0$.  

Now that we have reformulated our PDE system in a fixed domain, it is convenient to make a final modification by rewriting \eqref{ns_flattened} as a perturbation of the equilibrium configuration.  In other words, we posit that the solution has the special form  $u =0 + u$, $P = P_0 + p$, $\zeta = \zeta_0 + \eta$ for new unknowns $(u,p,\eta)$.  In order to record the perturbed equations, we first need to introduce some notation.

To begin, we use a Taylor expansion in $z$ to write
\begin{equation}
 \frac{y+z}{(1+\abs{y+z}^2)^{1/2}} = \frac{y}{(1+\abs{y}^2)^{1/2}} + \frac{z}{(1+\abs{y}^2)^{3/2}} +   \mathcal{R}(y,z),
\end{equation}
where $\mathcal{R} \in C^\infty(\R^{2})$ is given by
\begin{equation}\label{R_def}
 \mathcal{R}(y,z) =  \int_0^z 3 \frac{(s-z)(s+y)}{(1+ \abs{y+s}^2)^{5/2}}  ds.
\end{equation}
By construction,
\begin{equation}
 \frac{\p_1 \zeta}{(1+\abs{\p_1 \zeta}^2)^{1/2}} = \frac{\p_1 \zeta_0}{(1+\abs{\p_1 \zeta_0}^2)^{1/2}} + \frac{\p_1 \eta}{(1+\abs{\p_1 \zeta_0}^2)^{3/2}} +   \mathcal{R}(\p_1 \zeta_0,\p_1 \eta),
\end{equation}
which then allows us to use \eqref{zeta0_eqn} to compute
\begin{multline}\label{pert_comp_1}
 g \zeta - \sigma \H(\zeta) = \left( g \zeta_0 - \sigma \H(\zeta_0)\right) 
+ g \eta - \sigma \p_1 \left(\frac{\p_1 \eta}{(1+\abs{\p_1 \zeta_0}^2)^{3/2}}\right) 
-\sigma \p_1 \left(  \mathcal{R}(\p_1 \zeta_0,\p_1 \eta) \right) \\
= P_0  + g \eta - \sigma \p_1 \left(\frac{\p_1 \eta}{(1+\abs{\p_1 \zeta_0}^2)^{3/2}}\right) 
-\sigma \p_1 \left(  \mathcal{R}(\p_1 \zeta_0,\p_1 \eta) \right)
\end{multline}
and 
\begin{multline}
 \jg \mp \sigma \frac{\p_1 \zeta}{\sqrt{1+\abs{\p_1 \zeta}^2}}(\pm \ell,t) =
\jg \mp \frac{\sigma \p_1 \zeta_0}{(1+\abs{\p_1 \zeta_0}^2)^{1/2}}(\pm \ell) \mp \frac{\sigma \p_1 \eta}{(1+\abs{\p_1 \zeta_0}^2)^{3/2}}(\pm \ell,t) \\
\mp   \sigma \mathcal{R}(\p_1 \zeta_0,\p_1 \eta)(\pm \ell,t) 
=  \mp \frac{\sigma \p_1 \eta}{(1+\abs{\p_1 \zeta_0}^2)^{3/2}}(\pm \ell,t) \mp \sigma  \mathcal{R}(\p_1 \zeta_0,\p_1 \eta)(\pm \ell,t).
\end{multline}
Next, we compute 
\begin{multline}\label{pert_comp_2}
\diva \Sa(P,u) = \diva \Sa(p,u) \text{ in } \Omega, \; \Sa(P,u) \N = \Sa(p,u) \N + P_0 \N \text{ on }\Sigma, \\ 
\text{and } \Sa(P,u)\nu \cdot \tau = \Sa(p,u)\nu \cdot \tau \text{ on }\Sigma_s. 
\end{multline}
Finally,  we expand the velocity response function inverse $\sw \in C^2(\R)$.  Since $\sw$ is increasing, we may set 
\begin{equation}\label{linz_def}
 \linz = \sw'(0) >0. 
\end{equation}
We then define the perturbation $\swh \in C^2(\Rn{})$ as 
\begin{equation}\label{V_pert}
 \swh(z) = \frac{1}{\linz} \sw(z) - z.
\end{equation}

We now insert the expansions \eqref{pert_comp_1}--\eqref{pert_comp_2} and \eqref{V_pert} into \eqref{ns_flattened}.  This yields the following equivalent PDE system for the perturbed unknown s$(u,p,\eta)$:
\begin{equation}\label{ns_geometric}
\begin{cases}
\dt u -\dt \bar{\eta} W K \p_2 u + u \cdot \naba u  + \diva S_{\A}(p,u) =0 & \text{in }\Omega \\
\diva u = 0 &\text{in } \Omega \\
S_{\A}(p,u)\N = \left[g \eta - \sigma \p_1\left( \frac{\p_1 \eta}{(1+\abs{\p_1 \zeta_0}^2)^{3/2}}  + \mathcal{R}(\p_1\zeta_0,\p_1\eta) \right)   \right] \N &\text{on }\Sigma \\
\dt \eta = u \cdot \N &\text{on }\Sigma \\
(S_{\A}(p,u) \cdot \nu - \beta u)\cdot \tau = 0 &\text{on }\Sigma_s \\
u \cdot \nu =0 & \text{on }\Sigma_s \\
\linz \dt \eta(\pm \ell,t) + \linz \swh( \eta(\pm \ell,t) ) = \mp \sigma\left( \frac{\p_1 \eta}{(1+\abs{\p_1 \zeta_0}^2)^{3/2}}  + \mathcal{R}(\p_1\zeta_0,\p_1\eta)\right)(\pm \ell,t).
\end{cases}
\end{equation}
It is in this form that we will study the problem.  Throughout the paper we assume that $M_{top} > M_{min}$ is specified as in the discussion after Theorem \ref{zeta0_wp}.  For the data to \eqref{ns_geometric} we then have: 
\begin{enumerate}
 \item the initial free surface $\eta_0$, which we assume satisfies 
\begin{equation}\label{avg_zero_t=0}
 \int_{-\ell}^\ell \eta_0 =0
\end{equation}
so that the equilibrium mass, $M_{top}$, matches the initial mass, i.e.
\begin{equation}
 \int_{-\ell}^\ell (\zeta_0 + \eta_0) = M_{top},
\end{equation}
\item the initial velocity $u_0 : \Omega \to \R^2$, which we assume satisfies $\diverge_{\A_0} u_0 =0$ as well as the boundary conditions $u_0 \cdot \nu =0$ on $\Sigma_s$.
\end{enumerate}

\section{Main results and discussion }\label{sec_main}

\subsection{Energy and dissipation functionals and other notation}

In order to state our main result, we must first introduce some notation.  

\emph{Equilibrium angles and regularity parameters:} We begin by introducing the supplementary equilibrium contact angle 
\begin{equation}\label{omega_eq_def}
\omeq = \pi -\theq \in (0,\pi), 
\end{equation}
which is useful as it determines the angles created at the contact points in the fluid domain at equilibrium (see Figure \ref{fig_3}).  This angle, which can take on any value between $0$ and $\pi$ depending on the choice of the capillary parameters $\sigma$, $\gamma_{sv}$, and $\gamma_{sf}$, plays an important role in the elliptic regularity theory associated to $\Omega$, as it determines the possible regularity gain.  For the Stokes problem with boundary conditions related to those we use in \eqref{ns_geometric}, the regularity is related to the following parameter, computed by Orlt-S\"{a}ndig \cite{orlt_sandig}: 
\begin{equation}\label{epm_def}
 \epm =  \epm(\omeq) = \min\{1,-1+\pi/\omeq\} \in (0,1].
\end{equation}
For a parameter $0 < \ep \le \epm < 1$ we set 
\begin{equation}\label{qalpha_def}
 q_\ep = \frac{2}{2-\ep} \in (1,2).
\end{equation}
Note that $0 < \ep_- < \ep_+ < \epm(\omega)$ implies that 
\begin{equation}
 q_{\ep_-} < q_{\ep_+} < q_{\epm}.
\end{equation}
Then \cite{orlt_sandig} shows that the regularity available for the velocity in the associated Stokes problem cannot reach $W^{2,q_{\epm}}$.
 
With $\epm$ in hand, we fix three parameters $\low, \ep_-,\ep_+$ such that 
\begin{equation}\label{kappa_ep_def}
 0 <  \low < \ep_- < \ep_+ < \epm, \;  
\low < \min\left\{ \frac{1-\ep_+}{2}, \frac{\ep_+ - \ep_-}{2}  \right\}, 
  \text{ and } \ep_+ \le \frac{\ep_- + 1}{2}.
\end{equation}
For brevity we also write 
\begin{equation}\label{qpm_def}
 q_+ = q_{\ep_+} \text{ and }  q_- = q_{\ep_-} 
\end{equation}
in the notation established in \eqref{qalpha_def}.  We will crucially use these parameters to track regularity in this paper.

\emph{Norms:}  We write $W^{s,p}(\Gamma;\R^k)$ for $\Gamma \in \{\Omega,\Sigma,\Sigma_s\}$, $0 \le s \in \R$, $1 \le p \le \infty$, and $1 \le k \in \mathbb{N}$ for the usual Sobolev spaces of $\R^k-$valued functions on these sets. In particular, $W^{0,p}(\Gamma;\R^k) = L^p(\Gamma;\R^k)$. When $k=1$ we typically write $W^{s,p}(\Gamma) = W^{s,p}(\Gamma;\R)$. When $p=2$ we write $H^s(\Gamma;\R^k) = W^{s,2}(\Gamma;\R^k)$.  For the sake of brevity, we typically write our norms as $\norm{\cdot}_{W^{s,p}}$, suppressing the domain $\Gamma$ and the codomain $\R^k$.  We employ this notation whenever it is clear from the context what the set and codomain are; in situations where there is ambiguity (typically due to the evaluation of bulk-defined functions on $\Sigma$ or $\Sigma_s$ via trace operators) we will include the domain in the norm notation.  Next we define a useful pairing for the contact points that gives a contact point norm: we set 
\begin{equation}\label{bndry_pairing}
 [f,g]_\ell =  f(-\ell)g(-\ell) + f(\ell) g(\ell)    \text{ and } [f]_\ell = \sqrt{[f,f]_\ell}.
\end{equation}

\emph{Energy and dissipation functionals:} We define the following energy and dissipation functionals.  For $0 \le k \le 2$ we define the natural energy and dissipation via 
\begin{equation}\label{ED_natural}
\E_{\shortparallel,k} = \ns{\dt^k u}_{L^2} + \ns{\dt^k \eta}_{H^1} \text{ and } \D_{\shortparallel,k} = \ns{ \dt^k u}_{H^1} + \ns{\dt^k u}_{L^2(\Sigma_s)} + [\dt^{k+1} \eta]_\ell^2.
\end{equation}
We also set
\begin{equation}\label{ED_parallel}
 \seb = \sum_{k=0}^2 \E_{\shortparallel,k} \text{ and } \sdb=\sum_{k=0}^2 \D_{\shortparallel,k}.
\end{equation}
Then the full energy is
\begin{multline}\label{E_def}
 \E = \seb +   \ns{u}_{W^{2,q_+}} + \ns{\dt u}_{H^{1 + \ep_-/2}} + \ns{\dt^2 u}_{H^0} + \ns{p}_{W^{1,q_+}} + \ns{\dt p}_{L^2} \\
 + \ns{\eta}_{W^{3-1/q_+,q_+}} + \ns{\dt \eta}_{H^{3/2 + (\ep_- - \low)/2 }} + \ns{\dt^2 \eta}_{H^1},
\end{multline}
and the full dissipation is
\begin{multline}\label{D_def}
 \D = \sdb + \ns{u}_{W^{2,q_+}} + \ns{\dt u}_{W^{2,q_-}}   + \sum_{k=0}^2 [\dt^k u \cdot \N]_\ell^2 + \ns{p}_{W^{1,q_+}} + \ns{\dt p }_{W^{1,q_-}}  \\
 + \sum_{k=0}^2 \ns{\dt^k \eta}_{H^{3/2-\low}} + \ns{\eta}_{W^{3-1/q_+,q_+}} + \ns{\dt \eta}_{W^{3-1/q_-,q_-}}  + \ns{\dt^3 \eta}_{H^{1/2-\low}}   .
\end{multline}

\emph{Universal constants and Einstein summation:}   A generic constant $C>0$ will be called universal if it depends on $\Omega$, the dimension, or any of the parameters of the problem, but not on the solution or the initial data. In the usual manner, we allow the value of these constants to change from one estimate to the next. We employ the notation $a \ls b$ to mean that $a \le C b$ for a universal constant $C>0$, and we write $a \asymp b$ to mean that $a \ls b \ls a$.  From time to time we will use the Einstein convention of implicitly summing over repeated indices in vector and tensor expressions. 

\subsection{Main results and discussion  }\label{sec_res_disc}

Our main result is an a priori estimate for solutions to \eqref{ns_geometric} that shows that if solutions exist in a time horizon $[0,T)$ and have sufficiently small energy, then in fact the dissipation is integrable on $[0,T)$ and the energy decays exponentially.  Moreover, we have quantitative estimates in terms of the data.

\begin{thm}\label{main_apriori}
Let $\omeq \in (0,\pi)$ be given by \eqref{omega_eq_def}, $0 < \epm \le 1$ be given by \eqref{epm_def}, and suppose that $\low$, $\ep_-$, and $\ep_+$ satisfy \eqref{kappa_ep_def}.  Suppose that $\E$ and $\D$ are defined with these parameters via \eqref{E_def} and \eqref{D_def}, respectively.  Then there exists a universal constant $0 < \delta_0 <1$ such that if a solution to \eqref{ns_geometric} exists on the time horizon $[0,T)$ for $0 < T \le \infty$ and obeys the estimate 
\begin{equation}
 \sup_{0 \le t < T } \E(t) \le \delta_0, 
\end{equation}
then there exist universal constants $C,\lambda >0$ such that 
\begin{equation}
 \sup_{0 \le t <T} e^{\lambda t} \E(t) + \int_0^T \D(t) dt \le C \E(0).
\end{equation}
\end{thm}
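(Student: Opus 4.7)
My plan is an a priori scheme built on the physical energy-dissipation identity \eqref{fund_en_evolve} and its $\dt$- and $\dt^2$-differentiated analogues, combined with the corner-elliptic regularity theory of Orlt--S\"andig type to upgrade horizontal (temporal) control into the full spatial regularity prescribed by $\E$ and $\D$, and closed by a coercivity argument comparing $\E$ with $\D$. For the horizontal step, testing the momentum equation in \eqref{ns_geometric} by $u$, integrating over $\Omega$, and exploiting $\diva u = 0$, the dynamic boundary condition on $\Sigma$, the Navier-slip condition on $\Sigma_s$, and the contact-point response (whose leading part is $\linz \dt\eta$ at $\pm \ell$), I would derive a perturbed analogue of \eqref{fund_en_evolve}:
\[
\frac{d}{dt} E_0 + D_0 = N_0,
\]
with $E_0 \asymp \E_{\shortparallel,0}$ accounting for kinetic energy and the quadratic part of $\mathscr{I}(\zeta_0+\eta)-\mathscr{I}(\zeta_0)$, with $D_0$ providing control of $\ns{u}_{H^1} + \ns{u}_{L^2(\Sigma_s)} + \bs{\dt\eta}$, and with $N_0$ a cubic remainder collecting the convection $u\cdot\naba u$, the transport term $\dt\bar\eta W K \p_2 u$, the curvature remainder $\mathcal{R}(\p_1 \zeta_0,\p_1\eta)$, and the nonlinear part $\swh(\dt \eta)$; each contribution to $N_0$ is bounded by $\sqrt{\E}\,\D$ via Sobolev trace and product estimates. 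Applying $\dt$ and $\dt^2$ to the system and repeating, with the extra commutator remainders of $\dt$ with the geometric coefficients $W,A,J,K$ and with the drift $-\dt\bar\eta WK\p_2$ also absorbed by $\sqrt{\E}\,\D$, yields the horizontal bound $\tfrac{d}{dt}\seb + \sdb \ls \sqrt{\E}\,\D$.

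Next, at each fixed time I would recast \eqref{ns_geometric} as a stationary Stokes problem on the polygonal domain $\Omega$ with the mixed dynamic, Navier and contact-point boundary conditions, source $-\dt u + \dt\bar\eta WK\p_2 u - u\cdot\naba u$, and nonlinear boundary data. The regularity theory of \cite{orlt_sandig} for this system provides a threshold $W^{2,q}$ strictly below $q_{\epm}$; the parameters \eqref{kappa_ep_def} with $q_\pm = q_{\ep_\pm}$ are calibrated so that the theory at level $q_+$ produces the $\E$-summands $\ns{u}_{W^{2,q_+}}+\ns{p}_{W^{1,q_+}}+\ns{\eta}_{W^{3-1/q_+,q_+}}$, and, after applying it to $\dt u$ (with source $-\dt^2 u + \ldots$), the $\D$-summands $\ns{\dt u}_{W^{2,q_-}}+\ns{\dt p}_{W^{1,q_-}}+\ns{\dt\eta}_{W^{3-1/q_-,q_-}}$. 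Interpolating these $L^{q_\pm}$-based estimates against the $L^2$-based horizontal control from the first step via Gagliardo--Nirenberg on $\Omega$, $\Sigma$, and $\Sigma_s$ fills in the half-integer Hilbert-scale terms $\ns{\dt u}_{H^{1+\ep_-/2}}$, $\ns{\dt\eta}_{H^{3/2+(\ep_--\low)/2}}$, and $\ns{\dt^k \eta}_{H^{3/2-\low}}$, with the constraints on $\low,\ep_-,\ep_+$ in \eqref{kappa_ep_def} chosen exactly so that the interpolation exponents are admissible. Bounding all nonlinear right-hand sides by $\sqrt{\E}\,\D$ and absorbing back into the left yields a master inequality
\[
\frac{d}{dt}\widetilde{\E} + \D \ls \sqrt{\E}\,\D, \qquad \widetilde{\E} \asymp \E.
\]

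For the closure, the hypothesis $\sup_{[0,T)}\E \le \delta_0$ absorbs $\sqrt{\E}\,\D$ to yield $\tfrac{d}{dt}\widetilde{\E}+\tfrac12 \D \le 0$. I would then establish a coercive estimate $\widetilde{\E} \ls \D$ from three ingredients: (i) a Korn-type inequality adapted to the mixed Navier/free-surface conditions on $\Omega$, producing $\ns{u}_{H^1} \ls \ns{\sg u}_{L^2} + \ns{u}_{L^2(\Sigma_s)}$; (ii) strict convexity at the minimizer $\zeta_0$ of the capillary-gravity functional $\mathscr{I}$ from Theorem \ref{zeta0_wp}, which together with the mean-preservation \eqref{avg_zero_t=0}--\eqref{avg_prop} controls $\ns{\eta}_{H^1}$ in terms of the quadratic capillary form, itself controlled via the dynamic boundary condition; (iii) the contact-point dissipation $\bs{\dt\eta}$ from the contact-point response, used together with (ii) to upgrade control of $\eta$ and its time derivatives, while the elliptic gains of the second step provide the remaining Sobolev summands. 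Combining these yields $\widetilde{\E} \ls \D$, so that $\tfrac{d}{dt}\widetilde{\E} + \lambda \widetilde{\E} \le 0$; Gronwall's lemma then produces $\sup_{0 \le t < T} e^{\lambda t} \E(t) \ls \E(0)$, and integrating the master inequality in time delivers $\int_0^T \D \ls \E(0)$.

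The main obstacle is the compatibility between the horizontal and elliptic steps in the presence of the corner singularities of $\Omega$. Standard Hilbert-scale bootstrapping fails here because \cite{orlt_sandig} precludes reaching $W^{2,q_{\epm}}$ for the Stokes velocity; the whole scheme must therefore be carried out in a mixed $L^2$/$L^{q_\pm}$ framework in which each temporal derivative is exchanged for spatial regularity only up to the $q_{\epm}$ threshold, with the half-integer Sobolev indices built from $\low,\ep_\pm$ tracking precisely what remains. Producing estimates for the nonlinearities --- in particular for $\mathcal{R}(\p_1\zeta_0,\p_1\eta)$, the Jacobian coefficients $A,J,K$, and the commutators of $\dt$ with $\A$ --- that all fit within this restricted regularity budget and close by $\sqrt{\E}\,\D$, subject to the exponent constraints \eqref{kappa_ep_def}, constitutes the bulk of the technical work.
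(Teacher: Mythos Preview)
Your outline captures the overall architecture correctly, but there is a genuine circularity at the interface between your horizontal and elliptic steps, and it is precisely the obstacle the paper singles out as the central difficulty.

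To apply the corner Stokes regularity theory to the once-differentiated problem $(\dt u,\dt p,\dt\eta)$ at level $q_-$, the boundary datum on $\Sigma$ is $(\dt^2\eta - F^6)/\abs{\N_0}$, and the theory requires this in $W^{2-1/q_-,q_-}(\Sigma)\supset H^{3/2-\low}$ (see Proposition~\ref{ne1_g3}). After your horizontal step you only have $\dt^2\eta\in H^1$, and at the $\dt^2$ level there is nothing higher to interpolate \emph{against}: you cannot use $\dt\eta\in W^{3-1/q_-,q_-}$ because that very estimate carries $\norm{\dt^2\eta}_{H^{3/2-\low}}$ on its right-hand side. Your Gagliardo--Nirenberg step is therefore circular. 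The paper breaks this loop by a separate mechanism (Sections~\ref{sec_fnal_calc}--\ref{sec_enhancements}): it develops the functional calculus of the gravity-capillary operator $\K$ from \eqref{cap_op_def}, builds finite-rank approximations $D_j^s$ of $\K^{s/2}$, and tests the $\dt^k$-differentiated weak formulation against $w=M\nab\psi$ with $\psi$ solving a Neumann problem with data $D_j^s\dt^k\eta/\abs{\N_0}$. This yields $\norm{\dt^k\eta}_{H^{3/2-\low}}$ directly in dissipation form (Theorems~\ref{diss_enhance_eta} and~\ref{diss_enhance_dtketa}), \emph{prior} to any elliptic gain, and is what kick-starts the elliptic chain in Step~5 of the proof.

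There is a second gap in your horizontal step at $k=2$. Testing the twice-differentiated momentum equation by $\dt^2 u$ pairs the pressure against $\diva\dt^2 u = F^2\neq 0$, producing $\int_\Omega \dt^2 p\, J F^2$. Since $\dt^2 p$ appears in neither $\E$ nor $\D$, this term cannot be bounded by $\sqrt{\E}\,\D$ as you claim. The paper (Proposition~\ref{omega_construction} and Step~3 of the proof) corrects the test function by $\omega\in H^1_0$ built via the Bogovskii operator so that $J\diva\omega = JF^2 - \br{JF^2}_\Omega$; the surviving spatial-average term is then integrated by parts in time, trading $\dt^2 p$ for $\dt p$, which \emph{is} in $\E$. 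A related issue is your omission of how $\norm{\dt p}_{L^2}$ enters the energy: this is not produced by the horizontal identities and requires its own test-function argument (Theorem~\ref{en_enhance_dtp}).
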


The a priori estimates of this theorem may be coupled to a local existence theory that verifies the small energy condition is satisfied, provided the data are small enough and all necessary compatibility conditions are satisfied.  To keep the present paper of reasonable length, we will neglect to develop this local existence theory here.  Such a theory can be developed on the basis of the a priori estimates proved here in the same way that \cite{tice_zheng} develops the local theory for the Stokes version of \eqref{ns_geometric} based on the a priori estimates for the Stokes system that we developed in \cite{guo_tice_QS}.  Upon combining the local theory with our a priori estimates, we may deduce the existence of global-in-time decaying solutions.  This provides further evidence that the contact dynamics relation \eqref{cl_motion} together with the Navier-slip boundary conditions yield a good model of contact points in fluids.

\begin{thm}\label{main_gwp_dec}
Let $\omeq \in (0,\pi)$ be given by \eqref{omega_eq_def}, $0 < \epm \le 1$ be given by \eqref{epm_def}, and suppose that $\low$, $\ep_-$, and $\ep_+$ satisfy \eqref{kappa_ep_def}.  Suppose that $\E$ and $\D$ are defined with these parameters via \eqref{E_def} and \eqref{D_def}, respectively.    There exists a universal constant $0 < \delta_1 <1$ such that if $\E(0) \le \delta_1$, then there exists a unique global solution triple $(u,p,\eta)$ to \eqref{ns_geometric} on the time horizon $[0,\infty)$ such that 
\begin{equation}
 \sup_{0 \le t <\infty} e^{\lambda t} \E(t) + \int_0^\infty \D(t) dt \le C \E(0),
\end{equation}
where $C,\lambda >0$ are universal constants.
\end{thm}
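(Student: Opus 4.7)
The plan is to combine the a priori estimates of Theorem \ref{main_apriori} with a local well-posedness theorem and a standard continuation-by-contradiction argument. First, I would invoke a local existence and uniqueness result: given initial data $(u_0,\eta_0)$ satisfying the solenoidal condition, the boundary conditions stated after \eqref{ns_geometric}, and the higher-order compatibility conditions at $t=0$ needed for $\E(0)$ to be finite, there exist a time $T_0 = T_0(\E(0)) > 0$, nonincreasing in its argument, and a unique solution $(u,p,\eta)$ to \eqref{ns_geometric} on $[0,T_0]$ satisfying the bound $\sup_{[0,T_0]} \E(t) \le C_{\mathrm{loc}} \E(0)$ and depending continuously on $t$ in the energy. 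As remarked in the excerpt, this local theory can be developed in the same manner that \cite{tice_zheng} develops the local theory for the Stokes analog using the Stokes a priori estimates of \cite{guo_tice_QS}: one linearizes \eqref{ns_geometric} about the equilibrium, iterates the resulting linear problem using our a priori estimates as the closing mechanism, and derives uniqueness by estimating the energy of the difference of two solutions.

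With local existence in hand, choose $\delta_1 > 0$ small enough that $C \delta_1 \le \delta_0/2$, where $\delta_0$ and $C$ are the universal constants from Theorem \ref{main_apriori}. Define
\begin{equation*}
T^* = \sup \bigl\{ T > 0 : \text{the solution exists on } [0,T) \text{ and satisfies } \sup_{0 \le t < T} \E(t) \le \delta_0 \bigr\}.
\end{equation*}
Local existence from $\E(0) \le \delta_1 \le \delta_0$ guarantees $T^* > 0$. Suppose for contradiction that $T^* < \infty$. Since $\sup_{[0,T^*)} \E(t) \le \delta_0$, Theorem \ref{main_apriori} applies on $[0,T^*)$ and improves the bound to $\sup_{[0,T^*)} \E(t) \le C \E(0) \le \delta_0/2$. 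Picking $t_\star \in [0,T^*)$ with $\E(t_\star) \le \delta_0/2$ and applying local existence at $t_\star$ yields an extension of the solution for time $T_0(\delta_0/2)$ independent of $t_\star$; taking $t_\star$ sufficiently close to $T^*$, this produces a solution on some $[0,T^* + \tau)$ with $\tau>0$. By continuity of $\E$ in $t$ one may shrink $\tau$ if needed so that $\E(t) \le \delta_0$ on $[0,T^* + \tau)$. Applying Theorem \ref{main_apriori} once more on this larger interval yields $\E \le \delta_0/2$ there, contradicting the maximality of $T^*$. Hence $T^* = \infty$, the solution is global and unique, and a final application of Theorem \ref{main_apriori} delivers the claimed exponential decay and integrated dissipation estimate.

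The main obstacle is the local existence theory itself, which is genuinely nontrivial: it requires constructing approximate solutions respecting the nonlinear contact point ODE in the last equation of \eqref{ns_geometric} and the Navier-slip boundary conditions in a domain with corners of opening $\omeq$; handling the low regularity dictated by the parameters $q_\pm$ coming from the Orlt--S\"andig analysis \cite{orlt_sandig}; and verifying that the formally prescribed initial time derivatives $\dt^k u(0)$ and $\dt^k \eta(0)$ (obtained by differentiating \eqref{ns_geometric} in time and restricting to $t=0$) are compatible with the given $(u_0,\eta_0)$ in the appropriate Sobolev spaces so that $\E(0)$ is finite. Once the local theory is in place, the continuation argument above is routine; the substantive mathematical content of the theorem is contained entirely in Theorem \ref{main_apriori}.
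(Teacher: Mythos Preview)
Your proposal is correct and matches the paper's approach exactly: the paper does not give a detailed proof of this theorem but explicitly states (in the paragraph preceding its statement) that one couples the a priori estimates of Theorem \ref{main_apriori} with a local existence theory developed as in \cite{tice_zheng}, and then deduces global existence and decay. Your continuation-by-contradiction argument is precisely the standard way to execute this coupling, and your remarks on the obstacles in the local theory accurately reflect what the paper defers.
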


In \cite{guo_tice_QS} we proved analogous results for the Stokes version of \eqref{ns_euler} (the terms $\dt u + u \cdot \nab u$ in the first equation are neglected), so it is prudent to begin the discussion of our current results by comparing and contrasting the Stokes and Navier-Stokes problems and the difficulties they present.  For both problems, an examination of the control provided by the basic energy-dissipation relation \eqref{fund_en_evolve} (the kinetic energy term in the energy is removed for the Stokes problem) reveals that neither the energy nor the dissipation provide enough control to close a scheme of a priori estimates.  As such, we are forced to analyze solutions in a higher regularity context, and it is here that it becomes clear that the geometry of the fluid domain is the central difficulty.  Indeed, the first issue it causes is that even after reformulation in a fixed domain as in \eqref{ns_geometric}, the only differential operators compatible with the domain are time derivatives.  We then need a strategy for bootstrapping from energy-dissipation control of the time derivatives to higher spatial regularity via elliptic estimates.  

It is at this point where we encounter the fundamental difficulty in analyzing the contact point problem.  Both the moving domain $\Omega(t)$ and the equilibrium domain $\Omega$ have corners at the contact points, and thus the boundary is at most globally Lipschitz.  In such domains it is well-known that the corners can harbor singularities in the solutions to elliptic equations.  For the Stokes problem in $\Omega$ with Navier-slip boundary conditions, the work of Orlt-S\"{a}ndig \cite{orlt_sandig} shows that the velocity can not even belong to $W^{2,q_{\epm}}$, where $\epm$ is determined by the equilibrium angle as in \eqref{epm_def}.  Consequently, regardless of how many temporal derivatives we gain basic control of, there is a fundamental barrier to the spatial regularity gain we can hope to achieve.

In closing a scheme of a priori estimates for \eqref{ns_geometric}, the mandate then becomes to make due with what's available and close with little spatial regularity.  In our work on the Stokes problem \cite{guo_tice_QS}, we do this by crucially exploiting a version of the normal trace estimate for the viscous stress.  This allows us to get a dissipative estimate for $\K \dt^2 \eta$ in $H^{-1/2}$, where $\K$ is the gravity-capillary operator associated to $\zeta_0$ (see \eqref{cap_op_def}).  With this in hand, we take advantage of the contact point boundary condition (the last equation in \eqref{ns_geometric}) in two essential ways.  First, this condition is responsible for providing a dissipative estimate of $[\dt^3 \eta]_\ell$.  Second, this condition serves as a boundary condition compatible with the elliptic operator $\K$, which couples with the aforementioned dissipative control to yield an $H^{3/2}$ estimate for $\dt^2 \eta$ in terms of the dissipation.  This estimate then serves as the starting point for a chain of elliptic estimates in weighted $L^2-$based Sobolev spaces that allow us to close for the Stokes problem.  Here the choice of $L^2-$based weighted spaces is convenient as it maintains consistency with the $L^2-$based estimates coming from the energy and dissipation.

For the Navier-Stokes problem considered in the present paper, the convective term $\dt u + u\cdot \nab u$ precludes the use of the normal trace estimate for the twice time-differentiated problem since neither the energy nor the dissipation provides control of $\dt^3 u$ in this case.  We are thus forced to seek another mechanism for obtaining a sufficiently high regularity estimate for $\dt^2 \eta$, which we need to kick start the chain of elliptic gains.  This is the central difficulty in dealing with the contact point Navier-Stokes system \eqref{ns_geometric}.

In place of the normal trace estimate, we instead employ a delicate argument using test functions in the weak formulation of the twice time-differentiated problem, together with the dissipative estimate of $[\dt^3 \eta]_\ell$.  This is delicate for two reasons.  First, we have very poor spatial regularity at that level of time derivative, so we must be careful with how the test function interacts with the solution.  Second, we aim to achieve estimates for the fractional regularity of $\dt^2 \eta$, but in the weak formulation we find $\dt^2 \eta$ interacting with the test function on $(-\ell,\ell)$ via an $H^1-$type inner product with the equilibrium free surface function $\zeta_0$ appearing as a weight (see \eqref{bndry_ips}).  The standard Fourier-analytic tricks that would try on a torus or full space don't work here due to the finite extent of $(-\ell,\ell)$ and the weight.  We are thus led to replace the standard Fourier tricks with the functional calculus associated to the gravity-capillary operator $\K$, which provides a scale of custom Sobolev spaces measuring fractional regularity in terms of the eigenfunctions of $\K$.  This allows us to build test functions that can produce higher fractional regularity estimates for $\dt^2 \eta$.  Unfortunately, despite major effort, we were unable to derive an exact $H^{3/2}$ estimate for $\dt^2 \eta$.  The obstacles are primarily due to the technical complications that arise from the criticality of $H^{1/2}$ in one dimension.

The principal technical achievement of this paper is the development of a scheme of a priori estimates that exchanges the full $H^{3/2}$ estimate used for the Stokes problem for a slightly weaker estimate in $H^{3/2 - \low}$, where $\low$ is given by \eqref{kappa_ep_def}.  Fortunately, this is just barely sufficient to kick start the elliptic gain and allow us to close.  In order to execute this, we have had to switch from weighted $L^2$-based Sobolev spaces to unweighted $L^q-$based spaces for values of $q$ just below the maximal value $q_{\epm}$.  This yields key technical advantages in dealing with several nonlinear terms.

\subsection{Technical overview and layout of paper  }

We now turn our attention to a brief technical overview of our methods, which we provide in a rough sketch form meant to highlight the main ideas while suppressing certain technical complications.  The starting point of our scheme of a priori estimates is a version of the energy dissipation relation \eqref{fund_en_evolve} for \eqref{ns_geometric}.  We need versions of this for the solution and its time derivatives up to order two.  These are recorded in Section \ref{sec_basic_tools}.  Upon differentiating \eqref{ns_geometric} we produce commutators, so we end up with an energy dissipation relation roughly of the form 
\begin{equation}\label{sum_1}
 \frac{d}{dt} \seb + \sdb = \mathscr{N},
\end{equation}
where $\seb$ and $\sdb$ are as in \eqref{ED_parallel} and $\mathscr{N}$ represents nonlinear interactions arising due to the commutators.  Section \ref{sec_basic_tools} also contains a number of other basic estimates.

To advance from the basic control provided by $\seb$ and $\sdb$ to higher spatial regularity estimates we need elliptic estimates for a Stokes problem related to \eqref{ns_geometric}.  We develop these in Section \ref{sec_elliptic} within the context of $L^q-$based spaces instead of the weighted $L^2-$based spaces we employed in \cite{guo_tice_QS}.  Here the main technical problem is associated to the upper bound on the regularity gain available due to the corner singularities in $\Omega$.  An interesting feature of our main result, Theorem \ref{A_stokes_stress_solve}, is that it treats the triple $(v,Q,\xi)$ as the elliptic unknown, but $\xi$ only appears on the boundary.  

With the elliptic estimates and \eqref{sum_1} in hand, we may identify most of the nonlinear terms that need to be estimated in order to close our scheme.  Due to the limited spatial regularity, these estimates are fairly delicate and require a good deal of care.  In particular, in dealing with $\mathscr{N}$ in \eqref{sum_1}, we need structured estimates of the form 
\begin{equation}\label{sum_3}
 \abs{\mathscr{N}} \le C \E^\theta \D \text{ for some } \theta >0,
\end{equation}
where $\E$ and $\D$ are the full energy and dissipation from \eqref{E_def} and \eqref{D_def}, 
in order to have any hope closing with \eqref{sum_1}.  In Section \ref{sec_nl_int_d} we record a host of nonlinear interaction estimates of this form.  In Section \ref{sec_nl_int_e} we record similar estimates but in terms of the energy functional instead of the dissipation.

Section \ref{sec_nl_elliptic} records estimates of the nonlinear terms that appear in applying the elliptic estimates from Theorem \ref{A_stokes_stress_solve}.  An interesting feature of these is that the upper bound of regularity identified by Orlt-S\"{a}ndig \cite{orlt_sandig} yields an open interval $(0,q_{\epm})$ of possible integrability exponents.  We take advantage of this by using two different exponents $0 < q_- < q_+ < q_{\epm}$ as in \eqref{kappa_ep_def}, with $q_+$ associated to the non-differentiated problem and $q_-$ associated to the once-differentiated problem.  The parameter $q_-$ can be made arbitrarily close to $q_{\epm}$, but the tiny increase we get in advancing to $q_+$ plays an essential role in Proposition \ref{ne2_f2}, which highlights how delicate the nonlinear estimates are.

As mentioned above, the key to starting the elliptic gains is an estimate of $\dt^2 \eta$ in $H^{3/2 -\low}$.  To achieve this we employ the functional calculus associated to the gravity-capillary operator $\K$, defined by \eqref{cap_op_def}.  We develop this in Section \ref{sec_fnal_calc}.  This calculus provides us with the ability to make sense of fractional powers of $\K$, which is essential in our test function method for deriving the needed estimate.  It also provides us with a scale of custom Sobolev spaces defined in terms of the eigenfunctions of $\K$, which we characterize in terms of standard Sobolev spaces in Theorem \ref{s_embed} when the regularity parameter satisfies $0 \le s \le 2$.  A serious technical complication in our test function / functional calculus method is that we would like to exploit an equivalence of the form 
\begin{equation}\label{sum_4}
 (\dt^2 \eta, (\K)^{\hal - \low} \dt^2 \eta)_{1,\Sigma} \asymp \ns{\dt^2 \eta}_{H^{3/2 - \low}}
\end{equation}
where $(\cdot,\cdot)_{1,\Sigma}$ is as in \eqref{bndry_ips} and $\low$ satisfies \eqref{kappa_ep_def}, but we cannot guarantee that $(\K)^{\hal - \low} \dt^2 \eta \in H^1$ in our functional framework.  To get around this, for $j \in \mathbb{N}$ and $s \ge 0$ we introduce the operators $D_j^s$ in Section \ref{sec_djr}.  These are approximations of the fractional differential operators $D^s := (\K)^{s/2}$ formed by projecting onto the first $j$ eigenfunctions of $\K$ in the spectral representation of $D^{s}$.  The eigenfunctions are smooth up to the boundary, so they work nicely when replaced on the left side of \eqref{sum_4}.  We then aim to recover the desired control by working with these operators and sending $j \to \infty$.

In Section \ref{sec_enhancements} we carry out the details of our test function / functional calculus method to derive the estimate for $\dt^2 \eta$ in $H^{3/2-\low}$.  Along the way we also use similar methods to derive a couple other useful estimates for $\eta,$ $\dt \eta$, and $\dt p$.  These all serve as enhancements to the basic energy-dissipation estimate \eqref{sum_1} since they are given in similar form.

In Section \ref{sec_aprioris} we complete the proof of Theorem \ref{main_apriori}.  We combine an integrated form of \eqref{sum_1} with the enhancement estimates to form the core estimates in energy dissipation form.  These are then coupled to the elliptic estimates to gain spatial regularity.  We then employ our array of nonlinear estimates to derive an estimate of the form 
\begin{equation}
 \E(t)   + \int_{s}^t \D \ls   \E(s)
\end{equation}
for all $0 \le s \le t < T$, and from this we complete the proof with a version of Gronwall's inequality, Proposition \ref{gronwall_variant}.

Appendix \ref{sec_nonlinear_records} records the lengthy forms of various nonlinearities and commutators.  Appendix \ref{sec_analysis_tools} contains a number of useful tools from analysis that are used throughout the paper, including product and composition estimates, estimates for the Poisson extension, and the Bogovskii operator.

\section{Basic tools}\label{sec_basic_tools}

In this section we record a number of basic identities and estimates associated to the problem \eqref{ns_geometric}.
 
\subsection{Energy-dissipation relation  }

Upon applying temporal derivatives to \eqref{ns_geometric} and keeping track of the essential transport terms, we arrive at the following general linearization:
\begin{equation}\label{linear_geometric}
\begin{cases}
\dt v -\dt \bar{\eta} \frac{\phi}{\zeta_0} K \p_2 v + u \cdot \naba v  + \diva S_{\A}(q,v) =F^1  & \text{in }\Omega \\
\diva v = F^2 &\text{in } \Omega \\
S_{\A}(q,v)\N = \left[g \xi - \sigma \p_1\left( \frac{\p_1 \xi}{(1+\abs{\p_1 \zeta_0}^2)^{3/2}}  + F^3 \right)   \right] \N  + F^4 &\text{on }\Sigma \\
\dt \xi - v \cdot \N = F^6 &\text{on }\Sigma \\
(S_{\A}(q,v) \cdot \nu - \beta v)\cdot \tau = F^5 &\text{on }\Sigma_s \\
v \cdot \nu =0 & \text{on }\Sigma_s \\
\linz \dt \xi(\pm \ell,t) = \mp \sigma\left( \frac{\p_1 \xi}{(1+\abs{\p_1 \zeta_0}^2)^{3/2}}  + F^3\right)(\pm \ell,t) - \linz F^7.
\end{cases}
\end{equation}
We will mostly be interested in this problem for $v = \dt^k u$, $\xi = \dt^k \eta$ and $q = \dt^k p$, in which case the forcing terms have the special form given in Appendix \ref{sec_nonlinear_records}.

We now aim to record the weak formulation of \eqref{linear_geometric}.  First, we will need to introduce some useful bilinear forms.   Suppose that $\eta$ is given and that $\A$, $J$, $K$, and $\N$  are determined as in \eqref{AJK_def} and \eqref{N_def}.   We  define 
\begin{equation}\label{bulk_ips}
 \pp{u,v} := \int_\Omega \frac{\mu}{2} \sg_{\A} u : \sg_{\A} v J + \int_{\Sigma_s} \beta (u\cdot \tau) (v\cdot \tau) J
\text{ and } (u,v)_0 := \int_{\Omega} u \cdot v J.
\end{equation}
With these in hand we can formulate an integral version of \eqref{linear_geometric}.

\begin{lem}\label{geometric_evolution}
Suppose that $u,p,\eta$ are given and satisfy \eqref{ns_geometric}.  Further suppose that $(v,q,\xi)$ are sufficiently regular and solve \eqref{linear_geometric}.  Then for sufficiently regular test functions $w$ satisfying $w \cdot \nu =0$ on $\Sigma_s$ we have that 
\begin{multline}\label{ge_01}
\br{\dt v,Jw} + (-\dt \bar{\eta} \frac{\phi}{\zeta_0} K \p_2 v + u \cdot \naba v,Jw)_0 +
  \pp{v,w} - (q,\diva w)_0   \\
 = \int_\Omega F^1   \cdot w J   - \int_{\Sigma_s} J (w\cdot \tau)F^5 
-   \int_{-\ell}^\ell g \xi (w \cdot \N) - \sigma \p_1 \left( \frac{\p_1 \xi }{(1+\abs{\p_1 \zeta_0}^2)^{3/2}} +F^3\right)w\cdot  \N + F^4 \cdot w
\end{multline}
and
\begin{multline}\label{ge_00}
\br{\dt v,Jw} + (-\dt \bar{\eta} \frac{\phi}{\zeta_0} K \p_2 v + u \cdot \naba v,Jw)_0 +
  \pp{v,w} - (q,\diva w)_0 + (\xi,w\cdot \N)_{1,\Sigma} + \linz [\dt \xi,w\cdot \N]_\ell  \\
 = \int_\Omega F^1   \cdot w J   - \int_{\Sigma_s} J (w\cdot \tau)F^5 
- \int_{-\ell}^\ell \sigma  F^3   \p_1 (w \cdot \N) + F^4 \cdot w  
  - \linz [w\cdot \N,  F^7]_\ell,
\end{multline}
where $[\cdot,\cdot]_\ell$ is defined in \eqref{bndry_pairing} and $\linz >0$ is as in \eqref{linz_def}.
\end{lem}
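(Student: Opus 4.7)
The plan is to derive \eqref{ge_01} directly by testing the first equation of \eqref{linear_geometric} against $Jw$ and integrating by parts, and then obtain \eqref{ge_00} from \eqref{ge_01} via a further integration by parts on $(-\ell,\ell)$ in which the boundary contributions at the contact points are converted using the contact point equation.

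For \eqref{ge_01}, I would multiply the momentum equation by $Jw$ and integrate over $\Omega$. The terms $\dt v$, $-\dt\bar\eta\,\tfrac{\phi}{\zeta_0}K\p_2 v$, and $u\cdot\naba v$ produce $\br{\dt v,Jw}$ and the convective pairing with no manipulation. For the stress term $\diva S_{\A}(q,v)$ the main tool is the Piola identity $\p_k(J\A_{jk}) = 0$, which allows the rewriting $J\A_{jk}\p_k(\cdot) = \p_k(J\A_{jk}\cdot)$ so that the standard divergence theorem can be applied. Using the symmetry of $\sg_{\A} v$ in the viscous piece, the bulk contribution collapses to $\pp{v,w} - (q,\diva w)_0$ once the Navier-slip condition and the constraint $w\cdot\nu = 0$ on $\Sigma_s$ are used to absorb the $\beta$-slip term into $\pp{\cdot,\cdot}$ and produce $-\int_{\Sigma_s}J(w\cdot\tau)F^5$. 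On $\Sigma$ the boundary contribution is $-\int_{-\ell}^\ell(S_{\A}(q,v)\N)\cdot w\,dx_1$, and substituting the dynamic condition $S_{\A}(q,v)\N = [g\xi - \sigma\p_1(\tfrac{\p_1\xi}{(1+\abs{\p_1\zeta_0}^2)^{3/2}} + F^3)]\N + F^4$ and dotting with $w$ produces exactly \eqref{ge_01}.

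To pass to \eqref{ge_00}, I would integrate the capillary contribution $-\int_{-\ell}^\ell \sigma\p_1\bigl(\tfrac{\p_1\xi}{(1+\abs{\p_1\zeta_0}^2)^{3/2}} + F^3\bigr)(w\cdot\N)\,dx_1$ by parts in $x_1$. The interior pieces yield $\int_{-\ell}^\ell \sigma\tfrac{\p_1\xi\,\p_1(w\cdot\N)}{(1+\abs{\p_1\zeta_0}^2)^{3/2}}\,dx_1$, which combines with the $g\xi(w\cdot\N)$ term of \eqref{ge_01} to produce $(\xi,w\cdot\N)_{1,\Sigma}$, together with $\int_{-\ell}^\ell \sigma F^3\,\p_1(w\cdot\N)\,dx_1$. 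The endpoint contributions from this integration by parts are then rewritten using the contact-point equation, which may be recast in the form
\[
\sigma\tfrac{\p_1\xi}{(1+\abs{\p_1\zeta_0}^2)^{3/2}}(\pm\ell) \,=\, \mp\linz\dt\xi(\pm\ell) \,\mp\, \linz F^7(\pm\ell) \,-\, \sigma F^3(\pm\ell),
\]
so that the $\sigma F^3$ contributions produced by this substitution cancel exactly with the endpoint contributions that arise when the $F^3$ factor itself is integrated by parts, while the $\linz\dt\xi$ and $\linz F^7$ pieces assemble via \eqref{bndry_pairing} into $\linz[\dt\xi,w\cdot\N]_\ell + \linz[F^7,w\cdot\N]_\ell$. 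Moving the first of these to the left-hand side delivers \eqref{ge_00}.

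The main obstacle is the sign bookkeeping at the contact points: the $\mp$ in the contact-point equation must be correctly matched against the orientation arising from the fundamental theorem of calculus on $(-\ell,\ell)$, and one must verify that the spurious $\sigma F^3$ boundary contributions produced on both sides of the calculation annihilate each other exactly, so that the only trace quantities remaining on the right-hand side are $\linz[F^7,w\cdot\N]_\ell$ and the $F^4,F^5$ terms. Once this algebra is in hand, the remainder is routine given enough regularity on $(v,q,\xi)$ and $w$ to justify the integrations by parts and to interpret $\br{\dt v,Jw}$ as a genuine pairing consistent with the $L^2$ integral.
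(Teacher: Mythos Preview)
Your proposal is correct and follows essentially the same route as the paper: test the momentum equation against $Jw$, use the Piola identity $\p_k(J\A_{jk})=0$ together with the formula for $J\A\nu$ on $\Sigma$ and $\Sigma_s$ to integrate the stress term by parts, then insert the boundary conditions to obtain \eqref{ge_01}, and finally integrate the capillary term by parts on $(-\ell,\ell)$ and invoke the contact-point equation to reach \eqref{ge_00}. The only cosmetic difference is that the paper keeps $\tfrac{\p_1\xi}{(1+\abs{\p_1\zeta_0}^2)^{3/2}}+F^3$ bundled together throughout the endpoint computation rather than isolating and then cancelling the $\sigma F^3$ boundary pieces as you describe, but the algebra is the same.
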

\begin{proof}
Upon taking the dot product of the first equation in \eqref{linear_geometric} with $J w$ and integrating over $\Omega$, we arrive at the identity 
\begin{equation}\label{ge_1}
 \br{\dt v,Jw} + (-\dt \bar{\eta} \frac{\phi}{\zeta_0} K \p_2 v + u \cdot \naba v,Jw)_0 + I = II
\end{equation}
where we have written
\begin{equation}
I := \int_\Omega  \diva \Sa(q,v)  \cdot w J \text{ and }   II := \int_\Omega F^1 \cdot w J.
\end{equation}
In expanding the term $I$ we will employ a pair of identities that are readily verified through elementary computations, using the definitions of $J$, $\A$, and $\N$ from \eqref{AJK_def} and \eqref{N_def}: first,  
\begin{equation}\label{ge_3}
\p_k(J \A_{jk}) =0 \text{ for each }j; 
\end{equation}
and, second, 
\begin{equation}\label{ge_4}
 J \A \nu = 
\begin{cases}
 J \nu &\text{on }\Sigma_s \\
 \N/\sqrt{1 +\abs{\p_1 \zeta_0}^2} &\text{on }\Sigma.
\end{cases}
\end{equation}

From \eqref{ge_3} and an integration by parts, we can write 
\begin{equation}\label{ge_7}
 I = \int_\Omega \p_k (J \A_{jk} \Sa(q,v)_{ij}) w_i = \int_\Omega -J \A_{jk} \p_k w_i \Sa(q,v)_{ij} + \int_{\p \Omega} (J \A \nu) \cdot (\Sa(q,v) w) 
:= I_1 + I_2.
\end{equation}
The term $I_1$ is readily rewritten using the definition of $\Sa(q,v)$ (given just below \eqref{N_def}):
\begin{equation}\label{ge_20}
 I_1 = \int_\Omega \frac{\mu}{2} \sga v: \sga w J - q \diva{w} J.
\end{equation}
To handle $I_2$ we use the first equation in \eqref{ge_4} to see that
\begin{multline}
 \int_{\Sigma_s} (J \A \nu) \cdot (\Sa(q,v) w) = \int_{\Sigma_s} J \nu \cdot (\Sa(q,v) w) = 
\int_{\Sigma_s} J w \cdot (\Sa(q,v) \nu) \\
= \int_{\Sigma_s} J \left(\beta (v \cdot \tau) (w \cdot \tau) + w\cdot \tau F^5\right),
\end{multline}
and the second equality in \eqref{ge_4} to see that
\begin{multline}
 \int_{\Sigma} (J \A \nu) \cdot (\Sa(q,v) w) \\ = 
\int_{-\ell}^\ell (\Sa(q,v) \N)\cdot w = \int_{-\ell}^\ell g \xi (w \cdot \N) - \sigma \p_1 \left( \frac{\p_1 \xi }{(1+\abs{\p_1 \zeta_0}^2)^{3/2}} +F^3\right)w\cdot  \N + F^4 \cdot w.
\end{multline}
Since $\p \Omega = \Sigma_s \cup \Sigma$, we then have that 
\begin{multline}\label{ge_21}
I_2 = \int_{\Sigma_s} J \left(\beta (v \cdot \tau) (w \cdot \tau) + w\cdot \tau F^5\right) \\
+\int_{-\ell}^\ell g \xi (w \cdot \N) - \sigma \p_1 \left( \frac{\p_1 \xi }{(1+\abs{\p_1 \zeta_0}^2)^{3/2}} +F^3\right)w\cdot  \N + F^4 \cdot w.
\end{multline}
Upon combining \eqref{ge_20} and \eqref{ge_21} with \eqref{ge_1} and recalling the definition of $\pp{\cdot,\cdot}$ from \eqref{bulk_ips}, we deduce that \eqref{ge_01} holds.

It remains to show that \eqref{ge_01} can be rewritten as \eqref{ge_00}.  To this end, we integrate by parts and use the equations in \eqref{linear_geometric} to rewrite 
\begin{multline}\label{ge_8}
 \int_{-\ell}^\ell - \sigma \p_1 \left( \frac{\p_1 \xi }{(1+\abs{\p_1 \zeta_0}^2)^{3/2}} +F^3 \right)w\cdot \N  \\
 =
\int_{-\ell}^\ell \left( \frac{\p_1 \xi }{(1+\abs{\p_1 \zeta_0}^2)^{3/2}} +F^3 \right)\p_1 (w\cdot \N)
- \sigma \left. \left( \frac{\p_1 \xi }{(1+\abs{\p_1 \zeta_0}^2)^{3/2}} +F^3 \right) (w\cdot \N) \right\vert_{-\ell}^\ell 
\end{multline}
with 
\begin{multline}\label{ge_9}
- \sigma \left.  \left( \frac{\p_1 \xi }{(1+\abs{\p_1 \zeta_0}^2)^{3/2}} +F^3 \right)  (w\cdot \N) \right\vert_{-\ell}^\ell =
 \sum_{a=\pm 1} \left(\linz \dt\xi(a\ell) + \linz F^7(a\ell)  \right)(w\cdot \N(a\ell) ) \\
 = 
 \sum_{a=\pm 1} \linz (v \cdot \N(a\ell))(w\cdot \N(a\ell) ) + \linz (w\cdot\N(a\ell)) ( F^6(a\ell) + F^7(a\ell) ).
\end{multline}
Combining \eqref{ge_8} and \eqref{ge_9} with \eqref{ge_01} and rearranging then yields \eqref{ge_00}.
\end{proof}

The most natural use of Lemma \ref{geometric_evolution} occurs with $w=v$, but we will record a slight variant of this.  This results in the following fundamental energy-dissipation identity.

\begin{thm}\label{linear_energy}
Suppose that $\zeta=\zeta_0 + \eta$ is given and  $\A$ and $\N$ are determined in terms of $\zeta$ as in \eqref{AJK_def} and \eqref{N_def}.  Suppose that $(v,q,\xi)$ satisfy \eqref{linear_geometric} and that $\omega(\cdot,t) \in H^1_0(\Omega;\R^2)$ is sufficiently regular for the following expression to be well-defined.  Then
\begin{multline} \label{linear_energy_0}
 \frac{d}{dt} \left( \int_{\Omega} J \frac{\abs{v}^2}{2} +    \int_{-\ell}^\ell \frac{g}{2} \abs{\xi}^2 + \frac{\sigma}{2} \frac{\abs{\p_1 \xi}^2}{(1+\abs{\p_1 \zeta_0}^2)^{3/2}} - \int_\Omega J v\cdot \omega \right) \\
 + \frac{\mu}{2} \int_\Omega \abs{\sga v}^2 J 
+\int_{\Sigma_s} \beta J \abs{v \cdot \tau}^2 + \linz [\dt \xi,\dt \xi]_\ell  
= \int_\Omega F^1  \cdot v J + q J(F^2-\diva \omega)  - \int_{\Sigma_s}  J (v \cdot \tau)F^5 \\
-  \int_{-\ell}^\ell \sigma  F^3  \p_1(v \cdot \N) + F^4 \cdot v  -  g \xi F^6 -  \sigma \frac{\p_1 \xi \p_1 F^6}{(1+\abs{\p_1 \zeta_0}^2)^{3/2}}
  - \linz [v\cdot \N,  F^7]_\ell + \linz [\dt \xi,F^6] \\
- \int_\Omega v \cdot \dt(J \omega)  +   (-\dt \bar{\eta} \frac{\phi}{\zeta_0} K \p_2 v + u \cdot \naba v,J \omega)_0 + \pp{v,\omega}  - \int_\Omega F^1 \omega J.
\end{multline}
\end{thm}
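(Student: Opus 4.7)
\emph{Proof plan.} I will derive \eqref{linear_energy_0} by combining two applications of the weak identity \eqref{ge_00} of Lemma \ref{geometric_evolution}: one with the admissible test function $w = v$, which is allowed since \eqref{linear_geometric} gives $v\cdot\nu = 0$ on $\Sigma_s$, and one with $w = \omega \in H^1_0(\Omega;\R^2)$.

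In the first application I massage the left side of \eqref{ge_00} into the full time derivative of the sum of the bulk kinetic energy $\int_\Omega \tfrac{J|v|^2}{2}$ and the surface energy $\int_{-\ell}^\ell \tfrac{g}{2}|\xi|^2 + \tfrac{\sigma|\p_1\xi|^2}{2(1+|\p_1\zeta_0|^2)^{3/2}}$, with the dissipation remaining on the left and all forcing terms on the right. For the kinetic piece I use $\br{\dt v, Jv} = \tfrac{d}{dt}\int_\Omega \tfrac{J|v|^2}{2} - \int_\Omega \dt J\tfrac{|v|^2}{2}$ and show that the two transport operators on the left of \eqref{ge_00} together generate exactly $\int_\Omega \dt J \tfrac{|v|^2}{2}$, cancelling the extraneous piece. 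The convective half uses $\p_k(J\A_{jk})=0$ from \eqref{ge_3}, the constraint $\diva u = 0$, the identity \eqref{ge_4}, and the conditions $u\cdot\nu=0$ on $\Sigma_s$ and $u\cdot\N = \dt\eta$ on $\Sigma$ to reduce to $(u\cdot\naba v, Jv)_0 = \int_{-\ell}^\ell \dt\eta \tfrac{|v|^2}{2}$; the $\dt\bar\eta$ half uses the identity $\p_2(\dt\bar\eta W) = \dt J$, immediate from \eqref{AJK_def}, together with the support property $W = \phi/\zeta_0 \equiv 0$ on $\V_{\operatorname{btm}}$ and the fact that $\nu_2 = 0$ on the vertical side walls of $\V_{\operatorname{top}}$, to produce $(-\dt\bar\eta W K\p_2 v, Jv)_0 = \int_\Omega \dt J \tfrac{|v|^2}{2} - \int_{-\ell}^\ell \dt\eta \tfrac{|v|^2}{2}$; summing gives the promised cancellation.

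To obtain the surface energy I substitute the kinematic relation $v\cdot\N = \dt\xi - F^6$ from \eqref{linear_geometric} into the boundary term $(\xi, v\cdot\N)_{1,\Sigma}$, which splits as $\tfrac{d}{dt}\int_{-\ell}^\ell (\tfrac{g}{2}|\xi|^2 + \tfrac{\sigma|\p_1\xi|^2}{2(1+|\p_1\zeta_0|^2)^{3/2}})$ plus an $F^6$-forcing piece, and similarly I split $\linz[\dt\xi, v\cdot\N]_\ell = \linz[\dt\xi, \dt\xi]_\ell - \linz[\dt\xi, F^6]_\ell$ to isolate the contact-point dissipation. The constraint $\diva v = F^2$ turns $-(q,\diva v)_0$ into $-(q, F^2)_0$. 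This proves \eqref{linear_energy_0} in the special case $\omega \equiv 0$. For the $\omega$ correction I apply \eqref{ge_00} a second time with $w = \omega \in H^1_0(\Omega;\R^2)$; since $\omega$ has zero trace on $\p\Omega$, every boundary integral involving $\omega$ in \eqref{ge_00} drops, leaving
\begin{equation*}
 \br{\dt v, J\omega} + \left(-\dt\bar\eta\tfrac{\phi}{\zeta_0}K\p_2 v + u\cdot\naba v, J\omega\right)_0 + \pp{v,\omega} - (q,\diva\omega)_0 = \int_\Omega F^1\cdot \omega J.
\end{equation*}
The identity $\tfrac{d}{dt}\int_\Omega Jv\cdot\omega = \br{\dt v, J\omega} + \int_\Omega v\cdot\dt(J\omega)$ eliminates $\br{\dt v, J\omega}$; subtracting the resulting equation from the identity derived above folds $-\int_\Omega Jv\cdot\omega$ into the time derivative on the left and produces exactly the four $\omega$-dependent summands on the right of \eqref{linear_energy_0}.

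The main obstacle is the transport-Jacobian cancellation that powers the bulk kinetic energy identity. All the manipulations are elementary, but the book-keeping is delicate: one must correctly track the boundary contributions produced by two separate integrations by parts, exploit the support properties of $\phi$ to kill boundary terms on $\V_{\operatorname{btm}}$, and use both branches of the normal identity \eqref{ge_4}. Once this cancellation is secured, the remainder reduces to routine rearrangement of terms.
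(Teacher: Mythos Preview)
Your proposal is correct and follows essentially the same route as the paper. The only cosmetic difference is that the paper tests with $w = v - \omega$ in a single application of Lemma~\ref{geometric_evolution}, whereas you test with $w = v$ and $w = \omega$ separately and subtract; by linearity of \eqref{ge_00} these are equivalent, and the transport--Jacobian cancellation you describe is exactly the computation the paper carries out in \eqref{linear_energy_3}.
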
 
\begin{proof}
We use $v-\omega$ as a test function in Lemma \ref{geometric_evolution} to see that 
\begin{multline}\label{linear_energy_1} 
\br{\dt v,Jv} + (-\dt \bar{\eta} \frac{\phi}{\zeta_0} K \p_2 v + u \cdot \naba v,Jv)_0 +
  \pp{v,v} - (q,\diva v)_0 + (\xi,v\cdot \N)_{1,\Sigma} + \linz [\dt \xi,v\cdot \N]_\ell  \\
 = \int_\Omega F^1   \cdot v J   - \int_{\Sigma_s} J (v\cdot \tau)F^5 
- \int_{-\ell}^\ell \sigma  F^3   \p_1 (v \cdot \N) + F^4 \cdot v  
  - \linz [v\cdot \N, F^7]_\ell \\
+ \br{\dt v, J \omega} +   (-\dt \bar{\eta} \frac{\phi}{\zeta_0} K \p_2 v + u \cdot \naba v,J \omega)_0 + \pp{v,\omega} - (q,\diva \omega)_0 - \int_\Omega F^1 \omega J.
\end{multline}

First note that 
\begin{equation}\label{linear_energy_2}
 \pp{v,v} = \int_\Omega \abs{\sga v}^2 J 
+\int_{\Sigma_s} \beta J \abs{v \cdot \tau}^2.
\end{equation}
Next, we expand 
\begin{equation}
 \br{\dt v,Jv} = \frac{d}{dt}\int_{\Omega} \frac{\abs{v}^2}{2} - \int_{\Omega} \dt J \frac{\abs{v}^2}{2}.
\end{equation}
Using the identities \eqref{ge_3} and \eqref{ge_4},  we may integrate by parts to compute 
\begin{multline}
\int_{\Omega} \left( -\dt \bar{\eta} \frac{\phi}{\zeta_0} K \p_2 v + u \cdot \naba v \right)\cdot Jv  
= \int_{\Omega} \frac{\abs{v}^2}{2} \left(-J \diva{u} + \p_2 \left(\frac{\dt \bar{\eta} \phi}{\zeta_0} \right)   \right) \\
+ \int_{\Sigma} \frac{\abs{v}^2}{2 \sqrt{1+ \abs{\p_1 \zeta_0}^2}} \left(-\dt \eta + u \cdot \N \right)
+ \int_{\Sigma_s} J u \cdot \nu \frac{\abs{v}^2}{2}.
\end{multline}
Since $\dt \eta = u\cdot \N$ on $\Sigma$, $u\cdot \nu =0$ on $\Sigma_s$< and $\diva u=0$, we arrive at the equality
\begin{equation}
\int_{\Omega} \left( -\dt \bar{\eta} \frac{\phi}{\zeta_0} K \p_2 v + u \cdot \naba v \right)\cdot Jv =  \int_{\Omega} \frac{\abs{v}^2}{2}  \p_2 \left(\frac{\dt \bar{\eta} \phi}{\zeta_0}   \right).
\end{equation}
We then compute 
\begin{equation}
 J = 1 + \p_2\left( \frac{\bar{\eta} \phi}{\zeta_0} \right) \Rightarrow \p_2 \left(\frac{\dt \bar{\eta} \phi}{\zeta_0}   \right) = \dt J,
\end{equation}
which shows that
\begin{equation}\label{linear_energy_3}
 \br{\dt v,Jv} + (-\dt \bar{\eta} \frac{\phi}{\zeta_0} K \p_2 v + u \cdot \naba v,Jv)_0 = \frac{d}{dt}\int_{\Omega} \frac{\abs{v}^2}{2}.
\end{equation}

On the other hand, we may use \eqref{linear_geometric} to compute 
\begin{multline}\label{linear_energy_4}
(\xi,v\cdot \N)_{1,\Sigma} =  (\xi,\dt \xi - F^6)_{1,\Sigma} \\
 =  \dt \left(   \int_{-\ell}^\ell \frac{g}{2} \abs{\xi}^2 + \frac{\sigma}{2} \frac{\abs{\p_1 \xi}^2}{(1+\abs{\p_1 \zeta_0}^2)^{3/2}} \right) - \int_{-\ell}^\ell  g \xi F^6 +  \sigma \frac{\p_1 \xi \p_1 F^6}{(1+\abs{\p_1 \zeta_0}^2)^{3/2}},
\end{multline}
\begin{equation}\label{linear_energy_5}
 (q,\diva v)_0 - (q,\diva \omega)_0 = \int_\Omega q J(F^2 - \diva \omega),
\end{equation}
and 
\begin{equation}\label{linear_energy_6}
 [\dt \xi, v \cdot \N]_\ell =  [\dt \xi,\dt \xi]_\ell -  [\dt \xi, F^6]_\ell.
\end{equation}
Then \eqref{linear_energy_0} follows by plugging \eqref{linear_energy_2}, \eqref{linear_energy_3}, \eqref{linear_energy_4}, \eqref{linear_energy_5}, and \eqref{linear_energy_6} into \eqref{linear_energy_1} and noting that 
\begin{equation}
  \br{\dt v,J\omega} = \frac{d}{dt}\int_{\Omega} Jv\cdot \omega - \int_{\Omega} v \cdot \dt(J\omega).
\end{equation}

\end{proof}

Next we record an application of this to \eqref{ns_geometric}.   

\begin{cor}\label{basic_energy}
Suppose that $(u,p,\eta)$ solve \eqref{ns_geometric}, and consider the function $\Q$ given by \eqref{Q_def}.  Then
\begin{multline}\label{be_0}
 \frac{d}{dt} \left(\int_\Omega \hal J \abs{u}^2 +  \int_{-\ell}^\ell \frac{g}{2} \abs{\eta}^2 + \frac{\sigma}{2} \frac{\abs{\p_1 \eta}^2}{(1+\abs{\p_1 \zeta_0}^2)^{3/2}} +  \int_{-\ell}^\ell \sigma \Q(\p_1 \zeta_0,\p_1 \eta)\right)  \\
 + \frac{\mu}{2} \int_\Omega \abs{\sga u}^2 J 
+\int_{\Sigma_s} \beta J \abs{u \cdot \tau}^2 + \linz [\dt \eta]_\ell^2
 =- \linz [u\cdot \N,\swh(\dt \eta)]_\ell .
\end{multline}
\end{cor}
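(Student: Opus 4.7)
The plan is to derive \eqref{be_0} as a direct specialization of Theorem \ref{linear_energy}. Observe that the nonlinear system \eqref{ns_geometric} satisfied by $(u,p,\eta)$ is obtained from the linearized template \eqref{linear_geometric} by choosing $(v,q,\xi) = (u,p,\eta)$ and reading off the forcing data
\begin{equation*}
F^1 = F^2 = F^4 = F^5 = F^6 = 0, \quad F^3 = \mathcal{R}(\p_1 \zeta_0, \p_1 \eta), \quad F^7 = \swh(\dt \eta),
\end{equation*}
by comparing the systems equation by equation. I would then apply Theorem \ref{linear_energy} with these choices and with the auxiliary field $\omega \equiv 0$, which annihilates every $\omega$-dependent expression on the right-hand side of \eqref{linear_energy_0}.

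After substitution, the only surviving terms on the right of \eqref{linear_energy_0} are the $F^3$ contribution $-\int_{-\ell}^\ell \sigma \mathcal{R}(\p_1 \zeta_0, \p_1 \eta)\,\p_1(u \cdot \N)$ and the $F^7$ contribution $-\linz [u\cdot \N, \swh(\dt \eta)]_\ell$. The latter already appears unchanged on the right of \eqref{be_0}. To process the former, I would invoke the kinematic equation $\dt \eta = u \cdot \N$ on $\Sigma$ from \eqref{ns_geometric}, which yields $\p_1(u \cdot \N) = \dt \p_1 \eta$. Assuming $\Q$ is defined in \eqref{Q_def} so that $\p_z \Q(y,z) = \mathcal{R}(y,z)$, and using that $\zeta_0$ is time-independent, we obtain
\begin{equation*}
-\int_{-\ell}^\ell \sigma \mathcal{R}(\p_1 \zeta_0, \p_1 \eta)\,\dt \p_1 \eta \;=\; -\frac{d}{dt} \int_{-\ell}^\ell \sigma\, \Q(\p_1 \zeta_0, \p_1 \eta).
\end{equation*}
Moving this perfect time derivative to the left-hand side produces precisely the extra summand $\int_{-\ell}^\ell \sigma\, \Q(\p_1 \zeta_0, \p_1 \eta)$ inside the $\frac{d}{dt}$ in \eqref{be_0}, and combining with the already-matching kinetic, potential, and capillary pieces from \eqref{linear_energy_0} completes the identity.

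There is no genuine obstacle in this argument; the real analytic work has already been absorbed into Theorem \ref{linear_energy}. The only step requiring care is the bookkeeping: correctly identifying each $F^i$ by inspection of \eqref{ns_geometric}, confirming that with $\omega = 0$ every unwanted term in \eqref{linear_energy_0} drops out, and verifying that the signs and normalization in the $F^3$-to-$\Q$ conversion match the displayed formula. Each of these is a mechanical check once the primitive relation $\p_z \Q = \mathcal{R}$ and the definition \eqref{R_def} of $\mathcal{R}$ are at hand.
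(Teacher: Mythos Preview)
Your proposal is correct and follows essentially the same approach as the paper: identify $(v,q,\xi)=(u,p,\eta)$ with $F^i=0$ for $i\neq 3,7$, $F^3=\mathcal{R}(\p_1\zeta_0,\p_1\eta)$, $F^7=\swh(\dt\eta)$, apply Theorem~\ref{linear_energy} with $\omega=0$, and use $\dt\eta=u\cdot\N$ together with $\p_z\Q=\mathcal{R}$ to convert the $F^3$ term into a total time derivative. The paper's proof is identical in structure and content.
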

\begin{proof}
From \eqref{ns_geometric} we have that $v =u$, $q =p$, and $\xi = \eta$ solve \eqref{linear_geometric} with $F^i=0$ for $i\neq 3,7$ and $F^3 =   \mathcal{R}(\p_1 \zeta_0,\p_1 \eta)$, $F^7 = \swh(\dt \eta)$.  The identity  \eqref{be_0} then follows by applying Theorem \ref{linear_energy} with $\omega =0$ and noting that in this case
\begin{equation}
 -\int_{-\ell}^\ell \sigma F^3 \p_1 (v\cdot \N) = -\int_{-\ell}^\ell \sigma \dt \sigma \mathcal{Q}(\p_1 \zeta,\p_1 \eta) = -\frac{d}{dt} \int_{-\ell}^\ell \sigma \mathcal{Q}(\p_1 \zeta_0,\p_1 \eta).
\end{equation}

\end{proof}

Next we record the consequences of conservation of mass.

\begin{prop}\label{avg_zero_prop}
If $(u,p,\eta)$ solve \eqref{ns_geometric}, then 
\begin{equation}
 \int_{-\ell}^\ell \dt^j \eta =0 
\end{equation}
for $0 \le j \le 3$.
\end{prop}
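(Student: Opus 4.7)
\textbf{Proof plan for Proposition \ref{avg_zero_prop}.}  The strategy is to first establish the base case $j=0$, which is the mass conservation identity in the fixed-domain formulation, and then differentiate in time to obtain the three higher-order cases. The base case is precisely the pulled-back analog of the moving-domain calculation \eqref{avg_prop}, and once the notational dust settles it amounts to integrating $\diverge(J\A^T u)=0$ over $\Omega$.

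\emph{Step 1: Establish $\int_{-\ell}^\ell \eta(\cdot,t)\,dx_1 = 0$ for all $t\ge 0$.}  At $t=0$ this is exactly the hypothesis \eqref{avg_zero_t=0} on the data.  It therefore suffices to show $\frac{d}{dt}\int_{-\ell}^\ell \eta = 0$.  By the kinematic equation (the fourth equation in \eqref{ns_geometric}),
\begin{equation*}
\frac{d}{dt}\int_{-\ell}^\ell \eta \,dx_1 = \int_{-\ell}^\ell \dt\eta\,dx_1 = \int_{-\ell}^\ell u\cdot \N\,dx_1.
\end{equation*}
To see that this last integral vanishes, combine the identity \eqref{ge_3}, $\p_k(J\A_{jk})=0$, with the pointwise identity $\diva u = \A_{jk}\p_k u_j = 0$ (the second equation in \eqref{ns_geometric}) to obtain
\begin{equation*}
\p_k (J\A_{jk} u_j) = (\p_k J\A_{jk})u_j + J\A_{jk}\p_k u_j = 0 \quad \text{in } \Omega,
\end{equation*}
i.e.\ the vector field $J\A^T u$ is divergence-free on $\Omega$.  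The divergence theorem gives
\begin{equation*}
0 = \int_\Omega \diverge(J\A^T u)\,dx = \int_{\p\Omega} u\cdot (J\A\nu)\,dS,
\end{equation*}
and on the two pieces of $\p\Omega$ we use \eqref{ge_4}: on $\Sigma_s$ we have $J\A\nu = J\nu$, so the integrand reduces to $Ju\cdot\nu$, which vanishes by the no-penetration condition $u\cdot\nu=0$ on $\Sigma_s$; on $\Sigma$ we have $J\A\nu = \N/\sqrt{1+|\p_1\zeta_0|^2}$, and the surface element $dS = \sqrt{1+|\p_1\zeta_0|^2}\,dx_1$ cancels the denominator so that
\begin{equation*}
\int_\Sigma u\cdot(J\A\nu)\,dS = \int_{-\ell}^\ell u\cdot\N\,dx_1.
\end{equation*}
Combining the two contributions yields $\int_{-\ell}^\ell u\cdot\N\,dx_1 = 0$ and hence $\frac{d}{dt}\int_{-\ell}^\ell \eta = 0$, so the integral stays at its initial value, which is zero.

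\emph{Step 2: Pass to higher derivatives.}  With sufficient regularity of the solution (as implicit in the statement) we may interchange $\dt^j$ and $\int_{-\ell}^\ell$ to obtain, for $1\le j \le 3$,
\begin{equation*}
\int_{-\ell}^\ell \dt^j\eta(\cdot,t)\,dx_1 = \dt^j \int_{-\ell}^\ell \eta(\cdot,t)\,dx_1 = \dt^j(0) = 0.
\end{equation*}
This completes the proof.  The main subtlety in the argument is purely notational: one must correctly track the Jacobian and cofactor factors in the change of variables from the moving domain $\Omega(t)$ to $\Omega$, which is exactly what the identities \eqref{ge_3} and \eqref{ge_4} are designed to handle.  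No smallness assumption or higher-regularity input is needed beyond what is implicit in calling $(u,p,\eta)$ a solution.
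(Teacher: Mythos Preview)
Your proof is correct and follows essentially the same approach as the paper: both use the Piola identity \eqref{ge_3} and the boundary computation \eqref{ge_4} together with the divergence theorem to convert $\int_\Omega J\diva u = 0$ into $\int_{-\ell}^\ell u\cdot\N = \int_{-\ell}^\ell \dt\eta = 0$, then invoke \eqref{avg_zero_t=0} for $j=0$ and differentiate for $1\le j\le 3$. Your write-up is somewhat more detailed than the paper's, but the mathematical content is identical.
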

\begin{proof}
Integrating the condition $J \diva u =0$ against $J$ over $\Omega$ and using \eqref{ge_3} and \eqref{ge_4} together with the divergence theorem shows  that 
\begin{equation}
\frac{d}{dt} \int_{-\ell}^\ell \eta = \int_{-\ell}^\ell \dt \eta = \int_\Omega J \diva u =0.
\end{equation}
The result for $1\le j \le 3$ follows immediately from this, and for $j=0$ it follows from the assumption \eqref{avg_zero_t=0}. 
\end{proof}

\subsection{Coefficient bounds   }

The smallness of the perturbation $\eta$ will play an essential role in most of the arguments in the paper, from guaranteeing that $\Phi$ is a diffeomorphism to enabling certain nonlinear estimates.  The following lemma records this smallness is a quantitative way.

\begin{lem}\label{eta_small}
Let $q_+$ be as in \eqref{qpm_def}.  There exists a universal $0 < \gamma < 1$ such that if $\norm{\eta}_{W^{3-1/q_+,q_+}} \le \gamma$, then the following hold for $\A$ defined by \eqref{A_def}, $A, J, K$  defined by \eqref{AJK_def},  and $\N$ and $\N_0$  defined by \eqref{N_def} and \eqref{N0_def}, respectively.
\begin{enumerate}
 \item We have the estimates 
\begin{equation}\label{es_01}
 \max\{\norm{J-1}_{L^\infty}, \norm{K-1}_{L^\infty}, \norm{A}_{L^\infty}, \norm{\N- \N_0}_{L^\infty}  \}    \le \hal \text{ and } 
   \norm{\mathcal{A}}_{L^\infty} \ls 1.
\end{equation}

 \item For every $u \in H^1(\Omega; \R^2)$ such that $u\cdot \nu =0$ on $\Sigma_s$ we have that 
\begin{equation}
 \frac{\mu}{4} \int_\Omega \abs{\sg u}^2 + \frac{\beta}{2} \int_{\Sigma_s} \abs{u}^2 \le  \frac{\mu}{2} \int_\Omega \abs{\sga u}^2 J + \beta \int_{\Sigma_s} J\abs{u\cdot \tau}^2
 \le  \mu  \int_\Omega \abs{\sg u}^2 + 2\beta  \int_{\Sigma_s} \abs{u}^2
\end{equation}

 \item The map $\Phi$ defined by \eqref{mapping_def} is a diffeomorphism.
 
\end{enumerate}

\end{lem}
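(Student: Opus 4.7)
The plan is to reduce everything to an $L^\infty$ smallness bound on $\bar\eta$ and its first derivatives, and then to read off each of the three conclusions from the explicit algebraic formulas \eqref{AJK_def}, \eqref{N_def}, and \eqref{mapping_def}. The three conclusions are essentially decoupled once this control is in hand.

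The first step is a one-dimensional Sobolev embedding on $(-\ell,\ell)$: since $q_+ \in (1,2)$ by \eqref{qpm_def}, we have $(3-1/q_+) - 1/q_+ = 3 - 2/q_+ > 1$, so $W^{3-1/q_+,q_+}((-\ell,\ell)) \hookrightarrow C^{1,\alpha}([-\ell,\ell])$ for some $\alpha>0$. Composing with the extension operator $E$ and the lower Poisson extension $\mathcal P$ (whose mapping properties are compiled in Appendix \ref{sec_analysis_tools}) yields $\bar\eta \in W^{2,q_+}(\R\times\R^-)$ at regularity sufficient to embed into $C^{1,\alpha}$ in two dimensions as well, so that
\begin{equation}
\|\bar\eta\|_{L^\infty(\Omega)} + \|\nabla \bar\eta\|_{L^\infty(\Omega)} \lesssim \|\eta\|_{W^{3-1/q_+,q_+}}.
\end{equation}
The smooth cutoff $\phi$ and the equilibrium profile $\zeta_0$ (bounded below and smooth) give universal $L^\infty$ bounds on $W=\phi/\zeta_0$, $\phi'/\zeta_0$, and $\p_1\zeta_0$. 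Inserting these into \eqref{AJK_def} gives $\|A\|_{L^\infty} + \|J-1\|_{L^\infty} \lesssim \|\eta\|_{W^{3-1/q_+,q_+}}$, and similarly $\|\N-\N_0\|_{L^\infty} \lesssim \|\p_1\eta\|_{L^\infty}$ from \eqref{N_def} and \eqref{N0_def}. Choosing $\gamma$ small enough forces $J\ge 1/2$, whereupon $K = J^{-1}$ satisfies $\|K-1\|_{L^\infty} \le 2\|J-1\|_{L^\infty}$; the bound on $\mathcal A$ follows from its explicit form in \eqref{A_def}. This establishes part (1).

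For part (2), write $\sg_\A u = \sg u + (\mathcal A-I)\nabla u + (\nabla u)(\mathcal A-I)^T$, so that
\begin{equation}
|\sg_\A u|^2 J = |\sg u|^2 + R,\qquad |R| \le C\bigl(\|\mathcal A-I\|_{L^\infty} + \|J-1\|_{L^\infty}\bigr)|\nabla u|^2,
\end{equation}
and using $|\nabla u|^2 \lesssim |\sg u|^2 + |\sg u|\cdot|\mathcal A-I||\nabla u|$ absorbed via Young's inequality, shrinking $\gamma$ if necessary, produces the two-sided bulk estimate with the factors $\mu/4$ and $\mu$. On $\Sigma_s$, the condition $u\cdot\nu=0$ yields $|u\cdot\tau|=|u|$, so the boundary terms reduce to $\int_{\Sigma_s} \beta J |u|^2$, and the bound $1/2 \le J \le 3/2$ from part (1) gives the two-sided control with constants $\beta/2$ and $2\beta$.

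For part (3), $\Phi$ is $C^1$ on $\overline\Omega$ with Jacobian $\det \nabla\Phi = J \ge 1/2$, hence a local diffeomorphism everywhere. For injectivity, if $\Phi(x,t)=\Phi(x',t)$ then equality of first coordinates gives $x_1 = x_1'$, and the map $x_2\mapsto x_2 + (\phi(x_2)/\zeta_0(x_1))\bar\eta(x_1,x_2,t)$ is strictly increasing because its derivative equals $J(x_1,x_2,t)\ge 1/2$. Surjectivity onto $\Omega(t)$ follows from the three properties listed below \eqref{mapping_def}: $\Phi$ restricts to the identity on $\mathcal V_{\mathrm{btm}}$, sends $\Sigma$ bijectively onto $\Sigma(t)$ via $x_1\mapsto(x_1,\zeta(x_1,t))$, and leaves the lateral walls $\{x_1=\pm\ell\}$ invariant. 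Together with the interior local diffeomorphism property and the closedness/openness of $\Phi(\Omega)\cap \Omega(t)$ within $\Omega(t)$, this yields a global $C^1$ diffeomorphism. The main technical point to verify carefully is the regularity chain $\eta\mapsto E\eta\mapsto \mathcal P E\eta\mapsto \bar\eta$ in Step~1, since the $C^{1,\alpha}$ conclusion is what is ultimately driving every subsequent smallness bound; the remaining computations are algebraic manipulations of \eqref{AJK_def}--\eqref{N_def}.
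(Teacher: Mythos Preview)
Your outline for items (1) and (3) matches the paper's (terse) proof: Sobolev embeddings combined with the mapping properties of the Poisson extension in Proposition~\ref{poisson_prop}, followed by direct algebraic manipulation of \eqref{AJK_def}--\eqref{N_def} and the explicit form of $\Phi$. One small correction: you write $\bar\eta \in W^{2,q_+}$, but since $q_+<2$ the Sobolev index in two dimensions is $2-2/q_+=\ep_+<1$, which only gives $C^{0,\ep_+}$, not $C^{1,\alpha}$. The correct statement is $\bar\eta\in W^{3,q_+}$ (Proposition~\ref{poisson_prop} with $s=3$ applied to $E\eta\in W^{3-1/q_+,q_+}(\R)$), which does embed into $C^{1,\ep_+}$; this is a harmless slip in the index.

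There is, however, a genuine gap in your treatment of item (2). The bound $|R|\le C(\|\A-I\|_{L^\infty}+\|J-1\|_{L^\infty})|\nabla u|^2$ is correct, but the next step, ``using $|\nabla u|^2\lesssim|\sg u|^2+|\sg u|\cdot|\A-I||\nabla u|$ absorbed via Young's inequality,'' is not valid: the symmetric gradient $\sg u$ does not control the full gradient $\nabla u$ pointwise (consider $u$ an infinitesimal rotation, for which $\sg u=0$ but $\nabla u\neq 0$). What is needed is an \emph{integrated} Korn-type inequality: for $u\in H^1(\Omega;\R^2)$ one has
\[
\int_\Omega|\nabla u|^2 \lesssim \int_\Omega|\sg u|^2 + \int_{\Sigma_s}|u|^2,
\]
which holds because the only rigid motion vanishing on the curve $\Sigma_s$ is zero. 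With this in hand the remainder $\int_\Omega|R|$ is bounded by $C\gamma\bigl(\int_\Omega|\sg u|^2+\int_{\Sigma_s}|u|^2\bigr)$, and shrinking $\gamma$ absorbs it into both sides of the desired two-sided estimate. The paper does not write this out either; it defers to Proposition~4.3 of \cite{guo_tice_per}, where precisely this Korn argument is carried out.
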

\begin{proof}
The first and third items follow from standard product estimates, Proposition \ref{poisson_prop}, and the Sobolev embeddings.  The second item is a simple modification of Proposition 4.3 in \cite{guo_tice_per}.

\end{proof}

\subsection{$M$ as a multiplier}

It will be useful to define the following matrix in terms of $\eta$:
\begin{equation}\label{M_def}
 M = K \nab \Phi = (J \A^T)^{-1},
\end{equation}
where $\A$ is as in \eqref{A_def} and $J$ and $K$ are as in \eqref{AJK_def}.  We will view this matrix as inducing a linear map via multiplication.  Our first result records the boundedness properties of this map.

\begin{prop}\label{M_multiplier}
Let $M$ be given by \eqref{M_def} and suppose that  $1 \le q \le 2/(1-\ep_+)$.  Then we have the inclusions $M \in \L( W^{1,q}(\Omega;\R^2))$ and $M,\dt M \in \L(L^{q}(\Omega;\R^2))$ as well as the estimates 
\begin{equation}
 \norm{M \zeta}_{W^{1,q}} \ls (1+\sqrt{\E}) \norm{\zeta}_{W^{1,q}} \text{ and } \norm{M \zeta}_{L^{q}} + \norm{\dt M \zeta}_{L^{q}} \ls (1+\sqrt{\E}) \norm{\zeta}_{L^{q}}.
\end{equation}
\end{prop}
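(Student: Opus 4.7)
\emph{Approach.} Since $M = K\nabla\Phi = \begin{pmatrix} K & 0 \\ KA & 1 \end{pmatrix}$, every entry is a smooth function of $\bar\eta$, $\nabla\bar\eta$, $\zeta_0$, and $\phi$, with the identity piece handling the constant ``1'' in the bound. The strategy is to reduce the whole statement to Sobolev/embedding bounds on $\bar\eta$ and $\dt\bar\eta$, which in turn follow from the Poisson-extension estimates in Proposition \ref{poisson_prop} together with the energy-norm content of $\E$.

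\emph{Step 1: Extension regularity.} Since $\eta \in W^{3-1/q_+,q_+}$ and $\dt\eta \in H^{3/2+(\ep_--\low)/2}$ are each controlled by $\sqrt{\E}$, the definition \eqref{extension_def} of $\bar\eta$ and the trace/Poisson gains in Proposition \ref{poisson_prop} (together with smoothness of the composition with $E\zeta_0$) yield $\|\bar\eta\|_{W^{3,q_+}(\Omega)} + \|\dt\bar\eta\|_{H^{2+(\ep_--\low)/2}(\Omega)} \lesssim \sqrt{\E}$.

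\emph{Step 2: $L^\infty$ bounds on $M$ and $\dt M$.} In two dimensions, $W^{2,q_+}\hookrightarrow L^\infty$ (since $q_+>1$), so $\bar\eta,\nabla\bar\eta\in L^\infty$ with norms $\lesssim\sqrt{\E}$. Combined with the smallness assumption $\|\eta\|_{W^{3-1/q_+,q_+}}\le\gamma$ from Lemma \ref{eta_small} (which keeps $J$ bounded away from zero so $K=J^{-1}$ is smooth), the explicit form of $M$ gives $\|M\|_{L^\infty} \le 1 + C\sqrt{\E}$, the ``$1$'' arising from the identity part. For $\dt M$, the entries involve $\dt\bar\eta$ and $\nabla\dt\bar\eta$; since $3/2+(\ep_--\low)/2 > 1$ in 2D, both lie in $L^\infty$, giving $\|\dt M\|_{L^\infty}\lesssim\sqrt{\E}$. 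These $L^\infty$ bounds immediately yield the $L^q$ half of the proposition.

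\emph{Step 3: Control of $\nabla M$.} The entries of $\nabla M$ involve at worst one factor of $\nabla^2\bar\eta$, multiplied by coefficients that are $L^\infty$ by Step 2. Setting $p_\star = 2/(1-\ep_+)$, the 2D Sobolev embedding $W^{1,q_+}\hookrightarrow L^{p_\star}$ (since $1/q_+ - 1/2 = (1-\ep_+)/2 = 1/p_\star$) combined with Step 1 gives $\|\nabla M\|_{L^{p_\star}}\lesssim\sqrt{\E}$.

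\emph{Step 4: $W^{1,q}$ bound via the product rule.} Write $\nabla(M\zeta) = M\nabla\zeta + (\nabla M)\zeta$. The first term is controlled by $\|M\|_{L^\infty}\|\nabla\zeta\|_{L^q}\lesssim (1+\sqrt{\E})\|\zeta\|_{W^{1,q}}$. For the second term, use Hölder with exponents $p_\star$ and $p_2$ determined by $1/p_2 = 1/q - 1/p_\star = 1/q - (1-\ep_+)/2$. The hypothesis $1\le q\le 2/(1-\ep_+)$ makes $1/p_2\ge 0$, i.e.\ $p_2\in[q,\infty]$, and the 2D Sobolev embedding $W^{1,q}\hookrightarrow L^{p_2}$ holds (at the endpoint $q=2/(1-\ep_+)>2$ we have $p_2=\infty$ and use the Morrey embedding). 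Therefore $\|(\nabla M)\zeta\|_{L^q}\lesssim \sqrt{\E}\,\|\zeta\|_{W^{1,q}}$, which together with the first term finishes the $W^{1,q}$ estimate.

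\emph{Main obstacle.} The only delicate point is the endpoint $q = 2/(1-\ep_+)$: one needs $W^{1,q}\hookrightarrow L^\infty$ in order to pair with $\nabla M\in L^q$. This precisely requires $q>2$, which is equivalent to $\ep_+>0$ and is built into \eqref{kappa_ep_def}. Away from the endpoint, all embeddings have room to spare, so the estimates follow in a routine manner from Steps 1--3.
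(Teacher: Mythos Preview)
Your proof is correct and follows essentially the same approach as the paper: product-rule decomposition $\nabla(M\zeta)=M\nabla\zeta+(\nabla M)\zeta$, an $L^\infty$ bound on $M$ and $\dt M$ from $\bar\eta,\nabla\bar\eta,\dt\bar\eta,\nabla\dt\bar\eta\in L^\infty$, and a H\"older pairing of $\nabla M\in L^{2/(1-\ep_+)}$ against $\zeta$ in the Sobolev-embedded Lebesgue space. The only cosmetic difference is that the paper splits the H\"older step into subcritical/critical/supercritical cases ($q<2$, $q=2$, $q>2$), whereas you package all three into the single exponent relation $1/p_2=1/q-(1-\ep_+)/2$ together with the $2$D embedding $W^{1,q}\hookrightarrow L^{p_2}$; this is the same computation.
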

\begin{proof}
First note that 
\begin{equation}
 \norm{M \zeta}_{W^{1,q}} \ls \norm{\abs{M}\abs{\zeta}}_{L^q} + \norm{\abs{M} \abs{\nab \zeta}}_{L^q} + \norm{\abs{\nab M} \zeta}_{L^q} \ls \norm{M}_{L^\infty} \norm{\zeta}_{W^{1,q}} +\norm{\abs{\nab M} \zeta}_{L^q}.
\end{equation}
It's easy to see that 
\begin{equation}
 \norm{M}_{L^\infty}  \ls 1 + \norm{\bar{\eta}}_{W^{1,\infty}} \ls 1+ \sqrt{\E},
\end{equation}
which handles the first term on the right.  For the second we need to use H\"{o}lder's inequality, and we must break to cases.

In the first case we assume that $q$ is subcritical, i.e. $1 \le q < 2$.    Then 
\begin{equation}
 \frac{1- \ep_+}{2} + \frac{1}{q^\ast} = \frac{1-\ep_+}{2} + \frac{1}{q} - \frac{1}{2}  < \frac{1}{q},
\end{equation}
so we can bound 
\begin{equation}
 \norm{\abs{\nab M} \zeta}_{L^q} \ls \norm{\nab M}_{L^{2/(1-\ep_+)}} \norm{\zeta}_{L^{q^\ast}} \ls \norm{\bar{\eta}}_{L^{2/(1-\ep_+)}}\norm{\zeta}_{W^{1,q}} \ls \sqrt{\E} \norm{\zeta}_{W^{1,q}}.
\end{equation}
In the second case we assume criticality, i.e. $q=2$.  Then by the critical Sobolev embedding, 
\begin{equation}
 \norm{\abs{\nab M} \zeta}_{L^2} \ls \norm{\nab M}_{L^{2/(1-\ep_+)}} \norm{\zeta}_{L^{2/\ep_+}} \ls \norm{\bar{\eta}}_{L^{2/(1-\ep_+)}}\norm{\zeta}_{W^{1,2}} \ls \sqrt{\E} \norm{\zeta}_{W^{1,q}}.
\end{equation}
In the third case we assume supercriticality, i.e. $2 < q \le 2/(1-\ep_+)$.  Then 
\begin{equation}
 \norm{\abs{\nab M} \zeta}_{L^q} \ls \norm{\nab M}_{L^{2/(1-\ep_+)}} \norm{\zeta}_{L^{\infty}} \ls \norm{\bar{\eta}}_{L^{2/(1-\ep_+)}}\norm{\zeta}_{W^{1,q}} \ls \sqrt{\E} \norm{\zeta}_{W^{1,q}}.
\end{equation}

Thus, in any case we have 
\begin{equation}
 \norm{\abs{\nab M} \zeta}_{L^q} \ls \sqrt{\E} \norm{\zeta}_{W^{1,q}}, 
\end{equation}
and the first estimate follows.  To prove the second estimate we simply note that by Theorem \ref{catalog_energy}
\begin{equation}
 \norm{ M}_{L^\infty} + \norm{\dt M}_{L^\infty} \ls 1 + \norm{\bar{\eta}}_{W^{1,\infty}}+ \norm{\dt \bar{\eta}}_{W^{1,\infty}} \ls 1 + \sqrt{\E}.
\end{equation}
\end{proof}

The matrix $M$ plays an important role in switching from the operator $\diverge$ to $\diva$.  We record this information in the following.

\begin{prop}\label{M_properties}
Let $M$ be given by \eqref{M_def} and $1 \le q \le 2/(1-\ep_+)$.  Then the following hold for $u \in W^{1,q}(\Omega;\R^2)$.
\begin{enumerate}
 \item $\diverge u =p$ if and only if $\diva(Mu)=K p$.
 \item $u \cdot \nu =0$ on $\Sigma_s$ if and only if $(Mu) \cdot \nu =0$ on $\Sigma_s$.
 \item $u \cdot \N_0 = (Mu)\cdot \N$ on $\Sigma$.
\end{enumerate}
\end{prop}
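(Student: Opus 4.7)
The plan is to derive all three items from two structural facts already established in the paper: the Piola identity $\partial_k(J\A_{ik})=0$ from \eqref{ge_3} and the normal-alignment identities for $J\A\nu$ from \eqref{ge_4}. The key algebraic observation is that, because $M=(J\A^T)^{-1}$, we have the pointwise matrix identity $J\A^T M = I$, so for any vector field $w$, the $i$-th component reads $w_i = J\A_{ki}(Mw)_k$. In addition, from $\A=(\nabla\Phi^{-1})^T$ we immediately obtain $\A^{-1}=(\nabla\Phi)^T$, and hence $M^T = K(\nabla\Phi)^T = K\A^{-1}$, which is the form I will use when computing the effect of $M$ on normal vectors.

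For item 1, I would simply differentiate the identity $u_i = J\A_{ki}(Mu)_k$ and invoke the Piola identity \eqref{ge_3}:
\begin{equation*}
\diverge u = \partial_i u_i = \partial_i\bigl(J\A_{ki}(Mu)_k\bigr) = J\A_{ki}\,\partial_i(Mu)_k + (Mu)_k\,\partial_i(J\A_{ki}) = J\diva(Mu).
\end{equation*}
Multiplying by $K=J^{-1}$ yields $\diva(Mu)=K\diverge u$, which establishes the equivalence. The regularity hypothesis $1\le q \le 2/(1-\ep_+)$ together with Lemma \ref{eta_small} ensures that $M\in \L(W^{1,q}(\Omega;\R^2))$ by Proposition \ref{M_multiplier}, so the differentiation is legitimate and both sides live in $L^q$.

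For items 2 and 3, I would compute $M^T$ applied to the appropriate normal and then use the trivial identity $(Mu)\cdot N = u\cdot(M^T N)$. On $\Sigma_s$, \eqref{ge_4} gives $J\A\nu = J\nu$, whence $\A\nu=\nu$ and thus $\A^{-1}\nu=\nu$, so $M^T\nu = K\A^{-1}\nu = K\nu$. This produces $(Mu)\cdot\nu = K(u\cdot\nu)$, and since $K>0$ the two boundary conditions are equivalent, giving item 2. On $\Sigma$, writing $\nu = \N_0/\sqrt{1+|\partial_1\zeta_0|^2}$ and combining with the second case of \eqref{ge_4}, one reads off $J\A\N_0 = \N$, hence $M^T\N = K\A^{-1}\N = (J\A)^{-1}\N = \N_0$, which gives $(Mu)\cdot\N = u\cdot\N_0$ and proves item 3. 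I do not foresee any real obstacle here: every item reduces to a short algebraic manipulation, with the quantitative smallness from Lemma \ref{eta_small} and the multiplier bounds from Proposition \ref{M_multiplier} serving only to guarantee that all expressions are defined in the correct function-space sense, including the traces on $\Sigma_s$ and $\Sigma$.
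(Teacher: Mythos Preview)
Your proof is correct and follows essentially the same route as the paper's: both arguments hinge on the Piola identity \eqref{ge_3} for item 1 and on the relation $M^T = K(\nabla\Phi)^T = K\A^{-1}$ together with the normal identities of \eqref{ge_4} for items 2 and 3. The only cosmetic difference is that the paper writes item 1 by computing $\diverge(M^{-1}v)$ and then substituting $v=Mu$, whereas you differentiate $u_i = J\A_{ki}(Mu)_k$ directly; and for item 2 the paper splits $\Sigma_s$ into the side walls and the vessel bottom rather than invoking \eqref{ge_4} uniformly, but since $\A\nu=\nu$ holds on all of $\Sigma_s$ the two presentations coincide.
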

\begin{proof}
We compute
\begin{equation}
 \diverge(M^{-1} v) = \p_j( J\A_{ij}v_i) = J \A_{ij} \p_j v_i = J \diva{v}.
\end{equation}
Hence, upon setting $Mu = v$ we see that
\begin{equation}
 \diverge{u} = p \text{ if and only if } \diva(Mu) = Kp.
\end{equation}
This proves the first item.  For the second note that 
\begin{equation}
 K\nab \Phi^T \nu = K\nu \text{ on } \{ x \in \p \Omega \;\vert \;  x_1 = \pm \ell, x_2 \ge 0  \} \text{ and } K\nab \Phi^T \nu = \nu \text{ on } \{x \in \p \Omega \; \vert \; x_2 < 0 \},
\end{equation}
so on $\Sigma_s$ we have that
\begin{equation}
 M u \cdot \nu =0 \Leftrightarrow  u \cdot (K \nab \Phi^T \nu) =0 \Leftrightarrow u \cdot \nu =0.
\end{equation}
Finally, for the third item we compute on $\Sigma$:
\begin{equation}
 J \A \N_0 = \N \Rightarrow \N_0 = K (\A)^{-1} \N = K \nab \Phi^T \N,
\end{equation}
which implies that
\begin{equation}
 u \cdot \N_0 = u \cdot K \nab \Phi^T \N = K \nab \Phi u \cdot \N = M u \cdot \N.
\end{equation}

\end{proof}

\subsection{Various bounds   }

In subsequent parts of the paper we will need to repeatedly employ various $L^q$ estimates for $u$, $p$, $\eta$ and their derivatives in terms of either $\sqrt{\E}$ or $\sqrt{\D}$, defined respectively in \eqref{E_def} and \eqref{D_def}.  Thus, we now turn our attention to recording a precise catalog of such estimates, which are available due to the control provided by $\E$ and $\D$ and various auxiliary estimates.  In order to efficiently record this catalog, we will use tables of the following form.  
\begin{displaymath}
\def\arraystretch{1.2}
\begin{array}{|c | c | c | c | c |}
\hline
\text{Function} = f  & f & \nab f & \nab^2 f & \nab^3 f \\ 
\hline
\varphi & \infty & a & b & c \\ \hline
\tr \varphi  & \infty- & e &  &  \\ \hline
\end{array}
\end{displaymath}
The top row indicates that the first column labels the function under consideration, and the subsequent columns give the $q$ for which the number of derivatives indicated in top row  belong to $L^q$.  In this notation $q = \infty$ indicates $L^\infty$, while $\infty-$ indicates inclusion in $L^q$ for all $1 \le q < \infty$ (with bounds that diverge as $q \to \infty$ as in the critical Sobolev inequality), and an empty cell indicates no estimate available.  The set on which the $L^q$ norm is evaluated is always understood to be the ``natural'' set on which the function is defined: $\Omega$ for $u$, $p$, $\bar{\eta}$, and $(-\ell,\ell)$ for $\eta$.  The notation $\tr$ indicates that the function under consideration is the trace onto either $\Sigma$ or $\Sigma_s$.  For example, if we state that the above sample table records estimates in terms of $\sqrt{\E}$, and $\varphi$ is defined in $\Omega$, then this indicates that 
\begin{equation}
\norm{\varphi}_{L^\infty(\Omega)} + \norm{\nab \varphi}_{L^a(\Omega)} + \norm{\nab^2 \varphi}_{L^b(\Omega)} + \norm{\nab^3 \varphi}_{L^c(\Omega)} \ls \sqrt{\E},
\end{equation}
\begin{equation}
 \norm{\tr \varphi}_{L^q(\Sigma)} +  \norm{\tr \varphi}_{L^q(\Sigma_s)} \le C_q \sqrt{\E} \text{ for all } 1 \le q < \infty,
\end{equation}
where $C_q \to \infty$ as $q \to \infty$, and 
\begin{equation}
\norm{\tr \nab \varphi}_{L^e(\Sigma)}  + \norm{\tr \nab \varphi}_{L^e(\Sigma_s)} \ls \sqrt{\E}.
\end{equation}

With this notation established, we now turn to recording the catalogs.  We begin with the estimates in terms of the energy.

\begin{thm}\label{catalog_energy}
The following three tables record the $L^q$ bounds for $u$, $p$, $\eta$ and their derivatives in terms of the energy $\sqrt{\E}$, as defined in \eqref{E_def}.
\begin{displaymath}
\def\arraystretch{1.2}
\begin{array}{|c | c | c | c | c |}
\hline
\text{Function} = f  & f & \nab f & \nab^2 f & \nab^3 f \\ 
\hline
u & \infty & 2/(1-\ep_+) & 2/(2-\ep_+) &  \\ \hline
\dt u & \infty & 4/(2-\ep_-) &  &  \\ \hline
\dt^2 u & 2 &  & &  \\ \hline
\tr u & \infty & 1/(1-\ep_+) & &  \\ \hline
\tr \dt u & \infty &  & &  \\ \hline
\end{array}
\end{displaymath}
\begin{displaymath}
\def\arraystretch{1.2}
\begin{array}{|c | c | c | c | c |}
\hline
\text{Function} = f  & f & \nab f & \nab^2 f & \nab^3 f \\ 
\hline
p & 2/(1-\ep_+) & 2/(2-\ep_+) & &  \\ \hline
\dt p & 2  & & &  \\ \hline
\tr p & 1/(1-\ep_+) & & &  \\ \hline
\end{array}
\end{displaymath}
\begin{displaymath}
\def\arraystretch{1.2}
\begin{array}{|c | c | c | c | c |}
\hline
\text{Function} = f  & f & \nab f & \nab^2 f & \nab^3 f \\ 
\hline
\eta & \infty & \infty & 1/(1-\ep_+) &  \\ \hline
\dt \eta & \infty & \infty &  &  \\ \hline
\dt^2 \eta & \infty & 2  & &  \\ \hline
\bar{\eta} & \infty & \infty & 2/(1-\ep_+) & 2/(2-\ep_+) \\ \hline
\dt \bar{\eta} & \infty & \infty & 4/(2- (\ep_--\low) ) &   \\ \hline
\dt^2 \bar{\eta} & \infty  &  4 &   &  \\ \hline
\tr \bar{\eta} & \infty & \infty & 1/(1-\ep_+) &  \\ \hline
\tr \dt \bar{\eta} & \infty & \infty &  &  \\ \hline
\tr \dt^2 \bar{\eta} & \infty & 2  & &  \\ \hline
\end{array}
\end{displaymath}

\end{thm}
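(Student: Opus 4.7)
The plan is to derive each entry as a direct consequence of an appropriate Sobolev embedding or trace estimate applied to a norm that is visibly controlled by $\sqrt{\E}$ through the definition \eqref{E_def}. For the entries involving $u$, $p$, and their time derivatives, as well as $\dt^2 \eta$, the relevant norm is already named in $\E$ and no preparatory work is needed. For entries concerning $\bar{\eta}$ and its time derivatives, I would first invoke Proposition \ref{poisson_prop} together with boundedness of the extension operator $E$ to translate regularity of $\eta$, $\dt\eta$, $\dt^2\eta$ on $(-\ell,\ell)$ into regularity of the Poisson-extended functions on $\Omega$, using the fact that the Poisson extension gains exactly $1/q$ fractional derivatives when the boundary datum lies in $W^{s-1/q,q}$. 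Concretely this upgrades $\eta \in W^{3-1/q_+,q_+}$ to $\bar{\eta} \in W^{3,q_+}(\Omega)$, $\dt\eta \in H^{3/2+(\ep_--\low)/2}$ to $\dt\bar{\eta} \in H^{2+(\ep_--\low)/2}(\Omega)$, and $\dt^2\eta \in H^1$ to $\dt^2\bar{\eta} \in H^{3/2}(\Omega)$.

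For the bulk entries I would then apply the 2D Sobolev embeddings $W^{s,q}(\Omega)\hookrightarrow L^p(\Omega)$, valid on the Lipschitz domain $\Omega$: embedding into $L^\infty$ when $s > 2/q$ and into $L^p$ with $1/p = 1/q - s/2$ otherwise. For example, $u\in W^{2,q_+}$ embeds into $L^\infty$ since $2q_+>2$; $\nabla u \in W^{1,q_+} \hookrightarrow L^{2/(1-\ep_+)}$ via the critical Sobolev formula; and $\nabla^2 u \in L^{q_+}=L^{2/(2-\ep_+)}$ directly. Using the identity $1-1/q_+=\ep_+/2$ (which follows from $q_+=2/(2-\ep_+)$), the same line of argument dispatches all remaining bulk rows in the $u$, $\dt u$, $\dt^2 u$, $p$, $\dt p$, $\bar{\eta}$, $\dt\bar{\eta}$, $\dt^2 \bar{\eta}$ families; for the $\eta$ and $\dt\eta$ rows the ambient embedding is the 1D version $W^{s,q}((-\ell,\ell))\hookrightarrow L^p$ with $1/p = 1/q - s$, which in particular places $\eta''$ into $L^{1/(1-\ep_+)}$ and $\dt\eta'$ into $L^\infty$ since $1/2+(\ep_--\low)/2>1/2$.

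For the trace rows I would combine the trace estimate $W^{s,q}(\Omega)\to W^{s-1/q,q}(\p\Omega)$, valid on Lipschitz $\Omega$ for $s>1/q$, with the 1D embedding $W^{\sigma,q}((-\ell,\ell))\hookrightarrow L^{q/(1-\sigma q)}$ when $0<\sigma q<1$. The archetypal case is the trace of $\nabla u$: its trace lies in $W^{\ep_+/2, q_+}(\p\Omega)$, and the arithmetic $1/q_+-\ep_+/2 = (1-\ep_+/2)-\ep_+/2=1-\ep_+$ yields the displayed $L^{1/(1-\ep_+)}$ bound. An identical computation gives the trace entries for $p$ and $\nabla^2\bar{\eta}$, while the remaining trace entries either inherit $L^\infty$ from their bulk embeddings or follow from $H^{s+1/2}(\Omega)\to H^s(\p\Omega)$ combined with 1D Sobolev on the boundary.

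The only step requiring genuine care is the $\tr\nabla\dt^2\bar{\eta}$ entry, since $\nabla\dt^2\bar{\eta}\in H^{1/2}(\Omega)$ is borderline for the classical trace theorem. Rather than appealing to a general estimate, I would exploit the concrete form \eqref{extension_def} of $\dt^2\bar{\eta}$: on $\Sigma$ the tangential component of $\tr\nabla\dt^2\bar{\eta}$ can be identified, via the chain rule, with $\dt^2\eta'\in L^2(-\ell,\ell)$, while the normal component is produced by the Dirichlet-to-Neumann map built from the Poisson kernel, which is bounded from $H^1((-\ell,\ell))$ into $L^2$. The companion subtlety in the $\nabla^2\dt\bar{\eta}\in L^{4/(2-(\ep_--\low))}$ entry is already baked into the definition \eqref{E_def}: the parameter $\low$ from \eqref{kappa_ep_def} precisely accounts for the half-derivative lost when passing from bulk $H^{(\ep_--\low)/2}$ regularity to $L^p$ integrability via 2D Sobolev, and without it the exponent displayed in the table would not match.
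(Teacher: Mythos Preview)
Your proposal is correct and follows essentially the same approach as the paper's own proof, which simply invokes the standard Sobolev embeddings, trace theorems, and Proposition \ref{poisson_prop} without further elaboration. You have supplied considerably more detail than the paper does, and in particular your flagging of the borderline case $\tr\nabla\dt^2\bar{\eta}\in L^2$ (where $\nabla\dt^2\bar{\eta}$ lies only in $H^{1/2}(\Omega)$, for which the standard trace theorem fails) together with your proposed resolution via the explicit Poisson structure and the $H^1\to L^2$ boundedness of the associated Dirichlet-to-Neumann map is a genuine refinement over the paper's terse statement.
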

\begin{proof}
The estimates for $u$, $p$, $\eta$ and their derivatives follow directly from the standard Sobolev embeddings and trace theorems, together with the definition of $\E$.   The estimates for $\bar{\eta}$ at its derivatives follow similarly, except that we also employ Proposition \ref{poisson_prop} to account for the regularity gains arising from the appearance of the Poisson extensions $\mathcal{P}$ in the definition of $\bar{\eta}$.
\end{proof}

Next we record the catalog of estimates in terms of the dissipation.

\begin{thm}\label{catalog_dissipation}
The following three tables record the $L^q$ bounds for $u$, $p$, $\eta$ and their derivatives in terms of the dissipation $\sqrt{\D}$, as defined in \eqref{D_def}.
\begin{displaymath}
\def\arraystretch{1.2}
\begin{array}{|c | c | c | c | c |}
\hline
\text{Function} = f  & f & \nab f & \nab^2 f & \nab^3 f \\ 
\hline
u & \infty & 2/(1-\ep_+) & 2/(2-\ep_+) &  \\ \hline
\dt u & \infty & 2/(1-\ep_-) & 2/(2-\ep_-) &  \\ \hline
\dt^2 u & \infty- & 2 & &  \\ \hline
\tr u & \infty & 1/(1-\ep_+) & &  \\ \hline
\tr \dt u & \infty & 1/(1-\ep_-) & &  \\ \hline
\tr \dt^2 u & \infty- & & &  \\ \hline
\end{array}
\end{displaymath}
\begin{displaymath}
\def\arraystretch{1.2}
\begin{array}{|c | c | c | c | c |}
\hline
\text{Function} = f  & f & \nab f & \nab^2 f & \nab^3 f \\ 
\hline
p & 2/(1-\ep_+) & 2/(2-\ep_+) & &  \\ \hline
\dt p & 2/(1-\ep_-) & 2/(2-\ep_-)& &  \\ \hline
\tr p & 1/(1-\ep_+) & & &  \\ \hline
\tr \dt p & 1/(1-\ep_-) & & &  \\ \hline
\end{array}
\end{displaymath}
\begin{displaymath}
\def\arraystretch{1.2}
\begin{array}{|c | c | c | c | c |}
\hline
\text{Function} = f  & f & \nab f & \nab^2 f & \nab^3 f \\ 
\hline
\eta & \infty & \infty & 1/(1-\ep_+) &  \\ \hline
\dt \eta & \infty & \infty & 1/(1-\ep_-) &  \\ \hline
\dt^2 \eta & \infty & 1/\low & &  \\ \hline
\dt^3 \eta & 1/\low & & &  \\ \hline
\bar{\eta} & \infty & \infty & 2/(1-\ep_+) & 2/(2-\ep_+) \\ \hline
\dt \bar{\eta} & \infty & \infty & 2/(1-\ep_-) & 2/(2-\ep_-)  \\ \hline
\dt^2 \bar{\eta} & \infty  &  2/\low & 2/(1+\low)&  \\ \hline
\dt^3 \bar{\eta} & 2/\low & 2/(1+2\low) & &  \\ \hline
\tr \bar{\eta} & \infty & \infty & 1/(1-\ep_+) &  \\ \hline
\tr \dt \bar{\eta} & \infty & \infty & 1/(1-\ep_-) &  \\ \hline
\tr \dt^2 \bar{\eta} & \infty & 1/\low & &  \\ \hline
\tr \dt^3 \bar{\eta} & 1/\low & & &  \\ \hline
\end{array}
\end{displaymath}

\end{thm}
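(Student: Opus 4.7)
The plan is to mirror the short proof of Theorem~\ref{catalog_energy}: for each row in the three tables I identify which term of \eqref{D_def} (possibly passing through $\sdb$ as given by \eqref{ED_parallel} and \eqref{ED_natural}) provides the strongest regularity control, and then fill in the five columns by applying the standard Sobolev embeddings (on $\Omega$ in 2D, on $\Sigma$ and $\Sigma_s$ in 1D), the trace theorem, and---for the rows involving $\bar\eta$---Proposition~\ref{poisson_prop} to trade boundary regularity of $\eta$ and its time derivatives for bulk regularity of $\bar\eta$.

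For $u$ the dissipation directly controls $\ns{u}_{W^{2,q_+}}$; since $q_+=2/(2-\ep_+)\in(1,2)$, the 2D embeddings $W^{2,q_+}\hookrightarrow L^\infty$ and $W^{1,q_+}\hookrightarrow L^{2/(1-\ep_+)}$, together with the boundary embeddings $W^{2-1/q_+,q_+}(\p\Omega)\hookrightarrow L^\infty$ and $W^{1-1/q_+,q_+}(\p\Omega)\hookrightarrow L^{1/(1-\ep_+)}$, fill the $u$ and $\tr u$ rows. The $\dt u$ rows are identical with $q_-$ in place of $q_+$, using $\ns{\dt u}_{W^{2,q_-}}\le\D$. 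The $\dt^2 u$ row uses $\ns{\dt^2 u}_{H^1}\le\sdb\le\D$, then the critical 2D embedding $H^1\hookrightarrow L^{\infty-}$ and its 1D trace counterpart $H^{1/2}\hookrightarrow L^{\infty-}$. The pressure rows are recovered identically from $\ns{p}_{W^{1,q_+}}$ and $\ns{\dt p}_{W^{1,q_-}}$.

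For $\eta$ and $\dt\eta$ on the 1D interval $(-\ell,\ell)$ the dissipation controls $\ns{\eta}_{W^{3-1/q_+,q_+}}$ and $\ns{\dt\eta}_{W^{3-1/q_-,q_-}}$, and 1D Sobolev embeddings deliver the $L^\infty$ entries for $\eta,\nabla\eta,\dt\eta,\nabla\dt\eta$ and the $L^{1/(1-\ep_\pm)}$ entries for $\nabla^2\eta$ and $\nabla^2\dt\eta$. For $\dt^2\eta$ I use $\ns{\dt^2\eta}_{H^{3/2-\low}}\le\D$ and the 1D embeddings $H^{3/2-\low}\hookrightarrow L^\infty$ and $H^{1/2-\low}\hookrightarrow L^{1/\low}$; for $\dt^3\eta$ the bound $\ns{\dt^3\eta}_{H^{1/2-\low}}\le\D$ and the same 1D embedding $H^{1/2-\low}\hookrightarrow L^{1/\low}$. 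The $\bar\eta$ rows are then obtained by applying Proposition~\ref{poisson_prop} to the corresponding $\eta$ (resp.~$\dt^k\eta$) regularity, gaining $1/q$ (or $1/2$) of spatial smoothness on $\Omega$, and then invoking the 2D Sobolev embeddings and trace theorems column by column; the trace rows for $\bar\eta$ collapse to the rows for $\eta$ since the trace of $\bar\eta$ onto $\Sigma$ recovers $\eta$ up to composition with a smooth map.

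The only real obstacle is arithmetic: one must verify repeatedly that the fractional Sobolev embeddings in play are strictly subcritical so that the stated Lebesgue exponent on the right is finite and well-defined. This is precisely what the parameter hierarchy \eqref{kappa_ep_def}, combined with $q_\pm\in(1,2)$, guarantees; in particular the buffers $\low<(1-\ep_+)/2$ and $\low<(\ep_+-\ep_-)/2$ are exactly what make the borderline entries $\nabla\dt^2\bar\eta\in L^{2/\low}$, $\nabla^2\dt^2\bar\eta\in L^{2/(1+\low)}$, and $\nabla\dt^3\bar\eta\in L^{2/(1+2\low)}$ into genuine (as opposed to degenerate) $L^q$ bounds. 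Once these numerical inequalities are checked the remaining verifications are routine.
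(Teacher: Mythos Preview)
Your proposal is correct and takes essentially the same approach as the paper: the paper's own proof is two sentences, citing exactly the ingredients you name---the definition of $\D$, standard Sobolev embeddings and trace theorems, and Proposition~\ref{poisson_prop} for the $\bar{\eta}$ rows. Your version is simply a more explicit unpacking of that argument, with the same structure and no new ideas required.
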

\begin{proof}
The estimates for $u$, $p$, $\eta$ and their derivatives follow directly from the standard Sobolev embeddings and trace theorems, together with the definition of $\D$.   The estimates for $\bar{\eta}$ at its derivatives follow similarly, except that we also employ Proposition \ref{poisson_prop} to account for the regularity gains arising from the appearance of the Poisson extensions $\mathcal{P}$ in the definition of $\bar{\eta}$.
\end{proof}

\section{Elliptic theory for Stokes problems}\label{sec_elliptic}

In this section we record some elliptic theory for the Stokes problem.  We begin with analysis of a model problem in $2D$ cones and then build to a theory in the domain $\Omega$ given by \eqref{Omega_def}.  The material here roughly mirrors the material in Section 5 of \cite{guo_tice_QS}, except that here we work in $L^q-$based spaces without weights rather than $L^2-$based weighted spaces.

\subsection{Analysis in cones }

Given an opening angle $\omega \in (0,\pi)$, we define the infinite $2D$ cone 
\begin{equation}\label{cone1_def}
 K_\omega = \{x \in \Rn{2} \st  r>0 \text{ and } \theta \in (-\pi/2,-\pi/2 + \omega)   \},
\end{equation}
where $r$ and $\theta$ are the usual polar coordinates in $\Rn{2}$ with the set $\{\theta =-\pi/2\}$ chosen to coincide with the negative $x_2$ axis.   We define two parts of $\p K_\omega$ via
\begin{equation}
 \Gamma_- = \{ x \in \Rn{2} \st r>0 \text{ and } \theta =-\pi/2  \} \text{ and } \Gamma_+ = \{x \in \Rn{2} \st r>0 \text{ and } \theta =-\pi/2 + \omega  \}.
\end{equation}

Next we introduce a special matrix-valued function.  Suppose that $\af: K_\omega \to \Rn{2\times 2}$ is a map satisfying the following four properties.  First, $\af$ is smooth on $K_\omega$ and  $\af$ extends to a smooth function on $\bar{K}_\omega \backslash \{0\}$ and a continuous function on $\bar{K}_\omega$.  Second, $\af$ satisfies the following for all $a,b \in \mathbb{N}$:
\begin{equation}\label{frak_A_assump}
\begin{split}
& \lim_{r\to 0} \sup_{\theta \in [-\pi/2,-\pi/2 + \omega]} \abs{  (r \p_r)^a \p_\theta^b [ \af(r,\theta) \af^T(r,\theta) - I  ]    } =0  \\
& \lim_{r\to 0} \sup_{\theta \in [-\pi/2,-\pi/2 + \omega]} \abs{  (r \p_r)^a \p_\theta^b [ \af_{ij}(r,\theta)\p_j \af_{ik}(r,\theta)  ]    } =0 \text{ for  }k \in \{1,2\} \\
& \lim_{r\to 0} \sup_{\theta \in [-\pi/2,-\pi/2 + \omega]} \abs{  (r \p_r)^a \p_\theta^b [ \af(r,\theta)  - I  ]    } =0  \\
& \lim_{r\to 0}   (r \p_r)^a  [ \af(r,\theta_0)\nu   - \nu  ]     =0    \text{ for } \theta_0 =-\pi/2,-\pi/2 + \omega \\
& \lim_{r\to 0}  (r \p_r)^a  \left[ \left(\af\nu \otimes \af^T (\af \nu)^\bot + (\af\nu)^\bot \otimes \af^T (\af\nu)\right)(r,\theta_0)   - I  \right]     =0    \text{ for } \theta_0 =-\pi/2,-\pi/2 + \omega \\
\end{split}
\end{equation}
where again $(r,\theta)$ denote polar coordinates and $(z_1,z_2)^\bot = (z_2,-z_1)$. Third, the matrix $\af \af^T$ is uniformly elliptic on $K_\omega$.  Fourth,  $\det \af =1$ and $\p_j( \af_{ij}) = 0 \text{ for }i=1,2.$

The matrix $\af$ serves to determine the coefficients in a variant of the Stokes problem in the cone $K_\omega$.  This problem, which we call the $\af-$Stokes problem, reads:
\begin{equation}\label{af_stokes_cone}
\begin{cases}
 \diverge_\af S_\af(Q,v) = G^1 &\text{in } K_\omega \\
 \diverge_\af v = G^2 &\text{in } K_\omega \\
 v \cdot \af \nu = G^3_\pm &\text{on } \Gamma_\pm \\
 \mu \sg_\af v \af \nu \cdot (\af \nu)^\bot = G^4_\pm &\text{on } \Gamma_\pm,
\end{cases}
\end{equation}
where here the operators $\diverge_\af$ and $S_\af$ are defined in the same way as $\diva$ and $S_\A$.  When $\af = I_{2 \times 2}$ all of the above assumptions are trivially verified, and we arrive at the usual Stokes problem
\begin{equation}\label{stokes_cone}
\begin{cases}
 \diverge S(Q,v) = G^1 &\text{in } K_\omega \\
 \diverge v = G^2 &\text{in } K_\omega \\
 v \cdot  \nu = G^3_\pm &\text{on } \Gamma_\pm \\
 \mu \sg v  \nu \cdot  \tau = G^4_\pm &\text{on } \Gamma_\pm.
\end{cases}
\end{equation}
The purpose of the  assumptions in \eqref{frak_A_assump} is to guarantee that the problems \eqref{af_stokes_cone} and \eqref{stokes_cone} have the same elliptic regularity properties.

Next we introduce a parameter depending on the cone's opening angle that determines how much regularity is gained in these Stokes problems.  Given  $\omega \in (0,\pi)$ we define
\begin{equation}\label{crit_wt}
 \epm(\omega) = \min\{1,-1+\pi/\omega\} \in (0,1]. 
\end{equation}
We can now state the elliptic regularity for these Stokes problems.

\begin{thm}\label{cone1_solve}
Let $\omega \in (0,\pi)$ and $\epm(\omega)$ be as in \eqref{crit_wt}. Let $0 <\delta < \epm(\omega)$ and set 
\begin{equation}
 q_\delta = \frac{2}{2-\delta} \in (1,2).
\end{equation}
Suppose that $\af$ satisfies the four properties stated above and  that the data $G^1,G^2, G^3_\pm,G^4_\pm$ for the problem \eqref{stokes_cone} satisfy 
\begin{equation}\label{cone1_solve_01}
G^1 \in L^{q_\delta}(K_\omega), G^2 \in W^{1,q_\delta}(K_\omega), G^3_\pm \in W^{2-1/q_\delta,q_\delta}(\Gamma_\pm), G^4_\pm \in W^{1-1/q_\delta,q_\delta}(\Gamma_\pm)
\end{equation}
as well as the compatibility condition
\begin{equation}\label{cone1_solve_cc}
 \int_{K_\omega} G^2  = \int_{\Gamma_+} G^3_+  + \int_{\Gamma_-} G^3_- .
\end{equation}
Suppose that $(v,Q) \in H^1(K_\omega) \times H^0(K_\omega)$ satisfy  $\diverge_\af v = G^2$, $v\cdot \af \nu = G^3_\pm$ on $\Gamma_\pm$, and  
\begin{equation}
 \int_{K_\omega} \frac{\mu}{2} \sg_\af v : \sg_\af w  - Q \diverge_\af w   = \int_{K_\omega} G^1  \cdot w      + \int_{\Gamma_+} \G^4_+ w\cdot \frac{(\af \nu)^\bot}{\abs{\af \nu}} + \int_{\Gamma_-} \G^4_-  w\cdot \frac{(\af \nu)^\bot}{\abs{\af \nu}} 
\end{equation}
for all $w \in \{ w \in H^1(K_\omega) \st w\cdot (\af \nu) =0 \text{ on } \Gamma_\pm\}$.   Finally, suppose that $v,Q$ and all of the data $G^i$ are supported in $\bar{K}_\omega \cap B[0,1]$.  Then $v \in W^{2,q_\delta}(K_\omega) \cap H^{1+\delta}(K_\omega)$, $Q \in W^{1,q_\delta}(K_\omega) \cap H^\delta(K_\omega)$, and  
\begin{multline}\label{cone1_solve_02}
 \norm{v}_{W^{2,q_\delta}} + \norm{v}_{H^{1+\delta}} + \norm{ Q}_{W^{1,q_\delta}}  + \norm{Q}_{H^\delta}    \\
 \ls \norm{G^1}_{L^{q_\delta} } +  \norm{G^2}_{W^{1,q_\delta}} +  \norm{G^3_-}_{W^{2-1/q_\delta,q_\delta} } +  \norm{G^3_+}_{W^{2-1/q_\delta,q_\delta}} 
 +  \norm{G^4_-}_{W^{1-1/q_\delta,q_\delta}} +  \norm{G^4_+}_{W^{1-1/q_\delta,q_\delta}}.
\end{multline}
\end{thm}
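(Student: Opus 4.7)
The plan is to reduce the variable-coefficient $\af$-Stokes problem \eqref{af_stokes_cone} to the constant-coefficient Stokes problem \eqref{stokes_cone} via a perturbation argument, and then appeal to the corner regularity theory of Orlt--S\"andig \cite{orlt_sandig} for the latter. The key observation is that the four conditions \eqref{frak_A_assump} are crafted precisely so that, after the natural scaling $x \mapsto \lambda x$ adapted to the cone, the coefficient matrix $\af$ becomes asymptotically constant and equal to $I$ as one approaches the corner $r=0$. Thus, at every dyadic scale the problem looks like the classical Stokes problem up to a controllably small perturbation.

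First, I would treat the constant-coefficient case $\af=I$. Here the system \eqref{stokes_cone} is the standard Stokes system in the cone $K_\omega$ with mixed Navier/impermeability boundary conditions on the straight rays $\Gamma_\pm$. The Kondratiev/Mellin calculus applied to the associated operator pencil identifies the critical integrability threshold $q_{\epm(\omega)}$ and the critical $L^2$-regularity index $\epm(\omega)$ of \eqref{crit_wt}: for any $0<\delta<\epm(\omega)$, one obtains that weak solutions with data as in \eqref{cone1_solve_01} satisfying the compatibility condition \eqref{cone1_solve_cc} actually lie in $W^{2,q_\delta}\cap H^{1+\delta}$ for the velocity and $W^{1,q_\delta}\cap H^\delta$ for the pressure, with the linear estimate \eqref{cone1_solve_02}. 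This is the content (suitably specialized and combining the $L^q$ and $L^2$ scales) of the main results of \cite{orlt_sandig}; the compactness of the support in $\bar K_\omega\cap B[0,1]$ lets one avoid separate issues at spatial infinity.

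For the general $\af$ satisfying the four conditions, I would then write the $\af$-system as the constant-coefficient system plus a perturbation:
\begin{equation*}
\diverge S(Q,v) = G^1 + R^1(v,Q), \quad \diverge v = G^2 + R^2(v),
\end{equation*}
with analogous boundary identities on $\Gamma_\pm$. The residual terms $R^1, R^2$, and their boundary counterparts are linear in $(v,Q)$ with coefficients built from $\af\af^T-I$, $\af_{ij}\p_j \af_{ik}$, $\af\nu-\nu$, and $\af\nu\otimes \af^T(\af\nu)^\bot + (\af\nu)^\bot\otimes\af^T(\af\nu) - I$. The scale-invariant smallness from \eqref{frak_A_assump} (the norms $(r\p_r)^a\p_\theta^b$ of these quantities vanish uniformly as $r\to 0$) means that after localizing to a dyadic annulus $A_k:=\{2^{-k-1}\le r\le 2^{-k}\}$ and rescaling by $x\mapsto 2^k x$ to a fixed reference annulus, the rescaled perturbations become arbitrarily small in $C^1$ for large $k$. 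On each such rescaled piece one applies the constant-coefficient estimate together with an interior/boundary regularity estimate away from the corner, then undoes the rescaling. Because $q_\delta$ and the Sobolev exponents are compatible with the intrinsic scaling of the Stokes system and because $v,Q,G^i$ are supported in $\bar K_\omega\cap B[0,1]$, the dyadic pieces can be summed. The absorbable size of the perturbation allows one to close the estimate, yielding \eqref{cone1_solve_02}.

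The main obstacle I expect is organizing the perturbation so that every piece of data and every error term can be absorbed into the left-hand side of the final estimate. Two delicate points stand out. First, the boundary perturbations coming from $\af\nu \ne \nu$ mix the impermeability condition $v\cdot\af\nu = G^3_\pm$ with the tangential stress condition $\mu\sg_\af v\,\af\nu\cdot(\af\nu)^\bot = G^4_\pm$; rewriting these in the orthonormal frame $\{\nu,\tau\}$ on $\Gamma_\pm$ produces cross terms that must be shown to inherit the smallness afforded by the last two items of \eqref{frak_A_assump}. Second, the pressure in the Stokes system is globally coupled to the velocity through $\diverge v = G^2$, and one must make sure that modifying the divergence by $R^2(v)$ is consistent with the compatibility condition \eqref{cone1_solve_cc}; this is where the structural identities $\det\af=1$ and $\p_j\af_{ij}=0$ (so that $\diverge_\af$ differs from $\diverge$ only by a perturbation, and no spurious boundary integrals appear) are essential.
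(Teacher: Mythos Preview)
Your proposal is correct in spirit, and the constant-coefficient case $\af=I$ is handled exactly as in the paper, by appeal to the Orlt--S\"andig corner regularity theory \cite{orlt_sandig,orlt} and the operator pencil computations there (eigenvalues $\pm 1 + n\pi/\omega$, yielding the threshold $q_{\epm(\omega)}$). Where you diverge is in the treatment of the variable-coefficient $\af$-problem: you outline a hands-on dyadic rescaling and freezing-of-coefficients argument, whereas the paper takes a much shorter route by invoking Theorem 8.2.1 of Kozlov--Maz'ya--Rossmann \cite{kmr_1}. That theorem asserts precisely that, under the asymptotic flattening hypotheses \eqref{frak_A_assump}, the operator pencil governing corner regularity for the $\af$-Stokes problem coincides with that of the constant-coefficient Stokes problem, so the Orlt--S\"andig estimates carry over verbatim. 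Your approach would essentially reprove the relevant portion of that KMR result in this specific setting; it is more self-contained but considerably more labor-intensive, and the two obstacles you flag (mixing of boundary conditions under the frame change, and preserving the divergence compatibility via $\det\af=1$ and $\p_j\af_{ij}=0$) are genuine technical hurdles that the black-box citation entirely sidesteps. What your route buys is transparency about exactly which features of \eqref{frak_A_assump} are used and where; what the paper's route buys is brevity.
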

\begin{proof}
In the case $\af = I$ the result is proved in Corollary 4.2 of \cite{orlt_sandig} when $G^3_\pm =0$ and $G^4_\pm =0$ and in Theorem 3.6 of \cite{orlt} in the general case.  The choice of $q_\delta$ is determined by the eigenvalues of an operator pencil associated to \eqref{stokes_cone}, which may be found in the ``G-G eigenvalue computations'' of \cite{orlt_sandig} (with $\chi_1 = \chi_2 = \pi/2$).  These results show that these eigenvalues for the Stokes problem \eqref{stokes_cone} in the cone $K_\omega$  are $\pm 1 + n \pi/\omega$ for $n \in \mathbb{Z}$, which leads to the constraint $q < 2/(2-\epm(\omega))$ in $W^{2,q} \times W^{1,q}$ estimates.  However, Theorem 8.2.1 of \cite{kmr_1}, together with the assumptions on $\af$, guarantee that the operator pencils that determine the regularity of \eqref{af_stokes_cone} coincide with those of \eqref{stokes_cone}, so the estimates of \cite{orlt_sandig} and \cite{orlt} remain valid for the $\af-$Stokes problem. 
\end{proof}

\subsection{The Stokes problem in $\Omega$}

We now study the following Stokes problem in $\Omega$, as defined by \eqref{Omega_def}:
\begin{equation}\label{stokes_omega}
\begin{cases}
\diverge S(Q,v)  = G^1 &\text{in } \Omega \\
 \diverge v = G^2 &\text{in } \Omega \\
 v \cdot \nu = G^3_+ &\text{on } \Sigma \\
 \mu \sg v \nu \cdot \tau = G^4_+ &\text{on } \Sigma\\
 v \cdot \nu = G^3_- &\text{on } \Sigma_s \\
 \mu \sg v \nu \cdot \tau = G^4_- &\text{on } \Sigma_s.
\end{cases}
\end{equation}
Consider $0 < \delta < \epm$ (defined by \eqref{epm_def} in terms of $\omeq$ from \eqref{omega_eq_def}) and $q_\delta = 2/(2-\delta) \in (1,2)$.  We will study this problem with data belonging to the space $\mathfrak{X}_\delta$, which we define as the space of  $6-$tuples
\begin{multline}
 (G^1,G^2,G^3_+,G^3_-,G^4_+,G^4_-) \\
 \in L^{q_\delta}(\Omega) \times W^{1,q_\delta}(\Omega) \times W^{2-1/q_\delta,q_\delta}(\Sigma  )\times W^{2-1/q_\delta,q_\delta}(\Sigma_s  ) \times  W^{1-1/q_\delta,q_\delta}(\Sigma) \times W^{1-1/q_\delta,q_\delta}(\Sigma_s)             
\end{multline}
such that 
\begin{equation}
 \int_{\Omega} G^2 = \int_{\Sigma} G^3_+ + \int_{\Sigma_s} G^3_-.
\end{equation}
We endow this space with the obvious norm 
\begin{multline}
 \norm{ (G^1,G^2,G^3_+,G^3_-,G^4_+,G^4_-)}_{\mathfrak{X}_\delta} = \norm{G^1}_{L^{q_\delta}} + \norm{G^2}_{W^{1,q_\delta}} + \norm{G^3_+}_{W^{2-1/q_\delta,q_\delta}}  + \norm{G^3_-}_{W^{2-1/q_\delta,q_\delta}} \\
 + \norm{G^4_+}_{W^{1-1/q_\delta,q_\delta}}  + \norm{G^4_-}_{W^{1-1/q_\delta,q_\delta}}
\end{multline}

We have the following weak existence result, which works without constraint on $\delta \in (0,1)$.  

\begin{thm}\label{stokes_om_weak}
Assume that $(G^1,G^2,G^3_+,G^3_-,G^4_+,G^4_-)  \in \mathfrak{X}_\delta$ for any $0 < \delta < 1$.   Then there exist a unique pair $(v,Q) \in H^1(\Omega) \times \oH^0(\Omega)$ that is a weak solution to \eqref{stokes_omega} in the sense that  $\diverge v = G^2$, $v\cdot \nu = G^3$ on $\p \Omega$, and  
\begin{equation}\label{stokes_om_weak_form}
 \int_{\Omega} \frac{\mu}{2} \sg v : \sg w - Q \diverge w = \int_{\Omega} G^1  \cdot w   + \int_{\Sigma} G^4_+ (w\cdot \tau) + \int_{\Sigma_s} G^4_- (w \cdot \tau) 
\end{equation}
for all $w \in \{ w \in H^1(\Omega) \st w\cdot \nu =0 \text{ on } \p \Omega\}$.  Moreover, 
\begin{equation}\label{stokes_weak_0}
 \norm{v}_{H^1} + \norm{Q}_{L^2} \ls  \norm{ (G^1,G^2,G^3_+,G^3_-,G^4_+,G^4_-)}_{\mathfrak{X}_\delta}.
\end{equation}
\end{thm}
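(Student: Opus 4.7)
The strategy is the classical Galerkin/de Rham approach: lift the divergence and normal-trace data to reduce to a homogeneous problem, solve the reduced problem via Lax--Milgram on the space of divergence-free fields with vanishing normal trace, and then recover the pressure via de Rham. Observe first that the precise value of $0 < \delta < 1$ plays no role at this weak level; the inclusions $L^{q_\delta}(\Omega) \hookrightarrow (H^1(\Omega))^*$, $W^{1,q_\delta}(\Omega) \hookrightarrow L^2(\Omega)$, $W^{2-1/q_\delta,q_\delta} \hookrightarrow H^{1/2}$, and $W^{1-1/q_\delta,q_\delta} \hookrightarrow H^{-1/2}$ (all valid since $q_\delta \in (1,2)$) allow us to treat each datum as a bounded functional on $H^1(\Omega; \R^2)$ with norm controlled by $\norm{(G^1,\ldots,G^4_-)}_{\mathfrak{X}_\delta}$.

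First, I would construct a lift $\bar v \in H^1(\Omega; \R^2)$ satisfying $\diverge \bar v = G^2$ in $\Omega$, $\bar v \cdot \nu = G^3_\pm$ on $\Sigma, \Sigma_s$, and $\norm{\bar v}_{H^1} \ls \norm{(G^1,\ldots,G^4_-)}_{\mathfrak{X}_\delta}$. This can be done by first extending $G^3_\pm$ to an $H^1$ vector field $\bar v_0$ on $\Omega$ with $\bar v_0 \cdot \nu = G^3_\pm$ (harmless since $\partial \Omega$ is Lipschitz), then correcting the divergence via the Bogovskii operator from the appendix applied to $G^2 - \diverge \bar v_0$; the compatibility condition $\int_\Omega G^2 = \int_\Sigma G^3_+ + \int_{\Sigma_s} G^3_-$ is precisely the mean-zero condition needed to apply Bogovskii. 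Setting $\tilde v = v - \bar v$, the problem becomes: find $\tilde v$ in the closed subspace
\begin{equation}
\V := \{ w \in H^1(\Omega; \R^2) \st \diverge w = 0 \text{ in } \Omega,\ w \cdot \nu = 0 \text{ on } \partial \Omega \}
\end{equation}
such that $a(\tilde v, w) := \int_\Omega \tfrac{\mu}{2} \sg \tilde v : \sg w = \Lambda(w)$ for all $w \in \V$, where $\Lambda$ is a bounded linear functional on $\V$ built from $G^1$, $G^4_\pm$, and the lift $\bar v$, with $\norm{\Lambda}_{\V^*} \ls \norm{(G^1,\ldots,G^4_-)}_{\mathfrak{X}_\delta}$.

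Next, I would verify coercivity of $a$ on $\V$ via Korn's inequality, which holds in Lipschitz domains. The kernel of $\sg$ consists of planar rigid motions $w(x) = Bx + c$ with $B$ antisymmetric; the constraint $w\cdot \nu = 0$ on all of $\partial \Omega$ forces $w = 0$, since $\nu$ varies nontrivially along $\partial \Omega$ (the free surface has non-vertical normal while the vessel walls have vertical normal). Hence Korn provides $\norm{w}_{H^1}^2 \ls \int_\Omega \abs{\sg w}^2$ on $\V$, and Lax--Milgram produces a unique $\tilde v \in \V$ with $\norm{\tilde v}_{H^1} \ls \norm{(G^1,\ldots,G^4_-)}_{\mathfrak{X}_\delta}$. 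Setting $v = \tilde v + \bar v$ yields the velocity. For the pressure, I would invoke the de Rham-type theorem in the form: the mapping $w \mapsto \diverge w$ from $\{w \in H^1(\Omega; \R^2) \st w \cdot \nu = 0 \text{ on } \partial \Omega\}$ onto $\mathring{H}^0(\Omega)$ admits a bounded right inverse (again by Bogovskii). Since the functional $w \mapsto a(v, w) - \Lambda(w)$, defined on the full space $\{w \in H^1 \st w\cdot \nu =0\}$, vanishes on $\V$, this right inverse produces a unique $Q \in \mathring{H}^0(\Omega)$ representing it, with $\norm{Q}_{L^2}$ controlled by the data. The main technical point — and the one that requires a bit of care — is the Bogovskii solvability in the globally Lipschitz domain $\Omega$ with the mixed normal-trace condition on $\Sigma \cup \Sigma_s$; this is standard but the corners at $(\pm \ell, \zeta_0(\pm \ell))$ must be handled by the appendix's Bogovskii construction. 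Uniqueness follows by testing the homogeneous problem against $v$ itself and applying Korn.
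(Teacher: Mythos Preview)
Your proposal is correct and follows exactly the standard Lax--Milgram/Korn/de Rham argument that the paper has in mind; the paper's own proof simply notes that the argument is standard, does not exploit the higher regularity of $\mathfrak{X}_\delta$, and refers to Theorem 5.3 of \cite{guo_tice_QS} for details. Your sketch is in fact more explicit than what the paper records, and the only places that warrant a moment's care---the Bogovskii lift in the Lipschitz domain with corners and the exclusion of rigid motions via the varying normal on $\Sigma \cup \Sigma_s$---you have identified correctly.
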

\begin{proof}
The argument is standard and doesn't use the higher-regularity structure of $\mathfrak{X}_\delta$.  See, for instance, Theorem 5.3 in \cite{guo_tice_QS}.
\end{proof}

For second-order regularity we do need the constraints on $\delta$ in order to use Theorem \ref{cone1_solve}.

\begin{thm}\label{stokes_om_reg}
Let $\epm\in (0,1]$ be given by \eqref{epm_def}, and $0 < \delta < \epm$.  Let $(G^1,G^2,G^3_+,G^3_-,G^4_+,G^4_-)   \in \mathfrak{X}_{\delta}$, and let $(v,Q) \in H^1(\Omega) \times \oH^0(\Omega)$ be the weak solution to \eqref{stokes_omega} constructed in Theorem \ref{stokes_om_weak}.   Then $v \in W^{2,q_\delta}(\Omega) \cap H^{1+\delta}(\Omega)$, $Q \in W^{1,q_\delta}(\Omega)\cap \oH^\delta(\Omega)$, and 
\begin{equation}\label{stokes_om_reg_0}
 \norm{v}_{W^{2,q_\delta}} + \norm{v}_{H^{1+\delta}}  + \norm{Q}_{W^{1,q_\delta}} + \norm{Q}_{H^{\delta}}\ls   \norm{ (G^1,G^2,G^3_+,G^3_-,G^4_+,G^4_-)}_{\mathfrak{X}_\delta} .
\end{equation}
\end{thm}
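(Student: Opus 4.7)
The plan is to localize the problem with a suitable partition of unity, establish interior, smooth-boundary, and corner regularity estimates separately, and then sum. Since $(v,Q)$ is already known to exist in $H^1 \times \oH^0$ with the a priori bound \eqref{stokes_weak_0} from Theorem \ref{stokes_om_weak}, all we need is to upgrade its regularity and derive the quantitative bound \eqref{stokes_om_reg_0}.

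First, because $\p \Omega$ is $C^2$ away from the two corner points $p_\pm := (\pm \ell, \zeta_0(\pm \ell))$, I would pick a radius $r_0 > 0$ smaller than the distance from each $p_\pm$ to the rest of the boundary and to itself, and fix a finite open cover $\{U_j\}_{j=0}^{N}$ of $\bar{\Omega}$ such that: $U_0 \csubset \Omega$ is an interior piece; finitely many $U_j$ ($1\le j \le N-2$) are balls centered on smooth boundary points at distance $> r_0/2$ from $p_\pm$; and $U_{N-1}, U_N$ are balls of radius $r_0$ centered at $p_-, p_+$. Choose a subordinate partition of unity $\{\chi_j\}$. For each $j$, the pair $(v_j, Q_j) := (\chi_j v, \chi_j Q)$ solves a local Stokes system of the same type as \eqref{stokes_omega}, with new data $\tilde G^i_j$ expressible in terms of the original $G^i$, the cut-off $\chi_j$, and lower-order expressions in $v$ and $Q$. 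On any compactly supported test function, the weak formulation \eqref{stokes_om_weak_form} yields the corresponding localized weak formulation after accounting for the commutators $[\chi_j, \nabla]$.

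On the interior piece $U_0$, standard $L^{q_\delta}$ and $H^{1+\delta}$ estimates for the Stokes system give $v_0 \in W^{2,q_\delta} \cap H^{1+\delta}$ and $Q_0 \in W^{1,q_\delta} \cap H^\delta$ with the estimate controlled by $\|\tilde G^i_0\|$ and thus ultimately by the data-norm plus $\|v\|_{H^1} + \|Q\|_{L^2}$. On each smooth-boundary piece $U_j$, after a $C^2$ local flattening of $\p \Omega$, one has a Stokes system with Navier-slip boundary conditions in a half-ball; the $\delta < \ep_{\operatorname{max}} \le 1$ constraint is not binding here, so classical regularity theory (for example Agmon-Douglis-Nirenberg-type estimates for the Stokes operator with this mixed boundary condition) provides the same local $W^{2,q_\delta} \cap H^{1+\delta}$ control.

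The genuinely delicate piece is the corner estimate, and this will be the main obstacle. Near $p_\pm$, the two boundary arcs $\Sigma$ and $\Sigma_s$ meet at an angle $\omeq$, and by Young's law this angle can take any value in $(0,\pi)$, so the corner is not removable. For each $p = p_\pm$, I would construct a $C^2$ (in fact $C^\infty$ away from the corner) diffeomorphism $\Psi_p$ from a neighborhood of the origin in the model cone $K_{\omeq}$ onto a neighborhood of $p$ in $\bar\Omega$, with $\Psi_p(0) = p$, mapping the two edges $\Gamma_\pm$ into $\Sigma$ and $\Sigma_s$ respectively, and such that $D\Psi_p(0)$ is a rotation. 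Pulling $(v_j, Q_j)$ back by $\Psi_p$ yields a Stokes problem in $K_{\omeq}$ of the form \eqref{af_stokes_cone}, where the coefficient matrix $\af$ is built from $(\nabla \Psi_p)^{-T}$ normalized so that $\det \af = 1$ (incompressibility pulls back to $\diverge_\af = 0$) and $\p_j \af_{ij} = 0$. The four limiting conditions in \eqref{frak_A_assump} hold because $\Psi_p$ is $C^2$ with $D\Psi_p(0)$ orthogonal, which forces $\af(0) = I$ and the polar derivatives $(r\p_r)^a \p_\theta^b(\af - I)$ to vanish at $r=0$ to the required order; uniform ellipticity of $\af \af^T$ follows from continuity and nondegeneracy of $D\Psi_p$. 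A further cutoff places everything in $\bar K_{\omeq} \cap B[0,1]$, and the compatibility condition in Theorem \ref{cone1_solve} is inherited from the weak formulation. Theorem \ref{cone1_solve} then yields the estimate \eqref{cone1_solve_02} for the pulled-back solution in $W^{2,q_\delta} \cap H^{1+\delta}$, and pushing forward restores the estimate on $U_j \cap \Omega$.

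Summing the local bounds, the interior and smooth boundary contributions are controlled by $\|\tilde G^i\|$ plus lower-order norms of $v,Q$; the corner pieces are controlled in the same way by \eqref{cone1_solve_02}. The commutator terms arising from cutting off by $\chi_j$ all involve at most first derivatives of $v$ and $Q$ itself and are therefore bounded by $\|v\|_{H^1} + \|Q\|_{L^2}$, which by the weak estimate \eqref{stokes_weak_0} is already controlled by $\|(G^1,G^2,G^3_\pm,G^4_\pm)\|_{\mathfrak{X}_\delta}$. Combining everything gives \eqref{stokes_om_reg_0}.
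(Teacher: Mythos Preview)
Your proposal is correct and takes essentially the same approach as the paper: the paper's own proof is a one-sentence deferral to Theorem~5.5 of \cite{guo_tice_QS}, with Theorem~\ref{cone1_solve} substituted for the weighted $L^2$ cone estimate used there, and that referenced argument is precisely the partition-of-unity localization you outline (interior and smooth-boundary Stokes regularity plus the corner estimate via pullback to the model cone $K_{\omeq}$). The only points worth tightening when you write it out are the verification that the pulled-back coefficient matrix $\af$ genuinely satisfies all four structural hypotheses preceding \eqref{frak_A_assump}---in particular $\det\af=1$ and $\p_j\af_{ij}=0$, which require choosing the flattening diffeomorphism with some care (a Piola-type construction)---and the observation that since both boundary arcs near each corner ($\{x_2=\zeta_0(x_1)\}$ and $\{x_1=\pm\ell\}$) are $C^\infty$, the diffeomorphism $\Psi_p$ can be taken $C^\infty$ up to the vertex, which is what makes the polar-derivative limits in \eqref{frak_A_assump} hold for all $a,b\in\mathbb{N}$.
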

\begin{proof}
The argument used in Theorem 5.5 of \cite{guo_tice_QS} works in the present case as well, except that we use the estimates of Theorem \ref{cone1_solve} in place of the estimates from Theorem 5.1 in \cite{guo_tice_QS}.
\end{proof}

In what follows it will be useful to rephrase Theorem \ref{stokes_om_reg} as follows.  For $0 < \delta < \epm$ we define the operator 
\begin{equation}\label{stokes_om_iso_def1}
 T_\delta : \left( W^{2,q_\delta}(\Omega) \cap H^{1+\delta}(\Omega) \right) \times \left( W^{1,q_\delta}_\delta(\Omega) \cap \oH^\delta(\Omega) \right) \to \mathfrak{X}_\delta
\end{equation}
via
\begin{equation}\label{stokes_om_iso_def2}
 T_\delta(v,Q) = (\diverge S(Q,v), \diverge v, v\cdot n \vert_{\Sigma},v\cdot n \vert_{\Sigma_s}, \mu \sg v n \cdot \tau  \vert_{\Sigma}, \mu \sg v n \cdot \tau  \vert_{\Sigma_s}).  
\end{equation}
We may then deduce the following from Theorems \ref{stokes_om_weak} and \ref{stokes_om_reg}.

\begin{cor}\label{stokes_om_iso}
Let $\epm$ be as in \eqref{epm_def}.   If  $0 <\delta < \epm$,  then the operator $T_{\delta}$ defined by \eqref{stokes_om_iso_def1} and \eqref{stokes_om_iso_def2} is an isomorphism.
\end{cor}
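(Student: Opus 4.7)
The plan is to assemble the corollary directly from Theorems \ref{stokes_om_weak} and \ref{stokes_om_reg}, verifying only that $T_\delta$ is well-defined and bounded, and then invoking standard Banach space arguments for the bijectivity.

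First I would check that $T_\delta$ maps into $\mathfrak{X}_\delta$ with continuity. Given $(v,Q)$ in the indicated domain, the components of $T_\delta(v,Q)$ belong to the correct Sobolev spaces by standard product estimates and the trace theorem: $\diverge S(Q,v)$ involves first derivatives of $Q \in W^{1,q_\delta}$ and second derivatives of $v \in W^{2,q_\delta}$, hence lies in $L^{q_\delta}(\Omega)$; $\diverge v \in W^{1,q_\delta}(\Omega)$; $v \cdot \nu$ on each boundary component inherits the $W^{2-1/q_\delta,q_\delta}$ regularity from the trace of $v$; and $\mu \sg v\,\nu \cdot \tau$ lies in $W^{1-1/q_\delta,q_\delta}$ on each boundary piece. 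The compatibility condition
\begin{equation}
\int_\Omega \diverge v = \int_\Sigma v \cdot \nu + \int_{\Sigma_s} v \cdot \nu
\end{equation}
is just the divergence theorem, so $T_\delta(v,Q) \in \mathfrak{X}_\delta$. Boundedness of the map follows at once from these estimates.

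Next I would establish injectivity. If $T_\delta(v,Q) = 0$, then in particular $(v,Q) \in H^1(\Omega) \times \oH^0(\Omega)$ is a weak solution of \eqref{stokes_omega} with trivial data, and by the uniqueness clause of Theorem \ref{stokes_om_weak} we deduce $v=0$ and $Q=0$. For surjectivity, given any datum $G = (G^1,\ldots,G^4_-) \in \mathfrak{X}_\delta$, Theorem \ref{stokes_om_weak} produces a weak solution $(v,Q) \in H^1(\Omega) \times \oH^0(\Omega)$, and Theorem \ref{stokes_om_reg} then upgrades the regularity: $v \in W^{2,q_\delta}(\Omega) \cap H^{1+\delta}(\Omega)$ and $Q \in W^{1,q_\delta}(\Omega) \cap \oH^\delta(\Omega)$. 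Thus $(v,Q)$ lies in the domain of $T_\delta$ and satisfies $T_\delta(v,Q) = G$, so $T_\delta$ is surjective.

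Finally, the quantitative estimate \eqref{stokes_om_reg_0} provides
\begin{equation}
\norm{v}_{W^{2,q_\delta}} + \norm{v}_{H^{1+\delta}} + \norm{Q}_{W^{1,q_\delta}} + \norm{Q}_{H^\delta} \ls \norm{G}_{\mathfrak{X}_\delta},
\end{equation}
which shows that the inverse $T_\delta^{-1}$ is continuous. Hence $T_\delta$ is a bounded bijection with bounded inverse between Banach spaces, i.e. an isomorphism. I anticipate no real obstacle here: the entire statement is essentially a repackaging of the two theorems, and the only mild point is confirming that $T_\delta$ does land in $\mathfrak{X}_\delta$ (which is the compatibility condition), with the open mapping theorem providing an alternative route to the boundedness of the inverse if one prefers not to invoke \eqref{stokes_om_reg_0} directly.
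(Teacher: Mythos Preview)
Your proof is correct and takes essentially the same approach as the paper, which simply states that the result follows immediately from Theorems \ref{stokes_om_weak} and \ref{stokes_om_reg}. You have merely made explicit the routine details: well-definedness of $T_\delta$ via the divergence theorem, injectivity from the uniqueness in Theorem \ref{stokes_om_weak}, surjectivity from existence plus the regularity upgrade of Theorem \ref{stokes_om_reg}, and continuity of the inverse from \eqref{stokes_om_reg_0}.
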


\subsection{The $\A$-Stokes problem in $\Omega$}

Next we consider a version of the Stokes problem with coefficients that depend on a given function  $\eta \in W^{3-1/q_\delta,q_\delta}$ with $0 < \delta < \epm$.  The function $\eta$ determines the coefficients  $\A,$ $J$, and $\N$ via \eqref{AJK_def} and \eqref{N_def}, and we study the system
\begin{equation}\label{A_stokes}
\begin{cases}
\diva S_\A(Q,v) = G^1 & \text{in }\Omega \\
J \diva v = G^2 & \text{in } \Omega \\
v\cdot \N/ \abs{\N_0} = G^3_+ &\text{on } \Sigma \\
\mu \sg_\A v \N \cdot \mathcal{T} / \abs{\N_0}^2 = G^4_+ &\text{on } \Sigma \\
v\cdot J\nu = G^3_- &\text{on } \Sigma_s \\
\mu \sg_\A v \nu \cdot \tau = G^4_- &\text{on } \Sigma_s.
\end{cases}
\end{equation}
Note here that $\N = \N_0 - \p_1 \eta e_1$ for $\N_0$, given by \eqref{N0_def}, the outward normal vector on $\Sigma$ and $\mathcal{T} = \mathcal{T}_0 + \p_1 \eta e_2$ for $\mathcal{T}_0 = e_1 + \p_1 \zeta_0 e_2$ the associated  tangent vector.  

We begin our analysis of this problem by introducing the operator 
\begin{equation}\label{A_stokes_om_iso_def1}
 T_\delta[\eta] :  \left( W^{2,q_\delta}(\Omega) \cap H^{1+\delta}(\Omega) \right) \times \left( W^{1,q_\delta}_\delta(\Omega) \cap \oH^\delta(\Omega) \right) \to \mathfrak{X}_\delta
\end{equation}
given by 
\begin{equation}\label{A_stokes_om_iso_def2}
 T_\delta[\eta](v,Q) = (\diva S_\A(Q,v), J \diva v, v\cdot \N/\abs{\N_0} \vert_{\Sigma}, v\cdot J\nu \vert_{\Sigma_s}, \mu \sga v \N \cdot \mathcal{T} / \abs{\N_0}^2  \vert_{\Sigma}, \mu \sga v \nu \cdot \tau  \vert_{\Sigma_s}).  
\end{equation}

The map $T_\delta[\eta]$, which encodes the solvability of \eqref{A_stokes}, is an isomorphism under a smallness assumption on $\eta$.

\begin{thm} \label{A_stokes_om_iso}
Let $\epm$ be as in \eqref{epm_def}.  Let $0 < \delta < \epm$ and $q_\delta = 2/(2-\delta)$.  There exists a $\gamma >0$ such that if $\norm{\eta}_{W^{3-1/q_\delta,q_\delta}} < \gamma$, then  the operator $T_{\delta}[\eta]$ defined by \eqref{A_stokes_om_iso_def1} and \eqref{A_stokes_om_iso_def2} is well-defined and yields a bounded isomorphism.
\end{thm}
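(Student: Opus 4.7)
The plan is to realize $T_\delta[\eta]$ as a small perturbation of $T_\delta = T_\delta[0]$, which is already known to be an isomorphism by Corollary \ref{stokes_om_iso}, and then conclude via the standard fact that the set of invertible bounded linear operators is open in $\mathcal{L}(X,Y)$.

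First I would verify that $T_\delta[\eta]$ is a well-defined bounded linear map from the domain into $\mathfrak{X}_\delta$. Boundedness of each component follows from product and composition estimates in Sobolev spaces, using Lemma \ref{eta_small} (which guarantees $\A$, $J$, $K$, $\N$ are bounded in the relevant norms once $\norm{\eta}_{W^{3-1/q_+,q_+}}$ is small) together with the trace theorem for the boundary components $G^3_\pm$ and $G^4_\pm$. The nontrivial part of well-definedness is verifying the compatibility condition defining $\mathfrak{X}_\delta$, but this is automatic: using the identity \eqref{ge_3} and the boundary formulas \eqref{ge_4}, one computes by the divergence theorem
\begin{equation}
 \int_\Omega J\diva v = \int_\Omega \p_k(J \A_{jk} v_j) = \int_{\p\Omega} (J\A\nu)\cdot v = \int_\Sigma v\cdot \N / \abs{\N_0} + \int_{\Sigma_s} v\cdot J\nu,
\end{equation}
which is exactly the compatibility condition for the image of $T_\delta[\eta]$.

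Next I would estimate the operator norm of the difference $T_\delta[\eta] - T_\delta$. Component by component, this difference is a bilinear expression in $(v,Q)$ and in the coefficients $\A - I$, $J - 1$, $K - 1$, $\N - \N_0$, and their products; each of these vanishes when $\eta = 0$. Using the formulas \eqref{AJK_def}, \eqref{N_def} together with product estimates and Proposition \ref{poisson_prop} for the Poisson-extended $\bar{\eta}$, each coefficient is controlled in the appropriate Sobolev norm by $\norm{\eta}_{W^{3-1/q_\delta,q_\delta}}$. For the bulk terms $\diva S_\A(Q,v) - \diverge S(Q,v)$ and $J\diva v - \diverge v$, I would use $L^{q_\delta}$ and $W^{1,q_\delta}$ product estimates and the $W^{2,q_\delta}$ regularity of $v$ and $W^{1,q_\delta}$ regularity of $Q$. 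For the boundary terms, trace theory reduces matters to product estimates in $W^{2-1/q_\delta,q_\delta}$ and $W^{1-1/q_\delta,q_\delta}$ on $\Sigma$ and $\Sigma_s$; here I must be careful to treat the factors $\abs{\N_0}^{-1}$ and $\abs{\N_0}^{-2}$ (which are smooth on $[-\ell,\ell]$ and hence harmless multipliers) and expressions like $\N - \N_0 = -\p_1\eta\, e_1$, which indeed have norms controlled by $\norm{\eta}_{W^{3-1/q_\delta,q_\delta}}$ after trace. Combining these estimates yields
\begin{equation}
 \norm{T_\delta[\eta] - T_\delta}_{\mathrm{op}} \le C \, \norm{\eta}_{W^{3-1/q_\delta,q_\delta}}
\end{equation}
for a universal $C>0$.

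Finally, choosing $\gamma \le \min\{\gamma_0, (2C\norm{T_\delta^{-1}}_{\mathrm{op}})^{-1}\}$, where $\gamma_0$ is the constant supplied by Lemma \ref{eta_small}, the Neumann series argument shows that $T_\delta[\eta] = T_\delta(I + T_\delta^{-1}(T_\delta[\eta] - T_\delta))$ is an isomorphism, with an inverse bounded uniformly in $\eta$. The main obstacle is the careful bookkeeping in Step 2: the scale-critical nature of the Sobolev embeddings for $q_\delta$ just below the thresholds fixed by \eqref{kappa_ep_def} means that the product estimates on the boundary traces in $W^{1-1/q_\delta,q_\delta}$ must be handled with fractional product rules rather than naive $L^\infty$ bounds, and the factors involving $\N$ need to be expanded so that each piece has a factor depending on $\eta$ available to absorb into the smallness.
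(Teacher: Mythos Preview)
Your proposal is correct and follows essentially the same approach as the paper: verify the compatibility condition via the divergence theorem with \eqref{ge_3}--\eqref{ge_4}, estimate the difference $T_\delta[\eta]-T_\delta$ componentwise in terms of $\norm{\eta}_{W^{3-1/q_\delta,q_\delta}}$, and conclude by a Neumann-series/contraction argument using Corollary \ref{stokes_om_iso}. The only cosmetic difference is that the paper handles the boundary product terms by lifting to $W^{1,q_\delta}(\Omega)$ via the trace characterization and using integer-order product estimates there, rather than invoking fractional product rules directly on $\Sigma$ and $\Sigma_s$.
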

\begin{proof}
We divide the proof into steps.

\emph{Step 1 - Setup: }  First note that 
\begin{equation}
 \int_\Omega J \diva v = \int_{\p \Omega} J \A \nu \cdot v = \int_{\Sigma}   \frac{\N}{\abs{\N_0}} \cdot v+ \int_{\Sigma_s} J \nu \cdot v,
\end{equation}
which establishes the compatibility between the second and third terms needed for $T_\delta[\eta]$ to map into $\mathfrak{X}_\delta$.  

Now assume that $\gamma < 1$ is as small as in Lemma \ref{eta_small}.  We write $T_\delta[\eta](v,q) = T_\delta(v,Q) - \G(v,Q)$ where $T_\delta$ is defined by \eqref{stokes_om_iso} and $\G$ denotes the linear map with components
\begin{equation}
\begin{split}
\G^1(v,Q) &=  \diverge_{I-\A} S_{\A}(Q,v) - \diverge \mu \sg_{I-\A}(v)  \\
\G^2(v) &=  \diverge_{I-\A} v  + (1-J) \diva v \\
\G^3_+(v)  &= (1+(\p_1\zeta_0)^2)^{-1/2}[\p_1 \eta v_1  ]    \\
\G^4_+(v) &= (1+(\p_1\zeta_0)^2)^{-1} [ \mu \sg_{I-\A} v \N_0\cdot \mathcal{T}_0 -\mu \p_1 \eta (\sg_\A v \N_0 \cdot e_2 - \sg_\A v e_1 \cdot \mathcal{T}_0  ) +\mu (\p_1 \eta)^2 \sg_\A v e_1 \cdot e_2    ] \\
\G^3_- & = (1-J) v\cdot \nu  \\
\G^4_-(v) & =  \mu \sg_{I-\A} v \nu \cdot \tau.
\end{split}
\end{equation} 
Since both $T_\delta[\eta]$ and $T_\delta$ enforce the compatibility between the second and third terms, $\G$ does as well.  Then the equation $T_\delta[\eta](v,Q) = G := (G^1,G^2,G^3_+,G^3_-,G^4_+,G^4_-)$ is equivalent to 
\begin{equation}\label{A_stokes_om_iso_1}
 T_\delta(v,Q) = G + \G(v,Q).
\end{equation}

\emph{Step 2 - $\G$ boundedness: }  We now claim that 
\begin{equation}\label{A_stokes_om_iso_0}
 \norm{\G(v,Q) }_{\mathfrak{X}_\delta} \ls \norm{\eta}_{W^{3-1/q_\delta,q_\delta}}  \left( \norm{v}_{W^{2,q_\delta}} + \norm{Q}_{W^{1,q_\delta}}  \right).
\end{equation}
We proceed term by term.

\textbf{$\G^1$ estimate:}  We need to bound $\G^1(v,Q)$ in $L^{q_\delta}(\Omega)$.  We estimate the first term via 
\begin{multline}
\norm{\diverge_{I-A} S_{\A}(Q,v)  }_{L^{q_\delta}}  \ls \norm{\nab \bar{\eta}}_{L^\infty} \left(  \norm{ \nab Q }_{L^{q_\delta}} + \norm{ \nab^2 v }_{L^{q_\delta}} \right) \\
+ \norm{\nab \bar{\eta}}_{L^\infty} \norm{\nab^2 \bar{\eta}}_{L^{2/(1-\delta)}} \left(\norm{Q}_{L^{2/(1-\delta)}} + \norm{\nab v}_{L^{2/(1-\delta)}}   \right) \ls \norm{\eta}_{W^{3-1/q_\delta,q_\delta}} \left(\norm{v}_{W^{2,q_\delta}} + \norm{Q}_{W^{1,q_\delta}} \right).
\end{multline}
Similarly, we estimate the second term as
\begin{equation}
\norm{\diverge \mu \sg_{I-\A}(v)}_{L^{q_\delta}} \ls \norm{\nab \bar{\eta}}_{L^\infty} \norm{D^2 v}_{L^{q_\delta}} + \norm{\nab^2 \bar{\eta}}_{L^{2/(1-\delta)}}  \norm{\nab v}_{L^{2/(1-\delta)}}  
\ls \norm{\eta}_{W^{3-1/q_\delta,q_\delta}}  \norm{v}_{W^{2,q_\delta}}.
\end{equation}
Combining these two, we deduce that 
\begin{equation}
\norm{ \G^1(v,Q) }_{L^{q_\delta}} \ls  \norm{\eta}_{W^{3-1/q_\delta,q_\delta}} \left(\norm{v}_{W^{2,q_\delta}} + \norm{Q}_{W^{1,q_\delta}} \right).
\end{equation}

\textbf{$\G^2$ estimate: }  We need to bound $\G^2(v)$ in $W^{1,q_\delta}(\Omega)$.  For the first term
\begin{equation}
\norm{ \diverge_{I-\A} v  }_{W^{1,q_\delta}} \ls  \norm{\nab \bar{\eta}}_{L^\infty} \norm{v}_{W^{2,q_\delta}} +\norm{\nab^2 \bar{\eta}}_{L^{2/(1-\delta)}}  \norm{\nab v}_{L^{2/(1-\delta)}}  \ls \norm{\eta}_{W^{3-1/q_\delta,q_\delta}}  \norm{v}_{W^{2,q_\delta}}.
\end{equation}
Similarly, for the second term we bound 
\begin{multline}
\norm{ (1-J) \diva v   }_{W^{1,q_\delta}} \ls  \norm{\nab \bar{\eta}}_{L^\infty} \norm{v}_{W^{2,q_\delta}} + (1+\norm{\nab \bar{\eta}}_{L^\infty}) \norm{\nab^2 \bar{\eta}}_{L^{2/(1-\delta)}}  \norm{\nab v}_{L^{2/(1-\delta)}} \\
\ls \norm{\eta}_{W^{3-1/q_\delta,q_\delta}}  \norm{v}_{W^{2,q_\delta}}.
\end{multline}
Combining these, we deduce that 
\begin{equation}
\norm{ \G^2(v) }_{W^{1,q_\delta}} \ls  \norm{\eta}_{W^{3-1/q_\delta,q_\delta}}  \norm{v}_{W^{2,q_\delta}}
\end{equation}

\textbf{$\G^3_+$ estimate:}  We need to bound $\G^3_+(v)$ in $W^{2-1/q_\delta,q_\delta}(\Sigma)$.  For this we use the trace characterization of boundary norms and the fact that $W^{2,q_\delta}(\Omega)$ is an algebra to estimate:
\begin{multline}
 \norm{\G^3_+(v) }_{W^{2-1/q_\delta,q_\delta}(\Sigma)} \ls  \norm{ \p_1 \bar{\eta} v_1   }_{W^{2,q_\delta}(\Omega)} \ls \norm{\p_1 \bar{\eta} v_1   }_{W^{2,q_\delta}(\Omega)} \\
 \ls \norm{\p_1 \bar{\eta}   }_{W^{2,q_\delta}(\Omega)} \norm{v   }_{W^{2,q_\delta}(\Omega)} 
 \ls \norm{\bar{\eta}   }_{W^{3,q_\delta}(\Omega)} \norm{v   }_{W^{2,q_\delta}(\Omega)}
 \ls \norm{\eta   }_{W^{3-1/q_\delta,q_\delta}(\Omega)} \norm{v   }_{W^{2,q_\delta}(\Omega)}.
\end{multline}

\textbf{$\G^3_-$ estimate:} We need to bound $\G^3_+(v)$ in $W^{2-1/q_\delta,q_\delta}(\Sigma)$.  Since $\nu$ is determined by $\Sigma_s$, which is $C^2$, we can argue as with $\G^3_+$ to estimate 
\begin{equation}
\norm{\G^3_-(v)}_{W^{2-1/q_\delta,q_\delta}(\Sigma)} \ls \norm{(1-J) v}_{W^{2,q_\delta}(\Sigma)} \ls \norm{\bar{\eta}   }_{W^{3,q_\delta}(\Omega)} \norm{v   }_{W^{2,q_\delta}(\Omega)}
 \ls \norm{\eta   }_{W^{3-1/q_\delta,q_\delta}(\Omega)} \norm{v   }_{W^{2,q_\delta}(\Omega)}.
\end{equation}

\textbf{$\G^4_+$ estimate:} We need to bound $\G^4(v)$ in in $W^{1-1/q_\delta,q_\delta}(\Sigma)$.  Recall that  $\N_0 = -\p_1 \zeta_0 e_1 + e_2$ and $\mathcal{T}_0 = e_1 + \p_1 \zeta_0 e_2$ are smooth, so we can bound 
\begin{equation}
 \norm{\G^4_+(v)}_{W^{1-1/q_\delta,q_\delta}(\Sigma)} \ls  \norm{\sg_{I-\A} v}_{W^{1-1/q_\delta,q_\delta}(\Sigma)} + \norm{\p_1 \eta \sg_\A v }_{W^{1-1/q_\delta,q_\delta}(\Sigma)} + 
 \norm{(\p_1\eta)^2 \sg_\A v}_{W^{1-1/q_\delta,q_\delta}(\Sigma)}.
\end{equation}
We then use the trace characterization again to bound 
\begin{multline}
  \norm{\sg_{I-\A} v}_{W^{1-1/q_\delta,q_\delta}(\Sigma)}+   \norm{\p_1 \eta \sg_\A v}_{W^{1-1/q_\delta,q_\delta}(\Sigma)}  \ls  \norm{\sg_{I-\A} v}_{W^{1,q_\delta}(\Omega)} +  \norm{\p_1 \bar{\eta} \sg_\A v}_{W^{1,q_\delta}(\Omega)} \\
  \ls   \norm{\nab \bar{\eta} \nab v}_{L^{q_\delta}(\Omega)}
+   \norm{\nab^2 \bar{\eta} \nab v}_{L^{q_\delta}(\Omega)}
+  \norm{\nab \bar{\eta} \nab^2 v}_{L^{q_\delta}(\Omega)}  \\
\ls \norm{\nab \bar{\eta}}_{L^\infty} \norm{v}_{W^{2,q_\delta}} + \norm{\nab^2 \bar{\eta}}_{L^{2/(1-\delta)}} \norm{\nab v}_{L^{2/(1-\delta)}}  
\ls   \norm{\eta   }_{W^{3-1/q_\delta,q_\delta}(\Omega)} \norm{v   }_{W^{2,q_\delta}(\Omega)}.
\end{multline}
Similarly, 
\begin{multline}
 \norm{(\p_1\eta)^2 \sg_\A v}_{W^{1-1/q_\delta,q_\delta}(\Sigma)} \ls \norm{(\p_1\bar{\eta})^2 \sg_\A v}_{W^{1,q_\delta}(\Omega)} \\
 \ls    \norm{\nab \bar{\eta}}_{L^\infty}^2 \norm{v}_{W^{2,q_\delta}} + \norm{\nab \bar{\eta}}_{L^\infty} \norm{\nab^2 \bar{\eta}}_{L^{2/(1-\delta)}} \norm{\nab v}_{L^{2/(1-\delta)}}  
\ls  \norm{\eta   }_{W^{3-1/q_\delta,q_\delta}(\Omega)} \norm{v   }_{W^{2,q_\delta}(\Omega)}. 
\end{multline}
Combining these shows that 
\begin{equation}
  \norm{\G^4_+(v)}_{W^{1-1/q_\delta,q_\delta}(\Sigma)} \ls  \norm{\eta   }_{W^{3-1/q_\delta,q_\delta}(\Omega)} \norm{v   }_{W^{2,q_\delta}(\Omega)}. 
\end{equation}

\textbf{$\G^4_-$ estimate:}  We need to bound $\G^4_-(v)$ in $W^{1-1/q_\delta,q_\delta}(\Sigma_s)$.  Since $\nu$ and $\tau$ are determined by $\Sigma_s$ and are thus   $C^2$ we can estimate in exactly the same way as above:
\begin{multline}
  \norm{\G^4_-(v)}_{W^{1-1/q_\delta,q_\delta}(\Sigma_s)}  =   \norm{\sg_{I-\A} v}_{W^{1-1/q_\delta,q_\delta}(\Sigma_s)}   \ls  \norm{\sg_{I-\A} v}_{W^{1,q_\delta}(\Omega)}  \\
  \ls   \norm{\nab \bar{\eta} \nab v}_{L^{q_\delta}(\Omega)}
+   \norm{\nab^2 \bar{\eta} \nab v}_{L^{q_\delta}(\Omega)}
+  \norm{\nab \bar{\eta} \nab^2 v}_{L^{q_\delta}(\Omega)}  \\
\ls \norm{\nab \bar{\eta}}_{L^\infty} \norm{v}_{W^{2,q_\delta}} + \norm{\nab^2 \bar{\eta}}_{L^{2/(1-\delta)}} \norm{\nab v}_{L^{2/(1-\delta)}}  
\ls   \norm{\eta   }_{W^{3-1/q_\delta,q_\delta}(\Omega)} \norm{v   }_{W^{2,q_\delta}(\Omega)}.
\end{multline}

\textbf{Synthesis:}  Combining the above estimates shows that the bound \eqref{A_stokes_om_iso_0} holds.

\emph{Step 3 - Isomorphism: }   The map $T_{\delta}$ is an isomorphism, so \eqref{A_stokes_om_iso_1} is equivalent to the fixed point problem
\begin{equation}\label{A_stokes_om_iso_3}
 (v,Q) = T_{\delta}^{-1}(G+ \G(v,Q)) =: \Psi(v,Q)
\end{equation}
for $\Psi$ a map from $Z := \left( W^{2,q_\delta}(\Omega) \cap H^{1+\delta}(\Omega) \right) \times \left( W^{1,q_\delta}_\delta(\Omega) \cap \oH^\delta(\Omega) \right)$ to itself.  From \eqref{A_stokes_om_iso_0} we have that 
\begin{equation}
 \norm{\Psi(v_1,Q_1) - \Psi(v_2,Q_2)}_Z \le C \norm{\eta}_{W^{3-1/q_\delta,q_\delta}} \norm{T^{-1}_{\delta}}_{\text{op}}  \norm{(v_1,Q_1) - (v_2,Q_2) }_{Z}.   
\end{equation}
Hence, if $\gamma$ is sufficiently small, then $\Psi$ is a contraction and thus there exists a unique $(v,Q)$ solving  \eqref{A_stokes_om_iso_1} for every $G$.  In turn, this means that $T_\delta[\eta]$ is an isomorphism with this choice of $\gamma$. 
\end{proof}

\subsection{The $\A$-Stokes problem in $\Omega$ with $\beta \neq 0$}

As the next step we modify the boundary conditions in \eqref{A_stokes} to include the Navier-slip friction term on the vessel walls. The new system is:
\begin{equation}\label{A_stokes_beta}
\begin{cases}
\diva S_\A(Q,v) = G^1 & \text{in }\Omega \\
J \diva v = G^2 & \text{in } \Omega \\
v\cdot \N/ \abs{\N_0} = G^3_+ &\text{on } \Sigma \\
\mu \sg_\A v \N \cdot \mathcal{T} / \abs{\N_0}^2 = G^4_+ &\text{on } \Sigma \\
v\cdot J\nu = G^3_- &\text{on } \Sigma_s \\
\mu \sg_\A v \nu \cdot \tau + \beta v\cdot \tau = G^4_- &\text{on } \Sigma_s,
\end{cases}
\end{equation}
where $\beta>0$ is the Navier-slip friction coefficient.

We have the following existence result.

\begin{thm}\label{A_stokes_beta_solve}
Let $\epm$ be as in \eqref{epm_def}. Let $0 < \delta < \epm$, $q_\delta = 2/(2-\delta)$, and suppose that  $\ns{\eta}_{W^{3-1/q_\delta,q_\delta} } < \gamma$, where $\gamma$ is as in Theorem \ref{A_stokes_om_iso}.  If  $(G^1,G^2,G^3_+,G^3_-,G^4_+,G^4_-)   \in \mathfrak{X}_{\delta}$, then there exists a unique 
\begin{equation}
 (v,Q) \in \left( W^{2,q_\delta}(\Omega) \cap H^{1+\delta}(\Omega) \right) \times \left( W^{1,q_\delta}_\delta(\Omega) \cap \oH^\delta(\Omega) \right)
\end{equation}
solving \eqref{A_stokes_beta}.  Moreover, the solution obeys the estimate
\begin{equation}\label{A_stokes_beta_solve_0}
 \norm{v}_{W^{2,q_\delta}} + \norm{v}_{H^{1+\delta}}  + \norm{Q}_{W^{1,q_\delta}} + \norm{Q}_{H^{\delta}}\ls   \norm{ (G^1,G^2,G^3_+,G^3_-,G^4_+,G^4_-)}_{\mathfrak{X}_\delta} .
\end{equation}
\end{thm}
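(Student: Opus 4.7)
The plan is to adapt the fixed-point approach of Theorem \ref{A_stokes_om_iso} to accommodate the extra Navier-slip friction term $\beta v \cdot \tau$ on $\Sigma_s$, using the Fredholm alternative since this term has no small parameter attached. Let $\mathcal{Z} := \bigl(W^{2,q_\delta}(\Omega) \cap H^{1+\delta}(\Omega)\bigr) \times \bigl(W^{1,q_\delta}(\Omega) \cap \oH^\delta(\Omega)\bigr)$ denote the solution space, and define $\tilde T_\delta[\eta] : \mathcal{Z} \to \mathfrak{X}_\delta$ to be the operator associated to \eqref{A_stokes_beta}. Then $\tilde T_\delta[\eta] = T_\delta[\eta] + \B$, where $\B(v,Q) := (0,0,0,0,0,\beta v \cdot \tau|_{\Sigma_s})$ with $T_\delta[\eta]$ as in Theorem \ref{A_stokes_om_iso}.

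First I would verify that $\B$ is a compact linear map from $\mathcal{Z}$ into $\mathfrak{X}_\delta$. Indeed, the trace theorem gives $v \cdot \tau|_{\Sigma_s} \in W^{2-1/q_\delta,q_\delta}(\Sigma_s)$, and on the one-dimensional set $\Sigma_s$ the Rellich--Kondrachov theorem produces a compact embedding into $W^{1-1/q_\delta,q_\delta}(\Sigma_s)$. Combined with the smallness hypothesis on $\eta$ and Theorem \ref{A_stokes_om_iso}, which guarantees that $T_\delta[\eta]$ is an isomorphism, the equation $\tilde T_\delta[\eta](v,Q) = G$ is equivalent to the fixed point problem
\begin{equation*}
(I + T_\delta[\eta]^{-1}\B)(v,Q) = T_\delta[\eta]^{-1}G,
\end{equation*}
where $T_\delta[\eta]^{-1}\B$ is a compact operator on $\mathcal{Z}$. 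By the Fredholm alternative, $\tilde T_\delta[\eta]$ is an isomorphism if and only if its kernel is trivial, and in that case the estimate \eqref{A_stokes_beta_solve_0} follows from the open mapping theorem.

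Next I would rule out a nontrivial kernel via an energy argument. Given $(v,Q) \in \mathcal{Z}$ solving the homogeneous version of \eqref{A_stokes_beta}, I would test $\diva S_\A(Q,v) = 0$ against $Jv$, integrate by parts using the identities \eqref{ge_3}--\eqref{ge_4}, and exploit the constraint $\diva v = 0$ together with the homogeneous boundary data. On $\Sigma$ one has $v \cdot \N = 0$ and $\sga v \N \cdot \mathcal{T} = 0$, making the boundary contribution vanish there (using $\N \cdot \mathcal{T} = 0$). On $\Sigma_s$ one has $v \cdot \nu = 0$ and $\mu \sga v \nu \cdot \tau = -\beta v \cdot \tau$, so that contribution reduces to $\int_{\Sigma_s} J \beta (v \cdot \tau)^2$. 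The resulting identity
\begin{equation*}
\int_{\Omega} \frac{\mu}{2} \abs{\sga v}^2 J + \int_{\Sigma_s} J \beta \abs{v \cdot \tau}^2 = 0,
\end{equation*}
together with Lemma \ref{eta_small}(2), forces $\sg v = 0$ in $\Omega$ and $v = 0$ on $\Sigma_s$. Since $\sg v = 0$ means $v$ is an infinitesimal rigid motion $v(x) = a + c(-x_2, x_1)$, and since $\Sigma_s$ contains the non-degenerate vertical segments $\{x_1 = \pm \ell\}$, the vanishing of $v$ on $\Sigma_s$ forces $a = 0$ and $c = 0$, so $v \equiv 0$. Then the first equation reduces to $\A \nab Q = 0$; invertibility of $\A$ (Lemma \ref{eta_small}) gives $\nab Q = 0$, and the mean-zero condition $Q \in \oH^\delta(\Omega)$ then yields $Q = 0$.

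The principal obstacle is the careful bookkeeping in the energy identity: one must correctly identify which boundary contributions survive under the $\A$-dependent integration by parts and then apply the coercivity provided by Lemma \ref{eta_small}(2), which requires the normal vanishing of $v$ on $\Sigma_s$ that is encoded in $G^3_- = 0$. The compactness of $\B$ and the triviality of the kernel then close the argument via Fredholm, with \eqref{A_stokes_beta_solve_0} following from the bounded invertibility of $\tilde T_\delta[\eta]$.
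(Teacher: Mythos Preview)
Your proposal is correct and follows essentially the same approach as the paper: both treat the friction term $\beta v\cdot\tau|_{\Sigma_s}$ as a compact perturbation of the isomorphism $T_\delta[\eta]$ via the compact embedding $W^{2-1/q_\delta,q_\delta}(\Sigma_s)\hookrightarrow W^{1-1/q_\delta,q_\delta}(\Sigma_s)$, invoke the Fredholm alternative, and rule out a nontrivial kernel with the same energy identity obtained by testing against $Jv$. Your write-up is in fact slightly more explicit than the paper's in justifying $v\equiv 0$ (you pass through Lemma~\ref{eta_small}(2) and the rigid-motion classification, whereas the paper simply asserts $v=0$ after the energy identity).
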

\begin{proof}

Define the operator $R: \left( W^{2,q_\delta}(\Omega) \cap H^{1+\delta}(\Omega) \right) \times \left( W^{1,q_\delta}_\delta(\Omega) \cap \oH^\delta(\Omega) \right) \to \mathfrak{X}_\delta$ via 
\begin{equation}
 R(v,q) = (0,0,0,0,0,\beta v\cdot \nu\vert_{\Sigma_s}),
\end{equation}
which is bounded and well-defined since $v\cdot \nu \in W^{2-1/q_\delta,q_\delta}(\Sigma_s)$.  Standard Sobolev theory shows that the embedding $W^{2-1/q_\delta,1_\delta}(\Sigma_s) \hookrightarrow W^{1-1/q_\delta,q_\delta}(\Sigma_s)$ is compact, so $R$ is a compact operator.  Theorem \ref{A_stokes_om_iso} tells us that the operator $T_{\delta}[\eta]$ is an isomorphism, so the compactness of $R$ implies that  $T_{\delta}[\eta] + R$ is a Fredholm operator.  We claim that this map is injective. Once this is proved,  the Fredholm alternative implies that the map is also surjective and hence is an isomorphism.  

To prove the claim we assume $(T_{\delta}[\eta] + R)(v,Q) =0$, i.e. \eqref{A_stokes_beta} holds with all the $G^i$ terms vanishing.  We multiply the first equation in \eqref{A_stokes_beta} by $J v$ and integrate by parts, arguing as in Lemma \ref{geometric_evolution}, to arrive at the identity 
\begin{equation}
\int_\Omega \frac{\mu}{2} \abs{\sg_\A v}^2 J + \int_{\Sigma_s} \beta\abs{v\cdot \tau}^2 J =0.
\end{equation}
Thus $v=0$, but then $0 = \nab_\A Q = \A \nab Q=0$, which implies, since $\A$ is invertible (via Lemma \ref{eta_small}), that $Q$ is constant.  Since $Q \in \oH^{\delta}$ we then have that $Q =0$.  This proves the claim.

\end{proof}

\subsection{The $\A$-Stokes problem in $\Omega$ with a boundary equations for $\xi$}
 
We finally have the tools needed to address the desired problem, which synthesizes the $\A-$Stokes system in $\Omega$ with boundary conditions on $\Sigma$ involving a new unknown $\xi$: 
\begin{equation}\label{A_stokes_stress}
\begin{cases}
\diva S_\A(Q,v) = G^1 & \text{in }\Omega \\
J \diva v = G^2 & \text{in } \Omega \\
v\cdot \N/ \abs{\N_0} = G^3_+ &\text{on } \Sigma \\
S_\A(Q,v) \N =  \left[ g\xi  -\sigma \p_1\left( \frac{\p_1 \xi}{(1+\abs{\p_1 \zeta_0}^2)^{3/2}} + G^6 \right)  \right] \N   + G^4_+ \mathcal{T} + G^5\N  &\text{on } \Sigma \\
v\cdot J \nu = G^3_- &\text{on } \Sigma_s \\
 (\Sa(Q,v)\nu - \beta v)\cdot \tau = G^4_- &\text{on } \Sigma_s \\
 \mp \sigma \frac{\p_1 \xi}{(1+\abs{\p_1 \zeta_0}^2)^{3/2}} (\pm \ell) = G^7_\pm. 
\end{cases}
\end{equation}

We have the following existence result for  \eqref{A_stokes_stress}.

\begin{thm}\label{A_stokes_stress_solve}
Let $\epm$ be as in \eqref{epm_def}.  Let $0 < \delta < \epm$, $q_\delta = 2/(2-\delta)$, and suppose that  $\ns{\eta}_{W^{3-1/q_\delta,q_\delta} } < \gamma$, where $\gamma$ is as in Theorem \ref{A_stokes_om_iso}.  If  $(G^1,G^2,G^3_+,G^3_-,G^4_+,G^4_-)   \in \mathfrak{X}_{\delta}$, and $G^5,\p_1 G^6 \in W^{1-1/q_\delta,q_\delta}(\Sigma)$, and $G^7_\pm \in \R$,
then there exists a unique 
\begin{equation}
 (v,Q,\xi) \in \left( W^{2,q_\delta}(\Omega) \cap H^{1+\delta}(\Omega) \right) \times \left( W^{1,q_\delta}_\delta(\Omega) \cap \oH^\delta(\Omega) \right) \times W^{3-1/q_\delta,q_\delta}(\Sigma)
\end{equation}
solving \eqref{A_stokes_stress}.  Moreover, the solution obeys the estimate
\begin{multline}\label{A_stokes_stress_0}
 \norm{v}_{W^{2,q_\delta}} + \norm{v}_{H^{1+\delta}}  + \norm{Q}_{W^{1,q_\delta}} + \norm{Q}_{H^{\delta}}  + \norm{\xi}_{W^{3-1/q_\delta,q_\delta}}\\
 \ls   \norm{ (G^1,G^2,G^3_+,G^3_-,G^4_+,G^4_-)}_{\mathfrak{X}_\delta} 
 + \ns{G^5}_{W^{1-1/q_\delta,q_\delta}} + \ns{\p_1 G^6}_{W^{1-1/q_\delta,q_\delta}} + [G^7]_\ell^2,
\end{multline}
where we recall that $[\cdot,\cdot]_\ell$ is defined in \eqref{bndry_pairing}.
\end{thm}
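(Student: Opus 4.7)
The plan is to decouple the stress condition on $\Sigma$ by exploiting the orthogonality $\N \cdot \mathcal{T} = 0$, which follows immediately from $\N = \N_0 - \p_1\eta\, e_1$ and $\mathcal{T} = \mathcal{T}_0 + \p_1\eta\, e_2$. Dotting the fourth equation in \eqref{A_stokes_stress} with $\mathcal{T}/|\mathcal{T}|^2$ annihilates every $\N$-directed term (which carries all the $\xi$ content together with the $G^5$ and $G^6$ data), leaving the purely tangential condition
\begin{equation*}
\mu \sg_\A v \, \N \cdot \mathcal{T} / |\N_0|^2 = -G^4_+ |\mathcal{T}|^2/|\N_0|^2 =: \tilde{G}^4_+ \quad \text{on } \Sigma.
\end{equation*}
By product and trace estimates on $\Sigma$ in the regime $q_\delta < 2$, together with the smallness of $\eta$, the multiplier $|\mathcal{T}|^2/|\N_0|^2$ lies in $W^{2-1/q_\delta,q_\delta}(\Sigma)$, hence $\tilde{G}^4_+ \in W^{1-1/q_\delta,q_\delta}(\Sigma)$ with norm controlled by $\|G^4_+\|_{W^{1-1/q_\delta,q_\delta}}$.

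Next I would apply Theorem \ref{A_stokes_beta_solve} with the modified data tuple $(G^1,G^2,G^3_+,G^3_-,\tilde{G}^4_+,G^4_-) \in \mathfrak{X}_\delta$, producing a unique pair $(v,Q) \in (W^{2,q_\delta}\cap H^{1+\delta}) \times (W^{1,q_\delta}\cap \oH^\delta)$ that solves all the bulk and boundary equations of \eqref{A_stokes_stress} except for the $\N$-component of the fourth line and the last line. To recover $\xi$, I would dot that fourth equation with $\N/|\N|^2$ and rearrange to obtain the second-order linear ODE
\begin{equation*}
-\sigma \p_1\!\left(\frac{\p_1 \xi}{(1+|\p_1\zeta_0|^2)^{3/2}}\right) + g\xi = \frac{S_\A(Q,v)\N \cdot \N}{|\N|^2} - G^5 + \sigma \p_1 G^6 \quad\text{on } (-\ell,\ell),
\end{equation*}
coupled with the Neumann-type endpoint conditions $\mp \sigma \p_1\xi/(1+|\p_1\zeta_0|^2)^{3/2}(\pm\ell) = G^7_\pm$. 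The right-hand side lies in $W^{1-1/q_\delta,q_\delta}(-\ell,\ell)$ because the trace of $S_\A(Q,v) \in W^{1,q_\delta}(\Omega)$ onto $\Sigma$ is in $W^{1-1/q_\delta,q_\delta}(\Sigma)$ and the multiplier $\N/|\N|^2$ belongs to $W^{2-1/q_\delta,q_\delta}(\Sigma)$.

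Since $g > 0$, the natural bilinear form on $H^1(-\ell,\ell)$ associated with this ODE is coercive, so Lax-Milgram produces a unique weak solution. Standard $1$D elliptic regularity then upgrades this to $\xi \in W^{3-1/q_\delta,q_\delta}(-\ell,\ell)$ with the estimate $\|\xi\|_{W^{3-1/q_\delta,q_\delta}} \ls \|\mathrm{RHS}\|_{W^{1-1/q_\delta,q_\delta}} + [G^7]_\ell$. Combined with the estimate from Theorem \ref{A_stokes_beta_solve}, this gives \eqref{A_stokes_stress_0}. Uniqueness follows in two steps: any solution $(v,Q,\xi)$ of \eqref{A_stokes_stress} gives $(v,Q)$ solving the reduced Stokes problem (unique by Theorem \ref{A_stokes_beta_solve}), and with $(v,Q)$ fixed the ODE for $\xi$ has a unique solution. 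The main technical obstacle is the bookkeeping of nonlinear dependence on $\eta$ in the decomposition into tangential and normal stress components and in the boundary trace of $S_\A(Q,v)\N \cdot \N$, carried out at the low regularity $q_\delta < 2$; these product and trace estimates closely parallel the $\G$-boundedness arguments used in the proof of Theorem \ref{A_stokes_om_iso}.
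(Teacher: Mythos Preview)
Your proposal is correct and follows essentially the same approach as the paper: decouple by dotting the stress equation with $\mathcal{T}/|\mathcal{T}|^2$ to extract the tangential condition, invoke Theorem \ref{A_stokes_beta_solve} to solve for $(v,Q)$, then dot with $\N/|\N|^2$ to obtain the second-order ODE for $\xi$ and solve it by standard elliptic theory. Your treatment is in fact slightly more careful than the paper's in tracking the sign on $\tilde G^4_+$ (the paper writes $G^4_+|\N|^2/|\N_0|^2$ without the minus coming from $S_\A\N\cdot\mathcal{T}=-\mu\sg_\A v\,\N\cdot\mathcal{T}$) and in spelling out the uniqueness argument, which the paper leaves implicit.
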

\begin{proof}
First note that since $\abs{\N} = \abs{\mathcal{T}}$, 
\begin{equation}
 S_\A(Q,v) \N =  \left[ g\xi  -\sigma \p_1\left( \frac{\p_1 \xi}{(1+\abs{\p_1 \zeta_0}^2)^{3/2}} + G^6 \right)  \right] \N   + G^4_+ \mathcal{T} + G^5\N
\end{equation}
is equivalent to 
\begin{equation}
S_\A(Q,v) \N \cdot \frac{\N}{\abs{\N}^2}  =  \left[ g\xi  -\sigma \p_1\left( \frac{\p_1 \xi}{(1+\abs{\p_1 \zeta_0}^2)^{3/2}} + G^6 \right)  \right]     +G^5 
\end{equation}
and 
\begin{equation}
S_\A(Q,v) \N \cdot \frac{\mathcal{T}}{\abs{\N_0}^2} =  G^4_+ \frac{\abs{\N}^2}{\abs{\N_0}^2} \in.
\end{equation}

Note that the same sort of argument used in the proof of Theorem \ref{A_stokes_om_iso} shows that 
\begin{equation}
 \norm{G^4_+ \frac{\abs{\N}^2}{\abs{\N_0}^2} }_{W^{1-q_\delta,q_\delta}} \ls  \norm{G^4_+  }_{W^{1-q_\delta,q_\delta}}
\end{equation}
since $\norm{\eta}_{W^{3-1/q_\delta,q_\delta}} \le 1$.  We may then use Theorem \ref{A_stokes_beta_solve} to produce the pair $(v,Q)$ solving
\begin{equation} 
\begin{cases}
\diva S_\A(Q,v) = G^1 & \text{in }\Omega \\
J \diva v = G^2 & \text{in } \Omega \\
v\cdot \N/ \abs{\N_0} = G^3_+ &\text{on } \Sigma \\
\mu \sg_\A v \N \cdot \mathcal{T} / \abs{\N_0}^2 = G^4_+ \frac{\abs{\N}^2}{\abs{\N_0}^2} &\text{on } \Sigma \\
v\cdot J\nu = G^3_- &\text{on } \Sigma_s \\
\mu \sg_\A v \nu \cdot \tau + \beta v\cdot \tau = -G^4_- &\text{on } \Sigma_s,
\end{cases}
\end{equation}
and obeying the estimates \eqref{A_stokes_beta_solve_0}.  With this $(v,Q)$ in hand we then have a solution to \eqref{A_stokes_stress} as soon as we find $\xi$ solving 
\begin{equation}\label{A_stokes_stress_solve_1}
g\xi  -\sigma \p_1\left( \frac{\p_1 \xi}{(1+\abs{\p_1 \zeta_0}^2)^{3/2}} \right) = S_\A(Q,v) \N\cdot \frac{\N}{\abs{\N}^2}     +\sigma \p_1 G^6   -   G^5 
\end{equation}
on $\Sigma$ subject to the boundary conditions
\begin{equation}\label{A_stokes_stress_solve_2}
  \mp \sigma \left(\frac{\p_1 \xi}{(1+\abs{\p_1 \zeta_0}^2)^{3/2}} + F^3 \right)(\pm \ell) = G^7_\pm.
\end{equation}
The estimate \eqref{A_stokes_beta_solve_0} guarantees that $S_\A(q,v) \N\cdot \frac{\N}{\abs{\N}^2} \in W^{1-1/q_\delta,q_\delta}(\Sigma)$, the usual elliptic theory provides a unique $\xi \in W^{3-1/q_\delta,q_\delta}(\Sigma)$ satisfying \eqref{A_stokes_stress_solve_1} and \eqref{A_stokes_stress_solve_2} and obeying the estimate 
\begin{equation}\label{A_stokes_stress_solve_3}
\norm{\xi}_{W^{1-1/q_\delta,q_\delta}} \ls \norm{S_\A(Q,v) \N\cdot \frac{\N}{\abs{\N}^2}}_{W^{1-1/q_\delta,q_\delta}} + \norm{\p_1 G^6}_{W^{1-1/q_\delta,q_\delta}} + \norm{G^5}_{_{W^{1-1/q_\delta,q_\delta}}} + [G^7]_\ell^2 .
\end{equation}
Then \eqref{A_stokes_stress_0} follow by combining \eqref{A_stokes_beta_solve_0} and \eqref{A_stokes_stress_solve_3}.
\end{proof}

\section{Nonlinear estimates I: interaction terms, dissipative form}\label{sec_nl_int_d}

In this section we begin our study of the estimates available for the nonlinearities that appear in the system \eqref{ns_geometric} and its derivatives.  Here we focus on the interaction terms as they appear in Theorem \ref{linear_energy} and on deriving estimates in terms of the dissipation functional.  In order to avoid tedious restatements of the same hypothesis, we assume throughout this section that a solution to \eqref{ns_geometric} exists on the time horizon $(0,T)$ for $0 < T \le \infty$ and obeys the small-energy estimate 
\begin{equation}
 \sup_{0\le t < T} \E(t) \le \gamma^2 < 1,
\end{equation}
where $\gamma \in (0,1)$ is as in Lemma \ref{eta_small}.  In particular, this means that the estimates of Lemma \ref{eta_small} are available for use, and we will use them often without explicit reference.

\subsection{General interaction functional estimates}

We begin by studying the terms involving $F^1$, $F^4$, and $F^5$ in Theorem \ref{linear_energy}.  The structure of these is not particularly delicate, so we can derive general dual estimates in which the particular form of the test function is irrelevant.

We begin by studying $F^1$.

\begin{prop}\label{nid_f1}
Suppose that $F^1$ is as defined in either \eqref{dt1_f1} or \eqref{dt2_f1}.  Then 
\begin{equation}\label{nid_f1_0}
\abs{\int_\Omega J w \cdot F^1} \ls \norm{w}_{H^1} \left(\sqrt{\E} + \E \right) \sqrt{\D}.
\end{equation}
\end{prop}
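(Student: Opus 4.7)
The plan is to expand $F^1$ as a finite sum of multilinear expressions in derivatives of $\bar{\eta}$, $u$, and $p$ (as recorded in the appendix formulas \eqref{dt1_f1} and \eqref{dt2_f1}), and then bound $\int_\Omega J w \cdot F^1$ term by term by H\"{o}lder's inequality. Throughout I will use the bound $\norm{J}_{L^\infty} \ls 1$ from Lemma \ref{eta_small} to absorb the Jacobian factor, so it only remains to estimate $\int_\Omega w \cdot F^1$. Each H\"{o}lder-split will isolate $w$ against a single Lebesgue exponent $r \in (2,\infty)$ and then use the two-dimensional embedding $H^1(\Omega) \hookrightarrow L^r(\Omega)$ together with the catalog entries in Theorems \ref{catalog_energy} and \ref{catalog_dissipation} to convert the remaining factors into products of $\sqrt{\E}$ and $\sqrt{\D}$.

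First I would deal with the $\dt^1$ case, where the terms of $F^1$ are schematically
\[
\dt(\dt \bar\eta \, W K)\,\p_2 u, \quad \dt(u \cdot \naba) u, \quad \dt(\A) \nab p, \quad \mu\, \dt(\A)\text{-derivatives of } \nab u.
\]
For a typical quadratic term, e.g.\ $\dt \bar{\eta}\, \p_2 u \cdot w$, I would put $\dt \bar{\eta} \in L^\infty$ (a $\sqrt{\E}$ factor from the catalog) and pair $\p_2 u \in L^2$ against $w \in L^2$; this gives $\sqrt{\E}\,\norm{u}_{H^1}\norm{w}_{L^2} \ls \sqrt{\E}\sqrt{\D}\norm{w}_{H^1}$. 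Terms like $\dt \A \, \nab p \cdot w$ use $\dt\bar\eta \in L^\infty$ (energy) together with $\nab p \in L^{q_-/?}$ from the dissipation catalog; the corresponding $w$ is placed in the conjugate $L^r$ with $r < \infty$, which is admissible by $H^1 \hookrightarrow L^r$. Each such term yields exactly one $\sqrt{\E}$ and one $\sqrt{\D}$, with a harmless constant from the Sobolev embedding.

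For the $\dt^2$ case the analysis is similar but with more terms and more factors per term. Cubic commutator pieces of the form $(\dt \bar{\eta})^2 \p_2 u$, $\dt \bar\eta\, \dt u \cdot \naba u$, etc., are estimated by pulling off two $L^\infty$ (or $L^\infty$-like) factors controlled by $\sqrt{\E}$ and then a single $L^2$ factor controlled by $\sqrt{\D}$; this produces the $\E\sqrt{\D}$ contribution. Mixed quadratic pieces (one spatial derivative of a time-differentiated quantity) use H\"older with an exponent slightly below $2$ on the highest-derivative factor—paired against a slightly supra-$L^2$ norm on $w$ furnished by $H^1 \hookrightarrow L^{q'}$—to recover one $\sqrt{\E}$ and one $\sqrt{\D}$. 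In every case the $\dt^2 u$ appearing inside $F^1$ is the single dissipation-controlled factor, and $\dt \bar\eta$, $\dt^2 \bar\eta$, or lower-order quantities serve as the energy-controlled factors, matching the catalog entries $\norm{\dt \bar\eta}_{W^{1,\infty}} \ls \sqrt{\E}$ and $\norm{\dt^2 \bar\eta}_{L^\infty} \ls \sqrt{\D}$.

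The main obstacle will be bookkeeping: verifying, for each of the many commutator terms arising in \eqref{dt2_f1}, that the three H\"older exponents (one for $w$, two for the remaining factors) simultaneously satisfy $1/p_1 + 1/p_2 + 1/p_3 = 1$ and also match admissible entries in Theorems \ref{catalog_energy} and \ref{catalog_dissipation}. The tight spot is when two factors must be placed in $L^2$ and then $w$ is forced into $L^\infty$, which is not available from $H^1$ in $2$D; such situations are sidestepped by promoting one of the two factors from $L^2$ to $L^{2/(1-\ep_+)}$ using a $\sqrt{\E}$ catalog entry, which then leaves $w$ in $L^{r}$ for some $r < \infty$. Once every term has been accounted for, summing produces the desired bound \eqref{nid_f1_0}.
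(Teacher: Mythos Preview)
Your proposal is correct and follows essentially the same approach as the paper: expand $F^1$ term by term, apply H\"older's inequality with $w$ placed in some $L^r$ via the embedding $H^1(\Omega) \hookrightarrow L^r(\Omega)$ for $r < \infty$, and read off the remaining factors from the catalogs in Theorems \ref{catalog_energy} and \ref{catalog_dissipation}. The paper only writes out the harder $\dt^2$ case and marches through all fifteen terms of \eqref{dt2_f1} with explicit H\"older exponents, but your sketch captures the mechanism accurately, including the key point that the failure of $H^1 \hookrightarrow L^\infty$ in two dimensions is circumvented by promoting one of the other factors to a slightly better Lebesgue space via the catalog. One small inaccuracy: it is not always $\dt^2 u$ that carries the $\sqrt{\D}$ factor---in several terms (e.g.\ those involving $\dt^3 \bar\eta$, $\nab^2 \dt u$, or $\nab \dt^2 \A$) a different high-order quantity is the dissipation-controlled piece---but this does not affect the validity of the strategy.
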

\begin{proof}
We will present the proof only in the more involved case that $F^1$ is defined by \eqref{dt2_f1}, which corresponds to two temporal derivatives.  The case \eqref{dt1_f1}, which corresponds to one temporal derivative, follows from a simpler and easier argument.  There are fifteen terms appearing in \eqref{dt2_f1}, and we will deal with them one at a time, proving that each can be estimated in the stated form.  For the sake of brevity, throughout the proof we will repeatedly make use of four essential tools without explicitly referring to them: H\"{o}lder's inequality,  the standard Sobolev embeddings for $w \in H^1(\Omega)$, the fact that $\E \le 1$, and the catalogs of $L^q$ estimates given in Theorems \ref{catalog_energy} and \ref{catalog_dissipation}.  For the latter we will always use the following ordering convention: the ordering in expressions of the form
\begin{equation}
 a b c \ls A \sqrt{\E} \sqrt{\D} \text{ and } a b' c' \ls A \sqrt{\D} \E
\end{equation}
implies that we bound $a \ls A$, use Theorem \ref{catalog_energy} to bound $b \ls \sqrt{\E}$ and $c' \ls \E$, and use Theorem \ref{catalog_dissipation} to estimate $c \ls \sqrt{\D}$ and $b' \ls \sqrt{\D}$. In other words, the order of appearance of $\E$ and $\D$ on the right side corresponds to the order on the left and indicates which of Theorems \ref{catalog_energy} and \ref{catalog_dissipation} is being used implicitly.

\textbf{Term: $- 2\diverge_{\dt \A} S_\A(\dt p,\dt u)$. } We first bound 
\begin{multline}
 \abs{\int_\Omega J w \cdot (- 2\diverge_{\dt \A} S_\A(\dt p,\dt u)) } \ls \int_\Omega \abs{w} \abs{ \dt \A} (\abs{\nab \dt p} + \abs{\nab^2 \dt u} ) \\
 + \int_\Omega \abs{w} \abs{\dt \A} \abs{\nab \A} (\abs{\dt p} + \abs{\nab \dt u}) =: I + II.
\end{multline}
For $I$ we then bound 
\begin{equation}
I \ls \norm{w}_{L^{2/\ep_-}} \norm{  \dt \bar{\eta}}_{W^{1,\infty}} \left( \norm{\nab \dt p}_{L^{2/(2-\ep-)}} +\norm{\nab^2 \dt u}_{L^{2/(2-\ep-)}}  \right) \ls \norm{w}_{H^1} \sqrt{\E} \sqrt{\D},
\end{equation}
and for $II$ we bound 
\begin{equation}
 II \ls \norm{w}_{L^{2/(\ep_- + \ep_+)}} \norm{ \dt \bar{\eta}}_{W^{1,\infty}} \norm{ \bar{\eta}}_{W^{2,2/(1-\ep_+)}} \left( \norm{\dt p}_{L^{2/(1-\ep_-)}} + \norm{\nab \dt u}_{L^{2/(1-\ep_-)}} \right) 
 \ls \norm{w}_{H^1} \E \sqrt{\D}.
\end{equation}
Combining these shows that this term can be estimated as stated.

\textbf{Term: $2\mu \diva \sg_{\dt \A} \dt u$. }  We first bound 
\begin{equation}
 \abs{\int_\Omega J w \cdot (2\mu \diva \sg_{\dt \A} \dt u )} \ls \int_\Omega \abs{w} \abs{\nab \dt \A} \abs{\nab \dt u} + \int_\Omega \abs{w}\abs{ \dt \A} \abs{\nab^2 \dt u} =: I + II.
\end{equation}
We then bound 
\begin{equation}
 I \ls \norm{w}_{L^{4/(3\ep_-)}} \left( \norm{\bar{\eta}}_{W^{2,2/(1-\ep_-)}} +  \norm{ \dt \bar{\eta}}_{W^{2,2/(1-\ep_-)}} \right) \norm{\nab \dt u}_{L^{4/(2-\ep-)}} \ls \norm{w}_{H^1} \sqrt{\D} \sqrt{\E}
\end{equation}
and 
\begin{equation}
 II \ls \norm{w}_{L^{2/\ep_-}} \norm{ \dt \bar{\eta}}_{W^{1,\infty}} \norm{\nab^2 \dt u}_{L^{2/(2-\ep_-)}} \ls \norm{w}_{H^1} \sqrt{\E} \sqrt{\D}.
\end{equation}
Combining these shows that this term can be estimated as stated.

\textbf{Term: $- \diverge_{\dt^2 \A} S_\A(p,u)$. }  We first bound 
\begin{equation}
 \abs{\int_\Omega J w \cdot (- \diverge_{\dt^2 \A} S_\A(p,u))  } \ls \int_\Omega \abs{w} \abs{\dt^2 \A} (\abs{\nab p} + \abs{\nab^2 u}) + \int_\Omega \abs{w} \abs{\dt^2 \A} \abs{\nab \A} \abs{\nab u} =: I + II.
\end{equation}
Then we estimate 
\begin{equation}
 I \ls \norm{w}_{L^{2/(\ep_+ - \low)}}\left(\norm{\dt \bar{\eta}}_{W^{1,2/\low}} +  \norm{\dt^2 \bar{\eta}}_{W^{1,2/\low}} \right) \left(\norm{\nab p}_{L^{2/(2-\ep_+)}} + \norm{\nab^2 u}_{L^{2/(2-\ep_+)}} \right) \ls \norm{w}_{H^1} \sqrt{\D} \sqrt{\E}
\end{equation}
and 
\begin{equation}
II \ls \norm{w}_{L^{2/(2\ep_+ - \low)}} \left(\norm{\dt \bar{\eta}}_{W^{1,2/\low}} +  \norm{\dt^2 \bar{\eta}}_{W^{1,2/\low}} \right) \norm{ \bar{\eta}}_{W^{2,2/(1-\ep_+)}} \norm{\nab u}_{L^{2/(1-\ep_+)}} \ls \norm{w}_{H^1} \sqrt{\D} \E.
\end{equation}
Combining these shows that this term can be estimated as stated.

\textbf{Term: $2 \mu \diverge_{\dt \A} \sg_{\dt \A} u$. } We first estimate 
\begin{equation}
 \abs{\int_\Omega J w \cdot (2 \mu \diverge_{\dt \A} \sg_{\dt \A} u) } \ls \int_\Omega \abs{w} \abs{ \dt \A}^2 \abs{\nab^2 u} + \int_\Omega \abs{w} \abs{\dt \A} \abs{\nab  \dt \A} \abs{\nab u} =: I + II.
\end{equation}
We then bound 
\begin{equation}
 I \ls \norm{w}_{L^{2/\ep_+}} \ns{\dt \bar{\eta}}_{W^{1,\infty}} \norm{\nab^2 u}_{L^{2/(2-\ep_+)}} \ls \norm{w}_{H^1} \E \sqrt{\D}
\end{equation}
and 
\begin{equation}
 II \ls  \norm{w}_{L^{2/(\ep_- + \ep_+)}} \norm{ \dt \bar{\eta}}_{W^{1,\infty}} 
\left( \norm{\bar{\eta}}_{W^{2,2/(1-\ep_-)}} +  \norm{ \dt \bar{\eta}}_{W^{2,2/(1-\ep_-)}} \right) 
   \norm{\nab u}_{L^{2/(1-\ep_+)}} \ls \norm{w}_{H^1} \sqrt{\E} \sqrt{\D} \sqrt{\E}.
\end{equation}
Combining these shows that this term can be estimated as stated.

\textbf{Term: $\mu \diva \sg_{\dt^2 \A} u$. } We initially estimate 
\begin{equation}
 \abs{\int_\Omega J w \cdot (\mu \diva \sg_{\dt^2 \A} u)} \ls \int_\Omega \abs{w} \abs{ \dt^2 \A} \abs{\nab^2 u} + \int_\Omega \abs{w} \abs{\nab \dt^2 \A} \abs{\nab u}  =: I + II.
\end{equation}
Then we bound 
\begin{equation}
 I \ls \norm{w}_{L^{2/(\ep_+ - \low)}}\left(\norm{ \dt \bar{\eta}}_{W^{1,2/\low}} +  \norm{ \dt^2 \bar{\eta}}_{W^{1,2/\low}} \right) \norm{\nab^2 u}_{L^{2/(2-\ep_+)}} \ls \norm{w}_{H^1} \sqrt{\D} \sqrt{\E}
\end{equation}
and 
\begin{equation}
 II \ls \norm{w}_{L^{2/(\ep_+ - \low)}} \left( \norm{ \dt \bar{\eta}}_{W^{2,2/(1+\low)}}+\norm{ \dt^2 \bar{\eta}}_{W^{2,2/(1+\low)}} \right) \norm{\nab u}_{L^{2/(1-\ep_+)}} \ls \norm{w}_{H^1} \sqrt{\D} \sqrt{\E}.
\end{equation}
Combining these shows that this term can be estimated as stated.

\textbf{Term: $- 2 u \cdot \nab_{\dt \A} \dt u$. } For this term we bound 
\begin{multline}
\abs{\int_\Omega Jw \cdot (2 u \cdot \nab_{\dt \A} \dt u)} \ls \int_{\Omega} \abs{w} \abs{u} \abs{ \dt \A} \abs{\nab \dt u} \ls \norm{w}_{L^{2}} \norm{u}_{L^\infty} \norm{\dt \bar{\eta}}_{W^{1,\infty}} \norm{\nab \dt u}_{L^{2}} \\
\ls \norm{w}_{H^1} \sqrt{\E} \sqrt{\D} \sqrt{\E}.
\end{multline}

\textbf{Term: $- 2 \dt u \cdot \naba \dt u$. }  We bound 
\begin{equation}
\abs{\int_{\Omega} J w \cdot(2 \dt u \cdot \naba \dt u)} \ls \int_{\Omega} \abs{w} \abs{\dt u} \abs{\nab \dt u} \ls \norm{w}_{L^{2}} \norm{\dt u}_{L^\infty} \norm{\nab \dt u}_{L^{2}} 
\ls \norm{w}_{H^1} \sqrt{\E} \sqrt{\D}. 
\end{equation}

\textbf{Term: $- 2 \dt u \cdot \nab_{\dt \A} u$. }  We bound 
\begin{multline}
 \abs{\int_{\Omega} Jw \cdot(2 \dt u \cdot \nab_{\dt \A} u)} \ls \int_{\Omega} \abs{w} \abs{\dt u} \abs{\dt \A} \abs{\nab u} \ls \norm{w}_{L^{2}} \norm{\dt u}_{L^\infty} \norm{\dt \bar{\eta}}_{W^{1,\infty}} \norm{\nab u}_{L^{2}} \\
 \ls \norm{w}_{H^1} \E \sqrt{\D}.
\end{multline}

\textbf{Term: $- u \cdot \nab_{\dt^2 \A} u$. } We estimate 
\begin{multline}
 \abs{\int_{\Omega} Jw \cdot(u \cdot \nab_{\dt^2 \A} u)}  \ls \int_{\Omega} \abs{w} \abs{u} \abs{ \dt^2 \A} \abs{\nab u} \ls \norm{w}_{L^{2/(1 - \low)}} \norm{u}_{L^\infty} 
 \left(\norm{\dt \bar{\eta}}_{W^{1,2/\low}} +  \norm{\dt^2 \bar{\eta}}_{W^{1,2/\low}} \right) 
\norm{\nab u}_{L^{2}} \\
 \ls \norm{w}_{H^1} \sqrt{\E} \sqrt{\D} \sqrt{\E}.
\end{multline}

\textbf{Term: $- \dt^2 u \cdot \naba u$. } We estimate 
\begin{equation}
  \abs{\int_{\Omega} Jw \cdot(\dt^2 u \cdot \naba u)}  \ls \int_{\Omega} \abs{w} \abs{\dt^2 u} \abs{\nab u} \ls \norm{w}_{L^{2/\ep_+}} \norm{\dt^2 u}_{L^2} \norm{\nab u}_{L^{2/(1-\ep_+)}} \ls \norm{w}_{H^1} \sqrt{\D} \sqrt{\E}.
\end{equation}

\textbf{Term: $2 \dt \bar{\eta} \frac{\phi}{\zeta_0} \dt K \p_2 \dt u$. }  For this term we bound 
\begin{multline}
 \abs{\int_{\Omega} J w \cdot(2 \dt \bar{\eta} \frac{\phi}{\zeta_0} \dt K \p_2 \dt u) }  \ls \int_{\Omega} \abs{w} \abs{\dt \bar{\eta}} \abs{\dt K}\abs{\nab \dt u} \ls \norm{w}_{L^2} \norm{\dt \bar{\eta}}_{L^\infty} \norm{ \dt \bar{\eta}}_{W^{1,\infty}} \norm{\nab \dt u}_{L^2} \\
\ls \norm{w}_{H^1} \E \sqrt{\D}. 
\end{multline}

\textbf{Term: $2 \dt^2 \bar{\eta} \frac{\phi}{\zeta_0} K \p_2 \dt u$. } We estimate 
\begin{multline}
\abs{\int_{\Omega} J w \cdot(2 \dt^2 \bar{\eta} \frac{\phi}{\zeta_0} K \p_2 \dt u) } \ls \int_{\Omega} \abs{w} \abs{\dt^2 \bar{\eta}} \abs{\nab \dt u} \ls \norm{w}_{L^2} \norm{\dt^2 \bar{\eta}}_{L^\infty} \norm{\nab \dt u}_{L^2} \ls \norm{w}_{H^1} \sqrt{\D} \sqrt{\E}.
\end{multline}

\textbf{Term: $2 \dt^2 \bar{\eta} \frac{\phi}{\zeta_0} \dt K  \p_2 u$. }  We bound 
\begin{multline}
 \abs{\int_{\Omega} Jw \cdot(2 \dt^2 \bar{\eta} \frac{\phi}{\zeta_0} \dt K  \p_2 u)} \ls \int_{\Omega} \abs{w} \abs{\dt^2 \bar{\eta}} \abs{\dt K} \abs{\nab u} \ls  \norm{w}_{L^2} \norm{\dt^2 \bar{\eta}}_{L^\infty} \norm{\dt \bar{\eta}}_{W^{1,\infty}} \norm{\nab u}_{L^2} \\
 \ls \norm{w}_{H^1} \sqrt{\D} \E.
\end{multline}

\textbf{Term: $\dt^3 \bar{\eta} \frac{\phi}{\zeta_0} K \p_2 u$. } We estimate 
\begin{equation}
\abs{\int_{\Omega} J w \cdot(\dt^3 \bar{\eta} \frac{\phi}{\zeta_0} K \p_2 u)}  \ls \int_{\Omega} \abs{w} \abs{\dt^3 \bar{\eta}} \abs{\nab u} \ls \norm{w}_{L^{2/\ep_+}} \norm{\dt^3 \bar{\eta}}_{L^2} \norm{\nab u}_{L^{2/(1-\ep_+)}} \ls \norm{w}_{H^1} \sqrt{\D} \sqrt{\E}.
\end{equation}

\textbf{Term: $\dt \bar{\eta} \frac{\phi}{\zeta_0} \dt^2 K\p_2 u$. } For the final term we bound 
\begin{multline}
 \abs{\int_\Omega J w \cdot (\dt \bar{\eta} \frac{\phi}{\zeta_0} \dt^2 K\p_2 u)} \ls \int_{\Omega}  \abs{w} \abs{\dt \bar{\eta}} \abs{\dt^2 K} \abs{\nab u} \\
 \ls \norm{w}_{L^{2/(1-\low)}} \norm{\dt \bar{\eta}}_{L^\infty} 
  \left(\norm{\dt \bar{\eta}}_{W^{1,2/\low}} +  \norm{\dt^2 \bar{\eta}}_{W^{1,2/\low}} \right)
 \norm{\nab u}_{L^2}  
 \ls \norm{w}_{H^1} \sqrt{\E} \sqrt{\D} \sqrt{\E}.
\end{multline}
\end{proof}

Next we study the $F^4$ nonlinearity.

\begin{prop}\label{nid_f4}
Suppose that $F^4$ is defined as in either \eqref{dt1_f4} or \eqref{dt2_f4}.  Then we 
\begin{equation}\label{nid_f4_0}
\abs{ \int_{-\ell}^\ell w \cdot  F^4  } \ls \norm{w}_{H^1} (\sqrt{\E} + \E) \sqrt{\D}
\end{equation}
for all $w \in H^1(\Omega)$.
\end{prop}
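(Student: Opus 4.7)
The plan is to mimic the structure of the proof of Proposition \ref{nid_f1}, again focusing on the harder two-derivative case in which $F^4$ is given by \eqref{dt2_f4}; the one-derivative case \eqref{dt1_f4} is strictly easier because it contains fewer summands involving only single time derivatives, and the same method applies with bounds that never approach borderline spaces. The essential structural difference from Proposition \ref{nid_f1} is that the integrand now lives on the one-dimensional manifold $\Sigma = (-\ell,\ell)$, so I will systematically trade $\norm{w}_{H^1(\Omega)}$ for $\norm{w}_{L^q(\Sigma)}$ via the standard trace inequality $\norm{w}_{L^q(\Sigma)} \ls \norm{w}_{H^1(\Omega)}$, valid for every finite $q$, which costs at most a $q$-dependent constant that is harmless. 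I will again use H\"older's inequality implicitly, along with the catalogs of trace bounds from Theorems \ref{catalog_energy} and \ref{catalog_dissipation}, and the same ordering convention introduced in Proposition \ref{nid_f1} for which factor is estimated from which catalog.

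The term-by-term strategy is to split each summand of \eqref{dt2_f4} into three factors: one absorbing $w$ via $\norm{w}_{L^{q_w}(\Sigma)}$, one estimated in an ``energy'' norm giving $\sqrt{\E}$ (or $\E$), and one estimated in a ``dissipation'' norm giving $\sqrt{\D}$. The summands of $F^4$ arising from differentiating $S_\A(p,u)\N$ twice in time fall into three schematic families: (a) commutator terms of the form $S_{\dt^k \A}(\dt^j p, \dt^j u)\N$ with $j+k \le 2$ and at least one of $j,k$ positive; (b) normal-variation terms of the form $S_\A(\dt^j p, \dt^j u)\dt^k \N$ with $j+k \le 2$ and $k \ge 1$; and (c) purely kinematic terms of the form $S_\A(p,u)\dt^k \N$ where $\dt^k \N$ is rewritten in terms of $\dt^k \eta$. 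In each case I will pair $w$ with an exponent $q_w$ chosen small enough that the remaining two factors fit inside the catalog, with the $\bar{\eta}$-type coefficients pulled from either $\E$ or $\D$ depending on whether their time derivative count matches the energy or dissipation side.

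The main obstacles I anticipate are three. First, traces of $\dt^2 u$, $\dt^2 \bar{\eta}$, and $\dt^3 \bar{\eta}$ are only available in borderline spaces ($L^{\infty-}$ or $L^{1/\low}$ on $\Sigma$), so the corresponding terms must be arranged so that $w$ absorbs a strictly subcritical exponent, leaving just enough room. Second, the highest-order summand, which schematically reads $S_\A(p,u) (\dt^2 \p_1 \eta\, e_1)$, requires pairing $\dt^2 \p_1 \eta$, only controlled by $\sqrt{\D}$ via $\ns{\dt^2 \eta}_{H^{3/2-\low}}$ and a trace, against a stress factor whose $\sqrt{\E}$ control is only at $L^{1/(1-\ep_+)}$ on $\Sigma$; here the exponent arithmetic $\low < (\ep_+ - \ep_-)/2$ from \eqref{kappa_ep_def} is what gives the needed slack in the H\"older exponent for $w$. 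Third, the mixed commutators involving $\dt \A$ acting on $\dt p$ or $\nab \dt u$ traces exploit the small gap between $\ep_-$ and $\ep_+$ in \eqref{kappa_ep_def} to produce a finite H\"older exponent for $w$.

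Once each summand is bounded by $\norm{w}_{H^1}(\sqrt{\E}+\E)\sqrt{\D}$, summing yields \eqref{nid_f4_0}. The statement is then proved, modulo carrying out the routine H\"older bookkeeping for each of the terms in \eqref{dt2_f4} in the same disciplined format used in Proposition \ref{nid_f1}.
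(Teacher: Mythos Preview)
Your approach is essentially the same as the paper's: term-by-term H\"older estimates on $\Sigma$ using trace theory, the catalogs of Theorems \ref{catalog_energy} and \ref{catalog_dissipation}, and the ordering convention from Proposition \ref{nid_f1}. The paper also treats only the harder case \eqref{dt2_f4} explicitly.

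However, your enumeration of the summands is incomplete. Your three families (a)--(c) cover only the stress-tensor contributions to $F^4$, but \eqref{dt2_f4} also contains the gravity-capillary terms $\bigl[2g\dt\eta - 2\sigma\p_1(\cdots)\bigr]\dt\N$ and $\bigl[g\eta - \sigma\p_1(\cdots)\bigr]\dt^2\N$, including the nonlinear remainders $\p_1\dt[\mathcal{R}(\p_1\zeta_0,\p_1\eta)]\,\dt\N$ and $\p_1[\mathcal{R}(\p_1\zeta_0,\p_1\eta)]\,\dt^2\N$. These are handled by the same H\"older/catalog machinery, but the $\mathcal{R}$ terms require first expanding with the chain rule and invoking Proposition \ref{R_prop} to bound the coefficient functions $\p_z\mathcal{R}/\p_1\eta$, $\p_z^2\mathcal{R}$, etc.; the paper devotes two of its longest subcases to precisely these. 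Also, your list of ``borderline traces'' mentions $\dt^2 u$ and $\dt^3\bar\eta$, neither of which appears in $F^4$; the actual delicate factor here is $\p_1\dt^2\eta \in L^{1/\low}(\Sigma)$ via the dissipation catalog, which you do identify correctly in your second obstacle.
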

\begin{proof}
We will present the proof only in the more involved case that $F^4$ is defined by \eqref{dt2_f4}, which corresponds to two temporal derivatives.  The case \eqref{dt1_f4}, which corresponds to one temporal derivative, follows from a simpler and easier argument.  There are eleven terms appearing in \eqref{dt2_f4}, and we will deal with them mostly one at a time, with just a bit of grouping.  We will prove that each can be estimated in the stated form.  For the sake of brevity, throughout the proof we will repeatedly make use of five essential tools without explicitly referring to them: H\"{o}lder's inequality, standard trace estimates for $H^1(\Omega)$, the standard Sobolev embeddings for $H^1(\Omega)$ and $H^{1/2}((-\ell,\ell))$, the fact that $\E \le 1$, and the catalogs of $L^q$ estimates given in Theorems \ref{catalog_energy} and \ref{catalog_dissipation}.  For the latter we will again use the ordering convention described at the start of the proof of Proposition \ref{nid_f1}.

\textbf{Term: $ 2\mu \sg_{\dt \A} \dt u \N$.}  We bound 
\begin{multline}
\abs{\int_{-\ell}^\ell 2\mu w \cdot (\sg_{\dt \A} \dt u)(\N) }
\ls \int_{-\ell}^\ell \abs{w} \abs{\dt \A} \abs{\nab \dt u} 
 \ls  \norm{w}_{L^{1/\ep_-}(\Sigma)}   \norm{\dt \eta}_{W^{1,\infty}} \norm{\nab \dt u}_{L^{1/(1-\ep_-)}(\Sigma)}   \\
\ls \norm{w}_{H^1}   \sqrt{\E} \sqrt{\D} . 
\end{multline}

\textbf{Term: $ \mu \sg_{\dt^2 \A} u \N$.} We estimate 
\begin{multline}
\abs{ \int_{-\ell}^\ell w \cdot \mu \sg_{\dt^2 \A} u \N } 
\ls \int_{-\ell}^\ell \abs{w} \abs{\dt^2 \A} \abs{\nab u}   \\
\ls \norm{w}_{L^{1/(\ep_+ - \low)}(\Sigma)}  \left(\norm{\dt \eta}_{W^{1,1/\low}} + \norm{\dt^2 \eta}_{W^{1,1/\low}} \right)  \norm{\nab u}_{L^{1/(1-\ep_+)}(\Sigma)} 
\ls \norm{w}_{H^1}   \sqrt{\D}\sqrt{\E}.
\end{multline}

\textbf{Term: $\mu \sg_{\dt \A} u \dt \N$.}  We bound 
\begin{multline}
\abs{ \int_{-\ell}^\ell w \cdot \mu \sg_{\dt \A} u \dt \N  }
\ls \int_{-\ell}^\ell \abs{w} \abs{\dt \A} \abs{\nab u} \abs{\p_1 \dt \eta} \\
 \ls  \norm{w}_{L^{1/\ep_+}(\Sigma)}   \norm{\dt \eta}_{W^{1,\infty}} \norm{\nab u}_{L^{1/(1-\ep_+)}(\Sigma)}  \norm{\dt \p_1 \eta}_{L^\infty} 
\ls \norm{w}_{H^1}  \E \sqrt{\D}. 
\end{multline}

\textbf{Term: $\left[  2g \dt \eta  - 2\sigma \p_1 \left(\frac{\p_1 \dt \eta}{(1+\abs{\p_1 \zeta_0}^2)^{3/2}} \right)  \right]  \dt \N $.} We estimate
\begin{multline}
\abs{ \int_{-\ell}^\ell w \cdot \left[  2g \dt \eta  - 2\sigma \p_1 \left(\frac{\p_1 \dt \eta}{(1+\abs{\p_1 \zeta_0}^2)^{3/2}}  \right)  \right]  \dt \N  }
\ls \int_{-\ell}^\ell \abs{w}(\abs{\dt \eta} + \abs{\p_1 \dt \eta} + \abs{\p_1^2 \dt \eta} ) \abs{\p_1 \dt \eta} \\
\ls \norm{w}_{L^{1/\ep_-}(\Sigma)} \norm{\dt \eta}_{W^{2,1/(1-\ep_-)}}  \norm{\p_1 \dt \eta}_{L^\infty} 
\ls \norm{w}_{H^1} \sqrt{\D} \sqrt{\E} .
\end{multline}

\textbf{Term: $\p_1 \dt[ \mathcal{R}(\p_1 \zeta_0,\p_1 \eta)]    \dt \N$.} For this term we initially expand
\begin{multline}
 \p_1 \dt[ \mathcal{R}(\p_1 \zeta_0,\p_1 \eta)]  = \p_1 [\p_z \mathcal{R}(\p_1 \zeta_0,\p_1 \eta) \p_1 \dt \eta]  =  \frac{\p_z \mathcal{R}(\p_1 \zeta_0,\p_1 \eta)}{\p_1 \eta} \p_1 \eta \p_1^2 \dt \eta  + \p_z^2 \mathcal{R}(\p_1 \zeta_0,\p_1 \eta) \p_1^2 \eta \p_1 \dt \eta \\
 + \frac{ \p_z \p_y \mathcal{R}(\p_1 \zeta_0,\p_1 \eta)}{\p_1 \eta} \p_1 \eta \p_1^2 \zeta_0 \p_1 \dt \eta.
\end{multline}
This and Proposition \ref{R_prop} then allow us to bound 
\begin{multline}
\abs{ \int_{-\ell}^\ell w \cdot  \p_1 [\dt[ \mathcal{R}(\p_1 \zeta_0,\p_1 \eta)] ] \dt \N } 
\ls \int_{-\ell}^\ell \abs{w} \abs{\p_1 \dt \eta} \abs{\p_1 \eta}\abs{\p_1^2 \dt \eta} 
+ \int_{-\ell}^\ell \abs{w} \abs{\p_1 \dt \eta} \abs{\p_1^2 \eta} \abs{\p_1 \dt \eta} \\
+\int_{-\ell}^\ell \abs{w} \abs{\p_1 \dt \eta} \abs{\p_1 \eta} \abs{\p_1 \dt \eta} =: I + II + III.
\end{multline}
We then bound 
\begin{equation}
I \ls \norm{w}_{L^{1/\ep_-}(\Sigma)} \norm{\p_1 \dt \eta}_{L^\infty} \norm{\p_1 \eta}_{L^\infty} \norm{\p_1^2 \dt \eta}_{L^{1/(1-\ep_-)}} \ls \norm{w}_{H^1} \E \sqrt{\D},
\end{equation}
\begin{equation}
 II \ls   \norm{w}_{L^{1/\ep_+}(\Sigma)} \ns{\p_1 \dt \eta}_{L^\infty} \norm{\p_1^2   \eta}_{L^{1/(1-\ep_+)}} \ls \norm{w}_{H^1} \E \sqrt{\D},
\end{equation}
and 
\begin{equation}
 III \ls \norm{w}_{L^{2}(\Sigma)} \ns{\p_1 \dt \eta}_{L^\infty} \norm{\p_1   \eta}_{L^{2}} \ls \norm{w}_{H^1} \E \sqrt{\D}.
\end{equation}
Combining these then shows that this term can be estimated as stated.

\textbf{Term: $ -2 S_{\A}(\dt p,\dt u)  \dt \N$.} We estimate 
\begin{multline}
\abs{ \int_{-\ell}^\ell -2 w \cdot  S_{\A}(\dt p,\dt u)  \dt \N }
\ls \int_{-\ell}^\ell \abs{w} (\abs{\dt p} + \abs{\nab \dt u}) \abs{\p_1 \dt \eta}
\\
 \ls \norm{w}_{L^{1/\ep_-}(\Sigma)}   \left(\norm{\dt p}_{L^{1/(1-\ep_-)}(\Sigma)} + \norm{\nab \dt u}_{L^{1/(1-\ep_-)}(\Sigma)} \right) \norm{\dt \p_1 \eta}_{L^\infty} 
\ls \norm{w}_{H^1} \sqrt{\D} \sqrt{\E}.
\end{multline}

\textbf{Term: $\left[ g\eta  - \sigma \p_1 \left(\frac{\p_1 \eta}{(1+\abs{\p_1 \zeta_0}^2)^{3/2}}   \right)  \right]  \dt^2 \N$.} We bound
\begin{multline}
\abs{ \int_{-\ell}^\ell w \cdot \left[ g\eta  - \sigma \p_1 \left(\frac{\p_1 \eta}{(1+\abs{\p_1 \zeta_0}^2)^{3/2}}   \right)  \right]  \dt^2 \N  }
\ls \int_{-\ell}^\ell \abs{w} (\abs{\eta} + \abs{\p_1 \eta} + \abs{\p_1^2 \eta} ) \abs{\p_1 \dt^2 \eta} \\
\ls \norm{w}_{L^{1/(\ep_+ - \low)}(\Sigma)} \norm{\eta}_{W^{2,1/(1-\ep_+)}} \norm{\p_1 \dt^2 \eta}_{L^{1/\low}}
 \ls \norm{w}_{H^1} \sqrt{\E} \sqrt{\D}.
\end{multline}

\textbf{Term: $ \p_1[  \mathcal{R}(\p_1 \zeta_0,\p_1 \eta) ]     \dt^2 \N$.}  To handle this term we expand
\begin{equation}
 \p_1 [ \mathcal{R}(\p_1 \zeta_0,\p_1 \eta)]    =  \frac{\p_z \mathcal{R}(\p_1 \zeta_0,\p_1 \eta)}{\p_1 \eta} \p_1 \eta \p_1^2 \eta  
 + \frac{ \p_y \mathcal{R}(\p_1 \zeta_0,\p_1 \eta)}{(\p_1 \eta)^2}\p_1^2 \zeta_0 (\p_1 \eta)^2 .
\end{equation}
This and Proposition \ref{R_prop} then allow us to bound 
\begin{multline}
\abs{ \int_{-\ell}^\ell w \cdot \p_1[  \mathcal{R}(\p_1 \zeta_0,\p_1 \eta) ]     \dt^2 \N  }
\ls \int_{-\ell}^\ell \abs{w} \abs{\p_1 \dt^2 \eta} (\abs{\p_1 \eta} \abs{\p_1^2 \eta} + \abs{\p_1 \eta}^2  )  \\
\ls \norm{w}_{L^{1/(\ep_+-\low)}(\Sigma)} \norm{\p_1 \dt^2 \eta}_{L^{1/\low}} \left(\norm{\p_1 \eta}_{L^\infty} \norm{\p_1^2 \eta}_{L^{1/(1-\ep_+)}} +  \norm{\p_1 \eta}_{L^\infty} \norm{\p_1 \eta}_{L^{1/(1-\ep_+)}}   \right)
\ls \norm{w}_{H^1} \sqrt{\D} \E.
\end{multline}

\textbf{Term: $- S_{\A}(p,u)   \dt^2 \N$.}  For the final term we  estimate 
\begin{multline}
\abs{ - \int_{-\ell}^\ell w \cdot S_{\A}(p,u)   \dt^2 \N }
\ls \int_{-\ell}^\ell \abs{w} (\abs{p} + \abs{\nab u}) \abs{\p_1 \dt^2 \eta} \\
\ls \norm{w}_{L^{1/(\ep_+ - \low)}(\Sigma)} \left(\norm{p}_{L^{1/(1-\ep_+)}(\Sigma)} + \norm{\nab u}_{L^{1/(1-\ep_+)}(\Sigma)}   \right) \norm{\p_1 \dt^2 \eta}_{L^{1/\low}}
\ls \norm{w}_{H^1} \sqrt{\E} \sqrt{\D}.
\end{multline}
 
\end{proof}

Finally, we study the $F^5$ nonlinearity.

\begin{prop}\label{nid_f5}
Suppose that $F^5$ is given by either \eqref{dt1_f5} or \eqref{dt2_f5}.  Then
\begin{equation}\label{nid_f5_0}
 \abs{ \int_{\Sigma_s}  J(w \cdot \tau)F^5 } \ls \norm{w}_{H^1} \sqrt{\E}\sqrt{\D} 
\end{equation}
for every $w \in H^1(\Omega)$.
\end{prop}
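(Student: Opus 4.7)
The plan is to follow the same template already set up in the proofs of Propositions \ref{nid_f1} and \ref{nid_f4}, but in the considerably simpler context of $\Sigma_s$. The essential simplification is that $\Sigma_s$ is a time-independent piece of the boundary on which $\nu$ and $\tau$ are determined by the fixed geometry rather than by $\eta$, so no derivatives of the normal or tangent ever appear in $F^5$. I would present the argument only for the two-time-derivative case \eqref{dt2_f5}, since the one-time-derivative case \eqref{dt1_f5} is strictly easier and follows by the same tools; the one-time-derivative form is expected to be $-\mu(\sg_{\dt \A}u)\nu\cdot\tau$, and the two-time-derivative form is expected to contribute $-2\mu(\sg_{\dt \A}\dt u)\nu\cdot\tau$ together with $-\mu(\sg_{\dt^2 \A}u)\nu\cdot\tau$.

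After recalling, as at the start of the proof of Proposition \ref{nid_f1}, that H\"older's inequality, the standard trace inequality $H^1(\Omega)\hookrightarrow L^p(\Sigma_s)$, the Sobolev embeddings, the bound $\E\le 1$, and the catalogs in Theorems \ref{catalog_energy} and \ref{catalog_dissipation} are going to be used without further mention (with the same ordering convention as before), I would dispatch the two contributions separately. For the $\sg_{\dt \A}\dt u$ term, a straightforward pointwise bound followed by H\"older on $\Sigma_s$ with the triple $(1/\ep_-,\infty,1/(1-\ep_-))$ gives
\begin{equation*}
\Bigl|\int_{\Sigma_s} 2\mu J(w\cdot\tau)\,(\sg_{\dt \A}\dt u)\nu\cdot\tau\Bigr|
\ls \norm{w}_{L^{1/\ep_-}(\Sigma_s)}\norm{\dt\bar{\eta}}_{W^{1,\infty}}\norm{\nab \dt u}_{L^{1/(1-\ep_-)}(\Sigma_s)}
\ls \norm{w}_{H^1}\sqrt{\E}\sqrt{\D}.
\end{equation*}
For the $\sg_{\dt^2 \A}u$ term, I would instead split with the triple $(1/(\ep_+-\low),1/\low,1/(1-\ep_+))$, using that $\dt^2 \A$ is controlled in $W^{1,1/\low}$ by the dissipation while $\nab u$ still lives in $L^{1/(1-\ep_+)}(\Sigma_s)$ through the energy:
\begin{equation*}
\Bigl|\int_{\Sigma_s}\mu J(w\cdot\tau)\,(\sg_{\dt^2 \A}u)\nu\cdot\tau\Bigr|
\ls \norm{w}_{L^{1/(\ep_+-\low)}(\Sigma_s)}\bigl(\norm{\dt\bar{\eta}}_{W^{1,1/\low}}+\norm{\dt^2\bar{\eta}}_{W^{1,1/\low}}\bigr)\norm{\nab u}_{L^{1/(1-\ep_+)}(\Sigma_s)}
\ls \norm{w}_{H^1}\sqrt{\D}\sqrt{\E}.
\end{equation*}
Summing the two contributions, using $\abs{J}\ls 1$ from Lemma \ref{eta_small}, and recalling that $\E\le 1$ so that any stray factors of $\sqrt{\E}$ can be absorbed, yields \eqref{nid_f5_0}.

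The only nontrivial choice is the exponent triple used to apply H\"older for the $\dt^2\A$ term: one must make sure that $w$ sits at an integrability level compatible with trace, i.e.\ that $1/(\ep_+-\low)<\infty$, which is precisely the role of the constraint $\low<\ep_-<\ep_+$ from \eqref{kappa_ep_def}. Once that consistency is in place, every factor is governed directly by a line of Theorem \ref{catalog_energy} or \ref{catalog_dissipation}, and there is no delicate cancellation or interpolation to set up. I do not anticipate any genuine obstacle here; the result is effectively a trace-level, lower-derivative version of Proposition \ref{nid_f4}, with the added simplification that the normal and tangent on $\Sigma_s$ contribute no $\eta$-dependent nonlinearities.
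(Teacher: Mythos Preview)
Your proposal is correct and matches the paper's proof essentially line for line: the paper also treats only the two-time-derivative case \eqref{dt2_f5}, splits into the same two terms, and applies H\"older on $\Sigma_s$ with exactly the exponent triples $(1/\ep_-,\infty,1/(1-\ep_-))$ and $(1/(\ep_+-\low),1/\low,1/(1-\ep_+))$ that you chose. The only cosmetic difference is signs in your guessed form of $F^5$, which are irrelevant under the absolute value.
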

\begin{proof}
As in the proof of Propositions \ref{nid_f1} and \ref{nid_f4}, we will only prove the result in the harder case of two temporal derivatives, which occurs when $F^5$ is given by \eqref{dt2_f5}.  Then $F^5$ consists only of two terms.  

Using the bounds in Theorems \ref{catalog_energy} and \ref{catalog_dissipation} (once more with the ordering convention described at the start of the proof of Proposition \ref{nid_f1}) together with the Sobolev embeddings and trace estimates, we estimate the first term in $F^5$ via 
\begin{multline}
 \abs{\int_{\Sigma_s} J (w\cdot \tau) (2 \mu \sg_{\dt \A} \dt u \nu \cdot \tau) } \ls \int_{\Sigma_s} \abs{w} \abs{\dt \A} \abs{\nab \dt u} \\
 \ls \norm{w}_{L^{1/\ep_-}(\Sigma_s)} \norm{\dt \bar{\eta}}_{W^{1,\infty}(\Sigma_s)}  \norm{\nab \dt u}_{L^{1/(1-\ep_-)}(\Sigma_s)} \ls \norm{w}_{H^1} \sqrt{\E} \sqrt{\D}.
\end{multline}
Similarly, we bound the second term via 
\begin{multline}
 \abs{\int_{\Sigma_s} J (w\cdot \tau) (\mu \sg_{\dt^2 \A} u \nu \cdot \tau) } 
 \ls \int_{\Sigma_s} \abs{w} \abs{\dt^2 \A} \abs{\nab u} \\
 \ls \norm{w}_{L^{1/(\ep_+ - \low)}(\Sigma_s)}  \left( \norm{\dt \bar{\eta}}_{W^{1,1/\low}(\Sigma_s)}  + \norm{\dt^2 \bar{\eta}}_{W^{1,1/\low}(\Sigma_s)} \right) \norm{\nab u}_{L^{1/(1-\ep_+)}(\Sigma_s)} \ls \norm{w}_{H^1} \sqrt{\D} \sqrt{\E}.
\end{multline}
These bounds can then be combined to conclude that the stated estimate holds. 
\end{proof}

We synthesize the results of Propositions \ref{nid_f1}, \ref{nid_f4}, and \ref{nid_f5} into the following result.

\begin{thm}\label{nid_v_est}
Consider the functional $H^1(\Omega) \ni w\mapsto \br{\mathcal{F},w} \in \mathbb{R}$ defined by 
\begin{equation}
\br{\mathcal{F},w} = \int_\Omega F^1 \cdot w J 
-  \int_{-\ell}^\ell  F^4 \cdot w 
- \int_{\Sigma_s}  J (w \cdot \tau)F^5, 
\end{equation} 
where $F^1,F^4,F^5$ are defined either by \eqref{dt1_f1}, \eqref{dt1_f4}, and \eqref{dt1_f5} or else by \eqref{dt2_f1}, \eqref{dt2_f4}, and \eqref{dt2_f5}.  Then 
\begin{equation}
\abs{\br{\mathcal{F},w}} \ls  \norm{w}_{H^1}  ( \sqrt{\E} + \E)\sqrt{\D}
\end{equation}
for all $w \in H^1(\Omega)$.
\end{thm}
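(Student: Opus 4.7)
My plan is to prove Theorem \ref{nid_v_est} by a direct synthesis of Propositions \ref{nid_f1}, \ref{nid_f4}, and \ref{nid_f5}, which have just been established and which constitute essentially all the work. The theorem itself is a packaging statement whose purpose is to combine the three interaction estimates into a single dual-form bound on the functional $\mathcal{F}$, which will be more convenient in later applications (in particular, when testing the energy identity of Theorem \ref{linear_energy} against an $H^1$ function).

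Concretely, first I would apply the triangle inequality to split
\[
\abs{\br{\mathcal{F},w}} \le \abs{\int_\Omega F^1 \cdot w\, J} + \abs{\int_{-\ell}^\ell F^4 \cdot w} + \abs{\int_{\Sigma_s} J(w\cdot\tau)F^5},
\]
so that the three pieces may be treated independently.

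Next, I would invoke the three propositions in turn. Since the hypotheses allow either the once-differentiated choice \eqref{dt1_f1}, \eqref{dt1_f4}, \eqref{dt1_f5} or the twice-differentiated choice \eqref{dt2_f1}, \eqref{dt2_f4}, \eqref{dt2_f5}, and since each of Propositions \ref{nid_f1}, \ref{nid_f4}, \ref{nid_f5} is stated to cover both cases simultaneously, the bounds apply verbatim. This yields
\[
\abs{\int_\Omega F^1 \cdot w\, J} \ls \norm{w}_{H^1}(\sqrt{\E}+\E)\sqrt{\D}, \qquad
\abs{\int_{-\ell}^\ell F^4 \cdot w} \ls \norm{w}_{H^1}(\sqrt{\E}+\E)\sqrt{\D},
\]
and
\[
\abs{\int_{\Sigma_s} J(w\cdot\tau)F^5} \ls \norm{w}_{H^1}\sqrt{\E}\sqrt{\D}.
\]

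Finally, I would sum the three bounds, observing that the $F^5$ estimate is already subsumed by $\norm{w}_{H^1}(\sqrt{\E}+\E)\sqrt{\D}$ (since $\sqrt{\E}\sqrt{\D} \le (\sqrt{\E}+\E)\sqrt{\D}$), to conclude the desired inequality. There is no genuine obstacle in this proof; the entire difficulty was absorbed into the three earlier propositions, whose delicate term-by-term Hölder estimates rely critically on the interplay between the parameters $\low, \ep_-, \ep_+$ chosen in \eqref{kappa_ep_def} and on the catalogs of Theorems \ref{catalog_energy} and \ref{catalog_dissipation}.
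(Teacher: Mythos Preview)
Your proposal is correct and matches the paper's approach exactly: the paper's proof is the single sentence ``This follows immediately from Propositions \ref{nid_f1}, \ref{nid_f4}, and \ref{nid_f5}.'' Your write-up simply makes explicit the triangle inequality and the summation that this sentence encodes.
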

\begin{proof}
This follows immediately from Propositions \ref{nid_f1}, \ref{nid_f4}, and \ref{nid_f5}. 
\end{proof}

\subsection{General interaction functional estimates II: pressure term}

We next turn our attention to the term $F^2$ appearing in Theorem \ref{linear_energy}.  We again derive a general dual estimate.

\begin{thm}\label{nid_p_est}
Suppose that $F^2$ is given by either \eqref{dt1_f2} or \eqref{dt2_f2}.  Then 
\begin{equation}\label{nid_p_est_0}
 \abs{ \int_\Omega J \psi F^2} \ls \norm{\psi}_{L^2} \sqrt{\D} \sqrt{\E}
\end{equation}
for every $\psi \in L^2(\Omega)$.
\end{thm}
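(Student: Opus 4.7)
The plan is to follow the same strategy as Propositions \ref{nid_f1}, \ref{nid_f4}, and \ref{nid_f5}: decompose $F^2$ into its constituent terms according to \eqref{dt1_f2} or \eqref{dt2_f2} and bound each by H\"older's inequality together with the catalogs in Theorems \ref{catalog_energy} and \ref{catalog_dissipation}. Because $F^2$ arises purely from commuting time derivatives past the constraint $\diva u = 0$, every term has the schematic form $\dt^j \A * \nab \dt^{k-j} u$ with $k\in\{1,2\}$, and there are no boundary contributions to worry about. The factor $J$ is absorbed harmlessly via Lemma \ref{eta_small}. Throughout I will apply the ordering convention explained at the start of the proof of Proposition \ref{nid_f1}.

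In the once-differentiated case, $F^2$ reduces (up to sign) to a single term of the form $\dt \A \cdot \nab u$, so
\begin{equation*}
\abs{\int_\Omega J\psi F^2} \ls \int_\Omega \abs{\psi}\abs{\dt \A}\abs{\nab u} \ls \norm{\psi}_{L^2}\norm{\dt\bar\eta}_{W^{1,\infty}}\norm{\nab u}_{L^2} \ls \norm{\psi}_{L^2}\sqrt{\E}\sqrt{\D},
\end{equation*}
using the energy catalog for $\dt\bar\eta$ and the dissipation catalog for $\nab u$.

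In the twice-differentiated case \eqref{dt2_f2}, two classes of terms appear. Terms of the form $\dt \A \cdot \nab \dt u$ are handled exactly as above:
\begin{equation*}
\int_\Omega \abs{J\psi\, \dt\A\,\nab \dt u} \ls \norm{\psi}_{L^2}\norm{\dt\bar\eta}_{W^{1,\infty}}\norm{\nab \dt u}_{L^2} \ls \norm{\psi}_{L^2}\sqrt{\E}\sqrt{\D}.
\end{equation*}
Terms of the form $\dt^2 \A \cdot \nab u$ need slightly more care; with H\"older I bound
\begin{equation*}
\int_\Omega \abs{J\psi\, \dt^2\A\,\nab u} \ls \norm{\psi}_{L^2}\norm{\dt^2 \bar\eta}_{W^{1,2/\low}}\norm{\nab u}_{L^{2/(1-\low)}},
\end{equation*}
estimate the first coefficient norm by $\sqrt{\D}$ via Theorem \ref{catalog_dissipation}, and note that since $\low < \ep_+$ the exponent satisfies $2/(1-\low) \le 2/(1-\ep_+)$, so the boundedness of $\Omega$ together with Theorem \ref{catalog_energy} gives $\norm{\nab u}_{L^{2/(1-\low)}} \ls \norm{\nab u}_{L^{2/(1-\ep_+)}} \ls \sqrt{\E}$.

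Summing the finite list of terms produces \eqref{nid_p_est_0}. There is no substantive obstacle: the choice of dual exponent $2/\low$ for $\dt^2\bar\eta$ is essentially forced by the parameter constraints \eqref{kappa_ep_def}, which were set up precisely to make this H\"older pairing work, and every remaining contribution collapses to an elementary $L^\infty \times L^2$ estimate against $\psi\in L^2$.
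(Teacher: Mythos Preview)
Your proof is correct and follows essentially the same approach as the paper: decompose $F^2$ term by term and apply H\"older's inequality together with the catalogs in Theorems \ref{catalog_energy} and \ref{catalog_dissipation}. The only cosmetic difference is that for the $\dt^2\A\cdot\nab u$ term the paper pairs $\norm{\nab u}_{L^{2/(1-\ep_+)}}$ with an extra factor $\norm{1}_{L^{2/(\ep_+-\low)}}$, whereas you use the equivalent (and slightly cleaner) observation that $\norm{\nab u}_{L^{2/(1-\low)}}\ls\norm{\nab u}_{L^{2/(1-\ep_+)}}$ on the bounded domain.
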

\begin{proof}
Again, we will only prove the result in the harder case of two temporal derivatives, which occurs when $F^2$ is given by \eqref{dt2_f2}.  In this case $F^2$ only consists of two terms.  

From the bounds in Theorems \ref{catalog_energy} and \ref{catalog_dissipation} (again using the ordering convention described at the start of the proof of Proposition \ref{nid_f1}) together with the H\"{o}lder's inequality and the fact that $\Omega$ has finite measure, we bound the first term via 
\begin{multline}
\abs{\int_{\Omega} J \psi \diverge_{\dt^2 \A} u   } \ls \int_{\Omega} \abs{\psi} \abs{\dt^2 \A} \abs{\nab u} \\
\ls \norm{\psi}_{L^2}   \left(\norm{\dt \bar{\eta}}_{W^{1,2/\low}} +  \norm{\dt^2 \bar{\eta}}_{W^{1,2/\low}} \right) \norm{\nab u}_{L^{2/(1-\ep_+)}} \norm{1}_{L^{2/(\ep_+ - \low)}}
\ls \norm{\psi}_{L^2} \sqrt{\D} \sqrt{\E}.
\end{multline}
For the second term we argue similarly to estimate 
\begin{equation}
 \abs{\int_{\Omega} J \psi 2\diverge_{\dt \A}\dt u} \ls \int_{\Omega} \abs{\psi} \abs{\dt \A}\abs{\nab \dt u} \ls \norm{\psi}_{L^2} \norm{\dt \bar{\eta}}_{W^{1,\infty}} \norm{\nab \dt u}_{L^2} \ls \norm{\psi}_{L^2} \sqrt{\E} \sqrt{\D}.
\end{equation}
Upon combining these, we arrive at the stated bound.
\end{proof}

\subsection{Special interaction estimates I: velocity terms}

The $F^3$ nonlinear interaction term in Theorem \ref{linear_energy} requires greater care than we have used above.  Indeed, we will not derive general dual estimates, but will instead derive estimates that take careful advantage of the structure of the test function.  When two time derivatives are applied, the $F^3$ nonlinearity from \eqref{dt2_f3} has the form
\begin{equation}
 F^3 = \dt^2 [ \mathcal{R}(\p_1 \zeta_0,\p_1 \eta)] = \p_z \mathcal{R}(\p_1 \zeta_0,\p_1 \eta)  \p_1 \dt^2 \eta  + \p_z^2 \mathcal{R}(\p_1 \zeta_0,\p_1 \eta) (\p_1 \dt \eta)^2,
\end{equation}
where $\mathcal{R}$ is as in \eqref{R_def}.  For the purposes of estimating $F^3$ we will write
\begin{equation}
 \dt^2 u \cdot \N = \dt^3 \eta - F^6.  
\end{equation}
We may then decompose the relevant interaction term in Theorem \ref{linear_energy} as
\begin{multline}\label{nid_f3_decomp}
-  \int_{-\ell}^\ell \sigma  F^3   \p_1 (\dt^2 u \cdot \N)  = 
-  \int_{-\ell}^\ell \sigma \p_z \mathcal{R} (\p_1 \zeta_0,\p_1 \eta)  \p_1 \dt^2 \eta  \p_1 \dt^3 \eta
-  \int_{-\ell}^\ell \sigma \p_z \mathcal{R}(\p_1 \zeta_0,\p_1 \eta)  \p_1 \dt^2 \eta  \p_1 F^6 \\
-  \int_{-\ell}^\ell \sigma \p_z^2 \mathcal{R}(\p_1 \zeta_0,\p_1 \eta) (\p_1 \dt \eta)^2 \p_1 \dt^3 \eta 
-  \int_{-\ell}^\ell \sigma \p_z^2 \mathcal{R}(\p_1 \zeta_0,\p_1 \eta) (\p_1 \dt \eta)^2 \p_1 F^6 \\
=: I + II + III + IV.
\end{multline}

We will handle each of these separately, starting with $I$.

\begin{prop}\label{nid_f3_I}
Let $I$ be as in \eqref{nid_f3_decomp}.  Then we have the estimates 
\begin{equation}
 \abs{I +  \frac{d}{dt} \int_{-\ell}^\ell \sigma \p_z \mathcal{R}(\p_1 \zeta_0,\p_1 \eta)  \frac{\abs{\p_1 \dt^2 \eta}^2}{2} } \ls \sqrt{\E} \D
\end{equation}
and 
\begin{equation}
   \abs{ \int_{-\ell}^\ell \sigma \p_z \mathcal{R}(\p_1 \zeta_0,\p_1 \eta)  \frac{\abs{\p_1 \dt^2 \eta}^2}{2} } \ls \sqrt{\E} \seb.
\end{equation}
\end{prop}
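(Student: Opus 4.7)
The plan is to write $I$ as a total time derivative plus a controllable commutator, and then estimate both the commutator and the second quantity using the structure of $\mathcal{R}$ from \eqref{R_def} (together with the pointwise bounds from Proposition \ref{R_prop}) and the smallness afforded by $\sqrt{\E}$. The key observation for the first bound is simply that $I$ itself is already $-\tfrac{d}{dt}$ of the boundary integral on the left, modulo a commutator coming from the time-derivative landing on $\p_z\mathcal{R}$.

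For the first estimate, I would differentiate in time under the integral, using the chain rule $\dt[\p_z\mathcal{R}(\p_1\zeta_0,\p_1\eta)] = \p_z^2\mathcal{R}(\p_1\zeta_0,\p_1\eta)\,\p_1\dt\eta$, to obtain
\begin{equation}
\frac{d}{dt}\int_{-\ell}^\ell \sigma\,\p_z\mathcal{R}(\p_1\zeta_0,\p_1\eta)\frac{\abs{\p_1\dt^2\eta}^2}{2}
= \int_{-\ell}^\ell \sigma\,\p_z^2\mathcal{R}(\p_1\zeta_0,\p_1\eta)\,\p_1\dt\eta\,\frac{\abs{\p_1\dt^2\eta}^2}{2} \;-\; I,
\end{equation}
so that $I + \frac{d}{dt}(\cdots)$ equals exactly the commutator term on the right. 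I would then invoke Proposition \ref{R_prop} to bound $\abs{\p_z^2\mathcal{R}(\p_1\zeta_0,\p_1\eta)} \ls 1$ (valid since the smallness hypothesis on $\E$ keeps $\p_1\eta$ in a fixed compact set) and apply H\"older's inequality to get
\begin{equation*}
\abs{\int_{-\ell}^\ell \sigma\,\p_z^2\mathcal{R}(\p_1\zeta_0,\p_1\eta)\,\p_1\dt\eta\,\frac{\abs{\p_1\dt^2\eta}^2}{2}}
\ls \norm{\p_1\dt\eta}_{L^\infty}\,\ns{\p_1\dt^2\eta}_{L^2}.
\end{equation*}
Theorem \ref{catalog_energy} supplies $\norm{\p_1\dt\eta}_{L^\infty} \ls \sqrt{\E}$, while the parameter constraints \eqref{kappa_ep_def} force $\low < 1/2$, so $3/2-\low > 1$ and consequently $\ns{\p_1\dt^2\eta}_{L^2} \ls \ns{\dt^2\eta}_{H^{3/2-\low}} \ls \D$ via the definition \eqref{D_def}. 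Combining gives the desired $\sqrt{\E}\,\D$ bound.

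For the second estimate, the decisive observation is that $\p_z\mathcal{R}(y,0) = 0$ by a direct computation from \eqref{R_def} (differentiating in $z$ at $z=0$ kills the $(s-z)$ factor), so Proposition \ref{R_prop} yields the pointwise bound $\abs{\p_z\mathcal{R}(\p_1\zeta_0,\p_1\eta)} \ls \abs{\p_1\eta}$ under our smallness hypothesis. Then H\"older gives
\begin{equation*}
\abs{\int_{-\ell}^\ell \sigma\,\p_z\mathcal{R}(\p_1\zeta_0,\p_1\eta)\,\frac{\abs{\p_1\dt^2\eta}^2}{2}}
\ls \norm{\p_1\eta}_{L^\infty}\,\ns{\p_1\dt^2\eta}_{L^2} \ls \sqrt{\E}\,\seb,
\end{equation*}
where $\norm{\p_1\eta}_{L^\infty} \ls \sqrt{\E}$ comes from Theorem \ref{catalog_energy} and the bound $\ns{\p_1\dt^2\eta}_{L^2} \le \ns{\dt^2\eta}_{H^1} \le \seb$ follows directly from the definition \eqref{ED_parallel}.

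There is no serious analytic obstacle once one recognizes the total-derivative structure inside $I$; after that, the proof is a textbook $L^\infty \cdot L^2 \cdot L^2$ H\"older estimate, with the $L^\infty$ factor (either $\p_1\dt\eta$ or $\p_1\eta$) absorbing the required $\sqrt{\E}$ smallness and the two $L^2$ factors $\p_1\dt^2\eta$ producing either $\D$ (for the first estimate) or $\seb$ (for the second) via the defining norms.
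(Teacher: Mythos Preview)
Your proof is correct and follows essentially the same approach as the paper: both recognize the total-derivative structure $\p_1\dt^2\eta\,\p_1\dt^3\eta = \dt\tfrac{|\p_1\dt^2\eta|^2}{2}$, isolate the commutator arising from $\dt[\p_z\mathcal{R}] = \p_z^2\mathcal{R}\,\p_1\dt\eta$, and then close with the same $L^\infty\cdot L^2\cdot L^2$ H\"older bound using Proposition~\ref{R_prop} and Theorem~\ref{catalog_energy}. Your justification for $\ns{\p_1\dt^2\eta}_{L^2}\ls\D$ via the $H^{3/2-\low}$ term is slightly more explicit than the paper's, but the argument is otherwise identical.
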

\begin{proof}
The essential feature of $I$ is the appearance of the total time derivative $\p_1 \dt^2 \eta \p_1  \dt^3 \eta$, which allows us to write 
\begin{multline}
 I = -  \int_{-\ell}^\ell \sigma \p_z \mathcal{R} (\p_1 \zeta_0,\p_1 \eta) \dt \frac{\abs{\p_1 \dt^2 \eta}^2}{2} = -  \frac{d}{dt} \int_{-\ell}^\ell \sigma \p_z \mathcal{R}(\p_1 \zeta_0,\p_1 \eta)  \frac{\abs{\p_1 \dt^2 \eta}^2}{2} \\
 + \int_{-\ell}^\ell \sigma \p_z^2 \mathcal{R}(\p_1 \zeta_0,\p_1 \eta) \p_1 \dt \eta  \frac{\abs{\p_1 \dt^2 \eta}^2}{2}.
\end{multline}
Using Theorems \ref{catalog_energy} and \ref{catalog_dissipation} (with the ordering convention described in the proof of Proposition \ref{nid_f1}) in conjunction with Proposition \ref{R_prop}, we then estimate 
\begin{equation}
\abs{\int_{-\ell}^\ell \sigma \p_z^2 \mathcal{R}(\p_1 \zeta_0,\p_1 \eta) \p_1 \dt \eta  \frac{\abs{\p_1 \dt^2 \eta}^2}{2}} \ls \int_{-\ell}^\ell \abs{\p_1 \dt \eta} \abs{\p_1 \dt^2 \eta}^2 \ls \norm{\p_1 \dt \eta}_{L^\infty} \ns{\p_1 \dt^2 \eta}_{L^{2}} \ls \sqrt{\E} \D
\end{equation}
and 
\begin{equation}
   \abs{ \int_{-\ell}^\ell \sigma \p_z \mathcal{R}(\p_1 \zeta_0,\p_1 \eta)  \frac{\abs{\p_1 \dt^2 \eta}^2}{2} } \ls \int_{-\ell}^\ell \abs{\p_1 \eta} \abs{\p_1 \dt^2 \eta}^2 \ls \norm{\p_1 \eta}_{L^\infty} \ns{\p_1 \dt^2 \eta}_{L^{2}} \ls \sqrt{\E} \seb.
\end{equation}
These are the stated bounds.

\end{proof}

Next we deal with the term $II$.

\begin{prop}\label{nid_f3_II}
 Let $II$ be as given in \eqref{nid_f3_decomp}.  Then
\begin{equation}
 \abs{II} \ls \E \D.
\end{equation}
\end{prop}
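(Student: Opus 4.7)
The plan is to exploit the vanishing of $\p_z\mathcal{R}(y,z)$ at $z=0$, together with a careful treatment of the top-order derivative that appears in $\p_1 F^6$. By Proposition \ref{R_prop}, since $\p_z \mathcal{R}(y,0)=0$, the mean value theorem yields the Lipschitz bound $\abs{\p_z\mathcal{R}(\p_1 \zeta_0,\p_1 \eta)} \ls \abs{\p_1 \eta}$, which lies in $L^\infty((-\ell,\ell))$ with norm bounded by $\sqrt{\E}$. This already extracts one factor of $\sqrt{\E}$, so the task reduces to showing
\begin{equation*}
\int_{-\ell}^\ell \abs{\p_1 \eta}\,\abs{\p_1 \dt^2 \eta}\,\abs{\p_1 F^6} \ls \sqrt{\E}\,\D.
\end{equation*}

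Expanding $F^6 = -2\dt u_1 \p_1 \dt \eta - u_1 \p_1 \dt^2 \eta$ gives
\begin{equation*}
\p_1 F^6 = -2 \p_1 \dt u_1\, \p_1 \dt \eta - 2\dt u_1\, \p_1^2 \dt \eta - \p_1 u_1\, \p_1 \dt^2 \eta - u_1\, \p_1^2 \dt^2 \eta.
\end{equation*}
The first three contributions are amenable to direct H\"older estimates via the $L^q$ catalogs of Theorems \ref{catalog_energy} and \ref{catalog_dissipation}, with the ordering convention from the proof of Proposition \ref{nid_f1}. For instance, the first term yields $\norm{\p_1 \eta}_{L^\infty}\norm{\p_1\dt \eta}_{L^\infty}\norm{\p_1\dt u_1}_{L^{1/(1-\ep_-)}(\Sigma)}\norm{\p_1\dt^2 \eta}_{L^{1/\ep_-}} \ls \E \D$, using the embedding $L^{1/\low} \hookrightarrow L^{1/\ep_-}$ on a bounded interval guaranteed by $\low<\ep_-$ from \eqref{kappa_ep_def}. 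The other two terms close analogously, with $2\low<\ep_+$ from \eqref{kappa_ep_def} used to place $(\p_1\dt^2\eta)^2$ in $L^{2/\ep_+}$ for the third.

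The main obstacle is the final contribution containing $u_1 \p_1^2\dt^2 \eta$: since $\dt^2 \eta$ only lies in $H^{3/2-\low}$ within $\D$, the distribution $\p_1^2 \dt^2 \eta$ does not belong to any usable $L^q$ space. I would rewrite this piece as
\begin{equation*}
\int_{-\ell}^\ell \sigma\, \p_z\mathcal{R}(\p_1\zeta_0,\p_1\eta)\, u_1 \cdot \hal\p_1\bigl((\p_1\dt^2\eta)^2\bigr)
\end{equation*}
and integrate by parts in $x_1$. The boundary contribution $\bigl[\sigma \p_z\mathcal{R}\, u_1 (\p_1 \dt^2 \eta)^2\bigr]_{-\ell}^{\ell}$ vanishes thanks to the crucial identity $u_1(\pm\ell,\zeta(\pm\ell,t)) = 0$, which follows from the impermeability condition $u \cdot \nu = 0$ on the vertical walls $\{x_1 = \pm\ell\}\subset \Sigma_s$ (where $\nu = (\pm 1,0)$), combined with the continuity of $u$ provided by the inclusion $u\in W^{2,q_+}(\Omega)$ in $\E$. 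The resulting interior integral
\begin{equation*}
-\hal\int_{-\ell}^\ell \p_1\bigl(\sigma\, \p_z\mathcal{R}(\p_1\zeta_0,\p_1\eta)\, u_1\bigr)(\p_1\dt^2\eta)^2
\end{equation*}
is controlled by $\E\D$ by invoking Proposition \ref{R_prop} once more to absorb each derivative hitting $\p_z\mathcal{R}$ as a factor of $\p_1\eta$ or $\p_1^2\eta$ (using that $\p_{yz}\mathcal{R}(y,0)=0$ as well), and by placing $(\p_1 \dt^2 \eta)^2 \in L^{2/\ep_+}$ (available since $1/\low > 2/\ep_+$ by \eqref{kappa_ep_def}) against the remaining factors in the conjugate Lebesgue space via the catalogs.
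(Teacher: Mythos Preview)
Your proof is correct and follows essentially the same approach as the paper: extract a factor of $\p_1\eta$ from $\p_z\mathcal{R}$ via Proposition~\ref{R_prop}, estimate the three lower-order pieces of $\p_1 F^6$ by direct H\"older, and handle the top-order term $u_1\,\p_1^2\dt^2\eta$ by writing $\p_1\dt^2\eta\,\p_1^2\dt^2\eta = \tfrac12\p_1\bigl((\p_1\dt^2\eta)^2\bigr)$, integrating by parts, and using $u_1(\pm\ell)=0$ to eliminate the boundary contribution. One minor bookkeeping slip: since the displayed reduced integral still carries the factor $|\p_1\eta|$, its target should read $\E\D$ (as your worked example indeed delivers) rather than $\sqrt{\E}\D$.
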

\begin{proof}
We begin by writing $F^6 = -2 \dt u_1 \p_1 \dt \eta - u_1 \p_1 \dt^2 \eta$ in order to expand
\begin{multline}
II =   \int_{-\ell}^\ell \sigma \frac{\p_z \mathcal{R}(\p_1 \zeta_0,\p_1 \eta)}{\p_1 \eta} \p_1 \eta  \p_1 \dt^2 \eta  2 \p_1  \dt u_1 \p_1 \dt \eta 
+  \int_{-\ell}^\ell \sigma \frac{\p_z \mathcal{R}(\p_1 \zeta_0,\p_1 \eta)}{\p_1 \eta} \p_1 \eta  \p_1 \dt^2 \eta  2   \dt u_1 \p_1^2 \dt \eta  \\
+ \int_{-\ell}^\ell \sigma \frac{\p_z \mathcal{R}(\p_1 \zeta_0,\p_1 \eta)}{\p_1 \eta} \p_1 \eta  \p_1 \dt^2 \eta  \p_1 u_1 \p_1 \dt^2 \eta 
+ \int_{-\ell}^\ell \sigma \p_z \mathcal{R}(\p_1 \zeta_0,\p_1 \eta)   \p_1 \dt^2 \eta  u_1 \p_1^2 \dt^2 \eta 
\\=: II_1 + II_2 + II_3 + II_4.
\end{multline}
To estimate these terms we will use Theorems \ref{catalog_energy} and \ref{catalog_dissipation} (with the ordering convention described in the proof of Proposition \ref{nid_f1}), Proposition \ref{R_prop}, the ordering $0 <2 \low < \ep_- < \ep_+$ assumed in \eqref{kappa_ep_def}, and H\"{o}lder's inequality.  This yields the bounds 
\begin{multline}
\abs{II_1} \ls \int_{-\ell}^\ell \abs{\p_1 \eta}  \abs{\p_1 \dt^2 \eta} \abs{ \p_1  \dt u} \abs{ \p_1 \dt \eta} 
\ls \norm{\p_1 \eta}_{L^\infty} \norm{\p_1 \dt^2 \eta}_{L^{1/\low}} \norm{\p_1 \dt u}_{L^{1/(1-\ep_-)}(\Sigma)} \norm{\p_1 \dt \eta}_{L^\infty} \\
\ls \sqrt{\E} \D \sqrt{\E},
\end{multline}
\begin{multline}
 \abs{II_2} \ls \int_{-\ell}^\ell \abs{\p_1 \eta} \abs{\p_1 \dt^2 \eta} \abs{\dt u}\abs{\p_1^2 \dt \eta} \ls \norm{\p_1 \eta}_{L^\infty} \norm{\p_1 \dt^2 \eta}_{L^{1/\low}} \norm{\dt u}_{L^\infty(\Sigma)} \norm{\p_1^2 \dt \eta}_{L^{1/(1-\ep_-)}} \\
\ls \sqrt{\E} \sqrt{\D} \sqrt{\E} \sqrt{\D}, 
\end{multline}
and
\begin{equation}
\abs{III_3} \ls \int_{-\ell}^\ell \abs{\p_1 \eta} \abs{\p_1 \dt^2 \eta}^2 \abs{\p_1 u}
\ls \norm{\p_1 \eta}_{L^\infty} \ns{\p_1 \dt^2 \eta}_{L^{1/\low}} \norm{\p_1 u}_{L^{1/(1-\ep_+)}(\Sigma)} \ls \sqrt{\E} \D \sqrt{\E}.
\end{equation}
For $II_4$ we note that $\p_1 \dt^2 \eta \p_1^2 \dt^2 \eta$ is a total derivative, so we can integrate by parts and use the fact that $u_1 =0$ at the endpoints to see that 
\begin{multline} 
 \abs{II_4} = \abs{\int_{-\ell}^\ell \sigma \p_z \mathcal{R}(\p_1 \zeta_0,\p_1 \eta)  u_1  \p_1 \frac{\abs{ \p_1 \dt^2 \eta}^2}{2} } = 
 \abs{- \int_{-\ell}^\ell \sigma \p_1 \left[ \p_z \mathcal{R}(\p_1 \zeta_0,\p_1 \eta)  u_1 \right]  \frac{\abs{ \p_1 \dt^2 \eta}^2}{2} }
 \\
= \abs{ \int_{-\ell}^\ell  \sigma \left[  \frac{\p_z \mathcal{R}(\p_1 \zeta_0,\p_1 \eta)}{\p_1 \eta} \p_1 \eta \p_1  u_1 +  \p_z^2 \mathcal{R}(\p_1 \zeta_0,\p_1 \eta) \p_1^2 \eta u_1 + \frac{\p_y \p_z \mathcal{R}(\p_1 \zeta_0,\p_1 \eta)\p_1^2 \zeta_0}{\p_1\eta} \p_1 \eta   u_1\right]  \frac{\abs{ \p_1 \dt^2 \eta}^2}{2} }.
\end{multline}
We may then use the same tools listed above to estimate 
\begin{multline}
\abs{II_4} \ls \int_{-\ell}^\ell \abs{\p_1 \dt^2 \eta}^2 \left( \abs{\p_1 \eta}\abs{\p_1 u} + \abs{\p_1^2 \eta} \abs{u} + \abs{\p_1 \eta}\abs{u} \right)  \\
\ls  \ns{\p_1 \dt^2 \eta}_{L^{1/\low}} \left( \norm{\p_1 \eta}_{L^\infty} \norm{\p_1 u}_{L^{1/(1-\ep_+)(\Sigma)}} + \norm{\p_1^2 \eta}_{L^{1/(1-\ep_+)}} \norm{u}_{L^\infty(\Sigma)}  + \norm{\p_1 \eta}_{L^\infty} \norm{u}_{L^\infty(\Sigma)} \right) \\
\ls \D \E.
\end{multline}
Combining these bounds then yields the stated estimate.
\end{proof}

The term $III$ is next.

\begin{prop}\label{nid_f3_III}
Let $III$ be as given in \eqref{nid_f3_decomp}.  Then we have the bounds
\begin{equation}
 \abs{III + \frac{d}{dt} \int_{-\ell}^\ell \sigma \p_z^2 \mathcal{R}(\p_1 \zeta_0,\p_1 \eta) (\p_1 \dt \eta)^2 \p_1 \dt^2 \eta}
\ls (\sqrt{\E} + \E)\D
\end{equation}
and 
\begin{equation}
  \abs{\int_{-\ell}^\ell \sigma \p_z^2 \mathcal{R}(\p_1 \zeta_0,\p_1 \eta) (\p_1 \dt \eta)^2 \p_1 \dt^2 \eta } \ls  \sqrt{\E}\seb.
\end{equation}
\end{prop}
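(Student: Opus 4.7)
The proof should parallel Proposition \ref{nid_f3_I}: the idea is to recognize that $(\p_1 \dt \eta)^2 \p_1 \dt^3 \eta$ is not itself a total time derivative, but differs from one by a controllable remainder. Writing
\begin{equation}
(\p_1\dt\eta)^2 \p_1\dt^3\eta = \dt\bigl[(\p_1\dt\eta)^2 \p_1\dt^2\eta\bigr] - 2\,\p_1\dt\eta\,(\p_1\dt^2\eta)^2,
\end{equation}
plugging this into the definition of $III$, and differentiating the $\p_z^2 \mathcal R$ factor in time via $\dt[\p_z^2 \mathcal R(\p_1 \zeta_0,\p_1\eta)] = \p_z^3 \mathcal R(\p_1\zeta_0,\p_1\eta)\,\p_1\dt\eta$, yields
\begin{multline}
III + \frac{d}{dt}\int_{-\ell}^\ell \sigma\,\p_z^2 \mathcal R(\p_1\zeta_0,\p_1\eta)\,(\p_1\dt\eta)^2\,\p_1\dt^2\eta \\
= \int_{-\ell}^\ell \sigma\,\p_z^3 \mathcal R(\p_1\zeta_0,\p_1\eta)\,(\p_1\dt\eta)^3\,\p_1\dt^2\eta + 2\int_{-\ell}^\ell \sigma\,\p_z^2 \mathcal R(\p_1\zeta_0,\p_1\eta)\,\p_1\dt\eta\,(\p_1\dt^2\eta)^2.
\end{multline}

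For the remainder estimates I would invoke Proposition \ref{R_prop} to get pointwise boundedness of $\p_z^2 \mathcal R$ and $\p_z^3 \mathcal R$, and then apply H\"older's inequality with three and two factors respectively. The first remainder is bounded by $\|\p_1\dt\eta\|_{L^\infty}^2 \|\p_1\dt\eta\|_{L^2}\|\p_1\dt^2\eta\|_{L^2}$; two factors controlled from the energy catalog (Theorem \ref{catalog_energy}: $\dt\eta \in H^{3/2+(\ep_--\low)/2} \hookrightarrow W^{1,\infty}$ on the 1D set) and two from the dissipation catalog (Theorem \ref{catalog_dissipation}: $\dt\eta, \dt^2\eta \in H^{3/2-\low}$ gives $\p_1\dt\eta,\p_1\dt^2\eta \in L^2$) give $\sqrt{\E}^2 \sqrt{\D}^2 = \E\D$. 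The second is bounded by $\|\p_1\dt\eta\|_{L^\infty}\|\p_1\dt^2\eta\|_{L^2}^2$; using one energy and two dissipation bounds yields $\sqrt{\E}\D$. Summing gives the first claimed estimate $\ls (\sqrt{\E}+\E)\D$.

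For the second claim, we bound the extracted potential-type quantity by
\begin{equation}
\abs{\int_{-\ell}^\ell \sigma\,\p_z^2 \mathcal R(\p_1\zeta_0,\p_1\eta)\,(\p_1\dt\eta)^2\,\p_1\dt^2\eta} \ls \|\p_1\dt\eta\|_{L^\infty}\,\|\p_1\dt\eta\|_{L^2}\,\|\p_1\dt^2\eta\|_{L^2}.
\end{equation}
Using the energy catalog to bound $\|\p_1\dt\eta\|_{L^\infty}\ls \sqrt{\E}$ and using the inclusions $\|\dt\eta\|_{H^1}^2, \|\dt^2\eta\|_{H^1}^2 \le \seb$ for the remaining two $L^2$ factors yields the desired $\sqrt{\E}\,\seb$ bound.

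The main technical wrinkle, as in Proposition \ref{nid_f3_I}, lies purely in the bookkeeping of deciding which factors to assign to $\E$ and which to $\D$; the ordering convention in \eqref{kappa_ep_def} is designed so that the Sobolev embeddings in one dimension accommodate every choice. No new estimate beyond Proposition \ref{R_prop} and the two catalogs is required, and the argument is essentially identical in spirit to the $I$-term analysis—with the key structural observation being the ability to extract $\dt[(\p_1\dt\eta)^2 \p_1\dt^2\eta]$ from the cubic product, so that the leftover cubic-in-derivative pieces have one extra $\p_1\dt\eta$ or $\p_1\dt^2\eta$ factor available to absorb the missing power of $\sqrt{\E}$ or $\sqrt{\D}$.
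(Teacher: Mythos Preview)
Your proposal is correct and follows essentially the same approach as the paper: the decomposition via pulling out the time derivative is identical, and the remainder terms are bounded using Proposition \ref{R_prop} together with the energy and dissipation catalogs exactly as you describe. For the second estimate your $L^\infty \times L^2 \times L^2$ H\"older split is in fact slightly simpler than the paper's $L^4 \times L^4 \times L^2$ split with interpolation, but both yield the same bound.
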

\begin{proof}
To handle the term $III$ we begin by pulling a time derivative out of the integral:
\begin{multline}
III = -  \int_{-\ell}^\ell \sigma \p_z^2 \mathcal{R}(\p_1 \zeta_0,\p_1 \eta) (\p_1 \dt \eta)^2 \p_1 \dt^3 \eta  =  -  \frac{d}{dt} \int_{-\ell}^\ell \sigma \p_z^2 \mathcal{R}(\p_1 \zeta_0,\p_1 \eta) (\p_1 \dt \eta)^2 \p_1 \dt^2 \eta \\
 + \int_{-\ell}^\ell \sigma \p_z^3 \mathcal{R}(\p_1 \zeta_0,\p_1 \eta) (\p_1 \dt \eta)^3 \p_1 \dt^2 \eta 
 + \int_{-\ell}^\ell \sigma \p_z^2 \mathcal{R}(\p_1 \zeta_0,\p_1 \eta) 2 \p_1 \dt \eta  \abs{\p_1 \dt^2 \eta}^2.
\end{multline} 
We then employ Theorems \ref{catalog_energy} and \ref{catalog_dissipation} (with the ordering convention described in the proof of Proposition \ref{nid_f1}) and  Proposition \ref{R_prop} to bound 
\begin{multline}
\abs{\int_{-\ell}^\ell \sigma \p_z^3 \mathcal{R}(\p_1 \zeta_0,\p_1 \eta) (\p_1 \dt \eta)^3 \p_1 \dt^2 \eta} \ls \int_{-\ell}^\ell \abs{\p_1 \dt \eta}^3 \abs{\p_1 \dt^2\eta} \ls \ns{\p_1 \dt\eta}_{L^\infty} \norm{\p_1 \dt\eta}_{L^\infty} \norm{\p_1 \dt^2 \eta}_{L^{1/\low}} \\
\ls \E \D
\end{multline}
and
\begin{equation}
\abs{\int_{-\ell}^\ell \sigma \p_z^2 \mathcal{R}(\p_1 \zeta_0,\p_1 \eta) 2 \p_1 \dt \eta  \abs{\p_1 \dt^2 \eta}^2} \ls \int_{-\ell}^\ell \abs{\p_1 \dt \eta} \abs{\p_1 \dt^2 \eta}^2 \ls \norm{\p_1 \dt \eta}_{L^\infty} \ns{ \p_1 \dt^2 \eta}_{L^{1/\low}} \ls \sqrt{\E} \D.
\end{equation}
Upon combining these, we arrive at the first stated estimate. 

To derive the second estimate we first note that standard Sobolev embeddings and interpolation show that if $\psi \in H^{3/2}((-\ell,\ell))$ then 
\begin{equation}
 \norm{\p_1 \psi}_{L^4} \ls \norm{\p_1 \psi}_{H^{1/4}} \ls \norm{\psi}_{H^{5/4}} \ls \norm{\psi}_{H^1}^{1/2} \norm{\psi}_{H^{3/2}}^{1/2}.
\end{equation}
Applying this with $\psi = \dt \eta$ and again using Theorem \ref{catalog_energy}  and  Proposition \ref{R_prop} with the definitions of $\E$ (see \eqref{E_def}) and $\seb$ (see \eqref{ED_parallel}), we bound
\begin{multline}
 \abs{\int_{-\ell}^\ell \sigma \p_z^2 \mathcal{R}(\p_1 \zeta_0,\p_1 \eta) (\p_1 \dt \eta)^2 \p_1 \dt^2 \eta } \ls \int_{-\ell}^\ell \abs{\p_1 \dt \eta}^2 \abs{\p_1 \dt^2 \eta} \ls \ns{\p_1 \dt \eta}_{L^4} \norm{\p_1 \dt^2 \eta}_{L^2}  \\
\ls \norm{\dt \eta}_{H^1} \norm{\dt \eta}_{H^{3/2}}\norm{\dt^2 \eta}_{H^1} \ls \sqrt{\seb} \sqrt{\E} \sqrt{\seb},
\end{multline}
which is the second stated estimate.
\end{proof}

Finally, we handle the term $IV$.

\begin{prop}\label{nid_f3_IV}
Let $IV$ be as given in \eqref{nid_f3_decomp}.  Then
\begin{equation} 
 \abs{IV} \ls (\E + \E^{3/2})\D.
\end{equation}
\end{prop}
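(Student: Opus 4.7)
The structure of $IV$ closely mirrors that of $II$ from Proposition \ref{nid_f3_II}, with the factor $\p_1 \dt^2 \eta$ replaced by $(\p_1 \dt \eta)^2$. I would begin by substituting the identity
\begin{equation*}
F^6 = -2\dt u_1 \, \p_1 \dt \eta - u_1 \, \p_1 \dt^2 \eta
\end{equation*}
(as in the proof of Proposition \ref{nid_f3_II}) and expanding
\begin{equation*}
\p_1 F^6 = -2\p_1 \dt u_1 \, \p_1 \dt \eta - 2\dt u_1 \, \p_1^2 \dt \eta - \p_1 u_1 \, \p_1 \dt^2 \eta - u_1 \, \p_1^2 \dt^2 \eta.
\end{equation*}
This splits $IV$ into four subterms $IV_1, \ldots, IV_4$, each to be estimated via H\"older's inequality combined with the bound $|\p_z^2 \mathcal{R}| \ls 1$ from Proposition \ref{R_prop} and the catalogs in Theorems \ref{catalog_energy} and \ref{catalog_dissipation}.

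For $IV_1$, $IV_2$, and $IV_3$ the plan is to apply H\"older directly with Lebesgue exponents matched to the catalog bounds. A key observation is that $\|\p_1 \dt \eta\|_{L^\infty}$ is controlled by either $\sqrt{\E}$ or $\sqrt{\D}$, and similarly traces of $u$, $\p_1 u$, $\dt u$, and $\p_1 \dt u$ on $\Sigma$ admit both bounds, so one can arrange for two factors to produce $\sqrt{\D}$ and the remaining two $\sqrt{\E}$. For instance,
\begin{equation*}
|IV_1| \ls \|\p_1 \dt \eta\|_{L^\infty}^2 \|\p_1 \dt \eta\|_{L^{1/\ep_-}} \|\p_1 \dt u\|_{L^{1/(1-\ep_-)}(\Sigma)} \ls \sqrt{\E} \cdot \sqrt{\E} \cdot \sqrt{\D} \cdot \sqrt{\D} = \E \D.
\end{equation*}
The terms $IV_2$ and $IV_3$ are controlled analogously, placing the $\D$-bounds on the trace factor ($\dt u$ or $\p_1 u$) and on the high-regularity $\eta$-factor ($\p_1^2 \dt \eta$ in $L^{1/(1-\ep_-)}$, or $\p_1 \dt^2 \eta$ in $L^{1/\low} \hookrightarrow L^{1/\ep_+}$).

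The main obstacle is $IV_4$, which contains $u_1 \, \p_1^2 \dt^2 \eta$ and thus requires two spatial derivatives of $\dt^2 \eta$ that neither $\E$ nor $\D$ controls directly. Mirroring the treatment of $II_4$ in Proposition \ref{nid_f3_II}, I would integrate by parts in $x_1$; the boundary contributions at $x_1 = \pm \ell$ vanish because $u_1(\pm \ell) = 0$, as the vessel's vertical walls near the top force $\nu = \mp e_1$ and $u \cdot \nu = 0$ on $\Sigma_s$. This yields
\begin{equation*}
IV_4 = \int_{-\ell}^\ell \sigma \, \p_1\!\left[\p_z^2 \mathcal{R}(\p_1 \zeta_0, \p_1 \eta)(\p_1 \dt \eta)^2 u_1\right] \p_1 \dt^2 \eta,
\end{equation*}
and the chain rule $\p_1 \p_z^2 \mathcal{R} = \p_z^3 \mathcal{R} \, \p_1^2 \eta + \p_y \p_z^2 \mathcal{R} \, \p_1^2 \zeta_0$ combined with the product rule produces four subterms. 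Each is estimated by H\"older; the most delicate, carrying the factor $\p_1^2 \eta$, is bounded by
\begin{equation*}
\|\p_1^2 \eta\|_{L^{1/(1-\ep_+)}} \|\p_1 \dt \eta\|_{L^\infty}^2 \|u\|_{L^\infty(\Sigma)} \|\p_1 \dt^2 \eta\|_{L^{1/\ep_+}} \ls \sqrt{\E} \cdot \E \cdot \sqrt{\D} \cdot \sqrt{\D} = \E^{3/2} \D,
\end{equation*}
where I exploit $\low < \ep_+$ to obtain $\|\p_1 \dt^2 \eta\|_{L^{1/\ep_+}} \ls \|\p_1 \dt^2 \eta\|_{L^{1/\low}} \ls \sqrt{\D}$. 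This produces the $\E^{3/2}\D$ contribution to the final bound, while the remaining three subterms of $IV_4$ all yield $\E\D$ by analogous H\"older splittings. Summing the bounds from $IV_1, \ldots, IV_4$ then gives $|IV| \ls (\E + \E^{3/2})\D$, as claimed.
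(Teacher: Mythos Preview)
Your proposal is correct and follows essentially the same approach as the paper: split $IV$ according to the expansion of $\p_1 F^6$, handle the terms without $\p_1^2\dt^2\eta$ by direct H\"older estimates using the catalogs, and treat the dangerous term containing $u_1\,\p_1^2\dt^2\eta$ by integrating by parts (exploiting $u_1(\pm\ell)=0$) exactly as in the treatment of $II_4$. The only cosmetic difference is that the paper groups your $IV_1,IV_2$ together and your $IV_3,IV_4$ together before further splitting, and the integration by parts carries a minus sign you omitted, neither of which affects the argument.
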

\begin{proof}
We first write $F^6= -2 \dt u_1 \p_1 \dt \eta - u_1 \p_1 \dt^2 \eta$ in order to split
\begin{multline}
IV =   \int_{-\ell}^\ell \sigma \p_z^2 \mathcal{R}(\p_1 \zeta_0,\p_1 \eta) (\p_1 \dt \eta)^2    \p_1 (2 \dt u_1 \p_1 \dt \eta) \\
+  \int_{-\ell}^\ell \sigma \p_z^2 \mathcal{R}(\p_1 \zeta_0,\p_1 \eta) (\p_1 \dt \eta)^2  \p_1(u_1 \p_1 \dt^2 \eta) =: IV_1 + IV_2.
\end{multline}

Then Theorems \ref{catalog_energy} and \ref{catalog_dissipation} (with the ordering convention described in the proof of Proposition \ref{nid_f1}) and  Proposition \ref{R_prop} allow us to estimate 
\begin{multline}
\abs{IV_1} \ls \int_{-\ell}^\ell \abs{\p_1 \dt \eta}^2 \left( \abs{\p_1 \dt u} \abs{\p_1 \dt \eta} + \abs{\dt u} \abs{\p_1^2 \dt \eta}    \right) \\
\ls \ns{\p_1 \dt \eta}_{L^\infty} \left( \norm{\p_1 \dt u}_{L^{1/(1-\ep_-)}(\Sigma)} \norm{\p_1 \dt \eta}_{L^\infty} + \norm{\dt u}_{L^\infty(\Sigma)} \norm{\p_1^2 \dt \eta}_{L^{1/(1-\ep_-)}}  \right) \ls \E \D.
\end{multline}

To handle $IV_2$ we further expand 
\begin{multline}
IV_2 = \int_{-\ell}^\ell \sigma \p_z^2 \mathcal{R}(\p_1 \zeta_0,\p_1 \eta) (\p_1 \dt \eta)^2  \p_1 u_1 \p_1 \dt^2 \eta 
+ \int_{-\ell}^\ell \sigma \p_z^2 \mathcal{R}(\p_1 \zeta_0,\p_1 \eta) (\p_1 \dt \eta)^2  u_1 \p_1^2 \dt^2 \eta  =: IV_{3} + IV_4.
\end{multline}
The term $IV_3$ can be estimated as $IV_1$ was, recalling from \eqref{kappa_ep_def} that $\low < \ep_- < \ep_+$:
\begin{equation}
 \abs{IV_3} \ls \int_{-\ell}^\ell \abs{\p_1 \dt \eta}^2  \abs{\p_1 u } \abs{\p_1 \dt^2 \eta} \ls \ns{\p_1 \dt\eta}_{L^\infty} \norm{\p_1 u}_{L^{1/(1-\ep_+)(\Sigma)}} \norm{\p_1 \dt^2 \eta}_{L^{1/\low}} \ls \E \D.
\end{equation}
On the other hand, for $IV_4$ we need to integrate by parts again, using the fact that $u_1$ vanishes at the endpoints:
\begin{multline}
  IV_4 = -\int_{-\ell}^\ell \sigma \left[ \p_z^2 \mathcal{R}(\p_1 \zeta_0,\p_1 \eta) (\p_1 \dt \eta)^2  \p_1 u_1 + 2 \p_z^2 \mathcal{R}(\p_1 \zeta_0,\p_1 \eta) \p_1 \dt \eta \p_1^2 \dt \eta u_1 \right] \p_1 \dt^2 \eta  \\
-\int_{-\ell}^\ell \sigma \left[ \p_z^3 \mathcal{R}(\p_1 \zeta_0,\p_1 \eta) \p_1^2 \eta  + \p_y \p_z^2 \mathcal{R}(\p_1 \zeta_0,\p_1 \eta)  \p_1^2 \zeta_0 \right] (\p_1 \dt \eta)^2  u_1   \p_1 \dt^2 \eta =: IV_5 + IV_6.
\end{multline}
These terms can then be estimated as above:
\begin{multline}
\abs{IV_5} \ls \int_{-\ell}^\ell \left( \abs{\p_1 \dt \eta}^2 \abs{\p_1 u} + \abs{\p_1 \dt \eta} \abs{\p_1^2 \dt \eta} \abs{u} \right) \abs{\p_1 \dt^2 \eta} \\
\ls 
\left(\ns{\p_1 \dt \eta}_{L^\infty} \norm{\p_1 u}_{L^{1/(1-\ep_+)}(\Sigma)} + \norm{\p_1 \dt \eta}_{L^\infty} \norm{\p_1^2 \dt \eta}_{L^{1/(1-\ep_-)}} \norm{u}_{L^\infty(\Sigma)}  \right) \norm{\p_1 \dt^2 \eta}_{L^{1/\low}} \\
\ls \left(\E \sqrt{\D} + \sqrt{\E} \sqrt{\D} \sqrt{\E}   \right) \sqrt{\D}
\end{multline}
and 
\begin{multline}
 \abs{IV_6} \ls \int_{-\ell}^\ell (\abs{\p_1^2 \eta} +1) \abs{\p_1 \dt \eta}^2 \abs{u} \abs{\p_1 \dt^2 \eta} \\
\le \left(\norm{\p_1^2 \eta}_{L^{1/(1-\ep_+)}}  +1 \right) \ns{\p_1 \dt\eta}_{L^\infty} \norm{u}_{L^\infty(\Sigma)} \norm{\p_1 \dt^2 \eta}_{L^{1/\low}} \ls \left(\sqrt{\E} +1 \right) \E \D .
\end{multline}
The stated estimate then follows by combining all of these.
\end{proof}

Now that we have controlled $I$--$IV$ in \eqref{nid_f3_decomp} we can record a unified estimate.

\begin{thm}\label{nid_f3_dt2}
Let $F^3$ be given by \eqref{dt2_f3}.  Then we have the bounds
\begin{multline}
\abs{ -  \int_{-\ell}^\ell \sigma  F^3   \p_1 (\dt^2 u \cdot \N) +  \frac{d}{dt} \int_{-\ell}^\ell \left[ \sigma \p_z \mathcal{R}(\p_1 \zeta_0,\p_1 \eta)  \frac{\abs{\p_1 \dt^2 \eta}^2}{2} + \sigma \p_z^2 \mathcal{R}(\p_1 \zeta_0,\p_1 \eta) (\p_1 \dt \eta)^2 \p_1 \dt^2 \eta \right] } 
\\
\ls (\sqrt{\E} + \E + \E^{3/2})\D
\end{multline}
and
\begin{equation}
 \abs{\int_{-\ell}^\ell \left[ \sigma \p_z \mathcal{R}(\p_1 \zeta_0,\p_1 \eta)  \frac{\abs{\p_1 \dt^2 \eta}^2}{2} + \sigma \p_z^2 \mathcal{R}(\p_1 \zeta_0,\p_1 \eta) (\p_1 \dt \eta)^2 \p_1 \dt^2 \eta \right]} \ls \sqrt{\E} \seb.
\end{equation}
\end{thm}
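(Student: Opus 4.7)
The plan is to simply synthesize Propositions \ref{nid_f3_I}--\ref{nid_f3_IV} using the decomposition \eqref{nid_f3_decomp}. Recall that with $F^3$ defined by \eqref{dt2_f3} and with the identity $\dt^2 u \cdot \N = \dt^3 \eta - F^6$ coming from the linearized kinematic equation, the key algebraic observation is that we may expand $F^3 = \p_z \mathcal{R}(\p_1\zeta_0,\p_1\eta)\p_1\dt^2\eta + \p_z^2\mathcal{R}(\p_1\zeta_0,\p_1\eta)(\p_1\dt\eta)^2$ and thereby split
\begin{equation}
-\int_{-\ell}^\ell \sigma F^3 \p_1(\dt^2 u \cdot \N) = I + II + III + IV
\end{equation}
as in \eqref{nid_f3_decomp}, where $I$ and $III$ contain the factor $\p_1\dt^3\eta$ and so are candidates for producing the time-derivative terms appearing on the left side of the claimed inequality, while $II$ and $IV$ involve $\p_1 F^6$ and are pure remainders.

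For the first bound, I would apply Proposition \ref{nid_f3_I} to rewrite $I$ as $-\frac{d}{dt}\int_{-\ell}^\ell \sigma \p_z\mathcal{R}(\p_1\zeta_0,\p_1\eta)\abs{\p_1\dt^2\eta}^2/2$ plus a remainder controlled by $\sqrt{\E}\D$, and apply Proposition \ref{nid_f3_III} to rewrite $III$ as $-\frac{d}{dt}\int_{-\ell}^\ell \sigma \p_z^2\mathcal{R}(\p_1\zeta_0,\p_1\eta)(\p_1\dt\eta)^2\p_1\dt^2\eta$ plus a remainder controlled by $(\sqrt{\E}+\E)\D$. These two time-derivative terms are precisely the ones appearing inside $d/dt$ on the left-hand side of the theorem. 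Then Propositions \ref{nid_f3_II} and \ref{nid_f3_IV} contribute additional error terms bounded by $\E\D$ and $(\E+\E^{3/2})\D$, respectively. Summing all four contributions and collecting like powers of $\E$ yields the total error bound $(\sqrt{\E}+\E+\E^{3/2})\D$, as claimed.

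For the second bound, I would simply apply the second estimates from Propositions \ref{nid_f3_I} and \ref{nid_f3_III}, each of which controls one of the two integrands in terms of $\sqrt{\E}\,\seb$. The triangle inequality then produces the stated bound.

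There is essentially no obstacle here, since all the delicate work---especially the integration by parts in $II$ and $IV_4$ that exploits $u_1\vert_{x_1=\pm\ell}=0$, and the interpolation $\norm{\p_1\dt\eta}_{L^4}\ls \norm{\dt\eta}_{H^1}^{1/2}\norm{\dt\eta}_{H^{3/2}}^{1/2}$ used to connect $III$ to $\seb$---has already been carried out in the four preceding propositions. The theorem is purely a bookkeeping synthesis, so the proof proposal is a one- or two-line assembly: invoke the decomposition, cite the four propositions, and add the inequalities.
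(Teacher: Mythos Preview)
Your proposal is correct and matches the paper's proof essentially verbatim: the paper's argument is the single sentence ``The results follow from combining \eqref{nid_f3_decomp} with Propositions \ref{nid_f3_I}, \ref{nid_f3_II}, \ref{nid_f3_III}, and \ref{nid_f3_IV}.'' Your recognition that this theorem is purely a bookkeeping synthesis of the four preceding propositions is exactly right.
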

\begin{proof}
The results follows from combining \eqref{nid_f3_decomp} with Propositions \ref{nid_f3_I}, \ref{nid_f3_II}, \ref{nid_f3_III}, and  \ref{nid_f3_IV}.
\end{proof}

A similar and  simpler result holds for $F^3$ when only one time derivative is applied as in \eqref{dt1_f3}.   We will record it without proof.

\begin{thm}\label{nid_f3_dt}
Let $F^3$ be given by \eqref{dt1_f3}.  Then 
\begin{equation}
\abs{ -  \int_{-\ell}^\ell \sigma  F^3   \p_1 (\dt^2 u \cdot \N)  } 
\ls (\sqrt{\E} + \E)\D.
\end{equation}
\end{thm}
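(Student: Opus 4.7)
The plan is to mirror the strategy of the proof of Theorem \ref{nid_f3_dt2}, exploiting the major simplification that at one time derivative, the expansion \eqref{dt1_f3} gives the single-term expression
\[
F^3 = \dt[\mathcal{R}(\p_1 \zeta_0, \p_1 \eta)] = \p_z \mathcal{R}(\p_1 \zeta_0, \p_1 \eta)\, \p_1 \dt \eta,
\]
in contrast with the two-term structure appearing at the dt2 level. Consequently, the integrand in question factors as
\[
-\sigma F^3\, \p_1(v \cdot \N) = -\sigma\, \p_z \mathcal{R}(\p_1 \zeta_0, \p_1 \eta)\, \p_1 \dt \eta\, \p_1(v\cdot \N),
\]
where $v$ is the velocity quantity appearing in the test. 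Proposition \ref{R_prop} then provides the pointwise bound $|\p_z \mathcal{R}(\p_1 \zeta_0, \p_1 \eta)| \lesssim |\p_1 \eta|$, so the integrand is controlled pointwise by $|\p_1 \eta|\,|\p_1 \dt \eta|\,|\p_1(v\cdot\N)|$.

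The key step is to handle $\p_1(v \cdot \N)$ via the kinematic boundary condition, replacing $v\cdot\N$ with $\dt\xi - F^6$-type quantities so that each occurrence is rewritten in terms of derivatives of $\eta$ plus commutators involving the fluid velocity and $\p_1 \dt \eta$. When the resulting expression contains a $u_1 \cdot \p_1^2(\cdot)$ factor on the interior of the integrand, I will integrate by parts once in $x_1$, exploiting $u_1(\pm\ell,t)=0$ to discard boundary contributions, exactly as in the treatment of the terms $II_4$ and $IV_4$ in the proof of Theorem \ref{nid_f3_dt2}. Unlike the dt2 case, there is no need to extract a ``total time derivative'' structure, since at this order $\p_1 \dt \eta$ is not the highest-regularity object that must be absorbed.

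Having reduced each piece to a product of three or four scalar factors, I will apply H\"older's inequality with the triples/quadruples dictated by \eqref{kappa_ep_def}: the lowest-order factors $\p_1 \eta$ and $\p_1 \dt \eta$ are placed in $L^\infty$ and bounded via Theorem \ref{catalog_energy} by $\sqrt{\E}$; the intermediate $u$ or $\dt u$ factor is placed in an appropriate $L^q$ space with $q$ close to the boundary Sobolev critical exponent; and the remaining top-order factor (a second derivative of $\eta$ or $\dt \eta$, or a derivative of $u$ or $\dt u$) is estimated by $\sqrt{\D}$ via Theorem \ref{catalog_dissipation}. Every resulting product contains at least one $\sqrt{\E}$ factor (from either $\p_1 \eta$ or $\p_1 \dt \eta$), and depending on whether the commutator contributes an additional $u$-factor a second $\sqrt{\E}$ may appear; summing over the finite list of terms then yields $(\sqrt{\E} + \E)\D$ as claimed.

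I do not expect any genuine technical obstacle: the single-term structure of $F^3$ removes the delicate cancellation/extraction step that was essential in the dt2 analysis, and the remaining argument is a careful bookkeeping exercise. The only mild subtlety is selecting the H\"older exponents so that the top-order factor, which is the one with the least favorable integrability in $\D$, is always paired with factors controlled in $L^\infty$ through $\E$; this is guaranteed by the ordering $0 < \low < \ep_- < \ep_+ < \epm$ from \eqref{kappa_ep_def}.
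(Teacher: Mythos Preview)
Your proposal is correct and matches the approach the paper intends: the paper omits the proof entirely, stating only that ``A similar and simpler result holds\ldots We will record it without proof,'' and your outline---using the single-term structure $F^3=\p_z\mathcal{R}\,\p_1\dt\eta$, substituting $v\cdot\N=\dt^2\eta-F^6$ with $F^6=-u_1\p_1\dt\eta$, integrating by parts on the $u_1\p_1^2\dt\eta$ piece, and closing with the catalogs---is precisely that simpler argument. One remark: the theorem statement as printed has $\dt^2 u\cdot\N$, which is a typo for $\dt u\cdot\N$ (confirmed by its use in Step~2 of the main a~priori proof, where it is invoked for the once-differentiated problem); you are correctly treating the test velocity as $v=\dt u$, and indeed no time-derivative extraction is needed because the top-order factor $\p_1\dt^2\eta$ is controlled in $L^{1/\low}$ by $\sqrt{\D}$, unlike $\p_1\dt^3\eta$ in the $\dt^2$ case.
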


\subsection{Special interaction estimates II: free surface terms}

The term involving $F^6$ and $F^7$ in Theorem \ref{linear_energy} also require a delicate treatment.  We record these now, starting with $F^6$.

\begin{thm}\label{nid_f6}
We have the estimate
\begin{equation}
 \abs{   \int_{-\ell}^\ell   g \dt^2 \eta F^6 +  \sigma \frac{\p_1 \dt^2 \eta \p_1 F^6}{(1+\abs{\p_1 \zeta_0}^2)^{3/2}}} \ls \sqrt{\E} \D
\end{equation}
when $F^6$ is given by \eqref{dt2_f6}, and we have the estimate
\begin{equation}
 \abs{   \int_{-\ell}^\ell   g \dt \eta F^6 +  \sigma \frac{\p_1 \dt \eta \p_1 F^6}{(1+\abs{\p_1 \zeta_0}^2)^{3/2}}} \ls \sqrt{\E} \D
\end{equation}
when $F^6$ is given by \eqref{dt1_f6}.
\end{thm}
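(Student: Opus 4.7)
The plan is to treat the two claimed bounds separately, for $k\in\{1,2\}$ denoting the number of time derivatives on $\eta$. Differentiating the kinematic equation $\dt\eta = u\cdot\N$ produces the explicit form $F^6 = -u_1 \p_1 \dt\eta$ when $k=1$, and $F^6 = -2\dt u_1 \p_1 \dt\eta - u_1 \p_1 \dt^2\eta$ when $k=2$, matching the expansion already used in the proof of Proposition \ref{nid_f3_II}. Applying Leibniz to $\p_1 F^6$ yields finitely many multilinear summands, and for each I will pair it with $\dt^k\eta$ (first integrand) or $\p_1\dt^k\eta$ (second integrand), apply H\"older's inequality, and read each factor off the catalogs of Theorems \ref{catalog_energy} and \ref{catalog_dissipation}. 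Each grouping must contribute exactly one $\sqrt{\E}$ factor and two $\sqrt{\D}$ factors in order to deliver the target $\sqrt{\E}\D$.

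The crucial algebraic observation is that every $F^6$ contains the summand $u_1 \p_1 \dt^k\eta$, and hence $\p_1 F^6$ contains the top-order summand $u_1 \p_1^2 \dt^k\eta$. When these pair with $\dt^k\eta$ and $\p_1 \dt^k\eta$ respectively, they reorganize into total derivatives $\tfrac{1}{2}u_1 \p_1(|\dt^k\eta|^2)$ and $\tfrac{1}{2}u_1 \p_1(|\p_1 \dt^k\eta|^2)$. I then integrate by parts on $(-\ell,\ell)$; the boundary contributions vanish because $u_1(\pm\ell,\cdot) = 0$, which follows from the impermeability condition $u\cdot\nu = 0$ on $\Sigma_s$ together with $\nu = \pm e_1$ at the contact points. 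The resulting bulk terms are $\int\p_1(\Psi u_1)|\dt^k\eta|^2$ and $\int\p_1(\Psi u_1)|\p_1\dt^k\eta|^2$ with $\Psi\in\{g,\sigma/(1+\abs{\p_1\zeta_0}^2)^{3/2}\}$ smooth. H\"older with conjugate exponents $1/(1-\ep_+)$ and $2/\ep_+$ then yields a $\sqrt{\E}$ bound on $\norm{\p_1 u}_{L^{1/(1-\ep_+)}(\Sigma)}$ from Theorem \ref{catalog_energy} together with $\sqrt{\D}$ bounds on $\norm{\dt^k\eta}_{L^{2/\ep_+}}$ and $\norm{\p_1\dt^k\eta}_{L^{2/\ep_+}}$ from Theorem \ref{catalog_dissipation} (using $L^{1/\low}\hookrightarrow L^{2/\ep_+}$ via $2\low<\ep_+$ from \eqref{kappa_ep_def} when needed). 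For $k=1$, both $F^6$ and $\p_1 F^6$ consist entirely of pieces handled by this argument, so the estimate is complete.

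For $k=2$, three additional summands of $\p_1 F^6$ arise, namely $\p_1\dt u_1 \p_1\dt\eta$, $\dt u_1 \p_1^2\dt\eta$, and $\p_1 u_1 \p_1\dt^2\eta$, together with the summand $\dt u_1 \p_1\dt\eta$ of $F^6$ itself entering the first integrand. For $\int \p_1\dt^2\eta\cdot \p_1\dt u_1 \p_1\dt\eta$ I pair $\p_1\dt^2\eta\in L^{1/\low}$ ($\sqrt{\D}$), $\p_1\dt u \in L^{1/(1-\ep_-)}(\Sigma)$ ($\sqrt{\D}$), and $\p_1\dt\eta\in L^{1/(\ep_--\low)}\hookrightarrow L^\infty$ ($\sqrt{\E}$), which is valid by $\low<\ep_-$ from \eqref{kappa_ep_def}. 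For $\int \p_1\dt^2\eta\cdot \dt u_1 \p_1^2\dt\eta$ I read $\dt u$ from Theorem \ref{catalog_energy} as $\norm{\dt u}_{L^\infty(\Sigma)}\ls\sqrt{\E}$, leaving $\p_1\dt^2\eta\in L^{1/\ep_-}$ and $\p_1^2\dt\eta\in L^{1/(1-\ep_-)}$ to each carry $\sqrt{\D}$ via Theorem \ref{catalog_dissipation}. The summand $\p_1 u_1 \p_1 \dt^2\eta$ fits exactly the pattern of the post-IBP residual above. Finally, $\int \dt^2\eta\cdot \dt u_1 \p_1\dt\eta$ is handled by the triple $L^\infty$ bound $\norm{\dt^2\eta}_{L^\infty}\norm{\dt u}_{L^\infty(\Sigma)}\norm{\p_1\dt\eta}_{L^\infty}$, with the first two read as $\sqrt{\D}$ from Theorem \ref{catalog_dissipation} and the third as $\sqrt{\E}$ from Theorem \ref{catalog_energy}.

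The main obstacle is the accounting of $\E$ and $\D$ factors: most of the natural H\"older partitions of these integrands produce $\E\sqrt{\D}$ or $\D^{3/2}$ instead of the required $\sqrt{\E}\D$. The successful partitions rely on the strict orderings $\low<\ep_-<\ep_+$ and $2\low<\ep_+$ encoded in \eqref{kappa_ep_def}, and on selectively reading certain boundary factors (notably $\dt u$ on $\Sigma$ and $\p_1\dt\eta$) from Theorem \ref{catalog_energy} rather than Theorem \ref{catalog_dissipation} to free up a $\sqrt{\D}$ slot elsewhere. The integration by parts used to neutralize the top-order $\p_1^2\dt^k\eta$ is equally essential: a direct estimate would force two $\sqrt{\D}$ factors simultaneously on $\p_1^2\dt^k\eta$ and $\p_1\dt^k\eta$ together with another factor on $u_1$, overshooting the budget.
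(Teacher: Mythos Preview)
Your proposal is correct and follows essentially the same route as the paper: both hinge on integrating by parts the top-order summand $u_1\p_1^2\dt^k\eta$ against $\p_1\dt^k\eta$ via the total derivative $\tfrac12 u_1\p_1\abs{\p_1\dt^k\eta}^2$, exploiting $u_1(\pm\ell)=0$, and then closing every piece with H\"older and the catalogs of Theorems~\ref{catalog_energy} and~\ref{catalog_dissipation}. The only inessential differences are that you also integrate by parts on the lower-order term $\int g\,\dt^k\eta\,u_1\p_1\dt^k\eta$ (the paper estimates it directly via $\norm{\dt^2\eta}_{L^2}\norm{u}_{L^\infty(\Sigma)}\norm{\p_1\dt^2\eta}_{L^2}$), and a couple of your H\"older splittings assign the $\sqrt{\E}$ and $\sqrt{\D}$ labels to different factors than the paper does; both choices are valid given the parameter ordering in~\eqref{kappa_ep_def}.
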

\begin{proof}
We begin by using the definition of $F^6$ in \eqref{dt2_f6} to split
\begin{multline}
   \int_{-\ell}^\ell   g \dt^2 \eta F^6 +  \sigma \frac{\p_1 \dt^2 \eta \p_1 F^6}{(1+\abs{\p_1 \zeta_0}^2)^{3/2}} 
=    \int_{-\ell}^\ell   g \dt^2 \eta (-2 \dt u_1 \p_1 \dt \eta)  + g \dt^2 \eta (- u_1 \p_1 \dt^2 \eta) \\
   + \int_{-\ell}^\ell \sigma \frac{\p_1 \dt^2 \eta \p_1 (-2 \dt u_1 \p_1 \dt \eta)}{(1+\abs{\p_1 \zeta_0}^2)^{3/2}}    +  \int_{-\ell}^\ell\sigma \frac{\p_1 \dt^2 \eta \p_1 (- u_1 \p_1 \dt^2 \eta)}{(1+\abs{\p_1 \zeta_0}^2)^{3/2}}
   =: I + II +III.
\end{multline}
We will estimate these three terms using Theorems \ref{catalog_energy} and \ref{catalog_dissipation} (with the ordering convention described in the proof of Proposition \ref{nid_f1}) and H\"older's inequality.  For $I$ we directly estimate 
\begin{multline}
\abs{I} \ls \int_{-\ell}^\ell \abs{\dt^2 \eta} \left( \abs{\dt u} \abs{\p_1 \dt \eta} + \abs{u} \abs{\p_1 \dt^2 \eta} \right) \ls \norm{\dt^2 \eta}_{L^2} \left(\norm{\dt u}_{L^\infty(\Sigma)} \norm{\p_1 \dt \eta}_{L^2} + \norm{u}_{L^\infty(\Sigma)} \norm{\p_1 \dt^2 \eta}_{L^2}   \right) \\
\ls \sqrt{\D} \left(\sqrt{\E} \sqrt{\D} + \sqrt{\E} \sqrt{\D}\right).
\end{multline}
Similarly, for $II$ we apply the product rule and estimate (recalling \eqref{kappa_ep_def})
\begin{multline}
\abs{II} \ls \int_{-\ell}^\ell \abs{\p_1 \dt^2 \eta} \left( \abs{\p_1 \dt u} \abs{\p_1 \dt \eta} + \abs{\dt u} \abs{\p_1^2 \dt \eta}   \right)  \\
\ls \norm{\p_1 \dt^2 \eta}_{L^{1/\low}} \left( \norm{\p_1 \dt u}_{L^{1/(1-\ep_-)}(\Sigma)} \norm{\p_1 \dt \eta}_{L^\infty}  + \norm{\dt u}_{L^\infty(\Sigma)} \norm{\p_1^2 \dt \eta}_{L^{1/(1-\ep_-)}}  \right) \\
\ls \sqrt{\D}\left( \sqrt{\D} \sqrt{\E} + \sqrt{\E} \sqrt{\D}\right).
\end{multline}
On the other hand, for $III$ we expand with the product rule and then integrate by parts and exploit the vanishing of $u_1$ at the endpoints:
\begin{multline}
III = -\int_{-\ell}^\ell\sigma \frac{\p_1 \dt^2 \eta \p_1  u_1 \p_1 \dt^2 \eta}{(1+\abs{\p_1 \zeta_0}^2)^{3/2}} + 
\int_{-\ell}^\ell   \sigma \p_1 \left(\frac{u_1  }{(1+\abs{\p_1 \zeta_0}^2)^{3/2}} \right)  \frac{\abs{ \p_1 \dt^2 \eta }^2}{2} \\
= \frac{\sigma}{2} \int_{-\ell}^\ell \abs{\p_1 \dt^2 \eta}^2\left( u_1  \p_1 \left(\frac{1 }{(1+\abs{\p_1 \zeta_0}^2)^{3/2}} \right)  - \frac{\p_1 u_1  }{(1+\abs{\p_1 \zeta_0}^2)^{3/2}}     \right).
\end{multline}
In this form we can estimate with the same tools as above, crucially using that $2 \low < \ep_+$, to see that 
\begin{equation}
\abs{III} \ls \int_{-\ell}^\ell \abs{\p_1 \dt^2 \eta}^2 \left( \abs{u} + \abs{\p_1 u} \right) \ls \ns{\p_1 \dt^2 \eta}_{L^{1/\low}} \norm{ u}_{W^{1/(1-\ep_+)}(\Sigma)}  \ls \D \sqrt{\E}.
\end{equation}
Combining these then provides the stated bound. 
\end{proof}

Next we record the $F^7$ bound.

\begin{thm}\label{nid_f7}
We have the estimate
\begin{equation} 
\abs{ [\dt^2 u\cdot \N,F^7]_\ell} \ls \sqrt{\E} \D
\end{equation}
when $F^7$ is given by \eqref{dt2_f7}, and we have the estimate
\begin{equation} 
\abs{ [\dt u \cdot \N,F^7]_\ell} \ls \sqrt{\E} \D
\end{equation}
when $F^7$ is given by \eqref{dt1_f7}.
\end{thm}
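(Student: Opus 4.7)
The plan is to reduce both estimates to a pointwise Cauchy--Schwarz on the boundary pairing $[\cdot,\cdot]_\ell$, together with a Taylor expansion of $\swh$ at the origin. The crucial observation is that by the definitions \eqref{linz_def} and \eqref{V_pert} we have $\swh(0) = 0$ and $\swh'(0) = \sw'(0)/\linz - 1 = 0$, so $\swh$ vanishes quadratically at zero and its derivative vanishes linearly. Since $\sw \in C^2(\R)$, the remainder terms are uniformly bounded on the range of values the quantities $\dt^k \eta(\pm \ell)$ can attain under the smallness assumption $\E \le \gamma^2 \le 1$.

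For the first inequality I would begin by applying the trivial bound $\abs{[\dt^2 u \cdot \N, F^7]_\ell} \le [\dt^2 u \cdot \N]_\ell \, [F^7]_\ell$, and noting that $[\dt^2 u \cdot \N]_\ell \ls \sqrt{\D}$ since the term $\sum_{k=0}^2 [\dt^k u \cdot \N]_\ell^2$ appears explicitly in the definition \eqref{D_def} of $\D$. The substantive work is to control $[F^7]_\ell$. At the $\dt^2$ level, $F^7$ from \eqref{dt2_f7} is produced by differentiating the nonlinear contact-point response term $\swh(\dt \eta)$ twice, yielding terms schematically of the form $\swh''(\dt \eta)(\dt^2 \eta)^2$ and $\swh'(\dt \eta)\dt^3 \eta$ evaluated at the endpoints. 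A first-order Taylor expansion about $0$ together with $\swh'(0)=0$ gives $\abs{\swh'(\dt \eta(\pm \ell))} \ls \abs{\dt \eta(\pm \ell)}$, while $\swh''$ is bounded on the relevant small range. Pointwise this produces
\[
\abs{F^7(\pm \ell)} \ls \abs{\dt^2 \eta(\pm \ell)}^2 + \abs{\dt \eta(\pm \ell)} \abs{\dt^3 \eta(\pm \ell)}.
\]

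Squaring, summing over the two endpoints, and extracting $L^\infty$ factors gives
\[
[F^7]_\ell^2 \ls \norm{\dt^2 \eta}_{L^\infty}^2 \, [\dt^2 \eta]_\ell^2 + \norm{\dt \eta}_{L^\infty}^2 \, [\dt^3 \eta]_\ell^2.
\]
At this stage Theorem \ref{catalog_energy} yields $\norm{\dt \eta}_{L^\infty} + \norm{\dt^2 \eta}_{L^\infty} \ls \sqrt{\E}$, and the contact-point quantities $[\dt^2 \eta]_\ell$ and $[\dt^3 \eta]_\ell$ are controlled by $\sqrt{\D}$ since they are part of $\sdb \ls \D$ via \eqref{ED_parallel} and \eqref{ED_natural}. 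Combining, $[F^7]_\ell^2 \ls \E \D$, and therefore $\abs{[\dt^2 u\cdot \N,F^7]_\ell} \ls \sqrt{\D} \cdot \sqrt{\E \D} = \sqrt{\E}\, \D$.

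The $\dt$-level estimate follows the same template and is simpler: at this level $F^7$ in \eqref{dt1_f7} reduces essentially to $\swh'(\dt \eta)\dt^2 \eta$ evaluated at $\pm \ell$; the Taylor bound $\abs{\swh'(\dt \eta(\pm \ell))} \ls \abs{\dt \eta(\pm \ell)} \ls \norm{\dt \eta}_{L^\infty} \ls \sqrt{\E}$ gives $[F^7]_\ell \ls \sqrt{\E}\, [\dt^2 \eta]_\ell \ls \sqrt{\E}\sqrt{\D}$, which when combined with $[\dt u\cdot \N]_\ell \ls \sqrt{\D}$ yields the claim. The only real obstacle is making sure the Taylor expansions of $\swh'$ and $\swh''$ are valid uniformly over the range of $\dt \eta(\pm \ell, t)$; this is guaranteed by $\norm{\dt \eta}_{L^\infty} \ls \sqrt{\E} \le \gamma$ together with $\sw \in C^2(\R)$.
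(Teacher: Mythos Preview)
Your proof is correct and follows essentially the same approach as the paper. Both arguments use the Taylor expansion $\abs{\swh'(z)} \ls \abs{z}$ (valid on the bounded range of $\dt\eta$), bound $F^7$ pointwise at the endpoints by $\abs{\dt\eta}\abs{\dt^3\eta} + \abs{\dt^2\eta}^2$, and then split each product into one factor controlled by $\sqrt{\E}$ (via the $L^\infty$ or $H^1$ bound on $\dt^k\eta$) and one by $\sqrt{\D}$ (via the contact-point terms in $\sdb$); the only cosmetic differences are that the paper bounds $\max_{\pm\ell}\abs{F^7}$ directly and uses the identity $[\dt^2 u\cdot\N]_\ell = [\dt^3\eta]_\ell$ rather than citing $[\dt^2 u\cdot\N]_\ell^2 \le \D$ from \eqref{D_def}.
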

\begin{proof}
Once more we only record the proof in the harder case when $F^7$ is given by \eqref{dt2_f7}.  We begin by estimating 
\begin{equation}
 \abs{F^7} \ls \abs{\swh'(\dt \eta)} \abs{\dt^3 \eta} + \abs{\swh''(\dt \eta)} \abs{\dt^2 \eta}^2.
\end{equation}
Since $\norm{\dt \eta}_{L^\infty} \ls \sqrt{\E} \ls 1$, we can bound 
\begin{equation}
 \abs{\swh'(z)} = \frac{1}{\low}\abs{ \int_0^z \sw''(r)dr}\ls \abs{z} \text{ for } z \in [-\norm{\dt \eta}_{L^\infty} ,\norm{\dt \eta}_{L^\infty} ].
\end{equation}
From this, basic trace theory, and the bound $\sum_{k=1}^3\max_{\pm \ell} \abs{\dt^k \eta} \ls \sqrt{\D}$ we then estimate
\begin{equation}
 \max_{\pm \ell} \abs{F_7} \ls \max_{\pm \ell} \left( \abs{\dt \eta} \abs{\dt^3 \eta} + \abs{\dt^2 \eta}^2  \right) \ls \sqrt{\D} \left( \norm{\dt \eta}_{H^1} + \norm{\dt^2 \eta}_{H^1}  \right)  \ls \sqrt{\D} \sqrt{\E}.
\end{equation}
From this and the fact that  $[\dt^2 u \cdot \N]_\ell = [\dt^3 \eta ]_\ell \ls \sqrt{\D}$, we deduce that
\begin{equation}
 \abs{  [\dt^2 u\cdot \N,F^7]_\ell } \ls \sqrt{\E} \sqrt{\D} [\dt^2 u \cdot \N]_\ell \ls \sqrt{\E} \D,
\end{equation}
which is the stated estimate.
\end{proof}

We conclude with two more estimates involving the free surface function.  The first is for a term involving the function $\Q$ from \eqref{Q_def} that appears in Corollary \ref{basic_energy}.

\begin{thm}\label{nid_Q}
 Let $\Q$ be the smooth function defined by \eqref{Q_def}.  Then 
\begin{equation}
 \abs{\int_{-\ell}^\ell \sigma \Q(\p_1 \zeta_0, \p_1 \eta)  } \ls  \sqrt{\E} \ns{\eta}_{H^1}
\end{equation}
\end{thm}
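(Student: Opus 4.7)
The plan is to exploit the structure of $\mathcal{Q}$ as (essentially) an antiderivative of $\mathcal{R}$ to obtain a cubic pointwise bound in $\p_1 \eta$, then integrate and use $\p_1 \eta \in L^\infty$ control from the energy.

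First I would unwind the definition of $\mathcal{Q}$ from \eqref{Q_def}. Based on how $\mathcal{Q}$ is used in the proof of Corollary \ref{basic_energy}---where it arises so that $\mathcal{R}(\p_1 \zeta_0, \p_1 \eta) \p_1 \dt \eta = \frac{d}{dt} \mathcal{Q}(\p_1 \zeta_0, \p_1 \eta)$---the function $\mathcal{Q}$ satisfies $\p_z \mathcal{Q}(y,z) = \mathcal{R}(y,z)$ and $\mathcal{Q}(y,0) = 0$, so
\begin{equation}
 \mathcal{Q}(y,z) = \int_0^z \mathcal{R}(y,w) \, dw.
\end{equation}

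Next I would extract a quantitative cubic bound on $\mathcal{Q}$. From the integral representation \eqref{R_def} of $\mathcal{R}$, for $s$ between $0$ and $z$ one has $\abs{s - z} \le \abs{z}$, and on the range $\abs{y} \le M$, $\abs{z} \le 1$ the integrand factor $(s+y)/(1+\abs{y+s}^2)^{5/2}$ is uniformly bounded. This yields $\abs{\mathcal{R}(y,z)} \le C_M z^2$, and then integrating in $w$ from $0$ to $z$ gives
\begin{equation}
 \abs{\mathcal{Q}(y,z)} \le \tfrac{C_M}{3} \abs{z}^3
\end{equation}
uniformly for $\abs{y} \le M$ and $\abs{z} \le 1$. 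Since $\p_1 \zeta_0 \in C^\infty([-\ell,\ell])$ is bounded and $\norm{\p_1 \eta}_{L^\infty} \ls \sqrt{\E} \le 1$ by Theorem \ref{catalog_energy} and the standing smallness hypothesis $\E \le \gamma^2 < 1$, the bound applies pointwise on $(-\ell, \ell)$ with $y = \p_1 \zeta_0(x_1)$ and $z = \p_1 \eta(x_1)$.

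Finally, I would combine the pointwise estimate with H\"older's inequality:
\begin{equation}
 \abs{\int_{-\ell}^\ell \sigma \mathcal{Q}(\p_1 \zeta_0, \p_1 \eta)} \ls \int_{-\ell}^\ell \abs{\p_1 \eta}^3 \le \norm{\p_1 \eta}_{L^\infty} \ns{\p_1 \eta}_{L^2} \ls \sqrt{\E} \, \ns{\eta}_{H^1}.
\end{equation}
There is no real obstacle here; the only point requiring care is verifying the cubic (rather than merely quadratic) vanishing at $z = 0$, which follows from the fact that $\mathcal{R}$ itself is the second-order Taylor remainder in $z$ and thus $\mathcal{R}(y,0) = 0$.
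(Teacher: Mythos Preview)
Your proof is correct and follows essentially the same approach as the paper: obtain the cubic pointwise bound $\abs{\Q(\p_1\zeta_0,\p_1\eta)} \ls \abs{\p_1\eta}^3$ (which the paper extracts directly from Proposition~\ref{R_prop}), then split $\int \abs{\p_1\eta}^3 \le \norm{\p_1\eta}_{L^\infty}\ns{\p_1\eta}_{L^2}$ and invoke $\norm{\p_1\eta}_{L^\infty} \ls \sqrt{\E}$ from Theorem~\ref{catalog_energy}. The only difference is that you re-derive the cubic bound from the integral formula for $\mathcal{R}$ rather than citing Proposition~\ref{R_prop}.
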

\begin{proof}
According to Proposition \ref{R_prop} and Theorem \ref{catalog_energy}, we have that 
\begin{equation}
 \abs{\int_{-\ell}^\ell \sigma \Q(\p_1 \zeta_0, \p_1 \eta)  } \ls \int_{-\ell}^\ell \abs{\p_1 \eta}^3 \ls \norm{\p_1 \eta}_{L^\infty} \ns{\eta}_{H^1} \ls \sqrt{\E} \ns{\eta}_{H^1} .
\end{equation}
This is the stated estimate.
\end{proof} 
 
Our final estimate involves the term $\swh$, as defined in \eqref{V_pert}.

\begin{thm}\label{nid_W}
We have that 
\begin{equation} 
\abs{  [u\cdot \N,\swh(\dt \eta)]_\ell } \ls \norm{\dt \eta}_{H^1} \bs{u\cdot \N}.
\end{equation}
\end{thm}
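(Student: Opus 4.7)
The plan is to exploit the fact that the bracket $[\cdot,\cdot]_\ell$ from \eqref{bndry_pairing} is an inner product on the two-dimensional vector space of real-valued functions restricted to the endpoints $\{-\ell,\ell\}$, so it automatically satisfies a Cauchy--Schwarz inequality of the form $|[f,g]_\ell| \le [f]_\ell [g]_\ell$. Applying this with $f = u\cdot\N$ and $g = \swh(\dt\eta)$ immediately reduces the estimate to proving
\begin{equation*}
[\swh(\dt\eta)]_\ell \ls \norm{\dt\eta}_{H^1}.
\end{equation*}

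To establish this pointwise-contact-point bound, I will use that $\swh \in C^2(\R)$ with $\swh(0)=0$ (which follows from $\sw(0)=0$ in the definition \eqref{V_pert}). Since $\norm{\dt\eta}_{L^\infty} \ls \sqrt{\E} \ls 1$ by Theorem \ref{catalog_energy} and the running smallness assumption, the values $\dt\eta(\pm\ell)$ lie in a fixed bounded set on which $\swh'$ is uniformly bounded. A first-order Taylor expansion about $z=0$ then yields $|\swh(\dt\eta(\pm\ell))| \ls |\dt\eta(\pm\ell)|$, and hence
\begin{equation*}
[\swh(\dt\eta)]_\ell^2 \ls |\dt\eta(-\ell)|^2 + |\dt\eta(\ell)|^2 \ls \norm{\dt\eta}_{L^\infty((-\ell,\ell))}^2.
\end{equation*}

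To close the argument I will invoke the one-dimensional Sobolev embedding $H^1((-\ell,\ell)) \hookrightarrow C^0([-\ell,\ell])$, which gives $\norm{\dt\eta}_{L^\infty} \ls \norm{\dt\eta}_{H^1}$. Combining this with the previous display and the Cauchy--Schwarz step produces the claimed bound. There is no substantial obstacle here: the proof is a direct combination of a Taylor expansion of $\swh$, the trivial Cauchy--Schwarz inequality for $[\cdot,\cdot]_\ell$, and a standard 1D trace/embedding estimate.
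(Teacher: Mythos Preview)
Your argument does not prove the stated inequality. Recall that $\bs{u\cdot\N}=[u\cdot\N]_\ell^2$, so the claim is
\[
\abs{[u\cdot\N,\swh(\dt\eta)]_\ell}\ls \norm{\dt\eta}_{H^1}\,[u\cdot\N]_\ell^2.
\]
Your Cauchy--Schwarz plus first-order Taylor approach yields only
\[
\abs{[u\cdot\N,\swh(\dt\eta)]_\ell}\ls [u\cdot\N]_\ell\,[\swh(\dt\eta)]_\ell \ls [u\cdot\N]_\ell\,\norm{\dt\eta}_{H^1},
\]
which is a strictly weaker bound in the small-energy regime (where $[u\cdot\N]_\ell\ls 1$, so $[u\cdot\N]_\ell^2\le[u\cdot\N]_\ell$, and the paper's bound is the sharper one). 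This sharper form is precisely what is needed downstream: in Step~1 of the proof of Theorem~\ref{main_apriori} the term is absorbed via $\sqrt{\E}\,\D_{\shortparallel,0}$, which requires the factor $[u\cdot\N]_\ell^2=[\dt\eta]_\ell^2$.

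The two ingredients you are missing are: (i) $\swh$ vanishes to \emph{second} order at the origin, since by \eqref{V_pert} and \eqref{linz_def} one has $\swh(0)=0$ and $\swh'(0)=\linz^{-1}\sw'(0)-1=0$, so $\abs{\swh(z)}\ls z^2$ for $\abs{z}\ls 1$; and (ii) the kinematic boundary condition gives $\dt\eta=u\cdot\N$ at $\pm\ell$ (note $F^6=0$ at the undifferentiated level). Combining these,
\[
\abs{(u\cdot\N)(a\ell)\,\swh(\dt\eta(a\ell))}\ls \abs{u\cdot\N(a\ell)}\,\abs{\dt\eta(a\ell)}^2=\abs{u\cdot\N(a\ell)}^2\,\abs{\dt\eta(a\ell)},
\]
and summing over $a=\pm1$ together with the $H^1$ trace bound $\abs{\dt\eta(\pm\ell)}\ls\norm{\dt\eta}_{H^1}$ gives the stated estimate.
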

\begin{proof}
The definition of $\swh \in C^2$ in \eqref{V_pert} shows that $\abs{\swh(z)} \ls z^2$ for $\abs{z} \ls 1$.  Since $\dt \eta = u\cdot \N$ at $\pm \ell$ we can use standard trace theory to deduce the stated bound:  
\begin{equation}
 \abs{  [u\cdot \N,\swh(\dt \eta)]_\ell } \ls \sum_{a=\pm 1} \low \abs{u\cdot \N(a\ell,t)}^2 \abs{\dt \eta(a \ell,t)}  \ls \norm{\dt \eta}_{H^1} [u\cdot \N]_\ell^2.  
\end{equation}

\end{proof}

\section{Nonlinear estimates II: interaction terms, energetic form}\label{sec_nl_int_e}

In this section we continue our study of the nonlinear interaction terms appearing in  Theorem \ref{linear_energy}.  However, the focus now is on estimates in terms of the energy functional.  Once more, in order to avoid tedious restatements of the same hypothesis, we assume throughout this section that a solution to \eqref{ns_geometric} exists on the time horizon $(0,T)$ for $0 < T \le \infty$ and obeys the small-energy estimate 
\begin{equation}
 \sup_{0\le t < T} \E(t) \le \gamma^2 < 1,
\end{equation}
where $\gamma \in (0,1)$ is as in Lemma \ref{eta_small}.  Again, this means that the estimates of Lemma \ref{eta_small} are available for use, and we will use them often without explicit reference.

\subsection{General interaction functional estimates }

We begin by deriving general dual estimates for the terms involving $F^1$, $F^4$, and $F^5$ in terms of the energy functional.  First we consider $F^1$.

\begin{prop}\label{nie_f1}
Suppose that $F^1$ is as defined by \eqref{dt1_f1}.  Then 
\begin{equation}
\abs{\int_\Omega J w \cdot F^1} \ls \norm{w}_{H^1} \left(\E + \E^{3/2} \right) .
\end{equation}
\end{prop}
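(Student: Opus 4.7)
The plan is to mirror the structure of the proof of Proposition \ref{nid_f1} term by term, but to replace every invocation of Theorem \ref{catalog_dissipation} by Theorem \ref{catalog_energy}. Since \eqref{dt1_f1} arises by applying a single time derivative to the momentum equation in \eqref{ns_geometric} and collecting the commutator-type nonlinearities, each summand is a product of at least two ``small'' factors: either a coefficient-derivative factor such as $\dt \bar\eta$, $\dt \A$, $\dt K$, or $\dt^2 \bar\eta$, multiplied by a solution-derivative factor such as $\nabla u$, $\nabla^2 u$, $\nabla p$, $p$, or $\dt u$. My strategy is to pair one factor with $H^1$-control of $w$ via the 2D Sobolev and trace embeddings, and place the remaining small factors in appropriate $L^q$ spaces controlled purely by $\sqrt{\E}$.

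Concretely, for a representative term such as $-\diverge_{\dt\A} S_\A(p,u)$, I would split via the product rule and Hölder's inequality as
\begin{equation*}
\Bigl|\int_\Omega Jw\cdot \diverge_{\dt\A}S_\A(p,u)\Bigr|
\ls \|w\|_{L^{2/\ep_+}}\,\|\dt\bar\eta\|_{W^{1,\infty}}\bigl(\|\nabla p\|_{L^{2/(2-\ep_+)}}+\|\nabla^2 u\|_{L^{2/(2-\ep_+)}}\bigr) + \text{(lower order)},
\end{equation*}
and then use $\|w\|_{L^{2/\ep_+}} \ls \|w\|_{H^1}$ (by Sobolev embedding in 2D), together with the energy-catalog bounds $\|\dt\bar\eta\|_{W^{1,\infty}}\ls\sqrt{\E}$ and $\|\nabla p\|_{L^{2/(2-\ep_+)}}+\|\nabla^2 u\|_{L^{2/(2-\ep_+)}}\ls \sqrt{\E}$, yielding $\ls \|w\|_{H^1}\E$. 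The ``lower order'' terms appearing from $\nabla\A$ factors involve an extra copy of $\nabla \bar\eta$, which sits in $L^{2/(1-\ep_+)}$ via energy, producing a triple product and hence an $\E^{3/2}$ contribution; this is the source of the $\E^{3/2}$ summand in the target bound. The other summands in \eqref{dt1_f1}, namely $\mu\diva\sg_{\dt\A}u$, $-\dt u\cdot\naba u$, $-u\cdot\nab_{\dt\A}u$, $\dt^2\bar\eta\,W K\p_2 u$, and $\dt\bar\eta\,W\,\dt K\,\p_2 u$, admit entirely analogous splittings.

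The main obstacle I anticipate is the convective-type commutator $u\cdot\nab_{\dt\A}u$ and the transport commutator involving $\dt^2\bar\eta\,\p_2 u$, since at the energy level Theorem \ref{catalog_energy} only provides $\dt^2\bar\eta\in L^\infty$ with $\nabla\dt^2\bar\eta\in L^{4/(2-(\ep_--\low))}$, which is genuinely weaker than the dissipation-based control used in Proposition \ref{nid_f1}. For these I would exploit the full $L^\infty$ bound on $\dt^2\bar\eta$ together with $\|\nabla u\|_{L^2}\ls \sqrt{\E}$ and $\|w\|_{L^2}\ls \|w\|_{H^1}$, which suffices since these terms are already quadratic in energy and do not require the critical integrability of $\dt u$ or $\p_1\dt^2\eta$ that drives the dissipation-form proof. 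In every case the result is a bound of the form $\|w\|_{H^1}(\E + \E^{3/2})$, and summing over the finitely many terms of \eqref{dt1_f1} completes the proof. No new analytic ideas beyond Hölder, Sobolev/trace embeddings, and the energy catalog are required; the argument is essentially a simpler re-run of Proposition \ref{nid_f1} with all $\sqrt{\D}$ factors replaced by $\sqrt{\E}$.
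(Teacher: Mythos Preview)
Your proposal is correct and follows essentially the same approach as the paper: a term-by-term H\"older estimate using only the energy catalog of Theorem \ref{catalog_energy}, producing $\norm{w}_{H^1}\E$ from the binary products and $\norm{w}_{H^1}\E^{3/2}$ from the triple products. The only minor difference is that for the $\dt^2\bar\eta\,WK\p_2 u$ term the paper uses the $L^4\times L^4\times L^2$ splitting rather than your $L^2\times L^\infty\times L^2$ splitting, but both work and yield the same $\E$ bound.
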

\begin{proof}
The terms $F^1$, as defined by \eqref{dt1_f1}, contains six separate terms.  We will estimate each of these, employing H\"{o}lder's inequality and Theorem \ref{catalog_energy} repeatedly and without explicit reference. 

\textbf{Term: $-\diverge_{\dt \A} S_\A(p,u)$. } We first bound
\begin{equation}
\abs{\int_\Omega J w \cdot (-\diverge_{\dt \A} S_\A(p,u))  }  \ls \int_\Omega \abs{w}\abs{\dt \A} \left(\abs{\nab p} +\abs{\nab^2 u} \right) +\int_{\Omega} \abs{w} \abs{\dt \A} \abs{\nab \A}\left(\abs{p} + \abs{\nab u} \right) =: I + II.
\end{equation}
We then estimate 
\begin{equation}
I \ls \norm{w}_{L^{2/\ep_+}} \norm{\dt \bar{\eta}}_{W^{1,\infty}} \left( \norm{\nab p}_{L^{2/(2-\ep_+)}} + \norm{\nab^2 u}_{L^{2/(2-\ep_+)}} \right) \ls \norm{w}_{H^1}  \E,
\end{equation}
and 
\begin{equation}
 II \ls \norm{w}_{L^{1/\ep_+}} \norm{\dt \bar{\eta}}_{W^{1,\infty}} \norm{\bar{\eta}}_{W^{2,2/(1-\ep_+)}}   \left( \norm{p}_{L^{2/(1-\ep_+)}} + \norm{\nab u}_{L^{2/(1-\ep_+)}} \right) \ls \norm{w}_{H^1} \E^{3/2}.
\end{equation}
The combination of these estimates shows this term can be estimated as stated.

\textbf{Term: $\mu \diva \sg_{\dt \A} u$. }  We first bound
\begin{equation}
 \abs{\int_\Omega J w \cdot (\mu \diva \sg_{\dt \A} u)  } \ls \int_\Omega \abs{w} \abs{\dt \A} \abs{\nab^2 u}  + \int_\Omega \abs{w} \abs{\nab \dt \A} \abs{\nab u} =: I + II.
\end{equation}
For $I$ we bound
\begin{equation}
 I \ls \norm{w}_{L^{2/\ep_+}} \norm{\dt \bar{\eta}}_{W^{1,\infty}} \norm{\nab^2 u}_{L^{2/(2-\ep_+)}} \ls \norm{w}_{H^1} \E.
\end{equation}
For $II$ we use \eqref{kappa_ep_def} to see that $0 < \ep_- - \low < 1$ and $0 < 2\ep_+ + \ep_- - \low < 2$, so 
\begin{equation}
 II \ls \norm{w}_{L^{4/(2\ep_+ + \ep_- - \low)}}  \left( \norm{\bar{\eta}}_{W^{2,4/(2-(\ep_--\low))}} +  \norm{ \dt \bar{\eta}}_{W^{2,4/(2-(\ep_--\low))}} \right) \norm{\nab u}_{L^{2/(1-\ep_+)}} 
 \ls \norm{w}_{H^1} \E.
\end{equation}
Combining these then shows that this term can be estimated as stated.

\textbf{Term: $- u \cdot \nab_{\dt \A} u$. } We bound 
\begin{equation}
\abs{\int_\Omega Jw \cdot (- u \cdot \nab_{\dt \A} u) } \ls  \int_{\Omega} \abs{w} \abs{u} \abs{\dt \A} \abs{\nab u} 
\ls \norm{w}_{L^{2}} \norm{u}_{L^\infty} \norm{\dt \bar{\eta}}_{W^{1,\infty}} \norm{\nab u}_{L^{2}}
\ls \norm{w}_{H^1} \E^{3/2}.
\end{equation}

\textbf{Term: $- \dt u \cdot \naba   u$. }  We estimate 
\begin{multline}
 \abs{\int_\Omega Jw \cdot (- \dt u \cdot \naba   u)} \ls \int_{\Omega} \abs{w}\abs{\dt u}\abs{\nab u} \\
 \ls \norm{w}_{L^{4/(2\ep_+ + \ep_-)}} \norm{\dt u}_{L^{4/(2-\ep_-)}} \norm{\nab u}_{L^{2/(1-\ep_+)}}
\ls \norm{w}_{H^1} \E.
\end{multline}

\textbf{Term: $\dt^2 \bar{\eta} \frac{\phi}{\zeta_0} K \p_2 u$. }  We bound 
\begin{equation}
\abs{\int_\Omega J w \cdot (\dt^2 \bar{\eta} \frac{\phi}{\zeta_0} K \p_2 u) } \ls \int_\Omega \abs{w} \abs{\dt^2 \bar{\eta}} \abs{\nab u} \ls \norm{w}_{L^{4}} \norm{\dt^2 \bar{\eta}}_{L^4} \norm{\nab u}_{L^{2}} \ls \norm{w}_{H^1} \E.
\end{equation}

\textbf{Term: $\dt \bar{\eta} \frac{\phi}{\zeta_0} \dt K \p_2 u$. }  We bound 
\begin{equation}
\abs{\int_\Omega J w \cdot (\dt \bar{\eta} \frac{\phi}{\zeta_0} \dt K \p_2 u)} \ls \int_\Omega \abs{w} \abs{\dt \bar{\eta}} \abs{\dt K} \abs{\nab u} \ls \norm{w}_{L^{2}} \norm{\dt \bar{\eta}}_{L^\infty}\norm{\dt \bar{\eta}}_{W^{1,\infty}} \norm{\nab u}_{L^2} \ls \norm{w}_{H^1} \E^{3/2}.
\end{equation}

\end{proof}

Our next result concerns energetic estimates for $F^4$.

\begin{prop}\label{nie_f4}
Suppose that $F^4$ is defined by \eqref{dt1_f4}.  Then we 
\begin{equation} 
\abs{ \int_{-\ell}^\ell w \cdot  F^4  } \ls \norm{w}_{H^1} (\E + \E^{3/2})
\end{equation}
for all $w \in H^1(\Omega)$.
\end{prop}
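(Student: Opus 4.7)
The plan is to proceed in direct analogy with Proposition \ref{nie_f1}, handling $F^4$ term by term and invoking the energetic catalog in Theorem \ref{catalog_energy}. Since \eqref{dt1_f4} corresponds to applying only a single time derivative, the list of terms is considerably shorter than in the two-derivative case of Proposition \ref{nid_f4}: it should consist only of (i)~a commutator $\mu \sg_{\dt\A} u \, \N$ arising when $\dt$ hits $\A$ in the stress tensor, (ii)~a term of the form $\bigl[g\eta - \sigma \p_1(\p_1 \eta/(1+(\p_1\zeta_0)^2)^{3/2})\bigr]\dt \N$ from $\dt$ hitting $\N$ on the free surface equation, (iii)~a term involving $\p_1[\mathcal{R}(\p_1 \zeta_0, \p_1 \eta)]\, \dt \N$, and (iv)~$-S_\A(p,u)\,\dt\N$. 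I would treat each separately.

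For each term I would combine H\"older's inequality, the standard trace estimate $\|w\|_{L^r(\Sigma)} \lesssim \|w\|_{H^1(\Omega)}$ valid for every $1 \le r < \infty$, and the $L^q$ bounds from Theorem \ref{catalog_energy}. Concretely, for the commutator term one would pair $\|w\|_{L^{1/\ep_+}(\Sigma)}$ against $\|\dt\bar\eta\|_{W^{1,\infty}} \|\nabla u\|_{L^{1/(1-\ep_+)}(\Sigma)}$, yielding $\|w\|_{H^1}\,\E$. The stress term $-S_\A(p,u)\dt\N$ is handled similarly by bounding $(\|p\|_{L^{1/(1-\ep_+)}(\Sigma)} + \|\nabla u\|_{L^{1/(1-\ep_+)}(\Sigma)})\,\|\p_1\dt\eta\|_{L^\infty}$ against $\|w\|_{L^{1/\ep_+}(\Sigma)}$, giving $\|w\|_{H^1}\,\E$. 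For the curvature-type term one pairs $\|w\|_{L^{1/\ep_+}(\Sigma)}$ with $\|\eta\|_{W^{2,1/(1-\ep_+)}} \|\p_1\dt\eta\|_{L^\infty}$, again producing $\|w\|_{H^1}\,\E$. The only place where the cubic power $\E^{3/2}$ should enter is through the $\mathcal{R}$ term: expanding $\p_1[\mathcal{R}(\p_1\zeta_0, \p_1\eta)]$ with Proposition \ref{R_prop} produces summands of schematic form $\p_1\eta\,\p_1^2\eta$ and $(\p_1\eta)^2$ multiplied by $\p_1\dt\eta$, and distributing three factors of energy-bounded quantities against the single remaining boundary factor yields contributions of order $\E^{3/2}$.

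The ordering convention (as described at the start of the proof of Proposition \ref{nid_f1}) keeps the bookkeeping tidy: one reads off the implicit $L^q$ exponents in each estimate from Theorem \ref{catalog_energy}, with the only real constraint being that the exponents sum correctly in the H\"older inequality. Finally, summing the individual contributions produces the stated bound $\|w\|_{H^1}(\E + \E^{3/2})$.

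The main (quite mild) obstacle is purely combinatorial: verifying for each term that a compatible triple of integrability exponents exists given \eqref{kappa_ep_def}. In every case the freedom provided by the embedding $H^1(\Omega) \hookrightarrow L^r(\Sigma)$ for all $r < \infty$ is enough to absorb any leftover fractional exponent, so no genuine analytic difficulty arises and the argument is a cleaner version of its dissipative counterpart Proposition \ref{nid_f4}.
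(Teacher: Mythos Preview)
Your proposal is correct and follows essentially the same approach as the paper's own proof: the same four-term decomposition of $F^4$ from \eqref{dt1_f4}, the same use of H\"older's inequality, trace estimates, and the energetic catalog Theorem \ref{catalog_energy}, and the same identification of the $\mathcal{R}$ term as the source of the cubic contribution $\E^{3/2}$ via Proposition \ref{R_prop}. The specific exponent pairings you propose for each term match those in the paper.
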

\begin{proof}
The terms $F^4$, as defined by \eqref{dt1_f4}, contains four separate terms.  We will estimate each of these, employing H\"{o}lder's inequality, trace estimates, and Theorem \ref{catalog_energy} repeatedly and without explicit reference.

\textbf{Term: $\mu \sg_{\dt \A} u \N $. } We bound 
\begin{equation}
\abs{ \int_{-\ell}^\ell w \cdot (\mu \sg_{\dt \A} u \N)  } \ls \int_{-\ell}^\ell \abs{w} \abs{\dt \nab \eta} \abs{\nab u} \ls \norm{w}_{L^{1/\ep_+}(\Sigma)} \norm{\dt \eta}_{W^{1,\infty}} \norm{\nab u}_{L^{1/(1-\ep_+)}(\Sigma)} \ls \norm{w}_{H^1} \E.
\end{equation}

\textbf{Term: $\left(  g\eta  - \sigma \p_1 \left(\frac{\p_1 \eta}{(1+\abs{\p_1 \zeta_0}^2)^{3/2}}\right)\right) \dt\N$. }  We estimate 
\begin{multline}
\abs{\int_{-\ell}^\ell w \cdot \left(  g\eta  - \sigma \p_1 \left(\frac{\p_1 \eta}{(1+\abs{\p_1 \zeta_0}^2)^{3/2}}\right)\right) \dt\N}  \ls \int_{-\ell}^\ell \abs{w} \left(\abs{\eta} + \abs{\p_1 \eta} + \abs{\p_1^2 \eta} \right) \abs{ \dt \p_1 \eta} \\
\ls \norm{w}_{L^{1/\ep_+}(\Sigma)} \norm{\eta}_{W^{2,1/(1-\ep_+)}} \norm{\dt \eta}_{W^{1,\infty}} \ls \norm{w}_{H^1} \E.
\end{multline}

\textbf{Term: $-\sigma \p_1 [\mathcal{R}(\p_1 \zeta_0,\p_1 \eta)] \dt \N$. }  We first expand 
\begin{equation}
 \p_1 [\mathcal{R}(\p_1 \zeta_0,\p_1 \eta)]  = \p_y \mathcal{R}(\p_1 \zeta_0, \p_1 \eta) \p_1^2 \zeta_0 + \p_z\mathcal{R}(\p_1 \zeta_0,\p_1 \eta) \p_1^2 \eta
\end{equation}
and then use Proposition \ref{R_prop} in order to estimate 
\begin{equation}
\abs{\int_{-\ell}^\ell w \cdot(-\sigma \p_1 [\mathcal{R}(\p_1 \zeta_0,\p_1 \eta)] \dt \N) }  \ls
\int_{-\ell}^\ell \abs{w} \abs{\p_1 \eta}^2 \abs{\p_1 \dt \eta} + \int_{-\ell}^\ell \abs{w}  \abs{\p_1 \eta} \abs{\p_1^2 \eta} \abs{\p_1 \dt \eta} =:I +II.
\end{equation}
Then we bound 
\begin{equation}
 I \ls \norm{w}_{L^{2}(\Sigma)} \ns{\p_1 \eta}_{L^\infty} \norm{\p_1 \dt \eta}_{L^2} \ls \norm{w}_{H^1} \E
\end{equation}
and 
\begin{equation}
 II \ls \norm{w}_{L^{1/\ep_+}(\Sigma)} \norm{\p_1 \eta}_{L^\infty} \norm{\p_1^2 \eta}_{L^{1/(1-\ep_+)}} \norm{\p_1 \dt \eta}_{L^\infty}  \ls \norm{w}_{H^1} \E^{3/2}.
\end{equation}
Upon combining these, we find that this term can be estimated as stated.

\textbf{Term: $ - S_{\A}(p,u) \dt \N$. }  We bound 
\begin{multline}
\abs{\int_{-\ell}^\ell w \cdot ( - S_{\A}(p,u) \dt \N)} \ls \int_{-\ell}^\ell \abs{w} \left(\abs{p} + \abs{\nab u} \right)\abs{\dt \p_1 \eta} \\
\ls \norm{w}_{L^{1/\ep_+}(\Sigma)} \left(\norm{p}_{L^{1/(1-\ep_+)}(\Sigma) }+ \norm{\nab u}_{L^{1/(1-\ep_+)}(\Sigma) } \right) \norm{\p_1 \dt \eta}_{L^\infty} \ls \norm{w}_{H^1} \E.
\end{multline}
\end{proof}

Next we study the term $F^5$.

\begin{prop}\label{nie_f5}
Suppose that $F^5$ is given by  \eqref{dt1_f5}.  Then
\begin{equation} 
 \abs{ \int_{\Sigma_s}  J(w \cdot \tau)F^5 } \ls \norm{w}_{H^1} \E
\end{equation}
for every $w \in H^1(\Omega)$.
\end{prop}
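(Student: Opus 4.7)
The plan is to follow the same template used in Propositions \ref{nie_f1} and \ref{nie_f4}, exploiting that \eqref{dt1_f5} has a much simpler structure than its two-derivative counterpart \eqref{dt2_f5}. Since differentiating the Navier-slip condition $(S_{\A}(p,u)\nu - \beta u)\cdot \tau = 0$ once produces only the commutator term arising from the time derivative hitting $\A$, I expect $F^5$ in this case to consist essentially of $\mu \sg_{\dt \A} u \nu \cdot \tau$. Accordingly, the whole proof reduces to a single application of H\"older's inequality, trace theory for $H^1(\Omega)$, and the energy catalog in Theorem \ref{catalog_energy}.

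The key step is the estimate
\begin{equation}
\abs{\int_{\Sigma_s} J(w\cdot\tau)\,\mu\,\sg_{\dt\A} u\,\nu\cdot\tau}
\ls \int_{\Sigma_s} \abs{w}\,\abs{\dt\A}\,\abs{\nab u},
\end{equation}
which I would split via H\"older as
\begin{equation}
\norm{w}_{L^{1/\ep_+}(\Sigma_s)} \,\norm{\dt\bar\eta}_{W^{1,\infty}(\Sigma_s)}\, \norm{\nab u}_{L^{1/(1-\ep_+)}(\Sigma_s)}.
\end{equation}
Applying the standard $H^1(\Omega)\hookrightarrow L^{q}(\Sigma_s)$ trace embedding to the first factor and reading off the bounds $\norm{\dt\bar\eta}_{W^{1,\infty}(\Sigma_s)}\ls \sqrt{\E}$ and $\norm{\nab u}_{L^{1/(1-\ep_+)}(\Sigma_s)}\ls \sqrt{\E}$ from Theorem \ref{catalog_energy} yields the announced bound $\norm{w}_{H^1}\E$. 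The use of the energy entries (as opposed to the dissipation entries used in Proposition \ref{nid_f5}) is what forces the homogeneous $\E$ — rather than the $\sqrt{\E}\sqrt{\D}$ — on the right-hand side, and the fact that only one factor of $\dt\bar\eta$ appears (instead of the $\dt^2\bar\eta$ that required the $\low$-parameter interpolation in the two-derivative case) is what allows us to avoid the subtler exponent juggling and simply use $\ep_+$ twice.

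I do not anticipate any obstacle: the single term in \eqref{dt1_f5} is strictly simpler than either of the two terms handled in Proposition \ref{nid_f5}, and the energy-based versions of the $L^q$ bounds in Theorem \ref{catalog_energy} are strong enough to absorb both the $\dt\A$ factor and the $\nab u$ factor with room to spare. If the definition \eqref{dt1_f5} turns out to contain any additional lower-order terms (for instance, terms proportional to $\dt\N$ or to products of $\dt\bar\eta$ and $u$ on $\Sigma_s$), each one can be dispatched by the identical scheme, possibly producing an extra factor of $\sqrt{\E}$ and hence contributing at worst $\norm{w}_{H^1}\E^{3/2}$, which is still dominated by $\norm{w}_{H^1}\E$ under the standing smallness assumption $\E\le\gamma^2<1$.
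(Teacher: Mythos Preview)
Your proposal is correct and follows essentially the same approach as the paper: both identify $F^5 = \mu \sg_{\dt \A} u \nu \cdot \tau$ as the single term, bound the integrand by $\abs{w}\abs{\dt\A}\abs{\nab u}$ on $\Sigma_s$, and apply H\"older with the splitting $\norm{w}_{L^{1/\ep_+}(\Sigma_s)} \norm{\dt\bar\eta}_{W^{1,\infty}} \norm{\nab u}_{L^{1/(1-\ep_+)}(\Sigma_s)}$ together with trace theory and Theorem \ref{catalog_energy}.
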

\begin{proof}
Using trace estimates and the Sobolev embeddings together with Theorem \ref{catalog_energy}, we bound 
\begin{multline} 
 \abs{ \int_{\Sigma_s}  J(w \cdot \tau) ( \mu \sg_{\dt \A} u \nu \cdot \tau )} \ls 
\int_{\Sigma_s} \abs{w} \abs{\dt \A} \abs{\nab u}  \ls \norm{w}_{L^{1/\ep_+}(\Sigma_s)} \norm{\dt \bar{\eta}}_{W^{1,\infty}} \norm{\nab u}_{L^{1/(1-\ep_+)}(\Sigma_s)} \\
\ls \norm{w}_{H^1} \E
\end{multline}
This is the stated bound.
\end{proof}

We combine the above estimates into the following theorem, which is the analog of Theorem \ref{nid_v_est}.

\begin{thm}\label{nie_v_est}
Consider the functional $H^1(\Omega) \ni w\mapsto \br{\mathcal{F},w} \in \mathbb{R}$ defined by 
\begin{equation}
\br{\mathcal{F},w} = \int_\Omega F^1 \cdot w J 
-  \int_{-\ell}^\ell  F^4 \cdot w 
- \int_{\Sigma_s}  J (w \cdot \tau)F^5, 
\end{equation} 
where $F^1,F^4,F^5$ are defined via \eqref{dt1_f1}, \eqref{dt1_f4}, and \eqref{dt1_f5}, respectively.  Then 
\begin{equation}
\abs{\br{\mathcal{F},w}} \ls  \norm{w}_{H^1}  (\E+ \E^{3/2}) 
\end{equation}
for all $w \in H^1(\Omega)$.
\end{thm}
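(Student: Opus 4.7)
The plan is to observe that Theorem \ref{nie_v_est} is a direct synthesis of the three propositions immediately preceding it, and so the proof reduces to little more than invoking them and summing the resulting bounds. Specifically, by the linearity of the functional $w \mapsto \br{\mathcal{F},w}$ and the triangle inequality, we have
\begin{equation}
 \abs{\br{\mathcal{F},w}} \le \abs{\int_\Omega F^1 \cdot w\, J} + \abs{\int_{-\ell}^\ell w \cdot F^4} + \abs{\int_{\Sigma_s} J(w\cdot\tau) F^5},
\end{equation}
so it suffices to control each term separately.

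The first step is to apply Proposition \ref{nie_f1} to the bulk integral to obtain
$
 \abs{\int_\Omega F^1\cdot w J} \ls \norm{w}_{H^1}(\E + \E^{3/2}).
$
Next, I would apply Proposition \ref{nie_f4} to the free boundary integral to obtain
$
 \abs{\int_{-\ell}^\ell w\cdot F^4} \ls \norm{w}_{H^1}(\E + \E^{3/2}).
$
Finally, Proposition \ref{nie_f5} yields the bound on the solid boundary term,
$
 \abs{\int_{\Sigma_s} J(w\cdot\tau) F^5} \ls \norm{w}_{H^1}\E \ls \norm{w}_{H^1}(\E + \E^{3/2}).
$
Summing these three bounds and absorbing constants into the universal $\ls$ gives the claimed estimate $\abs{\br{\mathcal{F},w}}\ls \norm{w}_{H^1}(\E+\E^{3/2})$.

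I do not anticipate any obstacle: the propositions are already tailored to the three pieces of $\br{\mathcal{F},w}$, each uses the same notational conventions (the $F^i$ are given by \eqref{dt1_f1}, \eqref{dt1_f4}, \eqref{dt1_f5}), and each produces a bound with the requisite structure $\norm{w}_{H^1}\cdot(\text{polynomial in }\sqrt{\E})$. Since the sum $\E + \E^{3/2}$ dominates $\E$ for $\E \ge 0$, no rebalancing of powers is needed, and the smallness hypothesis $\E \le \gamma^2 < 1$ carried through the whole section ensures all applications of the propositions are legitimate. Thus the proof is essentially a one-line invocation.
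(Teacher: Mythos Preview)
Your proof is correct and is essentially identical to the paper's own proof, which simply states that the result follows immediately from Propositions \ref{nie_f1}, \ref{nie_f4}, and \ref{nie_f5}.
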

\begin{proof}
This follows immediately from Propositions \ref{nie_f1}, \ref{nie_f4}, and \ref{nie_f5}. 
\end{proof}

\subsection{General interaction functional with free surface terms }

Next we turn our attention to a general estimate involving the free surface and $F^3$.  

\begin{thm}\label{nie_ST}
Suppose that $F^3$ is given by \eqref{dt1_f3}.  Then we have the estimate
\begin{equation}
\abs{\int_{-\ell}^\ell g \dt \eta (w \cdot \N) - \sigma \p_1 \left( \frac{\p_1 \dt \eta }{(1+\abs{\p_1 \zeta_0}^2)^{3/2}} +F^3\right)w\cdot  \N } \ls \left(1+ \sqrt{\E}  \right) \norm{\dt \eta}_{H^{3/2 + (\ep_- - \low)/2}} \norm{w}_{H^1}
\end{equation}
for every $w \in H^1(\Omega)$.
\end{thm}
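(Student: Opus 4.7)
The plan is to interpret the integrand as a continuous duality pairing on the interval $(-\ell,\ell)$, exploiting the tiny regularity surplus $(\ep_- - \low)/2>0$ of $\dt\eta$ above the critical $H^{3/2}$ threshold. This surplus is precisely what compensates for the critical $H^{1/2}$ trace regularity of $w\cdot\N$ on $\Sigma$. Because IBP to boundary values at $\pm\ell$ is forbidden (the $H^{1/2}$ trace of $w$ from $H^1(\Omega)$ has no well-defined point values at the contact points), I will avoid IBP entirely and work directly at the level of the Sobolev duality $H^{-1/2+(\ep_--\low)/2}=(H^{1/2-(\ep_--\low)/2}_0)^*$, which is valid since $1/2-(\ep_--\low)/2<1/2$ (in this regime $H^{1/2-(\ep_--\low)/2}=H^{1/2-(\ep_--\low)/2}_0$, with no boundary-value obstruction to the dual pairing).

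The execution proceeds in three steps. First, I split the integrand into three pieces: the zeroth-order piece $g\dt\eta(w\cdot\N)$, the principal second-order piece $-\sigma\p_1\!\left(\frac{\p_1\dt\eta}{(1+\abs{\p_1\zeta_0}^2)^{3/2}}\right)(w\cdot\N)$, and the nonlinear correction $-\sigma\p_1 F^3\cdot(w\cdot\N)$. For the zeroth-order piece, a direct Cauchy--Schwarz and trace estimate $\norm{w\cdot\N}_{L^2(\Sigma)}\ls\norm{w}_{H^1}$ gives the bound $\ls\norm{\dt\eta}_{L^2}\norm{w}_{H^1}\le\norm{\dt\eta}_{H^{3/2+(\ep_--\low)/2}}\norm{w}_{H^1}$. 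For the principal piece, since $(1+\abs{\p_1\zeta_0}^2)^{-3/2}\in C^\infty([-\ell,\ell])$ acts as a multiplier, the function $\frac{\p_1\dt\eta}{(1+\abs{\p_1\zeta_0}^2)^{3/2}}$ belongs to $H^{1/2+(\ep_--\low)/2}$ with norm controlled by $\norm{\dt\eta}_{H^{3/2+(\ep_--\low)/2}}$, and hence its distributional derivative defines an element of $H^{-1/2+(\ep_--\low)/2}$. Pairing this with $w\cdot\N\in H^{1/2}(\Sigma)\hookrightarrow H^{1/2-(\ep_--\low)/2}$ and using continuity of the duality yields the desired bound.

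For the nonlinear correction term, by \eqref{dt1_f3} one has $F^3=\p_z\mathcal{R}(\p_1\zeta_0,\p_1\eta)\,\p_1\dt\eta$. Proposition \ref{R_prop} provides $\abs{\p_z\mathcal{R}(\p_1\zeta_0,\p_1\eta)}\ls\abs{\p_1\eta}$ and analogous control on its derivative, so schematically $F^3$ behaves like $g(\p_1\zeta_0,\p_1\eta)\,\p_1\eta\,\p_1\dt\eta$ with $g$ smooth. I then invoke a product estimate in the Banach algebra $H^{1/2+(\ep_--\low)/2}((-\ell,\ell))$ (which is an algebra since $1/2+(\ep_--\low)/2>1/2$), together with the Sobolev embedding $W^{3-1/q_+,q_+}\hookrightarrow H^{3/2+(\ep_--\low)/2}$ valid under \eqref{kappa_ep_def} (one checks the necessary inequality $2-(\ep_--\low)/2\ge 2/q_+=2-\ep_+$, which follows from $\ep_+>(\ep_--\low)/2$), to obtain $\norm{F^3}_{H^{1/2+(\ep_--\low)/2}}\ls\sqrt{\E}\,\norm{\dt\eta}_{H^{3/2+(\ep_--\low)/2}}$. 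The same duality argument as in the principal piece then absorbs this into the target estimate with the $\sqrt{\E}$ prefactor.

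The main obstacle, I expect, will be the rigorous handling of the duality without accidentally producing boundary terms at $\pm\ell$. The subtlety is that the two pieces $g\dt\eta(w\cdot\N)$ and $-\sigma\p_1(\cdots)(w\cdot\N)$ cannot individually be ``completed'' to a bilinear form $(\dt\eta,w\cdot\N)_{1,\Sigma}$ without IBP, yet IBP would demand trace values of $w\cdot\N$ at the contact points that simply do not exist from the $H^{1/2}$-regularity available. The resolution is that each $\p_1(\cdots)$ expression is treated as a single distribution in $H^{-1/2+(\ep_--\low)/2}$ paired against $w\cdot\N$ in its subcritical hull $H^{1/2-(\ep_--\low)/2}$; the critical-to-subcritical sacrifice of $(\ep_--\low)/2$ derivatives on the test side is exactly balanced by the surplus on the $\dt\eta$ side. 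The fractional product/composition estimates for $F^3$ will be handled via the tools in Appendix \ref{sec_analysis_tools}.
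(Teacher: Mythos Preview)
Your proposal is correct and follows essentially the same approach as the paper. The paper sets $s=1-(\ep_--\low)$ so that $1-s/2=1/2+(\ep_--\low)/2$ and $s/2=1/2-(\ep_--\low)/2$, then invokes Proposition~\ref{frac_IBP_prop} (which is precisely the duality $H^{-s/2}=(H^{s/2}_0)^\ast=(H^{s/2})^\ast$ you describe, valid since $0<s<1$), the algebra property of $H^{1-s/2}=H^{1/2+(\ep_--\low)/2}$ together with Proposition~\ref{frac_comp} for $F^3$, and the embedding $W^{3-1/q_+,q_+}\hookrightarrow H^{3/2+\ep_+}$ to extract the $\sqrt{\E}$ factor; the only detail you glossed over is the product estimate for $\norm{w\cdot\N}_{H^{s/2}}$, which the paper handles via Proposition~\ref{supercrit_prod} using $\N=\N_0-(\p_1\eta,0)$ and $\p_1\eta\in H^{1/2+\ep_+}$.
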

\begin{proof}
The first term is easy to deal with:
\begin{equation}
 \abs{\int_{-\ell}^\ell g \dt \eta w \cdot \N} \ls \int_{-\ell}^\ell \abs{\dt \eta}\abs{w} \ls \norm{\dt \eta}_{L^2} \norm{w}_{L^2(\Sigma)} \ls \norm{\dt \eta}_{L^2} \norm{w}_{H^1(\Omega)}.
\end{equation}
The second and third terms require more work.

Let $s = 1 - (\ep_- - \low) \in (0,1)$, which requires that 
\begin{equation}\label{nie_ST_7}
 2 - \frac{s}{2} = \frac{3}{2} + \frac{\ep_- - \low}{2}.
\end{equation}
Using this and Proposition \ref{frac_IBP_prop}  we may estimate 
\begin{equation}
 \abs{\int_{-\ell}^\ell  \sigma \p_1 \left( \frac{\p_1 \dt \eta }{(1+\abs{\p_1 \zeta_0}^2)^{3/2}}  + F^3\right)w\cdot  \N } \ls \left(\norm{ \frac{\p_1 \dt \eta }{(1+\abs{\p_1 \zeta_0}^2)^{3/2}}}_{H^{1-s/2}} + \norm{F^3}_{H^{1-s/2}} \right) \norm{w \cdot \N}_{H^{s/2}}. 
\end{equation}
Since $\zeta_0$ is smooth, Proposition \ref{supercrit_prod} shows that 
\begin{equation}\label{nie_ST_1}
\norm{ \frac{\p_1 \dt \eta }{(1+\abs{\p_1 \zeta_0}^2)^{3/2}}}_{H^{1-s/2}} \ls \norm{ \p_1 \dt \eta }_{H^{1-s/2}} \ls \norm{\dt \eta }_{H^{2-s/2}} = \norm{\dt \eta}_{H^{3/2 + (\ep_- - \low)/2}}.
\end{equation}

To handle the term
\begin{equation}
 F^3 = \dt [\mathcal{R}(\p_1 \zeta_0,\p_1 \eta)] = \p_z \mathcal{R}(\p_1 \zeta_0,\p_1 \eta) \p_1 \dt \eta
\end{equation}
 we first use the fact that $H^{1-s/2}((-\ell,\ell))$ is an algebra to bound 
\begin{equation}
 \norm{F^3}_{H^{1-s/2}} \ls  \norm{\p_z \mathcal{R}(\p_1 \zeta_0,\p_1 \eta)}_{H^{1-s/2}} \norm{\p_1 \dt \eta}_{H^{1-s/2}}
\end{equation}
and then we use Proposition \ref{frac_comp} with $f(x,z) = \p_z \mathcal{R}(\p_1 \zeta_0(x),z)$ to estimate 
\begin{equation}
  \norm{\p_z \mathcal{R}(\p_1 \zeta_0,\p_1 \eta)}_{H^{1-s/2}} \ls \norm{\p_1 \eta}_{H^{1-s/2}},
\end{equation}
which yields (again using \eqref{nie_ST_7})
\begin{equation}\label{nie_ST_2}
\norm{F^3}_{H^{1-s/2}} \ls \norm{\eta}_{H^{3/2 + (\ep_- - \low)/2}} \norm{\dt \eta}_{H^{3/2 + (\ep_- - \low)/2}}.
\end{equation}
However,  
\begin{equation}
 \frac{3}{2} + \frac{\ep_- - \low}{2} \le \frac{3}{2} + \ep_+
\end{equation}
and 
\begin{equation}
 \frac{1}{2} = \frac{2-\ep_+}{2} - \frac{1}{1} \left( 2 + \frac{\ep_+}{2} - \frac{3}{2} - \ep_+ \right) = \frac{1}{q_+} - \frac{1}{1}\left(3 - \frac{1}{q_+} - \frac{3}{2}-\ep_+ \right),
\end{equation}
so we have the embedding 
\begin{equation}\label{nie_ST_3}
 W^{3-1/q_+,q_+}((-\ell,\ell)) \hookrightarrow H^{3/2 + \ep_+}((-\ell,\ell)).
\end{equation}
Then \eqref{nie_ST_2} and \eqref{nie_ST_3} tell us that 
\begin{equation}\label{nie_ST_4}
\norm{F^3}_{H^{1-s/2}} \ls \sqrt{\E} \norm{\dt \eta}_{H^{3/2 + (\ep_- - \low)/2}}. 
\end{equation}

Next we use Proposition \ref{supercrit_prod} (with $1/2 + \ep_+ > \max\{1/2, s/2\}$), the usual trace estimate, the embedding \eqref{nie_ST_3}, and the bound $\E \le 1$ to see that 
\begin{equation}
 \norm{w \cdot \N}_{H^{s/2}} \ls \norm{\N}_{H^{1/2 + \ep_+}} \norm{w}_{H^{s/2}} \ls \left(1 + \norm{\eta}_{H^{3/2 + \ep_+}}  \right)\norm{w}_{H^{1/2}(\Sigma)} \ls \left( 1+ \sqrt{\E}\right) \norm{w}_{H^1(\Omega)} \ls \norm{w}_{H^1}.
\end{equation}
Combining this with \eqref{nie_ST_1} and \eqref{nie_ST_4},  we conclude that 
\begin{equation}
 \abs{\int_{-\ell}^\ell  \sigma \p_1 \left( \frac{\p_1 \dt \eta }{(1+\abs{\p_1 \zeta_0}^2)^{3/2}}  + F^3\right)w\cdot  \N } \ls  (1+\sqrt{\E} ) \norm{\dt \eta}_{H^{3/2 + (\ep_- - \low)/2}} \norm{w}_{H^1},
\end{equation}
which completes the proof.

\end{proof}

\section{Nonlinear estimates III: elliptic estimate terms}\label{sec_nl_elliptic}

In this section we complete our study of the nonlinear terms coming from \eqref{ns_geometric} by turning our attention to elliptic estimates.  More precisely, we study the terms appearing in applications of Theorem \ref{A_stokes_stress_solve}.  As in the previous two sections, we assume throughout this section that a solution to \eqref{ns_geometric} exists on the time horizon $(0,T)$ for $0 < T \le \infty$ and obeys the small-energy estimate 
\begin{equation}
 \sup_{0\le t < T} \E(t) \le \gamma^2 < 1,
\end{equation}
where $\gamma \in (0,1)$ is as in Lemma \ref{eta_small}.  This means that the estimates of Lemma \ref{eta_small} are available for use, and we will use them often without explicit reference.

\subsection{No time derivatives}

We begin with the elliptic estimates we will need for the problem \eqref{ns_geometric}, i.e. when no temporal derivatives are applied.  When we compare \eqref{ns_geometric} and \eqref{A_stokes_stress} we get
\begin{equation}\label{ne0_Gform}
\begin{split}
 G^1 & =  -\dt u +\dt \bar{\eta} \frac{\phi}{\zeta_0} K \p_2  u - u \cdot \naba  u, \; 
 G^2 = 0,  \\ 
 G^3_+ &= \dt \eta /\abs{\N_0}, \; 
 G^3_- = 0, \; 
 G^4_+ =0, \; 
 G^4_- = 0  \\
 G^5 &= 0, \; 
 G^6 = \mathcal{R}(\p_1 \zeta_0,\p_1 \eta), \;
 G^7 = \low \dt  \eta \pm \mathcal{R}(\p_1 \zeta_0,\p_1 \eta).
\end{split}
\end{equation}
This dictates the form of the estimates we need. 
 
We begin with the bounds for $G^1$  in \eqref{ne0_Gform}.

\begin{prop}\label{ne0_g1}
We have the bound 
\begin{equation}
 \norm{-\dt u +\dt \bar{\eta} \frac{\phi}{\zeta_0} K \p_2  u - u \cdot \naba  u}_{L^{q_+}} \ls \norm{\dt u}_{L^2} + \sqrt{\E}\left(  \norm{u}_{L^2} + \norm{\dt \eta}_{H^1}\right) 
\end{equation}
\end{prop}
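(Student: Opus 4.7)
The plan is to estimate each of the three summands in $G^1$ separately via H\"older's inequality, the key principle being to spend the factor $\sqrt{\E}$ on the gradient factor $\nab u$ (via the catalog in Theorem \ref{catalog_energy}) so that the remaining zeroth-order factors match the explicit norms $\norm{u}_{L^2}$ or $\norm{\dt \eta}_{H^1}$ on the right-hand side.

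The first summand is essentially trivial: since $q_+ < 2$ and $\Omega$ is bounded, H\"older's inequality gives $\norm{\dt u}_{L^{q_+}} \ls \norm{\dt u}_{L^2}$ directly.

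For the middle summand $\dt \bar{\eta}\,\tfrac{\phi}{\zeta_0}\,K\,\p_2 u$, I would apply H\"older with exponents $\infty$ and $q_+$. The smooth coefficient $\phi/\zeta_0$ is uniformly bounded, and $\norm{K}_{L^\infty} \ls 1$ by Lemma \ref{eta_small} in the small-energy regime. The extension $\dt \bar{\eta}$ is handled by combining the Poisson extension estimates of Proposition \ref{poisson_prop} with the 2D Sobolev embedding $H^{3/2}(\Omega) \hookrightarrow L^\infty(\Omega)$, yielding $\norm{\dt \bar{\eta}}_{L^\infty(\Omega)} \ls \norm{\dt \eta}_{H^1}$. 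It remains to bound $\norm{\p_2 u}_{L^{q_+}}$; embedding into the sharper catalog space gives $\norm{\p_2 u}_{L^{q_+}} \ls \norm{\nab u}_{L^{2/(1-\ep_+)}} \ls \sqrt{\E}$ by Theorem \ref{catalog_energy}. Multiplying these contributions produces a bound by $\sqrt{\E}\,\norm{\dt \eta}_{H^1}$.

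For the convective summand $u \cdot \naba u$, I would use the pointwise bound $|u \cdot \naba u| \ls |u|\,|\nab u|$ (via $\norm{\A}_{L^\infty} \ls 1$ from Lemma \ref{eta_small}) and apply H\"older with exponents $2$ and $2/(1-\ep_+)$. These satisfy $\tfrac{1}{2} + \tfrac{1-\ep_+}{2} = \tfrac{1}{q_+}$ by the very definition of $q_+$ in \eqref{qalpha_def}, so I obtain $\norm{u\cdot \naba u}_{L^{q_+}} \ls \norm{u}_{L^2}\norm{\nab u}_{L^{2/(1-\ep_+)}} \ls \sqrt{\E}\,\norm{u}_{L^2}$ by Theorem \ref{catalog_energy}. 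Summing the three contributions produces the claimed inequality. No serious obstacle is anticipated: the estimate is a routine application of H\"older once the right pair of exponents is identified for each term, and the mild subtlety is simply the careful bookkeeping of which factor absorbs the catalog bound $\sqrt{\E}$.
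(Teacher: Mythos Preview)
Your proposal is correct and follows essentially the same approach as the paper: estimate each of the three summands separately via H\"older's inequality, using Theorem \ref{catalog_energy} to absorb the gradient factor into $\sqrt{\E}$. The only minor difference is in the H\"older split for the middle term---the paper pairs $\dt\bar\eta \in L^2$ with $\nab u \in L^{2/(1-\ep_+)}$ (using $\norm{\dt\bar\eta}_{L^2} \ls \norm{\dt\eta}_{H^1}$ via Proposition \ref{poisson_prop}), whereas you pair $\dt\bar\eta \in L^\infty$ with $\nab u \in L^{q_+}$---but both splits yield the same bound $\sqrt{\E}\,\norm{\dt\eta}_{H^1}$.
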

\begin{proof}
The bound $\norm{\dt u}_{L^{q_+}} \ls \norm{\dt u}_{L^2}$ follows from the fact that $q_+ < 2$.  Using H\"{o}lder's inequality and Theorem \ref{catalog_energy} we then bound
\begin{equation}
 \norm{ u \cdot \naba  u}_{L^{q_+}} \ls   \norm{ \abs{u} \abs{\nab u}}_{L^{q_+}} \ls \norm{\nab u}_{L^{2/(1-\ep_+)}} \norm{u}_{L^2} \ls \sqrt{\E} \norm{u}_{L^2},
\end{equation}
and (also using Proposition \ref{poisson_prop})
\begin{equation}
\norm{\dt \bar{\eta} \frac{\phi}{\zeta_0} K \p_2  u}_{L^{q_+}} \ls \norm{\abs{\dt \bar{\eta}} \abs{\nab   u}}_{L^{q_+}} \ls  \norm{\nab u}_{L^{2/(1-\ep_+)}} \norm{\dt \bar{\eta}}_{L^{2}} \ls \sqrt{\E} \norm{\dt \eta}_{H^1}.
\end{equation}
The result follows by combining these.
\end{proof}

We continue with the bounds for $G^3$  in \eqref{ne0_Gform}.

\begin{prop}\label{ne0_g3}
Let $\N_0$ be given by \eqref{N0_def}.  Then we have the estimate 
\begin{equation}
 \norm{\dt \eta /\abs{\N_0}}_{W^{2-1/q_+,q_+}} \ls \norm{\dt \eta}_{H^{3/2-\low}} .
\end{equation}
\end{prop}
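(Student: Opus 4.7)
The idea is to factor the inequality into (i) a multiplication estimate by the smooth function $1/|\N_0|$ and (ii) a pure Sobolev embedding $H^{3/2-\low}((-\ell,\ell))\hookrightarrow W^{2-1/q_+,q_+}((-\ell,\ell))$. Since $\zeta_0\in C^\infty([-\ell,\ell])$ by Theorem \ref{zeta0_wp}, the function $1/|\N_0|=(1+(\p_1\zeta_0)^2)^{-1/2}$ is smooth and bounded (with all derivatives bounded) on the compact interval. Consequently a standard product estimate for a smooth multiplier in a fractional Sobolev space yields
\begin{equation*}
 \norm{\dt\eta/|\N_0|}_{W^{2-1/q_+,q_+}}\ls \norm{\dt\eta}_{W^{2-1/q_+,q_+}}.
\end{equation*}

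Next I would carry out the parameter bookkeeping for the Sobolev embedding. Using $q_+=2/(2-\ep_+)$ from \eqref{qpm_def} we rewrite the target regularity as $2-1/q_+=1+\ep_+/2$. The embedding chain on the bounded one-dimensional interval reads
\begin{equation*}
 H^{3/2-\low}((-\ell,\ell))\hookrightarrow H^{1+\ep_+/2}((-\ell,\ell))= W^{1+\ep_+/2,2}((-\ell,\ell))\hookrightarrow W^{1+\ep_+/2,q_+}((-\ell,\ell)),
\end{equation*}
where the first inclusion holds provided $3/2-\low\ge 1+\ep_+/2$, i.e.\ $\ep_++2\low\le 1$, and the second inclusion is the trivial $L^2\hookrightarrow L^{q_+}$ embedding on a finite-measure domain promoted to fractional Sobolev spaces (valid since $q_+<2$).

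Finally I check the constraint $\ep_++2\low\le 1$: the assumption $\low<(1-\ep_+)/2$ from \eqref{kappa_ep_def} gives $2\low<1-\ep_+$, so the required inequality holds strictly. Combining the multiplication bound with the embedding chain yields the claim. There is essentially no obstacle; the only point that requires attention is verifying the regularity budget against \eqref{kappa_ep_def}, which the parameters were designed precisely to accommodate.
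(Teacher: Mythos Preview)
Your proof is correct and follows essentially the same approach as the paper: first strip off the smooth factor $1/|\N_0|$, then use the parameter constraint $2\low+\ep_+<1$ from \eqref{kappa_ep_def} to obtain the embedding $H^{3/2-\low}((-\ell,\ell))\hookrightarrow W^{2-1/q_+,q_+}((-\ell,\ell))$. The only cosmetic difference is that the paper routes the embedding through the intermediate space $W^{2-1/q_+,\,2/(2\low+\ep_+)}$ via the fractional Sobolev embedding, whereas you use the simpler chain $H^{3/2-\low}\hookrightarrow H^{1+\ep_+/2}\hookrightarrow W^{1+\ep_+/2,q_+}$; both are equally valid.
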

\begin{proof}
First note that
\begin{equation}
 \frac{1}{\abs{\N_0}} = \frac{1}{\sqrt{1 + \abs{ \p_1 \zeta_0}^2}}
\end{equation}
is smooth, and we may thus bound 
\begin{equation}
 \norm{\dt \eta /\abs{\N_0}}_{W^{2-1/q_+,q_+}} \ls  \norm{\dt \eta }_{W^{2-1/q_+,q_+}}. 
\end{equation}

Next note that \eqref{kappa_ep_def} implies that $2\low + \ep_+ < 1$, so 
\begin{equation}
 2 - \frac{1}{q_+} = 2 - \frac{2 - \ep_+}{2} = 1 + \frac{\ep_+}{2}  \le \frac{3}{2} - \low
\end{equation}
and 
\begin{equation}
\frac{1}{q_+} = \frac{2-\ep_+}{2} \ge \frac{2\low + \ep_+}{2} = \hal - \frac{1}{1} \left(\frac{3}{2} - \low -1 - \frac{\ep_+}{2}  \right).
\end{equation}
These parameter bounds and the Sobolev embeddings show that
\begin{equation}
H^{3/2-\low}((-\ell,\ell)) \hookrightarrow W^{2-1/q_+, 2/(2\low + \ep_+)}((-\ell,\ell))  \hookrightarrow  W^{2-1/q_+,q_+}((-\ell,\ell)).
\end{equation}
This allows us to estimate 
\begin{equation}
 \norm{\dt^2 \eta}_{W^{2-1/q_+,q_+}} \ls \norm{\dt^2 \eta}_{H^{3/2 - \low}},
\end{equation}
and the result follows by combining these bounds.

\end{proof}

Our next result records the bounds for $G^6$  in \eqref{ne0_Gform}.

\begin{prop}\label{ne0_g6}
Let $\mathcal{R}$ be as defined in \eqref{R_def}.  Then we have the estimate
\begin{equation}
 \norm{\p_1 [\mathcal{R}(\p_1 \zeta_0,\p_1 \eta)]}_{W^{1-1/q_+,q_+}} \ls \sqrt{\E} \norm{\eta}_{W^{3 - 1/q_+,q_+}}
\end{equation}
\end{prop}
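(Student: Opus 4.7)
The argument parallels the treatment of $F^3$ in Theorem \ref{nie_ST}, but now must be carried out in the $L^{q_+}$-based fractional space rather than a Hilbert space. I would first apply the chain rule to write
\begin{equation*}
\p_1[\mathcal{R}(\p_1 \zeta_0, \p_1 \eta)] = \p_y \mathcal{R}(\p_1 \zeta_0, \p_1 \eta)\, \p_1^2 \zeta_0 + \p_z \mathcal{R}(\p_1 \zeta_0, \p_1 \eta)\, \p_1^2 \eta.
\end{equation*}
Because $\mathcal{R}$ is the second-order Taylor remainder in $z$, $\p_y \mathcal{R}$ and $\p_z \mathcal{R}$ vanish at $z = 0$ to orders $2$ and $1$, respectively; Proposition \ref{R_prop} then supplies smooth functions $\mathcal{S}_1, \mathcal{S}_2 : \R^2 \to \R$ with $\p_z \mathcal{R}(y,z) = z\,\mathcal{S}_1(y,z)$ and $\p_y \mathcal{R}(y,z) = z^2\,\mathcal{S}_2(y,z)$. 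The quantity to estimate is therefore
\begin{equation*}
(\p_1 \eta)^2\, \mathcal{S}_2(\p_1 \zeta_0, \p_1 \eta)\, \p_1^2 \zeta_0 + \p_1 \eta\, \mathcal{S}_1(\p_1 \zeta_0, \p_1 \eta)\, \p_1^2 \eta,
\end{equation*}
in which every summand carries at least one explicit factor of $\p_1 \eta$ destined to supply the $\sqrt{\E}$ smallness.

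Next, I would invoke the one-dimensional Sobolev embedding $W^{3-1/q_+, q_+}((-\ell,\ell)) \hookrightarrow C^{1+\ep_+}((-\ell,\ell))$, which holds because $(3-1/q_+) - 1/q_+ = 1 + \ep_+ > 0$, together with Theorem \ref{catalog_energy}, to obtain $\norm{\p_1 \eta}_{C^{\ep_+}} \ls \sqrt{\E}$. With $s_0 := 1 - 1/q_+ = \ep_+/2 \le \ep_+$, the composition estimate (Proposition \ref{frac_comp}) shows that $\mathcal{S}_j(\p_1 \zeta_0, \p_1 \eta) \in W^{s_0, q_+} \cap W^{s_0, \infty}$ with norm controlled by a universal constant, and the Moser-type product estimate (Proposition \ref{supercrit_prod}) furnishes a bound of the form
\begin{equation*}
\norm{fg}_{W^{s_0, q_+}} \ls \norm{f}_{W^{s_0, \infty}}\, \norm{g}_{W^{s_0, q_+}}.
\end{equation*}
Applied with $f = \p_1 \eta\, \mathcal{S}_1(\p_1 \zeta_0, \p_1 \eta)$ and $g = \p_1^2 \eta$, this handles the second summand, producing $\sqrt{\E}$ from the $L^\infty$-based factor and $\norm{\eta}_{W^{3-1/q_+, q_+}}$ from $g$. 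A similar argument, with one $\p_1 \eta$ split off as the $W^{s_0,\infty}$ factor and the other $\p_1 \eta\, \mathcal{S}_2\, \p_1^2 \zeta_0$ bounded in $W^{s_0, q_+}$ by $\norm{\eta}_{W^{2-1/q_+, q_+}} \le \norm{\eta}_{W^{3-1/q_+, q_+}}$, handles the first summand.

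The main technical subtlety is that $W^{1-1/q_+, q_+}$ is \emph{not} an algebra in one dimension, since $(1-1/q_+) q_+ = \ep_+/(2-\ep_+) < 1$. One therefore cannot directly imitate the algebra argument available in the Hilbert setting $H^{1-s/2}$ of Theorem \ref{nie_ST} and must instead rely on the asymmetric Moser-type product bound that places one factor in an $L^\infty$-based Sobolev space. This is precisely why the embedding $W^{3-1/q_+, q_+} \hookrightarrow C^{1+\ep_+}$, which converts the $\E$-regularity of $\eta$ into $L^\infty$-based control of $\p_1 \eta$ with smallness $\sqrt{\E}$, is indispensable here.
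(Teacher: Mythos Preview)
Your chain-rule decomposition and your diagnosis of the obstruction---that $W^{1-1/q_+,q_+}((-\ell,\ell))$ fails to be an algebra---are both correct, and the strategy of placing one factor in an $L^\infty$-based space does work in principle. However, the tools you cite do not deliver what you claim. Proposition~\ref{supercrit_prod} bounds $\norm{fg}_{W^{r,p}}$ by $\norm{f}_{W^{s,p}}\norm{g}_{W^{r,p}}$ with \emph{both} factors in $L^p$-based spaces (the hypothesis is $s>\max\{1/p,r\}$), not the mixed $W^{s_0,\infty}\times W^{s_0,q_+}\to W^{s_0,q_+}$ form you assert; and Proposition~\ref{frac_comp} is stated only for $H^s$, not for $W^{s,q_+}$ or H\"older spaces. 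The estimates you want are true, but they are not among the paper's recorded tools and would need separate justification.

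The paper's route is shorter and uses exactly the available machinery. It applies Theorem~\ref{supercrit_prod} with the supercritical factor placed in the \emph{integer-order} space $W^{1,q_+}$ (valid since $1>1/q_+>1-1/q_+$), yielding
\[
\norm{\p_1[\mathcal{R}(\p_1\zeta_0,\p_1\eta)]}_{W^{1-1/q_+,q_+}}
\ls \norm{\p_y\mathcal{R}(\p_1\zeta_0,\p_1\eta)}_{W^{1,q_+}}
+ \norm{\p_z\mathcal{R}(\p_1\zeta_0,\p_1\eta)}_{W^{1,q_+}}\norm{\p_1^2\eta}_{W^{1-1/q_+,q_+}}.
\]
Because $W^{1,q_+}$ involves only integer derivatives, the ordinary chain rule and the pointwise bounds of Proposition~\ref{R_prop} give directly $\norm{\p_y\mathcal{R}}_{W^{1,q_+}}\ls\norm{\p_1\eta}_{L^\infty}\norm{\eta}_{W^{2,q_+}}\ls\sqrt{\E}\,\norm{\eta}_{W^{3-1/q_+,q_+}}$ and $\norm{\p_z\mathcal{R}}_{W^{1,q_+}}\ls\sqrt{\E}$. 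No auxiliary functions $\mathcal{S}_j$, no fractional composition estimates, and no H\"older embedding of $\p_1\eta$ are needed: the integer-order space $W^{1,q_+}$ already serves as the supercritical multiplier space, and this is precisely the asymmetric product bound that Theorem~\ref{supercrit_prod} supplies.
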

\begin{proof}
We compute 
\begin{equation}
\p_1 [\mathcal{R}(\p_1 \zeta_0,\p_1 \eta)] = \p_y \mathcal{R}(\p_1 \zeta_0,\p_1 \eta) \p_1^2 \zeta_0 + \p_z \mathcal{R}(\p_1 \zeta_0,\p_1 \eta) \p_1^2 \eta. 
\end{equation}
We then use this with the product estimate from \eqref{ne1_g6_1} to bound 
\begin{multline}
 \norm{\p_1 [\mathcal{R}(\p_1 \zeta_0,\p_1 \eta)]}_{W^{1-1/q_+,q_+}} 
\\ 
 \ls \norm{\p_y \mathcal{R}(\p_1 \zeta_0,\p_1 \eta) }_{W^{1,q_+}} \norm{\p_1^2 \zeta_0}_{W^{1-1/q_+,q_+}}  
 + \norm{\p_z \mathcal{R}(\p_1 \zeta_0,\p_1 \eta)}_{W^{1,q_+}} \norm{\p_1^2 \eta}_{W^{1-1/q_+,q_+}} \\
\ls 
\norm{\p_y \mathcal{R}(\p_1 \zeta_0,\p_1 \eta) }_{W^{1,q_+}}   
 + \norm{\p_z \mathcal{R}(\p_1 \zeta_0,\p_1 \eta)}_{W^{1,q_+}} \norm{\eta}_{W^{3-1/q_+,q_+}}.
\end{multline}
On the other hand, Proposition \ref{R_prop} and Theorem \ref{catalog_energy} allow us to bound 
\begin{equation}
\norm{\p_y \mathcal{R}(\p_1 \zeta_0,\p_1 \eta) }_{W^{1,q_+}} \ls \norm{\p_1 \eta}_{L^\infty} \left(\norm{\p_1 \eta}_{L^{q_+}}   + \norm{\p_1^2 \eta}_{L^{q_+}}\right) \ls \sqrt{\E} \norm{\eta}_{W^{2,q_+}}
\end{equation}
and 
\begin{equation}
\norm{\p_z \mathcal{R}(\p_1 \zeta_0,\p_1 \eta)}_{W^{1,q_+}} \ls \norm{\p_1 \eta}_{L^{q_+}} +  \norm{\p_1^2 \eta}_{L^{q_+}} \ls \sqrt{\E}.
\end{equation}
The result follows by combining these and recalling that $3 - 1/q_+ > 2$.
\end{proof}

Finally, we record the bounds for $G^7$  in \eqref{ne0_Gform}.

\begin{prop}\label{ne0_g7}
Let $\mathcal{R}$ be as in \eqref{R_def}.  Then we have the estimate 
\begin{equation}
 [\mathcal{R}(\p_1 \zeta_0,\p_1 \eta)]_\ell \ls \sqrt{\E} \norm{\eta}_{W^{2,q_+}}
\end{equation}
\end{prop}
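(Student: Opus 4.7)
The plan is to exploit the key structural feature of $\mathcal{R}$ evident from the integral representation \eqref{R_def}: one has $\mathcal{R}(y,0) = 0$ and $\p_z \mathcal{R}(y,0) = 0$, so $\mathcal{R}(y,z)$ vanishes quadratically in $z$ near $z=0$. Since the running smallness assumption gives $\norm{\p_1 \eta}_{L^\infty} \ls \sqrt{\E} \ls 1$ via Theorem \ref{catalog_energy}, Proposition \ref{R_prop} (already used elsewhere in the section) yields the pointwise bound
\begin{equation*}
\abs{\mathcal{R}(\p_1 \zeta_0(x), \p_1 \eta(x))} \ls \abs{\p_1 \eta(x)}^2 \quad \text{for every } x \in [-\ell,\ell].
\end{equation*}

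Evaluating at the two endpoints and recalling the definition \eqref{bndry_pairing}, which gives $[f]_\ell^2 = \abs{f(-\ell)}^2 + \abs{f(\ell)}^2$, I then obtain
\begin{equation*}
[\mathcal{R}(\p_1 \zeta_0,\p_1 \eta)]_\ell \ls \abs{\p_1 \eta(-\ell)}^2 + \abs{\p_1 \eta(\ell)}^2 \ls \norm{\p_1 \eta}_{L^\infty}^2.
\end{equation*}

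To split the square into the form required by the right-hand side, I write $\norm{\p_1 \eta}_{L^\infty}^2 = \norm{\p_1 \eta}_{L^\infty} \cdot \norm{\p_1 \eta}_{L^\infty}$ and estimate the two factors differently. For the first factor I use Theorem \ref{catalog_energy}, which controls $\norm{\nab \eta}_{L^\infty}$ by $\sqrt{\E}$. For the second factor I invoke the one-dimensional Sobolev embedding $W^{2,q_+}((-\ell,\ell)) \hookrightarrow W^{1,\infty}((-\ell,\ell))$, valid since $q_+ > 1$. Combining the two bounds yields exactly $\norm{\p_1 \eta}_{L^\infty}^2 \ls \sqrt{\E}\, \norm{\eta}_{W^{2,q_+}}$, which finishes the proof. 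There is no real obstacle here; the only thing to be careful about is applying the quadratic vanishing of $\mathcal{R}$ correctly and asymmetrically distributing the two $L^\infty$ factors so that one produces the small parameter $\sqrt{\E}$ while the other supplies the norm that appears in the statement.
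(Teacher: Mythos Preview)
Your proof is correct. The paper takes a slightly different path: it first applies trace theory to bound $[\mathcal{R}(\p_1\zeta_0,\p_1\eta)]_\ell \ls \norm{\mathcal{R}(\p_1\zeta_0,\p_1\eta)}_{W^{1,q_+}}$, then estimates this $W^{1,q_+}$ norm by differentiating the composition and using Proposition~\ref{R_prop}, arriving at $\norm{\p_1\eta}_{L^\infty}\bigl(\norm{\p_1\eta}_{L^{q_+}} + \norm{\p_1^2\eta}_{L^{q_+}}\bigr)$. Your argument is more direct: you exploit the pointwise quadratic bound $\abs{\mathcal{R}(y,z)} \ls z^2$ at the two endpoints and never pass through the intermediate Sobolev norm of the composition. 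Both routes use the same ingredients (Proposition~\ref{R_prop} and Theorem~\ref{catalog_energy}) and are equally short; your version has the modest advantage of avoiding the chain-rule computation for $\norm{\mathcal{R}}_{W^{1,q_+}}$, while the paper's version parallels the analogous estimate in Proposition~\ref{ne0_g6} more closely.
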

\begin{proof}
From trace theory,  Theorem \ref{catalog_energy}, and Proposition \ref{R_prop} we may estimate 
\begin{equation}
 [\mathcal{R}(\p_1 \zeta_0,\p_1 \eta)]_\ell \ls \norm{\mathcal{R}(\p_1 \zeta_0,\p_1 \eta)}_{W^{1,q_+}} \ls  \norm{\p_1 \eta}_{L^\infty} \left(\norm{\p_1 \eta}_{L^{q_+}}   + \norm{\p_1^2 \eta}_{L^{q_+}}\right) \ls \sqrt{\E} \norm{\eta}_{W^{2,q_+}}.
\end{equation}
This is the stated estimate.

\end{proof}

\subsection{One time derivative}

We now turn our attention to the elliptic estimates for the once time differentiated problem.  In order to apply  Theorem \ref{A_stokes_stress_solve}, we are led to consider the following $G^i$ terms  for $F^1$--$F^7$ given by \eqref{dt1_f1}--\eqref{dt1_f7}:  
\begin{equation}\label{ne1_Gform}
\begin{split}
 G^1 & = F^1  -\dt^2 u +\dt \bar{\eta} \frac{\phi}{\zeta_0} K \p_2 \dt u - u \cdot \naba \dt u, \; 
 G^2 = J F^2,  \\ 
 G^3_+ &= (\dt^2 \eta - F^6)/\abs{\N_0}, \; 
 G^3_- = 0, \; 
 G^4_+ = F^4 \cdot \frac{\mathcal{T}}{\abs{\mathcal{T}}^2}, \; 
 G^4_- = F^5  \\
 G^5 &= F^4 \cdot \frac{\mathcal{N}}{\abs{\N}^2}, \; 
 G^6 = F^3, \;
 G^7 = \linz \dt^2 \eta + \linz F^7.
\end{split}
\end{equation}

We begin by estimating the $G^1$ term from \eqref{ne1_Gform}.

\begin{prop}\label{ne1_g1}
Let $F^1$ be given by \eqref{dt1_f1}. We have the estimate 
\begin{equation}
 \norm{F^1    +\dt \bar{\eta} \frac{\phi}{\zeta_0} K \p_2 \dt u - u \cdot \naba \dt u}_{L^{q_-}} \ls (\sqrt{\E} + \E) \sqrt{\D}.
\end{equation}
\end{prop}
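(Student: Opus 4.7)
The plan is to follow the standard term-by-term H\"older decomposition used in Propositions \ref{nid_f1} and \ref{nie_f1}. I would expand the expression into eight summands: the six terms of $F^1$ from \eqref{dt1_f1}, namely $-\diverge_{\dt\A}S_\A(p,u)$, $\mu\diva\sg_{\dt\A}u$, $-u\cdot\nab_{\dt\A}u$, $-\dt u\cdot\naba u$, $\dt^2\bar\eta(\phi/\zeta_0)K\p_2 u$, and $\dt\bar\eta(\phi/\zeta_0)\dt K\p_2 u$, together with the two added transport-like terms $\dt\bar\eta(\phi/\zeta_0)K\p_2\dt u$ and $-u\cdot\naba\dt u$. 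The target bound $(\sqrt{\E}+\E)\sqrt{\D}$ mandates that each summand contribute exactly one factor of $\sqrt{\D}$ alongside one or two factors of $\sqrt{\E}$.

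For each summand, I would identify a designated ``dissipation factor'' to be estimated using Theorem \ref{catalog_dissipation}, with the remaining factors estimated via Theorem \ref{catalog_energy}. For the two terms containing $\nab\dt u$ this factor is $\nab\dt u\in L^{2/(1-\ep_-)}$; for $-\dt u\cdot\naba u$ it is $\dt u\in L^\infty$; for $\dt^2\bar\eta(\phi/\zeta_0)K\p_2 u$ it is $\dt^2\bar\eta\in L^\infty$; and for the remaining four terms, which feature once-$\dt$-differentiated coefficients acting on $u$ or $p$, the dissipation factor is drawn from $\nab u$, $\nab^2 u$, $p$, or $\nab p$. After product-rule expansion, each summand becomes a product of two or three factors whose H\"older exponents, on the bounded domain $\Omega$, combine to an integrability at least $q_-$, exploiting the inclusions $L^{q_+}\hookrightarrow L^{q_-}$ (from $\ep_+>\ep_-$) and $L^{1/(1-\ep_+)}\hookrightarrow L^{q_-}$ (from $\ep_+>0$).

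The main obstacle is the exponent bookkeeping for the three-factor products arising from the expansions of $\diverge_{\dt\A}S_\A(p,u)$ and $\mu\diva\sg_{\dt\A}u$, where one encounters factors like $\dt\bar\eta\cdot\nab^2\bar\eta\cdot\nab u$ and $\nab\dt\A\cdot\nab u$. For the former I would use $\dt\bar\eta\in L^\infty$ together with $\nab^2\bar\eta\in L^{2/(1-\ep_+)}$ from energy and $\nab u\in L^{2/(1-\ep_+)}$ from dissipation, giving $\E\sqrt{\D}$ with resulting integrability $1/(1-\ep_+)$; for the latter I would pair $\nab\dt\A\in L^{2/(1-\ep_-)}$ from dissipation with $\nab u\in L^{2/(1-\ep_+)}$ from energy, giving $\sqrt{\E}\sqrt{\D}$ in $L^{2/(2-\ep_--\ep_+)}$, with the inclusion in $L^{q_-}$ reducing to $\ep_+\ge 0$. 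Collecting the per-term bounds yields the stated estimate.
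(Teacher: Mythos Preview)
Your proposal is correct and follows essentially the same approach as the paper: both proceed term-by-term through the eight summands, designate the same dissipation factors (in particular $\nab\dt u\in L^{2/(1-\ep_-)}$, $\dt u\in L^\infty$, $\dt^2\bar\eta\in L^\infty$, and for the $\nab\dt\A\cdot\nab u$ subterm the factor $\nab\dt\A\in L^{2/(1-\ep_-)}$), and close with the same H\"older arithmetic. One small correction: the inclusion $L^{1/(1-\ep_+)}\hookrightarrow L^{q_-}$ is equivalent to $2\ep_+\ge\ep_-$, which follows from $\ep_+>\ep_-$ rather than merely $\ep_+>0$; the paper records this as $\frac{1-\ep_+}{2}+\frac{1-\ep_+}{2}\le\frac{2-\ep_-}{2}$.
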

\begin{proof}

We will estimate term by term using H\"{o}lder's inequality and the bounds from Theorems \ref{catalog_energy} and \ref{catalog_dissipation}, once more using the ordering scheme used in the proof of Proposition \ref{nid_f1}.  Combining the estimates of each term then yields the stated estimate.  Recall from \eqref{kappa_ep_def} that $0 < 2\low < \ep_- < \ep_+$, which in particular means that 
\begin{equation}\label{ne1_g1_est}
q_- = \frac{2}{2-\ep_-} < \frac{2}{2-\ep_+} = q_+. 
\end{equation}

\textbf{Term: $\diverge_{\dt \A} S_\A(p,u)$.}  First note that 
\begin{equation} \frac{1 - \ep_+}{2} + \frac{1-\ep_+}{2} + \frac{1}{\infty} \le \frac{2-\ep_-}{2} = \frac{1}{q_-}.
\end{equation}
Using this and \eqref{ne1_g1_est} we can then bound
\begin{multline}
 \norm{\diverge_{\dt \A} S_\A(p,u)}_{L^{q_-}} \ls  \norm{\abs{\dt \A} \abs{\nab \A}  ( \abs{p} + \abs{\nab u})  }_{L^{q_-}} +  \norm{\abs{\dt \A} (\abs{\nab p} + \abs{\nab^2 u})  }_{L^{q_-}} \\
 \ls \norm{\dt \bar{\eta}}_{W^{1,\infty}} \norm{\bar{\eta}}_{W^{2,2/(1-\ep_+)}} \left( \norm{p}_{L^{2/(1-\ep_+)}} + \norm{\nab u}_{L^{2/(1-\ep_+)}} \right) 
+ \norm{\dt \bar{\eta}}_{W^{1,\infty}} \left( \norm{\nab p}_{L^{q_+}} + \norm{\nab^2 u}_{L^{q_+}}  \right) \\
\ls \E \sqrt{\D} + \sqrt{\E} \sqrt{\D}.
\end{multline}

\textbf{Term: $\mu \diva \sg_{\dt \A} u$.}  For this term note that
\begin{equation}
 \frac{1- \ep_-}{2} + \frac{1-\ep_+}{2} \le \frac{2-\ep_-}{2} = \frac{1}{q_-}.
\end{equation}
This and \eqref{ne1_g1_est} allow us to bound
\begin{multline}
\norm{ \mu \diva \sg_{\dt \A} u}_{L^{q_-}} \ls \norm{\abs{\nab \dt \A} \abs{\nab u} }_{L^{q_-}} +  \norm{\abs{\dt \A} \abs{\nab^2 u} }_{L^{q_-}} \\ 
\ls 
\left(\norm{\bar{\eta}}_{W^{2,2/(1-\ep_-)}} + \norm{\dt \bar{\eta}}_{W^{2,2/(1-\ep_-)}} \right) \norm{\nab u}_{L^{2/(1-\ep_+)}} 
+ \norm{\dt \bar{\eta}}_{W^{1,\infty}} \norm{\nab^2 u}_{L^{q_+}} 
\ls \sqrt{\D} \sqrt{\E} + \sqrt{\E} \sqrt{\D}.
\end{multline}

\textbf{Term: $u \cdot \nab_{\dt \A} u$. }  We simply use \eqref{ne1_g1_est} to bound
\begin{equation}
 \norm{u \cdot \nab_{\dt \A} u}_{L^{q_-}} \ls  \norm{\abs{u}\abs{\dt \A}  \abs{\nab u}}_{L^{q_-}}
\ls \norm{u}_{L^\infty} \norm{\dt \bar{\eta}}_{W^{1,\infty}} \norm{\nab u}_{L^{q_+}} \ls \E \sqrt{\D}.
\end{equation}

\textbf{Term: $\dt u \cdot \naba u$. } Again we use \eqref{ne1_g1_est} to bound
\begin{equation}
\norm{\dt u \cdot \naba u}_{L^{q_-}} \ls \norm{\abs{\dt u} \abs{\nab u}}_{L^{q_-}} \ls \norm{\dt u}_{L^\infty} \norm{\nab u}_{L^{q_+}} \ls \sqrt{\D} \sqrt{\E}.
\end{equation}

\textbf{Term: $\dt^2 \bar{\eta} \frac{\phi}{\zeta_0} K \p_2 u$. }   Again we use \eqref{ne1_g1_est} to bound
\begin{equation}
 \norm{\dt^2 \bar{\eta} \frac{\phi}{\zeta_0} K \p_2 u}_{L^{q_-}} \ls  \norm{ \abs{\dt^2 \bar{\eta}} \abs{\nab u}}_{L^{q_-}} \ls \norm{\dt^2 \bar{\eta}}_{L^\infty} \norm{\nab u}_{L^{q_+}} \ls \sqrt{\D} \sqrt{\E}.
\end{equation}

\textbf{Term: $\dt \bar{\eta} \frac{\phi}{\zeta_0} \dt K \p_2 u$. } Once more \eqref{ne1_g1_est} let's us bound
\begin{equation}
 \norm{\dt \bar{\eta} \frac{\phi}{\zeta_0} \dt K \p_2 u }_{L^{q_-}} \ls   \norm{\abs{\dt \bar{\eta}} \abs{\dt K} \abs{\nab u} }_{L^{q_-}} \ls \norm{\dt \bar{\eta}}_{L^\infty} \norm{\dt \bar{\eta}}_{W^{1,\infty}} \norm{\nab u}_{L^{q_+}} \ls \E \sqrt{\D}.
\end{equation}

\textbf{Term: $\dt \bar{\eta} \frac{\phi}{\zeta_0} K \p_2 \dt u$. } Since 
\begin{equation}\label{ne1_g1_est2}
 \frac{1-\ep_-}{2} \le \frac{2 - \ep_-}{2} = \frac{1}{q_-}
\end{equation}
we can bound
\begin{equation}
 \norm{\dt \bar{\eta} \frac{\phi}{\zeta_0} K \p_2 \dt u}_{L^{q_-}} \ls  \norm{\abs{\dt \bar{\eta}} \abs{\nab \dt u}}_{L^{q_-}} \ls \norm{\dt \bar{\eta}}_{L^\infty} \norm{\nab \dt u}_{L^{2/(1-\ep_-)}} \ls \sqrt{\E} \sqrt{\D}.
\end{equation}

\textbf{Term: $u \cdot \naba \dt u$. } For this term we use \eqref{ne1_g1_est2} again to bound
\begin{equation}
 \norm{u \cdot \naba \dt u}_{L^{q_-}} \ls  \norm{\abs{u} \abs{\nab \dt u}}_{L^{q_-}} \ls \norm{u}_{L^\infty} \norm{\nab \dt u}_{L^{2/(1-\ep_-)}} \ls \sqrt{\E} \sqrt{\D}.
\end{equation}

\end{proof}

Next we estimate the $G^2$ term from \eqref{ne1_Gform}.

\begin{prop}\label{ne1_g2}
Let $F^2$ be given by \eqref{dt1_f2}.  Then we have that 
\begin{equation}
 \norm{J F^2}_{W^{1,q_-}} \ls (\sqrt{\E}+ \E) \sqrt{\D}  .
\end{equation}
\end{prop}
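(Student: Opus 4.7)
The plan is to proceed in the same spirit as the proof of Proposition \ref{ne1_g1}, by expanding $J F^2$ term-by-term and controlling each piece via H\"older's inequality combined with the catalogs of $L^q$ bounds in Theorems \ref{catalog_energy} and \ref{catalog_dissipation}, using the ordering convention already introduced in Proposition \ref{nid_f1}.

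First I would write out the contents of $\eqref{dt1_f2}$ explicitly: $F^2$ arises from applying one time derivative to the constraint $\diva u = 0$, so it has the schematic form $F^2 = -\diverge_{\dt \A} u$, i.e.\ a sum of products of entries of $\dt \A$ against $\nab u$. Multiplying by $J$ and differentiating once, the $W^{1,q_-}$ norm reduces to $L^{q_-}$ control of expressions of the schematic types
\begin{equation*}
 \abs{\nab J}\abs{\dt \A}\abs{\nab u}, \quad \abs{J}\abs{\nab \dt \A}\abs{\nab u}, \quad \abs{J}\abs{\dt \A}\abs{\nab^2 u}, \quad \abs{J}\abs{\dt \A}\abs{\nab u}.
\end{equation*}
Both $J$ and $\nab J$ are controlled in $L^\infty$ and $L^{2/(1-\ep_+)}$ respectively through Theorem \ref{catalog_energy} and Proposition \ref{poisson_prop}, so they contribute at worst a factor of $1$ or $\sqrt{\E}$ and do not spoil anything.

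The main work is then to find appropriate H\"older splits that produce the desired bound $(\sqrt{\E} + \E)\sqrt{\D}$. Since $\ep_- < \ep_+$ we have $q_- < q_+$ and hence $L^{q_+}(\Omega) \hookrightarrow L^{q_-}(\Omega)$, which handles the highest-order term directly:
\begin{equation*}
 \norm{\abs{\dt \A}\abs{\nab^2 u}}_{L^{q_-}} \ls \norm{\dt \bar{\eta}}_{W^{1,\infty}}\norm{\nab^2 u}_{L^{q_+}} \ls \sqrt{\E}\sqrt{\D}.
\end{equation*}
For the term $\abs{\nab \dt \A}\abs{\nab u}$ I would use the split $(1-\ep_-)/2 + (1-\ep_+)/2 \le 1/q_-$, which together with $\nab^2 \dt \bar\eta \in L^{2/(1-\ep_-)}$ from the dissipation and $\nab u \in L^{2/(1-\ep_+)}$ from the energy gives a bound by $\sqrt{\D}\sqrt{\E}$. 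The mixed term $\abs{\nab J}\abs{\dt \A}\abs{\nab u}$ is handled by the triple H\"older split $(1-\ep_+)/2 + 1/\infty + (1-\ep_+)/2 \le 1/q_-$, producing $\E\sqrt{\D}$. The lowest-order term $\abs{\dt \A}\abs{\nab u}$ is estimated trivially by $\sqrt{\E}\sqrt{\D}$.

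I do not anticipate a real obstacle here: $F^2$ is structurally simpler than $F^1$ (no pressure terms, no convection, no transport contributions from $\dt \bar\eta$), and every H\"older inequality has slack because the target exponent $q_- < q_+$ allows us to import the $q_+$-estimates already catalogued. The only point that deserves care is verifying that every split sum of reciprocals is $\le 1/q_-$, and using the ordering $\low < \ep_- < \ep_+$ from \eqref{kappa_ep_def} when two ``energy'' factors appear together. Summing the contributions yields the stated inequality $\norm{J F^2}_{W^{1,q_-}} \ls (\sqrt{\E} + \E)\sqrt{\D}$.
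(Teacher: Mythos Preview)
Your proposal is correct and follows essentially the same approach as the paper's proof: the same identification $JF^2 = -J\diverge_{\dt\A}u$, the same product-rule expansion into the four schematic pieces, and the same H\"older splits $(1-\ep_+)/2 + (1-\ep_+)/2 \le 1/q_-$ and $(1-\ep_-)/2 + (1-\ep_+)/2 \le 1/q_-$ against the catalogs of Theorems \ref{catalog_energy} and \ref{catalog_dissipation}. The only cosmetic difference is that the paper writes the bound on $\abs{\nab\dt\A}$ as $\norm{\bar\eta}_{W^{2,2/(1-\ep_-)}} + \norm{\dt\bar\eta}_{W^{2,2/(1-\ep_-)}}$ to account for the lower-order product structure in $\A$, which you absorb into the schematic notation.
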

\begin{proof}
We begin by noting that $JF^2 = -J\diverge_{\dt \A} u$, so 
\begin{equation}
\norm{J F^2}_{W^{1,q_-}} \ls \norm{J\diverge_{\dt \A} u}_{L^{q_-}} + \norm{\nab(J\diverge_{\dt \A} u)}_{L^{q_-}}.
\end{equation}
We will estimate each of these terms with H\"{o}lder's inequality and the bounds from Theorems \ref{catalog_energy} and \ref{catalog_dissipation}, again using the ordering scheme used in the proof of Proposition \ref{nid_f1}.  For the first we use the fact that $q_- < q_+$ to bound 
\begin{equation}
\norm{J\diverge_{\dt \A} u}_{L^{q_-}} \ls \norm{\abs{\dt \A} \abs{\nab u}}_{L^{q_-}} \ls \norm{\dt \bar{\eta}}_{W^{1,\infty}} \norm{\nab u}_{L^{q_+}} \ls \sqrt{\E} \sqrt{\D}.
\end{equation}
For the second term we expand with the product rule and note that \eqref{kappa_ep_def} implies
\begin{equation}\label{ne1_g2_est}
 \frac{1- \ep_+}{2} +  \frac{1- \ep_+}{2} \le  \frac{2- \ep_-}{2} = \frac{1}{q_-} \text{ and }  \frac{1- \ep_+}{2} +  \frac{1- \ep_-}{2} \le  \frac{2- \ep_-}{2} = \frac{1}{q_-}
\end{equation}
which allows us to bound 
\begin{multline}
\norm{\nab(J\diverge_{\dt \A} u)}_{L^{q_-}} \ls \norm{\abs{\nab J} \abs{\dt \A} \abs{\nab u} }_{L^{q_-}} + \norm{ \abs{\nab \dt \A} \abs{\nab u} }_{L^{q_-}} + \norm{\abs{\dt \A} \abs{\nab^2 u} }_{L^{q_-}} \\
\ls \norm{\bar{\eta}}_{W^{2,2/(1-\ep_+)}} \norm{\dt \bar{\eta}}_{W^{1,\infty}} \norm{\nab u}_{L^{2/(1-\ep_+)}} 
+ \left(\norm{\bar{\eta}}_{W^{2,2/(1-\ep_-)}} + \norm{\dt \bar{\eta}}_{W^{2,2/(1-\ep_-)}} \right) \norm{\nab u}_{L^{2/(1-\ep_+)}}  \\
+ \norm{\dt \bar{\eta}}_{W^{1,\infty}} \norm{\nab^2 u}_{L^{q_+}} \ls \E \sqrt{\D} + \sqrt{\D} \sqrt{\E} + \sqrt{\E} \sqrt{\D} \ls(\sqrt{\E} + \E)\sqrt{\D}.
\end{multline}
Combining these bounds then yields the stated estimate.
\end{proof}

The next result records the bounds for the $G^3$ term from \eqref{ne1_Gform}.

\begin{prop}\label{ne1_g3}
Let $F^6$ be given by \eqref{dt1_f6}.  We have the bound 
\begin{equation}
 \norm{(\dt^2 \eta - F^6)/\abs{\N_0}}_{W^{2-1/q_-,q_-}} \ls \norm{\dt^2 \eta}_{H^{3/2-\low}} + \sqrt{\E} \sqrt{\D}.
\end{equation}
\end{prop}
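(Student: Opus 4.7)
\textbf{Proof plan for Proposition \ref{ne1_g3}.}  My plan is to split the estimate into two parts. First, since $\abs{\N_0} = \sqrt{1+\abs{\p_1 \zeta_0}^2}$ is a smooth, strictly positive function of $x_1 \in (-\ell,\ell)$, the multiplier $1/\abs{\N_0}$ is bounded on $W^{2-1/q_-,q_-}((-\ell,\ell))$ (by a supercritical product estimate like Proposition \ref{supercrit_prod}), so it suffices to bound $\norm{\dt^2\eta}_{W^{2-1/q_-,q_-}}$ and $\norm{F^6}_{W^{2-1/q_-,q_-}}$ separately.  Recall from \eqref{dt1_f6} that differentiating the kinematic equation once yields $F^6 = -u_1 \p_1 \dt\eta$.

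For the first piece, I would verify a Sobolev embedding of the form $H^{3/2-\low}((-\ell,\ell)) \hookrightarrow W^{2-1/q_-,q_-}((-\ell,\ell))$, in direct analogy with the $q_+$ version used in Proposition \ref{ne0_g3}.  Since $2 - 1/q_- = 1 + \ep_-/2$ and $1/q_- = 1 - \ep_-/2$, the two required inequalities $3/2 - \low \ge 1 + \ep_-/2$ and $(3/2-\low) - 1/2 \ge (1+\ep_-/2) - 1/q_- = \ep_-$ reduce to $\low \le (1-\ep_-)/2$ and $\low \le 1 - \ep_-$, both of which follow from the assumption $\low < (1-\ep_+)/2$ in \eqref{kappa_ep_def} (since $\ep_- < \ep_+$).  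This yields $\norm{\dt^2 \eta}_{W^{2-1/q_-,q_-}} \ls \norm{\dt^2 \eta}_{H^{3/2-\low}}$.

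For the $F^6$ piece, I would use that $W^{2-1/q_-,q_-}((-\ell,\ell))$ is an algebra (since $2 - 1/q_- > 1/q_-$, which is equivalent to $q_- > 1$) to bound
\[
\norm{u_1 \p_1 \dt\eta}_{W^{2-1/q_-,q_-}(\Sigma)} \ls \norm{u_1}_{W^{2-1/q_-,q_-}(\Sigma)} \,\norm{\p_1 \dt\eta}_{W^{2-1/q_-,q_-}(\Sigma)}.
\]
The identification of $\Sigma$ with $(-\ell,\ell)$ via the smooth graph of $\zeta_0$ lets us interpret these norms as norms on $(-\ell,\ell)$.  I would then use the standard trace estimate $\norm{u_1}_{W^{2-1/q_-,q_-}(\Sigma)} \ls \norm{u}_{W^{2,q_-}(\Omega)}$, followed by the embedding $W^{2,q_+} \hookrightarrow W^{2,q_-}$ (since $q_- < q_+$ and $\Omega$ is bounded) and the catalog bound $\norm{u}_{W^{2,q_+}} \ls \sqrt{\E}$ from Theorem \ref{catalog_energy}.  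For the second factor, I would rewrite $\norm{\p_1\dt\eta}_{W^{2-1/q_-,q_-}(\Sigma)} \ls \norm{\dt\eta}_{W^{3-1/q_-,q_-}((-\ell,\ell))} \ls \sqrt{\D}$ by Theorem \ref{catalog_dissipation} and the definition of $\D$ in \eqref{D_def}.  Combining these gives $\norm{F^6}_{W^{2-1/q_-,q_-}} \ls \sqrt{\E}\sqrt{\D}$.

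The routine calculations are the product/multiplier bound for $1/\abs{\N_0}$ and the algebra bound for $u_1 \p_1\dt\eta$; the only place requiring genuine care is checking the Sobolev embedding for $\dt^2 \eta$, which is the main (mild) obstacle and hinges crucially on the parameter constraints in \eqref{kappa_ep_def}.  No circular use of results arises since we only invoke the catalogs of Theorems \ref{catalog_energy} and \ref{catalog_dissipation}, standard Sobolev embeddings in 1D, and the trace theorem, all of which are available.
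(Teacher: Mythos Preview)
Your proposal is correct and follows essentially the same approach as the paper's own proof: reduce to bounding $\dt^2\eta$ and $F^6$ separately via the smooth multiplier $1/\abs{\N_0}$, establish the Sobolev embedding $H^{3/2-\low}\hookrightarrow W^{2-1/q_-,q_-}$ from the parameter constraints in \eqref{kappa_ep_def}, and handle $F^6=-u_1\p_1\dt\eta$ via the algebra property of $W^{2-1/q_-,q_-}((-\ell,\ell))$ together with trace theory and the catalogs. The paper phrases the embedding check as the single inequality $2\low+\ep_-<1$, whereas you verify the two Sobolev embedding conditions directly; these are equivalent, and the rest of your argument matches the paper's line for line.
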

\begin{proof}
First recall that $N = -\p_1 \zeta_0 e_1 + e_2$, so 
\begin{equation}
 \frac{1}{\abs{\N_0}} = \frac{1}{\sqrt{1 + \abs{ \p_1 \zeta_0}^2}}
\end{equation}
is smooth, and we may thus bound 
\begin{equation}
 \norm{(\dt^2 \eta - F^6)/\abs{\N_0}}_{W^{2-1/q_-,q_-}} \ls  \norm{\dt^2 \eta - F^6}_{W^{2-1/q_-,q_-}}. 
\end{equation}

Next note that \eqref{kappa_ep_def} implies that $2\low + \ep_- < 1$, so 
\begin{equation}
 2 - \frac{1}{q_-} = 2 - \frac{2 - \ep_-}{2} = 1 + \frac{\ep_-}{2}  \le \frac{3}{2} - \low
\end{equation}
and 
\begin{equation}
\frac{1}{q_-} = \frac{2-\ep_-}{2} \ge \frac{2\low + \ep_-}{2} = \hal - \frac{1}{1} \left(\frac{3}{2} - \low -1 - \frac{\ep_-}{2}  \right),
\end{equation}
and so these parameter bounds and the Sobolev embeddings show that
\begin{equation}
H^{3/2-\low}((-\ell,\ell)) \hookrightarrow W^{2-1/q_-, 2/(2\low + \ep_-)}((-\ell,\ell))  \hookrightarrow  W^{2-1/q_-,q_-}((-\ell,\ell)).
\end{equation}
This allows us to estimate 
\begin{equation}
 \norm{\dt^2 \eta}_{W^{2-1/q_-,q_-}} \ls \norm{\dt^2 \eta}_{H^{3/2 - \low}}.
\end{equation}
For the $F^6= -u_1 \p_1 \dt \eta$ term we the fact that $W^{2-1/q_-,q_-}((-\ell,\ell))$ is an algebra in conjunction with trace theory and the definitions of $\E$ and $\D$ in \eqref{E_def} and \eqref{D_def}, respectively, to bound 
\begin{equation}
\norm{F^6}_{W^{2-1/q_-,q_-}} \ls \norm{u}_{W^{2-1/q_-,q_-}(\Sigma)} \norm{\p_1 \dt \eta}_{W^{2-1/q_-,q_-}}  \ls \norm{u}_{W^{2,q_-}} \norm{\dt \eta}_{W^{3-1/q_-,q_-}} \ls \sqrt{\E} \sqrt{\D}. 
\end{equation}
Combining these then yields the stated bound.
\end{proof}

Next we bound the the $G^4$ and $G^5$ terms from \eqref{ne1_Gform}.

\begin{prop}\label{ne1_g4_g5}
Let $F^4$ and $F^5$ be given by \eqref{dt1_f4} and \eqref{dt1_f5}, respectively.  Then we have the bound 
\begin{equation}
 \norm{F^4 \cdot \frac{\mathcal{T}}{\abs{\mathcal{T}}^2}}_{W^{1-1/q_-,q_-}} +  \norm{F^4 \cdot \frac{\mathcal{N}}{\abs{\mathcal{N}}^2}}_{W^{1-1/q_-,q_-}} +  \norm{F^5 }_{W^{1-1/q_-,q_-}} \ls (\sqrt{\E} + \E) \sqrt{\D}.
\end{equation}
\end{prop}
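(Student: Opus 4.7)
The plan is to bound each of the three quantities term by term by exploiting the embedding $W^{1,q_-}(\Gamma) \hookrightarrow W^{1-1/q_-,q_-}(\Gamma)$ for $\Gamma \in \{\Sigma,\Sigma_s\}$, which reduces matters to $L^{q_-}$ estimates for the quantities and their first tangential derivatives. The tools will be H\"{o}lder's inequality, the product/algebra rules in $W^{1,q_-}$, the catalogs from Theorems \ref{catalog_energy} and \ref{catalog_dissipation}, Proposition \ref{R_prop} for derivatives of $\mathcal{R}$, and standard trace theory to lift terms like $\sg_{\dt \A} u$ from $W^{1,q_-}(\Omega)$ to their boundary traces. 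The key parameter inequalities to keep track of are those in \eqref{kappa_ep_def}, in particular $2\low + \ep_- < 1$ (ensuring $\dt \eta \in W^{3-1/q_-,q_-}$ is more than enough to multiply against) and the H\"{o}lder splittings $\tfrac{1-\ep_+}{2}+\tfrac{1-\ep_-}{2} \le \tfrac{1}{q_-}$ and $\tfrac{1-\ep_-}{2} \le \tfrac{1}{q_-}$ used systematically in Propositions \ref{ne1_g1}--\ref{ne1_g3}.

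For $F^5 = \mu \sg_{\dt \A} u\, \nu \cdot \tau$ on $\Sigma_s$, we use that $\nu,\tau$ are $C^2$ there and estimate $\norm{\sg_{\dt \A} u}_{W^{1,q_-}(\Omega)}$ by applying the product rule to $\dt \A\, \nab u$. The pure-product piece is bounded by $\norm{\dt \bar{\eta}}_{W^{1,\infty}} \norm{u}_{W^{2,q_-}(\Omega)}$, where the first factor is $\ls \sqrt{\E}$ by Theorem \ref{catalog_energy} and the second is $\ls \sqrt{\D}$ since $q_- < q_+$ and $\norm{u}_{W^{2,q_+}} \ls \sqrt{\D}$. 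The derivative piece is bounded by $\norm{\nab \dt \bar{\eta}}_{L^{2/(1-\ep_-)}} \norm{\nab u}_{L^2}$, which is again $\ls \sqrt{\D}\sqrt{\E}$ via the catalogs and Proposition \ref{poisson_prop}.

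For $F^4$, whose four summands are enumerated in \eqref{dt1_f4}, we handle each separately but with the same strategy. The term $\mu \sg_{\dt \A} u \N$ is treated exactly like $F^5$, noting that $\N = \N_0 -\p_1 \eta e_1$ with $\N_0 \in C^\infty$ and $\p_1 \eta$ controlled in $W^{2-1/q_+,q_+}$ by $\sqrt{\E}$, so the factor $\N$ is harmless in $W^{1-1/q_-,q_-}$. The gravity--capillary piece $(g\eta-\sigma\p_1((\p_1\eta)/(1+\abs{\p_1\zeta_0}^2)^{3/2}))\dt \N$ involves $\eta, \p_1 \eta, \p_1^2 \eta$ paired against $\p_1 \dt \eta, \p_1^2 \dt \eta$; the worst derivative interactions, for example $\p_1^2 \eta \cdot \p_1 \dt \eta$ and $\p_1\eta \cdot \p_1^2 \dt \eta$, are bounded by $\norm{\eta}_{W^{2,1/(1-\ep_+)}} \norm{\dt \eta}_{W^{1,\infty}}$ plus $\norm{\eta}_{W^{1,\infty}} \norm{\dt \eta}_{W^{2,1/(1-\ep_-)}}$, both producing $\sqrt{\E}\sqrt{\D}$. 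The remainder term $\sigma \p_1[\mathcal{R}(\p_1 \zeta_0,\p_1 \eta)] \dt \N$ is expanded via the chain rule as in Proposition \ref{ne0_g6}, and Proposition \ref{R_prop} yields extra factors of $\p_1 \eta$ that turn the estimate into $\E\sqrt{\D}$. Finally, $S_\A(p,u) \dt \N$ is handled by estimating $\norm{S_\A(p,u)}_{W^{1,q_-}(\Omega)} \ls \sqrt{\D}$ using the definition of $\D$, together with $\norm{\dt \N}_{W^{1-1/q_-,q_-}(\Sigma)} \ls \norm{\dt \eta}_{W^{2-1/q_-,q_-}} \ls \sqrt{\E}$.

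The main technical obstacle, as in the companion estimates Propositions \ref{ne1_g1}--\ref{ne1_g3}, is the bookkeeping: for each cross-term one must choose H\"{o}lder exponents summing to $1/q_-$ that simultaneously (i) are compatible with the Sobolev inclusions of Theorems \ref{catalog_energy} and \ref{catalog_dissipation}, and (ii) yield exactly one factor of $\sqrt{\D}$ and at least one factor of $\sqrt{\E}$. The constraints of \eqref{kappa_ep_def}, in particular $\ep_+ \le (\ep_- + 1)/2$ and $\low < (\ep_+ - \ep_-)/2$, are what make all such choices possible.
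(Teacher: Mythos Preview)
Your treatment of $F^5$ and of $\mu\sg_{\dt\A}u\,\N$ (via trace from $W^{1,q_-}(\Omega)$) is essentially the paper's argument; the minor slip is that $\nab\dt\A$ involves $\nab^2\dt\bar\eta$, not $\nab\dt\bar\eta$, but the H\"older split $\tfrac{1-\ep_-}{2}+\tfrac12=\tfrac1{q_-}$ still closes.

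There is, however, a genuine gap in your scheme for the remaining $F^4$ terms. Your stated plan is to bound everything in $W^{1,q_-}(\Sigma)$ and then embed into $W^{1-1/q_-,q_-}(\Sigma)$. This costs one full tangential derivative on the boundary, and that derivative is simply not available for the factors $\p_1^2\eta$ and $S_\A(p,u)\vert_\Sigma$. Concretely, the gravity--capillary bracket contains $\p_1^2\eta\cdot\p_1\dt\eta$; putting this in $W^{1,q_-}(\Sigma)$ forces control of $\p_1^3\eta$ in some $L^p$, but $\eta\in W^{3-1/q_+,q_+}$ only gives $\p_1^2\eta\in W^{\ep_+/2,q_+}$, so $\p_1^3\eta$ lives in negative regularity. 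Your listed ``worst derivative interactions'' $\p_1^2\eta\,\p_1\dt\eta$ and $\p_1\eta\,\p_1^2\dt\eta$ are the derivative of $\p_1\eta\,\p_1\dt\eta$; you have omitted the derivative of $\p_1^2\eta\,\p_1\dt\eta$.

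The $S_\A(p,u)\,\dt\N$ split has a related problem. Tracing $S_\A(p,u)\in W^{1,q_-}(\Omega)$ only yields $W^{1-1/q_-,q_-}(\Sigma)$, and your $\dt\N$ is also placed in $W^{1-1/q_-,q_-}(\Sigma)$. Since $1-1/q_-=\ep_-/2<1/q_-$, this space is subcritical and not an algebra, so no product estimate follows. The paper's fix is Theorem~\ref{supercrit_prod}: keep one factor in the supercritical space $W^{1,q_-}(\Sigma)$ and the other in $W^{1-1/q_-,q_-}$. For $\p_1^2\eta\,\p_1\dt\eta$ this means $\p_1\dt\eta\in W^{1,q_-}$ (via $\norm{\dt\eta}_{W^{3-1/q_-,q_-}}\ls\sqrt{\D}$) against $\p_1^2\eta\in W^{1-1/q_-,q_-}$ (via $\norm{\eta}_{W^{3-1/q_+,q_+}}\ls\sqrt{\E}$). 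For $S_\A(p,u)\,\dt\N$ the same mechanism forces the \emph{opposite} $\E$/$\D$ assignment from yours: $\dt\N\in W^{1,q_-}$ must come from $\sqrt{\D}$, because $H^{3/2+(\ep_--\low)/2}\hookrightarrow W^{2,q_-}$ would require $\low\le 0$, so $\dt\eta\in W^{2,q_-}$ is unavailable from $\E$; meanwhile $S_\A(p,u)\vert_\Sigma\in W^{1-1/q_-,q_-}$ is controlled by $\sqrt{\E}+\E$.
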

\begin{proof}
Recall that $\E$ and $\D$ are as defined in \eqref{E_def} and \eqref{D_def}. We begin with the $F^5 = \mu \sg_{\dt \A} u \nu \cdot \tau$ term.  We use trace theory, the product rule,  Theorems \ref{catalog_energy} and \ref{catalog_dissipation}, and \eqref{ne1_g2_est} to bound
\begin{multline}\label{ne1_g4_1}
  \norm{F^5 }_{W^{1-1/q_-,q_-}} \ls  \norm{\sg_{\dt \A} u }_{W^{1,q_-}(\Omega)} \ls \norm{\abs{\dt \A} \abs{\nab u}}_{L^{q_-}} + \norm{\abs{\nab \dt \A} \abs{\nab u}}_{L^{q_-}} + \norm{\abs{\dt \A} \abs{\nab^2 u}}_{L^{q_-}} \\
 \ls  \norm{\dt \bar{\eta}}_{W^{1,\infty}} \norm{\nab u}_{L^{q_+}}  
+ \left(\norm{\bar{\eta}}_{W^{2,2/(1-\ep_-)}} + \norm{\dt \bar{\eta}}_{W^{2,2/(1-\ep_-)}} \right) \norm{\nab u}_{L^{2/(1-\ep_+)}}  
+ \norm{\dt \bar{\eta}}_{W^{1,\infty}} \norm{\nab^2 u}_{L^{q_+}}  \\
\ls \sqrt{\E} \sqrt{\D} + \sqrt{\D} \sqrt{\E} + \sqrt{\E} \sqrt{\D}.
\end{multline}

We next turn our attention to the $F^4$ term.  First note that since
\begin{equation}
 \frac{\mathcal{T}}{\abs{\mathcal{T}}^2} = \frac{(1,\p_1 \zeta_0 + \p_1 \eta)}{1 + \abs{\p_1 \zeta_0 + \p_1 \eta}^2}, \N = (-\p_1 \zeta_0 - \p_1 \eta,1),\text{ and } \frac{\mathcal{N}}{\abs{\N}^2} = \frac{(-\p_1 \zeta_0 - \p_1 \eta,1)}{1 + \abs{\p_1 \zeta_0 + \p_1 \eta}^2}
\end{equation}
we have that 
\begin{equation}\label{ne1_g4_2}
 \norm{ \frac{\mathcal{T}}{\abs{\mathcal{T}}^2}}_{W^{1,q_-}} +  \norm{ \frac{\mathcal{N}}{\abs{\mathcal{N}}^2}}_{W^{1,q_-}} + \norm{\N}_{W^{1,q_-}} \ls 1+ \sqrt{\E}  \ls 1.
\end{equation}
This allows us to employ Theorem \ref{supercrit_prod} with $1 > \max\{1/q_-,1-1/q_1\}$ to estimate 
\begin{multline}
  \norm{F^4 \cdot \frac{\mathcal{T}}{\abs{\mathcal{T}}^2}}_{W^{1-1/q_-,q_-}} +   \norm{F^4 \cdot \frac{\mathcal{N}}{\abs{\mathcal{N}}^2}}_{W^{1-1/q_-,q_-}} 
  \ls  \norm{F^4 }_{W^{1-1/q_-,q_-}} \left( \norm{ \frac{\mathcal{T}}{\abs{\mathcal{T}}^2}}_{W^{1,q_-}} +  \norm{ \frac{\mathcal{N}}{\abs{\mathcal{N}}^2}}_{W^{1,q_-}}  \right) \\
  \ls \norm{F^4 }_{W^{1-1/q_-,q_-}}.  
\end{multline}
We will then estimate the $F^4$ norm on the right term by term to arrive at the stated estimate.  

\textbf{Term: $\mu \sg_{\dt \A} u \N$.}  For this term we first use Theorem \ref{supercrit_prod} and trace theory to bound 
\begin{equation}
 \norm{\mu \sg_{\dt \A} u \N}_{W^{1-1/q_-,q_-}} \ls \norm{\sg_{\dt \A} u}_{W^{1-1/q_-,q_-}(\Sigma)} \norm{\N}_{W^{1,q_-}} \ls \norm{\sg_{\dt \A} u}_{W^{1,q_-}(\Omega)} \norm{\N}_{W^{1,q_-}}.
\end{equation}
Using this, \eqref{ne1_g4_2}, and the estimate from \eqref{ne1_g4_1}, we deduce that 
\begin{equation}
 \norm{\mu \sg_{\dt \A} u \N}_{W^{1-1/q_-,q_-}} \ls \sqrt{\E} \sqrt{\D}. 
\end{equation}

\textbf{Term: $g\eta  \dt \N$.}  For this term we use Theorem \ref{supercrit_prod} to bound 
\begin{equation}
 \norm{g\eta  \dt \N}_{W^{1-1/q_-,q_-}} \ls \norm{\eta}_{W^{1,q_-}} \norm{\p_1 \dt \eta}_{W^{1-1/q_-,q_-}} \ls \norm{\eta}_{W^{1,q_-}} \norm{\dt \eta}_{W^{2-1/q_-,q_-}} \ls \sqrt{\E} \sqrt{\D}. 
\end{equation}

\textbf{Term: $- \sigma \p_1 \left(\frac{\p_1 \eta}{(1+\abs{\p_1 \zeta_0}^2)^{3/2}}  \right) \dt \N $.} We begin be expanding with the product rule and using the fact that $\zeta_0$ is smooth to bound 
\begin{equation}
 \norm{- \sigma \p_1 \left(\frac{\p_1 \eta}{(1+\abs{\p_1 \zeta_0}^2)^{3/2}}  \right) \dt \N}_{W^{1-1/q_-,q_-}} \ls \norm{\p_1 \eta \p_1 \dt \eta}_{W^{1-1/q_-,q_-}} + \norm{\p_1^2 \eta \p_1 \dt \eta}_{W^{1-1/q_-,q_-}}.
\end{equation}
Then Theorem \ref{supercrit_prod} and the fact that $2 < 3 - 1/q_+$ and $q_- < q_+$ allows us to bound 
\begin{equation}
  \norm{\p_1 \eta \p_1 \dt \eta}_{W^{1-1/q_-,q_-}}  \ls   \norm{\p_1 \eta }_{W^{1,q_-}}   \norm{ \p_1 \dt \eta}_{W^{1-1/q_-,q_-}} \ls \norm{\eta }_{W^{2,q_-}}   \norm{\dt \eta}_{W^{2-1/q_-,q_-}} \ls \sqrt{\E} \sqrt{\D}.
\end{equation}
Similarly, Theorem \ref{supercrit_prod} and the bounds $q_- < q_+$ and $3-1/q_- < 3 - 1/q_+$ imply that
\begin{multline}\label{ne1_g4_3}
 \norm{\p_1^2 \eta \p_1 \dt \eta}_{W^{1-1/q_-,q_-}} \ls \norm{\p_1^2 \eta }_{W^{1-1/q_-,q_-}} \norm{  \p_1 \dt \eta}_{W^{1 ,q_-}} \ls \norm{\eta }_{W^{3-1/q_-,q_-}} \norm{   \dt \eta}_{W^{2 ,q_-}} \\
 \ls \norm{\eta }_{W^{3-1/q_+,q_+}} \norm{   \dt \eta}_{W^{2 ,q_-}} \ls \sqrt{\E} \sqrt{\D}.
\end{multline}
Combining these then shows that 
\begin{equation}
 \norm{- \sigma \p_1 \left(\frac{\p_1 \eta}{(1+\abs{\p_1 \zeta_0}^2)^{3/2}}  \right) \dt \N}_{W^{1-1/q_-,q_-}} \ls  \sqrt{\E} \sqrt{\D}. 
\end{equation}

\textbf{Term: $ - \sigma \p_1 \left(  \mathcal{R}(\p_1 \zeta_0,\p_1 \eta) \right) \dt \N $.}  For this term we first expand 
\begin{equation}
\p_1 \left(  \mathcal{R}(\p_1 \zeta_0,\p_1 \eta) \right) = \p_y \mathcal{R}(\p_1 \zeta_0,\p_1 \eta) \p_1^2 \zeta_0 + \p_z \mathcal{R}(\p_1 \zeta_0,\p_1 \eta) \p_1^2 \eta
\end{equation}
and then use Theorem \ref{supercrit_prod} to bound 
\begin{multline}
 \norm{\sigma \p_1 \left(  \mathcal{R}(\p_1 \zeta_0,\p_1 \eta) \right) \dt \N }_{W^{1-1/q_-,q_-}} \ls  \norm{\p_y \mathcal{R}(\p_1 \zeta_0,\p_1 \eta) \p_1^2 \zeta_0}_{W^{1,q_-}} \norm{\p_1 \dt \eta}_{W^{1-1/q_-,q_-}} 
 \\ + \norm{\p_z \mathcal{R}(\p_1 \zeta_0,\p_1 \eta)}_{W^{1,q_-}} \norm{\p_1^2 \eta \p_1 \dt \eta}_{W^{1-1/q_-,q_-}}.
\end{multline}
The fact that $W^{1,q_-}((-\ell,\ell))$ is an algebra, Proposition \ref{R_prop}, and Theorem \ref{catalog_energy} then show that 
\begin{multline}
 \norm{\p_y \mathcal{R}(\p_1 \zeta_0,\p_1 \eta) \p_1^2 \zeta_0}_{W^{1,q_-}} \ls  \norm{\p_y \mathcal{R}(\p_1 \zeta_0,\p_1 \eta) }_{W^{1,q_-}} \ls
  \norm{\abs{\p_1 \eta}^2 }_{L^{q_-}} + \norm{\p_1^2 \eta}_{L^{q-}}  \\
  \ls   \norm{\p_1 \eta}_{L^\infty}^2  + \norm{\eta}_{W^{3-1/q_+,q+}} \ls \sqrt{\E} 
\end{multline}
and 
\begin{equation}
\norm{\p_z \mathcal{R}(\p_1 \zeta_0,\p_1 \eta)}_{W^{1,q_-}} \ls \norm{\p_1 \eta}_{L^{q_-}} +    \norm{\p_1^2 \eta}_{L^{q-}}  \ls   \norm{\p_1 \eta}_{L^\infty}^2  + \norm{\eta}_{W^{3-1/q_+,q+}} \ls \sqrt{\E}.
\end{equation}
Combining these with \eqref{ne1_g4_3} then shows that 
\begin{equation}
  \norm{\sigma \p_1 \left(  \mathcal{R}(\p_1 \zeta_0,\p_1 \eta) \right) \dt \N }_{W^{1-1/q_-,q_-}}
 \ls \sqrt{\E}\left( \norm{\dt \eta}_{W^{2-1/q_-,q_-}}  + \sqrt{\E} \sqrt{\D} \right) \ls  \left(\sqrt{\E} + \E\right) \sqrt{\D}.
\end{equation}

\textbf{Term: $- S_{\A}(p,u)\dt \N $.}  For this term we use Theorem \ref{supercrit_prod}, trace theory, the bound 
\begin{equation}
 \frac{1-\ep_+}{2} + \frac{1-\ep_+}{2} = \frac{2- 2\ep_+}{2} < \frac{2 - \ep_-}{2} = \frac{1}{q_-},
\end{equation}
and H\"older's inequality to estimate
\begin{multline}
 \norm{S_{\A}(p,u)\dt \N }_{W^{1-1/q_-,q_-}} \le  \norm{S_{\A}(p,u) }_{W^{1-1/q_-,q_-}(\Sigma)} \norm{\p_1 \dt \eta}_{W^{1,q_-}} \ls \norm{S_{\A}(p,u) }_{W^{1,q_-}(\Omega)} \norm{\dt \eta}_{W^{2,q_-}} \\
\ls \left(\norm{p}_{W^{1,q_-}} +  \norm{u}_{W^{2,q_-}} + \norm{\abs{\nab \A} \abs{\nab u}}_{L^{q_-}}\right) \norm{\dt \eta}_{W^{2,q_-}}  \\
\ls \left(\norm{p}_{W^{1,q_+}} +  \norm{u}_{W^{2,q_+}} + \norm{\bar{\eta}}_{W^{2,2/(1-\ep_+)}} \norm{\nab u}_{L^{2/(1-\ep_+)}}  \right) \norm{\dt \eta}_{W^{2,q_-}} \ls (\sqrt{\E}+\E) \sqrt{\D}.
\end{multline}

\end{proof}

The next result records the estimates for the $G^6$ term from \eqref{ne1_Gform}.

\begin{prop}\label{ne1_g6}
Let $F^3$ be as in \eqref{dt1_f3}.  Then we have the estimate 
\begin{equation}
 \norm{\p_1 F^3 }_{W^{1-1/q_-,q_-}} \ls  \left( \sqrt{\E} + \E\right) \sqrt{\D}.
\end{equation}
\end{prop}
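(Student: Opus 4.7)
The plan is to mimic the approach used in Propositions \ref{ne0_g6} and the $\mathcal{R}$-type estimates at the end of Proposition \ref{ne1_g4_g5}, expanding $\p_1 F^3$ with the product rule and bounding each resulting term in the Sobolev--Slobodeckij norm $W^{1-1/q_-,q_-}$ using the composition bounds from Proposition \ref{R_prop}, the algebra/product estimate of Theorem \ref{supercrit_prod}, and the catalog estimates of Theorems \ref{catalog_energy}--\ref{catalog_dissipation}.

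First I would write $F^3 = \p_z\mathcal{R}(\p_1\zeta_0,\p_1\eta)\,\p_1\dt\eta$, differentiate in $x_1$, and decompose
\begin{equation}
 \p_1 F^3 = \p_z\mathcal{R}(\p_1\zeta_0,\p_1\eta)\,\p_1^2\dt\eta + \p_z^2\mathcal{R}(\p_1\zeta_0,\p_1\eta)\,\p_1^2\eta\,\p_1\dt\eta + \p_y\p_z\mathcal{R}(\p_1\zeta_0,\p_1\eta)\,\p_1^2\zeta_0\,\p_1\dt\eta =: T_1 + T_2 + T_3.
\end{equation}
For the ``linear'' piece $T_1$ I would apply Theorem \ref{supercrit_prod} to pair $\p_z\mathcal{R}(\p_1\zeta_0,\p_1\eta)$, controlled in $W^{1,q_-}$ by Proposition \ref{R_prop} (yielding the factor $\sqrt{\E}$ through the vanishing of $\p_z\mathcal{R}$ at $z=0$), with $\p_1^2\dt\eta \in W^{1-1/q_-,q_-}$, which is bounded by $\norm{\dt\eta}_{W^{3-1/q_-,q_-}} \ls \sqrt{\D}$ using the definition \eqref{D_def}.

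For $T_3$ I would again use Theorem \ref{supercrit_prod} together with the smoothness of $\zeta_0$ and Proposition \ref{R_prop}, which together give $\norm{\p_y\p_z\mathcal{R}\,\p_1^2\zeta_0}_{W^{1,q_-}} \ls \norm{\p_1\eta}_{L^\infty} \ls \sqrt{\E}$ (since $\p_y\p_z\mathcal{R}$ also vanishes at $z=0$), paired with $\norm{\p_1\dt\eta}_{W^{1-1/q_-,q_-}} \ls \norm{\dt\eta}_{W^{2,q_-}} \ls \sqrt{\D}$. The main technical step is $T_2$: here the product $\p_1^2\eta\,\p_1\dt\eta$ must be put in $W^{1-1/q_-,q_-}$, and one factor ($\p_1^2\eta$) has only the limited regularity coming from the energy. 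Fortunately, exactly the same product was handled in display \eqref{ne1_g4_3} of Proposition \ref{ne1_g4_g5} via Theorem \ref{supercrit_prod} together with the elementary embedding $W^{3-1/q_+,q_+} \hookrightarrow W^{3-1/q_-,q_-}$ (since $q_-<q_+$), yielding $\norm{\p_1^2\eta\,\p_1\dt\eta}_{W^{1-1/q_-,q_-}} \ls \sqrt{\E}\sqrt{\D}$. Combining with $\norm{\p_z^2\mathcal{R}(\p_1\zeta_0,\p_1\eta)}_{W^{1,q_-}} \ls 1$ from Proposition \ref{R_prop} then gives control of $T_2$ at the level $\sqrt{\E}\sqrt{\D}$ (or $\E\sqrt{\D}$ if the $W^{1,q_-}$ norm of $\p_z^2\mathcal{R}$ forces an extra $\sqrt{\E}$).

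The hard part in executing this plan is keeping track of exactly how many powers of $\sqrt{\E}$ are extracted from the composition terms involving $\mathcal{R}$ and choosing integrability exponents compatible with both the algebra threshold $1 > \max\{1/q_-, 1 - 1/q_-\}$ for Theorem \ref{supercrit_prod} and the fact that $\p_1\dt\eta$ lies in $W^{2-1/q_-,q_-}$ rather than $W^{3-1/q_-,q_-}$. Summing the three term bounds yields $\norm{\p_1 F^3}_{W^{1-1/q_-,q_-}} \ls (\sqrt{\E}+\E)\sqrt{\D}$, which is the claimed estimate.
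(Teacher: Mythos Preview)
Your proposal is correct and follows essentially the same approach as the paper: the paper also expands $\p_1 F^3$ into the same three terms (your $T_1,T_2,T_3$ are its $III,II,I$), applies the product estimate from Theorem \ref{supercrit_prod} with the pairing $W^{1,q_-}\times W^{1-1/q_-,q_-}$, and bounds the $\mathcal{R}$-factors via Proposition \ref{R_prop} exactly as you describe, including the reuse of \eqref{ne1_g4_3} for the product $\p_1^2\eta\,\p_1\dt\eta$. The only minor slip is that $\norm{\p_y\p_z\mathcal{R}\,\p_1^2\zeta_0}_{W^{1,q_-}}$ is controlled by $\norm{\eta}_{W^{2,q_-}}$ (not just $\norm{\p_1\eta}_{L^\infty}$, since the derivative brings in $\p_1^2\eta$), but this is still $\ls\sqrt{\E}$ and does not affect the outcome.
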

\begin{proof} 
We begin by expanding
\begin{multline}
\p_1 F^3 = \p_1 \dt [  \mathcal{R}(\p_1 \zeta_0,\p_1 \eta)] = 
 \p_z \p_y \mathcal{R}(\p_1 \zeta_0,\p_1 \eta) \p_1^2 \zeta_0 \p_1 \dt \eta \\
+ \p_z^2  \mathcal{R}(\p_1 \zeta_0,\p_1 \eta) \p_1^2 \eta \p_1 \dt \eta
+ \p_z  \mathcal{R}(\p_1 \zeta_0,\p_1 \eta) \p_1^2 \dt \eta
:= I + II +III.
\end{multline}
Since $1 > \max\{1/q_-,1-1/q_1\}$ we can use Theorem \ref{supercrit_prod} to bound 
\begin{equation}\label{ne1_g6_1}
 \norm{\varphi \psi}_{W^{1-1/q_-,q_-}}\ls \norm{\varphi}_{W^{1,q_-}} \norm{\psi}_{W^{1-1/q_-,q_-}},
\end{equation}
and we will this to handle each of $I$, $II$, and $III$.

We begin with $I$ by using \eqref{ne1_g6_1} twice together with the fact that $q_- < q_+$ to bound 
\begin{multline}
 \norm{I}_{W^{1-1/q_-,q_-}} \ls \norm{\p_z \p_y \mathcal{R}(\p_1 \zeta_0,\p_1 \eta)}_{W^{1,q_-}}  \norm{\p_1^2 \zeta_0 \p_1 \dt \eta}_{W^{1-1/q_-,q_-}}  \\
 \ls  \norm{\p_z \p_y \mathcal{R}(\p_1 \zeta_0,\p_1 \eta)}_{W^{1,q_-}}  \norm{\p_1^2 \zeta_0}_{W^{1,q_-}} \norm{ \p_1 \dt \eta}_{W^{1-1/q_-,q_-}} \ls 
 \norm{\p_z \p_y \mathcal{R}(\p_1 \zeta_0,\p_1 \eta)}_{W^{1,q_-}}   \sqrt{\D}.
\end{multline}
For $II$ we also use \eqref{ne1_g6_1} twice and $q_- < q_+$ to see that 
\begin{multline}
\norm{II}_{W^{1-1/q_-,q_-}}  \ls   \norm{ \p_z^2  \mathcal{R}(\p_1 \zeta_0,\p_1 \eta)}_{W^{1,q_-}} \norm{ \p_1^2 \eta \p_1 \dt \eta}_{W^{1-1/q_-,q_-}} \\
\ls  \norm{ \p_z^2  \mathcal{R}(\p_1 \zeta_0,\p_1 \eta)}_{W^{1,q_-}} \norm{ \p_1^2 \eta }_{W^{1-1/q_-,q_-}}  \norm{\p_1 \dt \eta  }_{W^{1,q_-}} 
\ls \norm{ \p_z^2  \mathcal{R}(\p_1 \zeta_0,\p_1 \eta)}_{W^{1,q_-}} \sqrt{\E} \sqrt{\D}.
\end{multline}
For $III$ we only apply \eqref{ne1_g6_1} once to see that 
\begin{equation}
  \norm{III}_{W^{1-1/q_-,q_-}}\ls \norm{\p_z  \mathcal{R}(\p_1 \zeta_0,\p_1 \eta) }_{W^{1,q_-}} \norm{\p_1^2 \dt \eta}_{W^{1-1/q_-,q_-}} \ls \norm{\p_z  \mathcal{R}(\p_1 \zeta_0,\p_1 \eta) }_{W^{1,q_-}} \sqrt{\D}.
\end{equation}

It remains to handle the $\mathcal{R}$ terms in these estimates.  For this we use Proposition \ref{R_prop} to bound 
\begin{equation}
   \norm{ \p_z^2  \mathcal{R}(\p_1 \zeta_0,\p_1 \eta)}_{W^{1,q_-}} \ls 1 + \norm{\p_1 \eta}_{W^{1,q_-}} \ls 1 + \norm{\eta}_{W^{2,q_-}} \ls 1 + \sqrt{\E}
\end{equation}
and
\begin{equation}
 \norm{\p_z \p_y \mathcal{R}(\p_1 \zeta_0,\p_1 \eta)}_{W^{1,q_-}} + \norm{\p_z  \mathcal{R}(\p_1 \zeta_0,\p_1 \eta) }_{W^{1,q_-}} \ls \norm{\eta}_{W^{2,q_-}} \ls   \sqrt{\E}.
\end{equation}
Combining these bounds with the above, we deduce that 
\begin{equation}
 \norm{\p_1 F^3}_{W^{1-1/q_-,q_-}} \ls \left( \sqrt{\E} + \E\right) \sqrt{\D},
\end{equation}
as desired.

\end{proof}

Finally, we bound the $G^7$ term from \eqref{ne1_Gform}.

\begin{prop}\label{ne1_g7}
Let $F^7$ be as defined by \eqref{dt2_f7}.  Then we have the estimate 
\begin{equation}
 [\linz  F^7]_\ell \ls \sqrt{\E} \sqrt{\D}.
\end{equation}
\end{prop}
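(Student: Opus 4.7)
The plan is to exploit the fact that $\swh'(0)=0$, which holds because $\swh(z) = \linz^{-1}\sw(z) - z$ and $\linz = \sw'(0)$ by \eqref{linz_def}. Combined with $\swh \in C^2$, Taylor's theorem then gives the pointwise bound $|\swh'(z)| \ls |z|$ for $z$ in any compact neighborhood of the origin, which is applicable at the contact points since the smallness hypothesis $\E \le \gamma^2 < 1$ keeps $\norm{\dt \eta}_{L^\infty}$ uniformly small.

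First I would use the chain rule to identify $F^7$ from \eqref{dt1_f7}, up to the $\linz$ factor, as $\swh'(\dt \eta) \dt^2 \eta$. Applying the definition of the contact-point bracket \eqref{bndry_pairing}, together with the pointwise bound above, I would then estimate
\begin{equation}
[\linz F^7]_\ell \ls \sum_{a=\pm 1} |\swh'(\dt \eta(a\ell,t))|\,|\dt^2 \eta(a\ell,t)|
\ls \norm{\dt \eta}_{L^\infty} \norm{\dt^2 \eta}_{L^\infty}.
\end{equation}

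The remaining step is to absorb each factor into one of the required square roots. For the first factor, the one-dimensional Sobolev embedding $H^1((-\ell,\ell)) \hookrightarrow L^\infty((-\ell,\ell))$, together with the presence of $\ns{\dt\eta}_{H^1}$ as a summand of $\seb$ in the energy \eqref{E_def}, yields $\norm{\dt \eta}_{L^\infty} \ls \sqrt{\E}$. For the second factor, the constraints \eqref{kappa_ep_def} guarantee $3/2-\low > 1/2$, so the embedding $H^{3/2-\low}((-\ell,\ell)) \hookrightarrow L^\infty((-\ell,\ell))$ combined with the summand $\ns{\dt^2 \eta}_{H^{3/2-\low}}$ in the dissipation \eqref{D_def} gives $\norm{\dt^2 \eta}_{L^\infty} \ls \sqrt{\D}$. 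Multiplying the two bounds delivers the claim.

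No serious obstacle is anticipated; the estimate is essentially a one-line consequence of the quadratic vanishing of $\swh$ at the origin together with Sobolev embedding, once one notes that the dissipation's $H^{3/2-\low}$ control of $\dt^2 \eta$ (which is precisely the enhancement built into the definition of $\D$) is what supplies the $\sqrt{\D}$ factor on one of the contact-point values.
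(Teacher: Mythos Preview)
Your argument is correct and follows essentially the same route as the paper: both use $\swh'(0)=0$ to get $|\swh'(z)|\ls|z|$ near the origin, then bound the product $|\dt\eta|\,|\dt^2\eta|$ at the contact points. The only cosmetic difference is that the paper controls the $\dt^2\eta$ factor via the contact-point term $[\dt^2\eta]_\ell$ that sits directly inside $\sdb\subset\D$, whereas you route it through $\norm{\dt^2\eta}_{H^{3/2-\low}}$; both choices work equally well here.
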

\begin{proof}
The definition of $\swh \in C^2$ in \eqref{V_pert} shows that $\abs{\swh'(z)} \ls z$ for $\abs{z} \ls 1$.  From this, standard trace theory, and the definition of $E$ and $\D$ in \eqref{E_def} and \eqref{D_def}, respectively, we may then bound 
\begin{equation}
 [\linz  F^7]_\ell \ls \max_{\pm \ell} \abs{\dt \eta} \abs{\dt^2 \eta} \ls \norm{\dt \eta}_{H^1} [\dt^2 \eta]_{\ell} \ls \sqrt{\E} \sqrt{\D}. 
\end{equation}
\end{proof}

\subsection{Two time derivatives}

We will not apply Theorem  \ref{A_stokes_stress_solve} to the twice time differentiated problem.  However, we will need the following pair of estimates, which are in the same spirit as the above elliptic estimates.  The first gives estimates of $F^2$ from \eqref{dt2_f2}.

\begin{prop}\label{ne2_f2}
Let $F^2$ be given by \eqref{dt2_f2}.  Then we have the estimates
\begin{equation}\label{ne2_f2_01}
\norm{J F^2}_{L^{4/(3-2\ep_+)}}  \ls \E,
\end{equation}
\begin{equation}\label{ne2_f2_02}
\norm{J F^2}_{L^{2/(1-\ep_-)}}  \ls \sqrt{\E} \sqrt{\D},
\end{equation}
and 
\begin{equation}\label{ne2_f2_03}
  \norm{\dt(J F^2)}_{L^{q_-}}  \ls (\sqrt{\E} + \E) \sqrt{\D}.
\end{equation}
\end{prop}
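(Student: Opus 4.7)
My plan is to treat all three estimates through a term-by-term application of H\"older's inequality, using the catalogs of Theorems \ref{catalog_energy} and \ref{catalog_dissipation} as the underlying $L^q$ bounds. The starting point is the explicit form of $F^2$ from \eqref{dt2_f2}, namely $F^2 = - \diverge_{\dt^2 \A} u - 2 \diverge_{\dt \A} \dt u$, so that $JF^2$ is, schematically, a sum of terms of the form $\dt^2 \bar{\eta} \cdot \nab u$ and $\dt \bar{\eta} \cdot \nab \dt u$, together with lower-order products generated by the chain rule applied to $J$ and $K=J^{-1}$ (these extra factors are always bounded in $L^\infty$ by $\sqrt{\E}$ and are harmless). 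Once this reduction is made, the work reduces to choosing the right H\"older pairings and checking the exponent inequalities forced by \eqref{kappa_ep_def}.

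For estimate \eqref{ne2_f2_01}, I would only invoke Theorem \ref{catalog_energy}. The leading pairing is $\norm{\dt^2 \bar{\eta} \cdot \nab u}_{L^{4/(3-2\ep_+)}} \ls \norm{\dt^2 \bar{\eta}}_{L^4}\norm{\nab u}_{L^{2/(1-\ep_+)}} \ls \sqrt{\E}\sqrt{\E} = \E$, noting that $1/4 + (1-\ep_+)/2 = (3-2\ep_+)/4$. For the $\dt \bar{\eta} \cdot \nab \dt u$ piece I would pair $\dt \bar{\eta} \in L^\infty$ with $\nab \dt u \in L^{4/(2-\ep_-)}$; the constraint needed to embed $L^{4/(2-\ep_-)}$ into $L^{4/(3-2\ep_+)}$ on a bounded domain is $2-\ep_- \le 3- 2\ep_+$, which is exactly the condition $\ep_+ \le (\ep_- + 1)/2$ built into \eqref{kappa_ep_def}.

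For estimate \eqref{ne2_f2_02}, the structure is the same but now one factor in each product is taken from the dissipation catalog. Concretely, I would bound $\norm{\dt^2 \bar{\eta} \cdot \nab u}_{L^{2/(1-\ep_-)}} \ls \norm{\dt^2 \bar{\eta}}_{L^\infty}\norm{\nab u}_{L^{2/(1-\ep_+)}}$ with the first factor from the energy catalog and the second from the dissipation catalog, and similarly $\norm{\dt \bar{\eta} \cdot \nab \dt u}_{L^{2/(1-\ep_-)}} \ls \norm{\dt \bar{\eta}}_{L^\infty}\norm{\nab \dt u}_{L^{2/(1-\ep_-)}}$ with $\sqrt{\E}$ from energy and $\sqrt{\D}$ from dissipation. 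The inclusions $L^{2/(1-\ep_+)} \hookrightarrow L^{2/(1-\ep_-)}$ on the bounded domain $\Omega$ are automatic since $\ep_- < \ep_+$.

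Estimate \eqref{ne2_f2_03} requires first expanding $\dt(JF^2) = \dt J \cdot F^2 + J \dt F^2$ with $\dt F^2 = -\diverge_{\dt^3 \A} u - 3 \diverge_{\dt^2 \A} \dt u - 2 \diverge_{\dt \A} \dt^2 u$. The resulting catalog of terms is schematically $\dt^3 \bar{\eta}\cdot \nab u$, $\dt^2 \bar{\eta}\cdot \nab \dt u$, $\dt \bar{\eta}\cdot \nab \dt^2 u$, and products containing at least two time-differentiated factors of $\bar{\eta}$ times $\nab u$ or $\nab \dt u$. All but one are routine: for instance, $\norm{\dt^2 \bar{\eta} \cdot \nab \dt u}_{L^{q_-}} \ls \norm{\dt^2 \bar{\eta}}_{L^\infty}\norm{\nab \dt u}_{L^{q_-}} \ls \sqrt{\E}\sqrt{\D}$, and $\norm{\dt \bar{\eta} \cdot \nab \dt^2 u}_{L^{q_-}} \ls \norm{\dt \bar{\eta}}_{L^\infty}\norm{\nab \dt^2 u}_{L^2} \ls \sqrt{\E}\sqrt{\D}$. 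The genuinely delicate term, and what I expect to be the main obstacle, is $\dt^3 \bar{\eta} \cdot \nab u$, because $\dt^3 \bar{\eta}$ is only in $L^{2/\low}$ with $\low$ tiny, so integrability is essentially saturated. The available pairing is $\norm{\dt^3 \bar{\eta} \cdot \nab u}_{L^{q_-}} \ls \norm{\dt^3 \bar{\eta}}_{L^{2/\low}}\norm{\nab u}_{L^{2/(1-\ep_+)}}$, and the H\"older exponent match requires $\low/2 + (1-\ep_+)/2 \le (2-\ep_-)/2$, i.e., $\low \le 1 + \ep_+ - \ep_-$; this is comfortably guaranteed by the choice $0 < \low < (\ep_+ - \ep_-)/2$ in \eqref{kappa_ep_def}, and then dissipation supplies $\sqrt{\D}$ for $\dt^3 \bar{\eta}$ while energy supplies $\sqrt{\E}$ for $\nab u$. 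Combining all the terms then yields the stated bound $(\sqrt{\E} + \E)\sqrt{\D}$, with the $\E\sqrt{\D}$ contribution coming from the mixed products of two $\bar{\eta}$-derivatives with a velocity derivative.
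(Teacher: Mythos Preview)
Your overall strategy---term-by-term H\"older using the catalogs of Theorems \ref{catalog_energy} and \ref{catalog_dissipation}---is exactly the paper's approach. However, your schematic reduction systematically drops a spatial derivative: the matrix $\A$ depends on $\nab \bar{\eta}$ (see \eqref{A_def}, \eqref{AJK_def}), so $\dt^k \A$ is controlled at leading order by $\nab \dt^k \bar{\eta}$, not by $\dt^k \bar{\eta}$ alone. This miscount makes two of your proposed pairings too optimistic.

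For \eqref{ne2_f2_02}, you place $\dt^2 \A$ in $L^\infty$ via the energy catalog and pair with $\nab u$ from dissipation. But the energy catalog only gives $\nab \dt^2 \bar{\eta} \in L^4$, so $\dt^2 \A \in L^4$ at best from energy; the resulting H\"older condition $(3-2\ep_+)/4 \le (1-\ep_-)/2$ would force $\ep_+ - \ep_- \ge 1/2$, which \eqref{kappa_ep_def} does not guarantee. The paper reverses the assignment: it takes $\dt^2 \A \in L^{2/\low}$ from the \emph{dissipation} catalog (via $\nab \dt^2 \bar{\eta} \in L^{2/\low}$) and $\nab u \in L^{2/(1-\ep_+)}$ from energy, yielding the constraint $\low < \ep_+ - \ep_-$, which is exactly what \eqref{kappa_ep_def} supplies.

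For \eqref{ne2_f2_03}, the critical term $\diverge_{\dt^3 \A} u$ involves $\nab \dt^3 \bar{\eta}$, for which the dissipation catalog gives only $L^{2/(1+2\low)}$, not $L^{2/\low}$ (that exponent is for $\dt^3 \bar{\eta}$ itself). The correct H\"older check is then $(1+2\low)/2 + (1-\ep_+)/2 \le (2-\ep_-)/2$, i.e.\ $2\low < \ep_+ - \ep_-$, which is the sharper condition $\low < (\ep_+-\ep_-)/2$ in \eqref{kappa_ep_def}. Your weaker check $\low \le 1 + \ep_+ - \ep_-$ is true but not the binding constraint; this step is precisely where that part of \eqref{kappa_ep_def} earns its keep, so it should be stated correctly.
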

\begin{proof}
First note that \eqref{kappa_ep_def} requires that $0 <3 - 2 \ep_+ <1$, $0 < 1 - (\ep_+-\low) < 1$, and 
\begin{equation}\label{ne2_f2_1}
 \ep_+ \le \frac{1+\ep_-}{2} \Rightarrow \frac{4}{3-2\ep_+} \le \frac{4}{2-\ep_-}.
\end{equation}
Also, from \eqref{dt2_f2} we have that
\begin{equation}
 F^2 = -\diverge_{\dt^2 \A} u - 2\diverge_{\dt \A}\dt u.
\end{equation}
Then from Theorems \ref{catalog_energy} and \ref{catalog_dissipation} and H\"{o}lder's inequality we can bound 
\begin{multline}
 \norm{J \diverge_{\dt^2 \A} u}_{L^{4/(3-2\ep_+)}}    \ls  \norm{\abs{\dt^2 \A} \abs{\nab u}}_{L^{4/(3-2\ep_+)}} \ls \norm{\dt^2 \A}_{L^4} \norm{\nab u}_{L^{2/(1-\ep_+)}} \\
\ls \left(\norm{\dt \bar{\eta}}_{W^{1,4}} + \norm{\dt^2 \bar{\eta}}_{W^{1,4}}    \right) \norm{\nab u}_{L^{2/(1-\ep_+)}} \ls \E
\end{multline}
and, also using \eqref{ne2_f2_1},
\begin{multline}
 \norm{J  \diverge_{\dt \A}\dt u}_{L^{4/(3-2\ep_+)}} \ls  \norm{J  \diverge_{\dt \A}\dt u}_{L^{4/(2-\ep_-)}} \ls  \norm{\abs{\dt \A} \abs{\nab \dt u} }_{L^{4/(2-\ep_-)}} \\
 \ls \norm{\dt \bar{\eta}}_{W^{1,\infty}} \norm{\nab \dt u}_{L^{4/(2-\ep_-)}} \ls \E.
\end{multline}
Thus, \eqref{ne2_f2_01} holds.

To prove \eqref{ne2_f2_02} we argue similarly, first noting that 
\eqref{kappa_ep_def} tells us that
\begin{equation}
0 < \ep_+ - \ep_- -\low  \Rightarrow 1 - (\ep_+-\low) < 1 - \ep_- \Rightarrow \frac{2}{1-\ep_-} < \frac{2}{1-(\ep_+-\low)}.
\end{equation}
Thus, 
\begin{multline}
\norm{J \diverge_{\dt^2 \A} u}_{L^{2/(1-\ep_-)}} \ls 
\norm{J \diverge_{\dt^2 \A} u}_{L^{2/(1-(\ep_+-\low))}}  
\ls  \norm{\abs{\dt^2 \A} \abs{\nab u}}_{L^{2/(1-(\ep_+-\low))}} \\
\ls \norm{\dt^2 \A}_{L^{2/\low}} \norm{\nab u}_{L^{2/(1-\ep_+)}} 
\ls \left(\norm{\dt \bar{\eta}}_{W^{1,2/\low}} + \norm{\dt^2 \bar{\eta}}_{W^{1,2/\low}}    \right) \norm{\nab u}_{L^{2/(1-\ep_+)}} \ls \sqrt{\D} \sqrt{\E}
\end{multline}
and
\begin{equation}
 \norm{J \diverge_{\dt \A}\dt u}_{L^{2/(1-\ep_-)}}   \ls  \norm{\abs{\dt \A} \abs{\nab \dt u}}_{L^{2/(1-\ep_-)}}  \ls \norm{\dt \bar{\eta}}_{W^{1,\infty}} \norm{\nab \dt u}_{L^{2/(1-\ep_-)}} \ls \sqrt{\E} \sqrt{\D},
\end{equation}
and \eqref{ne2_f2_02} follows.

Finally, note that
\begin{equation}
 \abs{\dt(J F^2)} \ls \abs{\dt^3 \A}\abs{\nab u}  + \abs{\dt^2 \A} \abs{\nab \dt u} + \abs{\dt \A} \abs{\nab \dt^2 u} + \abs{\dt J}\left(\abs{\dt^2 \A} \abs{\nab u} + \abs{\dt \A} \abs{\nab \dt u} \right),
\end{equation}
while \eqref{kappa_ep_def} tells us that $0 < \ep_+ - 2 \low < 1$, 
\begin{equation}
\low < \frac{\ep_+-\ep_-}{2} \Rightarrow 
\ep_- < \ep_+ - 2\low 
 \Rightarrow \frac{1-\ep_-}{2} + \frac{1+2\low}{2} = 1 -\frac{(\ep_+-2\low)}{2} < 1 - \frac{\ep_-}{2} = \frac{1}{q_-},
\end{equation}
and 
\begin{equation}
 \frac{1-\ep_-}{2} + \hal  = \frac{2-\ep_-}{2} = \frac{1}{q_-},
\end{equation}
so we may again use Theorems \ref{catalog_energy} and \ref{catalog_dissipation} and H\"{o}lder's inequality to see that
\begin{multline}
 \norm{\dt(J F^2)}_{L^{q_-}}  \ls \norm{\dt^3 \A}_{L^{2/(1+2\low)}} \norm{\nab u}_{L^{2/(1-\ep_+)}} + \norm{\dt^2 \A}_{L^2} \norm{\nab \dt u}_{L^{2/(1-\ep_-))}}  + \norm{\dt \A}_{L^{\infty}} \norm{\nab \dt^2 u}_{L^{2}}  \\
 + \norm{\dt \bar{\eta} }_{W^{1,\infty}} \left( \norm{\dt^2 \A}_{L^2} \norm{\nab   u}_{L^{2/(1-\ep_+))}}   +  \norm{\dt \A}_{L^\infty} \norm{\nab \dt u}_{L^{2/(1-\ep_-)}}   \right) \\
\ls \left(\norm{\dt \bar{\eta}}_{H^1} + \norm{\dt^2 \bar{\eta}}_{H^1} + \norm{\dt^3 \bar{\eta}}_{W^{1,2/(1+2\low)}} \right) \sqrt{\E}    \\
+ \left(\norm{\dt \bar{\eta}}_{H^1} + \norm{\dt^2 \bar{\eta}}_{H^1} + \norm{\dt \bar{\eta}}_{W^{1,\infty}} \right) \sqrt{\D} + \sqrt{\E} \left(\norm{\dt \bar{\eta}}_{H^1} + \norm{\dt^2 \bar{\eta}}_{H^1} + \norm{\dt \bar{\eta}}_{W^{1,\infty}} \right) \sqrt{\D} \\
\ls \sqrt{\D} \sqrt{\E} + \sqrt{\E} \sqrt{\D} + \E \sqrt{\D}.
\end{multline} 
Then \eqref{ne2_f2_03} follows.
\end{proof}

Next we provide a bound for  $F^6$ from \eqref{dt2_f6}.

\begin{prop}\label{ne2_f6}
Let $F^6$ be given by \eqref{dt2_f6} and $\N$ be given by \eqref{N_def}.  Then we have the estimates
\begin{equation}
 \norm{F^6}_{H^{1/2-\low}} \ls \sqrt{\E} \sqrt{\D}
\end{equation}
and 
\begin{equation}
 \norm{\dt^2 u \cdot \N}_{H^{1/2}((-\ell,\ell))} \ls (1+\sqrt{\E}) \norm{\dt^2 u}_{H^1}.
\end{equation}
\end{prop}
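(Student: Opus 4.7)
The plan is to first read off from \eqref{dt2_f6} that $F^6 = -2\dt u_1 \p_1 \dt \eta - u_1 \p_1 \dt^2 \eta$, and to recall that $\N = \N_0 - \p_1 \eta\, e_1$ with $\N_0$ smooth. The unifying idea for both inequalities is that they are subcritical (respectively critical) product estimates in one dimension: each term factorises into a piece carrying Sobolev regularity strictly above $H^{1/2}((-\ell,\ell))$ paired with a piece that sits in $H^{1/2-\low}$ (first estimate) or $H^{1/2}$ (second estimate). In each instance I will invoke the supercritical product estimate of Proposition \ref{supercrit_prod} with one factor measured in a Sobolev index above $1/2$.

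For the first estimate I take the velocity traces as the ``smooth'' factors. The energy controls $\dt u$ in $H^{1+\ep_-/2}(\Omega)$, so by standard trace $\dt u \vert_\Sigma \in H^{1/2+\ep_-/2}$, which sits strictly above $H^{1/2}$; similarly $u \in W^{2,q_+}(\Omega)$ has trace in $W^{1+\ep_+/2, q_+}((-\ell,\ell))$, and the 1D Sobolev embedding places this in $H^{1/2+\ep_+}$. On the other factors, the dissipation (see \eqref{D_def}) controls $\dt \eta$ and $\dt^2 \eta$ in $H^{3/2-\low}$, so $\p_1 \dt \eta$ and $\p_1 \dt^2 \eta$ lie in $H^{1/2-\low}$ with norms bounded by $\sqrt{\D}$. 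Multiplying via Proposition \ref{supercrit_prod} then yields
\begin{equation*}
\norm{\dt u_1 \p_1 \dt \eta}_{H^{1/2-\low}} + \norm{u_1 \p_1 \dt^2 \eta}_{H^{1/2-\low}} \ls \sqrt{\E}\sqrt{\D},
\end{equation*}
which summed gives the claimed bound on $F^6$.

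For the second estimate I split $\dt^2 u \cdot \N = \dt^2 u \cdot \N_0 - \dt^2 u_1 \p_1 \eta$. The first summand is handled by trace alone: since $\N_0$ is smooth,
\begin{equation*}
\norm{\dt^2 u \cdot \N_0}_{H^{1/2}((-\ell,\ell))} \ls \norm{\dt^2 u}_{H^{1/2}(\Sigma)} \ls \norm{\dt^2 u}_{H^1(\Omega)}.
\end{equation*}
For the second summand, the trace of $\dt^2 u$ lies in $H^{1/2}$, while $\p_1 \eta$ inherits from $\eta \in W^{3-1/q_+, q_+}((-\ell,\ell))$ the regularity $W^{2-1/q_+,q_+} = W^{1+\ep_+/2, q_+}$, which embeds into $H^{1/2+\ep_+}$ with norm bounded by $\sqrt{\E}$. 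Applying Proposition \ref{supercrit_prod} once more produces $\norm{\dt^2 u_1 \p_1 \eta}_{H^{1/2}} \ls \sqrt{\E}\, \norm{\dt^2 u}_{H^1}$, and adding the two pieces yields the stated bound.

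The main technical point, though routine, is to verify that the parameter constraints of \eqref{kappa_ep_def} make all of the relevant Sobolev indices strictly exceed $1/2$, so that the supercritical product estimate is applicable; this is comfortable for the $\dt u$ factor (where one has the explicit margin $\ep_-/2 > 0$) and a bit tighter for the $u$ and $\p_1 \eta$ factors, where one uses that $q_+ < 2$ together with the 1D embedding to produce the requisite room above $H^{1/2}$.
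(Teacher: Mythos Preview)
Your proof is correct and follows essentially the same approach as the paper: both arguments split into the two summands of $F^6$ and into the $\N_0$ and $\p_1\eta$ pieces of $\N$, then apply the supercritical product estimate of Proposition~\ref{supercrit_prod} with one factor placed strictly above $H^{1/2}$. The only cosmetic difference is in the term $\dt u_1\,\p_1\dt\eta$: the paper places $\p_1\dt\eta$ in the high slot (via $\dt\eta\in W^{3-1/q_-,q_-}\hookrightarrow H^{3/2+\ep_-}$, controlled by $\D$) and $\dt u_1$ in the low slot, whereas you swap the roles, putting $\dt u_1\in H^{1/2+\ep_-/2}(\Sigma)$ (controlled by $\E$) as the high factor and $\p_1\dt\eta\in H^{1/2-\low}$ (controlled by $\D$) as the low one; your allocation is slightly more direct since it avoids the extra Sobolev embedding.
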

\begin{proof}
According to \eqref{dt2_f6}, Theorem \ref{supercrit_prod} with $\hal + \ep_\pm > \max\{\hal,\hal-\low\}$, and trace theory  we have that 
\begin{multline}
 \norm{F^6}_{H^{1/2-\low}} \ls \norm{\dt u_1 \p_1 \dt \eta}_{H^{1/2-\low}} + \norm{u_1 \p_1 \dt^2 \eta}_{H^{1/2-\low}} \\
\ls 
\norm{\dt u_1 }_{H^{1/2-\low}(\Sigma)}
\norm{\p_1 \dt \eta}_{H^{1/2+\ep_-}} 
+ \norm{u_1 }_{H^{1/2+\ep_+}(\Sigma)}
\norm{\p_1 \dt^2 \eta}_{H^{1/2-\low}} \\
\ls 
\norm{\dt u_1 }_{H^{1-\low}(\Omega)}
\norm{\dt \eta}_{H^{3/2+\ep_-}} 
+ \norm{u_1 }_{H^{1+\ep_+}(\Omega)}
\norm{\dt^2 \eta}_{H^{3/2-\low}}.
\end{multline}
Note that 
\begin{equation}
 \frac{1}{2} = \frac{2-\ep_-}{2} - \frac{1}{1} \left( 2 + \frac{\ep_-}{2} - \frac{3}{2} - \ep_- \right) = \frac{1}{q_-} - \frac{1}{1}\left(3 - \frac{1}{q_-} - \frac{3}{2}-\ep_- \right),
\end{equation}
and 
\begin{equation}
 \hal = \frac{2-\ep_+}{2} - \frac{1}{2}\left( 2 - 1 - \ep_+ \right) = \frac{1}{q_+}  - \frac{1}{2} \left( 2 - (1+\ep_+)\right),
\end{equation}
so the Sobolev embeddings show that 
\begin{equation}
 W^{3-1/q_-,q_-}((-\ell,\ell)) \hookrightarrow H^{3/2 + \ep_-}((-\ell,\ell))
\end{equation}
and 
\begin{equation}
 W^{2,q_+}(\Omega) \hookrightarrow H^{1+\ep_+}(\Omega).
\end{equation}
Hence, 
\begin{equation}
 \norm{\dt \eta}_{H^{3/2+\ep_-}} \ls  \norm{\dt \eta}_{W^{3-1/q_-,q_-}} \ls \sqrt{\E}
\end{equation}
and 
\begin{equation}
 \norm{u_1}_{H^{1+\ep_+}(\Omega)} \ls \norm{u}_{W^{2,q_+}} \ls \sqrt{\E}.
\end{equation}
Moreover, since $1 - \low < 1$ and $2 < 2/(1-\ep_-)$ we can use Theorem \ref{catalog_dissipation} to bound 
\begin{equation}
 \norm{\dt u_1}_{H^{1-\low}(\Omega)} \ls  \norm{\dt u_1}_{H^{1}} \ls \sqrt{\D}.
\end{equation}
Thus, upon combining all of these, we deduce that 
\begin{equation}
  \norm{F^6}_{H^{1/2-\low}} \ls \sqrt{\D} \sqrt{\E} + \sqrt{\E} \sqrt{\D},
\end{equation}
as desired.  

For the second estimate we use the fact that $H^{1/2}((-\ell,\ell))$ is an algebra in conjunction with trace theory and the embedding \eqref{nie_ST_3}:
\begin{multline}
\norm{\dt^2 u \cdot \N}_{H^{1/2}((-\ell,\ell))} =   \norm{\dt^2 u\cdot (1,-\p_1 \zeta_0)  }_{H^{1/2}((-\ell,\ell))} +   \norm{\dt^2 u_2 \p_1 \eta}_{H^{1/2}((-\ell,\ell))} \\
\ls \norm{\dt^2 u  }_{H^{1/2}(\Sigma)} \left(1 + \norm{\p_1 \eta}_{H^{1/2}} \right)
\ls \norm{\dt^2 u  }_{H^{1}(\Omega)} \left(1 + \norm{\eta}_{W^{3-1/q_+,q_+}}  \right)
\ls \norm{\dt^2 u}_{H^1} (1+\sqrt{\E}).
\end{multline}
\end{proof}

\section{Functional calculus of the gravity-capillary operator}\label{sec_fnal_calc}

In this section we record some essential properties of the gravity-capillary operator, $\K$, associated to the equilibrium $\zeta_0 : [-\ell,\ell] \to \R$ from Theorem \ref{zeta0_wp}, with gravitational coefficient $g >0$ and surface tension $\sigma >0$.  In particular, we develop the functional calculus associated to $\K$ with Neumann-type boundary conditions, we study a scale of custom Sobolev spaces built in terms of the eigenfunctions of $\K$, and we consider some useful approximations of the fractional differential operator $D^s = (\K)^{s/2}$.

\subsection{Basic spaces and the gravity-capillary operator}

We write the inner-products on $L^2((-\ell,\ell)) = H^0((-\ell,\ell))$ and $H^1((-\ell,\ell))$  via 
\begin{equation}\label{bndry_ips}
 (\varphi,\psi)_{0,\Sigma} := \int_{-\ell}^\ell \varphi \psi \text{ and } (\varphi,\psi)_{1,\Sigma} := \int_{-\ell}^\ell g \varphi \psi + \sigma \frac{\p_1 \varphi \p_1 \psi }{(1+\abs{\p_1 \zeta_0}^2)^{3/2}}.
\end{equation}
It's clear from the properties of $\zeta_0$ stated in  Theorem \ref{zeta0_wp} that the latter generates a norm equivalent to the standard one on $H^1((-\ell,\ell))$ and thus generates the standard topology.  Recall from \eqref{bndry_pairing} that for pairs $\varphi,\psi : \{-\ell,\ell\} \to \R$ we write  
\begin{equation}
 [\varphi,\psi]_\ell =  \varphi(-\ell)\psi(-\ell) + \varphi(\ell)\psi(\ell) \text{ and } [\varphi]_\ell = \sqrt{[\varphi,\varphi]_\ell},
\end{equation}
and we will often slightly abuse this notation by writing $[\varphi,\psi]_\ell$ when either $\varphi$ or $\psi$ is a function on $(-\ell,\ell)$ with well-defined traces, in which case the understanding is that the map on $\{-\ell,\ell\}$ is defined by the trace.

The inner-product gives rise to the following elliptic operator, which we call the gravity-capillary operator associated to $\zeta_0$:
\begin{equation}\label{cap_op_def}
 \K \varphi := g \varphi - \sigma \p_1 \left( \frac{\p_1 \varphi  }{(1+\abs{\p_1 \zeta_0}^2)^{3/2}} \right).
\end{equation}
The associated boundary operators are
\begin{equation}\label{bnrdy_op_def}
 \B_\pm \psi = \pm\frac{\psi'(\pm\ell)}{(1+\abs{\zeta_0'(\pm\ell)}^2)^{3/2}},
\end{equation}
and we write $\B\psi : \{-\ell,\ell\} \to \R$ via $\B\psi(\pm\ell) \B_{\pm} \psi$.   Then $\K$ and $\B$ intertwine our choice of inner-products on $L^2((-\ell,\ell))$ and $H^1((-\ell,\ell))$ via 
\begin{equation}\label{weak_motivation}
 (\varphi,\psi)_{1,\Sigma} = (\K \varphi,\psi)_{0,\Sigma} + [\B\varphi,\psi]_\ell
\end{equation}
for $\varphi,\psi \in C^2([-\ell,\ell]) $.   

We now aim to study the properties of $\K$ and $\B$.  We begin with a version of the Riesz representation.

\begin{thm}\label{riesz_iso}
The map $\J : H^{1}((-\ell,\ell)) \to [H^1((-\ell,\ell))]^\ast$  defined via  $\br{\J \varphi,\psi} = (\varphi,\psi)_{1,\Sigma}$ is an isomorphism. 
\end{thm}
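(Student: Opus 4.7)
The plan is to recognize this as a direct application of the classical Riesz representation theorem, with the only real work being to verify that $(\cdot,\cdot)_{1,\Sigma}$ genuinely defines a Hilbert space inner product on $H^1((-\ell,\ell))$ equivalent to the standard one. Since the text just before the theorem already asserts this equivalence, the proof should be brief.

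First I would check that $(\cdot,\cdot)_{1,\Sigma}$ is a symmetric, bilinear, positive definite form. Bilinearity and symmetry are immediate from the definition in \eqref{bndry_ips}. For the equivalence of norms, I would use that $\zeta_0 \in C^\infty([-\ell,\ell])$ from Theorem \ref{zeta0_wp}, so there exist constants $0 < c_0 \le C_0$ with $c_0 \le (1+\abs{\p_1 \zeta_0}^2)^{-3/2} \le C_0$ on $[-\ell,\ell]$. This yields the two-sided bound
\begin{equation}
\min\{g, \sigma c_0\} \norm{\varphi}_{H^1}^2 \le (\varphi,\varphi)_{1,\Sigma} \le \max\{g, \sigma C_0\} \norm{\varphi}_{H^1}^2
\end{equation}
for every $\varphi \in H^1((-\ell,\ell))$, and in particular positive definiteness and the norm equivalence.

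Next I would observe that this equivalence turns $(H^1((-\ell,\ell)), (\cdot,\cdot)_{1,\Sigma})$ into a Hilbert space (completeness is inherited from the standard $H^1$ structure via the equivalent norm). The map $\J$ is manifestly linear, and for each $\varphi \in H^1((-\ell,\ell))$ the functional $\psi \mapsto (\varphi,\psi)_{1,\Sigma}$ is bounded by the Cauchy--Schwarz inequality together with the upper norm bound above, so $\J$ takes values in $[H^1((-\ell,\ell))]^\ast$ and is bounded. The classical Riesz representation theorem applied to this Hilbert space then asserts that for each $\Lambda \in [H^1((-\ell,\ell))]^\ast$ there exists a unique $\varphi \in H^1((-\ell,\ell))$ with $\Lambda(\psi) = (\varphi,\psi)_{1,\Sigma}$ for all $\psi$, which is precisely the statement that $\J$ is a bijection; moreover it is an isometry between $H^1$ equipped with $(\cdot,\cdot)_{1,\Sigma}$ and its dual, hence an isomorphism of Banach spaces when $H^1$ carries the standard norm (by the norm equivalence). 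There is no real obstacle here; the theorem serves as a setup step for introducing the spectral framework of $\K$ developed in the subsequent subsections.
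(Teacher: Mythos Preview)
Your proof is correct and follows exactly the same approach as the paper, which simply cites the Riesz representation theorem. You have supplied the details behind the norm equivalence that the paper states just above the theorem, but the core argument is identical.
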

\begin{proof}
This is the Riesz representation theorem. 
\end{proof}

Next we construct a functional related to the form $[\cdot,\cdot]_\ell$.

\begin{lem}\label{H1_trace}
 Suppose that $h_\pm \in \R$ and that we view $h: \{-\ell,\ell\} \to \R$ via $h(\pm \ell) = \pm h$.  Then the map   $H^1((-\ell,\ell)) \ni \psi \mapsto [h,\psi]_\ell$ is bounded and linear.
\end{lem}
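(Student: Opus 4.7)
The plan is very short since the statement is a standard trace bound in one dimension. Linearity of $\psi \mapsto [h,\psi]_\ell = h(-\ell)\psi(-\ell) + h(\ell)\psi(\ell)$ is immediate from the bilinear form of $[\cdot,\cdot]_\ell$ defined in \eqref{bndry_pairing}: both evaluations $\psi \mapsto \psi(\pm\ell)$ are linear, and multiplying by the fixed constants $h(\pm\ell)$ and summing preserves linearity.

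For boundedness I would invoke the one-dimensional Sobolev embedding $H^1((-\ell,\ell)) \hookrightarrow C^0([-\ell,\ell])$, which in particular gives a universal constant $C>0$ such that $\max\{|\psi(-\ell)|,|\psi(\ell)|\} \le C\|\psi\|_{H^1}$ for all $\psi \in H^1((-\ell,\ell))$. (This is the trace inequality in the trivial one-dimensional case, and the embedding constant depends only on $\ell$.) Combining this with the Cauchy-Schwarz inequality in $\R^2$ applied to the pair $(h(-\ell),h(\ell))$ and $(\psi(-\ell),\psi(\ell))$ yields
\begin{equation*}
\bigl|[h,\psi]_\ell\bigr| \le [h]_\ell \, [\psi]_\ell \le [h]_\ell \sqrt{2}\,C\,\|\psi\|_{H^1},
\end{equation*}
which is the desired bounded-linear-functional estimate with operator norm controlled by $[h]_\ell$.

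There is no real obstacle here: the entire argument is essentially a reminder that point evaluation at a boundary point is a bounded functional on $H^1$ in one dimension. I would write the proof in two or three lines and move on.
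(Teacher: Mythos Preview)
Your proof is correct and follows essentially the same approach as the paper: both invoke the one-dimensional trace estimate $\max\{|\psi(-\ell)|,|\psi(\ell)|\} \ls \norm{\psi}_{H^1}$ to bound the point evaluations, from which boundedness of the functional is immediate.
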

\begin{proof}
This follows immediately from the standard trace  estimate $ \max\{\abs{\psi(\ell)},\abs{\psi(-\ell)} \} \ls \norm{\psi}_{1,\Sigma}.$
\end{proof}

We can now consider weak solutions to the problem
\begin{equation}\label{cap_bndry_eqn}
\begin{cases}
\K \varphi = f \\
\B \varphi = h
\end{cases}
\end{equation}
when $f \in [H^1((-\ell,\ell))]^\ast$ and $h: \{-\ell,\ell\} \to \R$ via $h(\pm \ell) = \pm h$.  Theorem \ref{riesz_iso} allows us to define the weak solution to \eqref{cap_bndry_eqn} as the unique $\varphi \in H^1((-\ell,\ell))$ determined by
\begin{equation}
 (\varphi,\psi)_{1,\Sigma} = \br{f,\psi} + [h,\psi]_\ell \text{ for all } \psi \in H^1((-\ell,\ell)).
\end{equation}
Note that according to \eqref{weak_motivation} any classical (or even strong, i.e. $H^2$) solution is a weak solution in the above sense. Moreover, Theorem \ref{riesz_iso} and Lemma \ref{H1_trace} easily imply that
\begin{equation}
 \norm{\varphi}_{1,\Sigma} \ls \norm{f}_{(H^1)^\ast} + [h]_\ell.
\end{equation}

We next show that if $f=0$ then the weak solution is smooth up to the boundary.

\begin{thm}\label{weak_soln_bnrdy_smooth}
Let $h_\pm \in \R$. Then the following hold.
\begin{enumerate}
 \item There exists a unique $\varphi \in H^1((-\ell,\ell))$ such that 
\begin{equation}\label{weak_soln_bnrdy_smooth_01}
 (\varphi,\psi)_{1,\Sigma} = [h,\psi]_\ell \text{ for all } \psi\in H^1((-\ell,\ell)). 
\end{equation}
 \item We have that $\varphi \in H^m((-\ell,\ell))$ for each $m \in \mathbb{N}$, and 
\begin{equation}
 \norm{\varphi}_{H^m} \ls [h]_\ell.
\end{equation}
 \item We have that $\varphi \in C^\infty([-\ell,\ell])$, and $\varphi$ is a classical solution to 
\begin{equation}\label{weak_soln_bnrdy_smooth_02}
\begin{cases}
\K \varphi = 0 &\text{in }(-\ell,\ell) \\
 \B_\pm \varphi   = h_\pm. 
\end{cases}
\end{equation}
\end{enumerate}
\end{thm}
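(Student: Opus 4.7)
The plan is to dispatch the three items in order: (1) via Riesz, (2) via a one-dimensional elliptic bootstrap, and (3) by integrating by parts to read off the boundary condition. Throughout, write $c(x) := (1+\abs{\p_1\zeta_0(x)}^2)^{3/2}$, which is smooth and bounded below by $1$ on $[-\ell,\ell]$ thanks to Theorem \ref{zeta0_wp}.

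For item (1), Lemma \ref{H1_trace} says that $\Lambda \psi := [h,\psi]_\ell$ is a bounded linear functional on $H^1((-\ell,\ell))$ with $\norm{\Lambda}_{(H^1)^\ast} \ls [h]_\ell$. Theorem \ref{riesz_iso} then immediately produces the unique $\varphi \in H^1$ satisfying \eqref{weak_soln_bnrdy_smooth_01}, together with the $m=1$ case of the bound in item (2).

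For item (2), I would run a standard bootstrap. Testing \eqref{weak_soln_bnrdy_smooth_01} against $\psi \in C_c^\infty((-\ell,\ell))$ yields $\K\varphi = 0$ in the sense of distributions, i.e.\ $\p_1(\p_1\varphi/c) = g\varphi/\sigma$. Since $\varphi \in H^1$ gives $\p_1\varphi/c \in L^2$ and the right-hand side lies in $L^2$, this upgrades to $\p_1\varphi/c \in H^1$, hence $\p_1\varphi \in H^1$ after multiplying by the smooth function $c$, and so $\varphi \in H^2$. Iterating: if $\varphi \in H^m$ for some $m \ge 1$, then $g\varphi/\sigma \in H^m$ while $\p_1\varphi/c \in L^2$, so $\p_1\varphi/c \in H^{m+1}$ and hence $\varphi \in H^{m+2}$. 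Starting from the base cases $m=1$ and $m=2$ (the latter produced by the first step), this reaches every integer order, with the norm bound $\norm{\varphi}_{H^m} \ls [h]_\ell$ (with implicit constant depending on $m$) tracked through the iteration.

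For item (3), the one-dimensional embedding $H^m \hookrightarrow C^{m-1}$ combined with item (2) gives $\varphi \in C^\infty([-\ell,\ell])$. Now that $\varphi \in H^2$, the identity \eqref{weak_motivation} is available for every $\psi \in H^1$; combining with \eqref{weak_soln_bnrdy_smooth_01} yields $(\K\varphi,\psi)_{0,\Sigma} + [\B\varphi - h,\psi]_\ell = 0$. The bulk identity $\K\varphi = 0$ from item (2) kills the volume term, leaving $[\B\varphi - h,\psi]_\ell = 0$ for all $\psi \in H^1$; since $(\psi(-\ell),\psi(\ell))$ ranges over all of $\R^2$ as $\psi$ varies in $H^1$, I conclude $\B_\pm\varphi = h_\pm$, which together with $\K\varphi = 0$ classically is \eqref{weak_soln_bnrdy_smooth_02}. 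I do not expect any genuine obstacle here: this is a textbook one-dimensional elliptic regularity argument for a smoothly varying, uniformly positive operator with Neumann-type data, and the only mild care required is to keep the normalization $\sigma$ and the normal-derivative weight $c$ consistent through the integration by parts producing \eqref{weak_motivation}.
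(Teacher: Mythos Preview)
Your proof is correct and follows essentially the same route as the paper: Riesz for item (1), a one-dimensional elliptic bootstrap for item (2), and integration by parts via \eqref{weak_motivation} for item (3). The only cosmetic difference is that the paper uses the test function $\chi = \psi/z$ (with $z = 1/c$) to isolate $\varphi''$ explicitly as $\varphi'' = -\tfrac{z'}{z}\varphi' + g\tfrac{\varphi}{z}$, whereas you keep the equation in divergence form $\p_1(\p_1\varphi/c) = g\varphi/\sigma$ and multiply by the smooth $c$ afterward; both iterations are equivalent.
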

\begin{proof}
The first item  follows from  Lemma \ref{H1_trace} and Theorem \ref{riesz_iso}.  Now consider the function $z \in C^\infty([-\ell,\ell])$ given by 
\begin{equation}
 z(x) =  \frac{1  }{(1+\abs{\zeta_0'(x)}^2)^{3/2}}
\end{equation}
and note that there exists a constant $z_0 >0$ such that 
\begin{equation}\label{weak_soln_bnrdy_smooth_1}
z(x) \ge z_0 \text{ for all }x \in [-\ell,\ell]. 
\end{equation}
The function $z$ allows us to conveniently rewrite 
\begin{equation}
 (\varphi,\psi)_{1,\Sigma} = \int_{-\ell}^\ell z \varphi' \psi' + g \varphi \psi \text{ for all } \psi \in H^1((-\ell,\ell)).
\end{equation}
Let $\psi \in C^\infty_c((-\ell,\ell))$ and note that the bound \eqref{weak_soln_bnrdy_smooth_1} implies that $\chi = \psi / z \in C^\infty_c((-\ell,\ell)) \subset H^1((-\ell,\ell))$.  Plugging this $\chi$ into \eqref{weak_soln_bnrdy_smooth_01} shows that  $(\varphi,\chi)_{1,\Sigma} = [h,\chi]_\ell = 0.$ Thus 
\begin{equation}
 0 = \int_{-\ell}^\ell  z \varphi' \left(\frac{\psi}{z}\right)' + g \varphi \left(\frac{\psi}{z}\right)  = \int_{-\ell}^\ell  \varphi' \psi' - \frac{z'}{z} \varphi' \psi + g \varphi \left(\frac{\psi}{z}\right),
\end{equation}
and upon rearranging we find that 
\begin{equation}
 \int_{-\ell}^\ell \varphi' \psi' = \int_{-\ell}^{\ell} -\left(-\frac{z'}{z} \varphi' + g\frac{\varphi}{z} \right)\psi \text{ for all } \psi \in C_c^\infty((-\ell,\ell)).
\end{equation}
The definition of weak derivatives then tells us that $\varphi'$ is weakly differentiable, and 
\begin{equation}\label{weak_soln_bnrdy_smooth_2}
 \varphi'' = -\frac{z'}{z} \varphi' + g\frac{\varphi}{z} \in L^2((-\ell,\ell)),
\end{equation}
where the latter inclusion follows from the fact that $\varphi \in H^1((-\ell,\ell))$, $z \in C^\infty([-\ell,\ell])$, and the estimate \eqref{weak_soln_bnrdy_smooth_1}.  Thus $\varphi \in H^2((-\ell,\ell))$, and we may estimate 
\begin{equation}
 \norm{\varphi}_{H^2} \ls \norm{\varphi}_{H^1} \ls [h]_\ell.
\end{equation}

Since $z'/z, g/z \in C^\infty([-\ell,\ell])$ we deduce from \eqref{weak_soln_bnrdy_smooth_2} and a simple induction argument that, in fact, $\varphi \in H^m((-\ell,\ell))$ for all $m \in \mathbb{N}$ and 
\begin{equation}
 \norm{\varphi}_{H^m} \le C_m [h]_\ell
\end{equation}
for a constant $C_m$ depending on $m$. Hence $\varphi \in C^\infty([-\ell,\ell])$.  Returning now to \eqref{weak_soln_bnrdy_smooth_01}, we find, upon using $\psi \in C^\infty([-\ell,\ell])$ and integrating by parts, that
\begin{equation}
 [h,\psi]_\ell =  (\varphi,\psi)_{1,\Sigma} = \int_{-\ell}^\ell \K\varphi \psi + [\B \varphi,\psi]_\ell
\end{equation}
for all $\psi \in C^\infty([-\ell,\ell])$.  This immediately implies the identity \eqref{weak_soln_bnrdy_smooth_02}.

\end{proof}

Next we consider elliptic regularity for \eqref{cap_bndry_eqn} with $f \neq 0$.

\begin{thm}\label{cap_regularity}
Let $h_\pm \in \R$ and $f \in H^m((-\ell,\ell))$ for some  $m \in \mathbb{N}$. Suppose that $\varphi \in H^1((-\ell,\ell))$ is the unique weak solution to \eqref{cap_bndry_eqn}.  Then $\varphi \in H^{m+2}((-\ell,\ell))$, and 
\begin{equation}
 \norm{\varphi}_{H^{m+2}} \ls \norm{f}_{H^m} + [h]_\ell.
\end{equation}
Moreover, $\varphi$ is a strong solution to \eqref{cap_bndry_eqn}.
\end{thm}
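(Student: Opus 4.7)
The plan is to reduce to the homogeneous boundary case by subtracting off the smooth solution from Theorem \ref{weak_soln_bnrdy_smooth} and then bootstrap regularity via a test function trick identical to the one used there.

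First, let $\varphi_1 \in C^\infty([-\ell,\ell])$ be the classical solution to $\K\varphi_1=0$, $\B\varphi_1 = h$ produced by Theorem \ref{weak_soln_bnrdy_smooth}, which satisfies $\norm{\varphi_1}_{H^{m+2}} \ls [h]_\ell$. Set $\varphi_2 = \varphi - \varphi_1 \in H^1((-\ell,\ell))$. By linearity and the definition of weak solution, $\varphi_2$ is the unique weak solution of $(\varphi_2,\psi)_{1,\Sigma} = (f,\psi)_{(H^1)^\ast, H^1}$ for all $\psi \in H^1((-\ell,\ell))$, with the Riesz bound $\norm{\varphi_2}_{H^1} \ls \norm{f}_{(H^1)^\ast} \ls \norm{f}_{H^m}$. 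It therefore suffices to prove the estimate in the case $h=0$, $f \in H^m$.

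Second, I would recycle the argument from the proof of Theorem \ref{weak_soln_bnrdy_smooth}. Setting $z(x) = (1+|\zeta_0'(x)|^2)^{-3/2} \in C^\infty([-\ell,\ell])$ with $z \ge z_0 >0$, for any $\psi \in C_c^\infty((-\ell,\ell))$ I plug the test function $\chi = \psi/z \in C_c^\infty((-\ell,\ell))$ into the weak formulation. Using $(\varphi_2,\chi)_{1,\Sigma} = \int_{-\ell}^\ell f\chi$ and rearranging exactly as in Theorem \ref{weak_soln_bnrdy_smooth} yields the pointwise distributional identity
\begin{equation}
\varphi_2'' = -\frac{z'}{z}\varphi_2' + \frac{g}{z}\varphi_2 - \frac{f}{z}.
\end{equation}
Since $z, z'/z, g/z \in C^\infty([-\ell,\ell])$ and $\varphi_2 \in H^1$, the right-hand side already belongs to $L^2$ when $f \in L^2$, so $\varphi_2 \in H^2$, with $\norm{\varphi_2}_{H^2} \ls \norm{\varphi_2}_{H^1} + \norm{f}_{L^2} \ls \norm{f}_{L^2}$.

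Third, I would iterate: assuming by induction that $\varphi_2 \in H^{k+2}$ with $\norm{\varphi_2}_{H^{k+2}} \ls \norm{f}_{H^k}$ whenever $f \in H^k$ for some $0 \le k < m$, the displayed identity and the smoothness of the coefficients propagate one more derivative, giving $\varphi_2 \in H^{k+3}$ with the corresponding bound. After $m$ steps we obtain $\varphi_2 \in H^{m+2}$ and $\norm{\varphi_2}_{H^{m+2}} \ls \norm{f}_{H^m}$. Finally, since $\varphi_2 \in H^2$, both $\K\varphi_2$ and the trace $\B\varphi_2$ are well defined; inserting $\psi \in C^\infty([-\ell,\ell])$ into the weak equation and integrating by parts via \eqref{weak_motivation} verifies that $\K\varphi_2 = f$ a.e.\ and $\B\varphi_2 = 0$, so $\varphi = \varphi_1+\varphi_2$ is a strong solution to \eqref{cap_bndry_eqn}. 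Combining with the bound on $\varphi_1$ yields $\norm{\varphi}_{H^{m+2}} \ls \norm{f}_{H^m} + [h]_\ell$.

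There is no real obstacle here; the only subtle point is the initial test function trick, which was already executed in Theorem \ref{weak_soln_bnrdy_smooth} and adapts verbatim because the extra $f$ term enters linearly on the right-hand side and retains the required regularity after division by $z$.
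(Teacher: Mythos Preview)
Your proposal is correct and follows essentially the same approach as the paper: both decompose $\varphi$ into the smooth boundary contribution from Theorem \ref{weak_soln_bnrdy_smooth} and a remainder with homogeneous boundary data, then apply the $\chi=\psi/z$ test function trick to obtain the second-order identity and bootstrap. The only differences are cosmetic (you swap the roles of $\varphi_1$ and $\varphi_2$, and your sign on the $f/z$ term is actually the correct one).
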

\begin{proof}

First note that $H^m((-\ell,\ell)) \hookrightarrow H^0((-\ell,\ell)) \hookrightarrow (H^1((-\ell,\ell)) )^\ast,$ where here in the last embedding we inject $H^0$ into $(H^1)^\ast$ in the standard way via 
\begin{equation}
 \br{\varphi,\psi}_\ast = \int_{-\ell}^\ell \varphi \psi = (\varphi,\psi)_{0,\Sigma} \text{ for }\varphi \in H^0((-\ell,\ell)) \text{ and }\psi \in H^1((-\ell,\ell)).
\end{equation}
Consequently, we can use Theorem \ref{riesz_iso} to solve for a unique $\varphi_1 \in H^1((-\ell,\ell))$ satisfying 
\begin{equation}\label{cap_regularity_1}
 (\varphi_1,\psi)_{1,\Sigma} = (f,\psi)_{0,\Sigma}
\end{equation}
and obeying the estimate 
\begin{equation}
 \norm{\varphi_1}_{H^1} \ls \norm{f}_{(H^1)^\ast} \ls \norm{f}_{H^0}.
\end{equation}

On the other hand, Theorem \ref{weak_soln_bnrdy_smooth} provides us with a unique $\varphi_2 \in C^\infty([-\ell,\ell])$ satisfying 
\begin{equation}
 (\varphi_2,\psi)_{1,\Sigma} = [h,\psi]_\ell \text{ for all }\psi \in H^1((-\ell,\ell)).  
\end{equation}
The theorem tells us that 
\begin{equation}
 \norm{\varphi_2}_{H^k} \le C_k [h]_\ell \text{ for all }k \in \N.
\end{equation}
By the uniqueness of weak solutions, we have that $\varphi = \varphi_1 + \varphi_2$.  To conclude we must only show that $\varphi_1 \in H^{m+2}((-\ell,\ell))$ and 
\begin{equation}\label{cap_regularity_3}
 \norm{\varphi_1}_{H^{m+2}} \ls \norm{f}_{H^m}.
\end{equation}

Let $z \in C^\infty([-\ell,\ell])$ be as in the proof of Theorem \ref{weak_soln_bnrdy_smooth}.  For $\psi \in C^\infty_c((-\ell,\ell))$ we have that $\chi = \psi/z \in C^\infty_c((-\ell,\ell))$, and so we can use $\chi \in H^1((-\ell,\ell))$ as a test function in \eqref{cap_regularity_1}; after rearranging, we find that 
\begin{equation}
  \int_{-\ell}^\ell \varphi_1' \psi'    = -  \int_{-\ell}^\ell \left(-\frac{z'}{z} \varphi_1' + g\frac{\varphi_1}{z} - \frac{f}{z} \right)\psi \text{ for all } \psi \in C^\infty_c((-\ell,\ell)).
\end{equation}
From the definition of weak derivatives we then find that $\varphi_1'$ is weakly differentiable, and
\begin{equation}\label{cap_regularity_2}
 \varphi_1'' = -\frac{z'}{z} \varphi_1' + g\frac{\varphi_1}{z} + \frac{f}{z}\in L^2((-\ell,\ell)),
\end{equation}
which implies that $\varphi_1 \in H^2((-\ell,\ell))$ and 
\begin{equation}
 \norm{\varphi_1}_{H^2} \ls \norm{\varphi_1}_{H^1} + \norm{f}_{L^2} \ls \norm{f}_{H^0}.
\end{equation}
This proves \eqref{cap_regularity_3} when $m =0$.  When $m \ge 1$ we use a finite iteration in \eqref{cap_regularity_2} to bootstrap from $\varphi_1 \in H^2((-\ell,\ell))$ to $\varphi_1 \in H^{m+2}((-\ell,\ell))$.  Along the way we readily deduce that \eqref{cap_regularity_3} holds.  Thus the desired inclusion and estimates for $\varphi_1$ hold for all $m \in\mathbb{N}$.

\end{proof}

\subsection{Eigenfunctions of the gravity-capillary operator}

The map 
\begin{equation}
H^0((-\ell,\ell)) \ni f \mapsto \varphi_f \in H^1((-\ell,\ell)) \csubset H^0((-\ell,\ell)),
\end{equation}
where $\varphi_f$  is uniquely determined by 
\begin{equation}
 (\varphi_f,\psi)_{1,\Sigma} = (f,\psi)_{0,\Sigma} \text{ for all } \psi \in H^1((-\ell,\ell))
\end{equation}
is easily seen to be compact and symmetric, so the usual spectral theory of compact symmetric operators (see, for instance, Chapter VI of \cite{reed_simon}) allows us to produce sequences $\{w_k\}_{k=0}^\infty \subset C^\infty([-\ell,\ell])$ and $\{\lambda_k\}_{k=0}^\infty \subset (0,\infty)$ such that the following hold.
\begin{enumerate}
 \item $\{w_k\}_{k=0}^\infty$ forms an orthonormal basis of $L^2((-\ell,\ell))$.
 \item $\{w_k/\sqrt{\lambda_k}\}_{k=0}^\infty$ forms an orthonormal basis of $H^1((-\ell,\ell))$ relative to the inner-product $(\cdot,\cdot)_{1,\Sigma}$.
 \item $\lambda_0 = g$ and $w_0 = 1/\sqrt{2\ell}$.
 \item $\{\lambda_k\}_{k=0}^\infty$ is non-decreasing, and $\lambda_k \to \infty$ as $k \to \infty$.
 \item For each $k \in \mathbb{N}$ we have that
\begin{equation}
\begin{cases}
 \K w_k = \lambda_k w_k &\text{in } (-\ell,\ell) \\
 \B_\pm w_k =0.
\end{cases}
\end{equation}
In other words, $w_k$ is the $k^{th}$ eigenfunction of the operator $\K$ with associated eigenvalue $\lambda_k \ge g$.
\end{enumerate}

We next introduce the notation for ``Fourier'' coefficients relative to this basis.

\begin{dfn}
For a function $f \in H^0((-\ell,\ell))$ we define the map $\hat{f} : \mathbb{N} \to \R$ via $\hat{f}(k) = (f,w_k)_{0,\Sigma}$.  The values of $\hat{f}$ are called the Fourier coefficients of $f$.
\end{dfn}

We have the following version of Parseval's theorem for this basis.

\begin{prop}\label{eigen_bases}
The following hold.
\begin{enumerate}
 \item For each $f,g \in H^0((-\ell,\ell))$ we have that 
\begin{equation}
 (f,g)_{0,\Sigma} = \sum_{k=0}^\infty \hat{f}(k) \hat{g}(k) \text{ and } 
 \ns{f}_{0,\Sigma} = \sum_{k=0}^\infty \abs{\hat{f}(k)}^2.
\end{equation}

 \item For each $f,g \in H^1((-\ell,\ell))$ we have that 
\begin{equation}
 (f,g)_{1,\Sigma} = \sum_{k=0}^\infty \lambda_k \hat{f}(k) \hat{g}(k) \text{ and }
 \ns{f}_{1,\Sigma} = \sum_{k=0}^\infty \lambda_k \abs{\hat{f}(k)}^2.
\end{equation}
\end{enumerate}
\end{prop}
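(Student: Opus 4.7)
The plan is to derive both statements as direct consequences of standard Parseval identities in Hilbert spaces, using the two orthonormal basis properties established in the enumerated list of properties of $\{w_k\}_{k=0}^\infty$ preceding the proposition.

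For the first item, the proof is immediate: since $\{w_k\}_{k=0}^\infty$ is by construction an orthonormal basis of $(H^0((-\ell,\ell)),(\cdot,\cdot)_{0,\Sigma})$, and the Fourier coefficients are defined exactly as $\hat{f}(k) = (f,w_k)_{0,\Sigma}$, the standard Hilbert-space Parseval identity yields both the inner product expansion and the norm identity. I would simply invoke this abstract result from any standard functional analysis reference (e.g., Reed-Simon) and be done with item (1).

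For the second item, the main task is to relate the Fourier coefficients with respect to the $H^1$-basis $\{w_k/\sqrt{\lambda_k}\}_{k=0}^\infty$ back to the coefficients $\hat{f}(k)$ defined via $(\cdot,\cdot)_{0,\Sigma}$. The key identity is that for any $\psi \in H^1((-\ell,\ell))$, the eigenfunction property together with $\B w_k = 0$ and the weak formulation \eqref{weak_motivation} give
\begin{equation}
(w_k,\psi)_{1,\Sigma} = (\K w_k,\psi)_{0,\Sigma} + [\B w_k,\psi]_\ell = \lambda_k (w_k,\psi)_{0,\Sigma},
\end{equation}
so that the $H^1$-Fourier coefficient of $f$ in the basis $\{w_k/\sqrt{\lambda_k}\}$ is $(f,w_k/\sqrt{\lambda_k})_{1,\Sigma} = \sqrt{\lambda_k}\,\hat{f}(k)$. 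Applying the abstract Parseval identity in $(H^1((-\ell,\ell)),(\cdot,\cdot)_{1,\Sigma})$ to this basis then immediately yields $(f,g)_{1,\Sigma} = \sum_k \lambda_k \hat{f}(k)\hat{g}(k)$ and the corresponding norm formula.

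There is no real obstacle here; the only subtlety worth being careful about is verifying the identity $(w_k,\psi)_{1,\Sigma} = \lambda_k(w_k,\psi)_{0,\Sigma}$ for general $\psi \in H^1$ rather than merely $\psi \in C^\infty$. This is handled by noting that $w_k \in C^\infty([-\ell,\ell])$ so the integration by parts in \eqref{weak_motivation} is valid against any $H^1$ test function, combined with the boundary condition $\B w_k = 0$ killing the boundary pairing term. Everything else is bookkeeping with the eigenvalue weights $\lambda_k$.
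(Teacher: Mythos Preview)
Your proposal is correct and essentially identical to the paper's proof: both items are derived from the abstract Parseval identity applied to the two orthonormal bases, with item (2) hinging on the identity $(w_k,f)_{1,\Sigma} = \lambda_k (w_k,f)_{0,\Sigma} = \lambda_k \hat{f}(k)$ for $f \in H^1$. Your extra remark about justifying this identity for general $H^1$ test functions is a reasonable point of care but does not depart from the paper's approach.
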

\begin{proof}
The first item follows from the fact that $\{w_k\}_{k=0}^\infty$ is an orthonormal basis of $H^0((-\ell,\ell))$.   The second follows from the fact that $\{w_k/\sqrt{\lambda_k}\}_{k=0}^\infty$ is an orthonormal basis of $H^1((-\ell,\ell))$ and the fact that $w_k$ satisfies $(w_k,f)_{1,\Sigma} = \lambda_k (w_k,f)_{0,\Sigma} = \lambda_k \hat{f}(k)$ for $f \in H^1((-\ell,\ell))$.
\end{proof}

\subsection{Sobolev spaces for the gravity-capillary operator }

In what follows we will often make reference to the vector space
\begin{equation}
 W = \spn\{w_k\}_{k=0}^\infty = \{ \sum_{k=0}^M a_k w_k \st  M \in \mathbb{N} \text{ and } a_0,\dotsc,a_M \in \R\},
\end{equation}
the set of finite linear combinations of basis elements.  Clearly, $W \subset C^\infty([-\ell,\ell])$.  We now define a special scale of Sobolev spaces built from the eigenfunctions of $\K$.

\begin{dfn}
Let $s \in \R$ and recall that   $W = \spn\{w_k\}_{k=0}^\infty$. 

\begin{enumerate}
 \item For $u,v \in W \subset L^2((-\ell,\ell))$ we define 
\begin{equation}
 \ip{u,v}_{\h^s_\K} = \sum_{k=0}^\infty \lambda_k^s (u,w_k)_{0,\Sigma} (v,w_k)_{0,\Sigma} = \sum_{k=0}^\infty \lambda_k^s \hat{u}(k) \hat{v}(k),
\end{equation}
which is clearly an inner-product with associated norm $\ns{u}_{\h^s_\K} = \ip{u,u}_{\h^s_\K}.$

 \item We define the Hilbert space 
\begin{equation}
 \h^s_\K((-\ell,\ell)) = \text{closure}_{\h^s_\K}(W).
\end{equation}

 \item  We define 
\begin{equation}
 \ell^2_s(\mathbb{N}) = \{ f: \mathbb{N} \to \R \st \sum_{k=0}^\infty \lambda_k^s \abs{f(k)}^2 < \infty\},
\end{equation}
which is clearly a Hilbert space when endowed with the obvious inner-product.
 
\end{enumerate}

\end{dfn}

We now characterize these spaces.  

\begin{thm}\label{l2_char}
The following are equivalent for $s \in \R$.
\begin{enumerate}
 \item $u \in \h^s_\K((-\ell,\ell))$.
 \item There exists $\hat{u} \in \ell^2_s(\mathbb{N})$ such that $u = \sum_{k=0}^\infty \hat{u}(k) w_k$, where the series converges with respect to the norm $\norm{\cdot}_{\h^s_\K}.$ 
\end{enumerate}
In either case we have that $\norm{u}_{\h^s_\K} = \norm{\hat{u}}_{\ell^2_s}.$
\end{thm}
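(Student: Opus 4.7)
The plan is to realize $\h^s_\K((-\ell,\ell))$ as a completion that is unitarily equivalent to $\ell^2_s(\mathbb{N})$, and then read off the series expansion from the isometry. First, I would observe that the map
\[
 \Phi : W \to \ell^2_s(\mathbb{N}), \qquad \Phi(u)(k) = \hat{u}(k) = (u,w_k)_{0,\Sigma},
\]
is well-defined because each $u \in W$ has only finitely many nonzero Fourier coefficients, and by the very definition of $\norm{\cdot}_{\h^s_\K}$ it satisfies the isometric identity $\norm{u}_{\h^s_\K} = \norm{\Phi(u)}_{\ell^2_s}$ on $W$. In particular $\norm{\cdot}_{\h^s_\K}$ really is a norm on $W$ (nondegeneracy is immediate since $\lambda_k \ge g > 0$), so the completion $\h^s_\K$ is a genuine Hilbert space.

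Next, I would note that the image $\Phi(W)$, namely the finitely supported sequences, is dense in $\ell^2_s(\mathbb{N})$: for any $f \in \ell^2_s(\mathbb{N})$ the truncation $f_N(k) := f(k)\mathbf{1}_{k \le N}$ lies in $\Phi(W)$ and satisfies $\norm{f - f_N}_{\ell^2_s}^2 = \sum_{k>N} \lambda_k^s |f(k)|^2 \to 0$ as $N \to \infty$. Combining this density with the fact that $\Phi$ is a surjective isometry from $W$ onto the subspace $\Phi(W) \subset \ell^2_s$, the standard extension-by-continuity argument produces a unique isometric isomorphism $\bar\Phi : \h^s_\K((-\ell,\ell)) \to \ell^2_s(\mathbb{N})$.

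For the equivalence (1) $\Leftrightarrow$ (2), the nontrivial direction is (1) $\Rightarrow$ (2). Given $u \in \h^s_\K$, set $\hat{u} := \bar\Phi(u) \in \ell^2_s(\mathbb{N})$, which automatically gives $\norm{u}_{\h^s_\K} = \norm{\hat{u}}_{\ell^2_s}$. Consider the partial sums $S_N := \sum_{k=0}^N \hat{u}(k) w_k \in W$. Then $\Phi(S_N)$ is precisely the truncation $\hat{u}_N$ defined above, so $\Phi(S_N) \to \hat{u}$ in $\ell^2_s$, and applying the inverse isometry $\bar\Phi^{-1}$ yields $S_N \to u$ in $\h^s_\K$. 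This is exactly the series representation claimed in (2). Conversely, if $u = \sum_{k=0}^\infty \hat{u}(k) w_k$ in $\h^s_\K$ with $\hat{u} \in \ell^2_s$, then $u$ is the $\h^s_\K$-limit of the finite sums $S_N \in W$, hence $u \in \h^s_\K$ by definition, and passing to the limit in the identity $\norm{S_N}_{\h^s_\K} = \norm{\Phi(S_N)}_{\ell^2_s}$ gives $\norm{u}_{\h^s_\K} = \norm{\hat{u}}_{\ell^2_s}$. The continuity of $\bar\Phi$ also forces the $\hat{u}$ produced in the two directions to coincide, so the characterization is consistent.

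The only mildly delicate point — and the main thing to verify carefully — is that when $s < 0$ the symbol $\hat{u}(k)$ for a general $u \in \h^s_\K$ is not literally the $L^2$ pairing with $w_k$, since $\h^s_\K$ need not embed into $L^2$ in that regime; rather it must be interpreted via the isometry $\bar\Phi$, or equivalently as $\hat{u}(k) = \lim_n (u_n,w_k)_{0,\Sigma}$ for any approximating sequence $u_n \in W$ with $u_n \to u$ in $\h^s_\K$. This limit exists and is independent of the sequence because $|\hat{u}_n(k) - \hat{u}_m(k)|^2 \le \lambda_k^{-s}\norm{u_n - u_m}_{\h^s_\K}^2$, and it agrees with the usual Fourier coefficient whenever $u \in L^2$ (in particular whenever $s \ge 0$), so no conflict of notation arises.
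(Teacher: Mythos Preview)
Your proof is correct and follows essentially the same approach as the paper's own argument: both hinge on the fact that the Fourier-coefficient map is an isometry from $W$ onto the finitely supported sequences in $\ell^2_s(\mathbb{N})$, and then pass to the completion. The only stylistic difference is that you package the extension step via the universal property of completions and the extension-by-continuity of isometries, whereas the paper carries out the same construction by hand --- taking an approximating sequence $u_m \in W$, using the coordinatewise bound $|a_m(k)-a_j(k)|^2 \le \lambda_k^{-s}\|u_m-u_j\|_{\h^s_\K}^2$ to extract a limit sequence $a \in \ell^2_s$, and then verifying directly that $\sum a(k) w_k$ converges to $u$.
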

\begin{proof} 
Suppose that $u \in \h^s_\K((-\ell,\ell))$.  Then there exist $\{u_m\}_{m=0}^\infty \subseteq W$ such that $u_m \to u$ in $\h^s_\K((-\ell,\ell))$ as $m \to \infty$.  For each $m$ we may then write 
\begin{equation}
 u_m = \sum_{k=0}^\infty a_{m}(k) w_k,
\end{equation}
where $\{a_{m}(k)\}_{k=1}^\infty \subset \R$ vanishes for all but finitely many $k$.  Then 
\begin{equation}
 \ns{u_m - u_j}_{\h^s_\K} = \sum_{k=0}^\infty \lambda_k^s \abs{a_{m}(k) - a_{j}(k)}^2,
\end{equation}
and hence 
\begin{equation}
 \abs{a_{m}(k) - a_{j}(k)}^2 \le \lambda_k^{-s}  \ns{u_m - u_j}_{\h^s_\K} \text{ for all } k \in \mathbb{N}^+.
\end{equation}
This  implies that for each $k \in \mathbb{N}$ we have that $\{a_{m}(k)\}_{m=0}^\infty$ is a Cauchy sequence in $\R$, and hence we may define $a: \mathbb{N} \to \R$ via $a(k) = \lim_{m \to \infty} a_m(k)$.

Now, for each $K \in \mathbb{N}$ we may estimate
\begin{equation}
 \sum_{k=0}^K \lambda_k^s \abs{a(k)}^2 = \lim_{m \to \infty}   \sum_{k=0}^K \lambda_k^s \abs{a_{m}(k)}^2 \le \limsup_{m\to \infty} \sum_{k=0}^\infty \lambda_k^s \abs{a_{m}(k)}^2 = \limsup_{m\to \infty} \ns{u_m}_{\h^s_\K} = \ns{u}_{\h^s_\K}.
\end{equation}
Upon sending $K \to \infty$ we then deduce that $a \in \ell^2_s(\mathbb{N})$.   For $m \in \mathbb{N}$ we then set  $v_m = \sum_{k=0}^{m} a(k) w_k \in W$.  Then for $m > j \ge 0$ we have that
\begin{equation}
 \ns{v_m - v_j}_{\h^s_\K} = \sum_{k=j+1}^m \lambda_k^s \abs{a(k)}^2,
\end{equation}
which then implies that $\{v_m\}_{m=0}^\infty$ is a Cauchy sequence in $\h^s_\K((-\ell,\ell))$, and hence convergent to 
\begin{equation}
 v = \sum_{k=0}^\infty a(k) w_k \in \h^s_\K((-\ell,\ell)).
\end{equation}
Moreover, 
\begin{equation}
 \ns{v}_{\h^s_\K} = \lim_{m \to \infty} \ns{v_m}_{\h^s_\K} = \lim_{m \to \infty} \sum_{k=0}^m \lambda_k^s \abs{a(k)}^2 = \sum_{k=0}^\infty \lambda_k^s \abs{a(k)}^2 = \ns{a}_{\ell^2_s}.
\end{equation}

Let $\ep >0$ and choose $M \in \N$ such that $j,m \ge M$ imply that $\norm{u_j - u_m}_{\h^s_\K} < \ep$.  Then for each $K \in \mathbb{N}$ and $m \ge M$ we have that
\begin{multline}
 \sum_{k=0}^K \lambda_k^s \abs{a(k) - a_{m}(k)}^2 = \lim_{j \to \infty}   \sum_{k=0}^K \lambda_k^s \abs{a_{j}(k) - a_{m}(k)}^2 \le \limsup_{j \to \infty} \sum_{k=0}^\infty \lambda_k^s \abs{a_{j}(k)-a_{m}(k)}^2 \\
 = \limsup_{j\to \infty} \ns{u_j - u_m}_{\h^s_\K} \le \ep^2.
\end{multline}
Sending $K \to \infty$, we then find that $m \ge M$ implies that 
\begin{equation}
 \sum_{k=0}^\infty \lambda_k^s \abs{a(k) - a_{m}(k)}^2 \le \ep^2.
\end{equation}
For any fixed $m$ we have that  
\begin{equation}
 \ns{u_m - v}_{\h^s_\K} = \lim_{K \to \infty} \ns{u_m - v_K}_{\h^s_\K} =  \sum_{k=0}^\infty \lambda_k^s \abs{a(k) - a_{m}(k)}^2.
\end{equation}
Then for $m \ge M$ we find that 
\begin{equation}
 \norm{u_m-v}_{\h^s_\K} \le \ep,
\end{equation}
and consequently, $u_m \to v$ as $m \to \infty$.  Thus $u =v$.  This completes the proof that $(1) \Rightarrow (2)$.

We now turn to the proof of the converse.  Suppose that $u = \sum_{k=0}^\infty \hat{u}(k) w_k$ for $\hat{u} \in \ell^2_s(\mathbb{N})$.  For $m \in \mathbb{N}$ we define $u_m = \sum_{k=0}^m \hat{u}(k) w_k \in W$.  Then $u_m \to u$ as $m \to \infty$ by assumption, and so $u \in \h^s_\K((-\ell,\ell))$.  Moreover, 
\begin{equation}
 \ns{u_m}_{\h^s_\K} = \sum_{k=0}^m \lambda_k^s \abs{\hat{u}(k)}^2
\end{equation}
and hence 
\begin{equation}
 \ns{u}_{\h^s_\K} = \lim_{m\to \infty} \ns{u_m}_{\h^s_\K} = \sum_{k=0}^\infty \lambda_k^s \abs{\hat{u}(k)}^2 = \ns{\hat{u}}_{\ell^2_s}.
\end{equation}
\end{proof}

This theorem suggests some notation.

\begin{dfn}
 To each $u \in \h^s_\K((-\ell,\ell))$ we associate a unique element $\hat{u} \in \ell^2_s(\mathbb{N})$ such that $u = \sum_{k=0}^\infty \hat{u}(k) w_k$ and $\norm{u}_{\h^s_\K} = \norm{\hat{u}}_{\ell^2_s}$.
\end{dfn}

Now we characterize the duals of the spaces we've built.

\begin{thm}\label{dual_char}
Let $s\in \R$.  Then the map $J : \h^{-s}_\K((-\ell,\ell)) \to (\h^s_\K((-\ell,\ell)))^\ast$ defined by  
\begin{equation}
 \br{Ju,v} = \sum_{k=0}^\infty \hat{u}(k) \hat{v}(k)
\end{equation}
is well-defined and is an isometric isomorphism.  Consequently, we have a canonical identification
\begin{equation}
(\h^s_\K((-\ell,\ell)))^\ast = \h^{-s}_\K((-\ell,\ell)).
\end{equation}
\end{thm}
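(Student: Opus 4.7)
The plan is to verify well-definedness, the isometry property, and surjectivity, in that order, exploiting the concrete Fourier-coefficient descriptions provided by Theorem \ref{l2_char} throughout. The main obstacle is not really an obstacle: since $\h^s_\K$ is Hilbert, one could in principle just invoke Riesz, but the real content is that the representing element can be located in the smaller ``Fourier" space $\h^{-s}_\K$ with the stated matching of norms, so I will work directly with eigenfunction expansions and test against finite-dimensional truncations.

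First I would show that $J$ is well-defined and bounded with $\norm{Ju}_{(\h^s_\K)^\ast} \le \norm{u}_{\h^{-s}_\K}$. For $u \in \h^{-s}_\K$ and $v \in \h^s_\K$, Cauchy--Schwarz in $\ell^2(\N)$ applied to the factorization $\hat u(k)\hat v(k) = (\lambda_k^{-s/2}\hat u(k))(\lambda_k^{s/2}\hat v(k))$ gives
\[
\Bigl\lvert \sum_{k=0}^\infty \hat{u}(k)\hat{v}(k) \Bigr\rvert \le \Bigl(\sum_{k=0}^\infty \lambda_k^{-s}\abs{\hat u(k)}^2\Bigr)^{1/2}\Bigl(\sum_{k=0}^\infty \lambda_k^{s}\abs{\hat v(k)}^2\Bigr)^{1/2} = \norm{u}_{\h^{-s}_\K}\norm{v}_{\h^s_\K},
\]
which shows both convergence of the series and the norm bound.

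Next I would prove the reverse inequality, which yields the isometry. The idea is to test against the optimizer in Cauchy--Schwarz: given $u \in \h^{-s}_\K$ and $M \in \N$, set $v_M = \sum_{k=0}^M \lambda_k^{-s}\hat u(k) w_k \in W \subset \h^s_\K$. By orthogonality of $\{w_k\}$ we have $\ns{v_M}_{\h^s_\K} = \sum_{k=0}^M \lambda_k^{-s}\abs{\hat u(k)}^2 = \br{Ju,v_M}$, so $\norm{v_M}_{\h^s_\K} \le \norm{Ju}_{(\h^s_\K)^\ast}$. Letting $M \to \infty$ and invoking the Fourier characterization from Theorem \ref{l2_char} gives $\norm{u}_{\h^{-s}_\K} \le \norm{Ju}_{(\h^s_\K)^\ast}$, so $J$ is an isometry.

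Finally I would establish surjectivity. Given $\ell \in (\h^s_\K)^\ast$, define candidate coefficients $a(k) := \ell(w_k)$ (these are well-defined because $w_k \in W \subset \h^s_\K$). The same truncation trick as above produces the required $\ell^2_{-s}$ bound: for any $M$, choosing $v_M = \sum_{k=0}^M \lambda_k^{-s}a(k)w_k$ and using linearity of $\ell$ yields
\[
\sum_{k=0}^M \lambda_k^{-s}\abs{a(k)}^2 = \ell(v_M) \le \norm{\ell}_{(\h^s_\K)^\ast}\Bigl(\sum_{k=0}^M \lambda_k^{-s}\abs{a(k)}^2\Bigr)^{1/2},
\]
so $\sum_{k=0}^M \lambda_k^{-s}\abs{a(k)}^2 \le \norm{\ell}^2_{(\h^s_\K)^\ast}$ uniformly in $M$. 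Hence $a \in \ell^2_{-s}(\N)$, and Theorem \ref{l2_char} produces $u \in \h^{-s}_\K$ with $\hat u = a$. By construction $\br{Ju,w_k} = a(k) = \ell(w_k)$ for every $k$, so $Ju$ and $\ell$ agree on the dense subspace $W \subset \h^s_\K$; since both are continuous on $\h^s_\K$, they coincide. This identifies the dual and completes the proof.
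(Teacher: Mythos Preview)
Your proof is correct and follows essentially the same route as the paper: both arguments work through the Fourier-coefficient characterization of Theorem~\ref{l2_char}, establish the bound $\norm{Ju}_{(\h^s_\K)^\ast} \le \norm{u}_{\h^{-s}_\K}$ via Cauchy--Schwarz, and then identify the dual via the weighted sequence spaces. The only cosmetic difference is that the paper invokes the duality $(\ell^2_s(\mathbb{N}))^\ast = \ell^2_{-s}(\mathbb{N})$ as a black box for surjectivity and the isometry, whereas you unpack that duality inline using the explicit truncated optimizers $v_M = \sum_{k=0}^M \lambda_k^{-s}\hat u(k)\,w_k$; your version is slightly more self-contained, the paper's slightly more streamlined.
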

\begin{proof}
The linearity of $J$ is trivial.  The boundedness follows from the estimate
\begin{equation}
 \abs{\br{Ju,v}} = \abs{\sum_{k=0}^\infty \lambda_k^{-s/2} \hat{u}(k)  \lambda_k^{s/2} \hat{v}(k) } \le \norm{\hat{u}}_{\ell^2_{-s}} \norm{\hat{v}}_{\ell^2_s} = \norm{u}_{\h^{-s}_\K} \norm{v}_{\h^s_\K}.
\end{equation}

Suppose that $Ju =0$ for some $u \in \h^{-s}_\K$.  Then 
\begin{equation}
 0 = \sum_{k=0}^\infty \hat{u}(k) \hat{v}(k) \text{ for all } v \in \h^s_\K.
\end{equation}
We may choose $v = w_j \in \h^s_\K((-\ell,\ell))$ for each $j \in \mathbb{N}$, and then $\hat{v}(k) = \delta_{kj}$, which means that 
\begin{equation}
 0 = \hat{u}(j) \text{ for all } j \in \mathbb{N}.
\end{equation}
Then $\norm{u}_{\h^s_\K} =0$, and so $u =0$, from which we deduce that $J$ is injective.
 
Now suppose that $F \in  (\h^s_\K((-\ell,\ell)))^\ast$.  Then we may define $\hat{F} \in (\ell^2_s(\mathbb{N}))^\ast$ via 
\begin{equation}
 \br{\hat{F},\hat{v}} = \br{F,v}.
\end{equation}
Since we have the canonical identification $(\ell^2_s(\mathbb{N}))^\ast = \ell^2_{-s}(\mathbb{N})$ we then deduce that there exists $\hat{u} \in \ell^2_{-s}(\mathbb{N})$ such that 
\begin{equation}
 \br{\hat{F},\hat{v}} = \sum_{k=0}^\infty \hat{v}(k) \hat{u}(k).
\end{equation}
Letting $u = \sum_{k=0}^\infty \hat{u}(k) w_k \in \h^{-s}_\K((-\ell,\ell))$, we find that
\begin{equation}
 \br{F,v} = \br{\hat{F},\hat{v}} = \br{J u,v} \text{ for all } v \in \h^s_\K((-\ell,\ell)),
\end{equation}
and hence $F = Ju$.  Thus $J$ is surjective.

It remains only to show that $J$ is an isometry.  For this we compute
\begin{equation}
 \norm{Ju}_{(\h^s_\K)^\ast} = \sup_{\norm{v}_{\h^s_\K} \le 1} \br{Ju,v}  = \sup_{\norm{\hat{v}}_{\ell^2_s} \le 1} \sum_{k=0}^\infty \hat{u}(k) \hat{v}(k) = \norm{\hat{u}}_{(\ell^2_s)^\ast} = \norm{\hat{u}}_{\ell^2_{-s}} = \norm{u}_{\h^{-s}_\K}. 
\end{equation}

\end{proof}

With this result in hand we can more explicitly describe the map $u \mapsto \hat{u}$.

\begin{thm}
The following hold for $s \in \R$.
\begin{enumerate}
 \item If $s \ge 0$ and $u \in \h^s_\K((-\ell,\ell))$, then $\hat{u}(k) = \ip{u,w_k}_{L^2}$ for all $k \ge 0$.
 \item If $s < 0$ and $u \in \h^s_\K((-\ell,\ell))$, then $\hat{u}(k) = \br{u,w_k}$ for all $k \ge 0$, which is well-defined since $w_k \in \h^{-s}_\K((-\ell,\ell))$ and $\h^s_\K((-\ell,\ell)) = (\h^{-s}_\K((-\ell,\ell)))^\ast$.
\end{enumerate}
\end{thm}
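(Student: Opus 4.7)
The plan is to dispatch the two items separately. Item 1 is essentially a continuity argument: $\h^s_\K((-\ell,\ell))$ embeds continuously into $L^2((-\ell,\ell))$ when $s \ge 0$, so the series expansion provided by Theorem \ref{l2_char} converges in $L^2$, and taking $L^2$-inner product with $w_k$ reads off the coefficient. Item 2 will then be an immediate consequence of Theorem \ref{dual_char} once we identify $\widehat{w_k}$ using item 1.

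For item 1, I would first observe that $\lambda_k \ge g > 0$ for all $k$ yields the continuous embedding $\h^s_\K((-\ell,\ell)) \hookrightarrow \h^0_\K((-\ell,\ell))$ via $\norm{v}_{\h^0_\K} \le g^{-s/2} \norm{v}_{\h^s_\K}$ on $W$ extended by density. Next, $\h^0_\K((-\ell,\ell))$ coincides with $L^2((-\ell,\ell))$ as a Hilbert space, because its inner product restricts to the $L^2$ inner product on the dense subspace $W$, density being exactly the statement that $\{w_k\}$ is an orthonormal basis of $L^2$. For $u \in \h^s_\K$, Theorem \ref{l2_char} gives $u = \sum_{j=0}^\infty \hat u(j) w_j$ in $\h^s_\K$, hence also in $L^2$. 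Continuity of the $L^2$-inner product with $w_k$, together with $(w_j,w_k)_{L^2} = \delta_{jk}$, then delivers $(u,w_k)_{L^2} = \hat u(k)$.

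For item 2, I would apply Theorem \ref{dual_char} with the parameter there taken to be $-s$ (which is positive in the present regime). This produces an isometric isomorphism $J : \h^s_\K((-\ell,\ell)) \to (\h^{-s}_\K((-\ell,\ell)))^\ast$ with action $\br{Jv,w} = \sum_{j=0}^\infty \hat v(j) \hat w(j)$, and this is precisely the identification used to make sense of $\br{u,w_k}$ in the statement. Since $-s > 0$, item 1 applies to $w_k \in \h^{-s}_\K$, yielding $\widehat{w_k}(j) = (w_k,w_j)_{L^2} = \delta_{jk}$. Therefore $\br{u,w_k} = \br{Ju,w_k} = \sum_{j=0}^\infty \hat u(j) \delta_{jk} = \hat u(k)$, which is the desired identity.

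The only subtle point, and the one I would double check carefully, is the identification $\h^0_\K((-\ell,\ell)) = L^2((-\ell,\ell))$ used in item 1: this rests on $W$ being dense in $L^2((-\ell,\ell))$, which is the orthonormal basis property of $\{w_k\}$ recorded in Proposition \ref{eigen_bases}. Once that identification is in place, both items reduce to routine continuity and duality manipulations built on Theorems \ref{l2_char} and \ref{dual_char}, with no further analytic difficulty expected.
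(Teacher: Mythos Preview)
Your proposal is correct and follows essentially the same approach as the paper: for item 1 you use the embedding $\h^s_\K \hookrightarrow L^2$ and the $L^2$-convergence of the series from Theorem \ref{l2_char} to extract the coefficient, and for item 2 you invoke Theorem \ref{dual_char} together with $\widehat{w_k}(j)=\delta_{jk}$. The paper's proof is just a terser version of the same argument; the only cosmetic difference is that the identification $\h^0_\K = L^2$ you justify from scratch is already recorded in Theorem \ref{sobolev_char}.
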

\begin{proof}
If $s \ge 0$ and $u \in \h^s_\K((-\ell,\ell))$, then $u \in L^2((-\ell,\ell))$.  Since $u = \sum_{k=0}^\infty \hat{u}(k) w_k$ with the series converging in $\h^s_\K((-\ell,\ell))$ and hence in $L^2$ we may then compute 
\begin{equation}
 \ip{u,w_k}_{L^2} = \ip{\sum_{j=0}^\infty \hat{u}(j) w_j ,w_k}_{L^2} = \hat{u}(k).
\end{equation}
This proves the first item.

Now assume that $s <0$ and $u \in \h^s_\K((-\ell,\ell))$.  Then Theorem \ref{dual_char} tells us that
\begin{equation}
 \br{u,w_k} = \sum_{j=0}^\infty \hat{u}(j) \delta_{jk} = \hat{u}(k),
\end{equation}
which proves the second item.

\end{proof}

We now record the nesting properties of these Sobolev spaces.

\begin{thm}\label{inclusion}
For $s,t \in \R$ with $s \le t$ we have that $\h^t_\K((-\ell,\ell)) \subseteq \h^s_\K((-\ell,\ell))$ and 
\begin{equation}
 \norm{u}_{\h^s_\K} \le \frac{1}{\lambda_1^{(t-s)/2}} \norm{u}_{\h^t_\K} \text{ for all } u\in \h^t_\K((-\ell,\ell)).
\end{equation}
\end{thm}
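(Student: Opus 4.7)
The plan is to reduce the statement to a purely sequence-space comparison via the characterization of $\h^s_\K((-\ell,\ell))$ from Theorem \ref{l2_char} and then observe that the eigenvalues are uniformly bounded below by a positive constant, so that the weights in the $\ell^2_s$ norm are dominated termwise by those in the $\ell^2_t$ norm when $s\le t$.

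More concretely, I would first take $u\in\h^t_\K((-\ell,\ell))$ and invoke Theorem \ref{l2_char} to write $u=\sum_{k=0}^\infty\hat u(k)w_k$ with $\hat u\in\ell^2_t(\mathbb{N})$ and $\norm{u}_{\h^t_\K}=\norm{\hat u}_{\ell^2_t}$. Next, recall from the spectral discussion preceding the theorem that $\{\lambda_k\}_{k=0}^\infty$ is non-decreasing with $\lambda_0=g>0$, so that $\lambda_k\ge\lambda_*>0$ for every $k$, where $\lambda_*$ is the minimal eigenvalue appearing in the stated bound. Since $s-t\le 0$, the function $\lambda\mapsto\lambda^{s-t}$ is non-increasing on $(0,\infty)$, and therefore $\lambda_k^{s-t}\le\lambda_*^{s-t}$ for all $k\in\mathbb{N}$.

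Then, for any $N\in\mathbb{N}$ I would estimate
\begin{equation}
\sum_{k=0}^N\lambda_k^s|\hat u(k)|^2=\sum_{k=0}^N\lambda_k^{s-t}\lambda_k^t|\hat u(k)|^2\le\lambda_*^{s-t}\sum_{k=0}^N\lambda_k^t|\hat u(k)|^2\le\lambda_*^{s-t}\norm{\hat u}_{\ell^2_t}^2,
\end{equation}
and send $N\to\infty$ to conclude that $\hat u\in\ell^2_s(\mathbb{N})$ with $\norm{\hat u}_{\ell^2_s}\le\lambda_*^{(s-t)/2}\norm{\hat u}_{\ell^2_t}$. Applying the other direction of Theorem \ref{l2_char} then yields $u\in\h^s_\K((-\ell,\ell))$ together with the desired norm bound.

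There is essentially no obstacle here: the only subtlety is matching the constant $\lambda_1^{(t-s)/2}$ appearing in the statement with the minimum eigenvalue used in the comparison, which reduces to the monotonicity of $\{\lambda_k\}$ and the positivity $\lambda_0=g>0$. The entire argument is a one-line termwise comparison in the Parseval representation; no additional tools beyond Theorem \ref{l2_char} are required.
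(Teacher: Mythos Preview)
Your argument is exactly the one-line termwise comparison the paper has in mind; its proof reads in full ``This follows immediately from the definition of the norm on these spaces.'' You are also right to flag the constant: since the minimal eigenvalue is $\lambda_0=g$, the comparison $\lambda_k^{s-t}\le \lambda_0^{s-t}$ yields the constant $\lambda_0^{-(t-s)/2}$ rather than the $\lambda_1^{-(t-s)/2}$ appearing in the statement (the latter fails at $k=0$ when $\lambda_0<\lambda_1$), so this is a harmless typo in the displayed bound and your proof gives the correct version.
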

\begin{proof}
This follows immediately from the definition of the norm on these spaces.
\end{proof}

Next we record some finer information about  these spaces.  In fact, this result is the key link to the usual theory of Sobolev spaces.

\begin{thm}\label{sobolev_char}
The following hold.
\begin{enumerate}
 
 \item We have that $\h^0_\K((-\ell,\ell)) = L^2((-\ell,\ell))$ and $\norm{u}_{0,\Sigma} = \norm{u}_{\h^0_\K}$ for all $u \in L^2((-\ell,\ell))$.
 \item We have that $\h^1_\K((-\ell,\ell)) = H^1((-\ell,\ell))$ and $\norm{u}_{1,\Sigma} = \norm{u}_{\h^1_\K}$ for all $u \in H^1((-\ell,\ell))$.
 \item Let $2\le m \in \mathbb{N}$.  Then 
\begin{equation}\label{sobolev_char_0}
\h^m_\K((-\ell,\ell)) =  \{ u \in H^m((-\ell,\ell)) \;\vert\;   (\K^{(r)} u)'(\pm \ell)=0 \text{ for }0\le r \le m/2-1   \},
\end{equation}
where $\K^{(0)} = I$ and $\K^{(r+1)} = \K \K^{(r)}$.  Moreover,  $\norm{\cdot}_{\h^m_\K}$ and $\norm{\cdot}_{H^m}$ are equivalent on these spaces.

\end{enumerate}

\end{thm}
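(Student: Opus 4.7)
The plan is to dispose of items (1) and (2) as direct consequences of the basis properties listed for $\{w_k\}$ immediately before Proposition \ref{eigen_bases}, and then obtain item (3) by induction on $m$, combining the elliptic regularity of Theorem \ref{cap_regularity} with the spectral expansion from Theorem \ref{l2_char}. For item (1), Parseval gives $\ns{u}_{0,\Sigma} = \sum_k |\hat u(k)|^2 = \ns{u}_{\h^0_\K}$ for all $u \in L^2$, and the density of $W$ in $L^2$ (from totality of the orthonormal basis) identifies $\h^0_\K((-\ell,\ell))$ with $L^2((-\ell,\ell))$. Item (2) is the analogous computation using the second basis: Parseval against $\{w_k/\sqrt{\lambda_k}\}$ yields $\ns{u}_{1,\Sigma} = \sum_k \lambda_k |\hat u(k)|^2 = \ns{u}_{\h^1_\K}$, and $W$ is dense in $(H^1,(\cdot,\cdot)_{1,\Sigma})$ because $\{w_k/\sqrt{\lambda_k}\}$ is a basis.

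For the forward inclusion in item (3), I would show inductively that any $u \in \h^m_\K$ lies in $H^m$, satisfies the stated boundary conditions, and obeys $\norm{u}_{H^m} \lesssim \norm{u}_{\h^m_\K}$. Given such $u$, Theorem \ref{l2_char} places $f := \sum_k \lambda_k \hat u(k) w_k$ in $\h^{m-2}_\K$ with $\norm{f}_{\h^{m-2}_\K} = \norm{u}_{\h^m_\K}$. Testing an arbitrary $\psi \in H^1$ against the partial sums $v_n = \sum_{k=0}^n \hat u(k) w_k$ and using the identity $(w_k,\psi)_{1,\Sigma} = \lambda_k \hat\psi(k)$ (which follows from \eqref{weak_motivation} and $\B w_k = 0$) produces, in the limit, the weak formulation $(u,\psi)_{1,\Sigma} = (f,\psi)_{0,\Sigma}$. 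Hence $u$ is the weak solution of \eqref{cap_bndry_eqn} with data $f$ and $h = 0$. For the base case $m=2$, $f \in L^2$, so Theorem \ref{cap_regularity} yields $u \in H^2$ with $\norm{u}_{H^2} \lesssim \norm{f}_{L^2} = \norm{u}_{\h^2_\K}$, and the vanishing $\B u = 0$ encodes $u'(\pm\ell)=0$. For $m \ge 3$, the induction hypothesis places $f$ in $H^{m-2}$ with $(\K^{(r)} f)'(\pm\ell) = 0$ for $0 \le r \le m/2 - 2$; Theorem \ref{cap_regularity} upgrades $u$ to $H^m$ with the required norm bound, and the boundary condition $(\K^{(r)} u)'(\pm\ell) = 0$ for $r \ge 1$ is precisely $(\K^{(r-1)} f)'(\pm\ell) = 0$ supplied by the induction, while the $r = 0$ condition is $\B u = 0$.

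For the reverse inclusion, I would iterate integration by parts. Assume $u \in H^m$ satisfies $(\K^{(r)} u)'(\pm\ell) = 0$ for $0 \le r \le m/2 - 1$. Applying \eqref{weak_motivation} in both arguments gives $(\K w_k,\phi)_{0,\Sigma} - (w_k,\K\phi)_{0,\Sigma} = [\B\phi,w_k]_\ell - [\B w_k,\phi]_\ell$; since $\B w_k = 0$, taking $\phi = u,\K u,\ldots$ successively and invoking $\B(\K^{(j)} u) = 0$ at each step produces the key identity $\lambda_k^r \hat u(k) = \widehat{\K^{(r)} u}(k)$ for every admissible $r$. When $m = 2r$, Parseval then gives $\ns{u}_{\h^m_\K} = \ns{\K^{(r)} u}_{L^2} \lesssim \ns{u}_{H^m}$ since $\K^{(r)}$ is a smooth linear differential operator of order $2r$; when $m = 2r+1$, item (2) gives $\ns{u}_{\h^m_\K} = \sum_k \lambda_k |\widehat{\K^{(r)} u}(k)|^2 = \ns{\K^{(r)} u}_{1,\Sigma} \lesssim \ns{\K^{(r)} u}_{H^1} \lesssim \ns{u}_{H^m}$. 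In either case $\hat u \in \ell^2_m(\mathbb{N})$, and Theorem \ref{l2_char} places $u$ in $\h^m_\K((-\ell,\ell))$ with the claimed estimate. The main bookkeeping challenge is matching the number of available boundary conditions to the parity of $m$ so that the iterated integration by parts runs through the correct number of steps; this is handled uniformly by the range $0 \le r \le m/2 - 1$, and no estimate beyond Theorem \ref{cap_regularity}, the identity \eqref{weak_motivation}, and Parseval is needed.
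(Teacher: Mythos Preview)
Your proposal is correct and follows essentially the same route as the paper: items (1) and (2) are read off from the orthonormal basis properties, and item (3) is proved by induction, using the spectral series $f=\sum_k \lambda_k \hat u(k) w_k$ together with Theorem~\ref{cap_regularity} for the forward inclusion, and integration by parts via \eqref{weak_motivation} with $\B w_k=0$ for the reverse. The only cosmetic difference is that the paper treats $m=2$ and $m=3$ as separate base cases before stepping $m-1\to m+1$, whereas you fold the $m=3$ case into the inductive step by appealing to item (2) for $\h^1_\K$; both organizations are valid.
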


\begin{proof}

The first two assertions follow easily from the properties of the eigenfunctions $\{w_k\}_{k=0}^\infty$, so we'll only prove the third item.  Throughout the proof we'll let $X^m$ denote the space on the right side of \eqref{sobolev_char_0}.  We proceed by induction, starting with the base cases $m=2$ and $m=3$.

First consider the case $m=2$.  Suppose that $u \in \h^{2}_\K((-\ell,\ell))$.  We may then define the function $U = \sum_{k=0}^\infty \lambda_k \hat{u}(k) w_k$, which belongs to $\h^0_\K((-\ell,\ell)) =L^2((-\ell,\ell))$ since 
\begin{equation}
 \ns{U}_{L^2} = \sum_{k=0}^\infty \abs{\lambda_k \hat{u}(k)}^2 = \ns{u}_{\h^2_\K} < \infty.
\end{equation}
Since $u \in \h^1_\K((-\ell,\ell))$ we also know that for $v \in H^1((-\ell,\ell))$, 
\begin{equation}
 (u,v)_{1,\Sigma} = \sum_{k=0}^\infty \lambda_k \hat{u}(k) \hat{v}(k) = \sum_{k=0}^\infty \hat{U}(k) \hat{v}(k)= (U,v)_{0,\Sigma},
\end{equation}
and hence $u$ is a weak solution to the problem 
\begin{equation}
 \begin{cases}
 \K u = U &\text{in }(-\ell,\ell) \\
 \B_\pm u =0.
 \end{cases}
\end{equation}
The elliptic regularity of Theorem \ref{cap_regularity} then tells us that $u \in H^2((-\ell,\ell))$ and $\norm{u}_{H^2} \asymp \norm{U}_{L^2} = \norm{u}_{\h^2_\K}$, from which we deduce that $\h^2_\K((-\ell,\ell)) \subseteq X^2$.

Now suppose that $u \in X^2$.  Then clearly $\K u \in L^2$, and we may compute
\begin{equation}
 \ns{u}_{H^2} \asymp \ns{\K u}_{H^0} = \sum_{k=0}^\infty \abs{(\K u,w_k)_{0,\Sigma}}^2 =   \sum_{k=0}^\infty \abs{( u,\K w_k)_{0,\Sigma}}^2 =  \sum_{k=0}^\infty \lambda_k^2 \abs{\hat{u}(k)}^2 = \ns{u}_{\h^2_\K},
\end{equation}
from which we deduce that $X^2 \subseteq \h^2_\K((-\ell,\ell))$.  A similar argument works for the case $m=3$; we omit the details for the sake of brevity.  This establishes the base case $m=2$ and $m=3$.  

Suppose now that the result has been proved for all $2 \le m$ for some $m \ge 3$.   Let  $u \in \h^{m+1}_\K((-\ell,\ell))$.  Using the same $U$ as above, we find that $\norm{U}_{\h^{m-1}_\K} = \norm{u}_{\h^{m+1}_\K}$, and so the induction hypothesis tells us that $U \in X^{m-1}$ with $\norm{U}_{\h^{m-1}_\K} \asymp \norm{U}_{\oH^{m-1}}$.  We then use elliptic regularity as above to see that $u \in X^{m+1}$ and $\norm{u}_{H^{m+1}} \asymp \norm{U}_{H^{m-1}} = \norm{u}_{\h^{m+1}_\K}$, which in turn shows that $\h^{m+1}_\K((-\ell,\ell)) \subseteq X^{m+1}$.

On the other hand, if $u \in X^{m+1}$ then elliptic regularity and the induction hypothesis show that 
\begin{multline}
 \ns{u}_{H^{m+1}} \asymp \ns{\K u}_{H^{m-1}} = \sum_{k=0}^\infty \lambda_k^{m-1} \abs{(\K u,w_k)_{0,\Sigma}}^2 =   \sum_{k=0}^\infty \lambda_k^{m-1} \abs{( u,\K w_k)_{0,\Sigma}}^2 \\
 =  \sum_{k=0}^\infty \lambda_k^{m+1} \abs{\hat{u}(k)}^2 = \ns{u}_{\h^{m+1}_\K}.
\end{multline}
We then deduce that $X^{m+1} \subseteq \h^{m+1}_\K((-\ell,\ell))$.

The principle of induction now tells us that $\h^{m}_\K((-\ell,\ell)) = X^m$ for all $m \ge 2$ and that the norms $\norm{\cdot}_{\h^m_\K}$ and $\norm{\cdot}_{H^m}$ are equivalent on these spaces. 
 
\end{proof}

Theorem \ref{inclusion} shows that we have the nesting $\h^t_\K((-\ell,\ell)) \subseteq \h^s_\K((-\ell,\ell))$ for $s < t$.  In fact, we can show more.

\begin{thm}\label{basic_interp}
Suppose that $s,t \in \R$ are such that $s < t$.  If $u \in \h^s_\K((-\ell,\ell)) \cap \h^t_\K((-\ell,\ell))$, then $u \in \h^r_\K((-\ell,\ell))$ for all $s \le r \le t$, and 
\begin{equation}
 \norm{u}_{\h^r_\K} \le \norm{u}_{\h^s_\K}^\theta \norm{u}_{\h^t_\K}^{1-\theta}
\end{equation}
for $\theta \in [0,1]$ given by 
\begin{equation}
 r = s \theta + t (1-\theta).
\end{equation}
\end{thm}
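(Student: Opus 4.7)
The plan is to reduce everything to the eigenfunction Fourier representation furnished by Theorem \ref{l2_char} and then apply H\"older's inequality on the resulting series of nonnegative terms. Given $u \in \h^s_\K((-\ell,\ell)) \cap \h^t_\K((-\ell,\ell))$, both $\sum_{k=0}^\infty \lambda_k^s |\hat{u}(k)|^2$ and $\sum_{k=0}^\infty \lambda_k^t |\hat{u}(k)|^2$ are finite, so showing $u \in \h^r_\K((-\ell,\ell))$ amounts to showing $\hat{u} \in \ell^2_r(\mathbb{N})$, which will be an automatic consequence of the quantitative bound.

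First I would dispose of the trivial endpoints: if $\theta = 1$ then $r = s$, and if $\theta = 0$ then $r = t$, so the inequality is just an equality in each case. Thus I may assume $\theta \in (0,1)$, and write $r = s\theta + t(1-\theta)$ so that
\begin{equation*}
\lambda_k^r |\hat{u}(k)|^2 = \left(\lambda_k^s |\hat{u}(k)|^2\right)^{\theta} \left(\lambda_k^t |\hat{u}(k)|^2\right)^{1-\theta}
\end{equation*}
pointwise in $k$. Summing in $k$ and applying H\"older's inequality with conjugate exponents $1/\theta$ and $1/(1-\theta)$ then yields
\begin{equation*}
\sum_{k=0}^\infty \lambda_k^r |\hat{u}(k)|^2 \le \left(\sum_{k=0}^\infty \lambda_k^s |\hat{u}(k)|^2 \right)^{\theta} \left( \sum_{k=0}^\infty \lambda_k^t |\hat{u}(k)|^2 \right)^{1-\theta} = \|u\|_{\h^s_\K}^{2\theta}\, \|u\|_{\h^t_\K}^{2(1-\theta)}.
\end{equation*}

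Next I would invoke Theorem \ref{l2_char} in reverse: the finiteness just established shows that $\hat{u} \in \ell^2_r(\mathbb{N})$, hence $u \in \h^r_\K((-\ell,\ell))$ with $\|u\|_{\h^r_\K}^2 = \|\hat{u}\|_{\ell^2_r}^2$ equal to the left hand side above. Taking square roots then produces the claimed estimate $\|u\|_{\h^r_\K} \le \|u\|_{\h^s_\K}^{\theta}\|u\|_{\h^t_\K}^{1-\theta}$.

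There is really no obstacle here: the custom Sobolev scale has been set up precisely so that all norms become weighted $\ell^2$ norms of Fourier coefficients, and H\"older's inequality on sequences of nonnegative reals gives log-convexity of weighted $\ell^2$ norms for free. The only thing requiring even a second of care is ensuring the membership $u \in \h^r_\K((-\ell,\ell))$ itself, which is handled cleanly by the equivalence of the two characterizations in Theorem \ref{l2_char} rather than by first approximating by elements of $W$.
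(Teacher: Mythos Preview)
Your proof is correct and follows essentially the same approach as the paper: both reduce to H\"older's inequality on the weighted $\ell^2$ series of Fourier coefficients. The only cosmetic difference is that the paper applies H\"older to finite partial sums $\sum_{k=0}^K$ and then sends $K\to\infty$, whereas you apply it directly to the infinite series; both are equally valid.
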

\begin{proof}
The result is trivial if $r =s$ or $r=t$, so we may assume that $s < r < t$.  We know that $\hat{u} \in \ell^2_s(\mathbb{N}) \cap \ell^2_t(\mathbb{N})$.  For $K \ge 1$ we may use H\"older's inequality to estimate
\begin{multline}
\sum_{k=0}^K \lambda_k^r \abs{\hat{u}(k)}^2 = \sum_{k=0}^K \lambda_k^{\theta s} \abs{\hat{u}(k)}^{2\theta}  \lambda_k^{(1-\theta) t} \abs{\hat{u}(k)}^{2(1-\theta)}
 \le 
 \left(\sum_{k=0}^K \lambda_k^s \abs{\hat{u}(k)}^{2} \right)^\theta \left(\sum_{k=0}^K \lambda_k^t \abs{\hat{u}(k)}^{2} \right)^{1-\theta} 
\\
\le 
 \left(\sum_{k=0}^\infty \lambda_k^s \abs{\hat{u}(k)}^{2} \right)^\theta \left(\sum_{k=0}^\infty \lambda_k^t \abs{\hat{u}(k)}^{2} \right)^{1-\theta} = \left( \ns{u}_{\h^s_\K}\right)^\theta  \left( \ns{u}_{\h^t_\K}\right)^{1-\theta}.
\end{multline}
Upon sending $K \to \infty$ we find that $u \in \h^r_\K((-\ell,\ell))$ and 
\begin{equation}
 \ns{u}_{\h^r_\K} = \sum_{k=0}^\infty \lambda_k^r \abs{\hat{u}(k)}^2 \le \left( \ns{u}_{\h^s_\K}\right)^\theta  \left( \ns{u}_{\h^t_\K}\right)^{1-\theta}.
\end{equation}
The result follows by taking square roots.

\end{proof}

\subsection{Functional calculus }

We can use the eigenvalues to define a functional calculus of $\K$.  First we need some notation.

\begin{dfn}\label{fnal_calc_def}
Write $\Sigma_\K = \{\lambda_k \st k \ge 0\} \subset [g,\infty)$.   For $r \in \R$ define the space
\begin{equation}
\mathfrak{B}^r(\Sigma_\K) = \{ f: \Sigma_\K \to \R \st \norm{f}_{\B^r} < \infty\}
\end{equation}
where 
\begin{equation}
 \norm{f}_{\mathfrak{B}^r} = \sup_{x \ge \lambda_0} \frac{\abs{f(x)}}{x^r}.
\end{equation}
This is easily shown to be a Banach space.  Similarly, for $r \in \R$ define 
\begin{equation}
\mathfrak{B}^r_0(\Sigma_\K) = \{ f \in \mathfrak{B}^r(\Sigma_\K) \st  \lim_{x \to \infty} (\abs{f(x)} / x^r) =0  \},
\end{equation}
which is again easily shown to be a Banach space.
\end{dfn}

Now we define a functional calculus of $\K$ on the spaces $\h^s_\K((-\ell,\ell))$.

\begin{dfn}
Let $s \in \R$ and $r \in \R$.  For $f \in \mathfrak{B}^r(\Sigma_\K)$ and $u \in \h^{s+2r}_\K((-\ell,\ell))$ define 
\begin{equation}
 f(\K)u = \sum_{k=0}^\infty f(\lambda_k) \hat{u}(k) w_k.
\end{equation}
\end{dfn}

The next result records the key properties of these operators.

\begin{thm}
 Let $s \in \R$ and $r \in \R$. For $f \in \mathfrak{B}^r(\Sigma_\K)$ and  $u \in \h^{s+2r}_\K((-\ell,\ell))$ let $f(\K) u$ be as defined above.  Then the following hold.
\begin{enumerate}
 \item $f(\K) : \h^{s+2r}_\K((-\ell,\ell)) \to \h^{s}_\K((-\ell,\ell))$ is bounded and linear.
 \item $f(\K)$ is self-adjoint in the sense that if $u,v \in \h^{s+2r}_\K((-\ell,\ell))$, then
\begin{equation}
 \ip{f(\K) u,v}_{\h^s_\K}  =  \ip{ u,f(\K) v}_{\h^s_\K}. 
\end{equation}

 \item The map
\begin{equation}
\mathfrak{B}^r(\Sigma_\K) \ni f \mapsto f(\K) \in \mathcal{L}(\h^{s+2r}_\K((-\ell,\ell)), \h^{s}_\K((-\ell,\ell))) 
\end{equation}
is bounded and linear.

 \item  If  $f \in \mathfrak{B}^r_0(\Sigma_\K)$, then $f(\K) : \h^{s+2r}_\K((-\ell,\ell)) \to \h^s_\K((-\ell,\ell))$ is a compact operator.

\end{enumerate}

\end{thm}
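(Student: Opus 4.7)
The plan is to reduce every assertion to the corresponding statement about the Fourier coefficient sequence $\hat{u} \in \ell^2_{s+2r}(\mathbb{N})$ via Theorem \ref{l2_char}, using the multiplication operator $\hat{u}(k) \mapsto f(\lambda_k) \hat{u}(k)$ on these weighted sequence spaces. First I would verify that $f(\K) u$ is well-defined as an element of $\h^s_\K((-\ell,\ell))$: for $u \in \h^{s+2r}_\K((-\ell,\ell))$, the sequence $(f(\lambda_k) \hat{u}(k))_{k \ge 0}$ lies in $\ell^2_s(\mathbb{N})$ because
\begin{equation}
\sum_{k=0}^\infty \lambda_k^s \abs{f(\lambda_k) \hat{u}(k)}^2 = \sum_{k=0}^\infty \lambda_k^{s+2r} \frac{\abs{f(\lambda_k)}^2}{\lambda_k^{2r}} \abs{\hat{u}(k)}^2 \le \norm{f}_{\mathfrak{B}^r}^2 \sum_{k=0}^\infty \lambda_k^{s+2r}\abs{\hat{u}(k)}^2,
\end{equation}
so Theorem \ref{l2_char} identifies $f(\K)u$ with a genuine element of $\h^s_\K((-\ell,\ell))$ satisfying $\norm{f(\K)u}_{\h^s_\K} \le \norm{f}_{\mathfrak{B}^r} \norm{u}_{\h^{s+2r}_\K}$. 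Linearity in $u$ is immediate from the linearity of the maps $u \mapsto \hat{u}(k)$ and the convergence of the defining series, which together yield item (1).

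For item (2), the self-adjointness, I would expand both sides using the definition of $\ip{\cdot,\cdot}_{\h^s_\K}$ and the fact that the Fourier coefficients of $f(\K)u$ are exactly $f(\lambda_k)\hat{u}(k)$ (which one confirms by pairing against $w_j$ inside $\h^s_\K$ via Theorem \ref{l2_char}). This gives
\begin{equation}
\ip{f(\K) u, v}_{\h^s_\K} = \sum_{k=0}^\infty \lambda_k^s f(\lambda_k)\hat{u}(k) \hat{v}(k) = \ip{u, f(\K) v}_{\h^s_\K},
\end{equation}
where the symmetry is clear because $f(\lambda_k) \in \mathbb{R}$. Item (3) is essentially a restatement of the estimate derived for item (1): linearity of $f \mapsto f(\K)$ is evident from the pointwise definition on coefficients, and the operator norm bound $\norm{f(\K)}_{\mathcal{L}} \le \norm{f}_{\mathfrak{B}^r}$ is exactly what was established above.

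The main step, and the one requiring genuine input beyond the coefficient bookkeeping, is item (4). The approach I would take is a finite-rank approximation argument. For each $N \in \mathbb{N}$ define the truncation $f_N : \Sigma_\K \to \mathbb{R}$ by $f_N(\lambda_k) = f(\lambda_k)$ if $k \le N$ and $f_N(\lambda_k) = 0$ otherwise; then $f_N(\K)$ has range contained in $\spn\{w_0,\dotsc,w_N\}$, so it is a finite-rank, hence compact, operator from $\h^{s+2r}_\K((-\ell,\ell))$ to $\h^s_\K((-\ell,\ell))$. Since $\lambda_k \to \infty$ as $k \to \infty$ and $f \in \mathfrak{B}^r_0(\Sigma_\K)$, we have
\begin{equation}
\norm{f - f_N}_{\mathfrak{B}^r} = \sup_{k > N} \frac{\abs{f(\lambda_k)}}{\lambda_k^r} \longrightarrow 0 \text{ as } N \to \infty.
\end{equation}
Applying item (3) to the difference $f - f_N \in \mathfrak{B}^r(\Sigma_\K)$ gives $\norm{f(\K) - f_N(\K)}_{\mathcal{L}} \le \norm{f - f_N}_{\mathfrak{B}^r} \to 0$, so $f(\K)$ is the operator-norm limit of compact operators and is therefore itself compact. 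The main subtlety here is confirming that the $\mathfrak{B}^r_0$ condition translates into operator-norm convergence of the truncations, which is precisely where the decay hypothesis on $f$ at infinity, combined with the discreteness and unboundedness of the spectrum $\Sigma_\K$, does the work.
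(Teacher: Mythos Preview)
Your proof is correct and follows essentially the same approach as the paper: items (1)--(3) are handled by the elementary coefficient computations you give (the paper simply calls these ``elementary''), and for item (4) both you and the paper use finite-rank truncations and show operator-norm convergence via the $\mathfrak{B}^r_0$ decay condition. One minor wrinkle: defining $f_N$ as a function on $\Sigma_\K$ can be ambiguous if an eigenvalue has multiplicity greater than one and the cutoff index $N$ falls in the middle of a repeated block, so it is cleaner (as the paper does) to define the truncated operator $F_N u = \sum_{k=0}^N f(\lambda_k)\hat u(k) w_k$ directly rather than through a truncated symbol.
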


\begin{proof}
 
The first three assertions are elementary, so we'll only prove the fourth.  To prove this we will show that $f(\K)$ is the limit (in the strong operator norm topology) of a sequence of finite rank operators (see, for instance, Chapter VI of \cite{reed_simon}).  To this end, for each $j \ge 0$ define $F_j :\h^{s+2r}_\K((-\ell,\ell)) \to \h^s_\K((-\ell,\ell))$ via 
\begin{equation}
 F_j u = \sum_{k=0}^j f(\lambda_k) \hat{u}(k) w_k.
\end{equation}
It's clear that each $F_j$ is bounded, linear, and of finite rank.  Also,   for $u \in \h^{s+2r}_\K((-\ell,\ell))$ and $j \ge 0$ we have that
\begin{equation}
 \ns{(F_j - f(\K))u }_{\h^s_\K} = \sum_{k=j+1}^\infty \lambda_k^s \abs{f(\lambda_k)}^2 \abs{\hat{u}(k)}^2 \le \sup_{k \ge j+1} \frac{\abs{f(\lambda_k)}^2}{\lambda_k^{2r}} \ns{u}_{\h^{s+2r}_\K},
\end{equation}
and hence
\begin{equation}
 \ns{F_j - f(\K) }_{\L(\h^{s+2r}_\K;\h^s_\K)} \le \sup_{k \ge j+1} \frac{\abs{f(\lambda_k)}^2}{\lambda_k^{2r}}.
\end{equation}
From this and the inclusion $f \in \mathfrak{B}^r_0(\Sigma_\K)$ we  deduce that $F_j \to f(\K)$ in $\L(\h^{s+2r}_\K;\h^s_\K)$, and hence $f(\K)$ is compact.
\end{proof}

One of the most important uses of this result is the following corollary.

\begin{cor}
If $s,t \in \R$ and $s< t$, then  $\h^t_\K((-\ell,\ell)) \csubset \h^s_\K((-\ell,\ell))$.
\end{cor}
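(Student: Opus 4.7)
The plan is to obtain this as an immediate consequence of the compactness portion of the preceding theorem, applied to the constant function $f \equiv 1$. The key observation is that the inclusion $\h^t_\K((-\ell,\ell)) \hookrightarrow \h^s_\K((-\ell,\ell))$ is literally the operator $f(\K)$ for this choice of $f$, since by definition
\begin{equation}
f(\K)u = \sum_{k=0}^\infty 1 \cdot \hat{u}(k) w_k = u
\end{equation}
for every $u$ in the domain. So the entire task reduces to showing that $f \equiv 1$ lies in $\mathfrak{B}^r_0(\Sigma_\K)$ for an appropriate $r$ that matches the domain and codomain.

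First I would set $r = (t-s)/2$, which is strictly positive by the hypothesis $s < t$. With this choice we have $s + 2r = t$, so the previous theorem, if applicable, will give compactness of $f(\K): \h^t_\K((-\ell,\ell)) \to \h^s_\K((-\ell,\ell))$, which is precisely the statement to be proved. Next I would verify that $f \equiv 1 \in \mathfrak{B}^r_0(\Sigma_\K)$: using Definition \ref{fnal_calc_def} and the fact that $\lambda_k \ge \lambda_0 = g > 0$ with $\lambda_k \to \infty$, together with $r > 0$, we obtain
\begin{equation}
\norm{f}_{\mathfrak{B}^r} = \sup_{x \ge \lambda_0} \frac{1}{x^r} = \frac{1}{\lambda_0^r} = g^{-r} < \infty
\end{equation}
and
\begin{equation}
\lim_{x \to \infty} \frac{|f(x)|}{x^r} = \lim_{x \to \infty} \frac{1}{x^r} = 0,
\end{equation}
so indeed $f \in \mathfrak{B}^r_0(\Sigma_\K)$. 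Invoking the fourth item of the preceding theorem then yields that $f(\K) = I$ is compact as a map $\h^t_\K((-\ell,\ell)) \to \h^s_\K((-\ell,\ell))$, which is the desired conclusion.

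There is essentially no obstacle here; the corollary is set up precisely so that it drops out of the functional calculus framework developed above. If anything, the only thing to double-check is that the functional calculus machinery is genuinely defined on the correct domain for this choice of $r$, but the indexing $s+2r = t$ is consistent with the definition of $f(\K)$ and the statement of the theorem.
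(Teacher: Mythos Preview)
Your proof is correct and is exactly the intended approach: the paper states this as an immediate corollary of the preceding theorem without further proof, and your argument supplies precisely the missing details, namely that $f\equiv 1$ lies in $\mathfrak{B}^r_0(\Sigma_\K)$ for $r=(t-s)/2>0$ so that item~(4) yields compactness of the inclusion.
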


We have the following variant of elliptic regularity in the spaces $\h^s_\K((-\ell,\ell))$.

\begin{thm}
Let $s \in [0,\infty)$ and suppose that $f \in \h^s_\K((-\ell,\ell))$.  If $u \in \h^1_\K((-\ell,\ell)) $ is the weak solution to $\K u =f$ and $\B u =0$, i.e.
\begin{equation}
 (u, v)_{1,\Sigma} = (f,v)_{0,\Sigma} \text{ for all } v \in H^1((-\ell,\ell)),
\end{equation}
then $u \in \h^{s+2}_\K((-\ell,\ell))$.  Moreover, $\norm{u}_{\h^{s+2}_\K} = \norm{f}_{\h^s_\K}$.  Consequently, $\K : \h^{s+2}_\K((-\ell,\ell)) \to \h^s_\K((-\ell,\ell))$ is an isometric isomorphism.
\end{thm}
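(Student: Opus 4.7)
The plan is to reduce the entire statement to a direct computation of the Fourier coefficients of $u$ with respect to the eigenfunction basis $\{w_k\}_{k=0}^\infty$, using the already-developed spectral theory of $\K$. Since $\h^s_\K((-\ell,\ell)) \subseteq \h^0_\K((-\ell,\ell)) = L^2((-\ell,\ell))$, the data $f$ admits the representation $f = \sum_{k=0}^\infty \hat{f}(k) w_k$ with $\sum_k \lambda_k^s |\hat{f}(k)|^2 < \infty$, and similarly $u \in \h^1_\K((-\ell,\ell)) = H^1((-\ell,\ell))$ has well-defined coefficients $\hat{u}(k) = (u,w_k)_{0,\Sigma}$.

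The key step is to test the weak formulation against $v = w_k \in H^1((-\ell,\ell))$ for each $k$. On one hand, the right-hand side yields $(f,w_k)_{0,\Sigma} = \hat{f}(k)$ by definition. On the other hand, the eigenfunction property $\K w_k = \lambda_k w_k$ together with $\B w_k = 0$ gives, via the identity $(\varphi,\psi)_{1,\Sigma} = (\K\varphi,\psi)_{0,\Sigma} + [\B\varphi,\psi]_\ell$ already established in \eqref{weak_motivation}, that
\begin{equation*}
(u,w_k)_{1,\Sigma} = (u,\K w_k)_{0,\Sigma} + [u,\B w_k]_\ell = \lambda_k (u,w_k)_{0,\Sigma} = \lambda_k \hat{u}(k).
\end{equation*}
Equating the two sides produces the explicit formula $\hat{u}(k) = \hat{f}(k)/\lambda_k$ for every $k \ge 0$, which, since $\lambda_k \ge g > 0$, is well-defined.

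With this formula in hand, membership and the norm identity follow by direct summation. Indeed,
\begin{equation*}
\sum_{k=0}^\infty \lambda_k^{s+2} |\hat{u}(k)|^2 = \sum_{k=0}^\infty \lambda_k^{s+2}\, \lambda_k^{-2} |\hat{f}(k)|^2 = \sum_{k=0}^\infty \lambda_k^s |\hat{f}(k)|^2 = \norm{f}^2_{\h^s_\K} < \infty,
\end{equation*}
so Theorem \ref{l2_char} places $u$ in $\h^{s+2}_\K((-\ell,\ell))$ with $\norm{u}_{\h^{s+2}_\K} = \norm{f}_{\h^s_\K}$. For the final assertion, the functional-calculus definition of $\K$ acting on elements of $\h^{s+2}_\K((-\ell,\ell))$ gives $\widehat{\K u}(k) = \lambda_k \hat{u}(k)$, so $\norm{\K u}_{\h^s_\K} = \norm{u}_{\h^{s+2}_\K}$ exhibits the isometry; injectivity is immediate from the vanishing of all coefficients, and surjectivity follows by setting $u = \sum_k \lambda_k^{-1} \hat{f}(k) w_k$ for any prescribed $f$ and checking via Theorem \ref{l2_char} that $u \in \h^{s+2}_\K((-\ell,\ell))$ and $\K u = f$ in the sense of the functional calculus.

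There is essentially no obstacle here, since all the substantive analytic content—existence and symmetry of the eigenbasis, the characterization of $\h^s_\K$ via Fourier coefficients, and the compatibility identity \eqref{weak_motivation}—has been established earlier. The only care required is to verify that testing the weak formulation against each $w_k$ is justified (which it is, as $w_k \in C^\infty([-\ell,\ell]) \subset H^1((-\ell,\ell))$) and to track the dualities cleanly when $s=0$ versus $s>0$, but in both cases the Fourier coefficients of $f \in \h^s_\K \subseteq L^2$ agree with the $L^2$ pairings, so no case split is needed.
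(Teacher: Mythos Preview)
Your proof is correct and follows essentially the same approach as the paper: test the weak formulation against each eigenfunction $w_k$ to obtain $\hat{f}(k) = \lambda_k \hat{u}(k)$, then compute the $\h^{s+2}_\K$ norm directly from the coefficients. The only minor remark is that your citation of \eqref{weak_motivation} is stated there for $C^2$ functions in both slots, whereas you apply it with $u \in H^1$; this is harmless since the needed identity $(u,w_k)_{1,\Sigma} = \lambda_k \hat{u}(k)$ follows immediately from Proposition~\ref{eigen_bases} (or equivalently from the $H^1$-orthonormality of $\{w_k/\sqrt{\lambda_k}\}$), which is how the paper phrases it.
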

\begin{proof}
We have that 
\begin{equation}
\hat{f}(k)= (f,w_k)_{0,\Sigma} = (u,w_k)_{1,\Sigma} = ( w_k, u)_{1,\Sigma} = \lambda_k (w_k,u)_{0,\Sigma} = \lambda_k (u,w_k)_{0,\Sigma} = \lambda_k \hat{u}(k).
\end{equation}
Thus 
\begin{equation}
 \ns{u}_{\h^{s+2}_\K} = \sum_{k=0}^\infty \lambda_k^{s+2} \abs{\hat{u}(k)}^2 =  \sum_{k=0}^\infty \lambda_k^{s} \abs{\hat{f}(k)}^2 = \ns{f}_{\h^s_\K}.
\end{equation}
 
\end{proof}

\subsection{Interpolation theory and its consequences}

Here we write $(X,Y)_{\theta,p}$ for $\theta \in [0,1]$ and $1 \le p \le \infty$ for the real interpolation of the spaces $X,Y$ with parameters $\theta,p$.  See \cite{bergh_lof} or \cite{triebel}, for instance,  for the precise definition.  We record a basic result from that book.

\begin{thm}
Let $s,t \in \R$ with $s \neq t$.  For $0 < \theta < 1$ and  $r = (1-\theta) s + \theta t$ we have that 
\begin{equation}
 (\ell^2_s(\mathbb{N}), \ell^2_t(\mathbb{N}))_{\theta,2} = \ell^2_r(\mathbb{N}).
\end{equation}
\end{thm}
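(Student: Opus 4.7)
My plan is to prove this by a direct computation of the Peetre $K$-functional, which is feasible because $\ell^2$-type norms decouple across coordinates under quadratic optimization. An equally valid alternative would be to view $\ell^2_s(\mathbb{N})$ as the weighted $L^2$-space $L^2(\mathbb{N}, w_s \, d\nu)$ where $\nu$ is counting measure and $w_s(k) = \lambda_k^s$, and then invoke the classical Stein--Weiss interpolation theorem for weighted $L^p$ spaces (e.g.\ Theorem 5.5.3 of Bergh--L\"ofstr\"om), which gives $(L^p(w_0), L^p(w_1))_{\theta,p} = L^p(w_0^{1-\theta} w_1^\theta)$; with $w_0 = \lambda_k^s$ and $w_1 = \lambda_k^t$ this yields weight $\lambda_k^{(1-\theta)s+\theta t} = \lambda_k^r$, as required. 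For completeness I will sketch the direct approach.

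The first step is to compute $K(\tau, f)$ for $f \in \ell^2_s(\mathbb{N}) + \ell^2_t(\mathbb{N})$. Up to a harmless constant, $K(\tau,f)^2$ is comparable to the infimum of $\|f_0\|_{\ell^2_s}^2 + \tau^2 \|f_1\|_{\ell^2_t}^2$ over all decompositions $f = f_0 + f_1$. Writing $f_0(k) = a_k$ and $f_1(k) = f(k) - a_k$, this minimization problem decouples into the scalar problems $\min_{a_k} \bigl( \lambda_k^s |a_k|^2 + \tau^2 \lambda_k^t |f(k) - a_k|^2 \bigr)$. Standard first-order calculus yields the optimal value
\begin{equation*}
\frac{\tau^2 \lambda_k^{s+t}}{\lambda_k^s + \tau^2 \lambda_k^t} |f(k)|^2,
\end{equation*}
and summing over $k$ gives $K(\tau,f)^2 \asymp \sum_k \frac{\tau^2 \lambda_k^{s+t}}{\lambda_k^s + \tau^2 \lambda_k^t} |f(k)|^2$.

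The second step is to insert this into the real interpolation norm $\|f\|_{\theta,2}^2 = \int_0^\infty \tau^{-2\theta} K(\tau,f)^2 \, \frac{d\tau}{\tau}$, swap sum and integral by Tonelli, and evaluate the resulting per-coordinate integral
\begin{equation*}
I_k := \int_0^\infty \frac{\tau^{1-2\theta}}{\lambda_k^s + \tau^2 \lambda_k^t} \, d\tau
\end{equation*}
via the substitution $\tau = \lambda_k^{(s-t)/2} v$. A direct bookkeeping of exponents gives $I_k = C_\theta \, \lambda_k^{(s-t)(1-\theta) - s}$ where $C_\theta = \int_0^\infty v^{1-2\theta}(1+v^2)^{-1} \, dv$ is finite because $0 < \theta < 1$. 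Multiplying by the prefactor $\lambda_k^{s+t}$ collapses the exponent to $\lambda_k^{(1-\theta)s + \theta t} = \lambda_k^r$, yielding $\|f\|_{\theta,2}^2 \asymp C_\theta \sum_k \lambda_k^r |f(k)|^2 = C_\theta \|f\|_{\ell^2_r}^2$, which is the desired identification of spaces and norms up to equivalence.

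The only real obstacle is the exponent bookkeeping in the substitution step, and the need to verify that the decoupled quadratic minimization gives a valid lower bound for $K(\tau,f)$ as well (not just an upper bound), which follows because each feasible decomposition $f = f_0 + f_1$ automatically decomposes coordinatewise, so the per-coordinate infimum bounds the global infimum from below. Density of finitely supported sequences in each $\ell^2_\alpha$ will handle any measure-theoretic issues in swapping the sum and integral.
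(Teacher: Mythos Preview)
Your proof is correct. The paper, by contrast, does not compute anything: it simply cites Theorem~5.4.1 of Bergh--L\"ofstr\"om and is done in one line. Your alternative suggestion (invoking Stein--Weiss via Theorem~5.5.3 of the same reference) is essentially the paper's approach, just pointing to a neighboring theorem in the same book. The direct $K$-functional computation you carry out is a genuinely different route: it is self-contained and makes the mechanism transparent (the quadratic structure of $\ell^2$ lets the infimum decouple coordinatewise, reducing everything to a one-variable integral), at the cost of some bookkeeping with exponents. The citation buys brevity and places the result in its natural generality (weighted $L^p$ interpolation); your computation buys independence from the literature and would be the right choice in a setting where one cannot simply defer to a textbook.
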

\begin{proof}
This follows immediately from  Theorem 5.4.1 of \cite{bergh_lof}.
\end{proof}

By combining this with Theorem \ref{l2_char} we immediately deduce the following.

\begin{cor}\label{s_interp}
Let $s,t \in \R$ with $s \neq t$.  For $0 < \theta < 1$ and  $r = (1-\theta) s + \theta t$ we have that 
\begin{equation}
 (\h^s_\K((-\ell,\ell)) , \h^t_\K((-\ell,\ell)))_{\theta,2} = \h^r_\K((-\ell,\ell)).
\end{equation}
 
\end{cor}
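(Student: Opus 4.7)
The plan is to reduce the interpolation statement for the abstract Sobolev spaces $\h^s_\K$ directly to the already-established interpolation statement for the weighted sequence spaces $\ell^2_s(\mathbb{N})$, by exploiting the isometric isomorphism $u \mapsto \hat u$ provided by Theorem \ref{l2_char}.

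First I would fix $s,t \in \R$ with $s \neq t$ and define, for each $\sigma \in \{s,t,r\}$, the Fourier coefficient map
\begin{equation}
\Phi_\sigma : \h^\sigma_\K((-\ell,\ell)) \to \ell^2_\sigma(\mathbb{N}), \qquad \Phi_\sigma(u) = \hat u.
\end{equation}
By Theorem \ref{l2_char}, each $\Phi_\sigma$ is a surjective linear isometry, with inverse $\hat u \mapsto \sum_{k=0}^\infty \hat u(k) w_k$ (the series converging in $\h^\sigma_\K$). Moreover, the definition of $\hat u$ is intrinsic and does not depend on $\sigma$: if $u$ lies in both $\h^s_\K$ and $\h^t_\K$, then $\Phi_s u = \Phi_t u$ as sequences. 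Consequently, the pair $(\Phi_s,\Phi_t)$ is a compatible morphism between the interpolation couples $(\h^s_\K,\h^t_\K)$ and $(\ell^2_s(\mathbb{N}),\ell^2_t(\mathbb{N}))$ in the sense required by real interpolation, and its inverse on each component assembles into a well-defined compatible morphism in the reverse direction.

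Next I would invoke the functoriality (interpolation property) of the real method $(\cdot,\cdot)_{\theta,2}$: any bounded linear map between two Banach couples induces a bounded linear map between the corresponding interpolation spaces, with norm bounded by the endpoint norms. Applying this to $\Phi_s \oplus \Phi_t$ and to its inverse, and noting that both maps have endpoint operator norm exactly $1$, yields that the induced map
\begin{equation}
\tilde \Phi : (\h^s_\K((-\ell,\ell)),\h^t_\K((-\ell,\ell)))_{\theta,2} \to (\ell^2_s(\mathbb{N}),\ell^2_t(\mathbb{N}))_{\theta,2}
\end{equation}
is an isometric isomorphism. By the theorem preceding the corollary, the right-hand space equals $\ell^2_r(\mathbb{N})$, which in turn is isometrically isomorphic to $\h^r_\K((-\ell,\ell))$ via $\Phi_r^{-1}$. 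Composing, we obtain an isometric isomorphism between $(\h^s_\K,\h^t_\K)_{\theta,2}$ and $\h^r_\K$.

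Finally, I would observe that this isomorphism is the identity on the common dense subspace $W = \operatorname{span}\{w_k\}_{k=0}^\infty$, since $\Phi_\sigma w_k$ is the $k$-th standard basis vector of $\ell^2_\sigma$ for every $\sigma$. Thus the two spaces coincide as subspaces of the ambient space $\bigcup_\sigma \h^\sigma_\K$ with identical norms, giving the claimed equality. The only real content is the functoriality of real interpolation together with the isometry from Theorem \ref{l2_char}; there is no genuine obstacle here, which is why the excerpt labels the corollary as immediate, but the one subtle point to be careful about is ensuring that $\Phi_s$ and $\Phi_t$ agree on $\h^s_\K \cap \h^t_\K$ so that one genuinely has a morphism of interpolation couples rather than merely a pair of unrelated isometries.
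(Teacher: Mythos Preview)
Your proposal is correct and is exactly the argument the paper has in mind: the paper simply says the corollary follows by combining the preceding $\ell^2$ interpolation theorem with the isometry of Theorem \ref{l2_char}, and you have spelled out precisely those details, including the compatibility of $\Phi_s$ and $\Phi_t$ on the intersection.
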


Next we present a useful application of the interpolation theory.  

\begin{lem}\label{s_embed_lem}
If $s \ge 0$, then $\h^{s}_\K((-\ell,\ell)) \hookrightarrow H^{s}((-\ell,\ell))$.
\end{lem}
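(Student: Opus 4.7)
The plan is to reduce the claim to its integer-order counterpart and then invoke the real interpolation machinery that has already been set up in Corollary \ref{s_interp}. Concretely, the endpoint identifications and inclusions are already in hand: Theorem \ref{sobolev_char} gives $\h^0_\K = L^2 = H^0$ and $\h^1_\K = H^1$ with equivalent norms, and for every integer $m \ge 2$ it identifies $\h^m_\K$ as a closed subspace of $H^m$ on which the two norms are equivalent. In particular, for every nonnegative integer $m$ there is a continuous inclusion $\h^m_\K((-\ell,\ell)) \hookrightarrow H^m((-\ell,\ell))$, which settles the integer case.

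For non-integer $s \ge 0$, the plan is to write $s = (1-\theta)m + \theta(m+1)$ with $m = \lfloor s \rfloor$ and $\theta = s - m \in (0,1)$. On the $\h^\bullet_\K$ side, Corollary \ref{s_interp} already provides the identity
\begin{equation*}
(\h^m_\K((-\ell,\ell)),\h^{m+1}_\K((-\ell,\ell)))_{\theta,2} = \h^s_\K((-\ell,\ell)).
\end{equation*}
On the standard Sobolev side, the classical real interpolation identity for $L^2$-based Sobolev spaces on a bounded Lipschitz interval gives
\begin{equation*}
(H^m((-\ell,\ell)),H^{m+1}((-\ell,\ell)))_{\theta,2} = H^s((-\ell,\ell)),
\end{equation*}
with equivalence of norms; this is a standard identification (see, e.g., \cite{bergh_lof} or \cite{triebel}) and we invoke it as a black box.

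Having matched both scales with interpolation scales, the embedding then follows from the functoriality of the real interpolation functor: if $X_0 \hookrightarrow Y_0$ and $X_1 \hookrightarrow Y_1$ are bounded inclusions of compatible couples, then $(X_0,X_1)_{\theta,2} \hookrightarrow (Y_0,Y_1)_{\theta,2}$ is bounded, with operator norm controlled by the product of the endpoint embedding norms raised to the powers $1-\theta$ and $\theta$. Applying this with $X_j = \h^{m+j}_\K((-\ell,\ell))$ and $Y_j = H^{m+j}((-\ell,\ell))$ for $j=0,1$ yields $\h^s_\K((-\ell,\ell)) \hookrightarrow H^s((-\ell,\ell))$, completing the proof.

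There is no real obstacle: the only point to be careful about is that the classical interpolation identity for $H^s$ on $(-\ell,\ell)$ is invoked in the $L^2$ real interpolation scale, which is precisely the case in which $(H^m,H^{m+1})_{\theta,2}$ coincides with the Besov/Bessel-potential space $H^{m+\theta}$ on a Lipschitz domain. Everything else is formal once the integer-order inclusions from Theorem \ref{sobolev_char} and the spectral interpolation identity from Corollary \ref{s_interp} are available.
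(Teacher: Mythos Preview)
Your proposal is correct and follows essentially the same approach as the paper: use Theorem~\ref{sobolev_char} for the integer endpoints $\h^m_\K \hookrightarrow H^m$ and $\h^{m+1}_\K \hookrightarrow H^{m+1}$, then interpolate via Corollary~\ref{s_interp} on the $\h^\bullet_\K$ side and the standard identity $(H^m,H^{m+1})_{\theta,2}=H^{m+\theta}$ on the classical side. The paper's argument is identical in structure, just phrased slightly more tersely.
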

\begin{proof}
We may view Theorem \ref{sobolev_char} as saying that, for $m \in \mathbb{N}$, the identity map $I$ is such that
\begin{equation}
 I : \h^m_\K((-\ell,\ell)) \to L^2((-\ell,\ell)) \text{ and } I : \h^{m+1}_\K((-\ell,\ell)) \to  H^{m+1}((-\ell,\ell)) 
\end{equation}
are bounded linear operators.   We can then interpolate and use Corollary \ref{s_interp} and the interpolation properties of standard Sobolev spaces (see, for instance, \cite{bergh_lof,triebel}) to deduce that for $0 < s < 1$,  
\begin{multline}
 I : \h^{m+s}_\K((-\ell,\ell)) =  (\h^m_\K((-\ell,\ell)) , \h^{m+1}_\K((-\ell,\ell)))_{s,2} \to (H^m((-\ell,\ell)),H^{m+1}((-\ell,\ell)))_{s,2} \\
 = H^{m+s}((-\ell,\ell)).
\end{multline}
 
\end{proof}

In fact, we can do quite a bit better when $0 \le s < 2$.  In stating the following result we recall (see \cite{lions_magenes_1}) that 
\begin{equation}\label{h_half_00}
 H^{1/2}_{00}((-\ell,\ell)) = (H^0((-\ell,\ell)), H^1_0((-\ell,\ell)))_{1/2,2}  
\end{equation}
and 
\begin{equation}
 H^{1/2}_{00}((-\ell,\ell)) \subset H^{1/2}((-\ell,\ell)) = (H^0((-\ell,\ell)), H^1_0((-\ell,\ell)))_{1/2,2}.
\end{equation}

\begin{thm}\label{s_embed}
For $0 \le t \le 1$ we have that $H^t((-\ell,\ell)) = \h^t_\K((-\ell,\ell))$ with norm equivalence $\norm{f}_{H^t} \asymp \norm{f}_{\h^t_\K}$.  Moreover, for  $s \in (0,1)$ we have that 
\begin{equation}
\h^{1+s}_\K((-\ell,\ell)) = 
\begin{cases}
 H^{1+s}((-\ell,\ell)) & \text{if } 0 < s < 1/2 \\
 \{f \in H^{3/2}((-\ell,\ell)) \st f' \in H^{1/2}_{00}((-\ell,\ell)) \} &\text{if } s= 1/2 \\
 \{f \in H^{1+s}((-\ell,\ell)) \st f' \in H^{s}_{00}((-\ell,\ell)) \} &\text{if } 1/2 < s < 1
\end{cases}
\end{equation}
and we have the norm equivalence 
\begin{equation}
\ns{f}_{\h^{1+s}_\K} \asymp 
\begin{cases}
\ns{f}_{H^{1+s}} &\text{if } 0 < s < 1/2 \\
\ns{f}_{H^{3/2}} + \int_{-\ell}^\ell \frac{\abs{f'(x)}^2}{\ell - \abs{x}} dx & \text{ if } s = 1/2 \\
\ns{f}_{H^{1+s}} + \ns{f'}_{H^s_0} &\text{if } 1/2 < s < 1.
\end{cases}
\end{equation}
\end{thm}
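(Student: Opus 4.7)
The plan is to reduce everything to the endpoint identifications $\h^0_\K = L^2$ and $\h^1_\K = H^1$ (with equivalent norms, from Theorem \ref{sobolev_char}) and the characterization $\h^2_\K = \{f \in H^2 : f'(\pm \ell) = 0\}$ (also from Theorem \ref{sobolev_char}, using that $\B_\pm f$ is a nonvanishing smooth multiple of $f'(\pm \ell)$), and then invoke the real interpolation identities from Corollary \ref{s_interp} together with classical results on interpolation of Sobolev spaces with boundary conditions.

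For the first assertion, given $0 < t < 1$, I would apply Corollary \ref{s_interp} to write $\h^t_\K((-\ell,\ell)) = (\h^0_\K, \h^1_\K)_{t,2}$, identify the endpoints with $L^2$ and $H^1$ via Theorem \ref{sobolev_char}, and then use the standard interpolation identity $(L^2, H^1)_{t,2} = H^t$ (see \cite{bergh_lof,triebel}). The norm equivalence follows from the equivalence at both endpoints together with continuity of the real interpolation functor.

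For $1 < 1+s < 2$, the same strategy gives $\h^{1+s}_\K = (\h^1_\K, \h^2_\K)_{s,2} = (H^1, H^2_N)_{s,2}$, where $H^2_N := \{f \in H^2 : f'(\pm\ell) = 0\}$. The crux is to identify this interpolation space. The idea is to pass to derivatives: the map $f \mapsto (f(-\ell), f')$ is a topological isomorphism of $H^1$ onto $\R \times L^2$ and of $H^2_N$ onto $\R \times H^1_0$ (given a pair $(c, g)$, reconstruct $f(x) = c + \int_{-\ell}^x g$, noting on the $H^2_N$ side the constraint $g(\pm\ell) = 0$). Real interpolation respects isomorphisms and direct sum decompositions, so
\begin{equation}
(H^1, H^2_N)_{s,2} \cong \R \times (L^2, H^1_0)_{s,2}
\end{equation}
with equivalent norms, and this translates back into saying that $f \in (H^1,H^2_N)_{s,2}$ if and only if $f \in H^{1+s}$ (from the underlying regularity of $f'$) with $f' \in (L^2,H^1_0)_{s,2}$.

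The main obstacle — and the step where I would cite rather than reprove — is the classical Lions-Magenes interpolation identification (see \cite{lions_magenes_1})
\begin{equation}
(L^2((-\ell,\ell)), H^1_0((-\ell,\ell)))_{s,2} =
\begin{cases}
H^s((-\ell,\ell)) & 0 < s < 1/2, \\
H^{1/2}_{00}((-\ell,\ell)) & s = 1/2, \\
H^s_0((-\ell,\ell)) & 1/2 < s < 1,
\end{cases}
\end{equation}
which gives exactly the three regimes stated in the theorem, using that $H^s_0 = H^s$ for $s < 1/2$ (no trace) and $H^s_0 \subsetneq H^s$ for $s > 1/2$ (vanishing trace). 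The norm equivalence in the critical case $s = 1/2$ uses the standard intrinsic description of the Lions-Magenes space via the weighted integral $\int_{-\ell}^\ell |g(x)|^2/(\ell - |x|)\, dx$ applied to $g = f'$, recalling the definition \eqref{h_half_00}. Combining the isomorphism above with these identifications yields the stated characterization and norm equivalence for $\h^{1+s}_\K$ in each of the three subcases.
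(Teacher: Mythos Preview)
Your proposal is correct and follows essentially the same route as the paper: both arguments use Theorem~\ref{sobolev_char} and Corollary~\ref{s_interp} to reduce to identifying $(H^1,H^2_N)_{s,2}$, pass to the derivative via the isomorphism $f \leftrightarrow (f(-\ell),f')$ between $H^1 \cong \R \times L^2$ and $H^2_N \cong \R \times H^1_0$, and then invoke the Lions--Magenes identification of $(L^2,H^1_0)_{s,2}$. The paper phrases the isomorphism step as two separate interpolations (of the integration map $F$ and of the differentiation map $D$) together with Lemma~\ref{s_embed_lem}, whereas you appeal directly to functoriality of real interpolation under isomorphisms; this is a cosmetic difference only.
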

\begin{proof}
The assertion $t=0,1$ is proved in Theorem \ref{sobolev_char}, and for $0 < t < 1$ it follows from this theorem, Corollary \ref{s_interp}, and standard Sobolev interpolation:
\begin{equation}
H^t((-\ell,\ell)) =  (H^0((-\ell,\ell)), H^1((-\ell,\ell)))_{t,2} = (\h^0_\K((-\ell,\ell)), \h^1_\K((-\ell,\ell)))_{t,2} = \h^{t}_\K((-\ell,\ell)).
\end{equation}

We now prove the assertion for $s \in (0,1)$.  Define the map $F: L^2((-\ell,\ell)) \times \R \to H^1((-\ell,\ell))$ via 
\begin{equation}
 F(g,v) = v + \int_{-\ell}^x g(t) dt.
\end{equation}
If $g \in H^1_0((-\ell,\ell))$ and $v \in \R$, then $F(g,v) \in L^2((-\ell,\ell))$ and $F(g,v)' = g \in H^1_0((-\ell,\ell))$.  From this and Theorem \ref{sobolev_char} we deduce that 
\begin{equation}
 F \in \L(L^2((-\ell,\ell)) \times \R ; \h^1_\K((-\ell,\ell))  ) \cap \L(H^1_0((-\ell,\ell)) \times \R ; \h^2_\K((-\ell,\ell))  ). 
\end{equation}
Hence, upon interpolating, we find that for $s \in (0,1)$ 
\begin{equation}\label{s_embed_1}
 F \in \L(X^s \times \R ; \h^{1+s}_\K((-\ell,\ell))  ),
\end{equation}
where we have written $X^s = (L^2((-\ell,\ell)), H^1_0((-\ell,\ell)))_{s,2}$ for brevity.

Next consider the map $D$ defined by $f \mapsto Df = f'$.  Theorem \ref{sobolev_char} tells us that 
\begin{equation}
 D \in \L(\h^1_\K((-\ell,\ell)) ; L^2((-\ell,\ell)) ) \cap \L( \h^2_\K((-\ell,\ell)) ; H^1_0((-\ell,\ell)) ).
\end{equation}
Upon interpolating again and using Corollary \ref{s_interp}, we find that
\begin{equation}\label{s_embed_2}
 D \in \L(\h^{1+s}_\K((-\ell,\ell)); X^s.  )
\end{equation}

For $s \in (0,1)$ define the Hilbert space
\begin{equation}
 Y^{1+s} = \{ f\in H^{1+s}((-\ell,\ell)) \st f' \in X^s \}
\end{equation}
with norm $\ns{f}_{Y^{1+s}} = \ns{f}_{H^{1+s}} + \ns{f'}_{X^s}$.  According to \eqref{s_embed_2} and Lemma \ref{s_embed_lem}, we have the continuous inclusion $\h^{1+s}_\K((-\ell,\ell)) \subseteq Y^{1+s}$.  On the other hand, if $f \in Y^{1+s}$, then $(f', f(-\ell)) \in X^s \times \R$, and by \eqref{s_embed_1} we have that $f = F(f',f(-\ell)) \in \h^{1+s}_\K((-\ell,\ell)))$.  Hence, we have the continuous inclusion $Y^{1+s} \subseteq \h^{1+s}_\K((-\ell,\ell)) $.  We deduce that we have the algebraic and topological identity 
\begin{equation}
 \h^{1+s}_\K((-\ell,\ell))  = Y^{1+s} \text{ for all }s \in (0,1).
\end{equation}

To conclude, we recall the standard interpolation facts
\begin{equation}
X^s = (L^2((-\ell,\ell)), H^1_0((-\ell,\ell)))_{2,s} =
\begin{cases}
H^s((-\ell,\ell)) & \text{if } 0 < s < 1/2 \\
H^{1/2}_{00}((-\ell,\ell)) &\text{if } s = 1/2 \\
H^s_0((\ell,\ell)) & \text{if } 1/2 < s < 1.
\end{cases}
\end{equation}
If $0 < s < 1/2$ then we have the norm equivalence
\begin{equation}
\ns{f}_{\h^{1+s}_{\K}} \asymp \ns{f}_{H^{1+s}} + \ns{f'}_{H^s((-\ell,\ell))} \asymp   \ns{f}_{H^{1+s}},
\end{equation}
and so $\h^{1+s}_\K((-\ell,\ell)) = H^{1+s}((-\ell,\ell))$.  The result follows from this and the   characterization of $H^{1/2}_{00}((-\ell,\ell))$ in \eqref{h_half_00}.

\end{proof}

As a byproduct of this result we get the following Sobolev embeddings.
 
\begin{thm}
For $s \in [0,2]$ we have that  
\begin{equation}
\h^s_\K((-\ell,\ell)) \hookrightarrow 
\begin{cases}
L^{2/(1-2s)}((-\ell,\ell)) & \text{if } s\in [0,1/2) \\
L^p((-\ell,\ell)) \text{ for all } p \in [1,\infty) &\text{if } s = 1/2 \\
C^{0,\alpha}_b((-\ell,\ell)) \text{ for } \alpha = s-1/2 &\text{if } s \in (1/2,3/2) \\
C^{0,\alpha}_b((-\ell,\ell)) \text{ for all } \alpha \in [0,1) &\text{if }s =3/2 \\
C^{1,\alpha}_b((-\ell,\ell)) \text{ for } \alpha = s-3/2 &\text{if } s \in (3/2,2]. 
\end{cases}
\end{equation}
Moreover, for $s \in [1,3/2]$ we have that 
\begin{equation}
\h^s_\K((-\ell,\ell))\hookrightarrow 
\begin{cases}
W^{1,2/(1-2s)}((-\ell,\ell)) & \text{if } s\in [1,3/2) \\
W^{1,p}((-\ell,\ell)) \text{ for all } p \in [1,\infty) &\text{if } s = 3/2.
\end{cases}
\end{equation}
\end{thm}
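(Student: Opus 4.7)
The plan is to reduce the claimed embeddings to the standard one-dimensional Sobolev embedding theorems on the interval $(-\ell,\ell)$. The key reduction is already essentially available: Lemma \ref{s_embed_lem} gives the continuous embedding $\h^s_\K((-\ell,\ell)) \hookrightarrow H^s((-\ell,\ell))$ for every $s\ge 0$, so it suffices to verify that $H^s((-\ell,\ell))$ continuously embeds into each of the target spaces listed in the statement.

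For the first set of embeddings, I would simply combine Lemma \ref{s_embed_lem} with the classical $1$D Sobolev embeddings on a bounded interval: namely, $H^s\hookrightarrow L^{2/(1-2s)}$ for $s\in[0,1/2)$; $H^{1/2}\hookrightarrow L^p$ for every finite $p$; $H^s\hookrightarrow C^{0,s-1/2}_b$ for $s\in(1/2,3/2)$ (obtained from Morrey's inequality once we have $s-1/2>0$); the critical case $H^{3/2}\hookrightarrow C^{0,\alpha}_b$ for every $\alpha\in[0,1)$ (obtained by real interpolation between $H^{3/2-\epsilon}\hookrightarrow C^{0,1-\epsilon-1/2}_b$ and $H^{3/2+\epsilon}\hookrightarrow C^{0,1}_b$, or equivalently by applying Morrey just below $s=3/2$ with $\epsilon$ loss); and finally $H^s\hookrightarrow C^{1,s-3/2}_b$ for $s\in(3/2,2]$ by the same Morrey embedding applied to the weak derivative. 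Each of these is standard and appears in, e.g., the books of Triebel or Bergh--L\"ofstr\"om, so no new work is required once Lemma \ref{s_embed_lem} is in hand.

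For the second batch of embeddings, where $s\in[1,3/2]$ and the conclusion is a $W^{1,p}$ embedding, I would again start from $\h^s_\K((-\ell,\ell))\hookrightarrow H^s((-\ell,\ell))$ via Lemma \ref{s_embed_lem}. Given $u\in H^s((-\ell,\ell))$ with $s\ge 1$, the distributional derivative $u'$ lies in $H^{s-1}((-\ell,\ell))$ with $s-1\in[0,1/2]$, and so the first set of embeddings (applied with $s$ replaced by $s-1$) furnishes $u'\in L^{2/(3-2s)}$ for $s\in[1,3/2)$ and $u'\in L^p$ for all finite $p$ when $s=3/2$; combined with $u\in L^\infty$ (which follows from $H^s\hookrightarrow L^\infty$ for $s\ge 1$ in dimension one) this yields the stated $W^{1,p}$ inclusions.

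There is no real analytic obstacle in this proof: once Lemma \ref{s_embed_lem} is available, the entire result is a bookkeeping exercise against the standard Sobolev embedding catalogue in one variable. The only mildly delicate point is the critical case $s=3/2$, where Morrey's inequality is not directly applicable at the endpoint and one must argue via real interpolation (using Corollary \ref{s_interp}) between values of $s$ on either side of $3/2$ to recover the H\"older embedding for every $\alpha<1$, and similarly for the endpoint $W^{1,p}$ statement.
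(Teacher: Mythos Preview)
Your proposal is correct and follows essentially the same route as the paper: reduce to the standard one-dimensional Sobolev embeddings for $H^s((-\ell,\ell))$ via the inclusion $\h^s_\K \hookrightarrow H^s$. The paper's proof is a single sentence citing Theorem~\ref{s_embed} (the full characterization $\h^s_\K = H^s$ for $0\le s\le 2$, modulo the boundary constraints at $s\ge 3/2$) rather than Lemma~\ref{s_embed_lem}, but only the embedding direction is needed here, so your use of the lemma suffices and is arguably the more economical citation.

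One small remark: in the second batch of embeddings you correctly compute the exponent as $2/(3-2s)$, which is what the statement must mean (the printed $2/(1-2s)$ is a typo, since $1-2s<0$ for $s\in[1,3/2)$). Your handling of the critical case $s=3/2$ via interpolation is fine but not strictly required, as the endpoint embedding $H^{3/2}((-\ell,\ell))\hookrightarrow C^{0,\alpha}_b$ for all $\alpha<1$ is already part of the standard catalogue.
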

\begin{proof}
These are immediate consequences of Theorem \ref{s_embed} and the standard Sobolev embeddings of $H^s((-\ell,\ell))$ for $0 \le s \le 2$. 
\end{proof}

\subsection{Bilinear boundedness, integration by parts}

Suppose that $\varphi, \psi \in W$ and let $s \in [0,1]$.   We then have that 
\begin{equation}
 (\varphi,\psi)_{1,\Sigma} = \sum_{k=0}^\infty \lambda_k \hat{\varphi}(k) \hat{\psi}(k) = \sum_{k=0}^\infty \lambda_k^s \hat{\varphi}(k)  \lambda_k^{1-s}\hat{\psi}(k) = (\K^s \varphi, \K^{1-s} \psi)_{0,\Sigma}.
\end{equation}
Consequently, for any $s,t \in [0,1]$ we have that 
\begin{equation}
 (\K^s \varphi, \K^{1-s} \psi)_{0,\Sigma} = (\K^t \varphi, \K^{1-t} \psi)_{0,\Sigma},
\end{equation}
which we can view as a sort of fundamental integration-by-parts result in the sense that we can arbitrarily shift powers of $\K$ from one term to the next so long as the overall power sums to unity.  Working in $W$ is obviously too restrictive, but we can extend by density to get a generalized version of integration by parts for all fractional orders.

\begin{thm}
Let $B : W \times W \to \R$ be the bilinear map defined via 
\begin{equation}
B(\varphi,\psi) = (\K \varphi,\psi)_{0,\Sigma} = (\varphi,\psi)_{1,\Sigma} = (\varphi, \K \psi)_{0,\Sigma}. 
\end{equation}
Then $B$ extends to a bounded bilinear map $B : \h^{2s}_\K \times \h^{2(1-s)}_\K \to \R$ for each for $s \in [0,1]$.
\end{thm}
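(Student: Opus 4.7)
The plan is to prove this via a direct Cauchy--Schwarz estimate on the dense subspace $W$, followed by the standard extension-by-continuity argument.

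First, I would fix $\varphi, \psi \in W$ and rewrite the bilinear form using the Parseval-type identity from Proposition \ref{eigen_bases}. Since $\varphi$ and $\psi$ lie in the span of the eigenfunctions, the expansion
\begin{equation*}
B(\varphi,\psi) = (\K\varphi,\psi)_{0,\Sigma} = \sum_{k=0}^\infty \lambda_k \hat\varphi(k)\hat\psi(k)
\end{equation*}
is a finite sum. The key algebraic step is then to split the weight as $\lambda_k = \lambda_k^{s} \cdot \lambda_k^{1-s}$ and group: write $B(\varphi,\psi) = \sum_k \bigl(\lambda_k^{s}\hat\varphi(k)\bigr)\bigl(\lambda_k^{1-s}\hat\psi(k)\bigr)$.

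Next, I would apply the Cauchy--Schwarz inequality in $\ell^2(\mathbb{N})$ to obtain
\begin{equation*}
|B(\varphi,\psi)| \le \Bigl(\sum_{k=0}^\infty \lambda_k^{2s}|\hat\varphi(k)|^2\Bigr)^{1/2}\Bigl(\sum_{k=0}^\infty \lambda_k^{2(1-s)}|\hat\psi(k)|^2\Bigr)^{1/2} = \norm{\varphi}_{\h^{2s}_\K}\norm{\psi}_{\h^{2(1-s)}_\K},
\end{equation*}
where the final identification uses the definition of the $\h^r_\K$ norm via $\ell^2_r$ Fourier coefficients (Theorem \ref{l2_char}). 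This establishes the desired continuity bound of $B$ on $W \times W$ with respect to the $\h^{2s}_\K \times \h^{2(1-s)}_\K$ topology.

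Finally, I would invoke the fact that $W$ is dense in $\h^r_\K((-\ell,\ell))$ for every $r \in \R$ by construction (these spaces are defined as the closure of $W$ in the $\h^r_\K$-norm). Since $B$ is bilinear, continuous on $W \times W$ with respect to the product topology of $\h^{2s}_\K \times \h^{2(1-s)}_\K$, and $W \times W$ is dense in $\h^{2s}_\K \times \h^{2(1-s)}_\K$, a standard bilinear extension-by-continuity argument produces a unique bounded bilinear extension to the full product space, with the same operator norm bound. There is no substantive obstacle here; the only care needed is matching the Fourier-side exponents correctly so that the Cauchy--Schwarz split yields precisely the norms $\norm{\cdot}_{\h^{2s}_\K}$ and $\norm{\cdot}_{\h^{2(1-s)}_\K}$, which is why the indices in the theorem are $2s$ and $2(1-s)$ rather than $s$ and $1-s$.
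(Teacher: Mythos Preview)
Your proposal is correct and essentially identical to the paper's proof: the paper writes the same splitting as $B(\varphi,\psi) = (\K^s\varphi, \K^{1-s}\psi)_{0,\Sigma}$, applies Cauchy--Schwarz to get $|B(\varphi,\psi)| \le \norm{\varphi}_{\h^{2s}_\K}\norm{\psi}_{\h^{2(1-s)}_\K}$, and then invokes density of $W$. Your version simply expands the operator identity at the Fourier-coefficient level, which is the same argument.
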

\begin{proof}
This follows directly from the identity
\begin{equation}
B(\varphi,\psi) =  (\K^s \varphi, \K^{1-s} \psi)_{0,\Sigma} \text{ for }\varphi,\psi \in W,
\end{equation}
which allows us to bound 
\begin{equation}
 \abs{B(\varphi,\psi)} \le \norm{\varphi}_{\h^{2s}_\K} \norm{\psi}_{\h^{2(1-s)}_\K}.
\end{equation}
Using this and the density of $W$ in  $\h^t_\K$ for all $t \ge 0$ proves the result.

\end{proof}

\subsection{The operators $D_j^r$ }\label{sec_djr}

We now turn our attention to the operators  $D^r := \K^{r/2}$ for $r \ge 0$, as defined by the functional calculus from Definition \ref{fnal_calc_def}. We will need to introduce some finite approximations, $D^r_j$, defined by  
\begin{equation}
 D_j^r u = \sum_{k=0}^j \lambda_k^{r/2} \hat{u}(k) w_k.
\end{equation}
It's easy to see that this is well-defined for every $u \in L^2((-\ell,\ell)) = \h^0_\K((-\ell,\ell))$ and that in this case $D_j^r u \in W  \subseteq \bigcap_{s \ge 0} \h^s_\K((-\ell,\ell))  \subset C^\infty([-\ell,\ell])$.

Let's now study some properties.  The first result tells us that $D^r_j$ is like an approximation of $r$ derivatives.

\begin{prop}\label{Djr_bnds}
Let $j \in \mathbb{N}$.  Then the following hold.
\begin{enumerate}
 \item If $0\le r_1,r_2,s_1,s_2 \in \R$ satisfy $r_1 + s_1 = r_2 + s_2$, then 
\begin{equation}
 \norm{D^{r_1}_j f}_{\h^{s_1}_\K} =  \norm{D^{r_2}_j f}_{\h^{s_2}_\K}
\end{equation}
for all $f \in L^2((-\ell,\ell))$. 
\item If $0 \le r \in \R$, then 
\begin{equation}
 \norm{D^r_j f}_{L^2} \le \norm{f}_{\h^r_\K}  \text{ and }  \norm{f}_{\h^r_\K} = \lim_{j \to \infty} \norm{D^r_j f}_{L^2}  
\end{equation}
for every $f \in \h^r_{\K}((-\ell,\ell))$.
\end{enumerate}
\end{prop}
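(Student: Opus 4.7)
The plan is to observe that both assertions reduce immediately to computations with the Fourier expansion, using the orthonormality of $\{w_k\}_{k=0}^\infty$ in $L^2((-\ell,\ell))$ together with Theorem \ref{l2_char}.

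For item (1), I will unpack the definition of $D^{r_1}_j$. Since the $w_k$ form an orthonormal basis of $L^2 = \h^0_\K$, for any $f \in L^2$ the element $D^{r_1}_j f = \sum_{k=0}^j \lambda_k^{r_1/2} \hat{f}(k) w_k$ lies in $W$ and has Fourier coefficients $\widehat{D^{r_1}_j f}(k) = \lambda_k^{r_1/2} \hat{f}(k)$ for $0 \le k \le j$ and $\widehat{D^{r_1}_j f}(k) = 0$ otherwise. Applying Theorem \ref{l2_char} then gives
\begin{equation}
\norm{D^{r_1}_j f}_{\h^{s_1}_\K}^2 = \sum_{k=0}^j \lambda_k^{s_1} \lambda_k^{r_1} |\hat{f}(k)|^2 = \sum_{k=0}^j \lambda_k^{s_1 + r_1} |\hat{f}(k)|^2.
\end{equation}
The right side depends only on the sum $s_1 + r_1$, so under the assumption $s_1 + r_1 = s_2 + r_2$ the same computation applied to $D^{r_2}_j f$ yields the same value, proving the claimed equality.

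For item (2), the special case $r_1 = r$, $s_1 = 0$ of the above computation reads
\begin{equation}
\norm{D^r_j f}_{L^2}^2 = \norm{D^r_j f}_{\h^0_\K}^2 = \sum_{k=0}^j \lambda_k^r |\hat{f}(k)|^2,
\end{equation}
while by Theorem \ref{l2_char} the assumption $f \in \h^r_\K((-\ell,\ell))$ means $\norm{f}_{\h^r_\K}^2 = \sum_{k=0}^\infty \lambda_k^r |\hat{f}(k)|^2 < \infty$. Since all summands are nonnegative, the partial sums are bounded by the full sum, giving $\norm{D^r_j f}_{L^2} \le \norm{f}_{\h^r_\K}$, and they converge monotonically to it as $j \to \infty$, giving the claimed limit. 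There is essentially no obstacle here — the result is a direct bookkeeping consequence of the spectral definitions, and the only thing to verify is the identification of the Fourier coefficients of $D^{r_1}_j f$, which is immediate from orthonormality of $\{w_k\}$.
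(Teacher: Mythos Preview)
Your proof is correct and follows essentially the same approach as the paper: both arguments compute $\norm{D^{r_1}_j f}_{\h^{s_1}_\K}^2$ directly from the spectral definition as the finite sum $\sum_{k=0}^j \lambda_k^{s_1+r_1}|\hat{f}(k)|^2$, observe that this depends only on $s_1+r_1$, and then specialize to $s_1=0$ to identify $\norm{D^r_j f}_{L^2}^2$ as a partial sum of the series defining $\norm{f}_{\h^r_\K}^2$. Your treatment is slightly more explicit about the monotone convergence of the partial sums, but the content is identical.
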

\begin{proof}
For the first item we compute 
\begin{equation}
 \ns{D^{r_1}_j f}_{\h^{s_1}_\K} = \sum_{k=0}^j \lambda_k^{s_1} \abs{ \lambda_k^{r_1/2} \hat{f}(k) }^2 = \sum_{k=0}^j \lambda_k^{s_2} \abs{ \lambda_k^{r_2/2} \hat{f}(k) }^2 = \ns{D^{r_2}_j f}_{\h^{s_2}_\K} .
\end{equation}
In turn this shows that 
\begin{equation}
\norm{D_j^r f}_{L^2} = \norm{D_j^0 f}_{\h^r_\K} = \left( \sum_{k=0}^j \lambda_k^r \abs{\hat{f}(k)}^2  \right)^{1/2},
\end{equation}
from which the second item follows.
\end{proof}

Next we consider how $D^r_j$ interacts with functions of average zero.

\begin{lem}\label{Djr_avg_zero}
If $f\in L^2((-\ell,\ell))$ satisfies $\int_{-\ell}^\ell f =0$, then $\int_{-\ell}^\ell D_j^r f =0$ for all $r \ge 0$ and $j \in \mathbb{N}$.
\end{lem}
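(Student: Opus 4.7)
The plan is to exploit two elementary facts about the eigenbasis $\{w_k\}_{k=0}^\infty$ together with the finiteness of the sum defining $D_j^r$. First, since $w_0 = 1/\sqrt{2\ell}$ is the constant function, the orthonormality $(w_k,w_0)_{0,\Sigma} = \delta_{k0}$ translates directly into the mean identity
\begin{equation*}
\int_{-\ell}^\ell w_k = \sqrt{2\ell}\,(w_k,w_0)_{0,\Sigma} = \sqrt{2\ell}\,\delta_{k0}.
\end{equation*}
In particular, $\int_{-\ell}^\ell w_k = 0$ for every $k \ge 1$. Second, the hypothesis $\int_{-\ell}^\ell f = 0$ rewrites as
\begin{equation*}
\hat{f}(0) = (f,w_0)_{0,\Sigma} = \frac{1}{\sqrt{2\ell}}\int_{-\ell}^\ell f = 0.
\end{equation*}

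With these observations in hand the proof reduces to a one-line termwise integration. Since $D_j^r f$ is a finite linear combination of the $w_k$, we may interchange sum and integral freely to obtain
\begin{equation*}
\int_{-\ell}^\ell D_j^r f = \sum_{k=0}^j \lambda_k^{r/2}\,\hat{f}(k)\int_{-\ell}^\ell w_k = \lambda_0^{r/2}\,\hat{f}(0)\sqrt{2\ell} + \sum_{k=1}^j \lambda_k^{r/2}\,\hat{f}(k)\cdot 0 = 0,
\end{equation*}
where the $k=0$ contribution vanishes by the second observation and the $k\ge 1$ contributions vanish by the first. There is no genuine obstacle here; the only thing worth emphasizing is the role of the constant eigenfunction $w_0$, which simultaneously encodes the mean-value functional and anchors the $k=0$ mode of the expansion that $D_j^r$ acts on.
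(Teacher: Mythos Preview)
Your proof is correct and follows essentially the same approach as the paper: both observe that $w_0 = 1/\sqrt{2\ell}$ is constant, so vanishing mean is equivalent to $\hat{f}(0)=0$, and this property is preserved by $D_j^r$ since it acts diagonally in the eigenbasis. Your version is simply a slightly more explicit unwinding of the paper's one-line argument that $\widehat{D_j^r f}(0)=0$.
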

\begin{proof}
Since $w_0 = 1/\sqrt{2\ell}$ we see that $\int_{-\ell}^\ell f =0$ if and only if $\hat{f}(0) =0$.  In this case we then have that $\widehat{D^r_j f}(0) =0$ as well, and the result follows.
\end{proof}

Next we prove an integration by parts formula.  

\begin{lem}\label{Djr_IBP}
Let $0 \le r,s,t,\rho \in \R$ be such that $r = s+t$.  Then for $j \in \mathbb{N}$, $f\in L^2((-\ell,\ell))$, and $g \in \h^\rho_\K((-\ell,\ell))$ we have that
\begin{equation}
 \ip{D^r_j f,g}_{\h^\rho_\K} =  \ip{D^s_j f,D^t_j g}_{\h^\rho_\K}.
\end{equation}
\end{lem}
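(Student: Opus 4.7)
The plan is to reduce everything to the spectral expansion in the eigenbasis $\{w_k\}_{k=0}^\infty$ and then exploit the fact that the truncated operators $D_j^r$ act diagonally and trivially in this basis, so that the identity $r = s+t$ propagates directly into the equality.

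First I would note that since $\rho \ge 0$ we have $g \in \h^\rho_\K((-\ell,\ell)) \subseteq \h^0_\K((-\ell,\ell)) = L^2((-\ell,\ell))$ by Theorem \ref{inclusion}, so $D_j^t g$ is well-defined by Definition of $D_j^r$ and belongs to $W \subseteq \bigcap_{\sigma \ge 0}\h^\sigma_\K((-\ell,\ell))$; likewise $D_j^r f, D_j^s f \in W$. In particular, all three objects $D_j^r f$, $D_j^s f$, $D_j^t g$ lie in $\h^\rho_\K((-\ell,\ell))$, so both inner products in the claim are defined.

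Next I would compute the Fourier coefficients of these truncated objects. Directly from the definition, for any $h \in L^2((-\ell,\ell))$ and any $\sigma \ge 0$,
\begin{equation}
\widehat{D_j^\sigma h}(k) = \begin{cases} \lambda_k^{\sigma/2} \hat{h}(k) & 0 \le k \le j, \\ 0 & k > j. \end{cases}
\end{equation}
Using Proposition \ref{eigen_bases} and the definition of $(\cdot,\cdot)_{\h^\rho_\K}$, the left-hand side then becomes
\begin{equation}
\ip{D_j^r f, g}_{\h^\rho_\K} = \sum_{k=0}^\infty \lambda_k^\rho \widehat{D_j^r f}(k)\, \hat g(k) = \sum_{k=0}^j \lambda_k^{\rho + r/2}\, \hat f(k)\, \hat g(k),
\end{equation}
while the right-hand side expands as
\begin{equation}
\ip{D_j^s f, D_j^t g}_{\h^\rho_\K} = \sum_{k=0}^\infty \lambda_k^\rho \widehat{D_j^s f}(k)\, \widehat{D_j^t g}(k) = \sum_{k=0}^j \lambda_k^{\rho + s/2 + t/2}\, \hat f(k)\, \hat g(k).
\end{equation}
The identity $r = s+t$ makes the two sums term-by-term identical, which completes the argument.

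There is really no obstacle here beyond bookkeeping: the key observation is that the truncation in $D_j^r$ is carried out at the same index $j$ on both sides, so that both sums run over the identical finite index set $\{0,1,\dots,j\}$, and the eigenvalue weights combine additively in the exponent. The only item that requires any care is verifying that $g$ has well-defined Fourier coefficients $\hat g(k)$, but this is immediate from the embedding $\h^\rho_\K \hookrightarrow L^2$ for $\rho \ge 0$.
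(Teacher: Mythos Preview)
Your proof is correct and follows essentially the same approach as the paper: expand both sides in the eigenbasis, use that the truncated operators act diagonally, and invoke $r = s+t$ to match the exponents term-by-term. The paper's version is terser (omitting the well-definedness remarks you include), but the argument is identical.
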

\begin{proof}
We simply compute 
\begin{multline}
(D^r_j f,g)_{\h^\rho_\K} = \sum_{k=0}^\infty \lambda_k^\rho \widehat{D^r_j f}(k) \hat{g}(k) 
= \sum_{k=0}^j \lambda_k^{\rho + r/2} \hat{f}(k) \hat{g}(k) = \sum_{k=0}^j \lambda_k^{\rho } \lambda_k^{s/2} \hat{f}(k) \lambda_k^{t/2} \hat{g}(k) \\
= \sum_{k=0}^\infty \lambda_k^\rho \widehat{D^s_j f}(k) \widehat{D^t_j g}(k)
= \ip{D^s_j f, D^t_j g}_{\h^\rho_\K}.
\end{multline}
\end{proof}

\begin{remark}
In this paper the most useful instances of Lemma \ref{Djr_IBP} occur with $\rho \in \{0,1\}$.  Indeed, the lemma shows that if $0 \le r = s+t$ and $f \in L^2((-\ell,\ell))$, then 
\begin{equation}
 \int_{-\ell}^\ell D^r_j f g =  \int_{-\ell}^\ell D^s_j f D^t_j g \text{ for all }  g \in L^2((-\ell,\ell)) = \h^0_\K((\ell,\ell))
\end{equation}
and 
\begin{equation}
(D^r_j f, g)_{1,\Sigma} = (D^s_j f, D^t_j g)_{1,\Sigma} \text{ for all } g \in H^1((-\ell,\ell)) = \h^1_\K((-\ell,\ell)).
\end{equation}
\end{remark}

We conclude with a dual estimate. 

\begin{prop}\label{Djr_dual}
 Let $0 \le r  \le 1/2$, $0 \le s <3/2$, and $j \in \mathbb{N}$.  Then 
\begin{equation}
D^s_j : H^{s-r}((-\ell,\ell)) \to   H^{-r}((-\ell,\ell))  =   (H^{r}_0((-\ell,\ell)))^\ast 
\end{equation}
is a bounded linear operator.  
\end{prop}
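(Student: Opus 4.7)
The plan is to use the duality $H^{-r}((-\ell,\ell)) = (H^r_0((-\ell,\ell)))^\ast$ from the proposition statement, together with the fact that $H^r_0 = H^r$ for $r \in [0,1/2]$ (no boundary condition is seen at this regularity). It suffices, therefore, to bound
\begin{equation*}
\abs{\int_{-\ell}^\ell D^s_j f \cdot g \,dx} = \abs{\sum_{k=0}^j \lambda_k^{s/2} \hat f(k) \hat g(k)}
\end{equation*}
by $C\norm{f}_{H^{s-r}} \norm{g}_{H^r}$ with $C$ independent of $j$, for arbitrary $g \in H^r$. Splitting $\lambda_k^{s/2} = \lambda_k^{(s-r)/2}\,\lambda_k^{r/2}$ and applying Cauchy--Schwarz reduces matters to the two inequalities
\begin{equation*}
\sum_{k=0}^j \lambda_k^{s-r} \abs{\hat f(k)}^2 \ls \ns{f}_{H^{s-r}} \quad \text{and} \quad \sum_{k=0}^j \lambda_k^{r} \abs{\hat g(k)}^2 \ls \ns{g}_{H^{r}}.
\end{equation*}
The second is immediate: since $r \in [0,1]$, Theorem \ref{s_embed} gives $\h^r_\K = H^r$ with equivalent norms, so the sum is bounded by $\ns{g}_{\h^r_\K} \asymp \ns{g}_{H^r}$.

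For the first estimate I split on the sign of $s-r$. If $s \ge r$, then $s-r \in [0, 3/2)$, and an inspection of all cases in Theorem \ref{s_embed} shows $\h^t_\K = H^t$ with equivalent norms for every $t \in [0, 3/2)$ (the range $t \in (1, 3/2)$ falls into the $0<s<1/2$ branch of the theorem, where no trace condition appears). Thus
\begin{equation*}
\sum_{k=0}^j \lambda_k^{s-r} \abs{\hat f(k)}^2 \le \ns{f}_{\h^{s-r}_\K} \ls \ns{f}_{H^{s-r}}.
\end{equation*}
If instead $s < r$, set $t = r-s \in (0, 1/2]$, so $f \in H^{-t}$ is a distribution and each $\hat f(k) = \br{f,w_k}$ is defined via the $H^{-t}$--$H^t_0$ pairing, using $w_k \in C^\infty \subset H^t_0$. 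Weighted $\ell^2$ duality then gives
\begin{equation*}
\Big(\sum_{k=0}^j \lambda_k^{-t} \abs{\hat f(k)}^2\Big)^{1/2} = \sup \Big\{ \Big|\sum_{k=0}^j a_k \hat f(k)\Big|  :  \sum_{k=0}^j \lambda_k^t \abs{a_k}^2 \le 1 \Big\}.
\end{equation*}
For any admissible sequence $(a_k)$, the function $\phi := \sum_{k=0}^j a_k w_k \in W$ satisfies $\ns{\phi}_{\h^t_\K} \le 1$; Theorem \ref{s_embed} (applied at $t \in [0,1]$), combined with the coincidence $H^t_0 = H^t$ for $t \in [0,1/2]$, yields $\norm{\phi}_{H^t_0} \ls 1$. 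Hence
\begin{equation*}
\Big|\sum_{k=0}^j a_k \hat f(k)\Big| = \abs{\br{f,\phi}} \le \norm{f}_{H^{-t}}\norm{\phi}_{H^t_0} \ls \norm{f}_{H^{s-r}},
\end{equation*}
and taking the supremum completes the bound in this case.

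The two cases combine to give $\norm{D^s_j f}_{H^{-r}} \ls \norm{f}_{H^{s-r}}$ uniformly in $j$. The main obstacle is precisely the uniformity in $j$: the finite-rank nature of $D^s_j$ trivially yields a $j$-dependent bound, but uniform control hinges on two facts from Theorem \ref{s_embed}, namely $\h^t_\K = H^t$ throughout $[0, 3/2)$ and $H^t_0 = H^t$ throughout $[0,1/2]$. These allow the Fourier-coefficient norm and the test-function construction to align with the standard Sobolev scale exactly in the required range; any enlargement of the range of $r$ past $1/2$ or of $s-r$ past $3/2$ would break this alignment and force genuine boundary conditions to appear.
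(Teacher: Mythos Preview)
Your proof is correct and follows essentially the same approach as the paper: identify $H^r_0 = H^r = \h^r_\K$ for $r \in [0,1/2]$, pair with a test function, split $\lambda_k^{s/2} = \lambda_k^{(s-r)/2}\lambda_k^{r/2}$, and apply Cauchy--Schwarz together with the identification $\h^{s-r}_\K = H^{s-r}$ from Theorem~\ref{s_embed}. The paper frames the last step by first proving $\h^{-r}_\K = H^{-r}$ via Theorem~\ref{dual_char} and then invoking $\norm{f}_{\h^{s-r}_\K}$ directly, glossing over the case $s-r<0$; your explicit split into the cases $s\ge r$ and $s<r$, with the duality argument spelled out in the latter, is if anything a bit more careful than the paper's presentation.
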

\begin{proof}
According to the results in Chapter 1 of \cite{lions_magenes_1} and Theorem \ref{s_embed} we have that 
\begin{equation}
H^{r}_0((-\ell,\ell))= H^{r}((-\ell,\ell))  = \h^{r}_\K((-\ell,\ell)).
\end{equation}
This and Theorem \ref{dual_char} then show that 
\begin{equation}
\h^{-r}_\K((-\ell,\ell))  = (H^{r}_0((-\ell,\ell)))^\ast = H^{-r}((-\ell,\ell))
\end{equation}
with equality of norms.  Hence, for $f \in H^{s-r}((-\ell,\ell)) = \h^{s-r}_\K((-\ell,\ell))$ (which again follows by Theorem \ref{s_embed}), we again use Theorem \ref{dual_char} together with Cauchy-Schwarz to compute 
\begin{multline}
\norm{D^s_j f}_{H^{-r}} \asymp  \norm{D^s_j f}_{\h^{-r}_\K} =  \norm{J D^s_j f}_{(\h^{r}_\K)^\ast} = \sup_{\norm{\hat{g}}_{\ell^2_r} \le 1} \sum_{k=0}^j \lambda_k^{s/2} \hat{f}(k) \hat{g}(k)  \\
= \sup_{\norm{\hat{g}}_{\ell^2_r} \le 1} \sum_{k=0}^j \lambda_k^{(s-r)/2} \hat{f}(k) \lambda_k^{r/2} \hat{g}(k) 
\le \norm{f}_{\h^{s-r}_\K} \ls \norm{f}_{H^{s-r}}.
\end{multline}
This proves the boundedness assertion, and linearity is trivial.
\end{proof}

\section{Enhancement estimates }\label{sec_enhancements}

Our goal in this section is to record enhancement estimates for the dissipation and energy that are derived through energy-type arguments rather than elliptic estimates.  We will gain some dissipative control of $\eta$, $\dt \eta$, and $\dt^2 \eta$, and we will gain energetic control of $\dt p$.

\subsection{Prerequisites}

Recall that for a real parameter $0 \le s < 1$ the fractional differential operator $D^s = \K^{s/2}$ and its finite approximations $D^s_j$ for $j \in \mathbb{N}$, as defined in Section \ref{sec_djr}.  The next result gives an existence result for a Neumann-type problem involving $D^s_j$.

\begin{prop}\label{psi_solve}
Let $s \in \R$ and $j,k \in \mathbb{N}$ with $0 \le k \le 2$ and $0 \le s < 1$.  Then there exists $\psi : \Omega \to \R$ solving 
\begin{equation}\label{diss_enhace_psi_eqn}
\begin{cases}
 - \Delta \psi  = 0 &\text{in }\Omega \\
\p_\nu \psi = (D_j^s \dt^k \eta)/\abs{\N_0} &\text{on }\Sigma \\
\p_\nu \psi = 0 &\text{on } \Sigma_s 
\end{cases}
\end{equation}
where  $\nu$ is the unit normal for the fixed domain $\Omega$ and its non-unit counterpart is $\N_0$, i.e. $\nu = \N_0/\abs{\N_0}$.  Moreover, we have the estimates 
\begin{equation}
\norm{\psi}_{H^1} \ls    \norm{\dt^k \eta }_{ H^{s-1/2}}, \; \norm{\psi}_{H^2}  \ls  \norm{D_j^s  \dt^k\eta}_{\h^{1/2}_\K}, \text{ and } \norm{\dt \psi}_{H^1} \ls  \norm{ \dt^{k+1} \eta }_{ H^{s-1/2}}.
\end{equation}
\end{prop}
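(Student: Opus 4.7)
The plan is to construct $\psi$ variationally and then derive the three estimates in sequence. First, since $D_j^s\dt^k\eta \in W \subset C^\infty([-\ell,\ell])$ is a finite linear combination of the eigenfunctions $w_m$, the Neumann datum $(D_j^s\dt^k\eta)/\abs{\N_0}$ is smooth on $\Sigma$ (and smooth on $\Sigma_s$ as well, being identically zero there). The Neumann compatibility condition reduces, after accounting for the factor $1/\abs{\N_0}$ and the surface measure $\abs{\N_0}\,dx_1$ on $\Sigma$, to $\int_{-\ell}^\ell D_j^s\dt^k\eta\,dx_1 = 0$; this holds by Proposition \ref{avg_zero_prop} (giving $\int_{-\ell}^\ell \dt^k\eta = 0$ for $0\le k\le 3$) together with Lemma \ref{Djr_avg_zero} (which guarantees that $D_j^s$ preserves the zero-average property). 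I would then invoke Lax--Milgram on the subspace $\{\phi \in H^1(\Omega) : \int_\Omega \phi = 0\}$, where the Dirichlet form $\int_\Omega \nab\psi\cdot\nab\phi$ is coercive by Poincar\'e--Wirtinger, paired with the linear functional $L: \phi \mapsto \int_{-\ell}^\ell (D_j^s\dt^k\eta)\,\phi|_\Sigma\,dx_1$, to produce a unique weak $\psi$.

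For the $H^1$ bound, Lax--Milgram gives $\norm{\psi}_{H^1}\ls\norm{L}_{(H^1)^\ast}$. I would estimate $\norm{L}$ by combining the trace inequality $\norm{\phi|_\Sigma}_{H^{1/2}}\ls\norm{\phi}_{H^1}$ with Proposition \ref{Djr_dual} applied at $r=1/2$ (valid since $0\le s<1<3/2$), which yields $\norm{D_j^s\dt^k\eta}_{H^{-1/2}}\ls\norm{\dt^k\eta}_{H^{s-1/2}}$. This furnishes the first asserted bound.

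For the $H^2$ bound I would appeal to Grisvard-type elliptic regularity for the Neumann Laplacian on the Lipschitz domain $\Omega$. Since $\p\Omega$ is $C^2$ away from the two contact points, at which the interior angles equal $\omeq\in(0,\pi)$, the smallest positive singular exponent of the associated Neumann operator pencil is $\pi/\omeq > 1$; the model corner profile $r^{\pi/\omeq}$ then belongs to $H^2$ near the contact points, ruling out corner obstructions and delivering an estimate of the form $\norm{\psi}_{H^2}\ls\norm{g_1}_{H^{1/2}(\Sigma)}+\norm{g_2}_{H^{1/2}(\Sigma_s)}$ with $g_1=(D_j^s\dt^k\eta)/\abs{\N_0}$ and $g_2=0$. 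Since $1/\abs{\N_0}$ is smooth on $[-\ell,\ell]$, the right-hand side is controlled by $\norm{D_j^s\dt^k\eta}_{H^{1/2}} \asymp \norm{D_j^s\dt^k\eta}_{\h^{1/2}_\K}$ via Theorem \ref{s_embed}. The principal obstacle here is exactly this corner regularity claim, which must accommodate potential jump-type mismatches of the Neumann data at the contact points; however, the strict inequality $\omeq<\pi$ keeps the relevant pencil exponent above one, so the $H^2$ bound survives.

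Finally, because $D_j^s$ is defined via eigenfunctions of $\K$ determined by the equilibrium $\zeta_0$ alone, it commutes with $\dt$; the Neumann problem \eqref{diss_enhace_psi_eqn} is linear and has $t$-independent coefficients, so differentiating in time shows that $\dt\psi$ weakly solves the same problem with boundary data $(D_j^s\dt^{k+1}\eta)/\abs{\N_0}$. Compatibility for this shifted problem is again furnished by Proposition \ref{avg_zero_prop} (valid up to order $k+1 \le 3$), so applying the $H^1$ estimate just established with $k$ replaced by $k+1$ yields $\norm{\dt\psi}_{H^1}\ls\norm{\dt^{k+1}\eta}_{H^{s-1/2}}$, which completes the list of asserted bounds.
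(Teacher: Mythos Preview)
Your proposal is correct and follows essentially the same route as the paper: you verify the Neumann compatibility via Proposition \ref{avg_zero_prop} and Lemma \ref{Djr_avg_zero}, obtain weak $H^1$ existence, invoke the $H^2$ Neumann regularity available because the corners of $\Omega$ are convex ($\omeq\in(0,\pi)$), and then derive all three estimates using Proposition \ref{Djr_dual} (and Theorem \ref{s_embed} for the $H^{1/2}\asymp\h^{1/2}_\K$ identification), with the $\dt\psi$ bound coming from time-differentiating the $t$-independent linear problem. The paper's proof is more terse---it simply cites \cite{kmr_1} for the convex-corner $H^2$ theory rather than spelling out the pencil-exponent argument---but the logic is identical.
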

\begin{proof}
To begin we note that Proposition \ref{avg_zero_prop} and Lemma \ref{Djr_avg_zero} imply that
\begin{equation}
 \int_\Sigma D_j^s \dt^k \eta \abs{\N_0}^{-1} = \int_{-\ell}^\ell D_j^s \dt^k  \eta =  \int_{-\ell}^\ell \dt^k \eta =0. 
\end{equation}
Consequently, the compatibility condition needed to produce a unique weak solution $\psi \in \oH^1(\Omega)$ to \eqref{diss_enhace_psi_eqn} is satisfied. Since the domain $\Omega$ has convex corners, the $H^2$ solvability theory is available for \eqref{diss_enhace_psi_eqn} (see, for instance, \cite{kmr_1}).  This, the elementary $H^1$ weak estimate, and Proposition \ref{Djr_dual} then show that 
\begin{equation}
\begin{split}
\norm{\psi}_{H^1} &\ls \norm{ D_j^s \dt^k \eta}_{H^{-1/2}} \ls   \norm{\dt^k \eta }_{ H^{s-1/2}} \\
\norm{\psi}_{H^2} & \ls  \norm{D_j^s \dt^k \eta}_{\h^{1/2}_\K} \ls \norm{D_j^s  \dt^k\eta}_{\h^{1/2}_\K}  \\
\norm{\dt \psi}_{H^1} &\ls \norm{ D_j^s \dt^{k+1} \eta}_{H^{-1/2}} \ls   \norm{ \dt^{k+1} \eta }_{ H^{s-1/2}},
\end{split}
\end{equation}
from which the result follows.

\end{proof}

\subsection{Dissipative enhancement for $\eta$}

We begin by considering dissipation enhancement estimates for $\eta$.  To this end let $\psi$ be as in Proposition \ref{psi_solve} with $k=0$.  This proposition and Proposition \ref{M_properties} show that if we set $w = M \nab \psi$, then $w$ is a valid choice of a test function in Lemma \ref{geometric_evolution} and
\begin{equation}
 \diva{w} = \diva{M\nab \psi} = K\Delta \psi =0.
\end{equation}
We will use this $w$ as a test function in Lemma \ref{geometric_evolution} to produce an essential dissipation estimate.

\begin{thm}\label{diss_enhance_eta}
Let $\low \in (0,1)$ be given by \eqref{kappa_ep_def}, and $0 < T \le \infty$.  There exists a universal $0 <\delta_\ast <1$ such that if  $\sup_{0\le t < T} \E(t) \le \delta_\ast$, then for every $0 \le \tau \le t < T$ we have the estimate
\begin{equation}
  \int_\tau^t \ns{ \eta}_{H^{3/2-\low}}  \ls  \E_{\shortparallel, 0}(\tau) + \E_{\shortparallel, 0}(t) + \int_{\tau}^t \D_{\shortparallel,0},
\end{equation}
where $\E_{\shortparallel,0}$ and $\D_{\shortparallel,0}$ are as in \eqref{ED_natural}.
\end{thm}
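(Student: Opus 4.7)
The plan is to test the identity \eqref{ge_00} against a boundary-forced harmonic extension constructed from the functional calculus of $\K$. Set $s = 1 - 2\low \in (0,1)$, which is admissible since \eqref{kappa_ep_def} forces $\low < 1/2$. For each $j \in \mathbb{N}$ invoke Proposition \ref{psi_solve} with $k = 0$ and this $s$ to produce $\psi_j : \Omega \to \R$ solving the Neumann problem with datum $D_j^s \eta / \abs{\N_0}$ on $\Sigma$, and set $w_j = M \nabla \psi_j$.  Proposition \ref{M_properties} then yields $w_j \cdot \nu = 0$ on $\Sigma_s$, $\diva w_j = K \Delta \psi_j = 0$, and the crucial identity $w_j \cdot \N = \nabla \psi_j \cdot \N_0 = D_j^s \eta$ on $\Sigma$; thus $w_j$ is an admissible test function in Lemma \ref{geometric_evolution}.

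I would then apply Lemma \ref{geometric_evolution} with $v = u$, $q = p$, $\xi = \eta$ and the data from \eqref{ns_geometric} ($F^1=F^2=F^4=F^5=F^6=0$, $F^3 = \mathcal{R}(\p_1 \zeta_0, \p_1 \eta)$, $F^7 = \swh(\dt\eta)$). The coercive term on the left becomes $(\eta, D_j^s \eta)_{1,\Sigma} = \sum_{k \le j} \lambda_k^{3/2-\low} \abs{\hat\eta(k)}^2$, which increases monotonically to $\ns{\eta}_{\h^{3/2-\low}_\K}$ as $j \to \infty$; by Theorem \ref{s_embed} this is equivalent to $\ns{\eta}_{H^{3/2-\low}}$ since $3/2-\low \in (1,3/2)$. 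Integrating over $[\tau,t]$ and transferring the time derivative off of $u$ produces the endpoint contributions $(u, Jw_j)_0(t) - (u, Jw_j)_0(\tau)$, each bounded by $\E_{\shortparallel,0}$ through $\norm{w_j}_{L^2} \ls \norm{\psi_j}_{H^1} \ls \norm{\eta}_{H^{s-1/2}} \ls \norm{\eta}_{H^1}$.

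The decisive spectral identity is
\[
\norm{\psi_j}_{H^2}^2 \ls \norm{D_j^s \eta}^2_{\h^{1/2}_\K} = \sum_{k \le j}\lambda_k^{3/2-2\low} \abs{\hat\eta(k)}^2 \le g^{-\low} (\eta, D_j^s \eta)_{1,\Sigma},
\]
obtained from Proposition \ref{psi_solve} and the bound $\lambda_k \ge g$; it brings the otherwise $j$-divergent quantity $\norm{\psi_j}_{H^2}$ under the control of the very coercive term being generated on the left-hand side. Consequently, $\pp{u,w_j} \ls \norm{u}_{H^1}\norm{w_j}_{H^1} \ls \sqrt{\D_{\shortparallel,0}}\sqrt{(\eta, D_j^s \eta)_{1,\Sigma}}$, and via $\norm{\dt \psi_j}_{H^1} \ls \sqrt{\D_{\shortparallel,0}}$ the interior residual $(u, \dt(Jw_j))_0$ can be handled analogously; Young's inequality absorbs the $(\eta, D_j^s \eta)_{1,\Sigma}$ factors back into the left-hand side. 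An analogous estimate $[D_j^s \eta]_\ell^2 \ls \norm{D_j^s \eta}^2_{\h^{1/2+\low}_\K} \le g^{-\low} (\eta, D_j^s \eta)_{1,\Sigma}$ (using the embedding in Theorem \ref{s_embed}) controls the contact terms $\linz[\dt\eta, D_j^s \eta]_\ell$ and $\linz[D_j^s \eta, \swh(\dt\eta)]_\ell$, the latter via $\abs{\swh(z)} \ls z^2$. The transport term and the $F^3$ contribution $-\int \sigma F^3 \p_1 (D_j^s \eta)$ are small and handled by Propositions \ref{R_prop} and \ref{M_multiplier}, Theorems \ref{catalog_energy} and \ref{catalog_dissipation}, and the smallness $\E \le \delta_\ast$.

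Finally, passing $j \to \infty$ by monotone convergence on the left and the $j$-uniform bounds on the right yields the inequality with $\int_\tau^t \ns{\eta}_{H^{3/2-\low}}$ on the left and $\E_{\shortparallel,0}(\tau) + \E_{\shortparallel,0}(t) + \int \D_{\shortparallel,0}$ plus a small tail $\sqrt{\delta_\ast}\int_\tau^t \ns{\eta}_{H^{3/2-\low}}$ on the right, after converting residual $\int \E_{\shortparallel,0}$ contributions produced by Young's inequality via $\E_{\shortparallel,0} \le \D_{\shortparallel,0} + \ns{\eta}_{H^{3/2-\low}}$ (since $\norm{u}_{L^2} \le \norm{u}_{H^1} \le \sqrt{\D_{\shortparallel,0}}$). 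Choosing $\delta_\ast$ small enough absorbs the tail into the left-hand side. The principal obstacle is precisely the $j$-uniformity issue: since $\norm{\psi_j}_{H^2}$ blows up with $j$ absent extra regularity of $\eta$, every nonlinear estimate must be arranged so that each occurrence of $\norm{\psi_j}_{H^2}$, or of a boundary trace of $D_j^s \eta$, is tied back to $(\eta, D_j^s \eta)_{1,\Sigma}$ through the spectral identifications above.
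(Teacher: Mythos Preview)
Your proposal is correct and follows essentially the same route as the paper: build $w_j = M\nabla\psi_j$ from the Neumann problem with datum $D_j^s\eta/\abs{\N_0}$, test \eqref{ge_00} to generate the coercive term $(\eta, D_j^s\eta)_{1,\Sigma}$, control $\norm{\psi_j}_{H^2}$ and $[D_j^s\eta]_\ell$ by that same coercive term via the spectral identities, absorb the $F^3$ contribution by smallness of $\E$, and pass $j\to\infty$ (the paper writes Fatou's lemma; your monotone-convergence phrasing is equivalent). Your explicit treatment of the $F^7=\swh(\dt\eta)$ contact term is a minor bookkeeping improvement, as the paper's displayed identity \eqref{diss_enhance_eta_ident} appears to suppress it.
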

\begin{proof}
We begin by assuming that $\delta_\ast < \gamma^2$, where $\gamma \in (0,1)$ is as in Lemma \ref{eta_small}.  In particular, this means that the estimates of Lemma \ref{eta_small} are available in what follows.

Let $s = 1- 2\low \in [0,1)$, which means that $3/2 - \low = 1 + s/2$.  For a fixed $j \in \mathbb{N}$ we let $\psi$ solve \eqref{diss_enhace_psi_eqn} with data $D^s_j \eta / \abs{\N_0}$.  Then Proposition \ref{psi_solve} provides the estimates
\begin{equation}\label{diss_enhance_eta_ests}
\norm{\psi}_{H^1} \ls    \norm{\eta }_{ H^{s-1/2}}, \; \norm{\psi}_{H^2}  \ls  \norm{D_j^s  \eta}_{\h^{1/2}_\K}, \text{ and } \norm{\dt \psi}_{H^1} \ls  \norm{ \dt \eta }_{ H^{s-1/2}}.
\end{equation}
Note that since $0 \le s < 1$ we can define $\rho = (1-s)/2 \in (0,1/2]$, which satisfies
\begin{equation}\label{diss_enhance_eta_rho}
\hal + \rho + s  = 1 + \frac{s}{2}.
\end{equation}
This, Theorem \ref{s_embed}, and  Proposition \ref{Djr_bnds} then provide the useful equivalence 
\begin{equation}\label{diss_enhance_eta_norm}
\norm{D_j^{s/2} \eta}_{H^1} \asymp  \norm{D_j^{s/2} \eta}_{\h^1_\K}   =  \norm{D_j^s \eta}_{\h^{1/2 + \rho}_\K} \asymp \norm{D_j^s \eta}_{H^{1/2 + \rho}}.
\end{equation}
We use $w = M \nab \psi$ from above in Lemma \ref{geometric_evolution} to arrive at the identity
\begin{multline}\label{diss_enhance_eta_ident}
\br{\dt u,Jw} + (-\dt \bar{\eta} \frac{\phi}{\zeta_0} K \p_2  u + u \cdot \naba  u,Jw)_0 +
  \pp{ u,w}  + ( \eta ,w\cdot \N)_{1,\Sigma} + \linz [\dt \eta ,w\cdot \N]_\ell  \\
 = - \int_{-\ell}^\ell \sigma  \mathcal{R}  \p_1 (w \cdot \N).
\end{multline}
We will deal with these term-by-term.

For the first term we note that $M = K \nab \Phi$ for $\Phi$ the flattening map, and so
\begin{equation}
\br{\dt u,JM \nab \psi} = \int_\Omega \dt u \cdot \nab \Phi \nab \psi  =   \frac{d}{dt} \int_{\Omega}  u \cdot \nab \Phi \nab \psi - \int_\Omega  u \cdot \nab\Phi \nab \dt \psi - \int_\Omega  u \cdot \dt(\nab\Phi ) \nab \psi.
\end{equation}
Using the bound $\E \le 1$, the definition of $\Phi$ in \eqref{mapping_def}, and \eqref{diss_enhance_eta_ests}, we may then estimate  
\begin{equation}\label{diss_enhance_eta_1}
 \abs{\int_\Omega  u \cdot \nab\Phi \nab \psi } \ls \norm{\nab\Phi}_{L^\infty} \norm{u}_0 \norm{\psi}_1 \ls  \norm{\nab\Phi}_{L^\infty} \norm{u}_{H^0}  \norm{ \eta }_{ H^{s-1/2}} \ls   \norm{u}_{H^0}  \norm{ \eta }_{ H^{1}}   \ls   \E_{\shortparallel, 0},
\end{equation}
where $\E_{\shortparallel, 0}$ is the natural energy at the non-differentiated level, as defined in \eqref{ED_natural}.  Similarly, 
\begin{equation}
\abs{- \int_\Omega  u \cdot \nab\Phi \nab \dt \psi - \int_\Omega  u \cdot \dt(\nab\Phi ) \nab \psi} \ls   \norm{u}_{H^0} \left( \norm{\psi}_{H^1} + \norm{\dt \psi}_{H^1} \right)  \ls \norm{u}_{H^0} \left(\norm{ \eta }_{ H^{s/2}} + \norm{\dt \eta }_{ H^{s/2}} \right). 
\end{equation}

For the second term we write 
\begin{equation}
 (-\dt \bar{\eta} \frac{\phi}{\zeta_0} K \p_2  u + u \cdot \naba  u,Jw)_0 =  (-\dt \bar{\eta} \frac{\phi}{\zeta_0} K \p_2  u + u \cdot \naba  u, \nab\Phi \nab \psi )_0.
\end{equation}
From this and the bounds $\E \le 1$ and \eqref{diss_enhance_eta_ests} we may then estimate
\begin{equation}
 \abs{ (-\dt \bar{\eta} \frac{\phi}{\zeta_0} K \p_2  u + u \cdot \naba  u,Jw)_0} \ls \norm{u}_{H^1}  \norm{\psi}_{H^1} \ls \norm{u}_{H^1} \norm{\eta}_{H^1}.
\end{equation}

For the third term we use the $H^2$ estimate from \eqref{diss_enhance_eta_ests} to bound 
\begin{equation}
\abs{\pp{u,w}} = \frac{\mu}{2} \abs{\int_{\Omega} J \sga  u : \sga (M \nab \psi)  } \ls \norm{u}_{H^1} \norm{\psi}_{H^2} \ls \norm{u}_{H^1} \norm{D_j^s  \eta}_{\h^{1/2}_\K}.
\end{equation}

To handle the fourth term we first note that on $\Sigma$, 
\begin{equation}
 w \cdot \N = M \nab \psi \cdot \N= \nab \psi \cdot \N_0 = \abs{\N_0} \p_\nu \psi = D_j^s   \eta.
\end{equation}
Using this,  Lemma \ref{Djr_IBP}, and \eqref{diss_enhance_eta_norm} we can rewrite the fourth term as
\begin{equation}
 ( \eta ,w\cdot \N)_{1,\Sigma} = ( \eta ,D_j^s   \eta)_{1,\Sigma}  = ( D_j^{s/2}\eta ,D_j^{s/2}   \eta)_{1,\Sigma} =   \ns{D_j^{s/2} \eta}_{\h^{1}_\K} = \ns{D_j^{s} \eta}_{\h^{1/2 + \rho}_\K}.
\end{equation}

Then for the fifth term we can use trace theory and Theorem \ref{s_embed} to bound
\begin{equation}
\abs{\linz [\dt \eta ,D_j^s  \eta]_\ell } \ls [\dt \eta]_\ell \norm{D_j^s  \eta}_{H^{1/2+\rho}} 
=  [\dt \eta]_\ell \norm{D_j^s  \eta}_{\h^{1/2+\rho}_\K} 
\end{equation}

Finally, we examine the nonlinear term on the right side of \eqref{diss_enhance_eta_ident}.  We start by using  Proposition \ref{frac_IBP_prop}, which is available since $0 <s < s$, to estimate 
\begin{equation}
 \abs{\int_{-\ell}^\ell \sigma  \mathcal{R}  \p_1 D^s_j \eta} \ls \norm{D_j^s \eta}_{H^{1-s/2}} \norm{\mathcal{R}}_{H^{s/2}}.
\end{equation}
Next we note that 
\begin{equation}
 \frac{\mathcal{R}(y,z)}{z^2},  \frac{\partial_2 \mathcal{R}(y,z)}{z}, \text{ and }\frac{\partial_1 \mathcal{R}(y,z)}{z^2}
\end{equation}
are well-defined and bounded by Proposition \ref{R_prop}.  Thus $\mathcal{R}(\p_1\zeta_0,\p_1 \eta)/\p_1 \eta$ is well-defined and satisfies 
\begin{equation}
 \p_1 \left(\frac{\mathcal{R}(\p_1 \zeta_0,\p_1 \eta)}{\p_1 \eta} \right) = \frac{\p_1 \mathcal{R}(\p_1\zeta_0,\p_1 \eta) \p_1^2 \zeta_0}{\p_1 \eta} + \left(\frac{\p_2 \mathcal{R}(\p_1 \zeta_0,\p_1 \eta) }{\p_1 \eta} - \frac{\mathcal{R}(\p_1\zeta_0,\p_1 \eta)}{(\p_1 \eta)^2} \right) \p_1^2 \eta,
\end{equation}
which in turn means that for any $1 \le q \le \infty$, 
\begin{equation}
 \norm{\mathcal{R}(\p_1\zeta_0,\p_1 \eta)/\p_1 \eta}_{W^{1,q}} \ls \norm{\p_1 \eta}_{L^q} + \norm{\p_1^2 \eta}_{L^q} = \norm{\p_1 \eta}_{W^{1,q}} \le \norm{\eta}_{W^{2,q}}.
\end{equation}
This allows us to use Theorem \ref{supercrit_prod}  and the embedding $W^{1,q_+}((-\ell,\ell))  \hookrightarrow H^{1/2 + \ep_+/2}((-\ell,\ell))$ to estimate 
\begin{multline}
 \norm{\mathcal{R}}_{H^{s/2}} =  \norm{\p_1 \eta  \mathcal{R}(\p_1\zeta_0,\p_1\eta)/ \p_1 \eta }_{H^{s/2}} \ls \norm{\p_1 \eta}_{H^{s/2}} \norm{\mathcal{R}(\p_1\zeta_0,\p_1\eta)/ \p_1 \eta}_{H^{(1+\ep_+)/2}} \\
\ls \norm{\p_1 \eta}_{H^{s/2}} \norm{\mathcal{R}(\p_1\zeta_0,\p_1\eta)/ \p_1 \eta}_{W^{1,q_+}} 
\ls \norm{\p_1 \eta}_{H^{s/2}} \norm{\eta}_{W^{2,q_+}}.
\end{multline}
Assembling these estimates and employing Theorem \ref{s_embed} and the definition of $\E$ from \eqref{E_def} then shows that 
\begin{equation}
 \abs{\int_{-\ell}^\ell \sigma  \mathcal{R}  \p_1 D^s_j \eta} \ls \norm{D_j^s \eta}_{H^{1-s/2}} \norm{ \eta}_{H^{1+s/2}} \norm{\eta}_{W^{2,q_+}} \ls  \ns{ \eta}_{H^{1+s/2}} \sqrt{\E}.
\end{equation}

We now combine all of these estimates to deduce that 
\begin{multline}
\ns{D_j^s \eta}_{\h^{1/2 +\rho}_\K}  +  \frac{d}{dt} \int_\Omega   u \cdot \nab\Phi \nab \psi  \\ \ls  
 \norm{u}_{H^0} \left(\norm{ \eta }_{ H^{s-1/2}} + \norm{\dt \eta }_{ H^{s-1/2}} \right) 
 + \norm{ u}_{H^1} \norm{D_j^s  \eta}_{\h^{1/2}_\K}  
  + [\dt \eta]_\ell \norm{D_j^s \eta}_{\h^{1/2 + \rho}_\K}  + \ns{ \eta}_{H^{1+s/2}} \sqrt{\E}.
\end{multline}
Then for $0 \le \tau \le t < T$ we can integrate this inequality to see that
\begin{multline}
\int_\tau^t \ns{D_j^s \eta}_{\h^{1/2 + \rho}_\K}  +   \int_\Omega   (u \cdot \nab\Phi \nab \psi)(t) \ls \int_\Omega   (u \cdot \nab\Phi \nab \psi)(\tau)   
+  \int_\tau^t \norm{u}_{H^0} \left(\norm{ \eta }_{ H^{s-1/2}} + \norm{\dt \eta }_{ H^{s-1/2}} \right) \\ 
 + \int_\tau^t \norm{ u}_{H^1} \norm{D_j^s  \eta}_{\h^{1/2}_\K}  
  + \int_\tau^t [\dt \eta]_\ell \norm{D_j^s \eta}_{\h^{1/2+\rho}_\K} + \int_\tau^t  \ns{ \eta}_{H^{1+s/2}} \sqrt{\E}.
\end{multline}  
We then use \eqref{diss_enhance_eta_1} and Cauchy's inequality to deduce from this that 
\begin{multline}
\hal \int_\tau^t \ns{D_j^s \eta}_{\h^{1/2+\rho}_\K}  \ls \E_{\shortparallel, 0}(\tau) + \E_{\shortparallel, 0}(t) +  \int_\tau^t \norm{u}_{H^0} \left(\norm{ \eta }_{ H^{s-1/2}} + \norm{\dt \eta }_{ H^{s-1/2}} \right) \\ 
 + \int_\tau^t \left(\ns{ u}_{H^1}  + [\dt \eta]_\ell^2 \right) + \int_\tau^t  \ns{ \eta}_{H^{1+s/2}} \sqrt{\E}.
\end{multline}
Note that from \eqref{diss_enhance_eta_rho}, Proposition \ref{Djr_bnds}, and Theorem \ref{s_embed} we have that
\begin{equation}
 \lim_{j \to \infty} \ns{D_j^s \eta}_{\h^{1/2+\rho}_\K}  =   \ns{\eta}_{\h_\K^{1+s/2}} \asymp \ns{\eta}_{H^{1+s/2}}.
\end{equation}
We then send $j \to \infty$ and use this and Fatou's lemma to see that  
\begin{multline}
\hal \int_\tau^t \ns{ \eta}_{H^{1+s/2}}  \ls \E_{\shortparallel, 0}(\tau) + \E_{\shortparallel, 0}(t) +  \int_\tau^t \norm{u}_{H^0} \left(\norm{ \eta }_{ H^{s-1/2}} + \norm{\dt \eta }_{ H^{s-1/2}} \right) \\ 
 + \int_\tau^t \left(\ns{ u}_{H^1}  + [\dt \eta]_\ell^2 \right) + \int_\tau^t  \ns{ \eta}_{H^{1+s/2}} \sqrt{\E}.
\end{multline}
Since $s -1/2 \le 1 + s/2$ we can then use Cauchy's inequality once more in addition to the smallness $\E \le \delta_\ast$ for some universal $\delta_\ast>0$ to conclude that 
\begin{equation}\label{diss_enhance_eta_2}
\frac{1}{4} \int_\tau^t \ns{ \eta}_{H^{1+s/2}}  \ls \E_{\shortparallel, 0}(\tau) + \E_{\shortparallel, 0}(t) +  \int_\tau^t  \left(\ns{u}_{H^0} + \norm{u}_{H^0} \norm{\dt \eta }_{ H^{s-1/2}} \right)  
 + \int_\tau^t \left(\ns{ u}_{H^1}  + [\dt \eta]_\ell^2 \right) .
\end{equation}
Finally, we use the equation $\dt \eta = u \cdot \N= u\cdot (-\p_1 \zeta_0,1) - u_1 \p_1 \eta$,  Theorem \ref{supercrit_prod}, and the fact that $\E \le 1$  to estimate 
\begin{equation}
 \norm{\dt \eta}_{H^{s-1/2}} \ls \norm{u}_{H^{s-1/2}} \left(1 + \norm{\p_1 \eta}_{H^1}   \right)   \ls \norm{u}_{H^s} \ls \norm{u}_{H^1}.
\end{equation}
Plugging this into \eqref{diss_enhance_eta_2} then shows that 
\begin{equation}
 \frac{1}{4} \int_\tau^t \ns{ \eta}_{H^{1+s/2}}  \ls \E_{\shortparallel, 0}(\tau) + \E_{\shortparallel, 0}(t) 
 + \int_\tau^t \left(\ns{ u}_{H^1}  + [\dt \eta]_\ell^2 \right)  = \E_{\shortparallel, 0}(\tau) + \E_{\shortparallel, 0}(t) + \int_{\tau}^t \D_{\shortparallel,0},
\end{equation}
which is the desired bound since $1+s/2= 3/2 - \low$.
\end{proof}

\subsection{Dissipative enhancement for $\dt \eta$ and $\dt^2 \eta$ }

We now turn our attention to enhanced dissipation estimates for $\dt \eta$ and $\dt^2 \eta$.

\begin{thm}\label{diss_enhance_dtketa}
Let $\low \in (0,1)$ be given by \eqref{kappa_ep_def}, and $0 < T \le \infty$.  Let $k \in \{1,2\}$.  There exists a universal $0 <\delta_\ast <1$ such that if  $\sup_{0\le t < T} \E(t) \le \delta_\ast$, then for every $0 \le \tau \le t < T$ we have the estimate
\begin{equation}
 \int_{\tau}^t \ns{\dt^k \eta}_{H^{3/2 - \low}}   \ls \E_{\shortparallel,k}(\tau) + \E_{\shortparallel,k}(t) 
+ \int_{\tau}^t \left( \sdb + \E\D\right).
\end{equation}
\end{thm}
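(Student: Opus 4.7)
The proof adapts the template of Theorem \ref{diss_enhance_eta}, applied to the $k$-times time-differentiated version of \eqref{ns_geometric}. Specifically, one takes $v = \dt^k u$, $q = \dt^k p$, $\xi = \dt^k \eta$ in \eqref{linear_geometric}, with forcing terms $F^1,\dotsc,F^7$ given by \eqref{dt1_f1}--\eqref{dt1_f7} when $k=1$ and by \eqref{dt2_f1}--\eqref{dt2_f7} when $k=2$. Fix $s = 1 - 2\low \in [0,1)$ and $\rho = (1-s)/2 = \low$, so that $3/2 - \low = 1 + s/2 = 1/2 + \rho + s$. For each $j \in \mathbb{N}$ I invoke Proposition \ref{psi_solve} at the $k$-th temporal level to produce a harmonic $\psi$ with $\p_\nu \psi = (D_j^s \dt^k \eta)/\abs{\N_0}$ on $\Sigma$ and $\p_\nu \psi = 0$ on $\Sigma_s$, obeying $\norm{\psi}_{H^1} \ls \norm{\dt^k\eta}_{H^{s-1/2}}$, $\norm{\psi}_{H^2} \ls \norm{D_j^s\dt^k\eta}_{\h^{1/2}_\K}$, and $\norm{\dt\psi}_{H^1} \ls \norm{\dt^{k+1}\eta}_{H^{s-1/2}}$. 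By Proposition \ref{M_properties}, the test function $w = M \nab \psi$ satisfies $\diva w = K\Delta\psi = 0$, $w \cdot \nu = 0$ on $\Sigma_s$, and $w\cdot \N = D_j^s \dt^k \eta$ on $\Sigma$, so it is admissible in identity \eqref{ge_00} of Lemma \ref{geometric_evolution}.

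With this choice, the pressure-divergence term $(q,\diva w)_0$ vanishes identically, and Lemma \ref{Djr_IBP} converts the capillary bilinear form into the coercive target $(\dt^k\eta, w\cdot\N)_{1,\Sigma} = \norm{D_j^{s/2}\dt^k\eta}_{\h^1_\K}^2$, which by Proposition \ref{Djr_bnds} and Theorem \ref{s_embed} converges to $\norm{\dt^k\eta}_{\h^{1+s/2}_\K}^2 \asymp \norm{\dt^k\eta}_{H^{3/2-\low}}^2$ as $j\to\infty$. The inertial term $\br{\dt^{k+1} u, Jw}$ is rewritten as $\frac{d}{dt}\int_\Omega \dt^k u \cdot \nab\Phi\nab\psi$ minus correction pieces involving $\dt\psi$ and $\dt(\nab\Phi)$, exactly as in the $k=0$ proof; the endpoint values of this total derivative are controlled by $\E_{\shortparallel,k}(\tau)+\E_{\shortparallel,k}(t)$ via $\norm{\psi}_{H^1}\ls\norm{\dt^k\eta}_{H^1}$. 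The transport coupling $(-\dt\bar\eta\frac{\phi}{\zeta_0}K\p_2\dt^k u + u\cdot\naba\dt^k u, Jw)_0$, the viscous form $\pp{\dt^k u, w}$, and the contact-line coupling $\linz[\dt^{k+1}\eta, w\cdot\N]_\ell$ are each bounded by combinations of $\sqrt{\sdb}$-factors multiplied by norms of $\psi$ supplied by Proposition \ref{psi_solve}; after time integration and Young's inequality they contribute $\int_\tau^t \sdb$ plus a small multiple of the target norm that is absorbed using the smallness of $\delta_*$.

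The main new feature relative to the proof of Theorem \ref{diss_enhance_eta} is the nonlinear forcing on the right of \eqref{ge_00}. Theorem \ref{nid_v_est} directly controls the combination $\int_\Omega F^1 \cdot wJ - \int_{-\ell}^\ell F^4\cdot w - \int_{\Sigma_s}J(w\cdot\tau)F^5$ by $\norm{w}_{H^1}(\sqrt{\E}+\E)\sqrt{\D}$; using $\norm{w}_{H^1}\ls\norm{\psi}_{H^2}\ls\norm{D_j^s\dt^k\eta}_{\h^{1/2}_\K}$ together with Young's inequality, this produces an absorbable $\delta\norm{D_j^{s/2}\dt^k\eta}_{\h^1_\K}^2$ piece plus a contribution of size $C_\delta \E\D$. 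Since $\diva w = 0$, the $F^2$-term never appears. The contact-point forcing $-\linz[w\cdot\N, F^7]_\ell$ is handled via the pointwise trace bound $\max_{\pm\ell}\abs{F^7}\ls\sqrt{\E\D}$ that is extracted from the proof of Theorem \ref{nid_f7}. The $F^3$-term $-\int_{-\ell}^\ell \sigma F^3 \p_1(w\cdot\N)$ cannot be reduced to Theorems \ref{nid_f3_dt} or \ref{nid_f3_dt2}, which are specialized to test functions of the form $\dt^m u \cdot \N$; instead I invoke the fractional integration-by-parts Proposition \ref{frac_IBP_prop} to split it as $\norm{F^3}_{H^{s/2}}\norm{D_j^s\dt^k\eta}_{H^{1-s/2}}$, and then use Theorem \ref{supercrit_prod}, Proposition \ref{R_prop}, and the composition bound of Proposition \ref{frac_comp} to show $\norm{F^3}_{H^{s/2}} \ls \sqrt{\E}\,\norm{\dt^k\eta}_{H^{3/2-\low}} + \sqrt{\E\D}$, the second summand appearing only for $k=2$ from the $\p_z^2\mathcal{R}(\p_1\dt\eta)^2$ piece of \eqref{dt2_f3}.

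The hard part is precisely this last nonlinear estimate at $k=2$: the quadratic factor $(\p_1\dt\eta)^2$ must be bounded in $H^{s/2}$ with $s/2 = 1/2 - \low < 1/2$, and in this subcritical regime an $H^{s/2}$-product estimate requires placing one copy of $\p_1\dt\eta$ in $H^{1/2+\ep}$ for some $\ep>0$. This is possible precisely because the energy $\E$ contains $\norm{\dt\eta}_{H^{3/2+(\ep_--\low)/2}}$, and the parameter hierarchy \eqref{kappa_ep_def} guarantees $(\ep_--\low)/2 > 0$. Assembling all contributions, integrating over $[\tau,t]$, absorbing the $\delta\norm{D_j^{s/2}\dt^k\eta}_{\h^1_\K}^2$ terms by choosing $\delta_*$ universally small, and finally sending $j\to\infty$ via Fatou's lemma — in complete parallel to the endgame of the $k=0$ proof — yields the claimed estimate.
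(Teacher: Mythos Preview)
Your proposal is correct and follows essentially the same route as the paper's proof: the same test function $w = M\nabla\psi$ built from Proposition~\ref{psi_solve}, the same use of \eqref{ge_00}, the same invocation of Theorem~\ref{nid_v_est} for the $F^1,F^4,F^5$ block, the same fractional integration-by-parts (Proposition~\ref{frac_IBP_prop}) plus product estimates for $F^3$, the same pointwise trace bound for $F^7$, and the same integrate--absorb--Fatou endgame. The only cosmetic difference is that the paper bounds the $\mathcal{R}$-compositions via direct $W^{1,q_+}$ estimates from Proposition~\ref{R_prop} rather than Proposition~\ref{frac_comp}, and routes the supercritical factor of $\p_1\dt\eta$ through $W^{2,q_+}$ instead of the energy's $H^{3/2+(\ep_--\low)/2}$ control; both bookkeepings lead to the same absorbable structure.
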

\begin{proof}
We will give the proof only in the harder case $k=2$. The case $k=1$ follows from a similar, simpler argument.   To begin, we assume that $\delta_\ast < \gamma^2$, where $\gamma \in (0,1)$ is as in Lemma \ref{eta_small}.  In particular, this means that the estimates of Lemma \ref{eta_small} are available in what follows.

We begin in essentially the same way as in the proof of Theorem \ref{diss_enhance_eta}.  Let $s = 1- 2\low \in [0,1)$, which means that $3/2 - \low = 1 + s/2$.  Also let $\rho= (1-s)/2$ so that $1/2 + \rho + s = 1 + s/2$.  For a fixed $j \in \mathbb{N}$ we let $\psi$ solve \eqref{diss_enhace_psi_eqn} with data $D^s_j \dt^2 \eta / \abs{\N_0}$.  Then Proposition \ref{psi_solve} provides the estimates
\begin{equation}\label{diss_enhance_dtketa_1}
\norm{\psi}_{H^1} \ls    \norm{\dt^2 \eta }_{ H^{s-1/2}}, \; \norm{\psi}_{H^2}  \ls  \norm{D_j^s  \dt^2 \eta}_{\h^{1/2}_\K}, \text{ and } \norm{\dt \psi}_{H^1} \ls  \norm{ \dt^3 \eta }_{ H^{s-1/2}}.
\end{equation}
Note that $s-1/2 = 1/2 - 2 \low < 1/2 - \low$, so the latter term is controlled by the dissipation (see \eqref{D_def}).  Then  Proposition \ref{M_def} lets us use Lemma \ref{geometric_evolution} with $w = M \nab \psi$ to see that
\begin{multline}\label{diss_enhance_dtketa_2}
\br{\dt^3 u,Jw} + (-\dt \bar{\eta} \frac{\phi}{\zeta_0} K \p_2 \dt^2 u + u \cdot \naba \dt^2 u,Jw)_0 +
  \pp{\dt^2 u,w}  + (\dt^2 \eta ,w\cdot \N)_{1,\Sigma} + \linz [\dt^3 \eta ,w\cdot \N]_\ell  \\
 = \int_\Omega F^1   \cdot w J   - \int_{\Sigma_s} J (w\cdot \tau)F^5 
- \int_{-\ell}^\ell \sigma  F^3   \p_1 (w \cdot \N) + F^4 \cdot w  
  - \linz [w\cdot \N,  F^7]_\ell.
\end{multline}
Here the forcing terms on the right are as defined in Appendix \ref{sec_nonlinear_records}.  Arguing as in the proof of Theorem \ref{diss_enhance_eta}, we estimate all of the terms on the left of \eqref{diss_enhance_dtketa_2} to arrive at the bounds
\begin{equation}\label{diss_enhance_dtketa_25}
 \abs{\int_\Omega   \dt^2 u \cdot \nab\Phi \nab \psi} \ls \norm{\dt^2 u}_{H^0} \norm{\dt^2 \eta}_{H^1} \ls \E_{\shortparallel,2},
\end{equation}
where $\E_{\shortparallel,2}$ is as defined in \eqref{ED_natural}, and 
\begin{multline}\label{diss_enhance_dtketa_3}
\ns{D_j^s \dt^2 \eta}_{\h^{1/2 + \rho}_\K}  +  \frac{d}{dt} \int_\Omega   \dt^2 u \cdot \nab\Phi \nab \psi   \ls  
 \norm{\dt^2 u}_{H^0} \left(\norm{ \dt^2 \eta }_{ H^{s-1/2}} + \norm{\dt^3 \eta }_{ H^{s-1/2}} \right) 
\\ + \norm{ \dt^2 u}_{H^1} \norm{\dt^2 \eta}_{H^{1+s/2}}  
  + [\dt^3 \eta]_\ell \norm{ \dt^2\eta}_{H^{1+s/2}}  + \br{\mathcal{F}, M \nab \psi},
\end{multline}
where, as shorthand, we have written
\begin{multline}\label{diss_enhance_dtketa_4}
 \br{\mathcal{F}, M \nab \psi} = \int_\Omega  F^1   \cdot M \nab \psi J   - \int_{\Sigma_s} J (M \nab \psi\cdot \tau)F^5 \\
- \int_{-\ell}^\ell \sigma  \left(F^3   \p_1 (M \nab \psi \cdot \N) + F^4 \cdot M \nab \psi\right)  
  - [M \nab \psi\cdot \N,  F^7]_\ell.
\end{multline}

We now estimate $\mathcal{F}$, breaking it into three separate pieces.  For the first piece we use Theorem \ref{nid_v_est}, Proposition \ref{M_properties}, and \eqref{diss_enhance_dtketa_1} to estimate 
\begin{multline}\label{diss_enhance_dtketa_5}
\abs{\int_\Omega F^1 \cdot (M \nab \psi) J 
-  \int_{-\ell}^\ell  F^4 \cdot (M \nab \psi) 
- \int_{\Sigma_s}  J ((M \nab \psi) \cdot \tau)F^5 } 
 \ls  \norm{M \nab \psi}_{H^1}  (\E+ \sqrt{\E})\sqrt{\D} \\
  \ls  \norm{\psi}_{H^2}  (\E+ \sqrt{\E})\sqrt{\D}
  \ls  \norm{D^s_j \dt^2 \eta}_{\h_\K^{1/2}}  (\E+ \sqrt{\E})\sqrt{\D}
\end{multline}

Next we handle the $F^3$ term.  According to \eqref{dt2_f3} we have that 
\begin{equation}
 F^3 = \dt^2 [ \mathcal{R}(\p_1 \zeta_0,\p_1 \eta)] = \p_z \mathcal{R}(\p_1 \zeta_0,\p_1 \eta) \p_1 \dt^2\eta + \p_z^2 \mathcal{R}(\p_1 \zeta_0,\p_1 \eta) (\p_1 \dt \eta)^2.
\end{equation}
On the other hand, we know that $M \nab \psi \cdot \N = D_j^s \dt^2 \eta$ on $\Sigma$.  Combining these, and employing Proposition \ref{frac_IBP_prop}, we can estimate  
\begin{multline}
 \abs{ \int_{-\ell}^\ell \sigma F^3   \p_1 (M \nab \psi \cdot \N) } \le \sigma \abs{ \int_{-\ell}^\ell \p_1( D^s_j \dt^2 \eta )\p_z \mathcal{R}(\p_1 \zeta_0,\p_1 \eta) \p_1 \dt^2\eta  } \\
 + \sigma \abs{\int_{-\ell}^\ell \p_1( D^s_j \dt^2 \eta ) \p_z^2 \mathcal{R}(\p_1 \zeta_0,\p_1 \eta) (\p_1 \dt \eta)^2 } \ls
\norm{D_j^s \dt^2 \eta}_{H^{1-s/2}} \norm{\p_z \mathcal{R}(\p_1 \zeta_0,\p_1 \eta) \p_1 \dt^2\eta  }_{H^{s/2}} \\
+  \norm{D_j^s \dt^2 \eta}_{H^{1-s/2}} \norm{ \p_z^2 \mathcal{R}(\p_1 \zeta_0,\p_1 \eta) (\p_1 \dt \eta)^2}_{H^{s/2}}.
\end{multline}
Note that 
\begin{equation}
\hal = \frac{1}{q_+} - \frac{1}{1} \left( \hal + \frac{\ep_+}{2}\right) 
\end{equation}
so the Sobolev embeddings imply that $W^{1,q_+}((-\ell,\ell)) \hookrightarrow H^{(1+\ep_+)/2}((-\ell,\ell))$ and
\begin{equation}
W^{2,q_+}((-\ell,\ell)) \hookrightarrow H^{(3+\ep_+)/2}((-\ell,\ell))  \hookrightarrow H^{3/2-\low}((-\ell,\ell)) =   H^{1+s/2}((-\ell,\ell)). 
\end{equation}
These and Theorem \ref{supercrit_prod} then imply that
\begin{multline}
\norm{\p_z \mathcal{R}(\p_1 \zeta_0,\p_1 \eta) \p_1 \dt^2\eta  }_{H^{s/2}} \ls  \norm{\p_1 \dt^2\eta }_{H^{s/2}} \norm{\p_z \mathcal{R}(\p_1 \zeta_0,\p_1 \eta)}_{H^{(1+\ep_+)/2}} \\
\ls \norm{\dt^2\eta }_{H^{1+s/2}} \norm{\p_z \mathcal{R}(\p_1 \zeta_0,\p_1 \eta)}_{W^{1,q_+}}
\end{multline}
and 
\begin{multline}
\norm{ \p_z^2 \mathcal{R}(\p_1 \zeta_0,\p_1 \eta) (\p_1 \dt \eta)^2}_{H^{s/2}} \ls \norm{(\p_1 \dt \eta)^2}_{H^{s/2}} \norm{\p_z^2 \mathcal{R}(\p_1 \zeta_0,\p_1 \eta)}_{H^{(1+\ep_+)/2}} \\
\ls \norm{\p_1 \dt \eta}_{H^{s/2}} \norm{\p_1 \dt \eta}_{H^{(1+\ep_+)/2}} \norm{\p_z^2 \mathcal{R}(\p_1 \zeta_0,\p_1 \eta)}_{W^{1,q_+}} \\
\ls  \ns{\dt \eta}_{W^{2,q_+}} \norm{\p_z^2 \mathcal{R}(\p_1 \zeta_0,\p_1 \eta)}_{W^{1,q_+}}.
\end{multline}
Since the terms involving $\mathcal{R}$ involve an integer derivative count, we can employ Proposition \ref{R_prop} to estimate 
\begin{equation}
\norm{\p_z \mathcal{R}(\p_1 \zeta_0,\p_1 \eta)}_{W^{1,q_+}} +  \norm{\p_z^2 \mathcal{R}(\p_1 \zeta_0,\p_1 \eta)}_{W^{1,q_+}} \ls \norm{\eta}_{W^{2,q_+}}.
\end{equation}
Hence, 
\begin{multline}\label{diss_enhance_dtketa_6}
  \abs{ \int_{-\ell}^\ell \sigma F^3   \p_1 (M \nab \psi \cdot \N) } \ls \ns{\dt^2 \eta}_{H^{1+s/2}} \norm{\eta}_{W^{2,q_+}} +  \norm{\dt^2 \eta}_{H^{1+s/2}}  \ns{\dt \eta}_{W^{2,q_+}} \norm{\eta}_{W^{2,q_+}} \\
 \ls \ns{\dt^2 \eta}_{H^{1+s/2}} \sqrt{\E} +  \norm{\dt^2 \eta}_{H^{1+s/2}} \E \sqrt{\D}. 
\end{multline}

Lastly, we handle the $F^7$ term, again using that $M \nab \psi \cdot \N = D_j^2 \dt^2 \eta$ on $\Sigma$.  Then \eqref{dt2_f7} and standard trace theory shows that 
\begin{multline}
\abs{\linz [M \nab \psi\cdot \N,  F^7]_\ell } = \linz \abs{ [D_j^s \dt^2 \eta,  \swh'(\dt \eta) \dt^3 \eta + \swh''(\dt \eta) (\dt^2 \eta)^2]_\ell } \\
\ls \norm{D_j^s \dt^2 \eta}_{H^{1-s/2}} \max_{\pm \ell}\abs{\swh'(\dt \eta) \dt^3 \eta + \swh''(\dt \eta) (\dt^2 \eta)^2}. 
\end{multline}
According to Theorem \ref{catalog_energy}, $\norm{\dt \eta}_{C^0_b} \ls \sqrt{\E} \ls 1,$ so we may estimate 
\begin{equation}
 \abs{\swh'(z)} = \frac{1}{\low}\abs{ \int_0^z \sw''(r)dr}\ls \abs{z} \text{ for } z \in [-\norm{\dt \eta}_{C^0},\norm{\dt \eta}_{C^0}].
\end{equation}
This and trace theory then provide the bound
\begin{multline}
\max_{\pm \ell}\abs{\swh'(\dt \eta) \dt^3 \eta + \swh''(\dt \eta) (\dt^2 \eta)^2} \ls \max_{\pm \ell} \left(\abs{\dt \eta} \abs{\dt^3 \eta} + \abs{ \dt^2 \eta}^2\right) \\
\ls  \sqrt{\sdb} \left( \norm{\dt \eta}_{H^1} + \norm{\dt^2 \eta}_{H^1}   \right) \ls \sqrt{\seb} \sqrt{\sdb}, 
\end{multline}
where $\seb$ and $\sdb$ are as defined in \eqref{ED_parallel}. Hence,
\begin{equation}\label{diss_enhance_dtketa_7}
\abs{\linz [M \nab \psi\cdot \N,  F^7]_\ell } \ls \norm{ \dt^2 \eta}_{H^{1+s/2}}\sqrt{\seb} \sqrt{\sdb}.
\end{equation}

Upon plugging the estimates \eqref{diss_enhance_dtketa_5}, \eqref{diss_enhance_dtketa_6}, and \eqref{diss_enhance_dtketa_7} into \eqref{diss_enhance_dtketa_4}, we deduce that 
\begin{equation}
\abs{ \br{\mathcal{F}, M \nab \psi}} \ls   
\ns{\dt^2 \eta}_{H^{1+s/2}} \sqrt{\E} +  \norm{\dt^2 \eta}_{H^{1+s/2}} \sqrt{\E} \sqrt{\D}.
\end{equation}
Inserting this into \eqref{diss_enhance_dtketa_3}, integrating in time from $\tau$ to $t$, and using \eqref{diss_enhance_dtketa_25} then shows that
\begin{multline}
\int_{\tau}^t \ns{D_j^s \dt^2 \eta}_{\h^{1/2 + \rho}_\K}   \ls 
\E_{\shortparallel,2}(\tau) + \E_{\shortparallel,2}(t) 
+ \int_{\tau}^t \norm{\dt^2 u}_{H^0} \left(\norm{ \dt^2 \eta }_{ H^{s-1/2}} + \norm{\dt^3 \eta }_{ H^{s-1/2}} \right) 
\\ + \int_{\tau}^t \left( \norm{ \dt^2 u}_{H^1}   + [\dt^3 \eta]_\ell \right)\norm{\dt^2 \eta}_{H^{1+s/2}}  
+ \int_{\tau}^t \left(\ns{\dt^2 \eta}_{H^{1+s/2}} \sqrt{\E} +  \norm{\dt^2 \eta}_{H^{1+s/2}} \sqrt{\E} \sqrt{\D} \right).
\end{multline}
We then send $j \to \infty$ and argue as in the proof of Theorem \ref{diss_enhance_eta} to deduce from this that 
\begin{multline}
\int_{\tau}^t \ns{\dt^2 \eta}_{H^{1+s/2}}   \ls 
\E_{\shortparallel,2}(\tau) + \E_{\shortparallel,2}(t) 
+ \int_{\tau}^t \norm{\dt^2 u}_{H^0} \left(\norm{ \dt^2 \eta }_{ H^{s-1/2}} + \norm{\dt^3 \eta }_{ H^{s-1/2}} \right) 
\\ + \int_{\tau}^t \left( \norm{ \dt^2 u}_{H^1}   + [\dt^3 \eta]_\ell \right)\norm{\dt^2 \eta}_{H^{1+s/2}}  
+ \int_{\tau}^t \left(\ns{\dt^2 \eta}_{H^{1+s/2}} \sqrt{\E} +  \norm{\dt^2 \eta}_{H^{1+s/2}} \sqrt{\E} \sqrt{\D} \right).
\end{multline}
Finally, we use Cauchy's inequality, the fact that $s-1/2 < 1/2$, and the assumption that $\E \le \delta_\ast$ for a universal $0 <\delta_\ast \le 1$ to absorb the $\ns{\dt^2 \eta}_{H^{1+s/2}}$ terms from the right to the left, which yields
\begin{equation}
\hal \int_{\tau}^t \ns{\dt^2 \eta}_{H^{1+s/2}}   \ls \E_{\shortparallel,2}(\tau) + \E_{\shortparallel,2}(t) 
+ \int_{\tau}^t \sdb + \E\D.
\end{equation}
This then provides the desired estimate since $1 + s/2 = 3/2 - \low$.
\end{proof}

\subsection{Energetic enhancement for $\dt p$ }

We now turn our attention to an estimate that provides $L^2$ control of $\dt p$ in terms of the energy.

\begin{thm}\label{en_enhance_dtp}
Let $0 < T \le \infty$ and suppose that $\sup_{0\le t < T} \E(t) \le \gamma^2$, where $\gamma \in (0,1)$ is as in Lemma \ref{eta_small}.  Then we have the estimate
\begin{equation}\label{en_enhance_dtp_0}
 \norm{\dt p}_{L^2} \ls  \norm{\dt u}_{H^1} + \norm{\dt^2 u}_{L^2}+  \norm{\dt \eta}_{H^{3/2 + (\ep_- - \low)/2}}  + \E + \E^{3/2} .
\end{equation}
\end{thm}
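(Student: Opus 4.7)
The strategy is to extract $\|\dt p\|_{L^2}^2$ from the weak formulation of the once--time--differentiated problem by testing against a carefully chosen divergence--adjusted vector field, and then estimating every other term on the right in the desired norms.

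The first step is to build the test function. Standard theory (a Bogovskii--type construction, cf.\ Appendix \ref{sec_analysis_tools}) produces $\tilde w \in H^1(\Omega;\R^2)$ satisfying
\begin{equation}
 \diverge \tilde w = \dt p \text{ in } \Omega, \quad \tilde w \cdot \nu = 0 \text{ on } \Sigma_s, \quad \tilde w \cdot \nu = \frac{|\Omega|}{|\Sigma|}\, \overline{\dt p} \text{ on } \Sigma, \quad \norm{\tilde w}_{H^1} \ls \norm{\dt p}_{L^2},
\end{equation}
where $\overline{\dt p}$ denotes the average over $\Omega$, so that the compatibility $\int_\Omega \dt p = \int_{\p\Omega} \tilde w\cdot\nu$ holds. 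I then set $w = M\tilde w$ with $M$ the matrix of Proposition \ref{M_multiplier}. By Proposition \ref{M_properties} we have $w\cdot \nu = 0$ on $\Sigma_s$ and $\diva w = K \dt p$, while Proposition \ref{M_multiplier} gives $\norm{w}_{H^1} \ls (1+\sqrt\E)\norm{\tilde w}_{H^1} \ls \norm{\dt p}_{L^2}$. Therefore
\begin{equation}\label{plan_key}
 (\dt p, \diva w)_0 = \int_\Omega \dt p \cdot K \dt p \cdot J = \ns{\dt p}_{L^2}.
\end{equation}

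Next, I apply Lemma \ref{geometric_evolution} with $(v,q,\xi) = (\dt u, \dt p, \dt \eta)$, forcing terms $F^1,\dots, F^7$ as in \eqref{dt1_f1}--\eqref{dt1_f7}, and test function $w$ just constructed, using the form \eqref{ge_01}. Solving for $\ns{\dt p}_{L^2}$ via \eqref{plan_key} yields
\begin{multline}\label{plan_identity}
 \ns{\dt p}_{L^2} = \br{\dt^2 u, Jw} + \bigl(-\dt\bar\eta \tfrac{\phi}{\zeta_0} K\p_2 \dt u + u\cdot\naba \dt u,\, Jw\bigr)_0 + \pp{\dt u, w} \\
 - \int_\Omega F^1\cdot w\, J + \int_{\Sigma_s} J(w\cdot\tau)F^5 + \int_{-\ell}^\ell g\dt\eta(w\cdot \N) - \sigma \p_1\Bigl(\tfrac{\p_1 \dt\eta}{(1+|\p_1\zeta_0|^2)^{3/2}} + F^3\Bigr) w\cdot \N - F^4\cdot w.
\end{multline}
I bound the three linear bulk terms immediately by Cauchy--Schwarz and the coefficient bounds of Lemma \ref{eta_small}:
\begin{equation}
 |\br{\dt^2 u, Jw}| + |(\text{transport},Jw)_0| + |\pp{\dt u,w}| \ls \bigl(\norm{\dt^2 u}_{L^2} + (1+\sqrt\E)\norm{\dt u}_{H^1}\bigr)\norm{w}_{H^1}.
\end{equation}
The nonlinear bulk and $\Sigma_s$ terms involving $F^1, F^4, F^5$ are exactly those treated by Theorem \ref{nie_v_est}, yielding a bound by $(\E + \E^{3/2})\norm{w}_{H^1}$. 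The remaining boundary contributions on $\Sigma$ are precisely the functional estimated in Theorem \ref{nie_ST} (applied with the present $F^3$ from \eqref{dt1_f3}), which delivers the bound $(1+\sqrt\E)\norm{\dt\eta}_{H^{3/2+(\ep_- - \low)/2}}\norm{w}_{H^1}$.

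Combining all these estimates with $\norm{w}_{H^1} \ls \norm{\dt p}_{L^2}$ gives
\begin{equation}
 \ns{\dt p}_{L^2} \ls \bigl(\norm{\dt^2 u}_{L^2} + \norm{\dt u}_{H^1} + \norm{\dt\eta}_{H^{3/2 + (\ep_- - \low)/2}} + \E + \E^{3/2}\bigr)\norm{\dt p}_{L^2},
\end{equation}
and dividing by $\norm{\dt p}_{L^2}$ (or using Cauchy's inequality to absorb) concludes the proof of \eqref{en_enhance_dtp_0}. The main technical point is the test function construction: the Bogovskii--style boundary prescription of $\tilde w\cdot\nu$ on $\Sigma$ (needed to satisfy the compatibility condition without forcing mean--zero on $\dt p$) together with the conversion via $M$ from Euclidean divergence to $\diva$ is what makes the bookkeeping clean and allows direct invocation of the energetic nonlinear estimates of Theorems \ref{nie_v_est} and \ref{nie_ST}.
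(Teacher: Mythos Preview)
Your strategy matches the paper's: test the once-differentiated weak formulation \eqref{ge_01} against a field $w=M\tilde w$ engineered so that $(\dt p,\diva w)_0=\ns{\dt p}_{L^2}$, then invoke Theorems \ref{nie_v_est} and \ref{nie_ST} for the nonlinear and boundary terms and divide through by $\norm{\dt p}_{L^2}$. The only difference lies in the construction of $\tilde w$: rather than a Bogovskii-type right inverse with prescribed normal trace on $\Sigma$, the paper takes $\tilde w=\nabla\psi$ where $\psi\in H^2(\Omega)$ solves the mixed problem $-\Delta\psi=\dt p$ in $\Omega$, $\psi=0$ on $\Sigma$, $\p_\nu\psi=0$ on $\Sigma_s$ (with $H^2$ regularity available since $\Omega$ has convex corners). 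This sidesteps the compatibility issue entirely, since the Dirichlet portion on $\Sigma$ removes any mean-zero constraint on $\dt p$; your construction handles the same issue by pushing the mean of $\dt p$ onto the normal trace on $\Sigma$, which is also fine but requires a mild extension of Proposition \ref{bogovskii} (that proposition only delivers $H^1_0$ output, so you need to first subtract a fixed smooth lifting of the boundary data before applying $\mathcal{B}_\Omega$). Once $w$ is in hand the remaining estimates are identical to the paper's.
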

\begin{proof}
Let $\psi \in H^2(\Omega)$ solve 
\begin{equation}
\begin{cases}
-\Delta \psi = \dt p & \text{in } \Omega \\
\psi =0 & \text{on } \Sigma \\
\p_\nu \psi =0 &\text{on } \Sigma_s,
\end{cases}
\end{equation}
which exists and enjoys  $H^2$ regularity since $\Omega$ has convex corners.  Moreover, 
\begin{equation}
\norm{\psi}_{H^2} \ls \norm{\dt p}_{L^2}.
\end{equation}
Proposition \ref{M_properties} shows that if we set $w = M \nab \psi$, then $w$ is a valid choice of a test function in Lemma \ref{geometric_evolution},
\begin{equation}
 \diva{w} = \diva{M\nab \psi} = K\Delta \psi = K \dt p,
\end{equation}
and we have the bound
\begin{equation}
 \norm{w}_{H^1} \ls \norm{\psi}_{H^2} \ls \norm{\dt p}_{L^2}.
\end{equation}

Using this $w$ in  \eqref{ge_01} of Lemma \ref{geometric_evolution}, we find that 
\begin{multline}\label{en_enhance_dtp_1}
\br{\dt^2 u,Jw} + (-\dt \bar{\eta} \frac{\phi}{\zeta_0} K \p_2 \dt u + u \cdot \naba \dt u,Jw)_0 +
  \pp{\dt u,w} - (\dt p,\diva w)_0   \\
 = \int_\Omega F^1   \cdot w J   - \int_{\Sigma_s} J (w\cdot \tau)F^5 
-   \int_{-\ell}^\ell g \dt \eta (w \cdot \N) - \sigma \p_1 \left( \frac{\p_1 \dt \eta }{(1+\abs{\p_1 \zeta_0}^2)^{3/2}} +F^3\right)w\cdot  \N + F^4 \cdot w
\end{multline}
with $F^1$, $F^3$, $F^4$, and $F^5$ given by \eqref{dt1_f1}, \eqref{dt1_f3}, \eqref{dt1_f4}, and \eqref{dt1_f5}, respectively, but 
\begin{equation}\label{en_enhance_dtp_2}
 (\dt p,\diva w)_0 = \int_\Omega J \dt p \diva w = \int_\Omega \abs{\dt p}^2 = \norm{\dt p}_{L^2}^2.
\end{equation}

According to Theorem \ref{nie_v_est}, we have the bound 
\begin{equation}\label{en_enhance_dtp_3}
 \abs{\int_\Omega F^1   \cdot w J   - \int_{\Sigma_s} J (w\cdot \tau)F^5 
+ F^4 \cdot w} \ls \left( \E+ \E^{3/2} \right) \norm{w}_{H^1} \ls \left( \E+ \E^{3/2} \right) \norm{\dt p}_{L^2},
\end{equation}
while Theorem \ref{nie_ST} shows that 
\begin{multline}
\abs{\int_{-\ell}^\ell g \dt \eta (w \cdot \N) - \sigma \p_1 \left( \frac{\p_1 \dt \eta }{(1+\abs{\p_1 \zeta_0}^2)^{3/2}} +F^3\right)w\cdot  \N } \ls \norm{\dt \eta}_{H^{3/2 + (\ep_- - \low)/2}} \norm{w}_{H^1}  \\
\ls  \norm{\dt \eta}_{H^{3/2 + (\ep_- - \low)/2}}  \norm{\dt p}_{L^2}.
\end{multline}
On the other hand, we have the bounds 
\begin{equation}
\abs{\pp{\dt u,w}}  \ls \norm{\dt u}_{H^1} \norm{w}_{H^1} \ls \norm{\dt u}_{H^1} \norm{\dt p}_{L^2},
\end{equation}
\begin{equation}
 \abs{\br{\dt^2 u,Jw}} \ls \norm{\dt^2 u}_{L^2} \norm{w}_{L^2} \ls  \norm{\dt^2 u}_{L^2}\norm{\dt p}_{L^2},
\end{equation}
and 
\begin{multline}\label{en_enhance_dtp_4}
\abs{  (-\dt \bar{\eta} \frac{\phi}{\zeta_0} K \p_2 \dt u + u \cdot \naba \dt u,Jw)_0  }  \ls \norm{w}_{L^2}\left( \norm{\dt \bar{\eta}}_{L^\infty} \norm{\nab \dt u}_{L^2}  + \norm{u}_{L^\infty}\norm{\dt u}_{L^2}\right)  \\
\ls  \norm{\dt p}_{L^2} \E.
\end{multline}

Plugging the estimates \eqref{en_enhance_dtp_3}--\eqref{en_enhance_dtp_4} into \eqref{en_enhance_dtp_1} and using \eqref{en_enhance_dtp_2}, we deduce that 
\begin{equation}
 \ns{\dt p}_{L^2} \ls \norm{\dt p}_{L^2} \left(\norm{\dt u}_{H^1} + \norm{\dt^2 u}_{L^2}+  \norm{\dt \eta}_{H^{3/2 + (\ep_- - \low)/2}}  + \E + \E^{3/2}  \right).
\end{equation}
Then \eqref{en_enhance_dtp_0} follows immediately from this.
\end{proof}

\section{A priori estimates}\label{sec_aprioris}
 
In this section we present the proof of our main a priori estimates, Theorem \ref{main_apriori}.

\subsection{A key construction}

We need one more technical tool to close our a priori estimates, namely the construction of a useful $\omega$ to use in Theorem \ref{linear_energy}.  We present the construction of such an $\omega$ now.

\begin{prop}\label{omega_construction}
Let $0 < T \le \infty$ and suppose that $\sup_{0\le t < T} \E(t) \le \gamma^2$, where $\gamma \in (0,1)$ is as in Lemma \ref{eta_small}. Let $F^2$ be given by \eqref{dt2_f2} and let $\br{\cdot}_\Omega$ denote the spatial average on $\Omega$, i.e. 
\begin{equation}
 \br{g}_\Omega = \frac{1}{\abs{\Omega}} \int_{\Omega} g.
\end{equation}
Then there exists $\omega : \Omega \times [0,T) \to \R^2$ satisfying the following.
\begin{enumerate}
 \item We have that $\omega(\cdot,t) \in H^1_0(\Omega;\R^2)$ for $0 \le t < T$, and 
 \begin{equation}\label{omega_construction_00}
 J \diva{\omega} = JF^2  -  \br{JF^2}_\Omega.
\end{equation}
 \item $\omega$ obeys the estimates 
\begin{equation} \label{omega_construction_01}
\norm{\omega }_{W^{1,4/(3-2\ep_+)}_0} \ls \E   \text{ and } \norm{\omega}_{W^{1,2/(1-\ep_-)}_0}  + \norm{\dt \omega}_{L^{2/(1-\ep_-)}} \ls (\sqrt{\E} +\E)\sqrt{\D}.
\end{equation}

 \item We have the interaction estimates 
\begin{equation} \label{omega_construction_02}
 \abs{\int_\Omega \dt^2 u J \omega} \ls   \E^{3/2},
\end{equation}
and
\begin{equation} \label{omega_construction_03}
 \abs{\int_{\Omega} \dt^2 u \dt(J \omega)} + \abs{(-\dt \bar{\eta} \frac{\phi}{\zeta_0} K \p_2 \dt^2 u + u \cdot \naba \dt^2 u,J\omega)_0}  + \abs{ \pp{\dt^2 u,\omega}} +  \abs{\int_\Omega J F^1 \cdot \omega}     \ls (\sqrt{\E} + \E) \D.
\end{equation}
\end{enumerate}
\end{prop}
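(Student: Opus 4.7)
The plan is to construct $\omega$ as $\omega = Mv$, where $M = K\nab\Phi$ is as in \eqref{M_def} and $v \in H^1_0(\Omega;\R^2)$ is produced by the Bogovskii operator (see Appendix \ref{sec_analysis_tools}) applied to the mean-zero source $g := JF^2 - \br{JF^2}_\Omega$. Proposition \ref{M_properties} asserts that $\diverge v = g$ is equivalent to $\diva(Mv) = Kg$, and multiplying by $J$ gives the required identity $J\diva\omega = g$; the same proposition and the vanishing of $v$ on $\p\Omega$ also imply $\omega \in H^1_0(\Omega;\R^2)$. The compatibility $\int_\Omega g = 0$ is immediate from the definition of $g$.

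Next I will invoke the standard Bogovskii $L^p$-estimates, valid for every $1<p<\infty$, together with the fact that Bogovskii is a time-independent linear right-inverse of $\diverge$, to obtain $\norm{v}_{W^{1,p}_0}\ls\norm{g}_{L^p}$ and $\norm{\dt v}_{W^{1,p}_0}\ls\norm{\dt g}_{L^p}$. Coupling this with the three bounds of Proposition \ref{ne2_f2} delivers $\norm{v}_{W^{1,4/(3-2\ep_+)}_0}\ls \E$, $\norm{v}_{W^{1,2/(1-\ep_-)}_0}\ls \sqrt{\E}\sqrt{\D}$, and $\norm{\dt v}_{W^{1,q_-}_0}\ls(\sqrt{\E}+\E)\sqrt{\D}$. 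The 2D Sobolev embedding $W^{1,q_-}\hookrightarrow L^{q_-^{\ast}} = L^{2/(1-\ep_-)}$ then promotes the last bound to $\norm{\dt v}_{L^{2/(1-\ep_-)}}\ls(\sqrt{\E}+\E)\sqrt{\D}$. Passing from $v$ to $\omega = Mv$ and $\dt\omega = \dt M\,v + M\,\dt v$, and using $\norm{M}_{L^\infty}+\norm{\dt M}_{L^\infty}\ls 1$ from Lemma \ref{eta_small} and Theorem \ref{catalog_energy}, establishes \eqref{omega_construction_01}. In particular, the embedding $W^{1,2/(1-\ep_-)}\hookrightarrow L^\infty$ yields $\norm{\omega}_{L^\infty}\ls\sqrt{\E}\sqrt{\D}$ and $\norm{\dt\omega}_{L^\infty}\ls(\sqrt{\E}+\E)\sqrt{\D}$, both of which will be essential for the interaction bounds.

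Estimate \eqref{omega_construction_02} will then be an immediate H\"older/Sobolev computation: $\abs{\int\dt^2 u\cdot J\omega}\ls\norm{\dt^2 u}_{L^2}\norm{\omega}_{L^2}\ls\sqrt{\E}\cdot\E$, where $\norm{\omega}_{L^2}$ is controlled by its $W^{1,4/(3-2\ep_+)}$ norm via the 2D Sobolev embedding into $L^2$ valid for any integrability exponent $\ge 1$. For \eqref{omega_construction_03} I will estimate the four summands separately, each time pairing a dissipative factor for the velocity with the dissipative $H^1$ or $L^\infty$ control of $\omega$ from \eqref{omega_construction_01}: the transport-type term by $\norm{(u,\dt\bar\eta)}_{L^2}\norm{\nab\dt^2 u}_{L^2}\norm{\omega}_{L^\infty}$, the Stokes-type term $\pp{\dt^2 u,\omega}$ by $\norm{\dt^2 u}_{H^1}\norm{\omega}_{H^1}$, the term $\int\dt^2 u\cdot \dt(J\omega)$ by splitting $\dt(J\omega)=\dt J\,\omega+J\,\dt\omega$ and using $\norm{\dt^2 u}_{L^2}\ls\sqrt{\D}$ against the $L^2$ bounds for $\omega$ and $\dt\omega$, and $\int JF^1\cdot\omega$ by applying the dual bound of Proposition \ref{nid_f1} with test function $\omega$. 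Each product collapses into $(\sqrt{\E}+\E)\D$ after absorbing any surplus factor of $\E$ using $\E\le\gamma^2\le 1$.

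The only real obstacle is the mismatch between the Lebesgue exponents provided by Proposition \ref{ne2_f2} for $\dt(JF^2)$ (only $L^{q_-}$) and the exponent $2/(1-\ep_-)$ required by \eqref{omega_construction_01} for $\dt\omega$. This gap closes exactly because of the Sobolev identity $q_-^\ast = 2/(1-\ep_-)$ in two dimensions, which is why the construction gives $\dt\omega \in L^{2/(1-\ep_-)}$ rather than a space with one full derivative, and correspondingly why the interaction estimates have been arranged so that $\dt\omega$ never appears with a spatial derivative on it.
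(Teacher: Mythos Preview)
Your approach is essentially identical to the paper's: define $\omega = M\bar\omega$ with $\bar\omega = \mathcal{B}_\Omega(JF^2 - \br{JF^2}_\Omega)$, invoke Proposition~\ref{ne2_f2} for the source bounds, pass through Bogovskii, and then estimate the interaction terms term-by-term exactly as you outline.

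Two small points of sloppiness are worth flagging. First, passing from $\norm{v}_{W^{1,p}}$ to $\norm{Mv}_{W^{1,p}}$ requires control of $\nab M$, not just $\norm{M}_{L^\infty}$; the paper invokes Proposition~\ref{M_multiplier} for precisely this, and you should do the same rather than citing only $L^\infty$ bounds on $M$. Second, your claim that $\norm{\dt\omega}_{L^\infty}\ls(\sqrt{\E}+\E)\sqrt{\D}$ via the embedding $W^{1,2/(1-\ep_-)}\hookrightarrow L^\infty$ is unjustified: you only establish $\dt\omega\in L^{2/(1-\ep_-)}$, not $W^{1,2/(1-\ep_-)}$. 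Fortunately you never actually use this $L^\infty$ bound in the interaction estimates (you correctly pair $\dt\omega$ with $\dt^2 u$ in $L^2$), so the argument survives; just delete the false claim.
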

\begin{proof}
Recall from Proposition \ref{M_properties} that  
\begin{equation}
\diverge{u} = \varphi \Leftrightarrow  \diva(M u) = K \varphi \Leftrightarrow J \diva (Mu) = \varphi.
\end{equation}
This means that if we first solve 
\begin{equation}
 \diverge{\bar{\omega}} =  JF^2  -  \br{JF^2}_\Omega,
\end{equation}
then $\omega = M \bar{\omega}$ satisfies \eqref{omega_construction_00}.

Let $\mathcal{B}_\Omega$ denote the Bogovskii operator from Proposition \ref{bogovskii}.  Then we will define 
\begin{equation}
 \bar{\omega} = \mathcal{B}_\Omega (JF^2  -  \br{JF^2}_\Omega ).
\end{equation}
The essential point is that the Bogovskii operator is a linear map that commutes with time derivatives and satisfies 
\begin{equation}\label{omega_5}
 \mathcal{B}_\Omega \in \L( \mathring{L}^{q}(\Omega), W^{1,q}_0(\Omega;\R^2)  ) \text{ for all } 1 < q < \infty
\end{equation}
and $\diverge \mathcal{B}_\Omega \varphi = \varphi$.  Then our desired vector field is given by 
\begin{equation}
 \omega = M \bar{\omega} = M \mathcal{B}_\Omega(J F^2 - \br{J F^2}_\Omega).
\end{equation}

According to Propositions \ref{ne2_f2} and \eqref{omega_5} we have the bounds
\begin{equation}\label{omega_1}
\norm{\bar{\omega} }_{W^{1,4/(3-2\ep_+)}_0} \ls \E, \; \norm{\bar{\omega}}_{W^{1,2/(1-\ep_-)}_0} \ls \sqrt{\E} \sqrt{\D}, \text{ and }  \norm{\dt \bar{\omega}}_{W^{1,q_-}_0} \ls (\sqrt{\E} + \E)\sqrt{\D}. 
\end{equation}
Then Proposition \ref{M_multiplier},  together with \eqref{omega_1} and the fact that $\E \le 1$ then shows that 
\begin{equation}\label{omega_2}
\norm{\omega }_{W^{1,4/(3-2\ep_+)}_0} \ls \E \text{ and } \norm{\omega}_{W^{1,2/(1-\ep_-)}_0} \ls \sqrt{\E} \sqrt{\D}. 
\end{equation}
and (since $\ep_- < \ep_+$ implies $2/(1-\ep_-) < 2/(1-\ep_+)$)
\begin{multline}\label{omega_3}
\norm{\dt \omega}_{L^{2/(1-\ep_-)}} \ls \norm{\dt M \bar{\omega}}_{L^{2/(1-\ep_-)}} + \norm{M \dt \bar{\omega}}_{L^{2/(1-\ep_-)}} 
\ls (1+ \sqrt{\E}) \left( \norm{\bar{\omega}}_{L^{2/(1-\ep_- )}}  +  \norm{\dt \bar{\omega}}_{L^{2/(1-\ep_-)}_0}  \right) 
\\
\ls (1+ \sqrt{\E})\left( \norm{\bar{\omega}}_{W^{1,2/(1-\ep_- )}_0}  +  \norm{\dt \bar{\omega}}_{W^{1,q_-}_0}  \right) 
\ls  (\sqrt{\E} + \E)\sqrt{\D},
\end{multline}
where in the third inequality we have also used the Sobolev embeddings.  Then \eqref{omega_construction_01} follows from \eqref{omega_2} and \eqref{omega_3}.

It remains only to prove the interaction estimates stated in the third item.  For each of these we will use the estimates \eqref{omega_construction_01} together with the bounds from Theorems \ref{catalog_energy} and \ref{catalog_dissipation}.  Indeed,   
\begin{equation}
 \abs{\int_\Omega \dt^2 u J \omega} \ls \int_\Omega \abs{\dt^2 u} \abs{\omega} \ls  \norm{\dt^2 u}_{L^2} \norm{\omega}_{L^2} \ls \norm{\omega}_{W^{1,4/(3-2\ep_+)}} \norm{\dt^2 u}_{L^2} \ls \E^{3/2},
\end{equation}
which is \eqref{omega_construction_02}.  For the first part of \eqref{omega_construction_03} we bound
\begin{multline}
 \abs{\int_{\Omega} \dt^2 u \dt(J \omega)} \ls \int_{\Omega} \abs{\dt^2 u} \left( \abs{\nab \dt \bar{\eta}} \abs{\omega}  + \abs{\dt \omega} \right) \ls \norm{\dt^2 u}_{L^2} \left(\norm{\dt \bar{\eta}}_{W^{1,\infty}} \norm{\omega}_{L^2} + \norm{\dt \omega}_{L^2}  \right) \\
\ls \sqrt{\D}(\sqrt{\E} \sqrt{\E} \sqrt{\D} + \sqrt{\E} \sqrt{\D}) \ls  (\sqrt{\E} + \E) \D.
\end{multline}
Next we bound 
\begin{equation}
\abs{(-\dt \bar{\eta} \frac{\phi}{\zeta_0} K \p_2 \dt^2 u + u \cdot \naba \dt^2 u,J\omega)_0} \ls \left( \norm{\dt \bar{\eta}}_{L^\infty}  + \norm{u}_{L^\infty}  \right) \norm{\nab \dt^2 u}_{L^2} \norm{\omega}_{L^2} \ls \sqrt{\E} \sqrt{\D} \sqrt{\E} \sqrt{\D} \ls \E \D,
\end{equation}
which is the second estimate in \eqref{omega_construction_03}.  Then we bound 
\begin{equation}
 \abs{ \pp{\dt^2 u,\omega}} \ls \norm{\dt^2 u}_{H^1} \norm{\omega}_{H^1} \ls \norm{\dt^2 u}_{H^1} 
 \norm{\omega}_{W^{1,2/(1-\ep_-)}} \ls \sqrt{\D} \sqrt{\E} \sqrt{\D},
\end{equation}
which is the third estimate in \eqref{omega_construction_03}.  For the final term in \eqref{omega_construction_03} we use Proposition \ref{nid_f1} to bound 
\begin{equation}
 \abs{\int_\Omega J F^1 \cdot \omega} \ls  \norm{\omega}_{H^1} (\sqrt{\E} + \E) \sqrt{\D} \ls \sqrt{\E} \sqrt{\D} (\sqrt{\E} + \E) \sqrt{\D}.
\end{equation}
This completes the proof of \eqref{omega_construction_03}.
\end{proof}

\subsection{Main a priori estimate}

We now have all of the tools needed to prove our main a priori estimate.  

\begin{proof}[Proof of Theorem \ref{main_apriori}]

Assume initially that $\delta_0 \le \gamma^2$, where $\gamma \in (0,1)$ is from Lemma \ref{eta_small}.  We divide the rest of the proof into several steps.

\emph{Step 1 - Lowest level energy-dissipation estimates: } Corollary \ref{basic_energy} tells us that
\begin{multline}
 \frac{d}{dt} \left(\int_\Omega \hal J \abs{u}^2 +  \int_{-\ell}^\ell \frac{g}{2} \abs{\eta}^2 + \frac{\sigma}{2} \frac{\abs{\p_1 \eta}^2}{(1+\abs{\p_1 \zeta_0}^2)^{3/2}} +  \int_{-\ell}^\ell \sigma \Q(\p_1 \zeta_0,\p_1 \eta)\right)  \\
 + \frac{\mu}{2} \int_\Omega \abs{\sga u}^2 J 
+\int_{\Sigma_s} \beta J \abs{u \cdot s}^2 + \linz \bs{\dt \eta}
 =- \linz [u\cdot \N,\swh(\dt \eta)]_\ell .
\end{multline}
We integrate this and use Lemma \ref{eta_small}  to deduce that 
\begin{equation}\label{ape_sk_-1}
  \E_{\shortparallel, 0}(t) + \int_{-\ell}^\ell \sigma \Q(\p_1 \zeta_0,\p_1 \eta(t))  + \int_{s}^t \D_{\shortparallel,0}  \ls  \E_{\shortparallel, 0}(s)  + \int_{-\ell}^\ell \sigma \Q(\p_1 \zeta_0,\p_1 \eta(s)) + \int_{s}^t \linz \abs{[u\cdot \N,\swh(\dt \eta)]_\ell}.
\end{equation}
Theorem \ref{nid_Q} says that
\begin{equation}
 \abs{\int_{-\ell}^\ell \sigma \Q(\p_1 \zeta_0, \p_1 \eta)  } \ls  \sqrt{\E} \ns{\eta}_{H^1} \ls \sqrt{\E} \E_{\shortparallel,0},
\end{equation}
and Theorem \ref{nid_W} says that
\begin{equation} 
\abs{  [u\cdot \N,\swh(\dt \eta)]_\ell } \ls \norm{\dt \eta}_{H^1} \bs{\dt \eta} \ls \sqrt{\E} \D_{\shortparallel,0},
\end{equation}
so if $\E \le \delta_0$ with $\delta_0$ sufficiently small, then \eqref{ape_sk_-1} implies that
\begin{equation}
  \E_{\shortparallel, 0}(t)  + \int_{s}^t \D_{\shortparallel,0}  \ls  \E_{\shortparallel, 0}(s).
\end{equation}
Then Theorem \ref{diss_enhance_eta} says
\begin{equation}
  \int_s^t \ns{ \eta}_{H^{3/2-\low}}  \ls  \E_{\shortparallel, 0}(s) + \E_{\shortparallel, 0}(t) + \int_{s}^t \D_{\shortparallel,0}
\end{equation}
and we may enhance the previous bound to
\begin{equation}\label{ape_sk_0}
  \E_{\shortparallel, 0}(t)  + \int_{s}^t \left(\D_{\shortparallel,0} + \ns{ \eta}_{H^{3/2-\low}}  \right)  \ls  \E_{\shortparallel, 0}(s)
\end{equation}
for all $0 \le s \le t \le T$.  

\emph{Step 2 - Energy-dissipation estimates for one temporal derivative: } Theorem \ref{linear_energy} applied with $(v,q,\xi) = (\dt u,\dt p,\dt \eta)$ and $\omega =0$ gives the identity
\begin{multline}
 \frac{d}{dt} \left( \int_{\Omega} J \frac{\abs{\dt u}^2}{2} +    \int_{-\ell}^\ell \frac{g}{2} \abs{\dt \eta}^2 + \frac{\sigma}{2} \frac{\abs{\p_1 \dt \eta}^2}{(1+\abs{\p_1 \zeta_0}^2)^{3/2}}  \right) \\
 + \frac{\mu}{2} \int_\Omega \abs{\sga \dt u}^2 J 
+\int_{\Sigma_s} \beta J \abs{\dt u \cdot s}^2 + \linz\bs{\dt^2 \eta}  
= \br{\mathcal{F}_1,(\dt u,\dt p,\dt \eta)}
\end{multline}
for 
\begin{multline}
\br{\mathcal{F}_1,(\dt u,\dt p, \dt \eta)} =  \int_\Omega F^1  \cdot \dt u J + \dt p JF^2  - \int_{\Sigma_s}  J (\dt u \cdot s)F^5 \\
-  \int_{-\ell}^\ell \sigma  F^3  \p_1(\dt u \cdot \N) + F^4 \cdot \dt u  -  g \dt \eta F^6 -  \sigma \frac{\p_1 \dt \eta \p_1 F^6}{(1+\abs{\p_1 \zeta_0}^2)^{3/2}}
  - \linz [\dt u\cdot \N,  F^7]_\ell + \linz [\dt^2 \eta,F^6] .
\end{multline}
Integrating and using Lemma \ref{eta_small} then shows that 
\begin{equation}
 \E_{\shortparallel, 1}(t) + \int_{s}^t \D_{\shortparallel, 1} \ls   \E_{\shortparallel, 1}(s)
+ \int_{s}^t    \br{\mathcal{F}_1,(\dt u,\dt p,\dt \eta)}.
\end{equation}
Theorems  \ref{nid_v_est}, \ref{nid_p_est}, \ref{nid_f3_dt}, \ref{nid_f6}, and \ref{nid_f7} then show that 
\begin{equation}
\abs{  \br{\mathcal{F}_1,(\dt u,\dt p,\dt \eta)} } \ls \sqrt{\E} \D,
\end{equation}
and hence we have the bound
\begin{equation}\label{ape_sk_1}
 \E_{\shortparallel, 1}(t) + \int_{s}^t \D_{\shortparallel, 1} \ls   \E_{\shortparallel, 1}(s)
+ \int_{s}^t    \sqrt{\E} \D.
\end{equation}

\emph{Step 3 - Energy-dissipation estimates with two temporal derivatives: } Theorem \ref{linear_energy} applied with $(v,q,\xi) = (\dt^2 u,\dt^2 p,\dt^2 \eta)$ and $\omega$ from Proposition \ref{omega_construction} (which guarantees that $\omega$ can be used in Theorem \ref{linear_energy}) yields
\begin{multline} 
 \frac{d}{dt} \left( \int_{\Omega} J \frac{\abs{\dt^2 u}^2}{2} +    \int_{-\ell}^\ell \frac{g}{2} \abs{\dt^2 \eta}^2 + \frac{\sigma}{2} \frac{\abs{\p_1 \dt^2\eta}^2}{(1+\abs{\p_1 \zeta_0}^2)^{3/2}} - \int_\Omega J \dt^2 u\cdot \omega \right) \\
 + \frac{\mu}{2} \int_\Omega \abs{\sga \dt^2 u}^2 J 
+\int_{\Sigma_s} \beta J \abs{\dt^2 u\cdot s}^2 + \linz \bs{\dt^3 \eta}  
= \br{\mathcal{F}_2,(\dt^2 u,\dt^2 \eta)} + \int_\Omega \dt^2p \br{JF^2}_\Omega  + \br{\mathcal{F}_3,\omega}
\end{multline}
where $\br{\cdot}_\Omega$ denotes the spatial average as in Proposition \ref{omega_construction},
\begin{multline}
\br{\mathcal{F}_2,(\dt^2 u,\dt^2 \eta)} = \int_\Omega F^1  \cdot \dt^2 u J   - \int_{\Sigma_s}  J (\dt^2 u \cdot s)F^5 \\
-  \int_{-\ell}^\ell \sigma  F^3  \p_1(\dt^2 u \cdot \N) + F^4 \cdot \dt^2u  -  g \xi F^6 -  \sigma \frac{\p_1 \dt^2 \eta \p_1 F^6}{(1+\abs{\p_1 \zeta_0}^2)^{3/2}}
  - \linz [\dt^2 u\cdot \N,  F^7]_\ell + \linz [\dt^3 \eta,F^6],
\end{multline}
and 
\begin{equation}
 \br{\mathcal{F}_3,\omega} = - \int_\Omega \dt^2 u \cdot \dt(J \omega)  +   (-\dt \bar{\eta} \frac{\phi}{\zeta_0} K \p_2 \dt^2 u + u \cdot \naba \dt^2 u,J \omega)_0 + \pp{\dt^2 u,\omega}  - \int_\Omega F^1 \omega J.
\end{equation}

Theorems  \ref{nid_v_est}, \ref{nid_f3_dt2}, \ref{nid_f6}, and \ref{nid_f7} show that 
\begin{equation}
\abs{ \int_{s}^t \br{\mathcal{F}_2,(\dt^2 u,\dt^2 \eta)}}  \ls \sqrt{\E} \D.
\end{equation}
For the second term we rewrite 
\begin{equation}
 \int_\Omega \dt^2 p \br{J F^2}_\Omega = \frac{d}{dt} \left(\br{J F^2}_\Omega  \int_\Omega \dt p    \right) - \dt  \br{J F^2}_\Omega  \int_\Omega \dt p    =: \frac{d}{dt} I_1 - I_2.
\end{equation}
We then use Proposition \ref{ne2_f2} to bound 
\begin{equation}
 \abs{I_1} \ls \norm{\dt p}_{L^2}  \abs{\br{J F^2}_\Omega} \ls \E^{3/2}
\end{equation}
and (since $\dt \br{J F^2}_\Omega = \br{\dt(J F^2)}_\Omega$)  
\begin{equation}
 \abs{I_2} \ls \norm{\dt p}_{L^2} \abs{ \dt \br{J F^2}_\Omega}  \ls \sqrt{\D}   \sqrt{\E} \sqrt{\D}  .
\end{equation}
Finally, the interaction estimates of Proposition \ref{omega_construction} show that 
\begin{equation}
\abs{ \br{\mathcal{F}_3,\omega}} \ls \sqrt{\E} \D. 
\end{equation}
Combining all the above then shows that 
\begin{equation}\label{ape_sk_2}
 \E_{\shortparallel, 2}(t)  - (\E(t))^{3/2}  + \int_{s}^t \D_{\shortparallel, 2} \ls   \E_{\shortparallel, 2}(s) + (\E(s))^{3/2}
+  \int_{s}^t  \sqrt{\E} \D .
\end{equation}

\emph{Step 4 - Synthesized energy-dissipation estimates: } We sum \eqref{ape_sk_0}, \eqref{ape_sk_1}, and  \eqref{ape_sk_2} to see that 
\begin{equation}\label{ape_1}
 \seb(t)  - (\E(t))^{3/2}  + \int_{s}^t \left(\sdb + \ns{\eta}_{H^{3/2-\low}} \right) \ls   \seb(s) + (\E(s))^{3/2}
+  \int_{s}^t  \sqrt{\E} \D .
\end{equation}
Subsequently, we sum the estimates provided by Theorem \ref{diss_enhance_dtketa} with $k=1$ and $k=2$ to deduce the enhancement estimate
\begin{equation}
 \int_{s}^t \ns{\dt \eta}_{H^{3/2 - \low}}  + \ns{\dt^2 \eta}_{H^{3/2 - \low}}   \ls \seb(s) + \seb(t) 
+ \int_{s}^t \left( \sdb + \sqrt{\E}\D\right),
\end{equation}
and upon combining this with \eqref{ape_1} we find that 
\begin{equation}\label{ape_2}
 \seb(t)  - (\E(t))^{3/2}  + \int_{s}^t \left(\sdb + \sum_{k=0}^2 \ns{\dt^k \eta}_{H^{3/2-\low}} \right) \ls   \seb(s) + (\E(s))^{3/2}
+  \int_{s}^t  \sqrt{\E} \D .
\end{equation}

\emph{Step 5 - Elliptic dissipation enhancements: }   We now combine the estimates of Propositions \ref{ne1_g1}--\ref{ne1_g7}  with Theorem \ref{A_stokes_stress_solve}, applied with $v = \dt u$, $Q = \dt p$, and $\xi = \dt \eta$ and $\delta = \ep_-$, to see that 
\begin{equation}
 \norm{\dt u}_{W^{2,q_-}} + \norm{\dt p}_{W^{1,q_-}} + \norm{\dt \eta}_{W^{3-1/q_-,q_-}} \ls \norm{\dt^2 u}_{L^{q_-}} + \norm{\dt^2 \eta}_{H^{3/2-\low}} + \sqrt{\E} \sqrt{\D}.
\end{equation}
Similarly, we combine the estimates of Propositions \ref{ne1_g1}--\ref{ne1_g7}  with Theorem \ref{A_stokes_stress_solve}, applied with $v =   u$, $Q =   p$, and $\xi =  \eta$ and $\delta = \ep_+$, to see that 
\begin{equation}
 \norm{u}_{W^{2,q_+}} + \norm{p}_{W^{1,q_+}} + \norm{\eta}_{W^{3-1/q_+,q_+}} \ls \norm{\dt u}_{L^{q_+}} + \norm{\dt \eta}_{H^{3/2-\low}} + \sqrt{\E} \sqrt{\D}.
\end{equation}
Since $q_- < q_+  < 2$ we can then bound 
\begin{equation}
 \ns{\dt^2 u}_{L^{q_-}} + \ns{\dt u}_{L^{q_+}} \ls  \ns{\dt^2 u}_{L^{2}} + \ns{\dt u}_{L^{2}} \ls \sdb 
\end{equation}
As such, we can combine these with \eqref{ape_2} to deduce that 
\begin{multline}\label{ape_3}
 \seb(t)  - (\E(t))^{3/2}  + \int_{s}^t \left(\sdb   +\sum_{k=0}^2 \ns{\dt^k \eta}_{H^{3/2-\low}}   \right)  + \int_{s}^t \left( \norm{u}_{W^{2,q_+}} + \norm{p}_{W^{1,q_+}} + \norm{\eta}_{W^{3-1/q_+,q_+}} \right) \\
+ \int_{s}^t \left(     \norm{\dt u}_{W^{2,q_-}} + \norm{\dt p}_{W^{1,q_-}} + \norm{\dt \eta}_{W^{3-1/q_-,q_-}} \right) 
  \ls   \seb(s) + (\E(s))^{3/2}
+  \int_{s}^t  \sqrt{\E} \D .
\end{multline}

Next we sweep up the missing terms in $\D$. Note that for $0 \le k \le 2$ we have that 
\begin{equation}\label{ape_10}
\dt^{k+1} \eta - \dt^k u \cdot \N = F^{6,k}, 
\end{equation}
where $F^{6,0} =0$, $F^{6,1}$ is given by \eqref{dt1_f6}, and $F^{6,2}$ is given by \eqref{dt2_f6}, and in any case $F^{6,k}$ vanishes at the endpoints $\pm\ell$; consequently,
\begin{equation}
 \sum_{k=0}^2 [\dt^k u \cdot \N]_\ell^2 = \sum_{k=0}^2 [\dt^{k+1} \eta]_\ell^2 \le \sdb.
\end{equation}
Similarly, using \eqref{ape_10} with $k=2$ in conjunction with  Proposition \ref{ne2_f6}, we find that 
\begin{equation}
\ns{\dt^3 \eta}_{H^{1/2-\low}}  \ls \ns{\dt^2 u \cdot \N}_{H^{1/2}((-\ell,\ell))} + \ns{F^{6,2}}_{H^{1/2-\low}} \ls \ns{\dt^2 u}_{H^1} + \E \D \ls \sdb + \E \D.
\end{equation}
Combining these with \eqref{ape_3} then leads us to the estimate 
\begin{equation}
 \seb(t)  - (\E(t))^{3/2}  + \int_{s}^t \D \ls   \seb(s) + (\E(s))^{3/2}
+  \int_{s}^t  \sqrt{\E} \D,
\end{equation}
and in turn we see from this that if $\E \le \delta_0$ for sufficiently small universal $\delta_0$, then we can absorb the last term on the right onto the left side and deduce that
\begin{equation}\label{ape_4}
 \seb(t)  - (\E(t))^{3/2}  + \int_{s}^t \D \ls   \seb(s) + (\E(s))^{3/2}.
\end{equation}

\emph{Step 6 - Energetic enhancement through dissipation integration:} We now integrate the dissipation to improve the energetic estimates with Proposition \ref{temp_deriv_interp}:
\begin{multline}
 \ns{\dt \eta(t)}_{H^{3/2 +(\ep_--\low)/2}} \ls  \ns{\dt \eta(s)}_{H^{3/2+(\ep_--\low)/2}} + \int_{s}^t \ns{\dt \eta}_{H^{3/2+\ep_-}} + \ns{\dt^2 \eta}_{H^{3/2-\low}} \\
\ls    \ns{\dt \eta(s)}_{H^{3/2+(\ep_--\low)/2}} + \int_{s}^t \D
\end{multline}
and
\begin{equation}
\ns{\dt u(t)}_{H^{1+\ep_-/2}} \ls \ns{\dt u(s)}_{H^{1+\ep_-/2}} + \int_{s}^t \ns{\dt u}_{H^{1+\ep_-}} + \ns{\dt^2 u}_{H^1} 
\ls \ns{\dt u(s)}_{H^{1+\ep_-/2}} + \int_{s}^t \D.
\end{equation}
We can then combine these with \eqref{ape_4} to deduce that 
\begin{equation}\label{ape_5}
 \tilde{\E}(t)  - (\E(t))^{3/2}  + \int_{s}^t \D \ls   \tilde{\E}(s) + (\E(s))^{3/2}
\end{equation}
for 
\begin{equation}
\tilde{\E} := \seb +  \ns{\dt u}_{H^{1+\ep_-/2}} + \ns{\dt \eta}_{H^{3/2 +(\ep_--\low)/2}}.
\end{equation}

\emph{Step 7 - Elliptic energy enhancement: } Propositions \ref{ne0_g1}--\ref{ne0_g7} and Theorem \ref{A_stokes_stress_solve}, applied to $(v,Q,\xi) = (u,p,\eta)$ and $\delta = \ep_+$, show that 
\begin{equation}
  \norm{u}_{W^{2,q_+}} + \norm{p}_{W^{1,q_+}} + \norm{\eta}_{W^{3-1/q_+,q_+}} \ls \norm{\dt u}_{L^2} + \norm{\dt \eta}_{H^{3/2-\low}} + \E \ls \sqrt{\tilde{\E}} + \E.
\end{equation}
Theorem \ref{en_enhance_dtp} provides the estimate 
\begin{equation}
 \norm{\dt p}_{L^2} \ls  \norm{\dt u}_{H^1} + \norm{\dt^2 u}_{L^2}+  \norm{\dt \eta}_{H^{3/2 + (\ep_- - \low)/2}}  + \E + \E^{3/2} \ls \sqrt{\tilde{\E}} + \E + \E^{3/2}.
\end{equation}
Squaring these and summing with $\seb$ then shows that 
\begin{equation}
 \E \ls \tilde{\E} +  \E^{3/2} 
\end{equation}
and so if $\E \le \delta_0$, with $\delta_0$ made smaller than another universal constant if need be, then
\begin{equation}
 \E \asymp \tilde{\E}.
\end{equation}
Plugging this into \eqref{ape_5} shows that
\begin{equation}\label{ape_6}
 \E(t)  - (\E(t))^{3/2}  + \int_{s}^t \D \ls   \E(s) + (\E(s))^{3/2}.
\end{equation}

\emph{Step 8 - Conclusion: } 
Taking $\delta_0$ again to be smaller than a universal constant if necessary, we can absorb the $\E^{3/2}$ terms in \eqref{ape_6}, resulting in the inequality
\begin{equation}\label{ape_7}
 \E(t)   + \int_{s}^t \D \ls   \E(s)
\end{equation}
for $0 \le s \le t$.  Note that $\E \ls \D$, so $\E$ is integrable on $(0,T)$.  We can then apply the Gronwall-type estimate of Proposition \ref{gronwall_variant} to see that $\E$ decays exponentially: there exists a universal $\lambda >0$ such that
\begin{equation}
 \E(t) \ls e^{-\lambda t} \E(0) \text{ for all } 0 \le t < T.
\end{equation}
Also, taking $s =0$ in \eqref{ape_7} and sending $t \to T$ shows that 
\begin{equation}
\int_0^T \D \ls  \E(0).
\end{equation}
Combining the previous two estimates completes the proof.
\end{proof}

\appendix

\section{Nonlinearities}\label{sec_nonlinear_records}

In this appendix we record the form of the commutators that arise in applying $\dt^k$ to \eqref{ns_geometric} as well as some estimates for the function $\mathcal{R}$ defined by \eqref{R_def}.

\subsection{Nonlinear commutator terms when $k=1$ }\label{fi_dt1}

When $\dt$ is applied to \eqref{ns_geometric} this results in the following terms appearing in \eqref{linear_geometric} for $k=1,2$.

\begin{equation}\label{dt1_f1}
 F^1 = - \diverge_{\dt \A} S_\A(p,u) + \mu \diva \sg_{\dt \A} u  - u \cdot \nab_{\dt \A} u - \dt u \cdot \naba   u 
 + \dt^2 \bar{\eta} \frac{\phi}{\zeta_0} K \p_2 u + \dt \bar{\eta} \frac{\phi}{\zeta_0} \dt K \p_2 u 
\end{equation}
\begin{equation}\label{dt1_f2}
 F^2 = -\diverge_{\dt \A} u
\end{equation}
\begin{equation}\label{dt1_f3}
 F^3 = \dt [  \mathcal{R}(\p_1 \zeta_0,\p_1 \eta)]
\end{equation}
\begin{equation}\label{dt1_f4}
 F^4 = \mu \sg_{\dt \A} u \N +\left[ g\eta  - \sigma \p_1 \left(\frac{\p_1 \eta}{(1+\abs{\p_1 \zeta_0}^2)^{3/2}} +  \mathcal{R}(\p_1 \zeta_0,\p_1 \eta) \right)  - S_{\A}(p,u) \right]  \dt \N 
\end{equation}
\begin{equation}\label{dt1_f5}
 F^5 = \mu \sg_{\dt \A} u \nu \cdot \tau
\end{equation}
\begin{equation}\label{dt1_f6}
 F^6 = u \cdot \dt \N = -u_1 \p_1 \dt \eta.
\end{equation}
\begin{equation}\label{dt1_f7}
 F^7 = \swh'(\dt \eta) \dt^2 \eta.
\end{equation}
Observe that $F^6$ vanishes at $\pm \ell$ since $u_1$ vanishes there.

\subsection{Nonlinear commutator terms when $k=2$ }\label{fi_dt2}

When $\dt^2$ is applied to \eqref{ns_geometric} this results in the following terms appearing in \eqref{linear_geometric}.
\begin{multline}\label{dt2_f1}
 F^1 = - 2\diverge_{\dt \A} S_\A(\dt p,\dt u) + 2\mu \diva \sg_{\dt \A} \dt u  \\
- \diverge_{\dt^2 \A} S_\A(p,u) + 2 \mu \diverge_{\dt \A} \sg_{\dt \A} u + \mu \diva \sg_{\dt^2 \A} u \\
- 2 u \cdot \nab_{\dt \A} \dt u - 2 \dt u \cdot \naba \dt u - 2 \dt u \cdot \nab_{\dt \A} u - u \cdot \nab_{\dt^2 \A} u - \dt^2 u \cdot \naba u \\
+ 2 \dt \bar{\eta} \frac{\phi}{\zeta_0} \dt K \p_2 \dt u + 2 \dt^2 \bar{\eta} \frac{\phi}{\zeta_0} K \p_2 \dt u + 2 \dt^2 \bar{\eta} \frac{\phi}{\zeta_0} \dt K  \p_2 u + \dt^3 \bar{\eta} \frac{\phi}{\zeta_0} K \p_2 u + \dt \bar{\eta} \frac{\phi}{\zeta_0} \dt^2 K\p_2 u.
\end{multline}
\begin{equation}\label{dt2_f2}
 F^2 = -\diverge_{\dt^2 \A} u - 2\diverge_{\dt \A}\dt u
\end{equation}
\begin{equation}\label{dt2_f3}
 F^3 = \dt^2 [ \mathcal{R}(\p_1 \zeta_0,\p_1 \eta)]
\end{equation}
\begin{multline}\label{dt2_f4}
 F^4 = 2\mu \sg_{\dt \A} \dt u \N + \mu \sg_{\dt^2 \A} u \N + \mu \sg_{\dt \A} u \dt \N\\
+\left[  2g \dt \eta  - 2\sigma \p_1 \left(\frac{\p_1 \dt \eta}{(1+\abs{\p_1 \zeta_0}^2)^{3/2}} + \dt[ \mathcal{R}(\p_1 \zeta_0,\p_1 \eta)] \right)  -2 S_{\A}(\dt p,\dt u) \right]  \dt \N 
\\
+ \left[ g\eta  - \sigma \p_1 \left(\frac{\p_1 \eta}{(1+\abs{\p_1 \zeta_0}^2)^{3/2}} +  \mathcal{R}(\p_1 \zeta_0,\p_1 \eta) \right)  - S_{\A}(p,u) \right]  \dt^2 \N
\end{multline}
\begin{equation}\label{dt2_f5}
 F^5 = 2 \mu \sg_{\dt \A} \dt u \nu \cdot \tau + \mu \sg_{\dt^2 \A} u \nu \cdot \tau
\end{equation}
\begin{equation}\label{dt2_f6}
 F^6 = 2 \dt u \cdot \dt \N + u \cdot \dt^2 \N = -2 \dt u_1 \p_1 \dt \eta - u_1 \p_1 \dt^2 \eta.
\end{equation}
\begin{equation}\label{dt2_f7}
 F^7 = \swh'(\dt \eta) \dt^3 \eta + \swh''(\dt \eta) (\dt^2 \eta)^2.
\end{equation}
Once more, note that $F^6$ vanishes at $\pm \ell$ since $u_1$ and $\dt u_1$ vanish there.

\subsection{$\mathcal{R}$ and $\mathcal{Q}$}

Recall that $\mathcal{R}$ is given by \eqref{R_def}.  The following records some essential estimates for it.  

\begin{prop}\label{R_prop}
The mapping $\mathcal{R} \in C^\infty(\Rn{2})$ defined by \eqref{R_def} obeys the following estimates.
\begin{multline}
 \sup_{(y,z) \in \R^{2}} \left[ \abs{\frac{1}{z^3}\int_0^z \mathcal{R}(y,s)ds} + \abs{\frac{\mathcal{R}(y,z)}{z^2}} + \abs{\frac{\p_z \mathcal{R}(y,z)}{z}} + \abs{\frac{\p_y \mathcal{R}(y,z)}{z^2}}  \right.
\\
\left.
 + \abs{ \p_z^2 \mathcal{R}(y,z)} + \abs{\frac{\p_y^2 \mathcal{R}(y,z)}{z^2}} + \abs{\frac{\p_z \p_y \mathcal{R}(y,z)}{z}}
 + \abs{ \p_z^3 \mathcal{R}(y,z)} + \abs{\frac{\p_y^2 \p_z \mathcal{R}(y,z)}{z}} + \abs{ \p_z^2 \p_y \mathcal{R}(y,z)} \right]   < \infty.
\end{multline}
\end{prop}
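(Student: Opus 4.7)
The plan is to recognize $\mathcal{R}(y,z)$ as the second-order Taylor remainder of the smooth function $f(w) := w/\sqrt{1+w^2}$ centered at $y$ and evaluated at $y+z$. A direct computation gives $f'(w) = (1+w^2)^{-3/2}$ and $f''(w) = -3w(1+w^2)^{-5/2}$, and the integral form of Taylor's remainder yields
\begin{equation*}
\mathcal{R}(y,z) = \int_0^z f''(y+s)(z-s)\,ds,
\end{equation*}
which agrees identically with \eqref{R_def} after recognizing $(s-z)(s+y) = -(z-s)(y+s)$ and the value of $f''$. A short induction shows $f^{(k)}(w) = P_k(w)(1+w^2)^{-(2k+1)/2}$ for a polynomial $P_k$ of degree at most $k-1$, and therefore $\|f^{(k)}\|_{L^\infty(\mathbb{R})} < \infty$ for every $k \ge 0$.

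With this Taylor representation in hand, every estimate in the proposition reduces to differentiation under the integral sign followed by the crude pointwise bound $\sup_k \|f^{(k)}\|_{L^\infty} < \infty$. Since the integrand vanishes at $s=z$, differentiating in $z$ gives $\partial_z \mathcal{R}(y,z) = \int_0^z f''(y+s)\,ds$, so $|\partial_z \mathcal{R}(y,z)| \le \|f''\|_\infty |z|$; one further $z$-derivative yields $\partial_z^2 \mathcal{R}(y,z) = f''(y+z)$ (uniformly bounded), and again $\partial_z^3 \mathcal{R}(y,z) = f'''(y+z)$. Differentiating the remainder formula in $y$ produces integrals of the form $\int_0^z f^{(k)}(y+s)(z-s)\,ds$ or $\int_0^z f^{(k)}(y+s)\,ds$, whose moduli are bounded respectively by $\|f^{(k)}\|_\infty z^2/2$ and $\|f^{(k)}\|_\infty |z|$; these factors of $z^2$ and $|z|$ exactly cancel the inverse power of $z$ appearing in the corresponding quotient in the statement. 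In particular this handles $\partial_y \mathcal{R}/z^2$, $\partial_y^2 \mathcal{R}/z^2$, $\partial_z \partial_y \mathcal{R}/z$, $\partial_y^2 \partial_z \mathcal{R}/z$, and $\partial_z^2 \partial_y \mathcal{R}$ in one stroke.

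The only remaining quantity is $z^{-3}\int_0^z \mathcal{R}(y,s)\,ds$, which I will handle by applying the pointwise bound $|\mathcal{R}(y,s)| \le \tfrac{1}{2}\|f''\|_\infty s^2$ obtained above and integrating once more to get $|\int_0^z \mathcal{R}(y,s)\,ds| \le \tfrac{1}{6}\|f''\|_\infty |z|^3$. I do not anticipate any real obstacle: the quotients involving inverse powers of $z$ extend continuously across $\{z = 0\}$ precisely because $\mathcal{R}$ is a Taylor remainder vanishing to the appropriate order, so the $L^\infty$ supremum is controlled entirely by $\|f^{(k)}\|_{L^\infty(\mathbb{R})}$ for $k \le 5$. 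The entire proof amounts to elementary bookkeeping built on the two facts that $\mathcal{R}$ is a Taylor remainder of $f$ and that $f$ has uniformly bounded derivatives of all orders.
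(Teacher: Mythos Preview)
Your proposal is correct and is precisely the ``elementary calculus'' the paper alludes to but omits: the definition \eqref{R_def} is set up exactly so that $\mathcal{R}$ is the integral-form second-order Taylor remainder of $f(w)=w(1+w^2)^{-1/2}$, and the paper's own motivation just before \eqref{R_def} makes this explicit. Your bookkeeping via differentiation under the integral sign and the uniform bounds $\|f^{(k)}\|_{L^\infty}<\infty$ is the intended argument.
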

\begin{proof}
These bounds follow from elementary calculus, so we omit the details.
\end{proof}

We also record here the definition of a special map related to $\mathcal{R}$.  We define $\mathcal{Q} \in C^\infty(\R^2)$ via 
\begin{equation}\label{Q_def}
 \Q(y,z) := \int_0^z  \mathcal{R}(y,r) dr \Rightarrow \frac{\p \Q}{\p z}(y,z) =  \mathcal{R}(y,z),
\end{equation}

\section{Miscellaneous analysis tools}\label{sec_analysis_tools}

In this appendix we record a host of analytic results that are used throughout the paper.

\subsection{Product estimates}

We begin with some useful product estimates.  First we recall a fact about Besov spaces.

\begin{prop}
If $s >0$ and $1 \le p,q \le \infty$, then $B^{s}_{p,q}(\R^n) \cap L^\infty(\R^n)$ is an algebra, and 
\begin{equation}
 \norm{fg}_{B^{s}_{p,q}} \ls \norm{f}_{L^\infty} \norm{g}_{B^s_{p,q}} + \norm{f}_{B^{s}_{p,q}} \norm{g}_{L^\infty}.
\end{equation}
In particular, if $s > n/p$ then $B^{s}_{p,q}(\R^n) \hookrightarrow L^\infty(\R^n)$ and hence $B^{s}_{p,q}(\R^n)$ is a Banach algebra.
\end{prop}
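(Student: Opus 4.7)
The plan is to prove the product estimate using the Littlewood--Paley characterization of Besov spaces together with Bony's paraproduct decomposition. Recall that $\norm{u}_{B^s_{p,q}} \asymp \|S_0 u\|_{L^p} + \bigl\|(2^{js}\|\Delta_j u\|_{L^p})_{j \ge 0}\bigr\|_{\ell^q}$, where $\{\Delta_j\}_{j \ge -1}$ is the standard Littlewood--Paley decomposition and $S_j = \sum_{k \le j-1}\Delta_k$ is the low-frequency cutoff. The core idea is to split the product
\begin{equation}
fg = T_f g + T_g f + R(f,g),
\end{equation}
where $T_f g = \sum_{j} S_{j-1}f \,\Delta_j g$ is the paraproduct and $R(f,g) = \sum_{|j-k|\le 1} \Delta_j f\, \Delta_k g$ is the resonant remainder. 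Each piece has good spectral localization: $S_{j-1}f\,\Delta_j g$ is spectrally supported in an annulus of size $2^j$, while the summands of $R$ are localized in a ball of size $2^j$.

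The first step is to estimate $T_f g$. Since $\|S_{j-1} f\|_{L^\infty} \ls \|f\|_{L^\infty}$ uniformly in $j$, H\"older's inequality yields $\|\Delta_j(T_f g)\|_{L^p} \ls \|f\|_{L^\infty}\sum_{|k-j|\le 2}\|\Delta_k g\|_{L^p}$, and multiplying by $2^{js}$ and taking $\ell^q$ gives $\|T_f g\|_{B^s_{p,q}} \ls \|f\|_{L^\infty}\|g\|_{B^s_{p,q}}$. The symmetric argument handles $T_g f$. The second step is the remainder $R(f,g)$: here the summands are only ball-localized, so one must use the Bernstein inequality $\|\Delta_j u\|_{L^p}\ls 2^{jn(1/r - 1/p)}\|\Delta_j u\|_{L^r}$ after applying $\Delta_j$ (which forces the relevant indices in the sum to satisfy $k \ge j-C$). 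One obtains $\|\Delta_j R(f,g)\|_{L^p}\ls \sum_{k\ge j-C}\|\Delta_k f\|_{L^p}\|\Delta_k g\|_{L^\infty}$, and then the convolution-type inequality on $\ell^q$ closes the estimate \emph{provided $s>0$}, since one needs summability of $2^{(j-k)s}$ for $k \ge j-C$. This positivity of $s$ is exactly where the hypothesis is used, and it is the main technical point to be careful about.

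The third step combines the three bounds to obtain the advertised inequality, and shows that if $f,g \in B^s_{p,q} \cap L^\infty$ then so does $fg$ (the $L^\infty$ bound being trivial from $\|fg\|_{L^\infty} \le \|f\|_{L^\infty}\|g\|_{L^\infty}$). Finally, under the assumption $s > n/p$, the Bernstein inequality gives $\|\Delta_j u\|_{L^\infty} \ls 2^{jn/p}\|\Delta_j u\|_{L^p}$, and thus
\begin{equation}
\|u\|_{L^\infty} \ls \sum_{j\ge -1} 2^{jn/p}\|\Delta_j u\|_{L^p} = \sum_{j\ge -1} 2^{-j(s-n/p)}\bigl(2^{js}\|\Delta_j u\|_{L^p}\bigr),
\end{equation}
and H\"older's inequality in $\ell^q$ (or direct summation when $q=\infty$) together with $s - n/p > 0$ yields the embedding $B^s_{p,q}(\R^n) \hookrightarrow L^\infty(\R^n)$. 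Substituting $\|f\|_{L^\infty} + \|g\|_{L^\infty} \ls \|f\|_{B^s_{p,q}} + \|g\|_{B^s_{p,q}}$ back into the product estimate converts it into the Banach algebra property. The only real obstacle is carefully bookkeeping the spectral localization and the $\ell^q$ convolution in the remainder piece; everything else is bounded operator manipulation.
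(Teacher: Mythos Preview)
Your proof is correct and follows the standard paraproduct approach. The paper itself does not give a proof of this proposition at all: it simply cites three references (Danchin, Mironescu--Russ, Runst) and moves on. Your argument via Bony's decomposition $fg = T_f g + T_g f + R(f,g)$ is precisely the method used in the cited Danchin survey, so in effect you have supplied the proof that the paper outsources. There is nothing to correct.
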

\begin{proof}
There are many proofs.  See for instance, Proposition 1.4.3 of \cite{danchin}, Proposition 6.2 of \cite{mironescu_russ}, or Theorem 2 of \cite{runst}. 
\end{proof}

Then we can prove the supercritical product estimate.

\begin{thm}
Suppose $1 < p < \infty$ and that $r>0$ and $s > \max\{n/p,r\}$.  Then for $\varphi \in W^{s,p}(\R^n)$ and $\psi \in W^{r,p}(\R^n)$ we have that $\varphi \psi \in H^r(\R^n)$ and 
\begin{equation}
 \norm{\varphi \psi}_{W^{r,p}} \ls \norm{\varphi}_{W^{s,p}} \norm{\psi}_{W^{r,p}}.
\end{equation}
\end{thm}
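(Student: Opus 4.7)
The plan is to exploit complex interpolation of the multiplication operator $M_\varphi(\psi) = \varphi\psi$ between its $L^p$ and $W^{s,p}$ endpoints. First I would verify the two endpoint estimates: (i) $M_\varphi$ is bounded $L^p \to L^p$ with operator norm at most $\|\varphi\|_{L^\infty}$, which in turn is bounded by $\|\varphi\|_{W^{s,p}}$ since the hypothesis $s > n/p$ gives the Sobolev embedding $W^{s,p}(\mathbb{R}^n) \hookrightarrow L^\infty(\mathbb{R}^n)$; and (ii) $M_\varphi$ is bounded $W^{s,p} \to W^{s,p}$ with norm $\lesssim \|\varphi\|_{W^{s,p}}$, i.e., the fact that $W^{s,p}$ is a Banach algebra when $s > n/p$.

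For the algebra property (ii), I would either invoke the standard Kato--Ponce/Moser inequality
\begin{equation*}
\|fg\|_{W^{s,p}} \lesssim \|f\|_{L^\infty}\|g\|_{W^{s,p}} + \|g\|_{L^\infty}\|f\|_{W^{s,p}}
\end{equation*}
and then use $W^{s,p} \hookrightarrow L^\infty$ for both factors, or deduce it from the preceding Besov algebra proposition by passing through the chain of embeddings relating $B^s_{p,q}$ and $W^{s,p} = F^s_{p,2}$ that are valid for $1 < p < \infty$.

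With the endpoints in hand, I would apply complex interpolation. For $1 < p < \infty$ one has the classical identification $[L^p(\mathbb{R}^n), W^{s,p}(\mathbb{R}^n)]_\theta = W^{\theta s, p}(\mathbb{R}^n)$ for $\theta \in (0,1)$, a standard consequence of the complex interpolation formula for Triebel--Lizorkin spaces. Choosing $\theta = r/s \in (0,1)$ (possible because $0 < r < s$) and interpolating the linear operator $M_\varphi$ between the two endpoints gives
\begin{equation*}
\|M_\varphi\|_{W^{r,p} \to W^{r,p}} \le \|M_\varphi\|_{L^p \to L^p}^{1-\theta} \|M_\varphi\|_{W^{s,p} \to W^{s,p}}^{\theta} \lesssim \|\varphi\|_{W^{s,p}},
\end{equation*}
which, evaluated on $\psi \in W^{r,p}$, is precisely the asserted product estimate.

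The main obstacle is justifying the interpolation identity $[L^p, W^{s,p}]_\theta = W^{\theta s, p}$, which is nontrivial for $p \neq 2$ and rests on the Triebel--Lizorkin characterization of Bessel-potential spaces; I would simply cite Triebel's monograph. If one wishes to avoid this machinery, an alternative route is a Bony paraproduct decomposition $\varphi\psi = T_\varphi\psi + T_\psi\varphi + R(\varphi,\psi)$, in which the first term is bounded using only $\|\varphi\|_{L^\infty}$, and the remaining two pieces exploit the positive margin $s - r > 0$ to transfer derivatives from $\psi$ to $\varphi$ via Bernstein's inequality; this paraproduct approach is more elementary but considerably longer.
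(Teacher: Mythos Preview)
Your proposal is correct and follows essentially the same strategy as the paper: establish that the multiplication operator $M_\varphi$ is bounded on both $L^p$ (via the supercritical Sobolev embedding) and on $W^{s,p}$ (via the algebra property), then interpolate to obtain boundedness on the intermediate space $W^{r,p}$. The paper phrases the algebra step through the identification $W^{s,p}=B^s_{p,p}$ and the preceding Besov algebra proposition, and appeals vaguely to ``standard interpolation theory'' with a citation to Triebel, whereas you make the complex interpolation explicit; these are cosmetic differences only.
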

\begin{proof}
Note first that for $r > n/p$ we have that $W^{r,p}(\R^n) = B^r_{p,p}(\R^n)$ is an algebra, and so the stated result is trivial.  We may thus reduce to the case $0 < r \le n/p$.

If $r=0$, then 
\begin{equation}
\norm{\varphi \psi }_{L^p}  \le \norm{\varphi}_{L^\infty} \norm{\psi}_{L^p} \le c \norm{\varphi}_{W^{s,p}} \norm{\psi}_{L^p}
\end{equation}
by virtue of the standard supercritical embedding $W^{s,p}(\R^n)\hookrightarrow C^0_b(\R^n)$.   On the other hand, since  $W^{s,p}(\R^n) = B^s_{p,p}(\R^n)$ is an algebra for $s > n/p$
\begin{equation}
\norm{\varphi \psi}_{W^{s,p}} \ls \norm{\varphi}_{W^{s,p}} \norm{\psi}_{W^{s,p}}. 
\end{equation}
Thus, if we define the operator $T_\varphi$ via $T_\varphi \psi = \varphi \psi$, then $T_\varphi \in \L(L^p(\R^n)) \cap \L(W^{s,p}(\R^n))$ with 
\begin{equation}
 \norm{T_\varphi}_{\L(L^p)} \ls \norm{\varphi}_{W^{s,p}} \text{ and } \norm{T_\varphi}_{\L(W^{s,p})} \ls \norm{\varphi}_{W^{s,p}}.
\end{equation}
Standard interpolation theory (see, for instance, \cite{triebel}) then implies that $T_\varphi \in \L(W^{r,p}(\R^n))$ for all $0 < r < s$, and 
\begin{equation}
 \norm{T_\varphi}_{\L(W^{r,p})} \ls \norm{\varphi}_{W^{r,p}}.
\end{equation}
This is equivalent to the stated estimate when $0 < r \le n/p$.
\end{proof}

This result may be extended to bounded domains through the use of extension operators.

\begin{thm}\label{supercrit_prod}
Let $\varnothing \neq \Omega \subset \R^n$ be bounded and open with Lipschitz boundary (or an open interval when $n=1$).  If $1 < p < \infty$, $r>0$, and $s > \max\{n/p,r\}$, then 
\begin{equation}
 \norm{fg}_{W^{r,p}(\Omega)} \ls \norm{f}_{W^{s,p}(\Omega)} \norm{g}_{W^{r,p}(\Omega)}.
\end{equation}
\end{thm}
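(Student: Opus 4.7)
The plan is to reduce to the already-proved Euclidean version by extending $f$ and $g$ from $\Omega$ to all of $\mathbb{R}^n$. Since $\Omega$ is bounded with Lipschitz boundary (or an open interval when $n=1$), a universal extension operator is available: there exists $E \in \mathcal{L}(W^{t,p}(\Omega), W^{t,p}(\mathbb{R}^n))$ for every $t \ge 0$ and $1 < p < \infty$, with $(Eu)|_\Omega = u$. For Lipschitz boundaries in dimension $n \ge 2$ this is Stein's extension or, more conveniently since we need the non-integer cases $t = r$ and $t = s$ to use the same extension, Rychkov's universal extension; when $n=1$ and $\Omega$ is an open interval, one can build $E$ by hand using higher-order reflections together with a cutoff, exactly as in the extension $E$ already invoked in Section~\ref{sec_intro}.

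First I would set $F = Ef$ and $G = Eg$, so that $F \in W^{s,p}(\mathbb{R}^n)$, $G \in W^{r,p}(\mathbb{R}^n)$, with
\[
\|F\|_{W^{s,p}(\mathbb{R}^n)} \lesssim \|f\|_{W^{s,p}(\Omega)}, \qquad \|G\|_{W^{r,p}(\mathbb{R}^n)} \lesssim \|g\|_{W^{r,p}(\Omega)}.
\]
Next I would apply the preceding full-space product estimate to $FG$, which is admissible under the hypotheses $1 < p < \infty$, $r > 0$, and $s > \max\{n/p, r\}$, yielding
\[
\|FG\|_{W^{r,p}(\mathbb{R}^n)} \lesssim \|F\|_{W^{s,p}(\mathbb{R}^n)} \|G\|_{W^{r,p}(\mathbb{R}^n)}.
\]
Finally, since $(FG)|_\Omega = fg$ and restriction is bounded from $W^{r,p}(\mathbb{R}^n)$ to $W^{r,p}(\Omega)$, the claim follows by chaining the three bounds.

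The only subtle point, and hence the one thing to verify carefully, is the existence of an extension operator that simultaneously handles the fractional spaces $W^{r,p}(\Omega)$ and $W^{s,p}(\Omega)$ on a merely Lipschitz domain; this is why Rychkov's (or Stein's) universal extension is the right tool rather than an $ad$ $hoc$ construction for each exponent separately. Once that is cited, the argument is a straightforward three-line reduction.
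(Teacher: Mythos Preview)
Your proposal is correct and matches the paper's proof essentially line for line: the paper also extends $f$ and $g$ via the Stein extension operator $E$, applies the full-space product estimate to $Ef\cdot Eg$, and then restricts, writing the whole argument as the single chain $\norm{fg}_{W^{r,p}(\Omega)} \ls \norm{Ef\,Eg}_{W^{r,p}(\R^n)} \ls \norm{Ef}_{W^{s,p}(\R^n)}\norm{Eg}_{W^{r,p}(\R^n)} \ls \norm{f}_{W^{s,p}(\Omega)}\norm{g}_{W^{r,p}(\Omega)}$.
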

\begin{proof}
If $E$ is the Stein extension operator (see, for instance, \cite{stein}), then 
\begin{equation}
 \norm{fg}_{W^{r,p}(\Omega)} \ls \norm{Ef Eg}_{W^{r,p}(\R^n)} \ls \norm{Ef}_{W^{s,p}(\R^n)} \norm{Eg}_{W^{r,p}(\R^n)}  \ls \norm{f}_{W^{s,p}(\Omega)} \norm{g}_{W^{r,p}(\Omega)}.
\end{equation}
\end{proof}

\subsection{Poisson extension}

Let $b >0$.  Given a Schwartz function $f: \R \to \R$, we define its Poisson extension  $\mathcal{P} f : \R \times (-b,0) \to \R$ via 
\begin{equation}\label{poisson_def}
 \mathcal{P}f(x_1,x_2) = \int_{\Rn{}} \hat{f}(\xi) e^{2\pi \abs{\xi}x_2} e^{2\pi i x_1 \xi} d\xi.
\end{equation}
The following records some basic properties of this map.

\begin{prop}\label{poisson_prop}
Let $0 < b < \infty$.  The following hold.
\begin{enumerate}
  \item $\mathcal{P}$ extends to a bounded linear operator from $L^p(\R)$ to $L^p(\R \times(-b,0))$ for each $1 \le p \le \infty$.
 \item $\mathcal{P}$ extends to a bounded linear operator from $H^{s-1/2}(\R)$ to $H^{s}(\R \times(-b,0))$ for all $s \ge 1/2$.
  \item Let $1 < p < \infty$.  Then $\mathcal{P}$ extends to a bounded linear operator from $W^{s-1/p,p}(\R)$ to $W^{s,p}(\R \times(-b,0))$ for all $2 \le s \in \R$.
\end{enumerate}
\end{prop}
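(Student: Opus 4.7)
My plan is to treat the three parts in order, unified by the Fourier representation \eqref{poisson_def}. Throughout, I would use that for $x_2 \in (-b,0)$, writing $t = -x_2 > 0$, the extension is a convolution with the classical Poisson kernel on $\R$:
\[
\mathcal{P}f(x_1, x_2) = (P_t \ast f)(x_1), \qquad P_t(y) = \frac{1}{\pi} \frac{t}{y^2 + t^2}.
\]
Since $\|P_t\|_{L^1(\R)} = 1$ for every $t > 0$, Young's convolution inequality gives $\|\mathcal{P}f(\cdot, x_2)\|_{L^p(\R)} \le \|f\|_{L^p(\R)}$ pointwise in $x_2$ and for all $1 \le p \le \infty$. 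Integrating in $x_2 \in (-b,0)$ produces the factor $b^{1/p}$ (or $1$ when $p=\infty$), establishing part (1).

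For part (2), I would first extend $\mathcal{P}$ to $H^{s-1/2}(\R)$ by density of Schwartz functions, then use the partial Plancherel identity in $x_1$. For integer derivative counts $a + c$, a direct differentiation under the integral yields
\[
\int_\R \abs{\p_1^a \p_2^c \mathcal{P}f(x_1, x_2)}^2 \, dx_1 = \int_\R (2\pi\abs{\xi})^{2(a+c)} e^{4\pi \abs{\xi} x_2} \abs{\hat f(\xi)}^2 \, d\xi.
\]
Integrating $x_2$ over $(-b,0)$ produces the factor $(1 - e^{-4\pi\abs{\xi}b})/(4\pi \abs{\xi})$, which is bounded by $\min\{b, (4\pi\abs{\xi})^{-1}\}$. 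The high-frequency regime yields the sharp gain of a half-derivative in regularity (so $(a+c) - 1/2$ frequency weight), while the low-frequency regime is controlled by $b$ times a polynomial weight that is uniformly bounded on $\{\abs{\xi} \lesssim 1\}$. Summing over $a + c \le s$ for integer $s$ produces $\|\mathcal{P}f\|_{H^s(\R \times (-b,0))} \lesssim \|f\|_{H^{s-1/2}(\R)}$. The case of noninteger $s \ge 1/2$ then follows by real interpolation between consecutive integer endpoints, using Corollary~\ref{s_interp}-type facts for the standard Sobolev scale.

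For part (3), I would reduce to controlling pure derivatives of order $s$ in $L^p$. Harmonicity of $\mathcal{P}f$ gives the identity $\p_2^2 \mathcal{P}f = -\p_1^2 \mathcal{P}f$, so any mixed derivative $\p_1^a \p_2^c \mathcal{P}f$ with $a + c = s$ is (up to sign) $\p_1^{a+2\lfloor c/2 \rfloor} \p_2^{c \mod 2} \mathcal{P}f$. Using $\p_1 \mathcal{P}f = \mathcal{P}(\p_1 f)$ and the Fourier identity $\p_2 \mathcal{P}f = \mathcal{P}(H \p_1 f)$, where $H$ is the Hilbert transform (boundedness of which on $L^p(\R)$ for $1 < p < \infty$ is classical Calderón–Zygmund), all these derivatives are expressed as $\mathcal{P}$ applied to exactly $s$ derivatives of $f$ (possibly with an $H$), which part (1) controls in $L^p$ by $\|f\|_{W^{s,p}(\R)}$. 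To replace $W^{s,p}(\R)$ by the weaker $W^{s-1/p,p}(\R)$ I would integrate the $L^p(\R)$ estimate of such top-order derivatives against $x_2 \in (-b,0)$ and use a Littlewood–Paley or Besov-trace argument that the extra $1/p$ factor of integrability in $x_2$ is exactly the gain in boundary regularity dictated by the classical trace theorem $W^{s,p}(\R \times (-b,0)) \hookrightarrow W^{s-1/p,p}(\R)$, of which $\mathcal{P}$ is a right inverse for harmonic functions. Lower-order terms are handled by part (1) together with the embedding $W^{s-1/p,p}(\R) \hookrightarrow L^p(\R)$.

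The main obstacle is part (3): unlike the $L^2$-based estimate of part (2), Plancherel is unavailable, so the loss of exactly $1/p$ between $W^{s-1/p,p}(\R)$ and $W^{s,p}(\R \times (-b,0))$ cannot be read off from a Fourier multiplier calculation. The cleanest route is probably to appeal to the general trace/extension theorem for harmonic functions on the half-space, proving that $\mathcal{P}$ is a continuous right inverse to the boundary trace on the full half-space $\R \times (-\infty, 0)$ and then localizing to the slab via a smooth cutoff in $x_2$—using part (1) to absorb the lower-order terms generated by the cutoff.
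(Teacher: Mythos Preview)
Your treatments of parts (1) and (2) match the paper's essentially exactly: Poisson kernel plus Young for (1), and direct Plancherel computations on the Fourier side for (2).

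For part (3) your approach diverges from the paper's, and as you yourself flag, it has a real gap. Your Hilbert-transform identity $\p_2 \mathcal{P}f = \mathcal{P}(H\p_1 f)$ together with part (1) only yields $\norm{\mathcal{P}f}_{W^{s,p}(\R\times(-b,0))} \ls \norm{f}_{W^{s,p}(\R)}$, which loses the crucial $1/p$. Your proposed repairs---a Littlewood--Paley/Besov trace argument, or citing a general harmonic extension theorem on the half-space---would work, but the first is left entirely unspecified and the second essentially assumes the result you are proving.

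The paper's argument for (3) is more concrete and avoids this difficulty. Given $f \in W^{k-1/p,p}(\R)$ for an integer $k \ge 2$, it first invokes standard trace theory to produce \emph{some} extension $F \in W^{k,p}(\R^2_-)$ with $F = f$ on the boundary; this is where the $1/p$ gain is absorbed, by a black-box trace extension rather than by anything harmonic. The difference $g = \mathcal{P}f - F$ then solves $\Delta g = -\Delta F \in W^{k-2,p}$ with zero Dirichlet data, and Agmon--Douglis--Nirenberg interior/boundary estimates on overlapping half-cubes $Q_-((nb,0),b)$ give uniform local $W^{k,p}$ control of $g$ in terms of $\norm{F}_{W^{k,p}}$ and $\norm{g}_{L^p}$; summing over $n$ and using part (1) to bound $\norm{g}_{L^p}$ closes the estimate on the slab. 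Noninteger $s \ge 2$ then follows by interpolation. The advantage of this route is that the sharp $1/p$ gain is delegated to the trace theorem, and the rest is standard elliptic regularity---no harmonic-specific extension theory or Littlewood--Paley machinery is needed.
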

\begin{proof}
The first item follows from the fact that $\mathcal{P}$ can be represented by convolution with the Poisson kernel, Young's inequality, and the fact that $b$ is finite.  The second item follows from simple calculations with the Fourier representation \eqref{poisson_def}: for instance, see Lemma A.5 of \cite{guo_tice_inf}.  For the third item we note that $\mathcal{P}f$ satisfies the Dirichlet problem
\begin{equation}
\begin{cases}
\Delta \mathcal{P} f =0 &\text{in }\mathbb{R}^2_- = \{x \in \R^2 \st x_2 < 0\} \\
\mathcal{P} f = f & \text{on } \p \mathbb{R}^2_-.
\end{cases}
\end{equation}

Suppose that $f \in W^{k-1/p,p}(\R)$ for $2 \le k \in \mathbb{N}$, then standard trace theory shows that there exists $F \in W^{k,p}(\mathbb{R}^2_-)$ such that $F = f$ on $\p \mathbb{R}^2_-$.  Then $g = \mathcal{P}f - F$ satisfies the boundary value problem 
\begin{equation}
\begin{cases}
\Delta g =-\Delta F \in W^{k-2,p}(\R^2_-)  &\text{in }\mathbb{R}^2_- = \{x \in \R^2 \st x_2 < 0\} \\
g = 0 & \text{on } \p \mathbb{R}^2_-.
\end{cases}
\end{equation}
The $L^p-$elliptic theory (see, for instance, \cite{adn_1}) then shows that for each $x \in \R$ we have the estimate 
\begin{equation}
 \norm{g}_{W^{k,p}(Q_-((x,0),b))} \le C(k,p,b) \left(  \norm{F}_{W^{k,p}(Q_-((x,0),2b))} +   \norm{g}_{L^{p}(Q_-((x,0),2b))} \right),
\end{equation}
where we have written $Q_-((x,0),r) = (x-r,x+r) \times (-r,0)$ for the lower half-cube.  Writing 
\begin{equation}
 \R \times (-b,0) = \bigcup_{n \in \mathbb{Z}} Q_-((nb,0),b),
\end{equation}
we deduce from this and the simple overlap geometry of these cubes that 
\begin{equation}
 \norm{g}_{W^{k,p}(\R \times (-b,0))} \le C(k,p,b) \left(  \norm{F}_{W^{k,p}(\R \times (-2b,0))} +   \norm{g}_{L^{p}(\R \times (-2b,0))} \right).
\end{equation}
However, from the first item (applied with $2b$ in place of $b$) and trace theory we know that  
\begin{equation}
 \norm{g}_{L^p(\R \times (-2b,0))} \le  \norm{\mathcal{P} f}_{L^p(\R \times (-2b,0))} +  \norm{F}_{L^p(\R \times (-2b,0))} \ls  \norm{f}_{W^{k-1/p,p}(\R )},
\end{equation}
and hence 
\begin{equation}
 \norm{g}_{W^{k,p}(\R \times (-b,0))} \ls   \norm{f}_{W^{k-1/p,p}(\R )}.
\end{equation}
In turn, we deduce that 
\begin{equation}
 \norm{\mathcal{P}f}_{W^{k,p}(\R \times (-b,0))}  \ls \norm{f}_{W^{k-1/p,p}(\R )}.
\end{equation}

The previous estimate shows that $\mathcal{P}$ extends to a bounded linear map from $W^{k-1/p,p}(\R )$ to $W^{k,p}(\R \times (-b,0))$ for every $2 \le k \in \mathbb{N}$ and $1 < p < \infty$.  Then standard interpolation theory shows that it extends to a bounded linear operator between the same spaces with $k$ replaced by $2 \le s \in \R$, and this is the third item.
\end{proof}

\subsection{The Bogovskii operator}

The Bogovskii operator \cite{bogovskii} gives an explicit right inverse to the divergence operator via a singular integral operator.  The operator may be readily defined in Lipschitz domains and avoids many of the technical difficulties encountered in using PDE-based methods to construct such right inverses.  We record some properties of this operator now.

\begin{prop}\label{bogovskii}
Let $\Omega \subset \R^2$ be given by \eqref{Omega_def}, and let $1 < p < \infty$.  There exists a locally integrable function $G_\Omega : \Omega \times \Omega \to \R^2$ such that the integral operator 
\begin{equation}
 \mathcal{B}_\Omega f(x) = \int_{\Omega} G_\Omega(x,y) f(y) dy 
\end{equation}
is well-defined for $f \in \mathring{L}^{q}(\Omega) = \{f \in L^q(\Omega) \st \int_\Omega f =0\}$ and satisfies the following.
\begin{enumerate}
 \item $\mathcal{B}_\Omega$ is a bounded linear map from $\mathring{L}^{q}(\Omega)$ to $W^{1,p}_0(\Omega;\R^2)$.
 \item If $f \in \mathring{L}^{q}(\Omega)$, then $u = \mathcal{B}_\Omega f \in W^{1,p}_0(\Omega;\R^2)$ satisfies 
\begin{equation}
\begin{cases}
\diverge u =f &\text{in }\Omega \\
u =0 &\text{on } \p \Omega.
\end{cases}
\end{equation}
\end{enumerate}
\end{prop}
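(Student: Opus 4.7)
The plan is to invoke the classical Bogovskii construction, which is now well-documented in the literature (see, e.g., the monograph of Galdi or the original paper of Bogovskii \cite{bogovskii}), and verify that our particular $\Omega$ satisfies the hypotheses needed. The essential observation is that $\Omega$, as defined in \eqref{Omega_def}, is a bounded open subset of $\R^2$ with Lipschitz boundary (the only singularities of $\p \Omega$ are the two convex corners at the contact points and any Lipschitz irregularities permitted in the definition of $\V_{\operatorname{btm}}$). This places us squarely in the setting where the classical theory applies.

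The construction proceeds in three main steps. First, if $\Omega$ were star-shaped with respect to some open ball $B \csubset \Omega$, one writes down the explicit Bogovskii kernel
\begin{equation}
G(x,y) = \frac{x-y}{\abs{x-y}^2} \int_{\abs{x-y}}^\infty \omega\!\left( y + r \tfrac{x-y}{\abs{x-y}} \right) r\, dr,
\end{equation}
where $\omega \in C^\infty_c(B)$ satisfies $\int_B \omega = 1$, and one verifies directly that $\mathcal{B} f(x) = \int_\Omega G(x,y) f(y)\, dy$ yields a bounded linear map $\mathring{L}^q(\Omega) \to W^{1,q}_0(\Omega; \R^2)$ satisfying $\diverge \mathcal{B} f = f$. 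The boundedness into $W^{1,q}_0$ is the main technical ingredient and is obtained by writing $\nab \mathcal{B} f$ as a sum of a weakly singular operator and a Calder\'on-Zygmund singular integral operator, and then invoking the standard $L^q$-theory for such operators.

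Second, since our $\Omega$ is bounded and Lipschitz but not necessarily star-shaped with respect to any single ball, I would decompose $\Omega = \bigcup_{i=1}^N \Omega_i$ into a finite union of open subdomains, each star-shaped with respect to an interior ball $B_i \csubset \Omega_i$. Such a decomposition is always available for bounded Lipschitz domains; one can construct it via the standard boundary flattening charts together with a finite covering by star-shaped pieces and a handful of interior star-shaped pieces. Third, given $f \in \mathring{L}^q(\Omega)$, I would split it as $f = \sum_{i=1}^N f_i$ with $\supp f_i \subset \Omega_i$ and $\int_{\Omega_i} f_i = 0$ for each $i$; the latter averaging condition is arranged by transferring mass between adjacent subdomains along their overlaps, which is possible because $\Omega$ is connected. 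Applying the local Bogovskii operator on each $\Omega_i$ yields $u_i \in W^{1,q}_0(\Omega_i;\R^2)$ with $\diverge u_i = f_i$, and the zero extensions combine to give $\mathcal{B}_\Omega f = \sum_i u_i \in W^{1,q}_0(\Omega;\R^2)$ with the claimed properties. The resulting global kernel $G_\Omega(x,y)$ is then a finite sum of the local kernels with cutoff weights.

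The main obstacle is, as usual, the Calder\'on-Zygmund estimate underlying the first step; once that is in place, the rest is a careful but elementary decomposition argument. Since every individual ingredient is classical and the verification that $\Omega$ is a bounded Lipschitz domain is immediate from \eqref{Omega_def} together with our standing assumptions on $\V_{\operatorname{btm}}$, I would present this proposition essentially as a citation to the classical literature, with only a brief remark confirming the applicability to our $\Omega$.
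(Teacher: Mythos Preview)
Your proposal is correct and matches the paper's approach: the paper simply cites the original construction of Bogovskii \cite{bogovskii} and the monograph treatment in \cite{acosta_duran}, exactly as you suggest in your final paragraph. Your outline of the star-shaped kernel plus finite decomposition is precisely the content of those references, so reducing the proof to a citation is appropriate here.
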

\begin{proof}
See \cite{bogovskii} for the original construction or Chapter 2 of \cite{acosta_duran} for a more detailed treatment. 
\end{proof}

\subsection{Gronwall variant}

We now record a variant of the classic Gronwall inequality, based on a result in \cite{maslova}.

\begin{prop}\label{gronwall_variant}
Let $0 < T \le \infty$ and suppose that $x : [0,T) \to [0,\infty)$ is integrable.  Further suppose that there exists $\alpha > 0$ such that $x$ satisfies 
\begin{equation}\label{gronwall_variant_00}
 x(t) + \int_s^t x(r) dr \le \alpha x(s) \text{ for all } 0 \le s \le t < T.
\end{equation}
Then 
\begin{equation}\label{gronwall_variant_01}
 x(t) \le \min\{2,\alpha \sqrt{e}\} e^{-\frac{t}{2\alpha}}  x(0) \text{ for all }0 \le t < T.
\end{equation}
\end{prop}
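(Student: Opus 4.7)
My plan is to iterate the hypothesis at discrete times to extract geometric decay, then interpolate between those times using the crude monotonicity-up-to-constants bound $x(t) \le \alpha x(s)$, which follows from the hypothesis by dropping the integral term. A second immediate consequence, $\int_s^t x(r)\,dr \le \alpha x(s)$, will power the iteration step.

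The combinatorial heart of the argument is the following dichotomy: for every $s$ with $s + 2\alpha < T$, there exists $t^\ast \in (s, s+2\alpha]$ with $x(t^\ast) \le e^{-1/2} x(s)$. I would prove this by contradiction. If instead $x(r) > e^{-1/2} x(s)$ throughout $(s, s+2\alpha]$, then $\int_s^{s+2\alpha} x(r)\,dr > 2\alpha e^{-1/2} x(s)$; the hypothesis gives $\int_s^{s+2\alpha} x(r)\,dr \le \alpha x(s)$, so these combine to force $2 e^{-1/2} < 1$, i.e.\ $4 < e$, which is false. This tight numerical margin between $2$ and $\sqrt{e}$ is precisely what produces the $\sqrt{e}$ in the final constant.

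With the lemma in hand I would inductively build a sequence $0 = t_0 < t_1 < t_2 < \dotsb$ satisfying $t_n - t_{n-1} \le 2\alpha$ and $x(t_n) \le e^{-n/2} x(0)$, terminating once $t_n + 2\alpha \ge T$. For an arbitrary $t \in [0,T)$ let $N$ be the largest index with $t_N \le t$; from $t_{N+1} \le 2\alpha(N+1)$ and $t < t_{N+1}$ one extracts $N \ge t/(2\alpha) - 1$. Combining this with $x(t) \le \alpha x(t_N) \le \alpha e^{-N/2} x(0)$ yields an exponential decay estimate, and packaging it with the trivial bound $x(t) \le \alpha x(0)$ produces the $\min\{2,\alpha\sqrt{e}\}$ prefactor by a case split on whether $\alpha\sqrt{e}$ exceeds $2$.

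The main obstacle is matching the stated decay rate $1/(2\alpha)$ rather than the rate $1/(4\alpha)$ produced by the step-$2\alpha$ iteration above. To sharpen the rate I would rerun the dichotomy with step size $\alpha$ in place of $2\alpha$, this time employing the full hypothesis including the boundary contribution $x(s+T)$: if $x(r) > e^{-1/2} x(s)$ on $(s,s+\alpha]$, then $x(s+\alpha) + \int_s^{s+\alpha} x(r)\,dr > (1+\alpha) e^{-1/2} x(s)$, contradicting $\le \alpha x(s)$ precisely when $(1+\alpha) > \alpha\sqrt{e}$, i.e.\ when $\alpha < 1/(\sqrt{e}-1)$. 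This halved step gives rate $1/(2\alpha)$ with constant $\alpha\sqrt{e}$ in the small-$\alpha$ regime; capping the constant at $2$ in the complementary large-$\alpha$ regime is the technical crux, which I would attempt via a continuous comparison with the extremal exponential $x(0) e^{-t/\alpha}$ that saturates the hypothesis whenever $\alpha \ge 1$.
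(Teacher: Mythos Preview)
Your discrete pigeonhole iteration is a genuinely different route from the paper's, and the step-$2\alpha$ stage is correct: it yields $x(t) \le \alpha\sqrt{e}\, e^{-t/(4\alpha)} x(0)$, and you correctly flag that this misses the target rate by a factor of two. The gap is in your repair. The step-$\alpha$ dichotomy requires $(1+\alpha) e^{-1/2} > \alpha$, i.e.\ $\alpha < 1/(\sqrt{e}-1) \approx 1.54$; since taking $s=t$ in the hypothesis already forces $\alpha \ge 1$ (unless $x\equiv 0$), this covers only a thin band. For larger $\alpha$ you appeal to a ``continuous comparison with the extremal exponential $x(0)e^{-t/\alpha}$,'' but noting that this exponential \emph{satisfies} the hypothesis gives no \emph{upper bound} on an arbitrary $x$: a comparison principle would need a one-sided differential or integral inequality that you have not set up. As written, the large-$\alpha$ regime is handled only by a gesture, so the stated rate $1/(2\alpha)$ is not proved. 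More structurally, any dichotomy of your type that discards $x$ at the right endpoint needs $Lc>\alpha$ with $c\le e^{-L/(2\alpha)}$ to hit rate $1/(2\alpha)$, and $\max_{u>0} 2u e^{-u}=2/e<1$ shows this is impossible; so the discrete scheme cannot reach the sharp rate without a new idea.

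The paper bypasses iteration entirely. Fixing $t$, it sets $y(s)=\int_s^t x$ and reads the hypothesis as $y(s)\le -\alpha\,\dot y(s)$ a.e., whence Gronwall gives $y(s)\le e^{-s/\alpha}y(0)$. Then the hypothesis is \emph{integrated in $s$} over $[t/2,t]$: $\tfrac{t}{2}x(t)\le \alpha\int_{t/2}^t x(s)\,ds=\alpha\,y(t/2)\ls \alpha e^{-t/(2\alpha)}x(0)$, which delivers $x(t)\le \tfrac{2\alpha}{t}e^{-t/(2\alpha)}x(0)$ at the correct rate with no case split on $\alpha$; pairing with the trivial bound $x(t)\le\alpha x(0)$ on $[0,\alpha)$ gives the constant $\min\{2,\alpha\sqrt{e}\}$. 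The device you are missing is this averaging of the hypothesis in $s$, which converts integral control of $x$ into pointwise control of $x(t)$; your scheme extracts only one sample per window and thereby loses a factor of two in the exponent.
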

\begin{proof}
First note that  \eqref{gronwall_variant_00} provides the trivial estimate 
\begin{equation}\label{gronwall_variant_1}
x(t) \le \alpha x(0) \text{ for all }0 \le t < T.
\end{equation}
Now fix $0 < t < T$ and define the absolutely continuous function $y: [0,t] \to [0,\infty)$ via $y(s) = \int_s^t x(r) dr$.  Then \eqref{gronwall_variant_00} implies that 
\begin{equation}
 y(s) \le \alpha x(s) = - \alpha \dot{y}(s) \text{ for a.e. } 0 \le s \le t
\end{equation}
and so the standard Gronwall estimate and \eqref{gronwall_variant_00} imply that 
\begin{equation}
y(s) \le e^{-s/\alpha} y(0) = e^{-s/\alpha} \int_0^t x(r) dr \le e^{-s/\alpha} x(0) \text{ for all }0 \le s \le t.
\end{equation}
We then integrate \eqref{gronwall_variant_00} over $s \in [t/2,t]$ and use this estimate to see that 
\begin{equation}
 \frac{t}{2} x(t) = \int_{t/2}^t x(t) ds \le \alpha \int_{t/2}^t x(s) ds = \alpha y(t/2) \le \alpha e^{-t/(2\alpha)} x(0),
\end{equation}
and hence
\begin{equation}\label{gronwall_variant_2}
 x(t) \le \frac{2\alpha}{t} e^{-t/(2\alpha)} x(0) \text{ for all } 0 \le t < T.
\end{equation}
Combining \eqref{gronwall_variant_1} and \eqref{gronwall_variant_2}, we deduce that 
\begin{equation}
 x(t) \le  \min\left\{\alpha, \frac{2\alpha}{t} e^{-\frac{t}{2\alpha}}  \right\} x(0) \text{ for all }0 \le t < T.
\end{equation}
The result then follows from this after noting that 
\begin{equation}
 \alpha \le t \Rightarrow  \frac{2\alpha}{t} e^{-\frac{t}{2\alpha}} \le 2e^{-\frac{t}{2\alpha}} \text{ and }  0 \le t < \alpha \Rightarrow \alpha \le \alpha e^{1/2} e^{-\frac{t}{2\alpha}},
\end{equation}
which mean that 
\begin{equation}
 \min\left\{\alpha, \frac{2\alpha}{t} e^{-\frac{t}{2\alpha}}  \right\} \le \min\{2,\alpha \sqrt{e}\} e^{-\frac{t}{2\alpha}} \text{ for all } t \ge 0.
\end{equation}
 
\end{proof}

\subsection{Estimates via temporal derivatives}

Next we record a result about how temporal derivatives and interpolation.

\begin{prop}\label{temp_deriv_interp}
 Let $\Gamma$ denote either $\Omega$ or $(-\ell,\ell)$.  Suppose that $f \in L^2((0,T);H^s_1(\Gamma))$ and $\dt f \in L^2((0,T);H^{s_2}(\Gamma))$ for $0 \le s_2 \le s_1$ and $0 < T \le \infty$.  Then for $s = (s_1 + s_2)/2$ we have that $f \in C^0([0,T);H^s(\Gamma))$, and we have the estimate
\begin{equation}
\ns{f(t)}_{H^s} \le \ns{f(\tau)}_{H^s} + \int_{\tau}^t \ns{f}_{H^{s_1}} + \ns{\dt f}_{H^{s_2}} 
\end{equation}
for all $0 \le t \le \tau < T$.
\end{prop}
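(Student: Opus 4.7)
The plan is to establish this through the standard Lions--Magenes style interpolation argument combined with a fundamental theorem of calculus identity for $\|f\|_{H^s}^2$. Since $\Gamma$ is either a bounded open interval or the bounded domain $\Omega$, I would first apply a Stein extension operator to reduce matters to the full-space setting on $\R$ or $\R^2$, where a Fourier characterization of $H^r$ is available. This reduction is clean because the extension operator is bounded on $H^r$ for every $r \ge 0$ and commutes with time differentiation.

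Once in the Fourier setting, write $\Lambda^r$ for the Bessel potential operator with symbol $(1+|\xi|^2)^{r/2}$, so that $\|g\|_{H^r}^2 = \|\Lambda^r g\|_{L^2}^2$. The key algebraic observation is that since $2s = s_1 + s_2$, we have the factorization $\Lambda^{2s} = \Lambda^{s_1} \Lambda^{s_2}$, and hence
\begin{equation*}
\|f(t)\|_{H^s}^2 = \langle \Lambda^{2s} f(t), f(t)\rangle_{L^2} = \langle \Lambda^{s_1} f(t), \Lambda^{s_2} f(t)\rangle_{L^2}.
\end{equation*}
Formally differentiating in time and applying Cauchy--Schwarz followed by $2ab \le a^2 + b^2$ yields
\begin{equation*}
\frac{d}{dt} \|f(t)\|_{H^s}^2 = 2\langle \Lambda^{s_1} f(t), \Lambda^{s_2} \dt f(t)\rangle_{L^2} \le \|f(t)\|_{H^{s_1}}^2 + \|\dt f(t)\|_{H^{s_2}}^2,
\end{equation*}
and integrating from $\tau$ to $t$ gives the desired inequality.

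To make this rigorous, I would first mollify $f$ in time (after zero-extension outside $[0,T)$ or, more carefully, an odd/even reflection) to obtain a smooth approximant $f_\epsilon$ for which the above differentiation is legitimate pointwise. Both $f_\epsilon \to f$ in $L^2_{\mathrm{loc}}H^{s_1}$ and $\dt f_\epsilon \to \dt f$ in $L^2_{\mathrm{loc}}H^{s_2}$ by standard mollifier theory. The integrated inequality then passes to the limit, which simultaneously yields the estimate and shows that $t \mapsto \|f(t)\|_{H^s}^2$ is absolutely continuous; in particular, the trace $f(\tau)$ is well-defined. Finally, the continuity $f \in C^0([0,T); H^s(\Gamma))$ follows from the classical Lions--Magenes embedding (see, e.g., Chapter~1 of \cite{lions_magenes_1}), which states precisely that $L^2 H^{s_1} \cap H^1 H^{s_2}$ embeds continuously into $C^0 H^{(s_1+s_2)/2}$.

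I expect the main (minor) obstacle to be a clean treatment of the mollification at the endpoint $t=\tau$; this is readily handled by extending $f$ by $f(\tau)$ for times below $\tau$ (or by replacing the pointwise identity with its integrated form against a test function and then removing the test function by density). None of the steps is deep; the whole argument is a routine application of standard interpolation and distributional calculus on a Hilbert scale.
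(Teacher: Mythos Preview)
The paper does not actually prove this proposition; it simply cites Lemma~A.4 of \cite{guo_tice_lwp}. Your sketch is the standard Lions--Magenes argument and is essentially what that reference contains, so in substance your approach is correct and matches what the authors have in mind.

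One small point worth flagging: your reduction via the Stein extension operator will produce the estimate on $\R^n$ with the $H^r(\R^n)$ norms of $Ef$, and transferring back to $\Gamma$ introduces the operator norms of $E$ as multiplicative constants. So you recover
\[
\ns{f(t)}_{H^s(\Gamma)} \lesssim \ns{f(\tau)}_{H^s(\Gamma)} + \int_\tau^t \ns{f}_{H^{s_1}(\Gamma)} + \ns{\dt f}_{H^{s_2}(\Gamma)},
\]
rather than the stated inequality with constant~$1$. This is harmless for the paper's purposes (the only uses, in Step~6 of the proof of Theorem~\ref{main_apriori}, are written with $\lesssim$), and the sharp constant can in any case be obtained by running your Bessel-potential argument directly on $\Gamma$ with $\Lambda = (I-\Delta)^{1/2}$ for a self-adjoint realization of the Laplacian, bypassing the extension. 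Either way, your argument is sound.
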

\begin{proof}
See, for instance, Lemma A.4 in \cite{guo_tice_lwp}.
\end{proof}

\subsection{Fractional integration by parts}

Here we record a sort of fractional integration-by-parts estimate.

\begin{prop}\label{frac_IBP_prop}
 Let $0 < s < 1$.  Then 
\begin{equation}
 \abs{\int_{-\ell}^\ell \p_1 f g } \ls \norm{f}_{H^{1-s/2}} \norm{g}_{H^{s/2}}.
\end{equation}
\end{prop}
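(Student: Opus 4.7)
The plan is to interpret the left-hand side as the duality pairing extended by density from smooth functions, then reduce to the corresponding estimate on $\mathbb{R}$, where a Fourier multiplier computation does all of the work. The key observation is that $0<s<1$ forces $0<s/2<1/2$, which keeps us safely below the critical trace exponent $1/2$ and unlocks two useful properties: first, a zero-extension operator $E_0:H^{s/2}((-\ell,\ell))\to H^{s/2}(\mathbb{R})$ is bounded since $H^{s/2}((-\ell,\ell))=H^{s/2}_0((-\ell,\ell))$ for $s/2<1/2$ (Chapter 1 of \cite{lions_magenes_1}); second, any reasonable extension $E:H^{1-s/2}((-\ell,\ell))\to H^{1-s/2}(\mathbb{R})$ produces a function continuous at $\pm\ell$ (because $1-s/2>1/2$), so its distributional derivative on $\mathbb{R}$ picks up no boundary delta terms.

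First I would fix a bounded linear extension operator $E:H^{1-s/2}((-\ell,\ell))\to H^{1-s/2}(\mathbb{R})$ (e.g.\ the Stein extension) and the zero-extension operator $E_0$ above. For $f\in C^\infty([-\ell,\ell])$ and $g\in C^\infty_c((-\ell,\ell))$, both extensions are classical smooth functions, $\partial_1(Ef)=\partial_1 f$ on $(-\ell,\ell)$, and $E_0 g$ is supported in $(-\ell,\ell)$, so
\begin{equation}
\int_{-\ell}^\ell (\partial_1 f)\,g\,dx = \int_{\mathbb{R}} \partial_1(Ef)\, E_0 g\,dx.
\end{equation}

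Second, I would estimate the right-hand side using the $H^{-s/2}(\mathbb{R})$--$H^{s/2}(\mathbb{R})$ duality pairing together with the fact that $\partial_1:H^{1-s/2}(\mathbb{R})\to H^{-s/2}(\mathbb{R})$ is bounded with operator norm $\lesssim 1$ (an immediate consequence of the Fourier characterization $\|h\|_{H^r(\mathbb{R})}^2=\int(1+|\xi|^2)^r|\hat h(\xi)|^2\,d\xi$, since $|2\pi i\xi|(1+|\xi|^2)^{-1/2}\lesssim 1$ implies $\|\partial_1 h\|_{H^{r-1}}\lesssim \|h\|_{H^r}$). This yields
\begin{equation}
\left|\int_{\mathbb{R}}\partial_1(Ef)\,E_0 g\,dx\right|\lesssim \|Ef\|_{H^{1-s/2}(\mathbb{R})}\,\|E_0 g\|_{H^{s/2}(\mathbb{R})}\lesssim \|f\|_{H^{1-s/2}}\,\|g\|_{H^{s/2}}.
\end{equation}

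Finally, I would extend the inequality by density: smooth functions are dense in $H^{1-s/2}((-\ell,\ell))$, and $C^\infty_c((-\ell,\ell))$ is dense in $H^{s/2}_0((-\ell,\ell))=H^{s/2}((-\ell,\ell))$ for $0<s/2<1/2$. The bilinear form $(f,g)\mapsto\int(\partial_1 f)g$, defined initially on smooth pairs, therefore extends continuously to all of $H^{1-s/2}\times H^{s/2}$ with the claimed bound. The only nonroutine step is the identification $H^{s/2}=H^{s/2}_0$ in the subcritical range, but this is classical; once it is invoked the remainder is just Fourier analysis on $\mathbb{R}$ and density.
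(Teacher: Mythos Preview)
Your proof is correct and shares the same skeleton as the paper's: both identify $H^{s/2}((-\ell,\ell))=H^{s/2}_0((-\ell,\ell))$ for $0<s/2<1/2$, both establish that $\partial_1:H^{1-s/2}\to H^{-s/2}$ is bounded, and both conclude via the $(H^{-s/2},H^{s/2})$ duality pairing. The difference lies in how the mapping property of $\partial_1$ is obtained. The paper stays on the interval and interpolates the operator $\partial_1$ between the endpoint bounds $\partial_1\in\mathcal{L}(L^2;H^{-1})$ and $\partial_1\in\mathcal{L}(H^1;L^2)$, landing directly at $\partial_1\in\mathcal{L}(H^{1-s/2}((-\ell,\ell));H^{-s/2}((-\ell,\ell)))$ without ever leaving $(-\ell,\ell)$. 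You instead push everything to $\mathbb{R}$ via extension operators and read off the mapping property from the Fourier symbol, then pull back. Your route is more concrete but carries the overhead of extension operators and a density argument; the paper's interpolation argument is shorter and intrinsic to the interval, avoiding any discussion of what happens at $\pm\ell$ under extension.
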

\begin{proof}
Since $0 < s < 1$ we have that (see, for instance, \cite{lions_magenes_1}) $H^{s/2}((-\ell,\ell)) = H^{s/2}_0((-\ell,\ell))$.    Next we note that
\begin{equation}
 \p_1  \in \L( L^2((-\ell,\ell)); H^{-1}((-\ell,\ell)) ) \cap \L(H^1((-\ell,\ell)); L^2((-\ell,\ell))).
\end{equation}
Since $L^2 = (L^2)^\ast = (H^0_0)^\ast$ and $H^{-1} = (H^1_0)^\ast$ we may then use interpolation theory to find that 
\begin{equation}
 \p_1 \in \L(  (H^1,L^2)_{1-s/2,2}; (L^2, H^{-1})_{1-s/2,2}  ) = \L( H^{1-s/2}((-\ell,\ell)); H^{-s/2}((-\ell,\ell)) ).
\end{equation}
Using this, we may then estimate 
\begin{equation}
  \abs{\int_{-\ell}^\ell \p_1 f g } \le \norm{\p_1 f}_{H^{-s/2}} \norm{g}_{H^{s/2}} \ls \norm{f}_{H^{1-s/2}} \norm{g}_{H^{s/2}}.
\end{equation}
\end{proof}

\subsection{Composition in $H^s((-\ell,\ell))$}

The following result provides a useful composition estimate in fractional Sobolev spaces.

\begin{prop}\label{frac_comp}
Let $f : (-\ell,\ell) \times \R \to \R$ be $C^1$ and satisfy  
\begin{equation}\label{frac_comp_00}
\sup_{z \in \R} \sup_{\abs{x} < \ell} \left( \frac{\abs{f(x,z)} + \abs{\p_1 f(x,z)}}{\abs{z}} + \abs{\p_2 f(x,z)} \right) \le M  < \infty.
\end{equation}
Then for every  $0 < s < 1$ there exists a constant $C = C(s,\ell) >0$ such that if $u \in H^s((-\ell,\ell))$ then $f(\cdot,u) \in H^s((-\ell,\ell))$ and 
\begin{equation}\label{frac_comp_01}
 \norm{f(\cdot,u)}_{H^s} \le C M \norm{u}_{H^s}.
\end{equation}
\end{prop}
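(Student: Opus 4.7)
The plan is to prove this by exploiting the two structural consequences of the hypothesis \eqref{frac_comp_00}: uniform Lipschitz control in the second variable, $|f(x,z_1) - f(x,z_2)| \le M|z_1 - z_2|$, and the vanishing $f(x,0) = \partial_1 f(x,0) = 0$ (which follows by sending $z \to 0$ in the first two bounds), and using the Gagliardo characterization of $H^s$ on the bounded interval $(-\ell,\ell)$.

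First I will handle the $L^2$ piece trivially: since $f(x,0)=0$ and $f$ is Lipschitz in $z$ with constant $M$, we have $|f(x,u(x))| \le M|u(x)|$, so $\|f(\cdot,u)\|_{L^2} \le M\|u\|_{L^2}$. Then for the seminorm, I will work with
\[
[f(\cdot,u)]_{H^s}^2 \asymp \iint_{(-\ell,\ell)^2} \frac{|f(x,u(x)) - f(y,u(y))|^2}{|x-y|^{1+2s}}\,dx\,dy,
\]
and split via the triangle inequality $|f(x,u(x)) - f(y,u(y))| \le |f(x,u(x)) - f(x,u(y))| + |f(x,u(y)) - f(y,u(y))|$. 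The first summand is bounded by $M|u(x)-u(y)|$ using the Lipschitz bound, producing exactly $M^2[u]_{H^s}^2$ after integration.

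The second summand is the main obstacle. Here I use the mean value theorem in the first slot together with the hypothesis $|\partial_1 f(\xi,z)| \le M|z|$ to obtain $|f(x,u(y)) - f(y,u(y))| \le M|u(y)||x-y|$. Squaring gives an integral of the form
\[
M^2 \iint_{(-\ell,\ell)^2} \frac{|u(y)|^2 |x-y|^2}{|x-y|^{1+2s}}\,dx\,dy = M^2 \int_{-\ell}^\ell |u(y)|^2 \left( \int_{-\ell}^\ell |x-y|^{1-2s}\,dx \right)dy.
\]
Since $(-\ell,\ell)$ is bounded and $1-2s \in (-1,1)$ for $0 < s < 1$, the inner integral is bounded by a constant $C(s,\ell)$ uniformly in $y$, yielding $C(s,\ell) M^2 \|u\|_{L^2}^2$. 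This is precisely where the boundedness of the interval is essential---on the line the same step would fail.

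Combining the two pieces produces $[f(\cdot,u)]_{H^s}^2 \le C(s,\ell) M^2 \|u\|_{H^s}^2$, which together with the $L^2$ bound gives the claimed estimate \eqref{frac_comp_01}. The measurability of $x \mapsto f(x,u(x))$ follows from the continuity of $f$, and the finiteness of the Gagliardo integral justifies the $H^s$ inclusion a posteriori.
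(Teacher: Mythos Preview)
Your proof is correct and follows essentially the same route as the paper: both use the Gagliardo seminorm, the pointwise bound $|f(x,u(x))|\le M|u(x)|$ for the $L^2$ part, and the fact that $\int_{-\ell}^\ell |x-y|^{1-2s}\,dx \le C(s,\ell)$ for the cross term. The only cosmetic difference is that the paper controls $f(x,u(x))-f(y,u(y))$ via a single application of the fundamental theorem of calculus along the segment $t\mapsto(tx+(1-t)y,\,tu(x)+(1-t)u(y))$, yielding the bound $M|x-y|(|u(x)|+|u(y)|)+M|u(x)-u(y)|$, whereas you split by the triangle inequality and the mean value theorem to get the slightly tighter $M|u(y)||x-y|+M|u(x)-u(y)|$; the resulting estimates are identical.
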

\begin{proof}
Let $u \in H^s((-\ell,\ell))$. We use the difference quotient characterization of $H^s((-\ell,\ell))$, which shows that 
\begin{equation}
 \ns{f(\cdot,u)}_{H^s} \asymp \ns{f(\cdot,u)}_{L^2} + [f(\cdot,u)]_{H^s}^2,
\end{equation}
where 
\begin{equation}
[f(\cdot,u)]_{H^s}^2 =  \int_{-\ell}^\ell \int_{-\ell}^\ell  \frac{\abs{f(x,u(x)) - f(y,u(y))}^2}{\abs{x-y}^{1 + 2s}} dx dy.
\end{equation}
To handle these note that by \eqref{frac_comp_00}, for $x,y \in (-\ell, \ell)$ we have that
\begin{equation}
\abs{f(x,u(x))} \le M \abs{u(x)}
\end{equation}
and 
\begin{multline}
 f(x,u(x)) - f(y,u(y)) = \int_0^1 \frac{d}{dt} f(tx + (1-t) y, tu(x) + (1-t)u(y)) dt \\
 = (x-y)\int_0^1 \p_1 f(tx + (1-t) y, tu(x) + (1-t) u(y))  dt 
 + (u(x) - u(y)) \int_0^1 \p_2 f(tx + (1-t) y, tu(x) + (1-t) u(y))  dt,
\end{multline}
so 
\begin{equation}
\abs{ f(x,u(x)) - f(y,u(y))} \le M \abs{x-y} \left(\abs{u(y)} + \abs{u(x)} \right) + M \abs{u(x)-u(y)}.
\end{equation}
These allow us to bound 
\begin{equation}
\ns{f(\cdot,u)}_{L^2} \le M^2 \ns{u}_{L^2}
\end{equation}
and (using Tonelli's theorem and the fact that $s < 1$)
\begin{multline}
 [f(\cdot,u)]_{H^s}^2 \le 2M^2 \int_{-\ell}^\ell \int_{-\ell}^\ell \left( \frac{\abs{x-y}^2 }{\abs{x-y}^{1+2s}}(2\abs{u(y)}^2 + 2 \abs{u(x)}^2)  + \frac{\abs{u(x)-u(y)}^2}{\abs{x-y}^{1+2s}}   \right) dx dy  \\
\le C(s,\ell) M^2 \ns{u}_{L^2} + 2M^2 [u]_{H^s}^2 
\end{multline}
for a constant $C(s,\ell) >0$.  Upon combining these we find that \eqref{frac_comp_01} holds.

\end{proof}


\end{document}